\documentclass[12pt]{amsart}

\usepackage[left=20mm,right=20mm]{geometry}
\usepackage{graphicx} % Required for inserting images
\usepackage{amsmath}
\usepackage{amssymb}
\usepackage{amscd}
\usepackage{amsthm}
\usepackage{enumerate}
\usepackage{setspace}
\usepackage{xcolor}
\usepackage[all]{xy}
\usepackage{url}
\definecolor{linkgold}{HTML}{966e09}
\usepackage[colorlinks=true,linkcolor=linkgold,citecolor=linkgold,urlcolor=linkgold,breaklinks,pagebackref]{hyperref}
\usepackage[lite,abbrev,alphabetic]{amsrefs}
\usepackage{ragged2e}
\AddToHook{env/biblist/begin}{\RaggedRight}
\usepackage{aliascnt}
\usepackage{cleveref}
\usepackage{comment}
\usepackage{todonotes}
\usepackage{tikz-cd}
\usepackage{quiver}

\makeatletter
\renewcommand{\subsection}{\@startsection{subsection}{2}%
  \z@{.7\linespacing\@plus.5\linespacing}{.35\linespacing}%
  {\normalfont\bfseries\raggedright}}
\makeatother

\theoremstyle{definition}
\newtheorem{definition}{Definition}[section]
\newaliascnt{example}{definition}
\newtheorem{example}[example]{Example}
\aliascntresetthe{example}

\theoremstyle{plain}
\newaliascnt{theorem}{definition}
\newtheorem{theorem}[theorem]{Theorem}
\aliascntresetthe{theorem}
\newaliascnt{proposition}{definition}
\newtheorem{proposition}[proposition]{Proposition}
\aliascntresetthe{proposition}
\newaliascnt{lemma}{definition}
\newtheorem{lemma}[lemma]{Lemma}
\aliascntresetthe{lemma}
\newaliascnt{corollary}{definition}
\newtheorem{corollary}[corollary]{Corollary}
\aliascntresetthe{corollary}
\newaliascnt{conjecture}{definition}
\newtheorem{conjecture}[conjecture]{Conjecture}
\aliascntresetthe{conjecture}
\newaliascnt{hypothesis}{definition}

\aliascntresetthe{hypothesis}
\newtheorem*{theorem*}{Theorem}
\newaliascnt{construction}{definition}
\newtheorem{construction}[construction]{Construction}
\aliascntresetthe{construction}
\newaliascnt{fact}{definition}

\aliascntresetthe{fact}

\theoremstyle{remark}
\newaliascnt{remark}{definition}
\newtheorem{remark}[remark]{Remark}
\aliascntresetthe{remark}

\crefname{theorem}{Theorem}{Theorems}
\crefname{proposition}{Proposition}{Propositions}
\crefname{lemma}{Lemma}{Lemmas}
\crefname{corollary}{Corollary}{Corollaries}
\crefname{conjecture}{Conjecture}{Conjectures}
\crefname{hypothesis}{Hypothesis}{Hypotheses}
\crefname{remark}{Remark}{Remarks}
\crefname{condition}{Condition}{Conditions}
\crefname{example}{Example}{Examples}
\crefname{definition}{Definition}{Definitions}
\crefname{section}{Section}{Sections}
\crefname{equation}{Equation}{Equations}
\crefname{construction}{Construction}{Constructions}

\def\N{\mathbb{N}}
\def\C{\mathbb{C}}% complex number field
\def\R{\mathbb{R}}% real number field
\def\Q{\mathbb{Q}}% rational number field
\def\Z{\mathbb{Z}}% ring of integers
\def\A{\mathbb{A}}% the ring of adeles
\def\H{\mathbb{H}}% Hamilton's quaternion field

\def\GL{\mathrm{GL}}
\def\PGL{\mathrm{PGL}}
\def\SL{\mathrm{SL}}

\def\O{\textnormal{O}}
\def\SO{\mathrm{SO}}
\def\GSO{\mathrm{GSO}}

\def\Sp{\mathrm{Sp}}
\def\GSp{\mathrm{GSp}}
\def\Spin{\mathrm{Spin}}
\def\PGSO{\mathrm{PGSO}}
\def\PGSp{\mathrm{PGSp}}
\def\PGO{\mathrm{PGO}}
\def\GO{\mathrm{GO}}
\def\HII{\mathrm{HII}}

\def\rd{\,\mathrm{d}}% measure

\def\Cc{C_c^{\infty}}% smooth functions

\DeclareMathOperator{\diag}{diag} % diagonal matrix
\DeclareMathOperator{\ord}{ord}% order 
\DeclareMathOperator{\vol}{vol}% volume

\DeclareMathOperator{\Gal}{Gal}% Galois group
\DeclareMathOperator{\ind}{ind}% compact induction
\DeclareMathOperator{\Hom}{Hom}% space of homomorphisms
\DeclareMathOperator{\End}{End}% space of endomorphisms
\DeclareMathOperator{\Aut}{Aut}% group of automorphisms
\DeclareMathOperator{\Lie}{Lie}% Lie algebra
\DeclareMathOperator{\Cent}{Cent}% centralizer
\DeclareMathOperator{\Irr}{Irr}% set of irreducible representations
\DeclareMathOperator{\Ad}{Ad}% adjoint action
\DeclareMathOperator{\ad}{ad}% adjoint action
\DeclareMathOperator{\Std}{Std}
\DeclareMathOperator{\Ker}{Ker}% kernel
\DeclareMathOperator{\triv}{triv}% the trivial representation
\DeclareMathOperator{\tr}{tr}% trace
\DeclareMathOperator{\id}{id}% identity operator
\DeclareMathOperator{\Sym}{Sym}% symmetrization

\DeclareMathOperator{\Out}{Out}
\DeclareMathOperator{\der}{der}

\makeatletter
\renewcommand\subsubsection{\@startsection{subsubsection}{3}{\z@}%
  {-18\p@ \@plus -4\p@ \@minus -4\p@}%
  {-0.6em}%
  {\normalfont\bfseries}} % ← 太字に変更
\makeatother

\newcommand{\fa}{\mathfrak{a}}

\newcommand{\fg}{\mathfrak{g}}

\newcommand{\fn}{\mathfrak{n}}
\newcommand{\fo}{\mathfrak{o}}
\newcommand{\fp}{\mathfrak{p}}

\newcommand{\fw}{\mathfrak{w}}

\newcommand{\bB}{\mathbb{B}}

\newcommand{\bG}{\mathbb{G}}

\newcommand{\bM}{\mathbb{M}}

\newcommand{\bP}{\mathbb{P}}
\newcommand{\bS}{\mathbb{S}}
\newcommand{\bT}{\mathbb{T}}

\newcommand{\bX}{\mathbb{X}}

\newcommand{\cA}{\mathcal{A}}

\newcommand{\cC}{\mathcal{C}}
\newcommand{\cD}{\mathcal{D}}

\newcommand{\cF}{\mathcal{F}}

\newcommand{\cL}{\mathcal{L}}

\newcommand{\cO}{\mathcal{O}}
\newcommand{\cP}{\mathcal{P}}

\newcommand{\cU}{\mathcal{U}}
\newcommand{\cV}{\mathcal{V}}
\newcommand{\cW}{\mathcal{W}}
\newcommand{\cX}{\mathcal{X}}

\numberwithin{equation}{section}
\newcommand{\disc}{\mathrm{disc}}

\newcommand{\reg}{\mathrm{reg}}
\newcommand{\temp}{\mathrm{temp}}

\newcommand{\unit}{\mathrm{unit}}
\newcommand{\spl}{\mathrm{spl}}
\newcommand{\red}{\mathrm{red}}

\newcommand{\esixroot}[6]{
  \begin{array}{ccccc}
     &   &  #2 &   &   \\
  #1 & #3 & #4 & #5 & #6 \\
  \end{array}
}

\newcommand{\Temp}{\mathrm{Temp}}
\newcommand{\abs}[1]{\lvert #1 \rvert}

\newcommand{\pr}{\mathrm{pr}}
\newcommand{\GSpin}{\mathrm{GSpin}}

\newcommand{\Norm}{\mathrm{Norm}}
\newcommand{\dist}{\mathrm{dist}}

\DefineSimpleKey{bib}{arxiv}
\newcommand\myurl[1]{\url{#1}}
\newcommand\arxiv[1]{available at \href{https://arxiv.org/abs/#1}{arXiv:#1}}

\BibSpec{article}{%
  +{}  {\PrintAuthors}                {author}
  +{,} { \textit}                     {title}
  +{.} { }                            {part}
  +{:} {\textit}                     {subtitle}
  +{,} {\PrintContributions}         {contribution}
  +{,} {\PrintConference}            {conference}
  +{}  {\PrintBook}                   {book}
  +{,} { }                            {booktitle}
  +{,} { }                            {journal}
  +{,} { \textbf}                     {volume}
  +{,} { \issuetext}                  {number}
  +{,} { \PrintDateB}                 {date}
  +{,} { pp.~}                       {pages}
  +{,} { available at \eprint}        {eprint}
  +{;} { \arxiv}               {arxiv}
  +{.\!\!}  {\SentenceSpace \PrintReviews} {review}
  +{,} { \PrintDOI}                   {doi}
}

\title[On the Hiraga--Ichino--Ikeda conjecture for $\mathrm{G_2}$]{On the Hiraga--Ichino--Ikeda conjecture on formal degrees for $\mathrm{G_2}$}
\author{Yugo Takanashi} 
\address{Department of Mathematics, Graduate School of Science, Kyoto University, Kitashirakawa Oiwake-cho, Sakyo-ku, Kyoto 606-8502, Japan}
\email{takanashi.yugo.7s@kyoto-u.ac.jp}
\date{}

\begin{document}

\begin{abstract}
    We prove that the twisted endoscopic character identities between triality $\mathrm{PGSO}_8$ and the endoscopic group $G_2$ imply the Hiraga--Ichino--Ikeda conjecture on formal degrees for $G_2$.
    In the course of proving the main result, we also establish some fundamental results on representation theory of $\mathrm{PGSO}_8$ with triality.
    A key feature of our approach is an application of the adjoint group of type $E_6$.
\end{abstract}

\maketitle
\setcounter{tocdepth}{1} 
\tableofcontents
\bigskip

\section{Introduction}\label{section:introduction}

Let $F$ be a local field of characteristic $0$. 
Let $G$ be a reductive group over $F$.
The local Langlands correspondence for $G$ is a conjectural finite-to-one correspondence between the set $\Pi(G)$ of isomorphism classes of smooth admissible irreducible representations of $G$ and the set $\Phi(G)$ of $L$-parameters for $G$.
In the archimedean case, it has already been established by Harish-Chandra and Langlands.
The local Langlands correspondence has been established for general linear groups by Harris--Taylor ~\cite{HarrisTaylor2001-simple-Shimura}, Henniart ~\cite{Henniart2000-simpleproof}, and Scholze ~\cite{Scholze2013-localLanglandsGLn}, and for classical groups by Arthur \cite{Art13}, Mok \cite{Mok15}, and others (see also \cite{kaletha2014endoscopicclassificationrepresentationsinner} and \cite{atobe2024localintertwiningrelationscotempered}).

From a harmonic-analytic point of view, we will also consider the Plancherel formula for $G$. It reads
\begin{align*}
     f(1) = \int_{\Irr_{\temp}(G)} \tr\pi(f^{\vee} \rd g) \rd \mu^G(\pi),
\end{align*}
for any $f \in \Cc(G)$.
Here, $dg$ is a Haar measure on $G$, and we set $f^{\vee}(g) = f(g^{-1})$ and
\[
  \pi(f^\vee \rd g)=\int_G f^\vee(g)\,\pi(g)\,\rd g .
\]
Furthermore, $\Irr_{\temp}(G) \subset \Pi(G)$ denotes the set of tempered irreducible representations of $G$ with the Fell topology, and $\rd \mu^G(\pi)$ is a Borel measure on $\Irr_{\temp}(G)$, called the Plancherel measure for $G$. Note that $\rd \mu^G(\pi)$ depends on the choice of the Haar measure $dg$.

Shahidi proved in \cite{Sha90}*{Proposition 9.3} that, if the local Langlands correspondence for $G$ satisfies certain desiderata, then the measure $\rd \mu^G(\pi)$ can be expressed via the local Langlands correspondence for $G$.
Hiraga, Ichino, and Ikeda proposed a conjectural formula for the measure $\rd \mu^G(\pi)$ in \cite{HII08} (see also \cite{HiragaIchinoIkeda2008-Correction}), assuming the local Langlands correspondence for $G$ (see \cref{conjecture:formal-degree-conjecture}).
They also proved the conjecture in some cases, including stable $L$-packets for $U_3$, by applying the endoscopic character identities proved by Rogawski \cite{Rogawski1990-U3} and a technique of Goldberg and Shahidi.

In the archimedean case, the conjecture follows from the results of Harish-Chandra \cite{Harish-Chandra1976-HarmonicanalysisIII}, Knapp--Stein \cite{Knapp-Stein1971intertwiningoperators}, and Arthur \cite{Art89IntResI}; see \cite{HII08}*{2}.
In \cite{BP21a}, Beuzart-Plessis announced that he had proved the Hiraga--Ichino--Ikeda conjecture for classical groups assuming the endoscopic character identities between classical groups and $\GL_n(F)$ with the (conjugate) self-dual involution, and the detailed proof has been given in \cite{beuzartplessis2025hiragaichinoikedaconjectureformaldegrees} recently.
He utilized his own results, such as those proved in \cite{Beuzart-Plessis2021-Plancherel-GLnE-GLnF}*{Proposition 3.41}, to substantially generalize the method of Hiraga--Ichino--Ikeda.

In this paper, we prove the Hiraga--Ichino--Ikeda conjecture \cite{HII08} (see also \cite{HiragaIchinoIkeda2008-Correction}) for the exceptional group $G_2(F)$ over any non-archimedean local field $F$ of characteristic $0$, assuming the conjectural endoscopic character identities between $G_2$ and $\PGSO_8$ with the triality automorphism $\tau \colon \PGSO_8 \xrightarrow{\sim} \PGSO_8$. 
We note that the local Langlands correspondence for $G_2(F)$ has been recently established by Gan--Savin in \cite{GS23LLC} and our work is based on this result.

We largely follow Beuzart-Plessis' method for classical cases, but several new issues arise in our setting:
\begin{itemize}
    \item 
    The local Langlands correspondence for $\PGSO_8(F)$ by Xu is somewhat weaker than the expected local Langlands correspondence. 
    \item    
    We have to develop the representation theory for $\PGSO_8(F)$ with the triality automorphism $\tau$.
    For example, as tempered $L$-packets for $\PGSO_8(F)$ need not be singletons, we have to construct an extension of each functorial lift from $G_2(F)$ to $\PGSO_8(F) \rtimes \tau$, which conjecturally satisfies the endoscopic character identities.
    \item      
    We need a certain irreducibility result for the semisimple adjoint group of type $E_6$, which seems not to have been considered previously.
\end{itemize}

Thus, we need to establish fundamental results on representation theory of $\PGSO_8(F)$ with the triality automorphism $\tau$, before proving the main analytic theorem.
Moreover, the proof of the main analytic theorem also requires similar harmonic analysis to that carried out in \cite{Beuzart-Plessis2021-Plancherel-GLnE-GLnF}*{Proposition 3.41}.
For example, see the proof of \cref{theorem:key-computation}.

Building on the present paper, the author proves the endoscopic character identities for $G_2$ and $\PGSO_8$ with the triality automorphism $\tau$ in \cite{Takanashi2026-EndoscopicDescriptionG2}.

\subsection*{Outline of the paper}

\begin{itemize}
    \item    
    In \Cref{section:2}, we will give the notation and assumptions which are used throughout this paper.
    \item    
    In \Cref{section:3}, we will give a form of the conjectural local Langlands correspondence for quasi-split $p$-adic reductive groups.
    \item    
    In \Cref{section:4}, we state the formal degree conjecture of Hiraga--Ichino--Ikeda, based on the conjectural form of the local Langlands correspondence given in the previous section.
    We will also review known results for general linear groups and classical groups.
    \item    
    In \Cref{section:5}, we will prove the Hiraga--Ichino--Ikeda conjecture for $\GSO_{2n}(F)$ and  $\PGSO_{2n}(F)$ by using the result of Beuzart-Plessis.
    \item    
    In \Cref{section:6}, we will give some results on the group $\PGSO_8(F)$ and its triality automorphism. 
    \item     
    In \Cref{section:7}, we will review some results on normalizing factors of intertwining operators.
    In particular, we study the relation between normalizing factors and isogenies.
    \item     
    In \Cref{section:8}, we study the relation between the group of type $E_6$ and the group $\PGSO_8(F)$ with the triality.
    \item       
    In \Cref{section:9}, we will study the endoscopy for $\PGSO_8(F)$ with the triality. 
    In particular, we will construct an extension of each functorial lift from $G_2(F)$ to $\PGSO_8(F)$, which conjecturally satisfies the endoscopic character identities. See \cref{theorem:canonical-extension-PGSO8-S3}.
    We will also state the conjectural endoscopic character identities, using the results of the previous section.
    \item      
    In \Cref{section:10}, we will prove the Hiraga--Ichino--Ikeda conjecture for $G_2(F)$ assuming the conjectural endoscopic character identities stated in the previous section.
\end{itemize}

\subsection*{Acknowledgements}
The author is deeply grateful to my advisor Yoichi Mieda for his constant encouragement and guidance throughout my doctoral studies.
The author also would like to express sincere gratitude to his family for their unwavering support.
Without their help, this work would not have been possible.

A substantial portion of this research was carried out during my visit to the National University of Singapore in 2024.
The author is grateful to Wee Teck Gan for hosting me and for many helpful discussions.
The author is also grateful to Rapha\"el Beuzart-Plessis for sharing his manuscript on the Hiraga--Ichino--Ikeda conjecture for classical groups and for his insightful comments on this work. 
Finally, the author thanks Atsushi Ichino, Masao Oi, and Satoshi Wakatsuki for their encouragement and valuable feedback.
This work was supported by JSPS KAKENHI Grant Number 23KJ0403.

\section{Notation and assumptions}\label{section:2}

\subsection{Local fields}

Let $F$ be a non-archimedean local field. 
We denote the ring of integers by $\fo_F$ and its maximal ideal by $\fp_F$.
Let $\varpi_F \in \fp_F$ be a uniformizer of $\fo_F$.
We denote the cardinality of the residue field $k_F = \fo_F/\fp_F$ of $F$ by $q = q_F$.
We denote the normalized absolute value of $F$ by $\abs{\cdot}_F$ which satisfies $\abs{\varpi_F}_F = q^{-1}$.
We denote a unique extension of $\abs{\cdot}_F$ to $\overline{F}$ by the same symbol.

Let $\Gamma_F$ (resp. $W_F$) be the Galois group (resp. the Weil group) of $F$.
Let $I_F$ be the inertia subgroup of $\Gamma_F$, and let $\mathrm{Frob}_F$ be a geometric Frobenius element of $F$.
Let $L_F = W_F \times \SL_2(\C)$ be the Weil-Deligne group of $F$.

Let $\psi = \psi_F$ be a nontrivial additive character of $F$.
Unless otherwise stated, we assume that $\psi_F$ is trivial on $\fo_F$ and nontrivial on $\varpi_F^{-1}\fo_F$.
We set 
\begin{align*}
    \bS^1 &= \{ z \in \C \mid \abs{z} = 1 \}, \\
    \C_{+} &= \{ z \in \C \mid \mathrm{Re}(z) > 0 \}.
\end{align*}

We define the function $\zeta_F(s)$ by
\begin{align*}
    \zeta_F(s) = \frac{1}{1-q^{-s}}.
\end{align*}

\subsection{Reductive groups}

Let $F$ be a field of characteristic 0.
Let $\bG$ be an algebraic group over $F$.
We will often write simply $G$ for $\bG(F)$, omitting explicit reference to $\bG$.
If $E/F$ is a field extension, then we write the base change $\bG \times_{\mathrm{Spec}(F)} \mathrm{Spec}(E)$ of $\bG$ to $E$ as $\bG_E$. We use similar notation for Lie algebras and morphisms of algebraic groups.
For example, for a Lie algebra $\fg$ over $F$, we write $\fg_E = \fg \otimes_F E$.

Let $G$ be an algebraic group over $F$.
Let $X^*(G)$ denote the group of algebraic characters of $G$ defined over $F$.
If $G$ is a torus, $X_*(G)$ denotes the group of algebraic cocharacters of $G$ defined over $F$.
Let $S \subset G$ be a split subtorus over $F$.
Then, the group $S$ acts on the Lie algebra $\Lie(G)$ via the restriction of the adjoint action and this action is diagonalizable.
Let $\Phi(G, S)$ be the set of nonzero weights of the action, and we call its elements the roots for $S$ in $G$.

Let $G$ be a reductive algebraic group over $F$.
Unless otherwise stated, when we speak of ``subgroups of $G$'', we mean the group of $F$-points of algebraic subgroups of $\bG$ defined over $F$.

Let $A_0$ be a maximal $F$-split torus of $G$ and let $P_0$ be a minimal parabolic subgroup of $G$ over $F$.
We assume that $A_0 \subset P_0$.
We call such a pair $(A_0, P_0)$ a minimal parabolic pair.
We denote $Z_G(A_0)$ by $M_0$, which is a Levi component of $P_0$.
We write the Levi decomposition $P_0 = M_0 \ltimes N_0$.

Let $P$ be a parabolic subgroup of $G$.
We write $P = MN$ for a parabolic subgroup $P$ of $G$ with the Levi decomposition $P = M \ltimes N$.
The parabolic subgroup is called standard (resp. semi-standard), if $P_0 \subset P$ (resp. $A_0 \subset P$). 

Let $M$ be a Levi subgroup of $G$.
We say that a Levi subgroup $M$ of $G$ is semi-standard if $M_0 \subset M$.
A semi-standard Levi subgroup $M$ is said to be standard if it is a Levi component of a standard parabolic subgroup $P$.
In this case, the group $P$ determines the group $M$ and let $M_P$ denote the group $M$.

We denote by $\cF^G(M)$ (resp. $\cL^G(M)$) the set of parabolic subgroups (resp. Levi subgroups) of $G$ containing $M$.
We write the set of parabolic subgroups of $G$ having a Levi component $M$ as $\cP^G(M)$.
Let $\cL^G_0$ denote a set of representatives of conjugacy classes of the standard Levi subgroups of $G$.

\subsection{Root systems}
    Let $G$ be a reductive group over $F$.
    Let $(A_0, P_0)$ be a minimal parabolic pair of $G$.
    
    Let $M$ be a standard Levi subgroup of $G$.
    Let $A_M$ be the maximal split torus in the center of $M$.
    We call the group $A_M$ the split component of $M$.
    We set
    \begin{align*}
        \fa^*_M = X^*(A_M) \otimes_{\Z} \R
    \end{align*}
    and 
    \begin{align*}
        \fa_M = X_*(A_M) \otimes_{\Z} \R.
    \end{align*}
    We have a canonical isomorphism 
    \begin{align*}
        X^*(M) \otimes_{\Z} \R \xrightarrow{\sim} \fa^*_M
    \end{align*}
    obtained from the restriction map.

    Let $M \subset M'$ be Levi subgroups of $G$.
    Then, we have the canonical maps 
    \begin{align*}
    X^*(M') \to X^*(M) \to X^*(A_M) \to X^*(A_{M'}).
    \end{align*}
    This gives the surjection
    \begin{align*}
    \fa^*_{M} \to \fa^*_{M'} 
    \end{align*}
    and its splitting.
    We denote the kernel by $\fa^{M', *}_M$.
    Thus, we have the canonical decomposition
    \begin{align}
    \fa^*_{M} \xrightarrow{\sim} \fa^*_{M'} \oplus \fa^{M', *}_M 
    \label{eq:decomposition-of-character-space}.
    \end{align}
    We set $\fa^{M', *}_{M, \C} = \fa^{M', *}_M \otimes_{\R} \C$.
    
    From a minimal parabolic pair $(A_0, P_0)$, we obtain the root system $\Phi(G, A_0)$ and the basis $\Delta_0$.
    We write $P = M P_0$.
    From the restriction of $\Delta_0$ to $A_M$, we obtain the basis $\Delta_P$ of $\fa^{G, *}_{M}$.
    We also denote the set of simple roots in $\Phi(M, A_0)$ by $\Delta^M_0$.
    Similarly, for an element $\alpha \in \Delta_P$, we can attach the coroot $\alpha^{\vee} \in \fa_M$ by using the lifting to $\Delta_0^{\vee}$.
    We denote the set of reduced elements in $\Phi(G, A_M)$ by $\Phi(G, A_M)_{\red}$.

\subsection{$L$-groups}
For details, see \cite{Borel1979-AutomorphicLfunctions}*{I.2}.
Assume that $F$ is a local field of characteristic $0$.
We denote the $L$-group of $G$ by ${}^LG = \widehat{G} \rtimes W_F$.
If $H$ is another reductive group over $F$ and there exists an $L$-morphism ${}^L H \to {}^L G$, then we usually denote it by
\begin{align*}
     \iota^{G}_{H} \colon {}^L H \to {}^L G
\end{align*}
in this paper. 
For example, this notation will be used if $H$ is an endoscopic group of $G$ or a Levi subgroup of $G$.

\subsection{Local factors}

Let $F$ be a non-archimedean local field of characteristic $0$.
Let $\phi \colon L_F \to \GL(V)$ be an $L$-parameter for a general linear group.
Then, we can write this parameter as 
\begin{align*}
    \phi = \bigoplus_{i \geq 0} \phi_i \otimes S_{i},
\end{align*}
where $\phi_i$ is a semisimple representation of $W_F$ and $S_{i}$ is the irreducible algebraic representation of $\SL_2(\C)$ of dimension $i+1$.
Then, we can define the Artin $L$-factor $L(s, \phi)$, the $\epsilon$-factor $\epsilon(s, \phi, \psi_F)$, and the $\gamma$-factor $\gamma(s, \phi, \psi_F) = \epsilon(s, \phi, \psi_F) \frac{L(1-s, \phi^{\vee})}{L(s, \phi)}$.
For example, we have 
\begin{align*}
    L(s, \phi) = \prod_{i \geq 0} L(s + \tfrac{i}{2}, \phi_i),
\end{align*}
where 
\begin{align*}
    L(s, \phi_i) = \det(1 - q^{-s} \phi_i(\mathrm{Frob}_F) \mid V^{I_F})^{-1}.
\end{align*}
These local factors are additive with respect to direct sums of representations.
We also use Arthur's version of local factors; see \cite{Art13}*{1.3.2}.
We set 
\begin{align*}
    \gamma_A(s, \phi, \psi_F) = \epsilon(s, \phi, \psi_F) \frac{L(1+s, \phi)}{L(s, \phi)}.
\end{align*}

Note that, for any tempered $L$-parameter $\phi$ of the general linear group, the local factor $L(s, \phi)$ (resp. $\epsilon(s, \phi, \psi_F)$) is holomorphic and nonzero at $s=1$ (resp. at $s=0$).
Thus, we have
\begin{align*}
    \ord_{s=0} \gamma(s, \phi, \psi_F) =
    \ord_{s=0} \gamma_A(s, \phi, \psi_F) = 
    -\ord_{s=0} L(s, \phi).
\end{align*}
We also note that 
\begin{align*}
    \lim_{s \to 0}
    \frac{\gamma(s, \mathbf{1}, \psi)}{\gamma_A(s, \mathbf{1}, \psi)}
    = 1.
\end{align*}

For a connected reductive group $G$ over $F$, an $L$-parameter $\phi \colon L_F \to {}^L G$ and a finite-dimensional representation $r \colon {}^L G \to \GL(V)$, we can define the local factors $L(s, \phi, r)$, $\epsilon(s, \phi, r, \psi_F)$, and $\gamma(s, \phi, r, \psi_F)$ by composing $r$ with $\phi$.
We also have Arthur's version $\gamma_A(s, \phi, r, \psi_F)$ defined similarly.

\subsection{Whittaker datum and the Tits liftings}

Let $F$ be a non-archimedean local field of characteristic $0$.
Let $G$ be a quasi-split reductive group over $F$.
Let $(T, B)$ be a Borel pair in $G$ defined over $F$.
We consider the set of roots $\Phi(\bG_{\overline{F}}, \bT_{\overline{F}})$.
Let $\Delta_B$ be the basis of $\Phi(\bG_{\overline{F}}, \bT_{\overline{F}})$ corresponding to $B$.
Let $\{X_{\alpha}\}_{\alpha}$ be a set of root vectors $X_{\alpha} \in (\fg_{\overline{F}})_{\alpha}$ for each simple root $\alpha \in \Delta_B$.
We call the triple $(T, B, \{X_{\alpha}\}_{\alpha \in \Delta_B})$ a splitting of $G$ over $F$, or an $F$-splitting of $G$, if the set $\{X_{\alpha}\}_{\alpha}$ is stable under the action of $\Gamma_F$.
We can dually define the notion of a $\Gamma_F$-splitting
$(\widehat{B}, \widehat{T}, \{ X_{\alpha^{\vee}} \}_{\alpha^{\vee}})$ for the dual group $\widehat{G}$.
In this situation, for any semi-standard Levi subgroup $M$, we have the canonical $L$-embedding $\iota^G_M \colon {}^L M \hookrightarrow {}^L G$.
Let $\mathbf{spl}_G = (T, B, \{X_{\alpha}\}_{\alpha \in \Delta_B})$ be an $F$-splitting of $G$.
Also, let $\mathbf{spl}_{\widehat{G}} = (\widehat{B}, \widehat{T}, \{ X_{\alpha^{\vee}} \}_{\alpha^{\vee}})$ be a $\Gamma_F$-splitting of $\widehat{G}$.

From $\mathbf{spl}_G$ and $\psi_F$, we obtain a Whittaker datum $\fw$ for $G$.
We follow the definitions in \cite{atobe2024localintertwiningrelationscotempered}*{1.7}.
Let $M_1, M_2$ be semi-standard Levi subgroups of $G$. 
Let $T(G, (M_1, M_2))$ be the transporter in $G$ from $M_1$ to $M_2$.
Then, we can define the Weyl set 
\begin{align*}
     W(G, (M_1, M_2)) 
     = M_2 \backslash T(G, (M_1, M_2)) /M_1.
\end{align*}
Then, for each element $w \in W(M_1, M_2)$, we obtain the Tits lifting $\widetilde{w} \in N_G(A_0) \subset G$ by using $\mathbf{spl}_G$; see \cite{atobe2024localintertwiningrelationscotempered}*{p.23, line 16}.
We denote $W(G, (M, M))$ by $W(G, M)$.

\subsection{Groups and representations}

For any group $G$ and subgroup $H$, we write the centralizer of $H$ in $G$ (resp. the normalizer of $H$ in $G$) as $\Cent_G(H)$ (resp. $\Norm_G(H)$).
For any group $G$ and its representation $\pi$ which are in an appropriate category, let $\pi^{\vee}$ denote the contragredient representation (in that category) of $\pi$.
We mainly consider the category of smooth admissible representations of a $p$-adic reductive group.
For a locally compact abelian group $G$, let $G^{\cD}$ denote the Pontryagin dual of $G$.

Let $F$ be a non-archimedean local field of characteristic $0$.
Let $G$ be a quasi-split reductive group over $F$.
In this paper, representations of $G$ are smooth representations of $G$ over $\C$.
Let 
\begin{align*}
    \Pi_{\disc}(G) \subset \Pi_{\temp}(G) \subset \Pi(G)
\end{align*}
denote the set of (isomorphism classes of) discrete series irreducible representations, the set of tempered irreducible representations, and the set of irreducible representations of $G$, respectively.

Let $H$ be another reductive group which is possibly equal to $G$.
Let $\theta \colon G \to H$ be an isomorphism.
For any representation $\sigma$ of $H$ and $\pi$ of $G$, we set
\begin{align*}
    &\sigma^{\theta} = \sigma \circ \theta, \\
    &{}^{\theta}\pi = \pi \circ \theta^{-1}.
\end{align*}

Let $P=MN$ be a parabolic subgroup of $G$ with a Levi decomposition.
We denote the normalized parabolic induction by $i^G_P$.
Later, we also consider the induction from an open subgroup $H$ to $G$.
In this situation, we write the usual induction functor as $\ind^G_{H}$.

\subsection{Unramified characters}\label{subsection:unramified-characters}

Let $F$ be a non-archimedean local field of characteristic $0$.
Let $G$ be a reductive group over $F$.

For an element $\chi \in X^*(G)$, we set $\abs{\chi}_F \colon G \to \R_{>0}$ by 
\begin{align*}
      \abs{\chi}_F \colon g \mapsto \abs{\chi(g)}_F.
\end{align*}
We define $G^1 = \bigcap_{\chi \in X^*(G)} \Ker(\abs{\chi}_F)$.
An unramified character of $G$ is a character of $G$ that factors through $G/G^1$.
We denote the set of unramified characters of $G$ by $\cX(G)$.
We have the surjective homomorphism
\begin{align}
       X^*(G) \otimes_{\Z} \C \to \cX(G) \colon \chi \otimes s \mapsto \abs{\chi}_F^{s}, \label{eq:character-to-unr-character}
\end{align}
whose kernel is a lattice in $i\fa^*_G$ containing the lattice
\begin{align*}
    \frac{2\pi i}{\log(q)} X^*(G).
\end{align*}
If $G$ is a split torus, the kernel coincides with the above group.

The map \cref{eq:character-to-unr-character} endows $\cX(G)$ with a structure of a complex analytic Lie group.
Let $\cX_{\unit}(G)$ be the subgroup of unitary unramified characters.
Thus, if $G$ is a split torus, we have the isomorphism 
\begin{align*}
    X^*(G) \otimes_{\Z} i\R / \frac{2\pi i}{\log(q)} X^*(G)
    \xrightarrow{\sim} 
    \cX_{\unit}(G),
\end{align*}
obtained from the homomorphism above.
The map \cref{eq:character-to-unr-character} endows $\cX_{\unit}(G)$ with the structure of a compact torus induced by the Euclidean space $i\fa^*_G$.

\subsection{Embedding of a torus in the product of linear groups}\label{subsection:embedding-tori}

Let $\prod_{i=1}^{n} G_i \subset \prod_{i=1}^{n} \GL_{n_i}$ be a product of linear groups such that $Z(\GL_{n_i}) \subset G_i$.
For $(a_i)_{i=1}^{n} \in \Z^n$, let $\GL_1^{(a_i)_{i=1}^n}$ denote the image of the morphism 
\begin{align*}
    \GL_1 \to \prod_{i=1}^n G_i \subset \prod_{i=1}^n \GL_{n_i} \colon z \mapsto (z^{a_i} \cdot \mathrm{id}_{n_i})_{i=1}^n.
\end{align*}
This morphism is a closed immersion if and only if the ideal $(a_1, \ldots, a_n)$ generated by the integers $a_i$ in $\Z$ is equal to $\Z$.

\section{Local Langlands correspondence and local theta correspondence}\label{section:3}

\subsection{Conjectural correspondence}

Let $F$ denote a non-archimedean local field of characteristic $0$.
In this paper, we consider the local Langlands correspondence only for tempered representations of quasi-split groups.

We fix an $F$-splitting $(T, B, \{X_{\alpha}\}_{\alpha})$ for $G$.

\begin{definition}[\cite{SilbergerZink2018-Classification-L-parameter}*{Definition 3.1}]
    Let $\phi \colon L_F \to {}^LG$ be a continuous homomorphism such that the composition 
    \begin{align*}
        W_F \to L_F \overset{\phi}{\to} {}^LG \to W_F
    \end{align*}
    is the identity.
    Here, the first map is the inclusion and the third map is the projection.
    The map $\phi$ is called an $L$-homomorphism for $G$ if its restriction to $W_F$ is semisimple and its restriction to $\SL_2(\C) \to \widehat{G}$ is an algebraic homomorphism.

    The $L$-homomorphism $\phi$ is called tempered if the image of the cocycle
    \begin{align*}
        W_F \to L_F \overset{\phi}{\to} {}^LG \to \widehat{G}
    \end{align*}
    is bounded.
\end{definition}

\begin{remark}
    We recall the definition of semisimplicity in this context.
    Let $E/F$ be the splitting field of $G$.
    Then, the action of $W_F$ on $\widehat{G}$ factors through the quotient $\Gal(E/F)$ and this defines the group $\widehat{G} \rtimes \Gal(E/F)$.
    Thus, we obtain the homomorphism 
    \begin{align*}
        \phi_{E/F} \colon L_F \to {}^L G \to \widehat{G} \rtimes \Gal(E/F).
    \end{align*}
    We say that the homomorphism $W_F \to {}^LG$ is semisimple if the induced homomorphism $W_F \to \widehat{G} \rtimes \Gal(E/F)$ is semisimple, in the sense that for any element $\gamma \in W_F$, the image $\phi_{E/F}(\gamma)$ is semisimple in the complex algebraic group $\widehat{G} \rtimes \Gal(E/F)$.
\end{remark}

\begin{definition}
    The set of $L$-homomorphisms has a conjugation action by $\widehat{G}$.
    Let $\phi_1, \phi_2$ be $L$-homomorphisms for $G$. 
    The $L$-homomorphisms $\phi_1$ and $\phi_2$ are said to be equivalent if they are conjugate by an element $s \in \widehat{G}$.
    We call an equivalence class of $L$-homomorphisms for $G$ an $L$-parameter for $G$.
    We denote by $\Phi(G)$ the set of $L$-parameters for $G$.

    We also denote by $\Phi_{\temp}(G) \subset \Phi(G)$ the subset of equivalence classes of tempered $L$-homomorphisms for $G$.
\end{definition}

\begin{remark}
    For a general (i.e., not necessarily quasi-split) reductive group $G$, we also have to assume that $\phi$ is relevant for $G$ to define the notion of $L$-homomorphisms for $G$; see \cite{SilbergerZink2018-Classification-L-parameter}*{p.319 line 13}.
\end{remark}

\begin{remark}
    In the situation that we distinguish an $L$-homomorphism $\phi$ from its equivalence class, we write the equivalence class as $[\phi]_{G}$.
\end{remark}

\begin{definition}
    For an $L$-homomorphism $\phi$ for $G$, we write $S_{\phi}$ for the centralizer subgroup $\Cent_{\widehat{G}}(\phi(L_F)) = \widehat{G} \cap \Cent_{{}^LG}(\phi(L_F))$ in $\widehat{G}$.
    
    For an $L$-parameter $[\phi]_G$ for $G$, the conjugacy class of the subgroup $S_{\phi}$ of $\widehat{G}$ is uniquely determined by $[\phi]_G$.
    We denote this conjugacy class of subgroups in $\widehat{G}$ by $S_{[\phi]_G}$.
\end{definition}

\begin{lemma}[\cite{Heiermann2006-unipotentpoles}*{Proposition 5.2}]
    For any $L$-homomorphism $\phi$ for $G$, the group $S_{\phi}$ is reductive.
\end{lemma}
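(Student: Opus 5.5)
The plan is to reduce to the classical fact that the centralizer of a reductive subgroup in a reductive group is reductive. The subtlety is that $\phi(W_F)$ is not a closed algebraic subgroup of $\widehat{G}$, so we first replace it by a suitable reductive algebraic group.

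\emph{Step 1: reduce to a finite extension.} Let $E/F$ be a finite Galois extension such that $W_E$ acts trivially on $\widehat{G}$. Enlarge $E$ so that $\phi|_{I_E}$ is trivial on the $\widehat{G}$-component. This is possible because $\phi(I_F)$ is finite: $\phi|_{I_F}$ is continuous into a complex Lie group and $I_F$ is profinite, so the image is a profinite subgroup of a Lie group, hence finite. Then
\[
S_\phi=\Cent_{\widehat{G}}(\phi(L_F))=\Cent_{\widehat{G}}(\phi(W_E)\cdot\phi(\SL_2(\C)))\cap \Cent_{\widehat{G}}(\phi(W_F)).
\]
We also have $W_E\subset W_F$ with finite index and $W_E/I_E\cong\Z$. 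Write $\phi(w)=(\phi_0(w),w)$. The group $\Cent_{\widehat G}(\phi(L_F))$ is then the fixed points, in $C:=\Cent_{\widehat{G}}(\phi_0(\mathrm{Frob}_E))\cap\Cent_{\widehat G}(\phi(\SL_2(\C)))$, of the finite group $W_F/W_E$ acting by conjugation via $\phi$. This action preserves $C$, because $W_E$ is normal in $W_F$ and $W_E$ acts trivially on $\widehat{G}$.

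\emph{Step 2: centralizer of a semisimple element and of $\SL_2$.} By semisimplicity, $s=\phi_0(\mathrm{Frob}_E)$ is a semisimple element of $\widehat{G}$. Let $H$ be the Zariski closure of the group generated by $s$ and $\phi(\SL_2(\C))$. The image $\phi(\SL_2(\C))$ is reductive, and $s$ commutes with it (since $L_F=W_F\times\SL_2(\C)$). So $H$ is the product of a diagonalizable group with the image of $\SL_2$, and hence a reductive algebraic subgroup. By the theorem of Richardson (centralizers of linearly reductive subgroups of reductive groups in characteristic $0$ are reductive, e.g. via Matsushima's criterion that $\widehat G/H$ is affine), $C=\Cent_{\widehat G}(H)$ is reductive.

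\emph{Step 3: finite group fixed points.} Now $S_\phi=C^{\Gamma}$, where $\Gamma=W_F/W_E$ acts on $C$ by algebraic automorphisms. More precisely, each $w$ acts by $\mathrm{Ad}(\phi_0(w))\circ w$, which is an algebraic automorphism of $\widehat{G}$ preserving $C$. Adjoin the finite group to form $C\rtimes\Gamma$. This group is reductive, since its identity component is $C^\circ$. The group $\Gamma$ is linearly reductive in characteristic $0$. Applying Richardson's theorem again to the image of $\Gamma$ in $C\rtimes \Gamma$ shows that the centralizer $(C\rtimes\Gamma)\cap\Cent(\Gamma)\supseteq C^\Gamma$ is reductive, with $C^\Gamma$ of finite index in it. Hence $S_\phi=C^\Gamma$ is reductive.

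The main obstacle is Step 1: the action of $W_F$ on $\widehat G$ and $\phi(I_F)$ must be controlled carefully enough that $S_\phi$ is cut out by finitely many algebraic conditions. Here the semisimplicity definition via $\widehat G\rtimes \Gal(E/F)$ from the preceding remark is exactly what guarantees that $\phi_0(\mathrm{Frob}_E)$ is semisimple, since its image in that group is semisimple. This step also ensures that reductivity, understood in the possibly disconnected sense of a reductive identity component, is what is proved.
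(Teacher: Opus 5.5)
Your proof is correct. The paper gives no argument of its own for this lemma; it imports the statement from Heiermann (Proposition~5.2), so your write-up is a self-contained substitute rather than a variant of something in the text.

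Your route is the standard one. You pass to a finite Galois $E/F$ over which both the action on $\widehat G$ and $\phi_0|_{I_E}$ are trivial. You then identify $S_\phi$ with the fixed points of the finite group $W_F/W_E$ on $C=\Cent_{\widehat G}(H)$, where $H=\overline{\langle s\rangle}\cdot\phi(\SL_2(\C))$. Finally you apply twice the characteristic-$0$ fact that centralizers (or fixed points) of linearly reductive groups in reductive groups are reductive. Your computation that the centralizer of $\Gamma$ in $C\rtimes\Gamma$ is $C^\Gamma\times Z(\Gamma)$, so that $C^\Gamma$ has finite index in it, is right.

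The citation buys brevity. Your version buys transparency: it shows exactly where continuity enters (finiteness of $\phi_0(I_E)$), where semisimplicity enters (of $s=\phi_0(\mathrm{Frob}_E)$), and where algebraicity on $\SL_2(\C)$ enters. It also makes explicit that ``reductive'' means reductive identity component, which is the form used later (e.g.\ for $Z(S_\phi)^\circ$).

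Two small repairs are needed.
\begin{itemize}
\item $\phi(I_F)$ itself is not finite, since it projects onto $I_F$. What is finite is $\phi_0(I_E)$. This is a homomorphism because $W_E$ acts trivially on $\widehat G$, and it is a continuous homomorphic image of a profinite group in a complex Lie group. You then replace $E$ by a larger finite Galois extension of $F$ whose inertia group lies in its kernel.
\item Matsushima's criterion has to be applied to $\widehat G/C$, not to $\widehat G/H$; affineness of $\widehat G/H$ only recovers the reductivity of $H$. One way is via the closed diagonal-conjugation orbit of a tuple topologically generating $H$.
\end{itemize}
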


\begin{lemma}\label{lemma:discrete-parameter}
    Let $[\phi]_G$ be an $L$-parameter for $G$.  
    Let $M$ be a standard Levi subgroup of $G$ such that $\phi'(L_F) \subset {}^LM$ for an $L$-homomorphism $\phi' \in [\phi]_G$.
    Assume that $M$ is minimal in the set of standard Levi subgroups which satisfy this condition.
    We fix an $L$-homomorphism $\phi' \in [\phi]_G$ satisfying the condition above.
    Then, we have 
    \begin{align*}
        \Cent_{{}^L G}(Z(S_{\phi'})^{\circ}) = {}^L M.
    \end{align*} 
\end{lemma}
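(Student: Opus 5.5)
The plan is to compare both sides of the identity with the torus
\[
A_{\widehat M} := \bigl(Z(\widehat{M})^{\Gamma_F}\bigr)^{\circ}.
\]
Two facts about this torus drive the argument. First, $\Cent_{{}^LG}(A_{\widehat M}) = {}^LM$; this is the standard description of standard Levi subgroups of ${}^LG$, see \cite{Borel1979-AutomorphicLfunctions}*{I.3}. Second, $A_{\widehat M}$ is contained in $S_{\phi'}$, because $\phi'(L_F) \subset {}^LM$ and $Z(\widehat M)^{\Gamma_F}$ commutes with ${}^LM$.

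The first step is to show that $A_{\widehat M}$ is a maximal torus of $S_{\phi'}^{\circ}$, which is reductive by the lemma of Heiermann recalled above. Let $S$ be a torus of $S_{\phi'}^{\circ}$ containing $A_{\widehat M}$.
\begin{itemize}
\item Since $S$ centralizes $A_{\widehat M}$, we get $\Cent_{{}^LG}(S) \subset {}^LM$.
\item $\Cent_{{}^LG}(S)$ is a Levi subgroup of ${}^LG$ in Borel's sense, and it contains $\phi'(L_F)$.
\item After conjugating inside ${}^LM$, this Levi is ${}^LM'$ for a standard $M' \subset M$, with $\phi'$ replaced by an $\widehat M$-conjugate of itself.
\item Minimality of $M$ then forces $M' = M$, hence $S \subset (Z(\widehat M)^{\Gamma_F})^{\circ}$, so $S = A_{\widehat M}$.
\end{itemize}

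The second step is to compare $Z(S_{\phi'})^{\circ}$ with $A_{\widehat M}$. The connected centre of a connected reductive group lies in every maximal torus, so $Z(S_{\phi'})^{\circ} \subset A_{\widehat M}$. Taking centralizers reverses this inclusion and gives ${}^LM \subset \Cent_{{}^LG}(Z(S_{\phi'})^{\circ})$. For the reverse inclusion it suffices to prove the equality of tori
\[
Z(S_{\phi'})^{\circ} = A_{\widehat M},
\]
after which the first fact above gives the statement.

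\textbf{Main obstacle.} The torus equality is where I expect all the difficulty to lie. It holds precisely when $A_{\widehat M}$ is central in $S_{\phi'}^{\circ}$, that is, when $S_{\phi'}^{\circ}$ is itself a torus. Argued with the data above alone, this can fail. For $G = \GL_2$ and $\phi' = \chi \oplus \chi$, we have $M = T$ and $S_{\phi'} = \GL_2(\C)$, so $Z(S_{\phi'})^{\circ}$ is the centre and its centralizer is all of ${}^LG$, not ${}^LT$.

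What I would try first is to locate the extra hypothesis implicit in the paper's use of the lemma. The natural candidate is that $\phi'$ is a discrete parameter of $M$ whose $W(G,M)$-conjugates are pairwise inequivalent, so that the root subgroups of $\widehat G$ relative to $A_{\widehat M}$ contribute nothing to $S_{\phi'}$. I would then verify $\Lie(S_{\phi'}) = \Lie(A_{\widehat M})$ by decomposing $\Lie(\widehat G)$ under $A_{\widehat M}$:
\begin{itemize}
\item The zero weight space is $\Lie(\widehat M)$. Its $\phi'$-invariants equal $\Lie(A_{\widehat M})$, by discreteness of $\phi'$ for $M$.
\item Each nonzero weight space $\widehat{\mathfrak n}_\alpha$ must have no $\phi'$-invariants. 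This uses the genericity condition, for instance via the nonvanishing of the relevant $L$-factors at $s=0$.
\end{itemize}
If the lemma is meant without such a hypothesis, I would instead state it with $A_{\widehat M}$ in place of $Z(S_{\phi'})^{\circ}$, which is what the first step proves.
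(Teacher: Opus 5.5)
Your diagnosis is right, and the error it exposes is in the paper, not in your argument. The statement is false as written: in your $\GL_2$ example $M=T$ is minimal, $S_{\phi'}=\GL_2(\C)$, and $\Cent_{{}^LG}(Z(S_{\phi'})^{\circ})={}^LG\neq{}^LT$.

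The paper's proof breaks at its first step. There it asserts that $Z(\widehat M)^{\Gamma_F}\subset S_{\phi'}$ is \emph{central} in $S_{\phi'}$. From this it deduces $\Cent_{{}^LG}(Z(S_{\phi'})^{\circ})\subset{}^LM$ and concludes via Borel's Lemma 3.5 and minimality. In your example $Z(\widehat T)^{\Gamma_F}=\widehat T$ is not central in $\GL_2(\C)$. So the paper argues for exactly the inclusion that fails. Your second step proves the opposite inclusion ${}^LM\subset\Cent_{{}^LG}(Z(S_{\phi'})^{\circ})$ unconditionally.

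Your first step, that $A_{\widehat M}$ is a maximal torus of $S^{\circ}_{\phi'}$, is correct. It is the right replacement, and it is all the paper actually needs. In \cref{lemma:well-definedness}, the step $\Ad(g)({}^LM_1)={}^LM_2$ fails for arbitrary $g$ (take any $g\in\GL_2(\C)$ in your example). Your first step uses only discreteness of $\phi_i$ for $M_i$, and it shows that $\Ad(g)(A_{\widehat{M_1}})$ and $A_{\widehat{M_2}}$ are both maximal tori of $S^{\circ}_{\iota^G_{M_2}\circ\phi_2}$. Replace $g$ by $sg$, with $s$ in this group conjugating the first torus to the second. This gives $\Ad(sg)({}^LM_1)={}^LM_2$ while keeping $\Ad(sg)\circ\iota^G_{M_1}\circ\phi_1=\iota^G_{M_2}\circ\phi_2$, and the rest of that proof goes through.

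Two corrections to your discussion of the main obstacle.

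First, $Z(S_{\phi'})^{\circ}=A_{\widehat M}$ holds iff $A_{\widehat M}$ is central in all of $S_{\phi'}$. This is strictly stronger than $S^{\circ}_{\phi'}$ being a torus. Take $G=\SL_2$, $\chi$ a nontrivial quadratic character, and $\phi'$ given by the image of $\diag(\chi,1)$ in $\PGL_2(\C)$. Then $S_{\phi'}=\Norm_{\PGL_2(\C)}(\widehat T)$, so $S^{\circ}_{\phi'}=\widehat T$ but $Z(S_{\phi'})^{\circ}=1$. Your Lie algebra computation only sees $S^{\circ}_{\phi'}$, so it is not enough on its own.

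Second, no $L$-factors are needed. (Also, $L$-factors never vanish; the condition you mean is holomorphy of $L(s,\phi',\widehat{\mathfrak n}_\alpha)$ at $s=0$.) It suffices to assume $S_{\phi'}\cap\Norm_{\widehat G}(A_{\widehat M})\subset\widehat M$, which follows from your regularity hypothesis. The argument runs as follows.
\begin{enumerate}
\item Suppose $\widehat{\mathfrak n}_\alpha^{\phi'}\neq 0$. Then $\alpha$ is a root of $S^{\circ}_{\phi'}$ relative to its maximal torus $A_{\widehat M}$.
\item A representative in $S^{\circ}_{\phi'}$ of the reflection $s_\alpha\in W(S^{\circ}_{\phi'},A_{\widehat M})$ normalizes $A_{\widehat M}$ but acts nontrivially on it. So it is not in $\widehat M$, contradicting the assumption. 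Hence $S^{\circ}_{\phi'}=A_{\widehat M}$.
\item Then all of $S_{\phi'}$ normalizes $A_{\widehat M}$, hence lies in $\widehat M$ by the assumption, hence centralizes $A_{\widehat M}\subset Z(\widehat M)$. This gives the torus equality.
\end{enumerate}
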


\begin{proof}
    Let $\phi' \colon L_F \to {}^L G$ be an $L$-homomorphism for $G$.
    If we have $\phi'(L_F) \subset {}^LM$ for some standard Levi subgroup $M$ of $G$, then we have 
    \begin{align*}
         Z(\widehat{M})^{\Gamma_F} \subset \Cent_{\widehat{G}}(\phi'(L_F))
    \end{align*} 
    and this group is central in the right-hand side.
    Thus we have 
    \begin{align*}
          Z(\widehat{M})^{\Gamma_F} \subset Z(\Cent_{\widehat{G}}(\phi'(L_F))) = Z(S_{\phi'}).
    \end{align*} 
    As we have $\Cent_{{}^L G}(\{Z(\widehat{M})^{\Gamma_F}\}^{\circ}) = {}^L M$, we obtain 
    \begin{align*}
         \phi'(L_F) \subset \Cent_{{}^L G}(Z(S_{\phi'})^{\circ}) \subset {}^L M.
    \end{align*}
    The group in the middle is conjugate to a group ${}^L M'$ for a standard Levi subgroup $M'$ of $M$ by \cite{Borel1979-AutomorphicLfunctions}*{Lemma 3.5} in ${}^L M$. 
    If we assume the minimality of $M$, then we have $M = M'$.
    This completes the proof.
\end{proof}

\begin{definition}
    We say that an $L$-parameter $[\phi]_G$ is discrete if, for any representative $\phi \in [\phi]_G$, there are no proper standard Levi subgroups $M$ of $G$ such that $\phi(L_F) \subset {}^L M$.
    By the proof of \cite{Borel1979-AutomorphicLfunctions}*{Proposition 3.6}, this condition is equivalent to the condition 
    \begin{align*}
         S_{\phi}/Z(\widehat{G})^{\Gamma_F}
    \end{align*}
    is finite for any representative $\phi$ of $[\phi]_G$.
    We denote by $\Phi_{\disc}(G)$ the set of equivalence classes of discrete tempered $L$-parameters for $G$.
\end{definition}

We have the following local Langlands correspondence for unramified characters.

\begin{proposition}[\cite{Haines2014-StableBernstein}*{p.128 line 3}]\label{proposition:local-Langlands-for-unramified-characters}
    We have functorial isomorphisms
    \begin{align*}
        \cX(G) \xrightarrow{\sim} \{ (Z(\widehat{G})^{I_F})_{\mathrm{Frob}_F} \}^{\circ}
    \end{align*}
    in $G$, obtained from the Kottwitz homomorphisms
    \begin{align*}
       \kappa_G \colon G(F) \to (X^*(Z(\widehat{G}))_{I_F})^{\mathrm{Frob}_F}.
    \end{align*}
    The right-hand side is a group of cocycles $H^1(W_F/I_F, Z(\widehat{G})^{I_F})$, which can be seen as $L$-homomorphisms $W_F \to Z(\widehat{G})^{I_F}$.
    For any element $\chi \in \cX(G)$, we write the corresponding $L$-homomorphism as the same symbol $\chi$.

    In this correspondence, unitary unramified characters correspond to bounded cocycles.
\end{proposition}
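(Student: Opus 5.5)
Since the statement is quoted from Haines, the plan is to reconstruct the argument from the Kottwitz homomorphism and Pontryagin duality. There are four steps: identify $\cX(G)$ with a dual group, dualize, identify the connected component, and track unitarity.

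\emph{Step 1: $\cX(G)$ as a character group.} The Kottwitz homomorphism $\kappa_G \colon G(F) \to (X^*(Z(\widehat{G}))_{I_F})^{\mathrm{Frob}_F}$ is surjective, and its kernel is $G(F)^1$, the intersection of the parahoric-type kernels. The unramified characters are trivial on $G^1$, and $G^1 \supseteq \Ker \kappa_G$. I will show that $G/G^1$ is identified with the quotient of $(X^*(Z(\widehat{G}))_{I_F})^{\mathrm{Frob}_F}$ by its torsion subgroup. To do this, compose $\kappa_G$ with the map to $\mathrm{Hom}(X^*(G),\Z)$ given by $g \mapsto (\chi \mapsto \ord_F \chi(g))$. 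This map has the same kernel modulo torsion, which follows by comparing ranks using the canonical isomorphism $X^*(G)\otimes\R \cong \fa_G^*$. Hence
\[
\cX(G)=\mathrm{Hom}\bigl(\text{free part of } (X^*(Z(\widehat{G}))_{I_F})^{\mathrm{Frob}_F}, \C^\times\bigr).
\]

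\emph{Step 2: dualizing.} For a finitely generated abelian group $A$, $\mathrm{Hom}(A,\C^\times)$ is the diagonalizable group with character group $A$. Apply this to $A = X^*(Z(\widehat{G}))_{I_F}$: the group $\mathrm{Hom}(A,\C^\times)$ is $Z(\widehat{G})^{I_F}$. Taking $\mathrm{Frob}_F$-invariants of $A$ dualizes to taking $\mathrm{Frob}_F$-coinvariants of $Z(\widehat{G})^{I_F}$. Discarding the torsion of $A^{\mathrm{Frob}}$ corresponds to passing to the identity component. This yields
\[
\cX(G) \cong \{(Z(\widehat{G})^{I_F})_{\mathrm{Frob}_F}\}^{\circ}.
\]
Next, $H^1(W_F/I_F, Z(\widehat{G})^{I_F}) = (Z(\widehat{G})^{I_F})_{\mathrm{Frob}_F}$, since $W_F/I_F \cong \Z$ is generated by $\mathrm{Frob}_F$ and a cocycle is determined by its value on $\mathrm{Frob}_F$. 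Inflating along $W_F \to W_F/I_F$ turns such a cocycle into an $L$-homomorphism $W_F \to Z(\widehat{G})^{I_F}\rtimes W_F$.

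\emph{Step 3: functoriality.} The Kottwitz map is functorial for normal homomorphisms, meaning morphisms $G \to G'$ with normal image, which induce $Z(\widehat{G'}) \to Z(\widehat{G})$. Each step above is natural, so the isomorphism is functorial in $G$.

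\emph{Step 4: unitarity.} A character is unitary exactly when its values on the lattice $G/G^1$ lie in $\bS^1$. Under duality this corresponds to the compact part of the complex torus $\{(Z(\widehat{G})^{I_F})_{\mathrm{Frob}_F}\}^\circ$. This is precisely the condition that the cocycle has bounded image, since $\mathrm{Frob}_F^n \mapsto z^n$ (up to a norm-like product) stays bounded iff $z$ lies in the maximal compact subtorus.

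The main obstacle is the torsion and connected-component bookkeeping in Step 2, where invariants dualize to coinvariants and torsion is removed. I will handle it by first treating the split-center case to fix normalizations and then passing to the $I_F$- and $\mathrm{Frob}_F$-twisted case via exactness of $\mathrm{Hom}(-,\C^\times)$, which holds because $\C^\times$ is divisible. The rest is standard and follows Haines.
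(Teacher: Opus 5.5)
Your argument is correct and is essentially the standard proof behind the result of Haines that the paper simply cites. In that proof, $\kappa_G$ identifies $G/G^1$ with $(X^*(Z(\widehat{G}))_{I_F})^{\mathrm{Frob}_F}$ modulo torsion; the exact functor $\Hom(-,\C^\times)$ then converts coinvariants into invariants, invariants into coinvariants, and killing torsion into passing to the identity component; finally $H^1(W_F/I_F,M)=M_{\mathrm{Frob}_F}$.

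There are two slips to fix. First, $\Ker\kappa_G$ is $G(F)_1$, which is in general strictly smaller than $G^1=\kappa_G^{-1}(\text{torsion})$, as your later sentences implicitly use. Second, in Step 4 boundedness constrains the class of $z$ in the coinvariants (equivalently its twisted norm $z\,\mathrm{Frob}_F(z)\cdots$), not the representative $z$ itself.
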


Later, we consider split reductive groups.
To make things explicit, we give explicit description of the isomorphism above.
Recall that there is a unique isomorphism
\begin{align*}
    X^*(\bG_{\overline{F}}) \xrightarrow{\sim} X_*(Z(\widehat{G})^{\circ})
\end{align*}
which is compatible with $X^*(\bT_{\overline{F}}) \xrightarrow{\sim} X_*(\widehat{T})$, by \cite{SilbergerZink2018-Classification-L-parameter}*{Proposition 4.1}.

\begin{lemma}
    Assume that $G$ is split in \cref{proposition:local-Langlands-for-unramified-characters}.
    Then, the isomorphism in \cref{proposition:local-Langlands-for-unramified-characters} becomes a map
    \begin{align*}
        \cX(G) \xrightarrow{\sim} Z(\widehat{G})^{\circ}
    \end{align*}
    This isomorphism sends any unramified character $\abs{\chi}_F^s$ to the element $\chi(q^{-s}) \in Z(\widehat{G})^{\circ}$ via the isomorphism
    \begin{align*}
        X^*(G) \xrightarrow{\sim} X_*(Z(\widehat{G})^{\circ}).
    \end{align*}
\end{lemma}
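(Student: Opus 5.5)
We reduce to the case of a split torus and then unwind the Kottwitz homomorphism explicitly. For split $G$, the Galois and inertia actions on $\widehat{G}$ are trivial. So $Z(\widehat{G})^{I_F} = Z(\widehat{G})$, and the coinvariants of $\mathrm{Frob}_F$ acting on it are $Z(\widehat{G})$ itself. Taking identity components, the target of \cref{proposition:local-Langlands-for-unramified-characters} is $Z(\widehat{G})^{\circ}$. An $L$-homomorphism $W_F/I_F \to Z(\widehat{G})$ is determined by the image of $\mathrm{Frob}_F$, which identifies $H^1(W_F/I_F, Z(\widehat{G}))$ with $Z(\widehat{G})$. This gives the first assertion.

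For the explicit formula, we use that both sides are functorial in $G$. We also use the identification $X^*(\bG_{\overline{F}}) = X^*(G) \xrightarrow{\sim} X_*(Z(\widehat{G})^{\circ})$ of \cite{SilbergerZink2018-Classification-L-parameter}*{Proposition 4.1}, which is compatible with the torus case. Let $D = G/G_{\der}$ be the cocenter, a split torus. We have $X^*(D) = X^*(G)$, and every unramified character of $G$ factors through $G \to D(F)$. Dually, $\widehat{D} = Z(\widehat{G})^{\circ}$, and the map ${}^LD \to {}^LG$ induced by $G \to D$ is the inclusion $Z(\widehat{G})^{\circ} \hookrightarrow \widehat{G}$. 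By functoriality of the Kottwitz homomorphism, the isomorphism for $G$ is the pullback of the one for $D$. So it suffices to treat $G = T$, a split torus, and by multiplicativity even $T = \GL_1$.

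For $T = \GL_1$, the Kottwitz map $\kappa \colon F^\times \to X^*(\widehat{T}) = \Z$ is $x \mapsto -\ord(x)$ or $\ord(x)$, depending on the normalization. An unramified character $\lambda$ corresponds to the element $t \in \widehat{T} = \C^\times$ with $\lambda(x) = t^{\pm\ord(x)}$. This is the local class field theory normalization, in which the uniformizer $\varpi_F$ corresponds to the geometric Frobenius $\mathrm{Frob}_F$. Take $\chi = \mathrm{id}$ and the character $\abs{x}_F^s = q^{-s\ord(x)}$. Then $\abs{\varpi_F}^s = q^{-s}$, and this corresponds to $\mathrm{Frob}_F \mapsto q^{-s} = \chi(q^{-s})$, viewing $\chi$ as the identity cocharacter of $\widehat{T}$. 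For a general character $\chi \in X^*(T)$, write $\chi = \sum a_i e_i$. Multiplicativity of both sides then gives $\abs{\chi}_F^s \mapsto \prod e_i^\vee(q^{-s})^{a_i} = \chi(q^{-s})$.

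The main obstacle is bookkeeping of sign conventions. We must check that the normalization of $\kappa_G$ in \cite{Haines2014-StableBernstein}, with geometric Frobenius, matches $\abs{\varpi_F}_F = q^{-1}$ so that no inverse appears. This would be verified directly on $\GL_1$ using the explicit form of $\kappa_{\GL_1}$. We must also check that the identification $X^*(G) \cong X_*(Z(\widehat{G})^{\circ})$ is compatible with the pullback along $G \to D$, which follows from its compatibility with $X^*(T) \cong X_*(\widehat{T})$.
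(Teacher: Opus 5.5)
Your argument is correct, but it is organized differently from the paper's.

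\textbf{The paper's route.} It follows the construction of the Kottwitz homomorphism in Kaletha--Prasad step by step:
\begin{enumerate}
\item it checks the formula for split tori (their Proposition 11.1);
\item then for groups with simply connected derived group, where $\kappa_G$ is \emph{defined} by passing to $D = G/G_{\der}$;
\item finally for general $G$, by choosing a split $z$-extension.
\end{enumerate}

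\textbf{Your route.} You go directly from an arbitrary split $G$ to its cocenter $D$. You use the functoriality of the isomorphism for the normal homomorphism $G \to D$, together with $\widehat{D} = Z(\widehat{G})^{\circ}$ and $X^*(D) = X^*(G)$. The key observation is that $\abs{\chi}_F^s$ on $G$ is literally the pullback of $\abs{\chi}_F^s$ on $D(F)$. For this you do not need $\cX(D) \to \cX(G)$ to be bijective, although it is.

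\textbf{Comparison.} Your route is shorter and avoids $z$-extensions explicitly. The price is that it relies on functoriality of $\kappa_G$ for groups whose derived group is not simply connected. In the standard treatments (Kottwitz, Kaletha--Prasad) that functoriality is itself obtained through $z$-extensions. So the paper's third step is absorbed into the functoriality you cite rather than genuinely eliminated. The paper's version makes that dependence explicit and stays closer to the definition of $\kappa_G$.

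\textbf{The sign.} The sign issue you flag for $\GL_1$ is fixed by the paper's convention: $\mathrm{Frob}_F$ is geometric, and the accompanying example sends $\abs{\cdot}_F$ to $q^{-1}$. The paper's proof does no more than yours on this point; it simply defers to Kaletha--Prasad for the torus case.
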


\begin{proof}
    If $G$ is a torus, this follows from \cite{KalethaPrasad-Bruhat-Tits}*{Proposition 11.1}.
    If the derived group of $G$ is simply connected, the lemma follows from the construction of the Kottwitz isomorphisms \cite{KalethaPrasad-Bruhat-Tits}*{Construction 11.3.1}.
    The general case follows from the previous cases by taking the split $z$-extension of $G$, as described in \cite{KalethaPrasad-Bruhat-Tits}*{Construction 11.4.1}.
\end{proof}

\begin{example}
    If we assume that $G = \bG_m$ and $\chi$ is the identity map of $\bG_m$ in the previous lemma, then the character 
    \begin{align*}
        F^{\times} \to \C^{\times} \colon a \mapsto \abs{a}_F
    \end{align*}
    is sent to the element $q^{-1} = \abs{\varpi}_F \in \C^{\times}$.
    This corresponds to the unramified character 
    \begin{align*}
        W_F \to \C^{\times} \colon \mathrm{Frob}_F \mapsto \abs{\varpi}_F,
    \end{align*}
    where $\mathrm{Frob}_F$ is a geometric Frobenius element.
\end{example}

In this paper, we use the following weaker form of the local Langlands correspondence to state the Hiraga--Ichino--Ikeda conjecture.

\begin{conjecture}\label{conjecture:local-Langlands-conjecture}
    Let $F$ be a non-archimedean local field of characteristic $0$.
    Let $G$ be a quasi-split reductive group over $F$.
    There exists a surjection
    \begin{align*}
        \cL_M \colon \Pi_{\temp}(M) \to \Phi_{\temp}(M)
    \end{align*}
    for any standard Levi subgroup $M$ of $G$.
    The inverse image $\cL_G^{-1}(\phi)$ of an $L$-parameter $\phi$ is called the $L$-packet $\Pi_{\phi}=\Pi_{\phi}(G)$ associated to $\phi$.
    This map should satisfy the following properties.
    \begin{enumerate}
       \item(Compatibility with parabolic inductions)
       Let $P=MN$ be a standard parabolic subgroup of $G$ with $M$ standard.
       Let $\sigma$ be a tempered irreducible representation of $M$. 
       Let $\pi$ be an irreducible component of the parabolic induction $i^G_P(\sigma)$.
       Then, we have $\iota^G_M \circ \cL_M(\sigma) = \cL_G(\pi)$.
       Moreover, the set $\Pi_{\cL_G(\pi)}$ is the union of irreducible components of $i^G_P(\sigma')$ for $\sigma' \in \Pi_{\cL_M(\sigma)}$.

        \item (Compatibility with twists by unramified characters)
        Let $\chi$ be a unitary unramified character of $M$ and let $\sigma$ be a tempered irreducible representation of $M$.
        Then, we have 
        \begin{align*}
            \cL_M(\sigma \otimes \chi) = \cL_M(\sigma) \otimes \chi.
        \end{align*}
        Here, the $L$-parameter $\cL_M(\sigma) \otimes \chi$ is given as 
        \begin{align*}
            w \rtimes g \in L_F = W_F \times \SL_2(\C) \mapsto \cL_M(\sigma)(w, g) \chi(w).
        \end{align*}
        
        \item (Enhanced $L$-parameters)
        Let $\phi$ be an $L$-homomorphism for $G$.
        For each Whittaker datum $\fw$ of $G$, we have a bijection 
        \begin{align*}
            \iota_{\fw} \colon \Pi_{[\phi]_G}(G) \to \Irr(\pi_0(S_{\phi}/Z(\widehat{G})^{\Gamma_F})). 
        \end{align*}
        Here, $\Irr(\pi_0(S_{\phi}/Z(\widehat{G})^{\Gamma_F}))$ denotes the set of isomorphism classes of irreducible representations of the finite group $\pi_0(S_{\phi}/Z(\widehat{G})^{\Gamma_F})$ over $\C$.
        Also, for each representation $\pi \in \Pi_{[\phi]_G}(G)$, the dimension $\dim_{\C}(\iota_{\fw}(\pi))$ is independent of the Whittaker datum $\fw$.
        We denote this number by $\langle \pi, 1 \rangle$.

        \item (Characterization of discrete series representations)
        A tempered irreducible representation $\sigma$ of $M$ is a  discrete series representation if and only if the $L$-parameter $\cL_M(\sigma)$ is discrete.
        \item (Stability of $L$-packets)
        The character 
        \begin{align*}
            \Theta_{\phi} = \sum_{\pi \in \Pi_{\phi}} \langle \pi, 1 \rangle \, \tr \pi
        \end{align*}
        is a stable invariant distribution on $G$.
        In particular, this implies that the distribution $\Theta_{\phi}$ and the map $\cL_{G}$ are invariant under the action of $G_{\ad}(F)$.
    \end{enumerate}
\end{conjecture}

\begin{remark}
     In this paper, we may write the image $\cL_M(\sigma)$ of a tempered representation $\sigma$ of $M$ as $\phi^M_{\sigma}$.
\end{remark}

\begin{remark}[Direct products and the LLC]\label{remark:direct-product-and-LLC}
    If the groups $G_1$ and $G_2$ satisfy a property in \cref{conjecture:local-Langlands-conjecture}, then their direct product $G_1 \times G_2$ also satisfies this property.
\end{remark}

We will define the data assuming \cref{conjecture:local-Langlands-conjecture}, which will be used to state the Hiraga--Ichino--Ikeda conjecture. 

% Given an $L$-parameter $\phi \colon L_F \to {}^L G$ and a finite-dimensional complex representation $r \colon {}^L G \to \GL_{\C}(V)$, we have the Artin local factors
% \begin{align*}
%     \gamma(s, \phi, r, \psi), \quad L(s, \phi, r), \quad \epsilon(s, \phi, r, \psi).
% \end{align*}
% For example, see \cite{Beuzart-Plessis2021-Plancherel-GLnE-GLnF}*{2.12} for details.
% For $G = \GL_n$, we write the standard $\gamma$-factor (resp. $\epsilon$-factor) of an $L$-parameter $\phi$ simply as $\gamma(s, \phi, \psi)$ (resp. $\epsilon(s, \phi, \psi)$).

\begin{definition}[Invariants of an  $L$-parameter]\label{definition:invariants-in-formal-degree-conjecture}
    Let $G$ be a quasi-split reductive group over $F$.
    Assume that \cref{conjecture:local-Langlands-conjecture} holds for $G$.
    Let $M$ be a standard Levi subgroup of $G$.
    Let $\Ad_{M}$ denote the adjoint representation ${}^L M \to \Aut_{\C}(\Lie(\widehat{M}))$.
    \begin{enumerate}
        \item 
        We set $M^{\natural} = M/A_M$.
        There is a natural injection 
        \begin{align*}
            {}^L M^{\natural} \hookrightarrow {}^L M
        \end{align*}
        which is the dual map of the projection $M \twoheadrightarrow M^{\natural}$.
        \item 
        Let $\sigma$ be a discrete series representation of $M$.
        Then, we have the number $\langle \sigma, 1 \rangle $.
        We define the group $S^{\natural}_{\sigma}= S^{\natural}_{\phi^M_{\sigma}}$ by 
        \begin{align*}
            S^{\natural}_{\sigma}
            = 
            S^{\natural}_{\phi^M_{\sigma}}
            =
            \Cent_{\widehat{M^{\natural}}}(\phi^M_{\sigma}).
        \end{align*} 
        \item 
        Let $\sigma$ be a discrete series representation of $M$.
        The adjoint $\gamma$-factor for $\sigma$ is defined by 
        \begin{align*}
            \gamma(s, \sigma, \Ad_{M}, \psi) = \gamma(s, \phi^M_{\sigma}, \Ad_{M}, \psi).
        \end{align*}
        Let $\Ad_{G/M}$ denote the representation of ${}^L M$ given by the quotient $\Ad_G \circ \iota^G_M / \Ad_{M}$.
        We set 
        \begin{align*}
            \gamma(s, \sigma, \Ad_{G/M}, \psi) = \gamma(s, \phi^M_{\sigma}, \Ad_{G/M}, \psi).
        \end{align*}
    \end{enumerate}
\end{definition}

\begin{remark}\label{remark:products-of-adjoint-gamma-factors}
    We use the notation from \cref{definition:invariants-in-formal-degree-conjecture}.
    We have $\gamma(s, \sigma, \Ad_{M}, \psi) \gamma(s, \sigma, \Ad_{G/M}, \psi) = \gamma(s, \phi^M_{\sigma}, \Ad_{G} \circ \iota^G_M, \psi)$.
    For any standard parabolic subgroup $P=MN$ with $M$ standard and any irreducible component $\pi$ of $i^G_P(\sigma)$, we have 
    \begin{align*}
        \gamma(s, \phi^M_{\sigma}, \Ad_{G} \circ \iota^G_M, \psi)
        = 
        \gamma(s, \pi, \Ad_G, \psi)
    \end{align*}
    by compatibility with parabolic inductions in \cref{conjecture:local-Langlands-conjecture}.
\end{remark}

\begin{lemma}\label{lemma:well-definedness}
    Let $M_1, M_2$ be standard Levi subgroups of $G$.
    Let $\phi_1, \phi_2$ be discrete $L$-parameters for $M_1$ and $M_2$.
    Assume that $\iota^G_{M_1} \circ \phi_1$ and $\iota^G_{M_2} \circ \phi_2$ are equivalent.
    Then, we have 
    \begin{align*}
        \gamma(s, \phi_1, \Ad_{G/M_1}, \psi) =  \gamma(s, \phi_2, \Ad_{G/M_2}, \psi).
    \end{align*}
\end{lemma}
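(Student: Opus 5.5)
Since $\gamma(s,\phi_i,\Ad_{G/M_i},\psi)=\gamma(s,\iota^G_{M_i}\circ\phi_i,\Ad_G,\psi)/\gamma(s,\phi_i,\Ad_{M_i},\psi)$ by \cref{remark:products-of-adjoint-gamma-factors}, and the numerators agree because $\iota^G_{M_1}\circ\phi_1$ and $\iota^G_{M_2}\circ\phi_2$ are $\widehat{G}$-conjugate, it suffices to show $\gamma(s,\phi_1,\Ad_{M_1},\psi)=\gamma(s,\phi_2,\Ad_{M_2},\psi)$. I will prove this by showing that $M_1$ and $M_2$ are conjugate by a Weyl element $w\in W(G,(M_1,M_2))$, and that the induced $L$-isomorphism ${}^LM_1\xrightarrow{\sim}{}^LM_2$ carries $\phi_1$ to a parameter equivalent to $\phi_2$. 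The adjoint representations then correspond, and hence so do the $\gamma$-factors.

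First, choose $g\in\widehat{G}$ with $g(\iota^G_{M_1}\circ\phi_1)g^{-1}=\iota^G_{M_2}\circ\phi_2$, and write $\phi$ for $\iota^G_{M_2}\circ\phi_2$. Since $\phi_i$ is discrete for $M_i$, the Levi $M_i$ is minimal among standard Levi subgroups whose $L$-group contains the image of a representative. Indeed, if $\phi_i$ factored through a smaller standard Levi subgroup up to $\widehat{G}$-conjugacy, then by the argument of \cite{Borel1979-AutomorphicLfunctions}*{Lemma 3.5} applied inside ${}^LM_i$ it would factor through a proper Levi of $M_i$ up to $\widehat{M_i}$-conjugacy. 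I expect this to need some care, so I would argue directly: the center of $S_{\phi_i}^{M_i}$ satisfies $(Z(S_{\phi_i}^{M_i}))^\circ=(Z(\widehat{M_i})^{\Gamma_F})^\circ$ by discreteness. From \cref{lemma:discrete-parameter}, or its proof, we get $\Cent_{{}^LG}(Z(S_\phi)^\circ)={}^LM_2$ and $\Cent_{{}^LG}(Z(S_{g\phi_1g^{-1}})^\circ)=g\,{}^LM_1g^{-1}$. Since $S_\phi=gS_{\iota\phi_1}g^{-1}$, the two centralizers are conjugate, so $g\,{}^LM_1g^{-1}={}^LM_2$.

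Next, I will adjust $g$. Both ${}^LM_1$ and ${}^LM_2$ are standard Levi subgroups of ${}^LG$ with respect to the $\Gamma_F$-splitting, so we may modify $g$ by an element of $\widehat{M_2}$. The result normalizes $\widehat{T}$ and maps $\widehat{B}\cap\widehat{M_1}$ to $\widehat{B}\cap\widehat{M_2}$. It therefore defines an element of the relative Weyl set, which for quasi-split $G$ is identified with $W(G,(M_1,M_2))$ via the dual Weyl group and $\Gamma_F$-invariants. This is the standard fact that standard Levi subgroups are $G$-conjugate if and only if their $L$-groups are $\widehat{G}$-conjugate. The resulting $\mathrm{Int}(g)\colon{}^LM_1\to{}^LM_2$ sends $\phi_1$ to $\phi_2$ up to $\widehat{M_2}$-conjugacy.

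Finally, $\mathrm{Int}(g)$ restricts to an isomorphism $\widehat{M_1}\to\widehat{M_2}$ intertwining $\Ad_{M_1}$ with $\Ad_{M_2}\circ\mathrm{Int}(g)$. This gives $\Ad_{M_1}\circ\phi_1\cong\Ad_{M_2}\circ\phi_2$ as representations of $L_F$, and hence equality of the $\gamma$-factors. Alternatively, the same argument applies directly to $\Ad_{G/M_i}$, since $\mathrm{Int}(g)$ preserves $\Lie\widehat{G}$ and maps $\Lie\widehat{M_1}$ onto $\Lie\widehat{M_2}$; this avoids the division step. The main obstacle is the identification $g\,{}^LM_1g^{-1}={}^LM_2$, which relies on the discreteness of both parameters via \cref{lemma:discrete-parameter}. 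The subsequent conjugation fix-up is routine.
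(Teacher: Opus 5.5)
Your argument is, at its core, the paper's. The paper picks $g$ with $\Ad(g)\circ\iota^G_{M_1}\circ\phi_1=\iota^G_{M_2}\circ\phi_2$, invokes \cref{lemma:discrete-parameter} to get $\Ad(g)({}^LM_1)={}^LM_2$, and concludes directly that $\Ad_{G/M_1}\circ\phi_1\cong\Ad_{G/M_2}\circ\phi_2$. That is the ``alternative'' in your last paragraph. The division via \cref{remark:products-of-adjoint-gamma-factors} and the Weyl-set fix-up are unnecessary.

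The step you yourself single out as the main one, $g\,{}^LM_1g^{-1}={}^LM_2$ for the chosen $g$, is a genuine gap, and the paper's proof shares it. The reason is that \cref{lemma:discrete-parameter} is false as stated.

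\emph{Counterexample.} Take $G=\GL_2$, $M_1=M_2=T$, and $\phi_1=\phi_2$ the trivial parameter, which is discrete for $T$. Then $S_{\iota^G_T\circ\phi_1}=\GL_2(\C)$, so $Z(S)^\circ$ is the scalar torus. Its centralizer in ${}^LG$ is all of ${}^LG$, not ${}^LT$. Moreover, every $g\in\GL_2(\C)$ conjugates $\iota^G_T\circ\phi_1$ to itself, while a nontrivial unipotent $g$ does not normalize $\widehat T$.

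The faulty step in the proof of that lemma is the assertion that $Z(\widehat M)^{\Gamma_F}$ is central in $S_{\phi'}$. Your remark about $Z(S^{M_i}_{\phi_i})^\circ$ concerns centralizers in $\widehat{M_i}$, not in $\widehat G$, so it does not bridge this.

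\emph{Repair.} The correct input is that discreteness makes $A_i=(Z(\widehat{M_i})^{\Gamma_F})^\circ$ a maximal torus of $S^\circ_{\iota^G_{M_i}\circ\phi_i}$. Indeed, a torus $A'\supset A_i$ there commutes with $A_i$, so it lies in $\Cent_{\widehat G}(A_i)=\widehat{M_i}$. Hence it lies in $S^{M_i}_{\phi_i}$, whose identity component is $A_i$.

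Now set $\phi=\iota^G_{M_2}\circ\phi_2$. Both $gA_1g^{-1}$ and $A_2$ are maximal tori of $S_\phi^\circ$, so some $s\in S_\phi^\circ$ satisfies $sgA_1g^{-1}s^{-1}=A_2$. Replacing $g$ by $sg$ keeps $\Ad(sg)\circ\iota^G_{M_1}\circ\phi_1=\phi$. It now also gives $sg\,{}^LM_1(sg)^{-1}=\Cent_{{}^LG}(A_2)={}^LM_2$. After this adjustment, your final paragraph goes through in either form.
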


\begin{proof}
    Let $g \in \widehat{G}$ be an element such that 
    \begin{align*}
        \Ad(g) \circ \iota^G_{M_1} \circ \phi_1 = \iota^G_{M_2} \circ \phi_2.
    \end{align*}
    By \cref{lemma:discrete-parameter}, we have 
    \begin{align*}
        \Ad(g)({}^L M_1) = {}^L M_2.
    \end{align*}
    Thus, the representation $\Ad_{G/M_1} \circ \iota^G_{M_1} \circ \phi_{1}$ of $L_F$ is equivalent to the representation $\Ad_{G/M_2} \circ \iota^G_{M_2} \circ \phi_{2}$ of $L_F$.
    This completes the proof.
\end{proof}

\subsection{Local Langlands correspondence for $\GL_n$}

    \begin{theorem}[\cite{HarrisTaylor2001-simple-Shimura}*{Theorem VII.2.20}, \cite{Henniart2000-simpleproof}*{Th\'eor\`eme 4.2}, \cite{Scholze2013-localLanglandsGLn}*{Theorem 1.2}]\label{theorem:local-Langlands-GLn}
        \cref{conjecture:local-Langlands-conjecture} holds for $G = \GL_n$.
        Furthermore, the map $\cL_{\GL_n}$ has the following properties.
        \begin{enumerate}
        \item[(2-bis)] (Compatibility with character twists) 
        Let $\chi \colon F^{\times} \to \C^{\times}$ be a continuous unitary character. 
        Then, we have 
        \begin{align*}
            \cL_{\GL_n}(\pi \otimes (\chi \circ \det)) = \cL_{\GL_n}(\pi) \otimes \chi,
        \end{align*}
        for any $\pi \in \Pi_{\temp}(\GL_n)$.
        Here, $\chi$ on the right-hand side is the $L$-parameter corresponding to the character $W_F \to \GL_1(\C)$, obtained from $\chi$ by the local class field theory.
        \item[(5)] (Compatibility with duality)
        Let $\pi$ be a tempered irreducible representation of $\GL_n(F)$.
        We have $\cL_{\GL_n}(\pi^{\vee}) = \cL_{\GL_n}(\pi)^{\vee}$.
        Here, we set 
        \begin{align*}
            {\cdot}^\vee \colon (g, w) \in {}^L \GL_n \mapsto ({}^t g^{-1}, w) \in {}^L \GL_n
        \end{align*}
        on the right-hand side. 
        \item[(6)] 
        The central character of $\pi \in \Pi_{\temp}(\GL_n)$ corresponds to the character $\det \circ \cL_{\GL_n}(\pi)$ of $L_F$ and hence of $W_F$ via the local class field theory.
        \end{enumerate}
\end{theorem}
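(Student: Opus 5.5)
This theorem is largely a citation of the local Langlands correspondence for $\GL_n$ (Harris--Taylor, Henniart, Scholze). The plan is to deduce each item of \cref{conjecture:local-Langlands-conjecture}, together with (2-bis), (5), (6), from the standard properties of that bijection. In all cases we take $\cL_{\GL_n}$ to be the Harris--Taylor/Henniart bijection between irreducible smooth representations of $\GL_n(F)$ and $n$-dimensional Frobenius-semisimple Weil--Deligne representations. We restrict it to tempered representations.

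\textbf{Levi subgroups and item (3).} The standard Levi subgroups are products $\prod_i \GL_{n_i}$. By \cref{remark:direct-product-and-LLC} it suffices to treat $\GL_n$ itself.

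For item (3), every $L$-parameter $\phi$ of $\GL_n$ is a direct sum $\bigoplus_j m_j \phi_j$ of pairwise inequivalent irreducible pieces. Its centralizer is therefore $S_\phi \cong \prod_j \GL_{m_j}(\C)$, which is connected. Hence $\pi_0(S_\phi/Z(\widehat G)^{\Gamma_F})$ is trivial.

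Item (3) thus reduces to the statement that $L$-packets are singletons. This is injectivity of the correspondence, and we take $\langle\pi,1\rangle=1$.

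\textbf{Surjectivity onto tempered parameters, and items (1), (2), (4).}
\begin{itemize}
\item Surjectivity follows from the known characterization of temperedness: tempered representations correspond exactly to bounded parameters. Via the Bernstein--Zelevinsky and Zelevinsky classification, the discrete series correspond to the irreducible parameters $\rho\otimes S_{a}$ with $\rho$ irreducible and unitary. This gives (4).
\item A tempered $\pi$ is the irreducible induced representation $i^G_P(\sigma_1\otimes\cdots\otimes\sigma_r)$ from discrete series, by Jacquet's irreducibility theorem for unitary induction on $\GL_n$. The correspondence is compatible with this induction by construction: its parameter is $\bigoplus_i \phi_{\sigma_i}$. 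This gives (1).
\item Twisting by a character $\chi\circ\det$ corresponds to $\phi\otimes\chi$, which gives (2) and (2-bis). Here unramified characters of a Levi subgroup $M$ are handled factorwise through $\det$.
\end{itemize}

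\textbf{Items (5), (6), and the main obstacle.} Compatibility with contragredients and with central characters are part of the defining characterization of the local Langlands correspondence for $\GL_n$. They are listed among the properties proved in the cited works, and in Henniart's uniqueness characterization. For cuspidal representations they follow from the preservation of $\epsilon$- and $L$-factors of pairs. The general case then follows by the Zelevinsky and Langlands classification, since taking contragredients and central characters are both compatible with parabolic induction.

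Stability (item (5) of the conjecture) is trivial here. The packets are singletons, so $\Theta_\phi=\tr\pi$. For $\GL_n$, stable conjugacy coincides with conjugacy, so every invariant distribution is stable.

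I expect the only nontrivial bookkeeping to be reconciling the Weil--Deligne formulation with the $L_F=W_F\times\SL_2(\C)$ formulation used here, together with the normalization of $\mathrm{Frob}_F$ as a geometric Frobenius. I would handle this with the standard equivalence of categories between Frobenius-semisimple Weil--Deligne representations and representations of $L_F$ that are semisimple on $W_F$ and algebraic on $\SL_2(\C)$.
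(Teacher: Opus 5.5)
Your proposal is correct and takes the same approach as the paper, which gives no argument beyond citing Harris--Taylor, Henniart, and Scholze. You simply unpack how each required property is read off from those results: singleton packets via connectedness of $S_\phi$, compatibility with induction, twists, contragredients and central characters, and trivial stability for $\GL_n$.
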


\begin{remark}
    It is also proved that the local Langlands correspondence for $\GL_n$ is compatible with twists by algebraic automorphisms of $\GL_n$ over $F$; see \cite{Haines2014-StableBernstein}*{Proposition 4.10}.
\end{remark}

\subsection{Arthur's local Langlands correspondence for $\SO_{2n}$ and $\Sp_{2n}$}

Let $\Sp_{2n}$ (resp. $\SO_{2n}$) be the symplectic group of a $2n$-dimensional symplectic space (resp. the split special orthogonal group of a $2n$-dimensional quadratic space) over $F$. 

\begin{remark}
    The standard Levi subgroups of $\Sp_{2n}$ are of the form 
    \begin{align*}
        \GL_{n_1} \times \cdots \times \GL_{n_r} \times \Sp_{2m}
    \end{align*}
    with $\sum_{i} 2n_i + 2m = 2n$.
    A similar result holds for $\SO_{2n}$.
\end{remark}

\begin{theorem}[\cite{Art13}*{Theorem 1.5.1, Proposition 6.6.1}]\label{theorem:local-Langlands-correspondence-Sp2n}
    \cref{conjecture:local-Langlands-conjecture} holds for $\Sp_{2n}(F)$.
\end{theorem}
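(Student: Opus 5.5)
The statement is a citation of Arthur's endoscopic classification, so the plan is to check that each item of \cref{conjecture:local-Langlands-conjecture} is either contained in \cite{Art13} or follows formally from it, rather than to prove anything from scratch. Arthur's Theorem 1.5.1 gives, for each tempered parameter $\phi$ of $\Sp_{2n}$, a finite packet $\Pi_\phi$ and, after fixing the Whittaker datum $\fw$, an injection $\Pi_\phi \to \Irr(\pi_0(S_\phi/Z(\widehat{G})^{\Gamma_F}))$. This injection is a bijection for quasi-split $\Sp_{2n}$, since the component group is abelian and all characters occur. The packets partition $\Pi_{\temp}(\Sp_{2n})$, which gives the surjection $\cL_{\Sp_{2n}}$.

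I would verify the items in the following order.

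\emph{Levi subgroups.} The standard Levi subgroups are $\GL_{n_1}\times\cdots\times\GL_{n_r}\times\Sp_{2m}$. For these, $\cL_M$ is defined as the product of \cref{theorem:local-Langlands-GLn} and Arthur's map for $\Sp_{2m}$, which is allowed by \cref{remark:direct-product-and-LLC}.

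\emph{Enhanced parameters (3).} The component groups are elementary abelian $2$-groups, so every irreducible representation is a character. Hence $\langle \pi,1\rangle=1$ for every $\pi$, and this is independent of $\fw$.

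\emph{Parabolic induction (1).} Arthur's construction of tempered packets (\cite{Art13}*{Proposition 2.4.3 and §6.6}) defines $\Pi_\phi$, for $\phi$ factoring through a Levi ${}^LM$ with $\phi=\iota^G_M\circ\phi_M$ and $\phi_M$ discrete, as the set of irreducible constituents of $i^G_P(\sigma)$ for $\sigma\in\Pi_{\phi_M}$. This gives both the equality $\iota^G_M\circ\cL_M(\sigma)=\cL_G(\pi)$ and the description of the packet as a union of constituents. Unramified twisting (2) then follows: an unramified character of $M$ lives only on the $\GL_{n_i}$ factors, where the claim is (2-bis) of \cref{theorem:local-Langlands-GLn}.

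\emph{Discrete series (4).} This is Proposition 6.6.1 of \cite{Art13}: $\Pi_\phi$ consists of discrete series exactly when $\phi$ is discrete, i.e. when $S_\phi$ is finite modulo the center.

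\emph{Stability (5).} The stable distribution attached to $\phi$ is $\sum_{\pi}\tr\pi$, which matches $\Theta_\phi$ because every $\langle\pi,1\rangle=1$. For invariance under $G_{\ad}(F)$, note that $\Theta_\phi$ is stable and hence invariant under stable conjugacy. Since $\GSp_{2n}(F)$ surjects onto $G_{\ad}(F)$, conjugation by $\GSp_{2n}(F)$ preserves each packet.

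\emph{Main obstacle.} The delicate point is (1): making sure that Arthur's packets for non-discrete tempered parameters are exactly the constituents of induced representations from packets of Levi subgroups, with Levi parameters matched through $\iota^G_M$. This also needs a check that the $\GL$-factor parameters line up with the conventions of \cref{theorem:local-Langlands-GLn}, including the identification of $\widehat{M}$ inside $\SO_{2n+1}(\C)$. I would settle this by quoting Arthur's Section 2.4 construction directly.
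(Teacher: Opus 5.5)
Your proposal is correct and follows the paper's proof, which is likewise an item-by-item reduction to \cite{Art13}: Theorem 1.5.1 gives the existence of $\cL_{\Sp_{2n}}$ and property (3), \cref{theorem:local-Langlands-GLn} gives (2), and Theorem 2.2.1 gives (5). The one difference is in the references: the paper extracts (1) from the proof of Proposition 6.6.1 and (4) from Theorem 6.6.1, Proposition 6.6.5 and Theorem 6.7.2 of \cite{Art13}, so your citation of Proposition 6.6.1 for (4) should be adjusted to match.
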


\begin{proof}
    The existence of the map $\cL_{\Sp_{2n}}$ and the property $(3)$ follow from \cite{Art13}*{Theorem 1.5.1}.
    The property $(1)$ follows from the proof of \cite{Art13}*{Proposition 6.6.1}.
    The property $(2)$ follows from \cref{theorem:local-Langlands-GLn}.
    The property $(4)$ follows from \cite{Art13}*{Theorem 6.6.1, Proposition 6.6.5, Theorem 6.7.2}.
    The property $(5)$ follows from \cite{Art13}*{Theorem 2.2.1}.
\end{proof}

Let  $\O_{2n}$ denote the orthogonal group of a $2n$-dimensional split quadratic space over $F$.
We have an isomorphism $\O_{2n}(F)/\SO_{2n}(F) \xrightarrow{\sim} \Z/2\Z$.
To state the local Langlands correspondence for $\SO_{2n}(F)$, we need to introduce a slightly different set of $L$-parameters for $\SO_{2n}$. 

\begin{definition}
    Let $\Phi(\SO_{2n})_{/ \O_{2n}}$ be the set of $\O_{2n}(\C)$-conjugacy classes of $L$-parameters for $\SO_{2n}$. 
    We use similar notation for tempered $L$-parameters.
    For an  $L$-parameter $\phi$ for $\SO_{2n}$, we write its $\O_{2n}(\C)$-conjugacy class as $[\phi]_{\O_{2n}}$.
    
    Dually, we write the set of $\O_{2n}(F)$-conjugacy classes of irreducible representations of $\SO_{2n}(F)$ as $\Pi(\SO_{2n})_{/ \O_{2n}}$.
    We use similar notation for tempered representations.
    For an irreducible representation $\pi$ of $\SO_{2n}(F)$, we write its $\O_{2n}(F)$-conjugacy class as $[\pi]_{\O_{2n}}$.
\end{definition}

Let $\iota^{\GL_{2n}}_{\SO_{2n}} \colon {}^L \SO_{2n} \to {}^L \GL_{2n}$ be the standard $L$-embedding.
Note that the map $\Phi(\SO_{2n}) \to \Phi(\GL_{2n})$ induced by this embedding factors through the map $\Phi(\SO_{2n})_{/\O_{2n}} \to \Phi(\GL_{2n})$.

\begin{theorem}[\cite{Art13}*{Theorem 1.5.1, Proposition 6.6.1, Theorem 2.2.4}]
\label{theorem:local-Langlands-correspondence-SO2n}
     We have a surjection
     \begin{align*}
        {\cL_{\SO_{2n}}}_{/ \O_{2n}} \colon \Pi_{\temp}(\SO_{2n})_{/ \O_{2n}} \to \Phi_{\temp}(\SO_{2n})_{/ \O_{2n}},
     \end{align*}
     which satisfies the conditions of \cref{conjecture:local-Langlands-conjecture} if we replace 
     \begin{itemize}
        \item the representations of $\SO_{2n}(F)$ by $\O_{2n}(F)$-conjugacy classes of representations,
        \item $L$-parameters for $\SO_{2n}$ by $\O_{2n}(\C)$-conjugacy classes of $L$-parameters for $\SO_{2n}$, and
        \item the character 
            \begin{align*}
                \tr \pi
            \end{align*}
            by the average of characters
            \begin{align*}
                \frac{1}{2} (\tr \pi + \tr \pi^{\theta})
            \end{align*}
            for any $\theta \in \O_{2n}(F) \setminus \SO_{2n}(F)$.
     \end{itemize}
     We call the inverse image of the equivalence class $[\phi]_{\O_{2n}}$ under this map, the $L$-packet associated with $[\phi]_{\O_{2n}}$, and we write it as  $\overline{\Pi}_{[\phi]_{\O_{2n}}}(\SO_{2n}) = \overline{\Pi}_{[\phi]_{\O_{2n}}}$.
     
     Furthermore, if $\pi$ is a tempered irreducible representation of $\SO_{2n}(F)$, the following are equivalent.
     \begin{enumerate}
        \item[(i)] The set ${\cL_{\SO_{2n}}}_{/ \O_{2n}}([\pi]_{\O_{2n}})$ consists of a unique $L$-parameter for $\SO_{2n}$.
        \item[(ii)] The set $[\pi]_{\O_{2n}}$ consists of a unique irreducible representation $\pi$ of $\SO_{2n}(F)$.
        \item[(iii)] The representation $\pr_1 \circ \iota^{\GL_{2n}}_{\SO_{2n}} \circ {\cL_{\SO_{2n}}}_{/ \O_{2n}}([\pi]_{\O_{2n}})$ of $L_F$ contains an odd-dimensional orthogonal irreducible representation of $L_F$. 
     \end{enumerate}
\end{theorem}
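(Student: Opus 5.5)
This is a citation-assembly theorem, modelled on the proof of \cref{theorem:local-Langlands-correspondence-Sp2n}. I would derive each property from Arthur's results for the split group $\SO_{2n}$, read modulo the outer automorphism $\theta \in \O_{2n}(F)\setminus\SO_{2n}(F)$. Then I would prove the equivalence (i)$\Leftrightarrow$(ii)$\Leftrightarrow$(iii) separately.

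\textbf{Step 1: existence, surjectivity and enhancements.} \cite{Art13}*{Theorem 1.5.1} attaches to each $[\phi]_{\O_{2n}} \in \Phi_{\temp}(\SO_{2n})_{/\O_{2n}}$ a finite packet $\overline{\Pi}_{[\phi]_{\O_{2n}}}$ of $\O_{2n}(F)$-orbits. It also gives a bijection with characters of the component group, depending on the Whittaker datum $\fw$. These packets partition $\Pi_{\temp}(\SO_{2n})_{/\O_{2n}}$, and that partition defines the surjection ${\cL_{\SO_{2n}}}_{/\O_{2n}}$.

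Arthur's component group is $\pi_0$ of the centralizer in $\SO_{2n}(\C)$ modulo the centre. Since $S_\phi$ is abelian here, all irreducible representations are characters, so $\langle\pi,1\rangle=1$ and property (3) follows. Independence of $\langle\pi,1\rangle$ from $\fw$ is then automatic.

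\textbf{Step 2: properties (1), (2), (4) and stability.} I would take these exactly as in \cref{theorem:local-Langlands-correspondence-Sp2n}:
\begin{itemize}
\item Compatibility with parabolic induction comes from the proof of \cite{Art13}*{Proposition 6.6.1}, applied to Levi subgroups $\prod\GL_{n_i}\times\SO_{2m}$.
\item Compatibility with unramified twists follows from the $\GL$-factors via \cref{theorem:local-Langlands-GLn}.
\item The characterization of discrete series comes from \cite{Art13}*{Theorem 6.6.1, Proposition 6.6.5, Theorem 6.7.2}.
\item Stability is \cite{Art13}*{Theorem 2.2.1}, applied to the $\theta$-averaged distributions $\tfrac12(\tr\pi+\tr\pi^\theta)$. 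These are exactly the objects Arthur's stable characters are built from for even orthogonal groups.
\end{itemize}

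\textbf{Step 3: the equivalence of (i), (ii), (iii).} This is the part needing actual argument, and I expect it to be the main obstacle, mainly in keeping the bookkeeping straight.

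(i)$\Leftrightarrow$(iii) is linear algebra on the dual side. An $\O_{2n}(\C)$-orbit of parameters consists of one $\SO_{2n}(\C)$-orbit exactly when $\Cent_{\O_{2n}(\C)}(\phi)$ contains an element of determinant $-1$. Writing the $2n$-dimensional orthogonal representation as a sum of irreducibles, the centralizer in $\O_{2n}(\C)$ is a product of orthogonal, symplectic and general linear groups. Only the orthogonal factors $\O(m_i)$, attached to orthogonal irreducible constituents $\phi_i$, can contribute determinant $-1$. The element $-1\in\O(m_i)$ has determinant $(-1)^{m_i\dim\phi_i}$. So a determinant $-1$ element exists if and only if some orthogonal constituent has odd dimension.

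(ii)$\Leftrightarrow$(i) uses the $\theta$-equivariance of Arthur's correspondence, \cite{Art13}*{Theorem 2.2.4}. If $\phi$ is not fixed by $\theta$ up to $\SO_{2n}(\C)$-conjugacy, then $\pi^\theta$ lies in the packet of $\phi^\theta \neq \phi$, so $\pi^\theta\ncong\pi$. If $\phi$ is $\theta$-stable, Arthur's construction identifies $\pi^\theta$ with $\pi$ through the character pairing. The delicate point is that this identification must be compatible with the fixed Whittaker datum $\fw$, which $\theta$ preserves. I would verify this by checking that the pairing is unchanged under $\theta$ when $\theta$ comes from a determinant $-1$ element of the centralizer, as in Arthur's discussion of $\widetilde{\O}_{2n}$.
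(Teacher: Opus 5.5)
You handle existence, surjectivity and properties (1)--(5) by the same citation assembly the paper relies on (compare \cref{theorem:local-Langlands-correspondence-Sp2n}). Your dual-side argument for (i)$\Leftrightarrow$(iii) is right in substance, up to two slips. First, it is the component group $\pi_0(S_\phi/Z(\widehat{G})^{\Gamma_F})$, not $S_\phi$, that is abelian. Second, the determinant $-1$ element you want is a reflection in $\O(m_i)$, which acts on $\phi_i\otimes\C^{m_i}$ with determinant $(-1)^{\dim\phi_i}$. The element $-1\in\O(m_i)$ has determinant $(-1)^{m_i\dim\phi_i}$, which is $+1$ for $\phi=\chi\oplus\chi$ with $\chi$ quadratic on $\SO_2$, even though that parameter is $\O_2(\C)$-stable.

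The genuine gap is (ii)$\Leftrightarrow$(i). Everything \cite{Art13}*{Theorem 1.5.1} provides lives at the level of $\O_{2n}(F)$-orbits. The packets consist of orbits $[\pi]_{\O_{2n}}$, and the pairing with the component group is attached to the orbit. The character identities of \cite{Art13}*{Theorem 2.2.1} are tested only on $\theta$-invariant functions. So $\pi$ and $\pi^\theta$ have the same parameter class, the same pairing and the same averaged character.

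Consequently ``$\pi^\theta$ lies in the packet of $\phi^\theta\neq\phi$'' has no content, because the packet of $\phi^\theta$ is the packet of $\phi$. Likewise, no verification on the pairing, Whittaker-compatible or not, can yield $\pi^\theta\cong\pi$, since the pairing does not see the difference between them. Both halves of your argument presuppose a $\theta$-equivariant correspondence $\pi\mapsto[\phi_\pi]_{\SO_{2n}}$ at the level of $\SO_{2n}(\C)$-classes. That is precisely what the weak correspondence does not contain. Note also that the equivalence implies that within one packet either all orbits are singletons or none are, which orbit-level statements cannot detect.

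What is missing is an actual result comparing the size of the $\O_{2n}(F)$-orbit of $\pi$ with the size of the $\O_{2n}(\C)$-orbit of its parameter. Any such result must use information beyond $\theta$-invariant test functions. The paper does not derive this; it imports it from \cite{Art13} along with the rest of the statement. You should either cite it in that form or give an argument that genuinely separates $\pi$ from $\pi^\theta$. The Whittaker bookkeeping you single out is not where the difficulty lies.
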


\begin{definition} 
    Let $\pi$ be a tempered irreducible representation for $\SO_{2n}(F)$.
    We call an $L$-parameter $\phi$ in the class ${\cL_{\SO_{2n}}}_{/\O_{2n}}([\pi]_{\O_{2n}})$ an $L$-parameter of $\pi$.
    We choose such an $L$-parameter for each tempered irreducible representation $\pi$ of $\SO_{2n}(F)$ and denote it by $\phi^{\SO_{2n}}_{\pi}$.
    We use similar notation for Levi subgroups for $\SO_{2n}$.

    If the representation $\pi$ satisfies the equivalent conditions in the statement of \cref{theorem:local-Langlands-correspondence-SO2n}, this $L$-parameter is uniquely determined by $\pi$.
    In this situation, we call the $L$-parameter the $L$-parameter of $\pi$.
    
    Also, we call the $L$-parameter $\iota^{\GL_{2n}}_{\SO_{2n}} \circ [\phi^G_{\pi}]_{\O_{2n}} \in \Phi(\GL_{2n})$ the standard $L$-parameter of $\pi$.
    We denote it by $\phi^{\GL_{2n}}_{\pi}$.
\end{definition}

\begin{remark}\label{remark:invariant-in-formal-degree-even-orthogonal}
    Let $M$ be a standard Levi subgroup of $\SO_{2n}$ and let $\sigma$ be a discrete series representation of $M$.
    Then, we can easily check that the isomorphism class of the group $S^{\natural}_{\phi^M_{\sigma}}$ and the $\gamma$-factors $\gamma(s, \phi^{M}_{\sigma}, \Ad_M, \psi)$ and $\gamma(s, \phi^M_{\sigma}, \Ad_{G/M}, \psi)$ are independent of the choice of $\phi^M_{\sigma}$.
    We write them as $S^{\natural}_{\sigma}, \gamma(s, \sigma, \Ad_M, \psi)$ and $\gamma(s, \sigma, \Ad_{G/M}, \psi)$.
\end{remark}

\begin{definition}
    The central character for an irreducible representation of $\SO_{2n}(F)$ is invariant under the conjugation action of $\O_{2n}(F)$.
    Thus, the central character of the equivalent class $[\pi]_{\O_{2n}}$ is well-defined.
    We call it the central character of $[\pi]_{\O_{2n}}$. 
\end{definition}

Later, we use the following well-known result. 
\begin{proposition}\label{proposition:central-character-SO2n}
    The central characters of the elements in one $L$-packet $\overline{\Pi}_{[\phi]_{\O_{2n}}}$ for $\SO_{2n}(F)$ are the same and its value at $-1 \in \SO_{2n}(F)$ is given by the root number $\epsilon(\frac{1}{2}, \iota^{\GL_{2n}}_{\SO_{2n}} \circ \phi)$ of the standard $L$-parameter $\iota^{\GL_{2n}}_{\SO_{2n}} \circ \phi$.
\end{proposition}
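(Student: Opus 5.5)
Since the center of $\SO_{2n}(F)$ is $\{\pm 1\}$, the proposition reduces to computing $\omega_\pi(-1)$ for every $\pi\in\overline{\Pi}_{[\phi]_{\O_{2n}}}$ and showing it equals $\epsilon(\frac12,\iota^{\GL_{2n}}_{\SO_{2n}}\circ\phi)$. This value is well defined: the standard parameter $\phi^{\GL_{2n}}$ is orthogonal, so its determinant is a quadratic character. For orthogonal parameters the root number $\epsilon(\frac12,\phi^{\GL_{2n}},\psi)$ is independent of $\psi$ and equals $\pm1$. The plan follows the standard strategy: reduce to discrete parameters, then to the supercuspidal support, using the compatibility properties in \cref{theorem:local-Langlands-correspondence-SO2n}.

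First, reduction to discrete series. If $\phi$ is not discrete, it factors through a Levi subgroup $M=\GL_{n_1}\times\cdots\times\GL_{n_r}\times\SO_{2m}$, and every $\pi$ in the packet is a constituent of $i^G_P(\sigma)$ with $\sigma=\tau_1\otimes\cdots\otimes\tau_r\otimes\sigma_0$. The element $-1\in\SO_{2n}$ lies in $M$ as $(-1,\dots,-1,-1)$, so
\[
\omega_\pi(-1)=\prod_i\omega_{\tau_i}(-1)\cdot\omega_{\sigma_0}(-1).
\]
On the Galois side, the standard parameter is $\bigoplus_i(\phi_{\tau_i}\oplus\phi_{\tau_i}^\vee)\oplus\phi_{\sigma_0}^{\GL_{2m}}$. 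By property (6) of \cref{theorem:local-Langlands-GLn} and local class field theory,
\[
\epsilon(\tfrac12,\phi_{\tau_i}\oplus\phi_{\tau_i}^\vee)=\det\phi_{\tau_i}(-1)=\omega_{\tau_i}(-1).
\]
Multiplicativity of $\epsilon$ then reduces the claim to discrete parameters for $\SO_{2m}$.

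Second, the discrete case. Here I would not try to compute $\omega_\pi$ from the parameter directly. Instead I would use Arthur's characterization through twisted endoscopic character identities: the packet $\overline{\Pi}_\phi$ is characterized by the identity relating the $\theta$-twisted character of the self-dual representation $\Pi_\phi$ of $\GL_{2n}(F)$ to the stable character $\Theta_\phi$. Evaluating both sides at central elements, or equivalently comparing the actions of the center, gives $\omega_\pi(-1)$ for all $\pi$ in the packet at once, which already proves that the central character is constant on the packet. I would then identify the constant via the Whittaker-generic member of the packet. For a generic $\pi$, the central sign can be read off from the local functional equation of the Rankin--Selberg or Langlands--Shahidi $\gamma$-factor $\gamma(s,\pi\times\chi,\psi)$ with $\chi$ highly ramified, by stability of $\gamma$-factors. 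Comparing with $\gamma(s,\phi^{\GL_{2n}}\otimes\chi,\psi)$, using Arthur's compatibility of Shahidi's $\gamma$-factors with the standard parameter, yields $\omega_\pi(-1)=\epsilon(\frac12,\phi^{\GL_{2n}})$. Here one uses that, for orthogonal $\phi$ and highly ramified $\chi$, $\epsilon(\frac12,\phi\otimes\chi)=\det\phi(-1)\,\epsilon(\frac12,\chi)^{2n}\cdots$, and the quadratic corrections cancel.

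The main obstacle is this last step: pinning down the exact sign relation. Stability of $\gamma$-factors for twists produces the central character of $\pi$ only up to normalizations involving $\det\phi^{\GL_{2n}}$ and $\epsilon(\frac12,\chi)$, and it requires care with the split versus quasi-split (discriminant) conventions. If that bookkeeping became messy, my fallback would be to cite the known result directly, since the proposition is stated as well known. The relevant references are Arthur's treatment of central characters of packets and Gan--Gross--Prasad, where the identity $\omega_\pi(-1)=\epsilon(\frac12,\phi^{\GL_{2n}})$ for even orthogonal groups is established.
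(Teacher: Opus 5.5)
Your reduction to discrete parameters is correct, including the identity $\epsilon(\tfrac12,\phi_\tau\oplus\phi_\tau^\vee,\psi)=\det\phi_\tau(-1)=\omega_\tau(-1)$. The discrete case, however, has a genuine gap: the identification step cannot work even in principle. For split $\SO_{2n}$ the standard parameter $\phi^{\GL_{2n}}=\iota^{\GL_{2n}}_{\SO_{2n}}\circ\phi$ lands in $\SO_{2n}(\C)$, so $\det\phi^{\GL_{2n}}=\mathbf{1}$. (This, not orthogonality alone, is what makes $\epsilon(\tfrac12,\phi^{\GL_{2n}},\psi)$ independent of $\psi$.) By Deligne--Henniart stability, for $\chi$ sufficiently ramified one has $\gamma(s,\phi^{\GL_{2n}}\otimes\chi,\psi)=\det\phi^{\GL_{2n}}(a_\chi)\,\gamma(s,\chi,\psi)^{2n}$, where $a_\chi\in F^\times$ depends only on $\chi$; the factor is not $\det\phi(-1)$ as you wrote. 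Since $\det\phi^{\GL_{2n}}=\mathbf{1}$, this is just $\gamma(s,\chi,\psi)^{2n}$, independent of $\phi$. Highly ramified twists therefore erase exactly the invariant $\epsilon(\tfrac12,\phi^{\GL_{2n}})$ you are after.

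The same loss is visible on the representation side. For a principal series $\pi$ induced from $\chi_1\otimes\cdots\otimes\chi_n$ and $\chi$ ramified enough, $\gamma(s,\pi\times\chi,\psi)=\prod_i\gamma(s,\chi_i\chi,\psi)\gamma(s,\chi_i^{-1}\chi,\psi)=\gamma(s,\chi,\psi)^{2n}$, whatever $\omega_\pi(-1)=\prod_i\chi_i(-1)$ is. So no sign bookkeeping will extract $\omega_\pi(-1)$ this way.

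Your constancy step is not justified either. For $z\in Z(\GL_{2n})$ one has $z\theta(z)=1$, so $z\delta\rtimes\theta$ and $\delta\rtimes\theta$ have the same norm. The center of $\GL_{2n}$ therefore does not see $-1\in\SO_{2n}(F)$, and ``comparing the actions of the center'' in the twisted identity isolates nothing about $\omega_\pi(-1)$.

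The missing idea is an identity expressing $\omega_\pi(-1)$ as a central root number attached to $\pi$ itself, not to its highly ramified twists. This is exactly what the paper uses. Lapid--Rallis proved, via the doubling method, that $\omega_\pi(-1)=\epsilon^{\mathcal{V}}(\tfrac12,\pi)$ for every irreducible $\pi$ of $\SO_{2n}(F)$. The doubling factor $\epsilon^{\mathcal{V}}(s,\pi,\psi)$ agrees with $\epsilon(s,\iota^{\GL_{2n}}_{\SO_{2n}}\circ\phi_\pi,\psi)$ by the globalization argument in Atobe et al. Because this holds for each $\pi$ separately and the right-hand side depends only on the parameter, constancy of the central character on $\overline{\Pi}_{[\phi]_{\O_{2n}}}$ comes for free. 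Neither the reduction to discrete series nor the choice of a generic member is needed. Your fallback of citing the known result is in effect the paper's proof, but what actually carries it is the Lapid--Rallis identity together with the matching of doubling and Artin $\epsilon$-factors, not a stability computation.
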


\begin{proof}
    By \cite{LapidRallis2005-localfactors}*{Theorem 1}, the value $\chi_{\pi}(-1)$ of the central character $\chi_{\pi}$  at $-1$ is given by the root number $\epsilon^{\cV}(\frac{1}{2}, \pi)$ of the epsilon factor defined by Lapid--Rallis in \cite{LapidRallis2005-localfactors} for any irreducible representation $\pi$ of $\SO_{2n}(F)$.
    The coincidence of the local factor $\epsilon^{\cV}(s, \pi, \psi)$ defined by Lapid--Rallis and the local factor $\epsilon(s, \iota^{\GL_{2n}}_{\SO_{2n}} \circ \phi^{\SO_{2n}}_{\pi}, \psi)$ obtained from the $L$-parameter $\phi^{\SO_{2n}}_{\pi}$ follows from a globalization argument in \cite{atobe2024localintertwiningrelationscotempered}*{p.130, Remark A.4.1}.
\end{proof}

We denote the standard $L$-embedding ${}^L \Sp_{2n} \to {}^L \GL_{2n+1}$ by $\iota^{\GL_{2n+1}}_{\Sp_{2n}}$.
By the same proof, we can obtain the following proposition.
\begin{proposition}\label{proposition:central-character-Sp2n}
    The central characters of the elements in one $L$-packet $\Pi_{\phi}$ for $\Sp_{2n}(F)$ are the same and its value at $-1 \in \Sp_{2n}(F)$ is given by the root number $\epsilon(\frac{1}{2}, \iota^{\GL_{2n+1}}_{\Sp_{2n}} \circ \phi)$ of the standard $L$-parameter $\iota^{\GL_{2n+1}}_{\Sp_{2n}} \circ \phi$.
\end{proposition}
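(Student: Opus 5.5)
The plan mirrors the proof of \cref{proposition:central-character-SO2n}. The idea is to reduce the value of the central character at $-1$ to a root number attached to the representation, and then to identify that root number with the Artin root number of the standard $L$-parameter.

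\textbf{Step 1 (central character at $-1$ via a doubling root number).} The center of $\Sp_{2n}(F)$ is $\{\pm 1\}$, so the central character $\chi_\pi$ of an irreducible $\pi$ is determined by $\chi_\pi(-1)$. I will invoke \cite{LapidRallis2005-localfactors}*{Theorem 1} in the symplectic case. It expresses $\chi_\pi(-1)$ through the doubling-method root number $\epsilon^{\cV}(\tfrac12,\pi,\psi)$ of Lapid--Rallis. Because the symplectic group has standard $L$-group $\SO_{2n+1}(\C)\subset \GL_{2n+1}(\C)$, this root number is independent of $\psi$ and takes values in $\{\pm1\}$. One normalization issue has to be tracked here. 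For $\Sp_{2n}$ the Lapid--Rallis identity may carry an extra factor such as $\epsilon(\tfrac12,\mathbf 1)$ or a Weil index. I will check that these are trivial for the odd orthogonal dual with trivial determinant, so that the statement holds exactly as written.

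\textbf{Step 2 (identify the doubling factor with the Artin factor).} Next I will show that $\epsilon^{\cV}(s,\pi,\psi)=\epsilon(s,\iota^{\GL_{2n+1}}_{\Sp_{2n}}\circ\phi^{\Sp_{2n}}_\pi,\psi)$ for tempered $\pi$. As in the $\SO_{2n}$ case, I will use the globalization argument of \cite{atobe2024localintertwiningrelationscotempered}*{Remark A.4.1}. First, embed a discrete series $\pi$ as a local component of a cuspidal automorphic representation whose other ramified components are controlled. Second, compare the global doubling functional equation with the global functional equation of the Arthur transfer to $\GL_{2n+1}$. Third, use the known unramified and archimedean matchings, together with the stability of $\epsilon$-factors under highly ramified twists, to isolate the factor at the given place. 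The tempered non-discrete case follows from the multiplicativity of both factors under parabolic induction, combined with property (1) of \cref{conjecture:local-Langlands-conjecture}, which \cref{theorem:local-Langlands-correspondence-Sp2n} guarantees.

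\textbf{Step 3 (constancy on the packet).} Within a packet $\Pi_\phi$ all members share $\phi$, so by Step 2 they share $\epsilon^{\cV}(\tfrac12,\pi)$. By Step 1 they therefore share $\chi_\pi(-1)$, and hence share the same central character.

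The main obstacle is Step 2, specifically the globalization with control at the remaining places. Since the paper explicitly says ``by the same proof'', I expect to cite Atobe's remark, which treats all classical groups, rather than reprove it.
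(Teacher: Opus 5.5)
Your proposal is correct and matches the paper's argument. The paper proves this ``by the same proof'' as \cref{proposition:central-character-SO2n}: Lapid--Rallis expresses $\chi_\pi(-1)$ as the doubling root number $\epsilon^{\cV}(\tfrac12,\pi)$, the globalization argument of Atobe et al.\ (Remark A.4.1) identifies this with $\epsilon(\tfrac12,\iota^{\GL_{2n+1}}_{\Sp_{2n}}\circ\phi)$, and constancy of the central character on the packet follows at once, just as in your Steps 1--3.
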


\subsection{Xu's local Langlands correspondence for $\GSO_{2n}$ and $\GSp_{2n}$}

Let $\GSO_{2n}$ and $\GSp_{2n}$ be the split even orthogonal similitude group of a $2n$-dimensional quadratic space and the symplectic similitude group of a $2n$-dimensional symplectic space over $F$, respectively.
We denote their similitude character by $\lambda$.

Xu \cite{BinXu2016liftingproblem,Xu18local} proved a form of the local Langlands correspondence for the similitude symplectic group $\GSp_{2n}$ and the split similitude orthogonal group $\GSO_{2n}$.
It is described using the Arthur local Langlands correspondence for $\Sp_{2n}$ and $\SO_{2n}$.

\begin{remark}
    The standard Levi subgroups of $\GSO_{2n}$ are of the form 
    \begin{align*}
        \GL_{n_1} \times \cdots \times \GL_{n_r} \times \GSO_{m}
    \end{align*}
    with $\sum_{i} 2n_i + m = 2n$.
    A similar result holds for $\GSp_{2n}$.
\end{remark}

Let $\iota^{\SO_{2n}}_{\GSO_{2n}} \colon {}^L \GSO_{2n} \to {}^L \SO_{2n}$ be the morphism of $L$-groups induced by the standard representation. 
Also, let $\nu \colon  {}^L \GSO_{2n} \to {}^L \GL_1$ be the morphism of $L$-groups induced by the spinor norm.

\begin{proposition}\label{proposition:amalgamation-to-GSO2n}
    Let $\phi^{\SO_{2n}}$ be an $L$-parameter for $\SO_{2n}$ and let $\chi$ an $L$-parameter for $\GL_1$, such that $\epsilon( \frac{1}{2}, \iota^{\GL_{2n}}_{\SO_{2n}} \circ \phi^{\SO_{2n}}) = \chi(-1)$.
    Here, $\chi(-1)$ is defined by the local class field theory.
    Then, there exists an $L$-parameter $\phi^{\GSO_{2n}}$ for $\GSO_{2n}$ such that
    \begin{align*}
        \iota^{\SO_{2n}}_{\GSO_{2n}} \circ \phi^{\GSO_{2n}}
        &=
        \phi^{\SO_{2n}}, \\
        \nu \circ \phi^{\GSO_{2n}} &= \chi.
    \end{align*}
    The $L$-parameter $\phi^{\GSO_{2n}}$ is determined up to twists by quadratic characters $W_F \to \mu_2(\C)$.
    We also have a corresponding result for $\GSp_{2n}$.
\end{proposition}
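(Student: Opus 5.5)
The plan is to realize $\GSO_{2n}$ on the dual side as a central quotient and lift $\phi^{\SO_{2n}}$ through the corresponding isogeny. The dual group $\widehat{\GSO}_{2n}$ is $\GSpin_{2n}(\C)$, and there is an exact sequence
\begin{align*}
    1 \to \mu_2 \to \GSpin_{2n}(\C) \xrightarrow{(\mathrm{pr},\, \nu)} \SO_{2n}(\C) \times \GL_1(\C) \to 1,
\end{align*}
where $\mathrm{pr}$ is the map inducing $\iota^{\SO_{2n}}_{\GSO_{2n}}$ and $\nu$ is the spinor norm. Since everything is split, the $L$-groups are direct products with $W_F$, so $L$-parameters are just homomorphisms $L_F \to \widehat{G}$. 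We want to lift $\phi^{\SO_{2n}} \times \chi \colon L_F \to \SO_{2n}(\C) \times \GL_1(\C)$ through this central $\mu_2$-extension. The obstruction is a class in $H^2(W_F, \mu_2)$; the $\SL_2(\C)$ factor causes no trouble because $\SL_2(\C)$ is simply connected, so the algebraic restriction lifts uniquely and commutes with the $W_F$-part. By Tate local duality, $H^2(W_F,\mu_2) = H^2(\Gamma_F,\mu_2) \cong \Z/2\Z$ after the standard reduction from Weil group to Galois group cohomology for finite coefficients, and it remains to identify the class.

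First I would note that the obstruction is additive. Pulling back along $\chi$, the extension restricted to $\GL_1$ is the squaring-type cover $\{(g,z)\}$, so the obstruction splits as $\mathrm{obs}(\phi^{\SO_{2n}}) + \mathrm{obs}(\chi)$ in $H^2(\Gamma_F,\mu_2)$. Here:
\begin{itemize}
    \item $\mathrm{obs}(\phi^{\SO_{2n}})$ is the second Stiefel--Whitney class $w_2$ of the orthogonal representation $\iota^{\GL_{2n}}_{\SO_{2n}}\circ\phi^{\SO_{2n}}$ (pulled back from $\Spin \to \SO$);
    \item $\mathrm{obs}(\chi)$ is the cup product $(\chi|_{\mu_2\text{-part}}) \cup (-1)$, i.e.\ the Hilbert symbol class measuring whether $\chi$ is a square twisted appropriately, which equals $\chi(-1)$ viewed in $\{\pm1\}$. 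To see this, note that lifting $\chi$ to the double cover amounts to finding $\eta$ with $\eta^2 = \chi$ up to the $(-1)$-twist coming from $Z(\GSpin)$; concretely, extend $\chi$ to a square root, which is possible iff $\chi(-1)=1$.
\end{itemize}

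Then I would invoke Deligne's theorem on orthogonal root numbers: for an orthogonal representation $V$ of determinant trivial (our $\phi^{\SO_{2n}}$ lands in $\SO$), $\epsilon(\tfrac12, V) = (-1)^{w_2(V)}$ via the invariant map. Hence the total obstruction vanishes exactly when $\epsilon(\tfrac12, \iota^{\GL_{2n}}_{\SO_{2n}}\circ\phi^{\SO_{2n}}) = \chi(-1)$, which is the hypothesis. Continuity and semisimplicity of the lift are automatic since the isogeny is finite, and boundedness is preserved, so the lift is an $L$-parameter for $\GSO_{2n}$.

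Uniqueness follows from the same sequence: two lifts differ by a homomorphism $L_F \to \mu_2$, which is trivial on $\SL_2(\C)$, i.e.\ a quadratic character of $W_F$. The $\GSp_{2n}$ case is identical with $\widehat{\GSp}_{2n}=\GSpin_{2n+1}(\C) \to \SO_{2n+1}(\C)\times\GL_1(\C)$, using Deligne's theorem for the odd orthogonal standard parameter. The main obstacle is the precise sign bookkeeping in identifying $\mathrm{obs}(\chi)$ with $\chi(-1)$ and matching it with Deligne's $w_2$ convention. I would settle this by testing on a split torus, where both sides are computed explicitly. I would cite Xu's papers for the final verification.
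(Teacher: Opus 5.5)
Your proposal is correct and follows essentially the paper's argument. The paper writes $\GSpin_{2n}=(\Spin_{2n}\times\GL_1)/\Delta\mu_2$, so that the obstruction in $H^2(W_F,\mu_2)\cong\{\pm1\}$ to lifting $(\phi^{\SO_{2n}},\chi)$ is the sum of two pieces: the $\Spin$-obstruction of $\phi^{\SO_{2n}}$, which equals $\epsilon(\tfrac12,\iota^{\GL_{2n}}_{\SO_{2n}}\circ\phi^{\SO_{2n}})$ by Deligne's theorem and additivity of $\epsilon$-factors, and the square-root obstruction of $\chi$, which equals $\chi(-1)$; uniqueness up to quadratic twists comes from the central kernel $\mu_2$.

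One point to tighten: Deligne's theorem concerns representations of $W_F$, so after your $\SL_2(\C)$ reduction you should note that $\epsilon(\tfrac12,\phi)=\epsilon(\tfrac12,\phi|_{W_F})$ for orthogonal $\phi$ (check this summand by summand: the $L$-factor correction terms for $\rho\otimes S_a$ cancel at $s=\tfrac12$). Also, there is no sign bookkeeping to worry about, since $H^2(W_F,\mu_2)$ has a unique nontrivial element.
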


\begin{proof}
     We realize the group $\GSpin_{2n}$ as the quotient $\Spin_{2n} \times \GL_1 / \Delta \mu_2$, where $\Delta \colon \mu_2 \hookrightarrow Z(\Spin_{2n}) \times \GL_1$ is the diagonal embedding. 
     First, the morphism $\GSpin_{2n} \to \SO_{2n} \times \GL_1$ has kernel $(\mu_2 \times \mu_2) / \Delta \mu_2$, where $\Delta \colon \mu_2 \hookrightarrow \mu_2 \times \mu_2$ is the diagonal embedding. 
     Thus, the $L$-morphism $(\phi^{\SO_{2n}}, \chi) \colon L_F \to {}^L \SO_{2n} \times {}^L \GL_1$ lifts to $\GSpin_{2n}(\C) \times W_F$ if and only if the sum of the elements in $H^2(W_F, \mu_2)$ defined by $\phi^{\SO_{2n}}$ and $\chi$ is trivial in the group $H^2(W_F, (\mu_2 \times \mu_2) / \Delta \mu_2)$.
     
     We have a canonical isomorphism $H^2(W_F, \mu_2) \xrightarrow{\sim} \{\pm 1\}$; see \cite{Deligne1976-localconstant}*{(4.1), (4.2)}.
     The element in $\{\pm 1\}$ which corresponds to $\phi^{\SO_{2n}}$ is equal to the root number $\epsilon(\frac{1}{2}, \iota^{\GL_{2n}}_{\SO_{2n}} \circ \phi^{\SO_{2n}})$ by \cite{Deligne1976-localconstant}*{Proposition (5.2)} and the additivity of $\epsilon$-factors.
     The element in $\{\pm 1\}$ corresponding to $\chi$ is equal to $\chi(-1)$ by the local class field theory.
     Our hypothesis implies that the product of these elements is trivial.
     This proves the first statement.
     The second statement follows from the fact that the kernel of the projection $\GSpin_{2n} \to \SO_{2n} \times \GL_1$ is isomorphic to $\mu_2$. 
     The proof for the group $\GSp_{2n}$ is the same.
\end{proof}

\begin{definition} 
   We define the set $\Phi(\GSO_{2n})_{/ \mathrm{GO}_{2n}}$ analogously to the definition of $\Phi(\SO_{2n})_{/ \O_{2n}}$.
   The set $\Pi_{\temp}(\GSO_{2n})_{/ \mathrm{GO}_{2n}}$ is also defined analogously to the definition of $\Pi_{\temp}(\SO_{2n})_{/ \O_{2n}}$.

   Then, we have the maps $\iota^{\SO_{2n}}_{\GSO_{2n}} \colon \Phi(\GSO_{2n}) \to \Phi(\SO_{2n})_{/ \O_{2n}}$ and $\nu \colon \Phi(\GSO_{2n}) \to \Phi(\GL_1)$ which are induced by the $L$-homomorphisms $\iota^{\SO_{2n}}_{\GSO_{2n}}$ and $\nu$.
   They factor through the quotient set $\Phi(\GSO_{2n})_{/ \mathrm{GO}_{2n}}$.
   We define an equivalence relation $\sim_{\O_{2n} \times \GL_1}$ on the set $\Phi(\GSO_{2n})_{/ \mathrm{GO}_{2n}}$ by $\phi_1 \sim_{\O_{2n} \times \GL_1} \phi_2$ if and only if $\iota^{\SO_{2n}}_{\GSO_{2n}} \circ \phi_1 = \iota^{\SO_{2n}}_{\GSO_{2n}} \circ \phi_2$ and $\nu \circ \phi_1 = \nu \circ \phi_2$.
   We denote the quotient set defined by this equivalence relation by 
   \begin{align*}
    \Phi(\GSO_{2n})_{/ \O_{2n} \times \GL_1}.
   \end{align*}
   We denote the subset of tempered elements by $\Phi_{\temp}(\GSO_{2n})_{/ \O_{2n} \times \GL_1}$.
\end{definition}

\begin{lemma}\label{lemma:identification-parameter-GSO-and-SO-times-GL1}
    The natural map 
    \begin{align*}
        \Phi_{\temp}(\GSO_{2n})_{/ \O_{2n} \times \GL_1} 
        \to   
        \Phi_{\temp}(\SO_{2n})_{/ \O_{2n}} \times \Phi_{\temp}(\GL_1)
    \end{align*}
    is injective. 
    The image coincides with the subset consisting of the pairs $(\phi, \chi) \in \Phi_{\temp}(\SO_{2n})_{/ \O_{2n}} \times \Phi_{\temp}(\GL_1)$ which satisfies the hypothesis of \cref{proposition:amalgamation-to-GSO2n}.
    We have the corresponding result for $\GSp_{2n}$.
\end{lemma}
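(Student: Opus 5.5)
Injectivity holds by definition: two classes in $\Phi_{\temp}(\GSO_{2n})_{/\GO_{2n}}$ are identified under $\sim_{\O_{2n}\times\GL_1}$ exactly when they have the same $\iota^{\SO_{2n}}_{\GSO_{2n}}$-image in $\Phi(\SO_{2n})_{/\O_{2n}}$ and the same $\nu$-image. So the map from the quotient set to $\Phi_{\temp}(\SO_{2n})_{/\O_{2n}}\times\Phi_{\temp}(\GL_1)$ is well defined and injective. Two points need care. First, the maps $\iota^{\SO_{2n}}_{\GSO_{2n}}$ and $\nu$ should factor through $\GO_{2n}(\C)$-conjugacy, which the paper already states. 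Second, temperedness must be preserved. A parameter for $\GSO_{2n}$ has bounded image exactly when its images in $\SO_{2n}(\C)$ and in $\GL_1(\C)$ are both bounded. This holds because $\GSpin_{2n}(\C)\to\SO_{2n}(\C)\times\GL_1(\C)$ has finite kernel $\mu_2$, so it is an isogeny onto its image and bounded subsets correspond under it.

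For the image, I would prove both inclusions using \cref{proposition:amalgamation-to-GSO2n} and the cohomological argument in its proof. For "image $\supseteq$ pairs satisfying the hypothesis": given a tempered pair $(\phi,\chi)$ with $\epsilon(\tfrac12,\iota^{\GL_{2n}}_{\SO_{2n}}\circ\phi)=\chi(-1)$, \cref{proposition:amalgamation-to-GSO2n} gives a lift $\phi^{\GSO_{2n}}$. By the boundedness remark above, this lift is tempered. We may choose the representative $\phi$ freely within its $\O_{2n}(\C)$-class, because the condition depends only on the standard parameter $\iota^{\GL_{2n}}_{\SO_{2n}}\circ\phi$.

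For "image $\subseteq$": take any $\phi^{\GSO_{2n}}$ and set $\phi=\iota^{\SO_{2n}}_{\GSO_{2n}}\circ\phi^{\GSO_{2n}}$ and $\chi=\nu\circ\phi^{\GSO_{2n}}$. Then $(\phi,\chi)$ lifts through $\GSpin_{2n}(\C)\to\SO_{2n}(\C)\times\GL_1(\C)$. The proof of \cref{proposition:amalgamation-to-GSO2n} shows that the obstruction is the image of the pair of classes in $H^2(W_F,\mu_2)\times H^2(W_F,\mu_2)$ in $H^2(W_F,(\mu_2\times\mu_2)/\Delta\mu_2)$. So the existence of a lift forces this image to vanish. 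Under $H^2(W_F,\mu_2)\cong\{\pm1\}$ and the quotient $(\mu_2\times\mu_2)/\Delta\mu_2\cong\mu_2$ (multiplication), the obstruction is the product $\epsilon(\tfrac12,\iota^{\GL_{2n}}_{\SO_{2n}}\circ\phi)\,\chi(-1)$. Its vanishing is exactly the stated hypothesis, since both factors are signs.

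The main obstacle is making the obstruction identification precise in both directions, that is, showing the lifting criterion is an "if and only if". For this I would use the long exact sequence of $W_F$-cohomology attached to the central extension $1\to\mu_2\to\GSpin_{2n}(\C)\to\mathrm{image}\to 1$. This is legitimate because the groups are central and $W_F$ acts trivially, since the groups are split.

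The $\GSp_{2n}$ case is identical. Replace $\GSpin_{2n}$ by the appropriate cover, namely $\Sp_{2n}(\C)$ paired with $\GL_1$ dually, and replace $\iota^{\GL_{2n}}_{\SO_{2n}}$ by $\iota^{\GL_{2n+1}}_{\Sp_{2n}}$. Here no $\O$-type conjugacy quotient is needed.
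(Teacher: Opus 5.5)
Your proposal is correct and matches the paper, which simply says the lemma follows from \cref{proposition:amalgamation-to-GSO2n}. You make explicit what that one line leaves implicit: injectivity is tautological from the definition of $\sim_{\O_{2n}\times\GL_1}$, and the ``if and only if'' lifting criterion in that proposition's proof gives both inclusions for the image. One correction to your last paragraph: for $\GSp_{2n}$ the relevant cover is the dual group $\GSpin_{2n+1}(\C)\to\SO_{2n+1}(\C)\times\GL_1(\C)$, which has kernel $\mu_2$, not $\Sp_{2n}(\C)$.
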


\begin{proof}
    This follows from \cref{proposition:amalgamation-to-GSO2n}.
\end{proof}

\begin{remark}
    We will identify the set $\Phi_{\temp}(\GSO_{2n})_{/ \O_{2n} \times \GL_1} $ with the subset of $\Phi_{\temp}(\SO_{2n})_{/ \O_{2n}} \times \Phi_{\temp}(\GL_1)$ via the natural map above.
    Let $[\phi]_{\O_{2n} \times \GL_1}$ denote the class of $\phi \in \Phi(\GSO_{2n})$.
\end{remark}

\begin{theorem}[\cite{Xu18local}*{Theorem 4.6, Lemma 6.2, Corollary 4.2}]\label{theorem:local-Langlands-correspondence-GSO2n}
    There exists a map 
    \begin{align*}
        {\cL_{\GSO_{2n}}}_{/ \O_{2n} \times \GL_1} 
        \colon   
        \Pi_{\temp}(\GSO_{2n})_{/ \mathrm{GO}_{2n}}
        \to
        \Phi_{\temp}(\GSO_{2n})_{/ \O_{2n} \times \GL_1}.
    \end{align*}
    This satisfies the analogues of $(1), (2)$, and $(4)$ in \cref{conjecture:local-Langlands-conjecture}, after replacing
    \begin{itemize}
    \item 
    The set $\Pi_{\temp}(G)$ by $\Pi_{\temp}(\GSO_{2n})_{/ \mathrm{GO}_{2n}}$.
    \item
    The set $\Phi_{\temp}(G)$ by $\Phi_{\temp}(\GSO_{2n})_{/ \O_{2n} \times \GL_1}$.
    \item     
    The character $\tr \pi$ by the average of characters $\frac{1}{2} (\tr \pi + \tr \pi^{\theta})$ for any $\theta \in \O_{2n}(F) \setminus \SO_{2n}(F)$.
    \end{itemize}
    Furthermore, the map has the following properties.
 
\begin{itemize}
    \item[(a)]
    If $\pi$ is a tempered irreducible representation of $\GSO_{2n}(F)$ with central character $\chi_{\pi}$ and if $\pi'$ is an irreducible component of the restriction of $\pi$ to $\SO_{2n}(F)$, then we have 
    \begin{align*}
        {\cL_{\GSO_{2n}}}_{/ \O_{2n} \times \GL_1}(\pi) = ({\cL_{\SO_{2n}}}_{/ \O_{2n}}(\pi'), \chi_{\pi})
    \end{align*}
    via the identification in \cref{lemma:identification-parameter-GSO-and-SO-times-GL1}.

    \item[(b)]
    We denote the inverse image of $(\phi, \chi) \in \Phi_{\temp}(\SO_{2n})_{/ \O_{2n}} \times \Phi_{\temp}(\GL_1)$ by $\overline{\Pi}_{\phi, \chi}$, via the identification in \cref{lemma:identification-parameter-GSO-and-SO-times-GL1}.
    Then, on Properties $(3), (5)$ of \cref{conjecture:local-Langlands-conjecture}, we have the following result: 
    There exists a subset $\overline{\Pi}^{X} \subset \overline{\Pi}_{\phi, \chi}$ such that   
    \begin{enumerate}
        \item[(3)] 
        For any $L$-parameter $\widetilde{\phi} \in \Phi(\GSO_{2n})$ which maps to $(\phi, \chi)$, we have a bijection
        \begin{align*}
            \overline{\Pi}^{X} \xrightarrow{\sim} \Irr(\pi_0(S_{\widetilde{\phi}}/Z(\widehat{\GSO_{2n}}))).
        \end{align*}
        Also, any element $[\pi]_{\GO_{2n}} \in \overline{\Pi}_{\phi, \chi}$ is contained in the set $\overline{\Pi}^{X} \otimes (\omega \circ \lambda)$ for some quadratic character $\omega \colon F^{\times} \to \mu_2(\C)$.
 
        \item[(5)] 
        The character defined by $\overline{\Pi}^{X}$ is stable.
    \end{enumerate}

    \item[(c)]
    Let $\pi$ be an irreducible representation such that $[\pi]_{\GO_{2n}} \in \overline{\Pi}^{X}$. 
    Let $\omega \colon F^{\times} \to \mu_{2}(\C)$ be a quadratic character.
    We take any $L$-parameter $\widetilde{\phi} \in \Phi(\GSO_{2n})$ which maps to $(\phi, \chi)$.
    Also, let $\Sigma$ be either $1 \in \GSO_{2n}(F)$ or any $\theta \in \O_{2n}(F) \setminus \SO_{2n}(F)$. 
    Dually, we set $\widehat{\Sigma} = 1$ if $\Sigma = 1$ and $\widehat{\Sigma} \in \mathrm{O}_{2n}(\C) \setminus \SO_{2n}(\C)$ if $\Sigma = \theta$. 
    The following two conditions are equivalent.
    \begin{enumerate}
        \item[(i)]   
        We have $\pi^{\Sigma} \otimes (\omega \circ \lambda) = \pi$. 
        \item[(ii)]    
        We have $(\Ad(\widehat{\Sigma}) \circ \widetilde{\phi}) \otimes \omega = \widetilde{\phi}$ in $\Phi_{\temp}(\GSO_{2n})$.
    \end{enumerate}

    \item[(d)]
    Finally, we write the set of the quadratic character $\omega \colon F^{\times} \to \mu_2(\C)$ satisfying the equivalent conditions $(i)$ and $(ii)$ of $(c)$ for some element $\Sigma$, as $\Hom(W_F, \mu_2(\C))_{\widetilde{\phi}}$.
    Then, we have 
    \begin{align*}
        \overline{\Pi}_{\phi, \chi} 
        =   
        \coprod_{\omega \in \Hom(W_F, \mu_2(\C))/\Hom(W_F, \mu_2(\C))_{\widetilde{\phi}}} \overline{\Pi}^{X} \otimes (\omega \circ \lambda),
    \end{align*} 
    by $(b)(3)$ and $(c)$.

\end{itemize}
\end{theorem}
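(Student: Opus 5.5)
This statement is essentially a repackaging of Xu's results \cite{Xu18local}, so the plan is to read off each assertion from the cited theorems and check that it matches the normalizations used here. I will first construct the map. Take a tempered $\pi$ of $\GSO_{2n}(F)$ and restrict it to $\SO_{2n}(F)$. The restriction is a finite direct sum of $\GSO_{2n}(F)$-conjugates of one tempered $\pi'$, since $\GSO_{2n}(F)/Z\cdot\SO_{2n}(F)$ is a finite abelian group. By \cite{Xu18local}*{Corollary 4.2}, together with the stability of $\Theta_\phi$ under $\SO_{2n,\ad}(F)$ from \cref{theorem:local-Langlands-correspondence-SO2n}(5), all these conjugates lie in one packet $\overline{\Pi}_{[\phi]_{\O_{2n}}}$. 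I therefore define the image of $[\pi]_{\GO_{2n}}$ to be $({\cL_{\SO_{2n}}}_{/\O_{2n}}(\pi'),\chi_\pi)$, which is exactly property (a).

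Next I must check that this pair lies in the image described in \cref{lemma:identification-parameter-GSO-and-SO-times-GL1}. The restriction of $\chi_\pi$ to $Z(\SO_{2n})$ is the central character of $\pi'$, so $\chi_\pi(-1)=\omega_{\pi'}(-1)$. By \cref{proposition:central-character-SO2n}, $\omega_{\pi'}(-1)=\epsilon(\tfrac12,\iota^{\GL_{2n}}_{\SO_{2n}}\circ\phi)$. This is precisely the hypothesis of \cref{proposition:amalgamation-to-GSO2n}, so a lift $\widetilde\phi$ exists. The construction is $\GO_{2n}(F)$-invariant, so the map descends to classes.

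Properties (1), (2) and (4) reduce to the $\SO_{2n}$ case of \cref{theorem:local-Langlands-correspondence-SO2n}. For (1), restriction commutes with parabolic induction, using $P\cap\SO_{2n}$ with Levi $\prod\GL_{n_i}\times\GSO_m$ restricted accordingly. For (2), unramified characters of a Levi restrict to unramified characters of the corresponding $\SO$-Levi, and their effect on the $\GL_1$-component is through the similitude. For (4), being a discrete series is detected on restriction to the finite-index (modulo centre) subgroup $Z\cdot\SO_{2n}(F)$, and discreteness of $\widetilde\phi$ is equivalent to discreteness of $\phi$ because $\widehat{\GSO}_{2n}\to\widehat{\SO}_{2n}$ is a central extension by a torus.

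The substantive input is (b)–(d), which I would take from \cite{Xu18local}*{Theorem 4.6, Lemma 6.2}. Xu constructs a distinguished subset $\overline{\Pi}^X$ (his $\widetilde\Pi_{\widetilde\phi}$), parametrized by characters of $\pi_0(S_{\widetilde\phi}/Z(\widehat{\GSO}_{2n}))$, whose character is stable. The full fibre over $(\phi,\chi)$ is then the union of its twists by $\omega\circ\lambda$, since the lifts of $(\phi,\chi)$ differ exactly by quadratic twists. For (c), I would compare twisting by $\omega\circ\lambda$ and $\GO$-conjugation on both sides via the twisted character identities in Xu's Lemma 6.2. Granting (c), the disjointness in (d) follows formally: two twists $\overline{\Pi}^X\otimes(\omega\circ\lambda)$ and $\overline{\Pi}^X\otimes(\omega'\circ\lambda)$ meet exactly when $\omega/\omega'$ stabilizes the parameter, that is, lies in $\Hom(W_F,\mu_2(\C))_{\widetilde\phi}$.

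I expect the main obstacle to be (c). The implication (i)$\Rightarrow$(ii) is straightforward, but (ii)$\Rightarrow$(i) requires that the parametrization of $\overline{\Pi}^X$ be compatible with the twist, and this uses the precise normalization of Xu's packets.
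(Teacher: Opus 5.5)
Your plan is essentially the paper's: the paper gives no independent argument and simply imports the statement from \cite{Xu18local}*{Theorem 4.6, Lemma 6.2, Corollary 4.2}, which is what you do for (b)--(d). Your restriction-to-$\SO_{2n}(F)$ checks of (a), (1), (2), (4) are the routine unpacking implicit in that citation, and this includes the root-number compatibility via \cref{proposition:central-character-SO2n} and \cref{proposition:amalgamation-to-GSO2n}.
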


\begin{definition}
    Let $\pi$ be a tempered irreducible representation of $\GSO_{2n}(F)$.
    We call an $L$-parameter $\phi^{\GSO_{2n}}_{\pi}$ for $\GSO_{2n}(F)$ which maps to the parameter ${\cL_{\GSO_{2n}}}_{/ \O_{2n} \times \GL_1}([\pi]_{\GSO_{2n}})$ an $L$-parameter of $\pi$.
    We use similar notation for representations of its Levi subgroups.
\end{definition}

\begin{remark}\label{remark:GSO2n-invariant-independence-of-choice}
    Let $M$ be a standard Levi subgroup of $\GSO_{2n}$ and let $\sigma$ be a discrete series representation of $M$.
    As in \cref{remark:invariant-in-formal-degree-even-orthogonal}, the adjoint $\gamma$-factors $\gamma(s, \phi^M_{\sigma}, \Ad_{M}, \psi)$, $\gamma(s, \phi^M_{\sigma}, \Ad_{G/M}, \psi)$ and the number $\abs{S^{\natural}_{\sigma}}$ are canonically defined by $\sigma$ and independent of the choice of $\phi^M_{\sigma}$.
    We denote them by $\gamma(s, \sigma, \Ad_{M}, \psi)$ and so on.
\end{remark}

\begin{definition}
    We call a twist $\overline{\Pi}^X \otimes (\omega \circ \lambda)$ of the set $\overline{\Pi}^X$ by a quadratic character $\omega$ an $L$-packet of Xu or a Xu $L$-packet for $\GSO_{2n}(F)$.
\end{definition}

The set of equivalence classes of $L$-parameters $\Phi_{\temp}(\GSp_{2n})_{/ \Sp_{2n} \times \GL_1}$ is defined analogously to the set $\Phi_{\temp}(\GSO_{2n})_{/ \O_{2n} \times \GL_1}$.

\begin{theorem}[\cite{Xu18local}*{Theorem 4.6, Lemma 6.2, Corollary 4.2}]
\label{theorem:local-Langlands-correspondence-GSp2n}

    There exists a map 
    \begin{align*}
        {\cL_{\GSp_{2n}}}_{/ \Sp_{2n} \times \GL_1} \colon \Pi_{\temp}(\GSp_{2n}) \to \Phi_{\temp}(\GSp_{2n})_{/ \Sp_{2n} \times \GL_1}.
    \end{align*}
    This map satisfies the analogues of $(1), (2), (4)$ in \cref{conjecture:local-Langlands-conjecture}, after replacing 
    \begin{itemize}
        \item The set $\Pi_{\temp}(G)$ by $\Pi_{\temp}(\GSp_{2n})$, and
        \item 
        The set $\Phi_{\temp}(G)$ by $\Phi_{\temp}(\GSp_{2n})_{/ \Sp_{2n} \times \GL_1}$.
    \end{itemize}
    
    Also, the map has the following properties.
\begin{itemize}
    \item[(a)]
    If $\pi$ is a tempered irreducible representation of $\GSp_{2n}(F)$ with central character $\chi_{\pi}$ and if $\pi'$ is an irreducible component of the restriction of $\pi$ to $\Sp_{2n}(F)$, then we have 
    \begin{align*}
        {\cL_{\GSp_{2n}}}_{/ \Sp_{2n} \times \GL_1}(\pi) = ({\cL_{\Sp_{2n}}}(\pi'), \chi_{\pi})
    \end{align*}
    via the identification in \cref{lemma:identification-parameter-GSO-and-SO-times-GL1}.
    
    \item[(b)]
    We denote the inverse image of $(\phi, \chi) \in \Phi_{\temp}(\Sp_{2n}) \times \Phi_{\temp}(\GL_1)$ by $\Pi_{\phi, \chi}$ via the identification in \cref{lemma:identification-parameter-GSO-and-SO-times-GL1}. 
    Then, on Properties $(3), (5)$ of \cref{conjecture:local-Langlands-conjecture}, we have the following result:
    There exists a subset $\Pi^{X} \subset \Pi_{\phi, \chi}$ such that 
    \begin{enumerate}
        \item[(3)]
        For any $L$-parameter $\widetilde{\phi} \in \Phi_{\temp}(\GSp_{2n})$ which maps to $(\phi, \chi)$, there exists a bijection
        \begin{align*}
            \Pi^{X} \xrightarrow{\sim} \Irr(\pi_0(S_{\widetilde{\phi}}/Z(\widehat{\GSp_{2n}}))).
        \end{align*}   
        \item[(5)] 
        The character defined by the set $\Pi^X$ is stable.
    \end{enumerate}
    
    \item[(c)]
    Let $\pi$ be an irreducible representation such that $\pi \in \Pi^{X}$. 
    Let $\omega \colon F^{\times} \to \mu_{2}(\C)$ be a quadratic character.
    We take an $L$-parameter $\widetilde{\phi} \in \Phi_{\temp}(\GSp_{2n})$ which maps to $(\phi, \chi)$.
    Then, the following two conditions are equivalent.
    \begin{enumerate}
        \item[(i)]   
        We have $\pi \otimes (\omega \circ \lambda) = \pi$. 
        \item[(ii)]    
        We have $\widetilde{\phi} \otimes \omega = \widetilde{\phi}$ in $\Phi_{\temp}(\GSp_{2n})$.
    \end{enumerate}
  
    \item[(d)]
    Finally, let $\Hom(W_F, \mu_2(\C))_{\widetilde{\phi}}$ denote the set of the quadratic character $\omega \colon F^{\times} \to \mu_2(\C)$ that satisfy the equivalent conditions $(i)$ and $(ii)$ of $(c)$.
    Then, we have 
    \begin{align*}
        {\Pi}_{\phi, \chi} 
        =   
        \coprod_{\omega \in \Hom(W_F, \mu_2(\C))/\Hom(W_F, \mu_2(\C))_{\widetilde{\phi}}} 
        {\Pi}^{X} \otimes (\omega \circ \lambda),
    \end{align*} 
    by $(b)(3)$ and $(c)$.
\end{itemize}
\end{theorem}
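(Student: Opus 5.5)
This theorem is cited from Xu, so the plan is to assemble it from his results, together with the compatibilities already recorded for $\Sp_{2n}$ and $\GL_1$. I will work throughout with the restriction to $\Sp_{2n}(F)$.

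\textbf{Step 1: the map.} For a tempered irreducible $\pi$ of $\GSp_{2n}(F)$, the restriction $\pi|_{\Sp_{2n}(F)}$ is a finite direct sum of tempered irreducibles. These form a single $\GSp_{2n}(F)$-orbit, hence a single $\PGSp_{2n}(F)=\Sp_{2n,\ad}(F)$-orbit. By property (5) of \cref{theorem:local-Langlands-correspondence-Sp2n}, $\cL_{\Sp_{2n}}$ is invariant under $\Sp_{2n,\ad}(F)$, so the parameter $\phi=\cL_{\Sp_{2n}}(\pi')$ does not depend on the constituent $\pi'$.

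The central character $\chi_\pi$ restricts on $\mu_2=Z(\Sp_{2n})$ to $\chi_{\pi'}(-1)$. By \cref{proposition:central-character-Sp2n} this equals $\epsilon(\tfrac12,\iota^{\GL_{2n+1}}_{\Sp_{2n}}\circ\phi)$. Hence $(\phi,\chi_\pi)$ satisfies the hypothesis of the $\GSp_{2n}$-version of \cref{proposition:amalgamation-to-GSO2n}, so by \cref{lemma:identification-parameter-GSO-and-SO-times-GL1} it defines an element of $\Phi_{\temp}(\GSp_{2n})_{/\Sp_{2n}\times\GL_1}$. This defines the map and gives (a) by construction.

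\textbf{Step 2: properties (1), (2), (4).} Each of these reduces to the corresponding property for $\Sp_{2n}$ and its Levi subgroups. The point is that restriction commutes with parabolic induction, since $\GSp_{2n}=Z\cdot\Sp_{2n}$ up to the similitude torus and the Levi subgroups are $\prod\GL_{n_i}\times\GSp_{2m}$. Unramified twists change only the $\GL_1$-component and the $\GL$-blocks. Being a discrete series is detected on restriction to $\Sp_{2n}(F)$ once the center is accounted for.

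\textbf{Step 3: Xu's packets $\Pi^X$, properties (b) and (c).} This is where the substance of \cite{Xu18local} lies, and I expect it to be the main obstacle.

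The plan is to fix a lift $\widetilde\phi$ and compare the component groups. There is an exact sequence relating $S_{\widetilde\phi}/Z(\widehat{\GSp_{2n}})$ to $S_\phi$ for $\widehat{\Sp_{2n}}=\SO_{2n+1}(\C)$: the image of $\pi_0(S_{\widetilde\phi}/Z)$ in $\pi_0(S_\phi)$ is the subgroup of characters trivial on $Z(\widehat{\Sp_{2n}})$ modulo the twisting group. The cokernel is governed by the characters $\omega$ with $\widetilde\phi\otimes\omega\cong\widetilde\phi$.

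On the representation side, Clifford theory for $\Sp_{2n}(F)\subset\GSp_{2n}(F)$ gives the matching statement. The constituents of $\pi|_{\Sp_{2n}}$ form one orbit, and the stabilizer of $\pi$ under twisting by $\omega\circ\lambda$ is dual to that orbit, by a Gelfand–Kazhdan-type multiplicity-one argument for the restriction.

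The stable subset $\Pi^X$ is selected following Xu, by lifting stable characters of $\Sp_{2n}$ using twisted endoscopy of $\GSp_{2n}$. I will simply invoke \cite{Xu18local}*{Theorem 4.6} for the existence of $\Pi^X$, the bijection (3), and stability (5). Condition (c) then becomes a matching of the two twisting groups through (a) and \cref{proposition:amalgamation-to-GSO2n}: a twist by $\omega$ leaves $(\phi,\chi)$ unchanged and moves $\widetilde\phi$ within its fiber. This is the content of \cite{Xu18local}*{Lemma 6.2, Corollary 4.2}.

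\textbf{Step 4: the decomposition (d).} This follows formally. The fiber over $(\phi,\chi)$ is a torsor under quadratic twists by \cref{proposition:amalgamation-to-GSO2n}, and (c) identifies the stabilizer of $\Pi^X$ under this action, which yields the disjoint union in (d).
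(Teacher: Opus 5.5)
Your overall route is the paper's: the paper gives no argument of its own and simply quotes Xu's Theorem~4.6, Lemma~6.2 and Corollary~4.2. Your Step~1 is a correct construction of the map and of (a): the constituents of $\pi$ on $\Sp_{2n}(F)$ form one $\PGSp_{2n}(F)$-orbit, $\chi_\pi(-1)=\epsilon(\tfrac12,\iota^{\GL_{2n+1}}_{\Sp_{2n}}\circ\phi)$ by \cref{proposition:central-character-Sp2n}, and then \cref{proposition:amalgamation-to-GSO2n} applies.

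The step that does not work as written is Step~4. The torsor supplied by \cref{proposition:amalgamation-to-GSO2n} lives on the parameter side, namely the lifts $\widetilde\phi$ of $(\phi,\chi)$. By contrast, $\Pi_{\phi,\chi}$ is the fiber of a map that cannot see $\widetilde\phi$. So that torsor says nothing about which representations lie in $\Pi_{\phi,\chi}$, and $\Pi_{\phi,\chi}$ is not itself a torsor: it has $|\Pi^X|$ orbits under quadratic twisting.

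Two representation-theoretic inputs are needed.
\begin{enumerate}
\item Clifford theory. If two irreducible representations of $\GSp_{2n}(F)$ share a constituent on $\Sp_{2n}(F)$, they differ by $\eta\circ\lambda$ for a character $\eta$ of $F^\times$. Equal central characters then force $\eta^2=1$, since $\lambda(z)=z^2$ on the center.
\item Distinct members of $\Pi^X$ have disjoint restrictions to $\Sp_{2n}(F)$, whose union is all of $\Pi_\phi$; each member lifts a different $\GSp_{2n}(F)$-orbit in $\Pi_\phi$. This is built into Xu's construction of $\Pi^X$, but it does not follow from the bare bijection in (b)(3) together with (c).
\end{enumerate}
Given these, every element of $\Pi_{\phi,\chi}$ is a quadratic twist of a member of $\Pi^X$. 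Moreover, if $\pi_1\otimes(\omega\circ\lambda)=\pi_2$ with $\pi_1,\pi_2\in\Pi^X$, the restrictions coincide. Hence $\pi_1=\pi_2$, and $\omega\in\Hom(W_F,\mu_2(\C))_{\widetilde\phi}$ by (c); this is the disjointness in (d).

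Without the second input, (b)(3), (c) and multiplicity one of restriction only show that the restrictions of the members of $\Pi^X$ have, counted together, exactly $|\Pi_\phi|=|\Pi^X|\cdot|\Hom(W_F,\mu_2(\C))_{\widetilde\phi}|$ constituents. That count does not exclude $\Pi^X$ containing two quadratic twists of one representation, in which case both the covering and the disjointness in (d) fail.

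Separately, the component-group statement in Step~3 is garbled: $Z(\widehat{\Sp_{2n}})=Z(\SO_{2n+1}(\C))$ is trivial. The correct statement is the exact sequence
\[
1\to S_{\widetilde\phi}/Z(\widehat{\GSp_{2n}})\to S_\phi\to\Hom(W_F,\mu_2(\C))_{\widetilde\phi}\to 1 .
\]
Here $s\in S_\phi$ is sent to $w\mapsto \widetilde{s}\,\widetilde\phi(w)\,\widetilde{s}^{-1}\widetilde\phi(w)^{-1}$ for any lift $\widetilde{s}\in\GSpin_{2n+1}(\C)$. This is a quadratic character because the spinor norm is $z\mapsto z^2$ on the center. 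It is the $\GSp_{2n}$ analogue of \cref{lemma:comparison-S-natural-groups}(4), and together with Clifford theory it is what produces the count above.
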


\begin{definition}
    We call a twist $\Pi^X \otimes (\omega \circ \lambda)$ of a set $\Pi^X$ by a quadratic character $\omega$, an $L$-packet of Xu or a Xu $L$-packet for $\GSp_{2n}(F)$. 
\end{definition}

\begin{remark}[Xu's local Langlands correspondence for $\PGSO_{2n}$ and $\PGSp_{2n}$]
    If $\chi=1$, we obtain the local Langlands correspondence for $\PGSO_{2n}$ and $\PGSp_{2n}$.
    In this case, the corresponding root numbers of the standard $L$-parameters are trivial.
\end{remark}

\subsection{Refinement of Xu's result for $\GSO_4$ and $\GSO_6$}

In this section, building on results by Ramakrishnan \cite{Ramakrishnan2000-Modularity-Rankin-Selberg}, Kim \cite{Kim2003-functoriality-exterior-square} and Henniart \cite{Henniart2009exterior}, we prove a refinement of Xu's local Langlands correspondence.
The results in this subsection seem to be well-known to experts, but for the sake of completeness, we include the proof.

\begin{remark} \label{remark:accidental-isomorphism}
    Related to $A_1 \times A_1 = D_2$ and $A_3 = D_3$, we have the following accidental isomorphisms given in \cite{GanTakeda2011LLC-for-GSp4}*{3}.
    The notation is as in \Cref{subsection:embedding-tori}.
    \begin{enumerate}
    \item    
    We consider the embedding 
    \begin{align*}
        \GL_1 \hookrightarrow \GL_2 \times \GL_2 \colon z \mapsto (z, z^{-1}).
    \end{align*}
    Then, we have an isomorphism 
    \begin{align}
        ac_{4} \colon (\GL_2 \times \GL_2) / \GL_{1}^{(1, -1)}  \xrightarrow{\sim} \GSO_{4}. 
        \label{eq:GSO4-accidental}
    \end{align} 
    \item   
    We consider the embedding 
    \begin{align*}
        \GL_1 \hookrightarrow \GL_4 \times \GL_1 \colon z \mapsto (z, z^{-2}).
    \end{align*} 
    Then, we have an isomorphism 
    \begin{align}
        ac_{6} \colon (\GL_4 \times \GL_1) / \GL_{1}^{(1, -2)}  \xrightarrow{\sim} \GSO_{6}.
        \label{eq:GSO6-accidental}
    \end{align}
    \end{enumerate}

    In the first case, we have the isomorphism of dual groups:
    \begin{align*}
        \{ (g_1, g_2) \in \GL_2 \times \GL_2 \mid \det g_1 = \det g_2 \} \xrightarrow{\sim} \GSpin_4.
    \end{align*}
    The connected centers of both groups are isomorphic to $\GL_1$ and we have the induced morphism 
    \begin{align*}
        \{ (g_1, g_2) \in \GL_2 \times \GL_2 \mid \det g_1 = \det g_2 \} \twoheadrightarrow \GSpin_4/\GL_1 \xrightarrow{\sim} \SO_4 \hookrightarrow \GL_4.
    \end{align*}
    This morphism induces the four-dimensional representation of the group $\{ (g_1, g_2) \in \GL_2 \times \GL_2 \mid \det g_1 = \det g_2 \}$ isomorphic to $\Std \otimes \Std^{\vee}$; see \cite{GanTakeda2011LLC-for-GSp4}*{3}.

    Similarly, in the second case, the induced map 
    \begin{align*}
        \{ (g_1, z) \in \GL_4 \times \GL_1 \mid \det g_1 = z^2 \} \xrightarrow{\sim} \GSpin_6 \twoheadrightarrow \SO_6 \hookrightarrow \GL_6
    \end{align*}
    can be identified with 
    \begin{align*}
         (g_1, z) \mapsto z^{-1} \cdot \wedge^2 g_1.
    \end{align*}
    See \cite{GanTakeda2011LLC-for-GSp4}*{4}.
\end{remark}

\begin{proposition}\label{proposition:refinement-GSO4}
    The following diagram commutes.
    % https://q.uiver.app/#q=WzAsNCxbMCwwLCJcXFBpX3tcXG1hdGhybXt0ZW1wfX0oXFxtYXRocm17R1NPXzR9KSJdLFsyLDAsIlxcUGlfe1xcbWF0aHJte3RlbXB9fSgoXFxtYXRocm17R0x9XzJcXHRpbWVzIFxcbWF0aHJte0dMfV8yKS9cXG1hdGhybXtHTH1fMSkiXSxbMiwxLCJcXFBoaV97XFxtYXRocm17dGVtcH19KChcXG1hdGhybXtHTH1fMlxcdGltZXMgXFxtYXRocm17R0x9XzIpL1xcbWF0aHJte0dMfV8xKSJdLFswLDEsIlxcUGhpKFxcbWF0aHJte1NPfV80KV97LyBcXG1hdGhybXtPfV80fSJdLFswLDEsImFjXzQiXSxbMSwyLCJcXG1hdGhjYWx7TH1fe1xcbWF0aHJte0dMfV8yIFxcdGltZXMgXFxtYXRocm17R0x9XzJ9Il0sWzAsMywie1xcbWF0aGNhbHtMfV97XFxtYXRocm17U099XzR9fV97LyBcXG1hdGhybXtPfV80fSIsMl0sWzIsMywiXFxvdGltZXMiXV0=
    \[\begin{tikzcd}
	{\Pi_{\mathrm{disc}}(\mathrm{GSO_4})} && {\Pi_{\mathrm{disc}}((\mathrm{GL}_2\times \mathrm{GL}_2)/\mathrm{GL}_1)} \\
	{\Phi(\mathrm{SO}_4)_{/ \mathrm{O}_4}} && {\Phi_{\mathrm{disc}}((\mathrm{GL}_2\times \mathrm{GL}_2)/\mathrm{GL}_1)}
	\arrow["{ac_4}", from=1-1, to=1-3]
	\arrow[from=1-1, to=2-1]
	\arrow["{\mathcal{L}_{\mathrm{GL}_2 \times \mathrm{GL}_2}}", from=1-3, to=2-3]
	\arrow[" \mathrm{Std} \otimes \mathrm{Std}^{\vee}", from=2-3, to=2-1]
    \end{tikzcd}\]
    The left vertical arrow is given by the 1st component of the morphism \cref{lemma:identification-parameter-GSO-and-SO-times-GL1} and the map of \cref{theorem:local-Langlands-correspondence-GSO2n}.
\end{proposition}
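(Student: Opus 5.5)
We reduce the statement to the characterization of Arthur's correspondence for $\SO_4$ through its standard $L$-parameter, and then compare with the Rankin--Selberg product on the $\GL_2\times\GL_2$ side.

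\textbf{Step 1 (descent to $\SO_4$).} Let $\pi\in\Pi_{\disc}(\GSO_4)$ and write $ac_4^*\pi=\pi_1\boxtimes\pi_2$ for discrete series $\pi_1,\pi_2$ of $\GL_2(F)$ with $\omega_{\pi_1}=\omega_{\pi_2}$, as forced by triviality on $\GL_1^{(1,-1)}$. By \cref{theorem:local-Langlands-correspondence-GSO2n}(a), the left vertical arrow sends $\pi$ to ${\cL_{\SO_4}}_{/\O_4}(\pi')$, where $\pi'$ is any irreducible constituent of $\pi|_{\SO_4(F)}$. Since $\SO_4\to\GSO_4$ corresponds under $ac_4$ to $(\GL_2\times\GL_2)^{\det_1=\det_2}/\GL_1$, the representation $\pi'$ is a constituent of the restriction of $\pi_1\boxtimes\pi_2$ to this subgroup.

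\textbf{Step 2 (reduction to standard parameters).} The map $\Phi_{\temp}(\SO_4)_{/\O_4}\to\Phi(\GL_4)$ is injective, because orthogonal representations of $L_F$ that are equivalent in $\GL_4$ are $\O_4(\C)$-conjugate. By \cref{remark:accidental-isomorphism}, composing $\mathrm{Std}\otimes\mathrm{Std}^\vee$ with this map gives $\phi_{\pi_1}\otimes\phi_{\pi_2}^\vee$. It therefore suffices to prove
\[
\phi^{\GL_4}_{\pi'}\;\cong\;\phi_{\pi_1}\otimes\phi_{\pi_2}^{\vee}
\]
as representations of $L_F$.

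\textbf{Step 3 (transfer to $\GL_4$).} Arthur's correspondence characterizes $\phi^{\GL_4}_{\pi'}$ through twisted endoscopic character identities with $\GL_4$. The $\GL_4$-representation with parameter $\phi_{\pi_1}\otimes\phi_{\pi_2}^\vee$ is Ramakrishnan's transfer $\pi_1\boxtimes\pi_2^\vee$. We then use either of two routes.

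The first route is local factors. By \cite{Art13}*{Theorem 1.5.1}, Arthur's parameter satisfies $\gamma(s,\pi'\times\tau,\psi)=\gamma(s,\phi^{\GL_4}_{\pi'}\otimes\phi_\tau,\psi)$ for all $\tau\in\Pi(\GL_r)$, $r\le 3$, with the left side given by Shahidi's factors. Ramakrishnan's comparison gives the analogous identity for $\phi_{\pi_1}\otimes\phi_{\pi_2}^\vee$. The local converse theorem for $\GL_4$ (twists up to $\GL_2$ suffice, by Henniart and Chen) then identifies the two parameters.

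The second route is a standard globalization. Embed $\pi_1,\pi_2$ as local components of cuspidal automorphic representations $\Pi_1,\Pi_2$ of $\GL_2(\A)$ with equal central characters. The strong multiplicity one theorem for $\GL_4$ and Arthur's global classification identify the transfer of the resulting automorphic representation of $\SO_4(\A)$ with $\Pi_1\boxtimes\Pi_2^\vee$ at almost all places. Local–global compatibility of Arthur's correspondence then gives the identity at the chosen place, up to $\O_4$-conjugacy.

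\textbf{Main obstacle.} The hardest point is making Step 3 rigorous. One must check that Arthur's normalization (Whittaker datum, and the convention $\mathrm{Std}\otimes\mathrm{Std}^\vee$ versus $\mathrm{Std}\otimes\mathrm{Std}$) matches the identification in \cref{remark:accidental-isomorphism}. One must also control the $\O_4$-ambiguity, which swaps the two factors up to contragredient and is harmless here because we only work with classes in $\Phi(\SO_4)_{/\O_4}$. I would take the globalization route, since it only needs the unramified computation that the Satake parameter of $\pi'$ at unramified places is $\mathrm{Std}\otimes\mathrm{Std}^\vee$ of those of $(\pi_1,\pi_2)$; this is immediate from the dual group map of \cref{remark:accidental-isomorphism}.
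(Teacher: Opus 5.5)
Your globalization route is essentially the paper's argument. The paper globalizes $\sigma$ on $\GSO_4$ and pulls back along $ac_4$, which also produces your $\Pi_1,\Pi_2$ with equal central characters; it then forms Ramakrishnan's $\Pi_1\boxtimes\Pi_2^{\vee}$, takes Arthur's $\GL_4$-transfer of an automorphic constituent of the restriction to $\SO_4(\A)$, and concludes by the unramified comparison, strong multiplicity one, and the local component at $v$. The one input you leave implicit is that this restriction actually contains a cuspidal automorphic representation of $\SO_4(\A)$, for which the paper cites Labesse--Schwermer; your alternative local-factor route is not needed.
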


\begin{remark}\label{remark:unramified-place}
    For (not necessarily tempered) unramified representations, we can give a similar statement to the proposition.
    This statement follows from \cref{remark:accidental-isomorphism}.
\end{remark}

\begin{proof}
     We take a discrete series representation $\sigma$ of $\GSO_4(F)$ and globalize it to a cuspidal automorphic representation $\Sigma$ of $\GSO_4(\A_k)$ for a number field $k$ such that $F$ is a completion of $k$, see \cite{Takanashi2025-Sauvageot}*{Corollary 8.3, Theorem 8.5}.
     Then, the pullback $\Sigma \circ ac_4$ is a cuspidal automorphic representation $\Pi_1 \boxtimes \Pi_2$ of $\GL_2(\A_k) \times \GL_2(\A_k)$.
     We can form the Rankin-Selberg convolution $\Pi_1 \times \Pi_2^{\vee}$ by \cite{Ramakrishnan2000-Modularity-Rankin-Selberg}*{Theorem M}, which is an isobaric sum of cuspidal automorphic representations of general linear groups.
     On the other hand, the restriction of $\Sigma$ contains a cuspidal automorphic representation of $\SO_4(\A_k)$ by \cite{LabesseSchwermer2019}*{Theorem 1.1.1}.
     Thus, we can take the functorial lift $\Pi$ to $\GL_4(\A_k)$ by Arthur's theorem, which is an isobaric sum of discrete automorphic representations of general linear groups.
     By \cref{remark:unramified-place} and the strong multiplicity one theorem, we have $\Pi = \Pi_1 \times \Pi_2^{\vee}$.
     From the local component at the place above $F$, we obtain the proposition.
\end{proof}

\begin{corollary}\label{corollary:local-Langlands-correspondence-GSO4}
    \cref{conjecture:local-Langlands-conjecture} holds for $\GSO_4(F)$.
    Also, the correspondence gives a refinement of Xu's local Langlands correspondence.
\end{corollary}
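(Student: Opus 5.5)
We will transport the local Langlands correspondence for $\GL_2 \times \GL_2$ through the accidental isomorphism $ac_4$ of \cref{remark:accidental-isomorphism}. The group $H = (\GL_2 \times \GL_2)/\GL_1^{(1,-1)}$ has tempered representations equal to the tempered representations $\pi_1 \boxtimes \pi_2$ of $\GL_2(F)\times\GL_2(F)$ with $\omega_{\pi_1} = \omega_{\pi_2}$. Here we use that $\GL_1^{(1,-1)}$ is a split torus, so $H(F) = (\GL_2(F)\times \GL_2(F))/F^{\times}$ by Hilbert 90. Dually, $\widehat{H} = \{(g_1,g_2) \mid \det g_1 = \det g_2\}$, and an $L$-parameter for $H$ is a pair $(\phi_1,\phi_2)$ of $L$-parameters for $\GL_2$ with $\det\phi_1 = \det\phi_2$, up to $\widehat H$-conjugacy. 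We therefore define $\cL_{\GSO_4}$ by composing $ac_4^{-1}$ with $\cL_{\GL_2\times\GL_2}$ from \cref{theorem:local-Langlands-GLn}, via \cref{remark:direct-product-and-LLC}. The matching of the central character condition with the determinant condition comes from property (6) of \cref{theorem:local-Langlands-GLn}, and the standard Levi subgroups of $\GSO_4$ correspond under $ac_4$ to those of $H$. The one subtlety is that $\widehat{H}$-conjugacy is finer than $\GL_2(\C)\times\GL_2(\C)$-conjugacy. However, $\widehat{H}$ together with the center surjects onto $\mathrm{PGL}_2\times\mathrm{PGL}_2$, so the two notions agree and $\cL_H$ is well defined and surjective.

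Next we check properties (1)--(5) of \cref{conjecture:local-Langlands-conjecture}. Properties (1), (2) and (4) follow formally from the corresponding properties for $\GL_2\times\GL_2$, since parabolic induction, unramified twists and discrete series are compatible with the isomorphism $ac_4$ and with the central quotient by a split torus. For (3), $\GL_2$ $L$-packets are singletons, so we must show $\pi_0(S_\phi/Z(\widehat{H})^{\Gamma_F})$ is trivial. The centralizer of $(\phi_1,\phi_2)$ in $\GL_2\times\GL_2$ is connected, being a product of groups of units of centralizer algebras. Intersecting it with $\widehat H$ imposes the single condition $\det a = \det b$, and this subgroup is again connected because it is a fibre product over a torus with the determinant maps surjective onto the connected centre part. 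Modulo $Z(\widehat H)$ the component group is therefore trivial. Property (5) is immediate, since singleton packets of $\GL_2 \times \GL_2$ have stable characters and stability is preserved under the isomorphism. For the Whittaker normalization, $\iota_{\fw}$ is the unique bijection onto the trivial representation.

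For the refinement statement, \cref{theorem:local-Langlands-correspondence-GSO2n}(a) characterizes Xu's map through the pair (parameter of a constituent of the restriction to $\SO_4$, central character). We must show our parameter $\phi$ maps to this pair under the map $\Phi(\GSO_4)\to\Phi(\SO_4)_{/\O_4}\times\Phi(\GL_1)$. On the discrete series the $\SO_4$-component is exactly \cref{proposition:refinement-GSO4}, which identifies it with $\phi_1\otimes\phi_2^{\vee}$, the image of $\phi$ under $\Std\otimes\Std^{\vee}$ by \cref{remark:accidental-isomorphism}. The $\GL_1$-component, the central character, is handled by property (6) of \cref{theorem:local-Langlands-GLn}, using that $\nu$ corresponds on $\widehat{H}$ to the common determinant, up to the convention fixed by $ac_4$. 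For general tempered representations we reduce to discrete series of Levi subgroups, by compatibility with parabolic induction on both sides. These Levi subgroups are $\GL_1$-tori and $\GL_2 \times \GL_1$-type groups, where the statement is the elementary comparison of \cref{remark:unramified-place}-style formulas. It follows that each Xu packet $\overline{\Pi}_{\phi,\chi}$ is a union of fibres of our map, modulo $\GO_4$-conjugation, which is the sense in which our correspondence refines Xu's.

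The main obstacle is the connectedness and conjugacy bookkeeping in $\widehat{H}$ versus $\GL_2\times\GL_2$. A second point needing care is the precise identification of the spinor norm $\nu$ with the determinant under $ac_4$, which we plan to verify on the maximal torus.
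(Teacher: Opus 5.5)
Your route is the one the paper intends. The corollary is recorded without separate proof right after \cref{proposition:refinement-GSO4}: one transports the $\GL_2\times\GL_2$ correspondence through $ac_4$, then reads off the refinement of Xu's map from that proposition, together with central characters and compatibility with parabolic induction. Your treatment of the following points is fine: well-definedness ($\widehat{H}$-conjugacy versus $\GL_2(\C)\times\GL_2(\C)$-conjugacy), surjectivity, properties (1), (2), (4), and the refinement argument.

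\textbf{Property (3).} One step here is false as stated. You assert that $S_\phi\cap\widehat{H}=\{(a,b)\in C_1\times C_2 \mid \det a=\det b\}$ is connected, as a fibre product of connected groups over a torus.

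\begin{itemize}
\item This fails precisely for discrete parameters. If $\phi_1,\phi_2$ are both irreducible, then $C_1=C_2=\C^{\times}$ (scalars), and the condition reads $a^2=b^2$. So $S_\phi\cap\widehat{H}=\{(a,\pm a)\}$ has two components.
\item In general, a fibre product of connected groups over a torus need not be connected; here both structure maps are $t\mapsto t^2$.
\end{itemize}

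The conclusion you need still holds, for a different reason: $Z(\widehat{H})=\{(a,\pm a)\}$ is itself disconnected.

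\begin{itemize}
\item If some $\phi_i$ is reducible, then $C_i$ contains $\diag(t,1)$, on which $\det$ has degree one. So the map $(a,b)\mapsto \det a\,(\det b)^{-1}$ is surjective on $\pi_1$, and its kernel is connected.
\item If both $\phi_i$ are irreducible, then $S_\phi\cap\widehat{H}=Z(\widehat{H})$.
\end{itemize}

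In both cases $S_\phi/Z(\widehat{H})$ is connected, which is what (3) requires.

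\textbf{Property (5).} This is a smaller point. Stability of characters on $\GL_2(F)\times\GL_2(F)$ does not formally descend to the quotient $H(F)$. What you need is that stable conjugacy of strongly regular elements of $H(F)$ coincides with $H(F)$-conjugacy.

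This does hold. By Kottwitz, $\ker(H^1(F,T)\to H^1(F,H))$ is dual to the cokernel of $\pi_0(Z(\widehat{H}))\to\pi_0(\widehat{T}^{\Gamma_F})$. Here $\widehat{T}^{\Gamma_F}$ is the same kind of fibre product as $S_\phi$: it is connected if one factor torus is split, and equals $Z(\widehat{H})$ if both are elliptic. So the same dichotomy shows the cokernel is trivial. Hence every invariant distribution on $H(F)$ is stable.
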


There is a similar result for $ac_6$ in \cref{remark:accidental-isomorphism}, which can be proved by applying the main theorems in \cite{Kim2003-functoriality-exterior-square} and \cite{Henniart2009exterior} on the exterior square functoriality, instead of \cite{Ramakrishnan2000-Modularity-Rankin-Selberg}.
We omit the proof, which is analogous to the proof for \cref{proposition:refinement-GSO4}.

\begin{proposition}\label{proposition:refinement-GSO6}
    The following diagram commutes.
    % https://q.uiver.app/#q=WzAsNCxbMCwwLCJcXFBpX3tcXG1hdGhybXt0ZW1wfX0oXFxtYXRocm17R1NPXzR9KSJdLFsyLDAsIlxcUGlfe1xcbWF0aHJte3RlbXB9fSgoXFxtYXRocm17R0x9XzJcXHRpbWVzIFxcbWF0aHJte0dMfV8yKS9cXG1hdGhybXtHTH1fMSkiXSxbMiwxLCJcXFBoaV97XFxtYXRocm17dGVtcH19KChcXG1hdGhybXtHTH1fMlxcdGltZXMgXFxtYXRocm17R0x9XzIpL1xcbWF0aHJte0dMfV8xKSJdLFswLDEsIlxcUGhpKFxcbWF0aHJte1NPfV80KV97LyBcXG1hdGhybXtPfV80fSJdLFswLDEsImFjXzQiXSxbMSwyLCJcXG1hdGhjYWx7TH1fe1xcbWF0aHJte0dMfV8yIFxcdGltZXMgXFxtYXRocm17R0x9XzJ9Il0sWzAsMywie1xcbWF0aGNhbHtMfV97XFxtYXRocm17U099XzR9fV97LyBcXG1hdGhybXtPfV80fSIsMl0sWzIsMywiXFxvdGltZXMiXV0=
    \[\begin{tikzcd}
	{\Pi_{\mathrm{disc}}(\mathrm{GSO_6})} && {\Pi_{\mathrm{disc}}((\mathrm{GL}_4 \times \mathrm{GL}_1)/\mathrm{GL}_1)} \\
	{\Phi(\mathrm{SO}_6)_{/ \mathrm{O}_6}} && {\Phi_{\mathrm{disc}}((\mathrm{GL}_4 \times \mathrm{GL}_1)/\mathrm{GL}_1)}
	\arrow["{ac_6}", from=1-1, to=1-3]
	\arrow[from=1-1, to=2-1]
	\arrow["{\mathcal{L}_{\mathrm{GL}_4  \times \mathrm{GL}_1}}", from=1-3, to=2-3]
	\arrow["\wedge^2 \otimes \Std^{-1}", from=2-3, to=2-1]
    \end{tikzcd}\]
    The left vertical arrow is given by the 1st component of the morphism \cref{lemma:identification-parameter-GSO-and-SO-times-GL1} and the map of \cref{theorem:local-Langlands-correspondence-GSO2n}.
\end{proposition}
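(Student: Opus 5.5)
We follow the proof of \cref{proposition:refinement-GSO4}, replacing Ramakrishnan's Rankin--Selberg functoriality with the exterior square lift of Kim and Henniart. The strategy has four steps: globalize, pull back along $ac_6$, compare two global lifts to $\GL_6$ by strong multiplicity one, and read off the local component.

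\textbf{Globalization.} Let $\sigma$ be a discrete series representation of $\GSO_6(F)$. By \cite{Takanashi2025-Sauvageot}*{Corollary 8.3, Theorem 8.5}, there is a number field $k$ with a place $v_0$ such that $k_{v_0}=F$, and a cuspidal automorphic representation $\Sigma$ of $\GSO_6(\A_k)$ with $\Sigma_{v_0}\cong\sigma$. Pulling back along $ac_6$ from \cref{remark:accidental-isomorphism}, $\Sigma\circ ac_6$ is a cuspidal automorphic representation $\Pi_1\boxtimes\mu$ of $\GL_4(\A_k)\times\GL_1(\A_k)$. It is trivial on $\GL_1^{(1,-2)}$, so $\omega_{\Pi_1}=\mu^2$.

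\textbf{First lift: exterior square.} By Kim's theorem \cite{Kim2003-functoriality-exterior-square}, $\wedge^2\Pi_1$ is an isobaric automorphic representation of $\GL_6(\A_k)$. At every place it is compatible with the local lift $\wedge^2\circ\cL_{\GL_4}$; at the remaining places, including the $p$-adic place $v_0$, this compatibility is Henniart's theorem \cite{Henniart2009exterior}. Twisting by $\mu^{-1}$ gives the automorphic representation $\wedge^2\Pi_1\otimes\mu^{-1}$ of $\GL_6(\A_k)$.

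\textbf{Second lift: through $\SO_6$.} By \cite{LabesseSchwermer2019}*{Theorem 1.1.1}, the restriction of $\Sigma$ to $\SO_6(\A_k)$ contains a cuspidal automorphic representation $\Sigma'$. Arthur's theorem then gives a functorial lift $\Pi$ of $\Sigma'$ to $\GL_6(\A_k)$, which is an isobaric sum of discrete automorphic representations. By \cref{remark:unramified-place} in its $ac_6$ version, at almost all unramified places the Satake parameter of $\Pi$ equals $z^{-1}\wedge^2 g_1$ evaluated at the Satake parameter of $\Pi_1\boxtimes\mu$. This identification uses the dual description $\GSpin_6\to\SO_6\hookrightarrow\GL_6$, $(g_1,z)\mapsto z^{-1}\wedge^2 g_1$, from \cref{remark:accidental-isomorphism}.

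\textbf{Comparison and local conclusion.} The two lifts agree at almost all places, so strong multiplicity one for isobaric sums (Jacquet--Shalika) gives $\Pi\cong\wedge^2\Pi_1\otimes\mu^{-1}$. Taking the $v_0$-components, the $L$-parameter of $\Sigma'_{v_0}$ under Arthur's map is $\wedge^2\cL_{\GL_4}(\Pi_{1,v_0})\otimes\mu_{v_0}^{-1}$, up to $\O_6(\C)$-conjugacy. Here we use local-global compatibility of Arthur's lift at $v_0$ and, on the $\GL_4$ side, Henniart's local compatibility. The representation $\Sigma'_{v_0}$ is a constituent of $\sigma|_{\SO_6(F)}$. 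By \cref{theorem:local-Langlands-correspondence-GSO2n}(a), the first component of the left vertical arrow applied to $\sigma$ is exactly ${\cL_{\SO_6}}_{/\O_6}(\Sigma'_{v_0})$, which is what commutativity of the diagram requires.

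\textbf{Main obstacle.} The delicate point is the precise dual-side identification in the unramified comparison, including the $z^{-1}$ twist and the sign convention $(1,-2)$. It amounts to checking that the Satake isomorphism is compatible with $ac_6$ and with the map $\GSpin_6\to\SO_6$. We would verify this on the maximal tori directly from the cocharacter descriptions.
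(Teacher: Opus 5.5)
Your proposal is correct and is the argument the paper intends. The paper omits this proof and says only that it is the proof of \cref{proposition:refinement-GSO4} with the Kim--Henniart exterior square functoriality in place of Ramakrishnan's Rankin--Selberg lift, which is exactly what you carry out: globalize, pull back along $ac_6$, compare with Arthur's lift of a cuspidal $\SO_6$-constituent by strong multiplicity one, and localize at $v_0$. The extra details you supply are consistent with the paper's conventions, namely $\omega_{\Pi_1}=\mu^2$, the twist $(g_1,z)\mapsto z^{-1}\wedge^2 g_1$, and the use of \cref{theorem:local-Langlands-correspondence-GSO2n}(a) to identify the left vertical arrow.
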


\begin{corollary}\label{corollary:local-Langlands-correspondence-GSO6}
    \cref{conjecture:local-Langlands-conjecture} holds for $\GSO_6(F)$.
    Also, the correspondence gives a refinement of Xu's correspondence.
\end{corollary}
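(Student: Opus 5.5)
The statement to be proved is \cref{corollary:local-Langlands-correspondence-GSO6}. The plan is to transport the local Langlands correspondence for $\GL_4 \times \GL_1$ through the accidental isomorphism $ac_6$ of \cref{remark:accidental-isomorphism}. Concretely, I would define $\cL_{\GSO_6}(\pi) := \cL_{\GL_4 \times \GL_1}(\pi \circ ac_6)$. This uses the identification ${}^L\GSO_6 \simeq \{(g,z) \in \GL_4 \times \GL_1 \mid \det g = z^2\}$, which is dual to $ac_6$. Since $(\GL_4 \times \GL_1)/\GL_1^{(1,-2)}$ is a quotient of $\GL_4 \times \GL_1$ by a central split torus, its tempered representations are the tempered representations of $\GL_4 \times \GL_1$ with trivial central character on that torus. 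By \cref{theorem:local-Langlands-GLn}(6), these correspond exactly to parameters valued in the dual subgroup above. Every Levi subgroup of $\GSO_6$ is the image under $ac_6$ of a Levi subgroup of $\GL_4\times\GL_1$ containing the torus, and the same description applies to each of them. I would then verify properties (1), (2) and (4) of \cref{conjecture:local-Langlands-conjecture} by transporting them from \cref{theorem:local-Langlands-GLn} together with \cref{remark:direct-product-and-LLC}.

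Properties (3) and (5) need a little more care, because $L$-packets for $\GL_4$ are singletons while the component group for the quotient may be nontrivial. First I would compute $S_\phi$ for $\phi = (\phi_4,\chi)$ inside $\{\det g = z^2\}$. Its image modulo the center is controlled by the characters $\eta$ with $\phi_4\otimes\eta \simeq \phi_4$. Next, on the representation side, the $L$-packet is the set of irreducible constituents of the restriction to the image of $(\GL_4\times\GL_1)(F)$ in $\GSO_6(F)$. Here $\GSO_6(F)$ may be strictly larger than that image, since $H^1(F,\GL_1)=0$ but the quotient may still fail to be surjective on $F$-points, which introduces a Kottwitz-type cokernel. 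I would therefore match the packet with the twisted-self-isomorphism group, following the standard Labesse--Langlands / Gelbart--Knapp style argument. Stability in (5) should follow because the packet character is the restriction of a $\GL$-character, hence invariant under $G_{\ad}(F)$ and stable.

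The refinement claim follows from \cref{proposition:refinement-GSO6}. It shows that the $\SO_6$-component of Xu's parameter equals $z^{-1}\wedge^2$ of the $\GL_4$-parameter, which is the image of our parameter under $\iota^{\SO_6}_{\GSO_6}$. By \cref{theorem:local-Langlands-correspondence-GSO2n}(a), the similitude component of Xu's parameter is the central character, and the central character of $\pi$ matches $\nu$ of our parameter. Hence, via \cref{lemma:identification-parameter-GSO-and-SO-times-GL1}, our map composed with the projection to $\Phi_{\temp}(\GSO_6)_{/\O_6\times\GL_1}$ recovers Xu's map. I would first establish this for discrete series and then extend it to tempered representations using compatibility (1) on both sides.

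The main obstacle I expect is (3): producing a bijection with $\Irr(\pi_0(S_\phi/Z(\widehat{G})^{\Gamma_F}))$ that depends on the Whittaker datum and is independent in dimension. This forces a careful analysis of restriction from a group onto a finite-index normal subgroup with abelian quotient. It also requires showing that the multiplicities $\langle\pi,1\rangle$ equal the dimensions of the corresponding irreducible representations, which may be larger than $1$ if the component group is nonabelian. My first attempt would be to use generic constituents to normalize, and Clifford theory to match the projective representations of the stabilizer group.
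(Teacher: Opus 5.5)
Your overall route is the one the paper intends. You transport the correspondence for $\GL_4\times\GL_1$ and its Levi subgroups through $ac_6$. You then get the refinement of Xu's correspondence from \cref{proposition:refinement-GSO6}, together with matching the central character to $\nu$ of the parameter. Your first and third paragraphs are fine. The gap is in (3), and to a lesser extent (5): you leave these open as the ``main obstacle'' because of a false claim.

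\textbf{The representation side.} Since $\GL_1^{(1,-2)}\cong\GL_1$ is a split torus, the exact sequence $1\to F^\times\to\GL_4(F)\times F^\times\to\GSO_6(F)\to H^1(F,\GL_1)=1$ shows that $ac_6$ \emph{is} surjective on $F$-points. Your sentence that $H^1(F,\GL_1)=0$ but surjectivity may still fail is self-contradictory; there is no Kottwitz-type cokernel. Your own definition $\cL_{\GSO_6}(\pi):=\cL_{\GL_4\times\GL_1}(\pi\circ ac_6)$ already presupposes that $\pi\circ ac_6$ is irreducible, which is exactly this surjectivity. Hence $\pi\mapsto\pi\circ ac_6$ is a bijection from $\Pi_{\temp}(\GSO_6)$ onto the tempered representations of $\GL_4(F)\times F^\times$ trivial on $\{(z,z^{-2})\}$. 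So every $L$-packet is a singleton, and likewise for every Levi subgroup $(\prod_i\GL_{n_i}\times\GL_1)/\GL_1^{(1,\ldots,1,-2)}$.

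\textbf{The dual side.} This confirms the above, and the characters $\eta$ with $\phi_4\otimes\eta\simeq\phi_4$ play no role. Take $\phi=(\phi_4,\chi)$ valued in $\widehat{\GSO_6}=\{(g,w)\in\GL_4(\C)\times\C^\times\mid\det g=w^2\}$. An element $(g,w)$ centralizes $\phi$ iff $g\in\Cent_{\GL_4(\C)}(\phi_4)$. The map $(g,w)\mapsto g$ induces $S_\phi/Z(\widehat{\GSO_6})\cong\Cent_{\GL_4(\C)}(\phi_4)/\C^\times$, which is connected. Twisting characters enter when the relevant dual group is a quotient of the one you control. Examples are $\SL_4\subset\GL_4$, where $\PGL_4(\C)$ is a quotient of $\GL_4(\C)$, or $\SO_6\times\GL_1\to\GSO_6$ in \cref{section:5}. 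Here instead $\widehat{\GSO_6}$ is a subgroup of $\GL_4(\C)\times\C^\times$, so centralizers are computed inside $\GL_4(\C)$ itself.

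\textbf{Consequences for (3) and (5).} Thus $\pi_0(S_\phi/Z(\widehat{G})^{\Gamma_F})=1$. Property (3) is the unique bijection between one-element sets, with $\langle\pi,1\rangle=1$ for every $\fw$; no Clifford theory or generic normalization is needed. For (5), invariance under $G_{\ad}(F)=\PGL_4(F)=\GL_4(F)/F^\times$ is clear. However, ``restriction of a $\GL$-character'' does not by itself prove stability. You should add that stably conjugate strongly regular elements of $\GSO_6(F)$ are conjugate. Let $T_\gamma=(T_4\times\GL_1)/\GL_1$ be the centralizer, with $T_4\subset\GL_4$ the centralizer of a lift. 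The connecting map $H^1(F,T_\gamma)\to H^2(F,\GL_1)$ is injective by Shapiro and Hilbert 90, and it factors through $H^1(F,\GSO_6)$. Hence $\ker(H^1(F,T_\gamma)\to H^1(F,\GSO_6))$ is trivial. With these observations the statement is the formal consequence the paper has in mind.
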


\subsection{Gan--Savin's local Langlands correspondence for $G_2$}

We state the local Langlands correspondence for $G_2$ proved by Gan and Savin.

\begin{remark}
    The Levi subgroups of $G_2$ are isomorphic either to $\GL_{2}$ or $\GL_1 \times \GL_1$; the local Langlands correspondence for these groups is already known.
\end{remark}

\begin{theorem}[\cite{GS23LLC}*{1.1}]\label{theorem:local-Langlands-correspondence-G2}
    \cref{conjecture:local-Langlands-conjecture} holds for $G_2(F)$, except for Property $(3)$ when the residual characteristic of $F$ is equal to $p=3$ and $\phi$ is discrete, and for Property $(5)$.
    In that situation, we only have an injection 
    \begin{align*}
        \iota_{\fw} \colon \Pi_{[\phi]_G}(G) \hookrightarrow \Irr(\pi_0(S_{\phi})).
    \end{align*}
\end{theorem}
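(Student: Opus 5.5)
This theorem is a citation of Gan--Savin \cite{GS23LLC}, so the proof will not rebuild the correspondence. It will check each property of \cref{conjecture:local-Langlands-conjecture} against what \cite{GS23LLC}*{1.1} provides, together with the structure of the Levi subgroups. The Levi subgroups of $G_2$ are $\GL_2$ (two non-conjugate copies, attached to the long and short root) and the torus $\GL_1\times\GL_1$. For these the maps $\cL_M$ come from \cref{theorem:local-Langlands-GLn} and \cref{remark:direct-product-and-LLC}.

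For $G$ itself, I take the map $\cL_G$ on $\Pi_{\temp}(G_2)$ from \cite{GS23LLC}*{Theorem 1.1}. Surjectivity onto $\Phi_{\temp}(G_2)$ holds because every $L$-packet of Gan--Savin is nonempty. The checks go in this order.
\begin{enumerate}
\item Property (4) is part of their main theorem: discrete series correspond to discrete parameters.
\item Property (1) follows because Gan--Savin construct the nondiscrete tempered packets as the constituents of $i^G_P(\sigma)$ with $\sigma$ a discrete series of a Levi subgroup, with parameter $\iota^G_M\circ\phi_\sigma$. Here I must check that the Levi embeddings ${}^LM\hookrightarrow {}^LG_2$ used there agree with the canonical ones coming from the fixed splittings.
\item Property (2) is then formal: for a unitary unramified character $\chi$ of $M$, the packet of $\phi_\sigma\otimes\chi$ consists of the constituents of $i^G_P(\sigma\otimes\chi)$.
\end{enumerate}

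For property (3), $Z(\widehat{G_2})$ is trivial, so the target is $\Irr(\pi_0(S_\phi))$. Gan--Savin give a bijection normalized by the Whittaker datum, which for $G_2$ is unique up to conjugacy since $G_2$ is adjoint; independence of $\fw$ is therefore automatic. The one failure is for discrete $\phi$ when $p=3$. There \cite{GS23LLC} only establishes injectivity, because some supercuspidals with $S_\phi\simeq S_3$ are not known to realize every character. That case must be stated as an exception, and this is the source of the caveat in the theorem.

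Property (5), stability of $\Theta_\phi$, is not proved in \cite{GS23LLC}, so it is excluded. The main obstacle is therefore bookkeeping rather than mathematics: matching Gan--Savin's normalizations (their Levi $L$-embeddings, their use of theta correspondences with $\PGSp_6$ and $\PGSO_8$, and the Whittaker normalization) with the conventions fixed in \Cref{section:2}. I would carry this out by comparing both constructions on the unramified principal series and on the Steinberg representation.
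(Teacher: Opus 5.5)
Your proposal is correct and matches the paper, which gives no argument beyond citing \cite{GS23LLC}*{1.1} and remarking that the proper Levi subgroups ($\GL_2$ for each root length, and $\GL_1\times\GL_1$) are covered by \cref{theorem:local-Langlands-GLn}. One small imprecision: Property (2) concerns only $\cL_M$, so it is vacuous for $M=G_2$ (which is semisimple, hence $\cX(G_2)$ is trivial) and for the proper Levis it is exactly the $\GL_n$ statement; it does not need to be deduced from any description of $G_2$-packets.
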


\begin{remark}
    We do not use Property $(3)$ when the residual characteristic of $F$ is equal to $p=3$ and $\phi$ is discrete, nor Property $(5)$ for $G_2(F)$ in this paper.
\end{remark}

\subsection{Relation with local theta correspondence}

In this paper, we utilize the similitude theta correspondence between $\PGO_{8}$ (or $\PGSO_{8}$) and $\PGSp_{6}$ and the exceptional theta correspondence for $G_2$ and $\PGSp_{6}$. 
We do not use the technical properties of these correspondences; therefore we summarize their main properties axiomatically.

Let $\iota^{\GSO_8}_{\GSp_6}$ denote the $L$-embedding ${}^L \GSp_6 \to {}^L \GSO_8$.
Also, let $\iota^{\GSp_6}_{G_2}$ denote the $L$-embedding ${}^L G_2 \to {}^L \GSp_6$ which factors through ${}^L \PGSp_6$.

\begin{theorem}[The similitude theta correspondence for $\PGSO_8$ and $\PGSp_6$]\label{theorem:similitude-theta}
    There exists a map 
    \begin{align*}
        \theta^{\PGSO_8}_{\PGSp_6} \colon \Pi_{\temp}(\PGSp_{6}) \to \Pi_{\temp}(\PGSO_8) \cup \{ 0 \}.
    \end{align*}
    The map satisfies the following properties.
    
    \begin{itemize}
    \item[(a)] 
    The image is equal to the disjoint union of $\Pi_{\temp}(\PGSO_8)_{\theta} =  \{ \pi \in \Pi_{\temp}(\PGSO_8) \mid \gamma(0, \iota^{\GL_{8}}_{\SO_8} \circ \phi^{\SO_8}_{\pi}, \psi) = 0  \}$ and $\{ 0 \}$.

    \item[(b)]
    The map is compatible with the local Langlands correspondence in the following sense:
    For any tempered irreducible representation $\sigma$ of $\PGSp_6(F)$ such that $\theta^{\PGSO_8}_{\PGSp_6}(\sigma) \neq 0$, we have 
    \begin{align*}
           {\cL_{\mathrm{GSO}_8}}_{/ \O_8 \times \GL_1} 
           (\theta^{\PGSO_8}_{\mathrm{PGSp}_6}(\sigma)) 
           =
           [
           \iota^{\GSO_8}_{\GSp_6}
           \circ  
           {\cL_{\GSp_6}}_{/ \Sp_6 \times \GL_1} (\sigma)]_{\O_8 \times \GL_1}.
    \end{align*}

    \item[(c)]
    If we set 
    \begin{align*}
        \Pi_{\temp}(\PGSp_6)_{\theta} = \{ \sigma \in \Pi_{\temp}(\PGSp_6) \mid  \theta^{\PGSO_8}_{\PGSp_6}(\sigma) \neq 0 \},
    \end{align*}
    then the map $\theta^{\PGSO_8}_{\PGSp_6}$ induces a bijection 
    \begin{align*}
        \theta^{\PGSO_8}_{\PGSp_6} \colon 
        \Pi_{\temp}(\PGSp_{6})_{\theta} \xrightarrow{\sim} \Pi_{\temp}(\PGSO_8)_{\theta}.
    \end{align*}

    \item[(d)] 
    The map $\theta^{\PGSO_8}_{\PGSp_6}$ maps generic representations to generic representations.
    The inverse image of any generic representation is generic.
    \end{itemize}
\end{theorem}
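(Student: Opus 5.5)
The approach I would take is to reduce the statement to the isometry theta correspondence for the dual pair $(\Sp_6, \O_8)$ with $\O_8$ split, and then pass to similitudes and to adjoint groups. For the isometry pair I would use the known description of tempered theta lifts in terms of $L$-parameters (Prasad's conjecture, proved by Atobe--Gan), combined with the conservation relation. Concretely, let $\sigma'$ be a tempered irreducible representation of $\Sp_6(F)$ with parameter $\phi'$, which is $7$-dimensional orthogonal. Then:
\begin{itemize}
\item the conservation relation $m^{+}(\sigma') + m^{-}(\sigma') = 10$ and the Atobe--Gan criterion decide whether the lift of $\sigma'$ to the split $\O_8$ is nonzero;
\item when the lift is nonzero, it is tempered and irreducible, and its standard parameter is $\phi' \oplus \mathbf{1}$;
\item the component character of the lift is determined explicitly by that of $\sigma'$, relative to compatible Whittaker data.
\end{itemize}
Conversely, a tempered $\pi'$ of $\SO_8(F)$ is in the image exactly when its parameter contains $\mathbf{1}$, that is, when $\gamma(0, \iota^{\GL_8}_{\SO_8}\circ\phi^{\SO_8}_{\pi'}, \psi) = 0$. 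The lift in the opposite direction from $\O_8$ to $\Sp_6$ then inverts the correspondence, which gives the isometry versions of (a), (b) and (c).

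Next I would extend this to similitudes following Roberts and Gan--Takeda. I would consider $\GSp_6^{+}$ and $\GO_8$, and define $\theta$ by inducing or restricting along $\GSp_6^{+} \subset \GSp_6$. Requiring trivial central characters on both sides yields the map $\Pi_{\temp}(\PGSp_6) \to \Pi_{\temp}(\PGSO_8)\cup\{0\}$. In the passage from $\GO_8$ to $\GSO_8$ I expect no ambiguity. The reason is that $\phi'\oplus\mathbf{1}$ contains an odd-dimensional orthogonal summand, so by \cref{theorem:local-Langlands-correspondence-SO2n}(iii) the relevant $\O_8$-orbits are singletons. Hence the restriction to $\GSO_8$ of the similitude theta lift is irreducible.

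Property (b) would then follow from \cref{theorem:local-Langlands-correspondence-GSO2n}(a) and \cref{theorem:local-Langlands-correspondence-GSp2n}(a). Both parameters restrict on the isometry groups to $\phi'\oplus\mathbf{1}$ and $\phi'$ respectively, and the $\GL_1$-components are both trivial. By \cref{lemma:identification-parameter-GSO-and-SO-times-GL1}, this already determines the class in $\Phi_{\temp}(\GSO_8)_{/\O_8\times\GL_1}$.

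For the bijection in (c), I would check that the constituents of $\sigma|_{\Sp_6}$ are permuted by $\GSp_6(F)$ compatibly with the $\GSO_8(F)$-action on the constituents of the lift. This is the standard Roberts-type argument that similitude theta preserves multiplicity-one restriction data. For (d), genericity on both sides is characterized, via the LLC and a Whittaker datum, by the trivial component character. Since the Atobe--Gan description of component characters sends trivial to trivial for compatible Whittaker data, generic representations correspond to generic ones in both directions. Alternatively, one could compute Whittaker functionals directly on the Weil representation.

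I expect the main obstacle to be the similitude step. Two points need care there: controlling the quotient $\GSp_6(F)/\GSp_6^{+}(F)$ together with the twists by quadratic characters $\omega\circ\lambda$, which is exactly the ambiguity in Xu's packets in \cref{theorem:local-Langlands-correspondence-GSp2n}(d); and showing that nonvanishing for the isometry pair matches nonvanishing for the similitude pair. I would first try to show that for all tempered $\sigma$ in the $\theta$-image the relevant $\omega$-stabilizers on both sides coincide, using \cref{theorem:local-Langlands-correspondence-GSO2n}(c) and \cref{theorem:local-Langlands-correspondence-GSp2n}(c).
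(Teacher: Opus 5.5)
Your strategy is essentially the one behind the paper's proof, which is purely by citation. Howe duality for the similitude pair (Gan--Takeda, Thm.~1.2) together with Baki\'c--Gan--Savin, Prop.~5.1, gives the map and (c). Baki\'c--Gan--Savin, Lemma~5.3, combined with Atobe--Gan, Thm.~4.7 (Prasad's conjecture for the isometry pair), gives (a), (b) and (d). You are in effect proposing to reprove the Baki\'c--Gan--Savin passage from $(\Sp_6,\O_8)$ to $(\PGSp_6,\PGSO_8)$ by hand, following Roberts. That is a sound plan, but three points need fixing.

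First, the conservation relation is misstated. For $\Sp(W)$ with $\dim W=6$ and the two even orthogonal towers of trivial discriminant, it reads $m^{+}(\sigma')+m^{-}(\sigma')=2\dim W+4=16$, not $10$. Equivalently, $n(\Pi)+n(\Pi\otimes\det)=2\dim V=16$ for $\Pi$ on $\O_8$.

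Second, the obstacle you single out does not arise. Since $V_8$ is split, the similitude character of $\GO_8(F)$ is surjective, so $\GSp_6^{+}=\GSp_6$ and there is no index to control.

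Third, there is a missing step. Irreducibility of the restriction of the $\GO_8$-lift to $\GSO_8$ does not follow from the $\O_8$-stability of the $\SO_8$-constituents given by \cref{theorem:local-Langlands-correspondence-SO2n}(iii). A priori one could have $\pi^{\Sigma}\cong\pi\otimes(\omega\circ\lambda)\not\cong\pi$ for a nontrivial quadratic $\omega$, and then every $\SO_8$-constituent would still be $\O_8$-stable. You can close this in either of two ways.
\begin{itemize}
\item Apply \cref{theorem:local-Langlands-correspondence-GSO2n}(c) with $\omega=\mathbf{1}$. The parameter $\iota^{\GSO_8}_{\GSp_6}\circ\widetilde{\phi}$, and any quadratic twist of it, takes values in $\GSpin_7$. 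Conjugation by the reflection in the trivial line fixes $\GSpin_7$ pointwise, so $\pi^{\Sigma}\cong\pi$.
\item Use that the restriction of the lift to $\O_8$ is multiplicity-free, by Howe duality and multiplicity one for $\sigma|_{\Sp_6}$. The corrected conservation relation then forbids $\Pi$ and $\Pi\otimes\det$ from both lifting to $\Sp_6$, since $6+6<16$.
\end{itemize}
The same conservation input is what you need for injectivity in (c) once you forget the $\GO_8$-structure: two $\GO_8$-representations with the same irreducible restriction to $\GSO_8$ differ by $\det$.
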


\begin{proof}
    The existence of the map follows from \cite{GanTakeda2016-Howeduality}*{Theorem 1.2} and \cite{bakic2023similitudeexceptionalthetacorrespondences}*{Proposition 5.1}.
    The statements (a) and (b) follow from \cite{bakic2023similitudeexceptionalthetacorrespondences}*{Lemma 5.3} and \cite{AtobeGan2017-localLanglandsorthogonal}*{Theorem 4.7}.
    The statement (c) follows from \cite{bakic2023similitudeexceptionalthetacorrespondences}*{Proposition 5.1} and \cite{GanTakeda2016-Howeduality}*{Theorem 1.2}.
    The statement (d) on the generic representations follows from \cite{bakic2023similitudeexceptionalthetacorrespondences}*{Lemma 5.3} and 
    \cite{AtobeGan2017-localLanglandsorthogonal}*{Theorem 4.7}.
\end{proof}

\begin{theorem}[The exceptional theta correspondence for $\PGSp_6$ and $G_2$]\label{theorem:exceptional-theta}
    There exists a map 
    \begin{align*}
        \theta^{\PGSp_6}_{G_2} \colon \Pi_{\temp}(G_2) \to \Pi_{\temp}(\PGSp_6) \cup \{ 0 \}.
    \end{align*}
    The map satisfies the following properties.

    \begin{itemize}
    \item[(a)]
    The image is equal to the disjoint union of $\Pi_{\temp}(\PGSp_6)_{\theta} =  \{ \pi \in \Pi_{\temp}(\PGSp_6) \mid \gamma(0, \pi, \Spin, \psi) = 0 \}$ and $\{ 0 \}$.
    Here, the local $\gamma$-factor $\gamma(s, \pi, \Spin, \psi)$ is the $\gamma$-factor for the spin representation of ${}^L \PGSp_6$, which is defined in \cite{GS23LLC}*{12.3}.

    \item[(b)]
    The map is compatible with the local Langlands correspondence in the following sense:
    For any tempered irreducible representation $\sigma$ of $G_2(F)$ such that $\theta^{\PGSp_6}_{G_2}(\sigma) \neq 0$, we have 
    \begin{align*}
           {\cL_{\mathrm{GSp_6}}}_{/ \Sp_6 \times \GL_1} 
           (\theta^{\PGSp_6}_{G_2}(\sigma)) 
           =
           [
           \iota^{\GSp_6}_{G_2}
           \circ  
           {\cL_{G_2}} (\sigma)
           ]_{\Sp_6 \times \GL_1}.
    \end{align*}

    \item[(c)] 
    Define
    \begin{align*}
        \Pi_{\temp}(G_2)_{\theta} = \{ \sigma \in \Pi_{\temp}(G_2) \mid  \theta^{\PGSp_6}_{G_2}(\sigma) \neq 0 \},
    \end{align*}
    then the map $\theta^{\PGSp_6}_{G_2}$ induces a bijection 
    \begin{align*}
        \theta^{\PGSp_6}_{G_2} \colon 
        \Pi_{\temp}(G_2)_{\theta} \xrightarrow{\sim} \Pi_{\temp}(\PGSp_6)_{\theta}.
    \end{align*}

    \item[(d)] 
    The map $\theta^{\PGSp_6}_{G_2}$ maps generic representations to generic representations.
    The inverse image of any generic representation is generic.
    \end{itemize}
\end{theorem}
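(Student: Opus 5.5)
This theorem is a compilation of results from the literature, in the same way as \cref{theorem:similitude-theta}, so the plan is to read each property off from Gan--Savin's work on the exceptional dual pair $G_2 \times \PGSp_6$ inside the adjoint group of type $E_7$, together with the local Langlands correspondence of \cref{theorem:local-Langlands-correspondence-G2}. For existence, I would cite the Howe duality theorem for this exceptional dual pair proved by Gan--Savin (and the related results in \cite{GS23LLC}, as well as \cite{bakic2023similitudeexceptionalthetacorrespondences}). That theorem says the big theta lift of an irreducible representation of $G_2(F)$ has at most one irreducible quotient, and by the same references this quotient is tempered when the input is tempered. Sending $\sigma$ to this quotient, or to $0$ if there is none, defines the map. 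Injectivity on the nonzero locus, which is half of (c), is then the symmetric half of Howe duality.

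For (b), I would use the construction in \cite{GS23LLC}: there the $L$-parameter of a representation of $G_2(F)$ is essentially defined by transporting, along this theta lift, the Langlands parameter of $\PGSp_6$ (Xu's, with the spin representation) into ${}^L G_2$ through $\iota^{\GSp_6}_{G_2}$. So (b) is either built into the construction or is stated among its compatibilities; I only need to check that it matches the normalization through the classes modulo $\Sp_6 \times \GL_1$. Property (d) follows from the known behaviour of twisted Jacquet modules of the minimal representation along the unipotent radical of the Heisenberg parabolic, as used in \cite{GS23LLC}.

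The main obstacle is (a) together with the surjectivity in (c): I need to show that $\sigma \in \Pi_{\temp}(\PGSp_6)$ lies in the image exactly when $\gamma(0,\pi,\Spin,\psi)=0$. I would first look for a direct statement of this in \cite{GS23LLC} (around its Section 12.3, where the spin $\gamma$-factor is defined), or in \cite{bakic2023similitudeexceptionalthetacorrespondences}. If it is not stated directly, I would argue through parameters. Having a pole of $L(s,\phi,\Spin)$ at $s=0$ is equivalent to the $8$-dimensional spin parameter containing the trivial representation. This is the same as the parameter factoring through $G_2(\C) \subset \mathrm{Spin}_7(\C)$, the stabilizer of a vector. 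Gan--Savin's characterization of the image of the theta lift on each tempered $L$-packet of $\PGSp_6$ then gives the equality of sets. Care is needed because packets are not singletons, so which members occur is governed by the characters of the component group in \cite{GS23LLC}; this is where I expect the most friction.
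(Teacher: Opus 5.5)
Your plan agrees with the paper, whose proof is likewise a compilation of citations: existence from \cite{GS23theta}*{Theorem 1.1, Theorem 1.2}, (a) from \cite{GrossSavin1998-Motives}*{4, Proposition 1.10} and \cite{GS23LLC}*{Main theorem (vi), Theorem 12.7} together with the local Langlands correspondences for $\GSp_6$ and $G_2$ (so through parameters, as in your fallback), (b) from \cite{GS23theta}*{15.1}, (c) from \cite{GS23theta}*{Theorem 12.4}, and (d) from \cite{GS23theta}*{Corollary 11.2, Lemma 12.1}. Two details in your parameter argument for (a) need tightening: Xu's class determines a $\Spin_7$-valued parameter only up to a quadratic twist, which twists the spin representation, so you must use the refined parameter that computes $\gamma(s,\pi,\Spin,\psi)$ as in \cite{GS23LLC}*{12.3}; and the fixed vector must be taken non-isotropic for its stabilizer to be $G_2$, which is automatic because the invariants of a semisimple orthogonal representation form a nondegenerate subspace.
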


\begin{proof}
    The existence of the map follows from \cite{GS23theta}*{Theorem 1.1, Theorem 1.2}.
    The statement (a) follows from \cite{GrossSavin1998-Motives}*{4, Proposition 1.10}, \cite{GS23LLC}*{Main theorem (vi), Theorem 12.7}, \cref{theorem:local-Langlands-correspondence-GSp2n} and \cref{theorem:local-Langlands-correspondence-G2}.
    Also, the statement (b) follows from \cite{GS23theta}*{15.1}.
    The statement (c) follows from \cite{GS23theta}*{Theorem 12.4}.
    The statement (d) follows from \cite{GS23theta}*{Corollary 11.2, Lemma 12.1}.
\end{proof}

\subsection{Automorphisms of $\PGSO_8$}

In this section, let $F$ be a local or global field.

In this subsection, we fix an $F$-splitting $\mathbf{spl}_{\PGSO_8}$ of $\PGSO_8(F)$. Also, we fix a $\Gamma_F$-splitting $\mathbf{spl}_{\Spin_8}$ of $\Spin_8(\C)$.

\begin{definition}\label{definition:automorphism-of-D4}
    We use the following notation for the automorphisms of the diagram of type $D_4$. 
    % https://q.uiver.app/#q=WzAsNCxbMSwwLCJcXGJldGFfMSJdLFsxLDEsIlxcYmV0YV8yIl0sWzAsMiwiXFxiZXRhXzMiXSxbMiwyLCJcXGJldGFfNCJdLFswLDEsIiIsMCx7InN0eWxlIjp7ImhlYWQiOnsibmFtZSI6Im5vbmUifX19XSxbMSwyLCIiLDAseyJzdHlsZSI6eyJoZWFkIjp7Im5hbWUiOiJub25lIn19fV0sWzEsMywiIiwwLHsic3R5bGUiOnsiaGVhZCI6eyJuYW1lIjoibm9uZSJ9fX1dLFswLDIsInNfNiIsMix7ImN1cnZlIjozLCJzdHlsZSI6eyJ0YWlsIjp7Im5hbWUiOiJhcnJvd2hlYWQifX19XSxbMCwzLCJzXzEiLDAseyJjdXJ2ZSI6LTMsInN0eWxlIjp7InRhaWwiOnsibmFtZSI6ImFycm93aGVhZCJ9fX1dXQ==
    \[\begin{tikzcd}
	& {\beta_1} \\
	& {\beta_2} \\
	{\beta_3} && {\beta_4}
	\arrow[no head, from=1-2, to=2-2]
	\arrow["{s_6}"', curve={height=18pt}, tail reversed, from=1-2, to=3-1]
	\arrow["{s_1}", curve={height=-18pt}, tail reversed, from=1-2, to=3-3]
	\arrow[no head, from=2-2, to=3-1]
	\arrow[no head, from=2-2, to=3-3]
    \end{tikzcd}\] 
    These induce automorphisms of $\PGSO_8(F)$ and $\Spin_8(\C)$, by using the splitting $\mathbf{spl}_{\PGSO_8}$ and $\mathbf{spl}_{\widehat{\PGSO_8}}$.
    We denote the corresponding automorphisms of $G$ and $\widehat{G}$ (or ${}^L G$) by $s_i$ and $\widehat{s_i}$ for $i=1, 6$.  
    In general, any element $a$ of the automorphism group $\Aut(\mathrm{Dyn}(D_4)) \xrightarrow{\sim} S_3$ of the Dynkin diagram above gives a unique automorphism of $\PGSO_8(F)$ by the isomorphism theorem.
    We denote it also by $a$, and we use similar notation for the dual group.
\end{definition}

\subsection{Kret--Shin parameters for $\PGSO_8$} 

Following Gan--Savin \cite{GS23LLC}*{Theorem 12.3}, we construct certain refinements of Xu's $L$-parameters for $\PGSO_8$, which are called Kret--Shin parameters.  

We note that for a tempered irreducible representation $\pi$ of $\GSO_8(F)$, the $L$-parameter $\Std_{\GSO_8} \circ \phi^{\GSO_8}_{\pi}$ is independent of the choice of $\phi^{\GSO_8}_{\pi}$ as this is defined by the composition 
\begin{align*}
    \Pi(\GSO_8) \to \Phi(\GSO_8)_{/ \O_8 \times \GL_1} \to \Phi(\SO_8)_{/ \O_8} \to \Phi(\GL_8).
\end{align*}
We write the image as $\Std_{\GSO_8} \circ \phi^{\GSO_8}_{\pi}$ or $\phi^{\GL_{8}}_{\pi}$.
We use a similar notation for $\PGSO_8$.

\begin{proposition}\label{proposition:Kret--Shin-parameters}
    Let $\pi$ be an irreducible discrete series representation of $\PGSO_8(F)$.
    Then, there exists a discrete $L$-homomorphism $\phi \colon L_F \to {}^L \PGSO_{8}$, such that 
    \begin{align*}
        \begin{cases}
              \Std_{\PGSO_8} \circ \phi = \Std_{\PGSO_8} \circ \phi^{\PGSO_8}_{\pi}, \\ 
              \Std_{\PGSO_8} \circ \widehat{s_1} \circ \phi = \Std_{\PGSO_8} \circ \phi^{\PGSO_8}_{{}^{s_1}\pi}.
        \end{cases}
    \end{align*}
    Also, we can take the same $L$-parameters $\phi$ for all elements in the $L$-packet $\overline{\Pi}^{X}$ containing $\pi$.
\end{proposition}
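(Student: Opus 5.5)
The plan follows Gan--Savin \cite{GS23LLC}*{Theorem 12.3}: reduce the statement to a comparison of $8$-dimensional orthogonal representations of $L_F$. The dual group of $\PGSO_8$ is $\Spin_8(\C)$. It has three $8$-dimensional representations: $\Std_{\PGSO_8}$ and the two half-spin representations. These are permuted by the outer automorphisms $\widehat{s_1}, \widehat{s_6}$, so that $\Std_{\PGSO_8} \circ \widehat{s_1}$ is one of the half-spin representations.

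\textbf{Step 1: what data pins down $\phi$.} I would show that a discrete $L$-homomorphism $\phi \colon L_F \to \Spin_8(\C) \times W_F$ is determined, up to $\Spin_8(\C)$-conjugacy and possibly the outer automorphism, by the pair
\[
(\Std_{\PGSO_8} \circ \phi,\ \Std_{\PGSO_8} \circ \widehat{s_1} \circ \phi).
\]
Conversely, I would identify which pairs $(\phi_1, \phi_2)$ of tempered discrete orthogonal $8$-dimensional parameters with trivial determinant occur in this way. For this I would use the triality description of $\Spin_8$: the map $\Spin_8 \to \SO_8 \times \SO_8$ given by $(\Std, \Std\circ\widehat{s_1})$ has kernel $\mu_2$. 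Lifting the pair $(\phi_1,\phi_2)$ to $\Spin_8$ is then an obstruction problem in $H^2(W_F, \mu_2) \simeq \{\pm1\}$, exactly as in the proof of \cref{proposition:amalgamation-to-GSO2n}. The obstruction is the product of the second Stiefel--Whitney classes, equivalently the product of the root numbers $\epsilon(\tfrac12, \phi_i)$ by Deligne. Discreteness of $\phi$ follows from discreteness of $\phi_1$, since $S_\phi$ maps to $S_{\phi_1}$ with finite kernel.

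\textbf{Step 2: existence for $\pi$.} Put $\phi_1 = \phi^{\GL_8}_{\pi}$ and $\phi_2 = \phi^{\GL_8}_{{}^{s_1}\pi}$. Both are discrete by property (4) of Xu's correspondence (\cref{theorem:local-Langlands-correspondence-GSO2n}), since ${}^{s_1}\pi$ is again a discrete series representation. Since the central character is trivial, the remark on $\PGSO_{2n}$ gives $\epsilon(\tfrac12,\phi_i)=1$, so each $\phi_i$ lifts to $\Spin_8$. The difficulty is that lifting each $\phi_i$ separately is not enough: we need a \emph{single} $\phi$ whose two projections are $\phi_1$ and $\phi_2$ simultaneously. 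This compatibility is not formal. Independent lifts of $\phi_1$ come with $\widehat{s_1}$-images that differ by twists by quadratic characters and by the $\O_8$-ambiguity. So the real content is to show that some lift $\widetilde{\phi}_1$ of $\phi_1$ satisfies $\Std\circ\widehat{s_1}\circ\widetilde{\phi}_1 \simeq \phi_2$.

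\textbf{Step 3: the compatibility, which is the main obstacle.} I would prove it globally. Globalize $\pi$ to a cuspidal automorphic representation $\Pi$ of $\PGSO_8(\A_k)$ that is discrete series at the relevant places, as in the proof of \cref{proposition:refinement-GSO4}. Twisting by $s_1$ gives ${}^{s_1}\Pi$, which is again automorphic. Apply Arthur's transfer to $\GL_8$ for both $\Pi$ and ${}^{s_1}\Pi$, via restriction to $\SO_8$ (Labesse--Schwermer). At unramified places the Satake parameters of $\Pi$ lie in $\Spin_8(\C)$, and their images under $\Std$ and $\Std\circ\widehat{s_1}$ are exactly the Satake parameters of the two transfers (\cref{remark:unramified-place}). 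This gives a global compatibility of the two $\GL_8$ lifts at almost all places. That compatibility would then be converted into the local statement at $F$. The step I am least sure of is the conversion: it likely uses that the two lifts determine a global $\Spin_8$-valued datum (as Kret--Shin do via Galois representations), then local--global compatibility at $F$. If this is too heavy, I would fall back on Gan--Savin's argument, which goes through the theta correspondences \cref{theorem:similitude-theta} and \cref{theorem:exceptional-theta} together with $\gamma$-factor identities.

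\textbf{Step 4: packet independence.} For another $\pi'$ in the same packet $\overline{\Pi}^X$, the parameter $\phi^{\GL_8}_{\pi'}$ equals $\phi^{\GL_8}_{\pi}$ by definition of the packet. Since $s_1$ preserves the stable character (by the stability in (b)(5) and transport of structure), ${}^{s_1}\overline{\Pi}^X$ is again a Xu packet. Hence $\phi^{\GL_8}_{{}^{s_1}\pi'}$ is independent of $\pi'$, and the same $\phi$ works for every element of $\overline{\Pi}^X$.
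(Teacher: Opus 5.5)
The decisive step is your Step 3, and the proposal does not carry it out. The automorphic data you assemble are Arthur's $\GL_8$-transfers of $\Pi$ and ${}^{s_1}\Pi$, whose Satake parameters at unramified places are the two images of one $\Spin_8(\C)$-class. These data only produce two $8$-dimensional parameters $\phi_1,\phi_2$ at $v$. Nothing in them forces the pair $(\phi_1,\phi_2)$ at the ramified place to be the two projections of a single $\Spin_8(\C)$-valued homomorphism.

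What supplies this is a global $\GSpin_8$-valued object, and your guess about Kret--Shin is exactly the paper's proof, modelled on \cite{GS23LLC}*{Proposition 5.1}. The paper first globalizes $\pi$ by Shin's automorphic Plancherel density theorem \cite{Shi12}*{Theorem 5.8}, as in \cite{GS23LLC}*{Lemma 12.4}. This gives $\Pi$ with $\Pi_v\simeq\pi$ satisfying the hypotheses of \cite{KretShin-24-general-orthogonal}*{Theorem A}: a totally real base field, cohomological archimedean components, and a Steinberg condition at an auxiliary place. Your globalization ``as in \cref{proposition:refinement-GSO4}'' would have to be upgraded to arrange these. That theorem then yields $\rho_\Pi\colon\Gamma_k\to\GSpin_8(\overline{\Q}_\ell)$. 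Its composites with the vector and half-spin representations are identified, by Chebotarev at unramified places, with the Galois representations of the $\GL_8$-transfers of $\Pi$ and ${}^{s_1}\Pi$. Local--global compatibility for these at $v$ then shows that the Frobenius-semisimplified Weil--Deligne representation of $\rho_\Pi|_{\Gamma_{k_v}}$ is the required $\phi$; after normalization it is $\Spin_8(\C)$-valued because $\pi$ has trivial central character. Your fallback does not close the gap: by \cref{theorem:similitude-theta}(a), theta lifts only reach $\pi$ with $\gamma(0,\Std_{\PGSO_8}\circ\phi^{\PGSO_8}_\pi,\psi)=0$, so they say nothing about a general discrete series $\pi$.

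Two further points. Step 1 is incorrect and should be discarded. $\ker\Std_{\PGSO_8}$ and $\ker(\Std_{\PGSO_8}\circ\widehat{s_1})$ are distinct order-two subgroups of $Z(\Spin_8)\simeq\mu_2\times\mu_2$, so the map $(\Std_{\PGSO_8},\Std_{\PGSO_8}\circ\widehat{s_1})\colon\Spin_8\to\SO_8\times\SO_8$ is injective. Realizing a pair $(\phi_1,\phi_2)$ therefore means conjugating it into a $28$-dimensional subgroup of a $56$-dimensional group. That is neither an $H^2(W_F,\mu_2)$-obstruction nor a root-number condition; if it were, Step 3 would be vacuous, contradicting your own Step 2. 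The correct bookkeeping is the one in your Step 2: lift $\phi_1$ (possible since $\epsilon(\tfrac12,\phi_1)=1$), after which the candidates for $\Std_{\PGSO_8}\circ\widehat{s_1}\circ\widetilde\phi_1$ vary only by quadratic twists and the $\O_8$-ambiguity.

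In Step 4, stability of the $s_1$-transported packet character does not by itself make ${}^{s_1}\overline{\Pi}^X$ a single Xu packet, since a sum of several packet characters is also stable. You would additionally need that the stable character of a discrete Xu packet has no proper stable sub-sum, or an argument making the global construction uniform over the packet.
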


\begin{proof}
    We can proceed as the proof of \cite{GS23LLC}*{Proposition 5.1}, by applying \cite{KretShin-24-general-orthogonal}*{Theorem A} instead of \cite{KretShin23-general-symplectic}*{Theorem A} and the globalization result \cite{Shi12}*{Theorem 5.8} as \cite{GS23LLC}*{Lemma 12.4}.
\end{proof}

\begin{definition}
    We call an $L$-parameter $\phi$ satisfying the condition of \cref{proposition:Kret--Shin-parameters} a Kret--Shin parameter of $\pi$.
    For any tempered irreducible representation $\pi$, we fix a Kret--Shin parameter $\phi^{KS}_{\pi}$ of $\pi$.
\end{definition}

\begin{remark}\label{remark:Kret--Shin-not-discrete-tempered}
    The proof of \cref{proposition:Kret--Shin-parameters} (or an application of \cref{corollary:local-Langlands-correspondence-GSO4} and \cref{corollary:local-Langlands-correspondence-GSO6}) shows that if $\pi$ is not a discrete series representation, we may and do take a canonical $L$-parameter for $\pi$ as a Kret--Shin parameter for $\pi$.
\end{remark}

\begin{lemma}\label{lemma:injectivity-G_2-to-GL7}
    The canonical map 
    \begin{align*}
        \Phi(G_2) \to \Phi(\GL_7)
    \end{align*}
    induced by the standard representation of $G_2(\C)$ is injective.
    Hence, the morphism
    \begin{align*}
        \Phi(G_2) \to \Phi(\GL_7) \to \Phi(\GL_8)
    \end{align*}
    obtained by adding the trivial representation is also injective.
\end{lemma}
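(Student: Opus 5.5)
The statement is that an $L$-parameter for $G_2$ is determined up to $G_2(\C)$-conjugacy by the $7$-dimensional representation $\Std \circ \phi$ of $L_F$. I would reduce this to a statement about conjugacy of subgroups, or rather of homomorphisms, into the algebraic groups $G_2(\C) \subset \SO_7(\C) \subset \GL_7(\C)$. Since $G_2$ is split, ${}^L G_2 = G_2(\C) \times W_F$, and an $L$-parameter is a $G_2(\C)$-conjugacy class of homomorphisms $\phi \colon L_F \to G_2(\C)$ with semisimple, continuous restriction to $W_F$ and algebraic restriction to $\SL_2(\C)$. Suppose $\Std \circ \phi_1$ and $\Std \circ \phi_2$ are $\GL_7(\C)$-conjugate. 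Because both preserve the invariant quadratic form on the $7$-dimensional space, they are $\O_7(\C)$-conjugate. This is the standard fact that orthogonal representations which are isomorphic as representations are isometric. Since $-1$ is central and lies in $\O_7 \setminus \SO_7$, they are in fact $\SO_7(\C)$-conjugate.

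The heart of the matter is passing from $\SO_7(\C)$-conjugacy to $G_2(\C)$-conjugacy. For this I would invoke the known result of Griess (\emph{Basic conjugacy theorems for $G_2$}) and Larsen. It says that two homomorphisms from any group into $G_2(\C)$ which are conjugate in $\GL_7(\C)$ (equivalently, element-wise conjugate, through the characters of the $7$-dimensional representation) are conjugate in $G_2(\C)$. In other words, $G_2$ is "acceptable" in Larsen's sense for the $7$-dimensional representation. Its use is legitimate here because it applies to arbitrary abstract homomorphisms, and equivalence of $L$-parameters is merely abstract conjugacy of the homomorphisms $\phi_i$.

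If I wanted to avoid this citation, I would argue directly, as follows.
\begin{itemize}
\item Use the double coset space $\SO_7(\C)/G_2(\C)$. It is the space of nondegenerate $3$-forms of the relevant type, equivalently the $6$-dimensional quadric (two transitive orbits, via $\Spin_7 / G_2 \cong S^7$-type geometry).
\item Show that if $g\phi_1 g^{-1} = \phi_2$ with $g \in \SO_7(\C)$, then the invariant $3$-forms $\omega$ and $g^*\omega$ are both $\phi_1$-invariant.
\item Show that the centralizer of $\phi_1$ in $\SO_7(\C)$ acts transitively on the $\phi_1$-invariant generic $3$-forms in the $\SO_7$-orbit of $\omega$.
\end{itemize}
This transitivity, which is a case analysis over the possible images $\overline{\phi(L_F)}$, is the main obstacle. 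That is exactly why I would rather cite Griess's theorem.

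The second assertion is formal. The map $\Phi(\GL_7) \to \Phi(\GL_8)$, $\rho \mapsto \rho \oplus \mathbf{1}$, is injective, because semisimple Frobenius-type representations of $L_F$ (semisimple on $W_F$, algebraic on $\SL_2(\C)$) satisfy Krull--Schmidt cancellation. So the composite is injective.
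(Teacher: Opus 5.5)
Your argument is correct and is essentially the paper's proof. The paper's proof consists only of citing Larsen (Proposition 2.8) and Griess (Theorem 1) for exactly the chain you use: $\GL_7(\C)$-conjugacy gives orthogonal conjugacy, which gives $G_2(\C)$-conjugacy, and the passage to $\GL_8$ is formal, as you say.

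Your optional direct sketch is not needed and is slightly off: $\SO_7(\C)/G_2(\C)$ is $7$-dimensional, being the quotient by $\pm 1$ of the affine quadric $\{q=1\}=\Spin_7(\C)/G_2(\C)$ in the $8$-dimensional spin representation. Nothing in your main line depends on it.
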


\begin{proof}
    This follows from \cite{Larsen1994-conjugacy}*{Proposition 2.8} and \cite{Griess1995-G2}*{1.1 Theorem 1}. 
\end{proof}

\begin{proposition}\label{proposition:construction-of-distinguished-Xu-packet}
    The following statements hold true:
    \begin{enumerate}
        \item 
        Let $\pi$ be a tempered irreducible representation of $\PGSO_8(F)$ such that
        \begin{align}
            \begin{cases}
                \gamma(0, \Std_{\PGSO_8} \circ \phi^{\PGSO_8}_{\pi}, \psi) = 0, \\
                \gamma(0, \Std_{\PGSO_8} \circ \phi^{\PGSO_8}_{{}^{s_1}\pi}, \psi) = 0.
            \end{cases}
            \label{eq:condition-distinguished-packet}
        \end{align}
        Then, there exists a unique tempered irreducible representation $\pi^{G_2}$ of $G_2(F)$, such that 
        \begin{align*}
            \theta^{\PGSO_8}_{\PGSp_6} \circ \theta^{\PGSp_6}_{G_2}(\pi^{G_2}) = \pi.
        \end{align*}
        \item    
        By $(1)$ of this proposition, we have the map   
        \begin{align*}
            \{ \pi \in \Pi_{\temp}(\PGSO_8(F)) \mid \text{$\pi$ satisfies (3.3)} \}  
            \to  
            \Phi_{\temp}(G_2),
        \end{align*}   
        given by 
        \begin{align*}
           \cL_{\mathrm{dist}} \colon \pi \mapsto \phi^{G_2}_{\pi^{G_2}}.
        \end{align*}
        Then, this map is surjective and the fiber of the image of a representation $\pi$ is the $L$-packet of Xu containing the representation $\pi$.
        If $\pi$ is a discrete series representation, then $\pi$ has a unique Kret--Shin parameter given by $\iota^{\PGSO_8}_{G_2} \circ \phi^{G_2}_{\pi^{G_2}}$.
    \end{enumerate}
\end{proposition}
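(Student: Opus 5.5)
For (1), I will chain the two theta correspondences. By \cref{theorem:similitude-theta}(a), the first condition in \cref{eq:condition-distinguished-packet} puts $\pi$ in $\Pi_{\temp}(\PGSO_8)_{\theta}$. Then \cref{theorem:similitude-theta}(c) gives a unique $\sigma \in \Pi_{\temp}(\PGSp_6)_{\theta}$ with $\theta^{\PGSO_8}_{\PGSp_6}(\sigma)=\pi$. Next I must show $\sigma \in \Pi_{\temp}(\PGSp_6)_{\theta}$ for the exceptional correspondence, i.e. $\gamma(0,\sigma,\Spin,\psi)=0$ in the sense of \cref{theorem:exceptional-theta}(a). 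This is where the second condition enters.

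The key identity is that $\Spin\circ\phi^{\PGSp_6}_{\sigma}\oplus\mathbf 1$ agrees with $\Std_{\PGSO_8}\circ\widehat{s_1}\circ\iota^{\PGSO_8}_{\PGSp_6}\circ\phi^{\PGSp_6}_\sigma$ (up to the choice between $s_1,s_6$). Indeed, under triality, $\Std$ of $\Spin_8$ restricted to $\Spin_7$ becomes the spin representation of $\Spin_7$ plus a trivial summand. I would then relate $\Std\circ\widehat{s_1}\circ\phi$ to $\phi^{\GL_8}_{{}^{s_1}\pi}$. For this I use the Kret--Shin parameter of \cref{proposition:Kret--Shin-parameters} (or \cref{remark:Kret--Shin-not-discrete-tempered} in the nondiscrete case). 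The difficulty is that Xu's parameter $\phi^{\PGSO_8}_{\pi}=\iota\circ\phi_\sigma$ is only determined up to $\O_8\times\GL_1$ and quadratic twists, so $\widehat{s_1}\circ\phi$ is not obviously well defined.

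To handle this, I will use \cref{theorem:similitude-theta}(b) to write $\Std\circ\phi^{KS}_\pi=\Std\circ\iota\circ\phi_\sigma$. I then argue via globalization, in the same way \cref{proposition:Kret--Shin-parameters} is proved, that $\phi^{KS}_\pi$ can be conjugated into ${}^L\PGSp_6$ compatibly. This is the main obstacle, and I expect to need to choose the Kret--Shin parameter to factor through $\iota^{\PGSO_8}_{\PGSp_6}$. Once it is in place, $\gamma(0,\phi^{\GL_8}_{{}^{s_1}\pi},\psi)=\gamma(0,\Spin\circ\phi_\sigma,\psi)\gamma(0,\mathbf 1,\psi)$. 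Because $\gamma(s,\mathbf 1,\psi)$ has a pole at $0$ of order one, vanishing of the left side forces the spin $\gamma$-factor to vanish to the required order. Matching this with the normalization in \cite{GS23LLC}*{12.3}, \cref{theorem:exceptional-theta}(a),(c) then produce a unique $\pi^{G_2}$ with $\theta^{\PGSp_6}_{G_2}(\pi^{G_2})=\sigma$. Uniqueness of $\pi^{G_2}$ follows from the injectivity of both bijections in (c).

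For (2), surjectivity proceeds by the same chain in reverse. Given $\phi\in\Phi_{\temp}(G_2)$, I take $\pi^{G_2}$ in its packet, which is nonempty by \cref{theorem:local-Langlands-correspondence-G2}, and choose a generic member. By \cref{theorem:exceptional-theta}(d) and \cref{theorem:similitude-theta}(d), its lifts are nonzero; the vanishing $\gamma$-conditions hold since the parameters contain the trivial representation via $\iota^{\PGSO_8}_{G_2}$. The image $\pi$ then satisfies \cref{eq:condition-distinguished-packet}, because $\widehat{s_1}$ fixes $\iota^{\PGSO_8}_{G_2}({}^LG_2)$ pointwise.

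For the fibers, the standard parameter satisfies $\phi^{\GL_8}_\pi=\Std\circ\iota_{G_2}\circ\phi^{G_2}_{\pi^{G_2}}$ by \cref{theorem:similitude-theta}(b) and \cref{theorem:exceptional-theta}(b). By \cref{lemma:injectivity-G_2-to-GL7}, two representations have the same image if and only if they have the same $\GL_8$ parameter. Among the $\pi$ satisfying \cref{eq:condition-distinguished-packet}, I compare this with Xu's packet structure in \cref{theorem:local-Langlands-correspondence-GSO2n}(d). Quadratic twists $\omega$ change $\phi^{\GL_8}_{{}^{s_1}\pi}$ by $\omega$, which destroys the trivial summand in the triality-rotated parameter unless $\omega$ stabilizes the packet. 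So the fiber is exactly one Xu packet.

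Finally, for a discrete $\pi$, any Kret--Shin parameter $\phi$ has $\Std\circ\phi$ and $\Std\circ\widehat{s_1}\circ\phi$ both containing $\mathbf 1$, hence also $\Std\circ\widehat{s_6}\circ\phi$. A parameter with this property fixes a vector in each of the three $8$-dimensional representations, so it factors through $G_2(\C)$. Combined with \cref{lemma:injectivity-G_2-to-GL7}, this pins $\phi$ down as $\iota^{\PGSO_8}_{G_2}\circ\phi^{G_2}_{\pi^{G_2}}$ up to conjugacy.
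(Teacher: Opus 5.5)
There is a genuine gap in (1), at the step that converts the second condition in \cref{eq:condition-distinguished-packet} into $\gamma(0,\sigma,\Spin,\psi)=0$. Your branching claim is wrong. A half-spin representation of $\Spin_8$ restricted to $\iota^{\PGSO_8}_{\PGSp_6}(\Spin_7)$ is the $8$-dimensional spin representation, with no trivial summand; already $8\neq 9$. The decomposition $\Std_7\oplus\mathbf 1$ is what you get from the \emph{standard} representation. In addition, $\gamma(s,\mathbf 1,\psi)=(1-q^{-s})/(1-q^{-(1-s)})$ has a simple zero at $s=0$, not a pole. So if there really were a trivial summand, the vanishing of $\gamma(0,\phi^{\GL_8}_{{}^{s_1}\pi},\psi)$ would be automatic and would say nothing about the spin factor. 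The correct identity is $\gamma(s,\Std_{\PGSO_8}\circ\phi^{\PGSO_8}_{{}^{s_1}\pi},\psi)=\gamma(s,\sigma,\Spin,\psi)$ on the nose. The paper takes it directly from \cite{GS23LLC}*{12.3}, where this spin $\gamma$-factor is defined. That citation also disposes of the well-definedness of $\widehat{s_1}\circ\phi$, which you call the main obstacle but leave unresolved. The same issue affects your surjectivity step. The claim that $\widehat{s_1}$ fixes $\iota^{\PGSO_8}_{G_2}({}^LG_2)$ pointwise only helps if the parameter of ${}^{s_1}\pi$ is $\widehat{s_1}\circ\phi_\pi$. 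The clean argument is the identity above read backwards, together with \cref{theorem:exceptional-theta}(a) for $\sigma=\theta^{\PGSp_6}_{G_2}(\pi^{G_2})$.

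For the fibre in (2), the key idea is missing. Your claim that a quadratic twist ``destroys the trivial summand'' fails in general. If $\Std_7\circ\phi^{G_2}$ contains $\omega$, then $(\Std_7\circ\phi^{G_2})\otimes\omega\oplus\omega$ still contains $\mathbf 1$. The twisted packet is then again distinguished with the same image, and you must still prove $\Pi^X\otimes\omega=\Pi^X$. The paper does this through genericity. Each Xu packet has a unique generic member (\cite{Var17} and \cref{theorem:local-Langlands-correspondence-GSO2n}(d)). Both theta correspondences preserve genericity in both directions and are injective (\cref{theorem:similitude-theta}(c),(d), \cref{theorem:exceptional-theta}(c),(d)). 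Hence two Xu packets with the same image have generic members lifted from the generic member of the $G_2$-packet, so they coincide. You also need the whole Xu packet of $\pi$ to lie in the fibre. This requires $\Std\circ\phi^{\PGSO_8}_{{}^{s_1}\pi'}$ to be constant on the packet, which the paper gets from parabolic induction in the non-discrete case and from a common Kret--Shin parameter in the discrete case. Your last paragraph, forcing the image into $G_2(\C)$ via fixed vectors and then applying \cref{lemma:injectivity-G_2-to-GL7}, is a reasonable alternative for uniqueness of the Kret--Shin parameter. The step ``hence also $\Std\circ\widehat{s_6}\circ\phi$'' still needs a justification, for example Clifford multiplication of a fixed non-isotropic vector with a fixed spinor.
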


\begin{proof}
    We will prove the first statement.
    Let $\pi$ be a tempered irreducible representation of $\PGSO_8(F)$, such that 
        \begin{align*}
            \begin{cases}
                \gamma(0, \Std_{\PGSO_8} \circ \phi^{\PGSO_8}_{\pi}, \psi) = 0, \\
                \gamma(0, \Std_{\PGSO_8} \circ \phi^{\PGSO_8}_{{}^{s_1}\pi}, \psi) = 0.
            \end{cases}
        \end{align*}
    Then, by the first equation and the property (a) in \cref{theorem:similitude-theta}, the representation $\pi$ is the similitude theta lift of a uniquely-determined tempered irreducible representation $\pi'$ of $\PGSp_6(F)$.
    In this case, \cite{GS23LLC}*{12.3} implies that we have 
    \begin{align*}
        \gamma(s, \Std_{\PGSO_8} \circ \phi^{\PGSO_8}_{{}^{s_1}\pi}, \psi) 
        = 
        \gamma(s, \pi', \Spin, \psi).
    \end{align*}
    Thus, the property (a) in \cref{theorem:exceptional-theta} implies that there exists a unique irreducible tempered representation $\pi^{G_2}$ of $G_2(F)$, whose exceptional theta lift is equal to $\pi'$.
    Thus, we obtained the first statement.

    We now start the proof of surjectivity of the map $\cL_{\dist}$.
    The map $\cL_{\dist}$ is well-defined by the proof of the first statement.
    By the properties (d) in \cref{theorem:similitude-theta} and \cref{theorem:exceptional-theta} and \cite{GS23LLC}*{3.4, Proposition 8.1}, the map is surjective.

    We next prove that the fiber is a union of Xu $L$-packets.
    We first note that each discrete series $L$-packet of a proper Levi subgroup of $\PGSO_8(F)$ is a singleton, since such a Levi subgroup is a quotient of a product of general linear groups by a central split torus.
    Thus, for non-discrete-series representations, this follows from the fact that the parameters $\Std_{\GSO_8} \circ \phi^{\GSO_8}_{\pi}$ and $\Std_{\GSO_8} \circ \phi^{\GSO_8}_{{}^{s_1}\pi}$ are constant on each Xu $L$-packet by the compatibility of the local Langlands correspondence for $\GSO_8$ with parabolic induction in \cref{theorem:local-Langlands-correspondence-GSO2n}.
    For discrete series representations, since we can take a common Kret--Shin parameter for representations in each Xu $L$-packet of $\PGSO_8(F)$, the fiber of $\phi^{G_2}_{\pi^{G_2}}$ is a union of Xu $L$-packets of $\PGSO_8(F)$.

    We finally prove that the fiber consists of a single Xu $L$-packet.
    Let $\phi^{G_2}$ be the image of $\pi$.
    Thus, if two Xu $L$-packets $\Pi_1, \Pi_2$ are mapped to the parameter $\phi^{G_2}$ by $\cL_{\dist}$, then each of two $L$-packets contains a unique generic representation $\pi_1$ and $\pi_2$ respectively.
    This follows from \cite{Var17}*{Corollary 6.16} and \cref{theorem:local-Langlands-correspondence-GSO2n} (d).
    These two generic representations map to the same generic representation in the packet $\Pi^{G_2}_{\phi^{G_2}}$ of $G_2(F)$ by the property (d) in \cref{theorem:similitude-theta} and \cref{theorem:exceptional-theta}.
    Also, the Kret--Shin parameter is given by $\iota^{\PGSO_8}_{G_2} \circ \phi^{G_2}$ by the property $(b)$ in \cref{theorem:similitude-theta} and \cref{theorem:exceptional-theta}, and \cref{lemma:injectivity-G_2-to-GL7}.
    This concludes the proof of the second statement.
\end{proof}

\begin{definition}
    The $L$-packet of Xu which maps to $\phi^{G_2}$ by the map $\cL_{\mathrm{dist}}$ in \cref{proposition:construction-of-distinguished-Xu-packet} is called the distinguished Xu packet associated with $\phi^{G_2}_{\pi^{G_2}}$.
    We denote it by $\Pi_{\phi^{G_2}}^{X} = \Pi_{\phi^{G_2}}^{X}(\PGSO_{8})$.
\end{definition}

\begin{remark} \label{remark:uniqueness-distinguished-L-packet}
    From the proof, we can also see the following statement, whose proof is given below: 
    If a Xu $L$-packet $\Pi^{X}$ and a quadratic twist $\Pi^X \otimes \omega$ are distinguished, then we have $\Pi^X = \Pi^X \otimes \omega$.
    
    We give the proof. 
    Let $\phi_1^{G_2}$ and $\phi_2^{G_2}$ be the $L$-parameters for $G_2$ corresponding to $\Pi^{X}$ and $\Pi^X \otimes \omega$.
    From the assumption, we have 
    \begin{align*}
        \iota^{\GL_8}_{G_2} \circ \phi_1^{G_2}
        = 
        \iota^{\GL_8}_{G_2} \circ \phi_2^{G_2}.
    \end{align*}
    Then, the result follows from \cref{lemma:injectivity-G_2-to-GL7}.
\end{remark}

\section{The Hiraga--Ichino--Ikeda conjecture}\label{section:4}

Let $F$ be a non-archimedean local field of characteristic $0$.
Let $\psi$ be a nontrivial additive character of $F$.

\subsection{Normalization of measures}

\subsubsection{Regularized $\gamma$-factors}

For an $L$-parameter $\phi \colon L_F \to {}^L G$ and a finite-dimensional complex representation $r \colon {}^L G \to \GL_{\C}(V)$, we have the Artin $\gamma$-factor 
\begin{align*}
    \gamma(s, \phi, r, \psi).
\end{align*}

We define its regularized value at $s=0$ by 
\begin{align*}
    \gamma^*(0, \phi, r, \psi) = \lim_{s \to 0+} \zeta_F(s)^{n_{\phi}} \gamma(s, \phi, r, \psi),
\end{align*}
where the number $n_{\phi}$ is the order of zero of the function $\gamma(s, \phi, r, \psi)$ at $s=0$.
For the function $\zeta_F(s)$, see \cref{section:2}.

\subsubsection{Measures on $F$-varieties}

Let $\rd_{\psi}x$ be the self-dual measure on $\A^n(F) = F^n$ with respect to $\psi$.

Let $\bX$ be a smooth irreducible algebraic variety over $F$ of dimension $n$.
Let $\omega \in \Gamma(\bX, \wedge^{n}\Omega_{\bX/F})$ be a nowhere vanishing top differential form on $\bX$ over $F$.
Locally, there exists an \'etale coordinate $U \to \A^n$ for $\bX$ and we can write the differential form $\omega$ as 
\begin{align*}
    \omega = f dx_1 \wedge \ldots \wedge dx_n,
\end{align*}
on $U$.
Here, $x_1, \dots, x_n$ are the pullback of standard coordinates of $\A^n$ and $f$ is a regular function on $U$.
In this situation, we set 
\begin{align*}
    \abs{\omega}_{U, \psi} = \abs{f}_F \rd_{\psi}x_1 \ldots \rd_{\psi}x_n.
\end{align*}
By the formula for change of coordinates, the local measures $\abs{\omega}_{U, \psi}$ can be glued to the global measure on $X = \bX(F)$.
We denote by $\abs{\omega}_{\psi}$ this measure on $\bX(F)$.

\subsubsection{Measures on the reductive groups}

Let $\bG$ be a reductive group over $F$.
We follow the method of \cite{GrossGan1999Haarmeasure} to construct a normalized Haar measure on $G$.

Let $\bG_{\spl}$ be the split reductive group over $\Z$ which is isomorphic to $\bG$ over $\overline{F}$.
Then, there exists a generator $\omega_{\spl} \in \Gamma(\bG_{\spl}, \wedge^{\dim \bG} \Omega_{\bG_{\spl}/\Z})^{\bG}$ of the space of the invariant top differential forms on $\bG_{\spl}$, which is determined up to $\pm 1$.
We take an $\overline{F}$-isomorphism $\iota \colon \bG_{\overline{F}} \xrightarrow{\sim} (\bG_{\spl})_{\overline{F}}$.
Pulling back $(\omega_{\spl})_{\overline{F}}$, we obtain the differential form $\iota^{*} (\omega_{\spl})_{\overline{F}}$.
Then, there exists a scalar $\lambda \in \overline{F}^{\times}$ such that
\begin{align*}
    \omega = \lambda \iota^{*} (\omega_{\spl})_{\overline{F}}
\end{align*}
is defined over $F$. 
Thus, we obtain the Haar measure 
\begin{align*}
    \rd_{\psi} g = \abs{\lambda}^{-1}_F \abs{\omega}_F
\end{align*}
on $G = \bG(F)$.
It is easy to check that this measure depends only on $\psi$, and does not depend on the choices above.

\subsubsection{Measures on the unipotent groups}\label{subsubsection:measures-unipotent}

Via the exponential map and the character $\psi$ of $F$, we can obtain the measure on the unipotent group $U$, for example, by choosing a basis of $\Lie(U)$. 
Let $\mathbf{spl}_G = (T, B, \{X_{\alpha}\}_{\alpha})$ be an $F$-splitting of $G$.
Let $P = MU$ be a parabolic subgroup of $G$ and let $\overline{P} = M \overline{U}$ be its opposite.
We fix measures $du, d\overline{u}$ on the groups $U$ and $\overline{U}$, such that 
\begin{align*}
    \int_{U \times M \times \overline{U}} f(um\overline{u}) \delta_{M}(m) \rd u \rd_{\psi}m \rd \overline{u} 
    = 
    \int_{G} f(g) \rd_{\psi} g
\end{align*}
for all smooth functions $f \in \Cc(G)$.
We assume similar normalizations for Levi subgroups of $G$.
See \cite{Wal03}*{p. 240 (2), p.241 line 12}.
In the paper, Waldspurger defined the scalar $c(G | M) > 0$, but it is equal to $1$ under our normalization above.
It appears in the statement of the Plancherel formula; see \cite{Wal03}*{Th\'eor\`eme VIII.1.1}.

Let us assume that $G$ is split.
The results \cite{KalethaPrasad-Bruhat-Tits}*{Proposition 2.9.11, Lemma 2.9.14, Proposition 2.11.2, Proposition 2.11.3, Proposition 2.11.9} imply that the splitting $\mathbf{spl}_G$ gives the canonical measure $\rd_{\psi} u$ on $U$ and the canonical measure $\rd_{\psi} \overline{u}$ on $\overline{U}$, which satisfy the condition above.
Also, these results imply that the measure $\rd_{\psi}u$ is equal to the product measure of the measures $\rd_{\psi} u_{\alpha}$ on the root group $U_{\alpha}$ for each root $\alpha \in \Phi(A_T, U)$, i.e.,
\begin{align*}
      \rd_{\psi} u = \prod_{\alpha \in \Phi(A_T, U)} \rd_{\psi}u_{\alpha}.
\end{align*}
A similar result for roots for $A_M$ in $U$ holds.

\subsubsection{Measures on tempered spectra}\label{subsubsection:normalization-tempered-spectra}

Let $G$ be a reductive group over $F$.
In this section, we define and normalize the measure on the space of induced tempered representations of $G$.

We first assume that $G$ is a split torus. 
We use the notation in \cref{subsection:unramified-characters}.
We take the measure $\rd \chi$ on the space $\cX_{\unit}(G)$ such that the volume of $\cX_{\unit}(G)$ is equal to $1$; see \cite{Wal03}*{p.239 line 33}.
We take the measure on the space $i\fa^*_G$ such that the map in \Cref{eq:character-to-unr-character} is locally measure preserving.
Also, we have the exact sequence 
\begin{align*}
    1 \to G^1 \to G \to \Z^{\dim G} \to 1, 
\end{align*}
where the group $G^1$ is the maximal compact subgroup.
This gives the exact sequence of the Pontryagin dual 
\begin{align*}
    1 \to \cX_{\unit}(G) \to G^{\cD} \to (G^1)^{\cD} \to 1. 
\end{align*}
Here ${\cdot}^{\cD}$ denotes the Pontryagin dual.
We take a unique measure $d{\chi}$ on the group $G^{\cD}$ such that the local homeomorphism $\cX_{\unit}(G) \to G^{\cD}$ is locally measure preserving.  
Note that if we write the dual measure of $\rd_{\psi} g$ as
\begin{align*}
    \rd_{\psi} \chi,
\end{align*}
then, we have 
\begin{align*}
    d{\chi} = \gamma^*(0, \mathbf{1}, \psi)^{-\dim G}  \rd_{\psi} \chi.
\end{align*}

Let $G$ be a general quasi-split reductive group.
On the set of discrete series representations $\Pi_{\disc}(G)$, we take a unique topology such that the map taking the central characters on $A_G$
\begin{align*}
    \Pi_{\disc}(G) \to A_G^{\cD} \colon \pi \to {\chi_{\pi}}_{\restriction{A_G}}
\end{align*}
is a local homeomorphism.
Similarly, we can define a unique measure $\rd \pi$ on the set of discrete series representations $\Pi_{\disc}(G)$, such that the map above is locally measure preserving.
It is easy to see that if we fix a representation $\pi \in \Pi_{\disc}(G)$, then the map 
\begin{align*}
    i\fa_G^{*} \to \Pi_{\disc}(G) \colon \lambda \to \pi \otimes \chi_{\lambda}
\end{align*}
is a local homeomorphism and locally measure preserving; see \cite{Wal03}*{p.302 line 1}.
The image of this map is called a connected component of $\Pi_{\disc}(G)$.

We define the space of induced tempered representations $\Temp_{\ind}(G)$ and the canonical measure on the space. 
We have fixed a minimal parabolic pair $(P_0, A_0)$ of $G$.
Let $M$ be a standard Levi subgroup of $G$ and let $\sigma$ be a discrete series representation of $M$. 
By the theorem of Harish-Chandra \cite{Wal03}, we have the following result. 

\begin{proposition}[\cite{Wal03}*{Proposition III.4.1}]\label{proposition:induced-tempered-representations-HC}
    Let $G$ be a reductive group over $F$.
    Let $P = MN$ be a parabolic subgroup of $G$.
    \begin{enumerate}
        \item The isomorphism class of the representation $i^G_P(\sigma)$ does not depend on the choice of parabolic subgroup $\cP^G(M)$ of $G$. 
        We denote it by $i^G_{M}(\sigma)$.
        \item Two such induced representations $i^G_M(\sigma)$ and $i^G_{M'}(\sigma')$ have the same irreducible constituent if and only if there exists an element $g \in G$ such that $gMg^{-1}=M'$ and ${}^g \sigma = \sigma'$. 
        In this case, we have ${}^g i^G_{M}(\sigma) = i^G_{M'}(\sigma')$.
    \end{enumerate}
\end{proposition}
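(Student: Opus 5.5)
This is Harish-Chandra's theorem as recorded in Waldspurger, and I would prove it via the theory of intertwining operators and the Geometric Lemma rather than anything from the Langlands side. In both parts $\sigma$ is a discrete series representation of $M$ (hence unitary with unitary central character); everything below depends on the unitary/tempered setting.

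\emph{Part (1).} Fix $P, P' \in \cP^G(M)$ and consider the standard intertwining operator
\begin{align*}
J_{P'|P}(\sigma_\lambda) \colon i^G_P(\sigma_\lambda) \to i^G_{P'}(\sigma_\lambda), \quad
f \mapsto \int_{N \cap N' \backslash N'} f(ng)\,\rd n,
\end{align*}
where $\sigma_\lambda = \sigma \otimes \chi_\lambda$ for $\lambda \in \fa^*_{M,\C}$.
\begin{itemize}
\item The integral converges for $\mathrm{Re}\,\lambda$ in a suitable positive cone.
\item It continues meromorphically in $\lambda$.
\item Harish-Chandra's $\mu$-function gives $J_{P|P'}J_{P'|P} = j(\sigma_\lambda)$, a scalar meromorphic function. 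Moreover $j(\sigma_\lambda)$ is regular and nonzero, or has at most a zero, on $i\fa^*_M$ when $\sigma$ is square-integrable.
\end{itemize}
I would then normalize, $R = j^{-1/2}J$ up to a suitable choice, to obtain operators that are holomorphic and unitary on $i\fa^*_M$. Equivalently, one can use a leading-term argument along a line through $0$ to get a nonzero intertwiner at $\lambda = 0$ that is an isometry up to scalar. This gives $i^G_P(\sigma) \simeq i^G_{P'}(\sigma)$. An alternative route is a character argument: both inductions have the same character by van Dijk's formula, which depends only on $M$ and $\sigma$. Both are unitary and hence semisimple, so equal characters imply isomorphism. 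I would actually use this character argument as the primary proof, since it is much cleaner.

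\emph{Part (2).} One direction is formal: conjugating by $g$ transports $i^G_P(\sigma)$ to $i^G_{gPg^{-1}}({}^g\sigma)$, and part (1) removes the dependence on the parabolic. For the converse, suppose $\pi$ is a common irreducible constituent. Since both inductions are unitary and hence semisimple, $\pi$ embeds in both.
\begin{itemize}
\item Frobenius reciprocity gives a nonzero map from the Jacquet module $r_{P'}(i^G_P\sigma)$ to $\sigma'$.
\item The Geometric Lemma filters $r_{P'}(i^G_P\sigma)$ by pieces $i^{M'}_{M' \cap wPw^{-1}} \circ w \circ r_{M \cap w^{-1}P'w}(\sigma)$, indexed by $w \in W^{M',M}$.
\item Since $\sigma'$ is discrete series, hence cuspidal-support-wise "discrete", it must be a subquotient of one of these pieces.
\end{itemize}

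The main obstacle is the next step: showing that only pieces with $wMw^{-1} = M'$ can contribute, and that for these the relevant Jacquet module is $\sigma$ itself. I would handle this by comparing central exponents via Casselman's criterion. A piece with $wMw^{-1} \subsetneq M'$ yields a properly induced representation of $M'$, whose constituents have exponents that cannot all satisfy the strict negativity of a discrete series, owing to the unitarity of $\sigma$. Symmetrically, if $w^{-1}M'w \subsetneq M$, the piece involves a proper Jacquet module of $\sigma$, whose exponents are strictly negative and incompatible with the unitary central character of $\sigma'$ on $A_{M'}$. This leaves $wMw^{-1} = M'$ and ${}^w\sigma \simeq \sigma'$, so $g = \widetilde{w}$ works, and the equality ${}^g i^G_M(\sigma) = i^G_{M'}(\sigma')$ then follows from the forward direction.
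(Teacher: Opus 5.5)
The paper gives no argument of its own here; it quotes Waldspurger (Harish-Chandra). Your reconstruction is the standard proof: van Dijk's character formula for (1), and Frobenius reciprocity, the Geometric Lemma and Casselman's criterion for (2). It goes through, but two points need repair.

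\emph{Part (1): your aside on $j$ is backwards.} It is $\mu=j^{-1}$ that is holomorphic on $i\fa^*_M$ and may vanish there. Consequently $j(\sigma_\lambda)$ and $J_{P'|P}(\sigma_\lambda)$ can have poles on the unitary axis, for example at $\lambda=0$ for the trivial character of the diagonal torus of $\SL_2$. Your leading-term argument copes with this. Your primary route is unaffected: van Dijk's formula together with Harish-Chandra's local integrability, linear independence of characters, and semisimplicity of the unitary finite-length inductions.

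\emph{Part (2): your case split does not cover every piece.} Index the Geometric Lemma pieces by $L=M\cap w^{-1}M'w$. A single piece can have both $L\subsetneq M$ and $wLw^{-1}\subsetneq M'$, so your two cases as written do not exhaust the pieces.

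Treat $L\subsetneq M$ first. Write $P'=M'N'$ and $Q=M\cap w^{-1}P'w$. Every $A_{M'}$-character occurring in the piece is the restriction of $w\chi$ for some exponent $\chi$ of the Jacquet module of $\sigma$ along $Q$. Strict negativity of $\mathrm{Re}\,\chi$ (Casselman) forces non-unitarity on $A_{M'}$ only because of a compatibility you did not state. Namely, $w^{-1}\fa_{M'}\subset\fa_L$ contains a vector positive on every root of $A_L$ in the unipotent radical $M\cap w^{-1}N'w$ of $Q$: take $w^{-1}H'$ with $H'$ in the open $P'$-chamber. Without this, a strictly negative functional on the $Q$-chamber could a priori vanish on $w^{-1}\fa_{M'}$. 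This compatibility is the real content of the step and should be stated.

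Once $L=M$, your other argument rules out $wMw^{-1}\subsetneq M'$. That argument uses semisimplicity of the unitary representation $i^{M'}_{M'\cap wPw^{-1}}({}^w\sigma)$, Frobenius reciprocity, and Casselman's criterion on $M'$. This leaves $wMw^{-1}=M'$ and $\sigma'\simeq{}^w\sigma$.

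Finally, $\sigma'$ lands in a single piece simply by Jordan--H\"older for the finite filtration. Discreteness of $\sigma'$ plays no role at that step.
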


\begin{definition} 
    We define the set $\Temp_{\ind}(G)$ as the quotient set of the set of pairs $(M, \sigma)$, where $M$ is a standard Levi subgroup of $G$ and $\sigma$ is a discrete series representation of $M$, by the equivalence relation defined by the conjugation action of $G$.
    We can see that this set is bijective to the set of tempered induced representations of $G$ by applying \cref{proposition:induced-tempered-representations-HC}.
\end{definition}

Let $\cL^G_0$ denote a set of representatives of conjugacy classes of the standard Levi subgroups of $G$.
Then, we have a bijection 
\begin{align*}
      \coprod_{{M} \in \cL^G_0} \Pi_{\disc}(M)/W(G, M) \xrightarrow{\sim} \Temp_{\ind}(G) \colon [(M, \sigma)] \mapsto \pi = i^G_M(\sigma).
\end{align*}
On the left-hand side, the space $\Pi_{\disc}(M)/W(G, M)$ has a structure of a countable union of quotients of a compact torus by a finite group action. 
This gives the set $\Temp_{\ind}(G)$ a topology and the space of smooth functions.
Also, we take the quotient measure $\frac{1}{\abs{W(G, M)}}(i^G_M)_{*} d\sigma$ on the space $\Pi_{\disc}(M)/W(G, M)$, where $d\sigma$ is the canonical measure on the space $\Pi_{\disc}(M)$.
We denote this measure by $\rd \pi$.
We call the image of a connected component of $\Pi_{\disc}(M)$ by the above map, a connected component of $\Temp_{\ind}(G)$.

\subsection{The Harish-Chandra Plancherel formula}

\begin{theorem}[\cite{Wal03}*{Th\'eor\`eme VIII.1.1}]
    Let $G$ be a reductive group over $F$.
    \begin{enumerate}
        \item For any function $f \in \Cc(G)$, the function 
        \begin{align*}
            \pi \in \Temp_{\ind}(G) \to \C \colon \pi \mapsto \tr \pi(f) = \tr(\pi)(f \rd_{\psi}g)
         \end{align*}
         is smooth and compactly supported.
        \item
         There exists a smooth function $\mu^G_{ \psi}(\pi) \colon \Temp_{\ind}(G) \to \R_{>0}$ such that for any function $f \in \Cc(G)$, we have  
        \begin{align}
            f(1) = \int_{\Temp_{\ind}(G)} \tr \pi(f^{\vee}) \mu^G_{\psi}(\pi) \rd \pi. \label{eq:Plancherel-formula}
        \end{align}
    \end{enumerate}  
\end{theorem}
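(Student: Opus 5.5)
This is Harish-Chandra's Plancherel theorem, cited from Waldspurger, so the plan is to deduce it from \cite{Wal03}*{Th\'eor\`eme VIII.1.1} after matching normalizations, not to reprove it. Waldspurger works with triples $(M,\sigma)$, $M$ a standard Levi subgroup and $\sigma \in \Pi_{\disc}(M)$. He writes
\begin{align*}
f(1) = \sum_{M \in \cL^G_0} \frac{c(G|M)}{\abs{W(G,M)}} \int_{\Pi_{\disc}(M)} d(\sigma)\, \mu(\sigma)\, \tr i^G_M(\sigma)(f^{\vee}) \rd \sigma ,
\end{align*}
where $d(\sigma)$ is the formal degree, $\mu(\sigma)$ is Harish-Chandra's $\mu$-function, and $\rd\sigma$ is pulled back from $i\fa^*_M$. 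Under the measures fixed in \Cref{subsubsection:measures-unipotent}, $c(G|M)=1$.

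Step 1 (part (1)). For fixed $f \in \Cc(G)$, the operator $i^G_M(\sigma \otimes \chi_\lambda)(f)$ is realized on a space independent of $\lambda$ by compact-picture restriction to a good maximal compact subgroup $K$. It depends only on finitely many $K$-types, namely those of level at most the invariance level of $f$. Its matrix coefficients are Laurent polynomials in $q^{\langle\lambda,\cdot\rangle}$, so the trace is smooth on each connected component of $\Temp_{\ind}(G)$. Compact support follows from two facts. First, only finitely many Bernstein components contain representations with nonzero $K_f$-fixed vectors, where $K_f$ is a compact open subgroup fixing $f$. Second, each connected component $\Pi_{\disc}(M)\cdot\cX_{\unit}(M)$ is compact modulo $W(G,M)$. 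This finiteness result appears as \cite{Wal03}*{VIII} (or the corresponding lemma in III).

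Step 2 (part (2)). Define $\mu^G_\psi(i^G_M(\sigma)) := d(\sigma)\mu(\sigma)$, with the formal degree taken with respect to $\rd_\psi m$ on $M/A_M$, adjusted by the measure on $A_M^{\cD}$ normalized in \Cref{subsubsection:normalization-tempered-spectra}. The steps are:
\begin{itemize}
\item Show this is well defined on $\Temp_{\ind}(G)$, i.e.\ invariant under $W(G,M)$. Here I would use the invariance $d({}^w\sigma)=d(\sigma)$ and $\mu({}^w\sigma)=\mu(\sigma)$ from \cite{Wal03}*{V.2}.
\item Show it is smooth and positive. Positivity holds because $d(\sigma)>0$, and $\mu$ is positive and real-analytic on unitary twists \cite{Wal03}*{V.2.1}.
\item Check that the quotient measure $\frac{1}{\abs{W(G,M)}}(i^G_M)_*\rd\sigma$ matches Waldspurger's weighting. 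At points where the stabilizer in $W(G,M)$ is nontrivial, the pushforward already accounts for multiplicity, which is a measure-zero issue.
\end{itemize}
With these in place, \cref{eq:Plancherel-formula} is literally Waldspurger's formula.

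The main obstacle is the bookkeeping of normalizations. Three measures must be reconciled: the measure on $\Pi_{\disc}(M)$ via $A_M^{\cD}$ versus $i\fa_M^*$, the Haar measures $\rd_\psi$ versus Waldspurger's choices, and the constant $c(G|M)$. I would handle this by absorbing every discrepancy into the definition of $\mu^G_\psi$. Since the statement only asserts existence of some smooth positive $\mu^G_\psi$, any positive constant per component is harmless. The point requiring care is that these constants are locally constant on $\Temp_{\ind}(G)$, which holds because they depend only on $M$.
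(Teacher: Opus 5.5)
Your reduction matches the paper, which gives no argument beyond citing \cite{Wal03}*{Th\'eor\`eme VIII.1.1}. The paper has arranged the measures so that Waldspurger's constant $c(G|M)$ equals $1$ and the measure on $\Temp_{\ind}(G)$ is Waldspurger's. Absorbing the remaining $M$-dependent constants into $\mu^G_\psi$ is fine for an existence statement, and your treatment of part (1) is the standard one. (A side remark: points with nontrivial stabilizer in $W(G,M)$ need no measure-zero argument, since integration against the pushforward measure $\frac{1}{\abs{W(G,M)}}(i^G_M)_*\rd\sigma$ is exact.)

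\textbf{Gap: strict positivity.} Harish-Chandra's $\mu$-function is only non-negative on unitary twists. By the product formula, $\mu(\sigma_\lambda)=0$ exactly when some $\mu_\alpha(\sigma_\lambda)=0$, i.e.\ when $W^0_{\sigma_\lambda}\neq 1$.
\begin{itemize}
\item For $G=\SL_2$, $M=T$, the density on the unitary principal series vanishes at the trivial character. There $i^G_T(\mathbf{1})$ is irreducible although the Weyl element fixes $\mathbf{1}$.
\item The paper exhibits such zeros itself. In the proof of \cref{proposition:irreducibility-of-parabolic-induction-case-1}, one has $\mu_\alpha(\pi)=0$ for every $\alpha$ (via \cref{proposition:normalizing-factor-and-mu-function}). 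Hence the $E_6$ density vanishes at $i^G_P(\pi)$.
\end{itemize}

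This cannot be repaired by choosing a different density. By \cref{proposition:sakellaridis-venkatesh}, any density satisfying the Plancherel formula is determined almost everywhere. Since $\rd\pi$ has full support on each compact connected component, a continuous density is then determined everywhere. So no smooth density is strictly positive at these points.

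What your argument, and \cite{Wal03}, actually yield is a smooth $\mu^G_\psi\colon \Temp_{\ind}(G)\to\R_{\geq 0}$ that is positive off a null set. The $\R_{>0}$ in the statement should be read that way. This weaker form is all the paper uses later: the positivity step in the proof of \cref{theorem:main-theorem} needs only almost-everywhere non-negativity.
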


\begin{definition}
    We call the function $\mu^G_{\psi}(\pi)$ the Plancherel density function normalized with respect to $\psi$.
\end{definition}

\begin{proposition}[\cite{SakellaridisVenkatesh2017}*{Proposition 6.1.1}]\label{proposition:sakellaridis-venkatesh}
    The measurable function which satisfies \cref{eq:Plancherel-formula} for all functions $f \in \Cc(G)$ is unique up to coincidence at a conull set with respect to $\rd \pi$.
    In particular, any measurable function which satisfies \cref{eq:Plancherel-formula} for all functions $f \in \Cc(G)$ is non-negative almost everywhere with respect to $\rd \pi$.
\end{proposition}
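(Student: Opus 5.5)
The claim is uniqueness almost everywhere of a measurable density $\mu$ satisfying \cref{eq:Plancherel-formula} for all $f \in \Cc(G)$. By linearity it suffices to show the following. If a measurable, locally integrable function $\nu$ on $\Temp_{\ind}(G)$ satisfies
\begin{align*}
    \int_{\Temp_{\ind}(G)} \tr \pi(f^{\vee}) \, \nu(\pi) \rd \pi = 0
\end{align*}
for all $f \in \Cc(G)$, then $\nu = 0$ almost everywhere. Here $\nu$ is the difference of two candidate densities; I take local integrability as part of what ``satisfies the formula'' means, so that the integrals make sense. Applying this to $\nu = \mu - \mu^G_{\psi}$, with $\mu^G_{\psi}$ the smooth positive density of Harish-Chandra's theorem, gives the uniqueness. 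The nonnegativity a.e.\ then follows at once, because $\mu^G_{\psi}$ takes values in $\R_{>0}$.

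The key input is the density of the image of the trace map. By the Paley--Wiener theorem for the Schwartz algebra (Harish-Chandra, Waldspurger), or more elementarily the trace Paley--Wiener theorem of Bernstein--Deligne--Kazhdan, the functions $\pi \mapsto \tr \pi(f^{\vee})$, $f \in \Cc(G)$, include enough functions on $\Temp_{\ind}(G)$. Concretely, I would use the following fact: for each connected component $\Pi_{\disc}(M)/W(G,M)$, every $W(G,M)$-invariant Laurent polynomial (regular function) on the complex torus $\{\sigma \otimes \chi\}$, restricted to the compact real torus, is realized as $\tr \pi(f)$ for some $f$ whose trace vanishes on all other components. Some care is needed because $\tr$ is evaluated on the whole induced representation; since $\Temp_{\ind}(G)$ is parametrized by induced representations, this is consistent. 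The trace Paley--Wiener theorem describes exactly such functions on the Bernstein variety, supported on a single component modulo the Weyl group, and they separate points on the quotient.

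Next I localize to one component, parametrized by $\cX_{\unit}(M)$ modulo a finite group. Pulling $\nu$ back to the compact torus gives a $W$-invariant $L^1$ function there, and by the previous step it integrates to zero against all $W$-invariant trigonometric polynomials. Averaging over $W$ then shows that it integrates to zero against all trigonometric polynomials. By uniqueness of Fourier coefficients on a compact torus, the pullback vanishes a.e., so $\nu = 0$ a.e.\ on that component. Countably many components then give the claim.

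The main obstacle is the precise Paley--Wiener input: realizing every invariant Laurent polynomial on a single component as a trace, with zero contribution from the other components, including components whose induced representations are reducible or which share a Bernstein component. I would cite the trace Paley--Wiener theorem in the form used by Sakellaridis--Venkatesh. If full realization is unavailable, I would fall back on a Stone--Weierstrass density argument in $C$ of each compact quotient, which together with local integrability is enough to conclude.
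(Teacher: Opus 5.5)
Your argument works, but it is not the paper's route. The paper gives no argument and simply quotes Proposition~6.1.1 of Sakellaridis--Venkatesh, an abstract uniqueness statement for Plancherel (direct-integral) decompositions coming from general spectral theory. That statement transfers to $\Temp_{\ind}(G)$ because, off a $\rd\pi$-null set, the induced representations are irreducible and pairwise inequivalent, so $\Temp_{\ind}(G)$ agrees with the tempered dual up to null sets. You instead separate spectral parameters by hand: you isolate one component with a test function, reduce to a $W$-invariant $L^1$ function on a compact torus orthogonal to all invariant trigonometric polynomials, average, and conclude by Fourier uniqueness; positivity is then inherited from $\mu^G_{\psi}>0$. (The symmetries of a component are the stabiliser of the orbit, acting by affine maps of $\cX_{\unit}(M)$, together with translations by the finite group of unramified characters fixing $\sigma$. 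Your averaging step is unaffected, since these maps preserve Haar measure and trigonometric polynomials.) Your route gives an argument that lives on $\Temp_{\ind}(G)$ itself, with no passage to $\widehat{G}$. The cost is that everything rests on the realization step. You should cite it in Arthur's reformulation of the Bernstein--Deligne--Kazhdan trace Paley--Wiener theorem, on his basis $T_{\temp}(G)$ of tempered virtual characters, where the full induced family of $(M,\sigma,1)$ is a single basis component independent of all the others. The bare Bernstein--Deligne--Kazhdan description of good forms on the Grothendieck group does not give this independence without that extra work. Your Stone--Weierstrass fallback cannot replace it either: multiplying by the Bernstein centre only rescales traces by functions of the cuspidal support. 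It therefore never separates distinct tempered families that share a cuspidal support (e.g.\ distinct discrete series of $G$ with the same infinitesimal character), and there one genuinely needs pseudo-coefficients, i.e.\ the tempered Paley--Wiener theorem.
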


\subsection{Fixing central characters}

For a unitary character $\chi_{A_G} \colon A_G \to \bS^{1}$, let $\Temp_{\ind}(G)_{\chi_{A_G}}$ denote the subset of $\Temp_{\ind}(G)$ consisting of representations with central character $\chi_{A_G}$ on $A_G$.
We can define the topology and measure on this space by using the space $\cX_{\unit}(A_M/A_G)$ for standard Levi subgroups $M$.
We denote this measure by $\rd_{\chi_{A_G}}\pi$.
We denote the restriction of the function $\mu^G_{\psi}(\pi)$ to this space by the same symbol.
The Fourier inversion theorem for $A_G$ and the Harish-Chandra Plancherel theorem imply the following proposition.

\begin{proposition}\label{proposition:Plancherel-formula-central-character}
    \begin{enumerate}    
    \item
    For any function $f \in \Cc(G, \chi_{A_G})$, we have 
    \begin{align*}
        f(1) 
        = 
        \gamma^{*}(0, \mathbf{1}, \psi)^{-\dim(A_G)}
        \int_{\pi \in \Temp_{\ind}(G)_{\chi_{A_G}}} \tr \pi (f^{\vee}) \mu^G_{\psi}(\pi) \rd_{\chi_{A_G}} \pi.
    \end{align*} 
    \item     
    For any function $f \in \Cc(G)$, we have 
    \begin{align*}
        f(1) = 
        \int_{\chi_{A_G} \in \widehat{A_G}} 
        (\int_{\pi \in \Temp_{\ind}(G)_{\chi_{A_G}}}      
        \tr \pi (f^{\vee}_{\chi_{A_G}})  
        \mu^G_{\psi}(\pi) 
        \rd_{\chi_{A_G}} \pi) \,
        \rd\chi_{A_G}, 
    \end{align*}
    where 
    \begin{align*}
        f_{\chi_{A_G}}(g) = \int_{z \in A_G} f(zg) \chi^{-1}_{A_G}(z) \rd_{\psi}z.
    \end{align*}
    \end{enumerate} 
\end{proposition}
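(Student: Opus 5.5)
Both parts follow from the Harish-Chandra Plancherel formula \cref{eq:Plancherel-formula} combined with Fourier inversion on the split torus $A_G$. The plan is to prove part (2) first and then deduce part (1) from it, using the uniqueness statement \cref{proposition:sakellaridis-venkatesh} where needed.

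For (2), start from $f\in\Cc(G)$. By Fourier inversion on $A_G$ with respect to $\rd_\psi z$ and its dual measure $\rd_\psi\chi$,
\[
f(g)=\int_{\widehat{A_G}} f_{\chi_{A_G}}(g)\,\rd_\psi\chi_{A_G}.
\]
Set $g=1$. Each $f_{\chi_{A_G}}$ is a compactly-supported-modulo-$A_G$ function with $A_G$-equivariance $\chi_{A_G}$. For every tempered $\pi$ with central character $\chi'$ on $A_G$, one has $\tr\pi(f^\vee)=\tr\pi(f^\vee_{\chi'})$, where the trace on the right is taken over $G/A_G$ with the quotient measure.

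Next, decompose $\Temp_{\ind}(G)$ fibrewise over $\widehat{A_G}$ via $\pi\mapsto\chi_\pi|_{A_G}$. On each connected component, i.e.\ the image of $\sigma\otimes\cX_{\unit}(M)$, the measure $\rd\pi$ is built from $\cX_{\unit}(A_M)$. Using the isogeny $A_G\times (A_M/A_G)\to A_M$ and the normalizations of \cref{subsubsection:normalization-tempered-spectra}, $\rd\pi$ disintegrates as $\rd\chi_{A_G}\,\rd_{\chi_{A_G}}\pi$, with $\rd\chi_{A_G}$ the canonical measure of volume-one type on $\widehat{A_G}$. Here $\rd\chi_{A_G}=\gamma^*(0,\mathbf 1,\psi)^{-\dim A_G}\rd_\psi\chi_{A_G}$ as noted above. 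Substituting this disintegration into \cref{eq:Plancherel-formula} and applying Fubini gives (2) directly. Note that the statement of (2) is written with $\rd\chi_{A_G}$ and no constant, which is consistent with the constant appearing in (1).

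For (1), take $f\in\Cc(G,\chi_{A_G})$ and apply the argument to a function $\tilde f\in\Cc(G)$ with $\tilde f_{\chi'}=\hat\phi(\chi')f$. For example, take $\tilde f(g)=\phi(z)f(g')$ in local coordinates; more cleanly, use $\tilde f=\phi * f$ with $\phi\in\Cc(A_G)$, so that $\tilde f_{\chi'}=\hat\phi(\chi'\chi_{A_G}^{-1})\,f$. Then (2) gives
\[
\tilde f(1)=\int \hat\phi\,\Psi(\chi')\,\rd\chi',
\]
where $\Psi(\chi')$ is the inner spectral integral evaluated on the family $f$. The left side equals $\int\hat\phi(\chi')\rd_\psi\chi'$ times $f(1)$ after Fourier inversion, so letting $\hat\phi$ concentrate at $\chi_{A_G}$ yields $f(1)=\gamma^*(0,\mathbf 1,\psi)^{-\dim A_G}\Psi(\chi_{A_G})$. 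The $\gamma^*$ factor is exactly the ratio between $\rd\chi$ and $\rd_\psi\chi$.

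\textbf{Main obstacle.} The delicate step is making the concentration argument rigorous, which requires continuity of $\Psi$ in $\chi'$. Continuity follows from smoothness of $\mu^G_\psi$ and of $\pi\mapsto\tr\pi(f^\vee)$ on the finitely many relevant components, by part (1) of the Plancherel theorem. A second point to check carefully is the finite isogeny $A_G\times A_M/A_G\to A_M$: the kernel's order must cancel against the covering degree in the definition of $\rd_{\chi_{A_G}}\pi$.
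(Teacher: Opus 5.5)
Your route is the one the paper intends; its only justification is Fourier inversion on $A_G$ together with the Harish-Chandra Plancherel formula. Your argument for (2) is sound: $\tr\pi(f^\vee)=\tr\pi(f^\vee_{\chi'})$ for $\pi$ with central character $\chi'$ on $A_G$, then the disintegration $\rd\pi=\rd\chi_{A_G}\,\rd_{\chi_{A_G}}\pi$, then Fubini.

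One small correction there: there is no isogeny $A_G\times(A_M/A_G)\to A_M$. The relevant structure is the exact sequence $1\to A_G\to A_M\to A_M/A_G\to 1$ of split tori. Since $X_*$ is exact on it, $1\to\cX_{\unit}(A_M/A_G)\to\cX_{\unit}(A_M)\to\cX_{\unit}(A_G)\to 1$ is exact. So the volume-one measures disintegrate with constant exactly $1$, and there is no covering degree to cancel.

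The genuine gap is in your deduction of (1).
For $f\in\Cc(G,\chi_{A_G})$ and $\phi\in\Cc(A_G)$, the convolution $\phi*f$ equals $\bigl(\int_{A_G}\phi(z)\chi_{A_G}^{-1}(z)\,\rd_\psi z\bigr)f$. It is not compactly supported, so (2) cannot be applied to it.

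More to the point, no $\tilde f\in\Cc(G)$ can satisfy $\tilde f_{\chi'}=\hat\phi(\chi'\chi_{A_G}^{-1})f$. The left side transforms under $A_G$ by $\chi'$, while $f$ transforms by $\chi_{A_G}$, so this would force $\hat\phi(\chi'\chi_{A_G}^{-1})=0$ for every $\chi'\neq\chi_{A_G}$. For the same reason your $\Psi(\chi')$ ``evaluated on the family $f$'' is undefined off $\chi_{A_G}$: $\pi(f^\vee)$ only makes sense when $\pi$ has central character $\chi_{A_G}$ on $A_G$.

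The repair is to work with a lift.
\begin{enumerate}
\item Pick $h\in\Cc(G)$ with $h_{\chi_{A_G}}=f$, e.g.\ $h=\eta f$ with $\eta\in\Cc(G)$ and $\int_{A_G}\eta(zg)\,\rd_\psi z=1$ on $\mathrm{supp}(f)$.
\item Put $\Psi_h(\chi')=\int_{\Temp_{\ind}(G)_{\chi'}}\tr\pi(h^\vee_{\chi'})\,\mu^G_\psi(\pi)\,\rd_{\chi'}\pi$.
\item Since $A_G$ is central, $(\phi*h)_{\chi'}=\hat\phi(\chi')\,h_{\chi'}$ with $\hat\phi(\chi')=\int_{A_G}\phi(z)(\chi')^{-1}(z)\,\rd_\psi z$. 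Hence $\Psi_{\phi*h}=\hat\phi\,\Psi_h$.
\item Evaluate $(\phi*h)(1)$ once by Fourier inversion on $A_G$ and once by (2), using $\rd\chi=\gamma^*(0,\mathbf 1,\psi)^{-\dim A_G}\rd_\psi\chi$. This yields
\[
\int_{\widehat{A_G}}\hat\phi(\chi')\bigl(h_{\chi'}(1)-\gamma^*(0,\mathbf 1,\psi)^{-\dim A_G}\Psi_h(\chi')\bigr)\,\rd_\psi\chi'=0
\quad\text{for all }\phi\in\Cc(A_G).
\]
\item The function $\chi'\mapsto h_{\chi'}(1)$ is the Fourier transform of $h|_{A_G}\in\Cc(A_G)$, hence continuous with compact support. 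The function $\Psi_h$ is also continuous with compact support: its integrand on $\Temp_{\ind}(G)$ is smooth and compactly supported, and $\Temp_{\ind}(G)$ is fibred locally trivially over $\widehat{A_G}$.
\item By Fourier uniqueness the bracket vanishes everywhere, and at $\chi'=\chi_{A_G}$ this is exactly (1).
\end{enumerate}

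So the continuity you flagged is the right issue, but it must be proved for $\Psi_h$ attached to a lift $h\in\Cc(G)$, not for a family built from $f$ itself.
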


\begin{remark}\label{remark:reduction-fixing-central-character}
    \cref{proposition:Plancherel-formula-central-character} implies that we can compute the density function $\mu^G_{\psi}(\pi)$ by fixing the central character $\chi_{A_G}$ of $A_G$.
\end{remark}

\subsection{The statement of the conjecture}

\begin{definition}
    Let $G$ be a reductive group over $F$.
    Let $\pi$ be a discrete series representation of $G$.
    Let $\langle \cdot, \cdot  \rangle$ be the canonical $G$-invariant pairing between $\pi$ and $\pi^{\vee}$.
    Then, the formal degree $d^G_{\psi}(\pi)$ is an element in $\R_{>0}$ which satisfies 
    \begin{align*}
        \int_{g \in G/A_G} 
        \langle \pi(g)v, v^{\vee} \rangle 
        \langle w, \pi^{\vee}(g) w^{\vee} \rangle 
        \rd_{\psi}g 
        = 
        d^G_{\psi}(\pi)^{-1}
        \langle v, w^{\vee} \rangle 
        \langle w, v^{\vee} \rangle.
    \end{align*}
\end{definition}

\begin{theorem}[\cite{Wal03}*{Th\'eor\`eme VIII.1.1}]
    Let $G$ be a reductive group over $F$.
    Then, the Plancherel density function $\mu^G_{\psi}$ can be written as the product of two factors: For any $\pi = i^G_M(\sigma) \in \Temp_{\ind}(G)$, the formal degree $d_{\psi}(\sigma)$ of $\sigma$ and the inverse of the Harish-Chandra $j$-function ${j_{\psi}(\sigma)} =  J_{P \vert \overline{P}}(\sigma)J_{\overline{P} \vert P}(\sigma)$ (see \cite{Wal03}*{IV.3} for details on $j$-function), where $J_{\overline{P} \vert P}(\sigma)$ is the unnormalized intertwining operator defined in \cref{subsection:normalizing-factors}.
\end{theorem}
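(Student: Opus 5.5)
This statement is Harish-Chandra's factorization of the Plancherel density, quoted from \cite{Wal03}*{Th\'eor\`eme VIII.1.1}. The plan is therefore to derive it from Waldspurger's formulation by matching his normalizations with ours, rather than to reprove the Plancherel theorem. Waldspurger's theorem gives, for $f \in \Cc(G)$,
\begin{align*}
    f(1) = \sum_{M} \frac{c(G|M)^{-2}\gamma(G|M)^{-1}}{\abs{W(G,M)}} \int_{\Pi_{\disc}(M)} d(\sigma)\, j(\sigma)^{-1} \tr i^G_M(\sigma)(f^{\vee}) \rd\sigma ,
\end{align*}
with the sum over $M \in \cL^G_0$. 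I would then identify each ingredient with the objects fixed in \Cref{subsubsection:measures-unipotent} and \Cref{subsubsection:normalization-tempered-spectra}.

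\textbf{Step 1: constants.} I would check that $c(G|M)=1$ under our choice of $\rd_{\psi}u$ and $\rd_{\psi}\overline{u}$. This holds because our measures satisfy the decomposition $\rd_\psi g = \delta_M \rd u \rd_\psi m \rd\overline{u}$, which is exactly Waldspurger's defining condition. The constant $\gamma(G|M)$ comes from comparing measures on $A_M$-related spaces. I would check that it is absorbed by our convention that $\rd\sigma$ is pulled back from the dual measure normalization on $A_M^{\cD}$, which is made via $\cX_{\unit}$ with total volume one.

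\textbf{Step 2: factors.} The formal degree $d(\sigma)$ is defined with respect to $\rd_\psi m$ on $M/A_M$, which is consistent with our definition of $d^M_\psi$. The $j$-function is $J_{P|\overline{P}}(\sigma)J_{\overline{P}|P}(\sigma)$, computed with the measures $\rd_\psi u$ and $\rd_\psi \overline u$. It is a scalar by Harish-Chandra's commuting-algebra theorem for generic $\sigma$, and it extends to a smooth function on each connected component.

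\textbf{Step 3: assembling.} Pushing the sum forward along the bijection $\coprod_M \Pi_{\disc}(M)/W(G,M) \to \Temp_{\ind}(G)$ turns the $\frac{1}{\abs{W(G,M)}}$-weighted measure into $\rd\pi$. This exhibits $\mu^G_\psi(i^G_M\sigma) = d_\psi(\sigma)\, j_\psi(\sigma)^{-1}$. The density is well defined on $\Temp_{\ind}(G)$ because $d$ and $j$ are invariant under $W(G,M)$-conjugation. Uniqueness almost everywhere then follows from \cref{proposition:sakellaridis-venkatesh}.

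The main obstacle is Step 1, the bookkeeping that Waldspurger's constants $c(G|M)$ and $\gamma(G|M)$ reduce to $1$ under our measure conventions. I would handle it by checking the conventions on the torus case first, where the factor $\gamma^*(0,\mathbf{1},\psi)^{-\dim}$ relating $\rd\chi$ to $\rd_\psi\chi$ must be tracked. I would then reduce the general case to this one through the exact sequence for $M^1 \subset M$.
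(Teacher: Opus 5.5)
Your overall route, which is to quote \cite{Wal03}*{Th\'eor\`eme VIII.1.1} and then match normalizations, is also all the paper does. It gives no argument beyond the citation and the remark that $c(G|M)=1$ for the measures of \Cref{subsubsection:measures-unipotent}. The problem is your Step~1, which is where the whole content of such a proof lies.

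The constant $\gamma(G|M)$ in Waldspurger is, as far as I can reconstruct his conventions, Harish-Chandra's constant $\int_{\overline U}\delta_P(m_P(\overline u))\,d\overline u$, where $m_P$ is the $M$-part of the Iwasawa decomposition $G=UMK$. It compares the measure on $\overline U$ with the measure on $K$ (equivalently on $P\backslash G$), and it depends on $G$, not only on $M$. Everything on the spectral side that you propose to use depends only on $M$: the measure $\rd\sigma$ on $\Pi_{\disc}(M)$, the volume-one normalization of $\cX_{\unit}$, the dual measure on $A_M^{\cD}$, and the formal degree on $M/A_M$. So none of it can absorb a $G$-relative constant.

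The cancellation of $\gamma(G|M)$ happens on the unipotent side instead. Waldspurger's formula is written with Harish-Chandra's $\mu$-function, which is not $j^{-1}$. Up to conventions, $\mu$ is $j^{-1}$ multiplied by such $\gamma$-constants; compare the familiar relation $J_{P\vert\overline P}(\sigma)J_{\overline P\vert P}(\sigma)=\mu(\sigma)^{-1}\gamma(G|P)^{2}$. These constants, together with Waldspurger's definition of $c(G|M)$, cancel and leave only the inverse of the big-cell constant, which is $1$ here. By writing $j(\sigma)^{-1}$ in place of $\mu$ at the outset, you discarded exactly the factors that do the cancelling. With $c(G|M)=1$, your formula as written is off by $\gamma(G|M)^{-1}$.

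This is not cosmetic. Take $G=\PGL_2$, $M=T$, $\psi$ unramified and the paper's measures. Then $\int_{\overline U}\delta_P(m_P(\overline u))\,\rd_\psi\overline u=1+q^{-1}$. On the other hand, the spherical Plancherel identity holds on the nose with $\mu^G_\psi=d_\psi\, j_\psi^{-1}$. To see this, take $f=\mathbf 1_{\PGL_2(\fo_F)}$, write $z=\chi(\diag(\varpi_F,1))=e^{i\theta}$, and use $d_\psi(\chi)=\zeta_F(1)$, the Plancherel density of $T$ for $\rd\chi$. The spectral side is $\frac12(1-q^{-2})\zeta_F(1)\int_{\abs{z}=1}\frac{\abs{1-z^2}^2}{\abs{1-q^{-1}z^2}^2}\frac{\rd\theta}{2\pi}=\frac12(1-q^{-2})\zeta_F(1)\frac{2}{1+q^{-1}}=1=f(1)$. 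A leftover $\gamma(G|M)^{-1}$ would therefore be an error of $(1+q^{-1})^{-1}$. Renormalizing $\rd\sigma$ can only produce $M$-dependent factors, so it cannot produce this one.

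Your proposed test cannot detect the problem. When $G=M=T$ there is no $\overline U$, so $\gamma=1$. Reducing along $M^1\subset M$ only controls the $A_M$-side issue, namely compatibility of the formal degree with $\rd\sigma$; that is a genuine point, but a separate one. To close Step~1 you need the unipotent-side bookkeeping. Take Waldspurger's definitions of $\mu$, $j$, $c(G|M)$ and $\gamma(G|M)$. Then check that, once $\rd_\psi g$ decomposes along the big cell with constant $1$, they combine to $\mu^G_\psi=d_\psi\, j_\psi^{-1}$. A rank-one computation like the one above is a good safeguard.
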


\begin{conjecture}[The Hiraga--Ichino--Ikeda conjecture, the formal degree conjecture \cite{HII08}]\label{conjecture:formal-degree-conjecture}
    Let $G$ be a quasi-split reductive group over $F$. 
    Assume that \cref{conjecture:local-Langlands-conjecture} holds for $G$.
    \begin{enumerate}
        \item  
        For any Levi subgroup $M$ of $G$ and any discrete series representation $\sigma$ of $M$, the formal degree $d^{M}_{\psi}(\sigma)$ is equal to 
        \begin{align*}
            \langle \sigma, 1 \rangle 
            \frac{\abs{\gamma^*(0, \phi^M_{\sigma}, \Ad_{M}, \psi)}}{\abs{S^{\natural}_{\sigma}}}.
        \end{align*}   
        \item    
        For almost all $\pi = i^G_M(\sigma) \in \Temp_{\ind}(G)$, the $j$-function $j_{\psi}(\sigma)$ is equal to 
        \begin{align*}
             \abs{\gamma^*(0, \phi^M_{\sigma}, \Ad_{G/M}, \psi)}^{-1}.
        \end{align*}
    \end{enumerate}
\end{conjecture}

\begin{remark}\label{remark:reduction-to-G}
    We take a representation $\pi \in \Temp_{\ind}(G)$.
    By the property $(1)$ of \cref{conjecture:local-Langlands-conjecture}, the $L$-parameters of irreducible components $\pi'$ of $\pi$ are the same, and we call it the $L$-parameter of $\pi$ and we write it as $\phi^G_{\pi}$.
    Combining the two statements in the conjecture, the density function $\mu^G_{\psi}(\pi)$ is conjecturally equal to 
    \begin{align}
        \mu^G_{\HII, \psi}(\pi)
        =
        \langle \sigma, 1 \rangle 
        \frac{\abs{\gamma^*(0, \phi^G_{\pi}, \Ad_{G}, \psi)}}{\abs{S^{\natural}_{\sigma}}}
    \end{align}
    for almost all $\pi = i^G_P(\sigma) \in \Temp_{\ind}(G)$.
    Conversely, assuming this statement and $(1)$ of \cref{conjecture:formal-degree-conjecture}, we obtain Part $(2)$ of the conjecture.
\end{remark}

\begin{definition}
    Assume that \cref{conjecture:local-Langlands-conjecture} for $G$ holds.
    We define the equivalence relation $\sim_{st}$ on the set $\Temp_{\ind}(G)$ by $\pi \sim_{st} \pi'$ if and only if $\phi^G_{\pi} = \phi^G_{\pi'}$.
    We denote the push-forward measure of the measure $\rd \pi$ on $\Temp_{\ind}(G)$ to $\Temp_{\ind}(G)/\sim_{st}$ by $\rd \phi$.
\end{definition}

\begin{remark}\label{remark:stability-SO_2n}
    Considering \cref{theorem:local-Langlands-correspondence-SO2n}, we define the equivalence relation $\sim_{st,w}$ on $\Temp_{\ind}(\SO_{2n}(F))$ by 
    \begin{align*}
        \pi \sim_{st,w} \pi' \text{ if and only if } [\phi^{\SO_{2n}}_{\pi}]_{\O_{2n}} = [\phi^{\SO_{2n}}_{\pi'}]_{\O_{2n}},
    \end{align*}
    and similarly for $\GSO_{2n}(F)$ by using the transfer to $\GL_{2n}(F) \times \GL_1(F)$.
    Here, the notation $w$ denotes ``weak''.
    We have the canonical map 
    \begin{align}
        \Temp_{\ind}(\GSO_{2n}(F))/\sim_{st, w} 
        \to   
        \Temp_{\ind}(\SO_{2n}(F))/\sim_{st, w}, 
        \label{eq:transfer-by-restriction}
    \end{align}
    induced by the restriction of representations of $\GSO_{2n}(F)$.
\end{remark}

\subsection{The conjecture for classical groups}

\begin{theorem}\label{theorem:formal-degree-conjecture-GLn}
    \cref{conjecture:formal-degree-conjecture} for $\GL_n(F)$ holds.
\end{theorem}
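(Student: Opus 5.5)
The plan is to reduce \cref{conjecture:formal-degree-conjecture} for $\GL_n(F)$ to two classical computations, one for each part. The $L$-packets for $\GL_n$ are singletons, $\langle \sigma, 1\rangle = 1$, and $S^{\natural}_{\sigma}$ is trivial for discrete $\sigma$. Hence part (1) becomes an explicit formula for the formal degree of a discrete series representation of $\GL_n(F)$, in terms of $\abs{\gamma^*(0,\phi_\sigma,\Ad,\psi)}$. By the proof of \cref{lemma:well-definedness}, part (2) becomes the statement that the Harish-Chandra $j$-function on $\GL_{n_1}\times\cdots\times\GL_{n_r}$ is a product of Rankin--Selberg $\gamma$-factors.

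For part (2), I would first factor the $j$-function over the reduced roots of $A_M$. This uses the standard multiplicativity of unnormalized intertwining operators along a reduced decomposition of $w$, and it reduces the problem to maximal Levi subgroups $\GL_a\times\GL_b$. For $\sigma_1\boxtimes\sigma_2$, Shahidi's theory identifies the normalizing factors as Rankin--Selberg $\gamma$-factors, with the measures normalized as in \Cref{subsubsection:measures-unipotent}. Concretely, $J_{P|\overline P}J_{\overline P|P}$ equals $\gamma(s,\sigma_1\times\sigma_2^\vee,\psi)\gamma(-s,\sigma_1^\vee\times\sigma_2,\psi)$ up to sign and absolute value, evaluated at $s=0$. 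The Jacquet--Piatetski-Shapiro--Shalika $\gamma$-factors agree with the Artin factors under the local Langlands correspondence of \cref{theorem:local-Langlands-GLn}. Their product is $\gamma(0,\phi,\Ad_{G/M},\psi)$, and it is regular and nonzero for $\sigma$ in general position, which is why the statement is only claimed for almost all $\pi$.

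For part (1), I would follow \cite{HII08}. By the Zelevinsky--Bernstein classification, a discrete series $\sigma$ of $\GL_n$ is a generalized Steinberg representation $\mathrm{St}_k(\rho)$ with $\rho$ supercuspidal on $\GL_d$ and $n=dk$. Step one is to establish the formula for supercuspidal $\rho$. One route is to compare with the Plancherel density via the character identity for cyclic base change and the formula of Silberger--Zink. An alternative is to use the explicit Bushnell--Kutzko type theory, where the formal degree is known, together with the computation of the adjoint $\gamma$-factor through the conductor of $\rho\times\rho^\vee$. Step two passes to $\mathrm{St}_k(\rho)$. I would write the Plancherel density near the induced representation $\rho|\cdot|^{s_1}\boxtimes\cdots\boxtimes\rho|\cdot|^{s_k}$, using part (2) for the $j$-function. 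Taking the residue at the point where $\mathrm{St}_k(\rho)$ occurs, via Heiermann's residue formula for formal degrees, gives $d(\mathrm{St}_k(\rho))$ as an iterated residue of $d(\rho)^k$ times the product of $\gamma$-factors. Finally, the regularization $\gamma^*$ in $\gamma^*(0,\phi,\Ad,\psi)$ must be matched with the residues of the factors $\zeta_F$ that appear.

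The main obstacle is the supercuspidal case of part (1), since it requires either the full depth of the type-theoretic formal degree computation or a global trace formula argument. In practice, I expect to cite \cite{HII08} (Theorem 3.1 and its correction), where precisely this statement is proved for $\GL_n$ by these means, and only to verify that the normalizations used here agree with theirs: the measures $\rd_\psi g$ and $\rd\pi$, and $\gamma^*$. The check is that $c(G|M)=1$ and that the factor $\gamma^*(0,\mathbf 1,\psi)^{-\dim A_G}$ is accounted for as in \cref{proposition:Plancherel-formula-central-character}.
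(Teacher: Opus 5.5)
Your route is in substance the paper's. The paper gives no argument of its own and cites \cite{Beuzart-Plessis2021-Plancherel-GLnE-GLnF}*{Proposition 2.132}, which rests on the same two inputs you sketch: Silberger--Zink \cite{SilbergerZink1996-formaldegree} for formal degrees and Shahidi \cite{Shahidi1984-Fouriertransform} for the $j$-function. However, your reduction misstates the target, and this would derail the normalization check that you propose as the actual content of the proof.

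The group $S^{\natural}_{\sigma}=\Cent_{\widehat{M^{\natural}}}(\phi^M_{\sigma})$ is \emph{not} trivial. For $M=\GL_n$ one has $M^{\natural}=\PGL_n$ and $\widehat{M^{\natural}}=\SL_n(\C)$, and the centralizer of an irreducible $\phi_\sigma$ in $\SL_n(\C)$ is $\mu_n$. For $M=\prod_i\GL_{n_i}$ it is $\prod_i\mu_{n_i}$. You have conflated $S^{\natural}_{\sigma}$ with $\pi_0(S_{\phi}/Z(\widehat{M})^{\Gamma_F})$, which is indeed trivial and gives $\langle\sigma,1\rangle=1$. The two differ by $Z(\widehat{M^{\natural}})^{\Gamma_F}$; compare \cref{lemma:comparison-S-natural-groups}(3). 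The paper itself uses $\abs{S^{\natural}_{\phi^I_0}}=2$ for $I\cong\GL_2$ in the proof of \cref{lemma:comparison-Weyl-and-S-group}. So part (1) for $\GL_n$ reads $d^{\GL_n}_{\psi}(\sigma)=\frac{1}{n}\abs{\gamma^*(0,\phi_\sigma,\Ad_{\GL_n},\psi)}$. With your version, any comparison with \cite{HII08} or with Silberger--Zink is off by exactly the factor $n$. The same issue hits your residue step. There you pass from $\rho^{\boxtimes k}$, with $\rho$ supercuspidal on $\GL_d(F)$ and $n=dk$, to $\mathrm{St}_k(\rho)$. The change from $\abs{S^{\natural}}=d^k$ to $\abs{S^{\natural}}=dk$ is an essential part of that bookkeeping, and your formulation makes it invisible.

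Part (2) is also stated backwards. Shahidi's theorem computes the Plancherel $\mu$-function, i.e.\ $j^{-1}$, as a constant times $\gamma(s,\sigma_1\times\sigma_2^{\vee},\psi)\gamma(-s,\sigma_1^{\vee}\times\sigma_2,\psi)$. Equivalently, $J_{P|\overline{P}}J_{\overline{P}|P}=r_{P|\overline{P}}\,r_{\overline{P}|P}$ is the \emph{reciprocal} of that product, as in the proof of \cref{proposition:normalizing-factor-and-mu-function}. That reciprocal is what matches $j_{\psi}(\sigma)=\abs{\gamma^*(0,\phi,\Ad_{G/M},\psi)}^{-1}$. Moreover, the ambiguity in Shahidi's formula is a positive measure-dependent constant, not ``sign and absolute value''. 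Since part (2) is an exact identity, this constant is precisely what must be pinned down, using the normalizations of \Cref{subsubsection:measures-unipotent} ($c(G|M)=1$). With these two corrections your plan is fine as a citation argument. Citing Beuzart-Plessis, as the paper does, also spares you reconciling the conventions of \cite{HII08} with the present ones ($\gamma^*$ of the full $\Ad_M$, $\rd_\psi g$, and the spectral measure $\rd\pi$), which your last step defers.
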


\begin{proof}
    For example, see \cite{Beuzart-Plessis2021-Plancherel-GLnE-GLnF}*{Proposition 2.132}, which applies results by Silberger--Zink \cite{SilbergerZink1996-formaldegree} and Shahidi \cite{Shahidi1984-Fouriertransform}.
\end{proof}

Note that for even split orthogonal groups, we have seen that the invariants in the statement of \cref{conjecture:formal-degree-conjecture} can be defined by applying \cref{theorem:local-Langlands-correspondence-SO2n}; see \cref{remark:invariant-in-formal-degree-even-orthogonal}.

\begin{theorem}[\cite{beuzartplessis2025hiragaichinoikedaconjectureformaldegrees}*{Theorem 1.1}]\label{theorem:formal-degree-conjecture-SO2n}
    \cref{conjecture:formal-degree-conjecture} holds for any even split orthogonal group.
\end{theorem}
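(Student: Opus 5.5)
This statement is taken directly from \cite{beuzartplessis2025hiragaichinoikedaconjectureformaldegrees}*{Theorem 1.1}, so within this paper it is used as an input and not reproved. If I had to supply the argument, I would follow the strategy of Hiraga--Ichino--Ikeda, as generalized by Beuzart-Plessis, for $G=\SO_{2n}$. The idea is to compare the Plancherel measure of $\SO_{2n}(F)$ with the twisted Plancherel measure of $\GL_{2n}(F)\rtimes\theta$, where $\theta$ is the orthogonal-type involution. On the $\GL_{2n}$ side \cref{theorem:formal-degree-conjecture-GLn} is already known. Everything is understood up to $\O_{2n}(F)$-conjugacy, as in \cref{theorem:local-Langlands-correspondence-SO2n} and \cref{remark:stability-SO_2n}.

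The first step is to reduce to discrete series. By \cref{remark:reduction-to-G}, the Harish-Chandra product formula $\mu^G_\psi=d_\psi(\sigma)\,j_\psi(\sigma)^{-1}$, and the fact that standard Levi subgroups are products $\prod\GL_{n_i}\times\SO_{2m}$ (with part (1) known for the $\GL$ factors), it suffices to establish two things. The first is the Plancherel density identity $\mu^G_\psi=\mu^G_{\HII,\psi}$ almost everywhere on $\Temp_{\ind}(\SO_{2n})$. The second is the $j$-function statement, part (2). For part (2) I would use Shahidi's theory: via the Langlands--Shahidi method, the normalizing factors of intertwining operators for maximal Levi subgroups $\GL_k\times\SO_{2m}$ are ratios of Rankin--Selberg and Asai/exterior-square type $\gamma$-factors. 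These agree with the Artin factors by Arthur's theory (compatibility of \cref{theorem:local-Langlands-correspondence-SO2n} with $\GL$), and the product over reduced roots in $\Phi(G,A_M)_{\red}$ then gives $\abs{\gamma^*(0,\phi^M_\sigma,\Ad_{G/M},\psi)}^{-1}$.

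The second step is the heart of the argument, and I expect it to be the main obstacle. The goal is to prove the density identity stably, by integrating the twisted Plancherel formula for $\GL_{2n}\rtimes\theta$ against the endoscopic character identities. Take $f\in\Cc(\SO_{2n})$ with a matching twisted function $\tilde f$ on $\GL_{2n}$. I would compute $f(1)$, which is a stable orbital integral at the identity, in two ways. On one side apply the Plancherel formula for $\SO_{2n}$, and group it stably as $\int \Theta_\phi(f^\vee)\,\mu\,\rd\phi$. On the other side, transfer to the twisted side and use a twisted Plancherel/local trace formula for $\GL_{2n}\rtimes\theta$ in the spirit of \cite{Beuzart-Plessis2021-Plancherel-GLnE-GLnF}*{Proposition 3.41}. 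There the density is governed by $\GL_{2n}$ formal degrees and the residues of the twisted intertwining operators at the $\theta$-stable parameters, and it can be computed explicitly in terms of $\gamma^*(0,\phi,\Ad,\psi)$ using \cref{theorem:formal-degree-conjecture-GLn}. The delicate points are these:
\begin{itemize}
\item controlling the contribution of the other twisted endoscopic groups ($\SO_{2a}\times\SO_{2b}$, and $\Sp$ or $\SO$ odd), which have to be handled by induction on $n$;
\item making the transfer factors and the Whittaker normalization match;
\item extracting a pointwise identity from an integral identity, using density of characters of the form $\Theta_\phi(f)$ and the uniqueness statement \cref{proposition:sakellaridis-venkatesh}.
\end{itemize}

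The last step is to pass from the stable identity to individual discrete series. The stable identity gives $\sum_{\pi\in\Pi_\phi}\langle\pi,1\rangle d(\pi)$. To separate it into individual formal degrees, I would use the endoscopic character identities for the elliptic endoscopic groups of $\SO_{2n}$ attached to each $s\in S_\phi$, together with Fourier inversion on $\pi_0(S^\natural_\phi)$ and induction on $n$. This shows that $d(\pi)/\langle\pi,1\rangle$ is constant on the packet. Combined with $\sum_\pi\langle\pi,1\rangle^2=\abs{\pi_0(S^\natural_\phi)}$, it yields $d(\pi)=\langle\pi,1\rangle\abs{\gamma^*(0,\phi,\Ad,\psi)}/\abs{S^\natural_\phi}$.
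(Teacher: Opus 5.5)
Your proposal agrees with the paper: \cref{theorem:formal-degree-conjecture-SO2n} is imported from \cite{beuzartplessis2025hiragaichinoikedaconjectureformaldegrees}*{Theorem 1.1} and not reproved, so citing it is exactly what the paper does. Your optional sketch has two problems. First, in the Hiraga--Ichino--Ikeda/Beuzart-Plessis method, which the paper reproduces for $G_2$ in \cref{section:9,section:10}, your third step is unnecessary: every $f^H\in\Cc(H)$ arises as a transfer (cf.\ \cref{lemma:transfer-surjection}), and the twisted character identity already produces $\sum_{\pi\in\Pi_\phi}\langle\pi,1\rangle\tr\pi$, so uniqueness of the Plancherel density (\cref{proposition:sakellaridis-venkatesh}) gives the individual formal degrees directly, as in the proof of \cref{theorem:main-theorem}. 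Second, your count $\sum_\pi\langle\pi,1\rangle^2=\abs{\pi_0(S^\natural_\phi)}$ is off by the factor $\abs{Z(\widehat{G})^{\Gamma_F}}$ (which is $2$ for $\SO_{2n}$), because packets are parametrized by $\pi_0(S_\phi/Z(\widehat{G})^{\Gamma_F})$.
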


In fact, the conjecture has been proved uniformly for all classical groups \cite{beuzartplessis2025hiragaichinoikedaconjectureformaldegrees}.
The odd special orthogonal and unitary cases were already known before; see \cite{ILM17} and \cite{Beuzart-Plessis2021-Plancherel-GLnE-GLnF}.

\subsection{Other known cases}
 
We give some other known results on the Hiraga--Ichino--Ikeda conjecture here.

\begin{itemize}
    \item 
    Unipotent representations for unramified groups; \cite{FengOpdamSolleveld2022-unipotent}.
    \item 
    Simple supercuspidal representations; \cite{Mie2021-simplesupercuspidal}.
    \item
    For non-singular supercuspidal representations; \cite{Schwein2024-regular}, \cite{Ohara2023-non-singular}.
\end{itemize}

We also have some relation between the formal degree of a discrete series representation and the formal degree of its cuspidal support; see \cite{Wang2025-formal-degree}.

\section{The conjecture for $\GSO_{2n}$ and $\PGSO_{2n}$}\label{section:5}

In this section, we prove the Hiraga--Ichino--Ikeda conjecture for $\GSO_{2n}(F)$ and $\PGSO_{2n}(F)$, following the method of \cite{GI14}*{Lemma 13.2}.
See also \cref{remark:GSO2n-invariant-independence-of-choice}.

\subsection{Galois cohomology and abelian Galois cohomology}

In this section, we recall the results on the abelian Galois cohomology proved by Borovoi \cite{Borovoi1998abeliangalois}.

\begin{definition}[Abelian Galois cohomology]
    Let $\bG$ be a reductive group over $F$.
    Let $Z(\bG_{sc})$ be the center of $\bG_{sc}$, where $\bG_{sc}$ is the simply connected cover of the derived group of $\bG$.
    The $i$-th abelian Galois cohomology $H^{i}_{ab}(F, G)$ of $G$ is the $i$-th Galois hypercohomology $\H^{i}(F, Z^{\bullet})$ of the complex of $\Gamma_F$-modules
    \begin{align*}
       Z^{\bullet} = (\cdots \to 0 \to Z(\bG_{sc}) \to Z(\bG) \to 0 \to \cdots),
    \end{align*}
    where $Z(\bG_{sc})$ is placed at degree $-1$ and $Z(\bG)$ is placed at degree $0$.
\end{definition}

\begin{definition}[Fundamental group]
    Let $\bG$ be a reductive group over $F$.
    Let $(\bB, \bT)$ be a Borel pair of $\bG_{\overline{F}}$.
    Consider the group
    \begin{align*}
         X_*(\bT)/X_*(\bT_{sc}).
    \end{align*}
    Here, the group $\bT_{sc}$ is the maximal torus of $\bG_{sc}$ corresponding to $\bT \subset \bG$.
    The construction $(\bB, \bT) \mapsto X_*(\bT)/X_*(\bT_{sc})$ forms a projective system, and we define 
    \begin{align*}
        \pi_1(G) = \lim_{(\bB, \bT)}  X_*(\bT)/X_*(\bT_{sc}).
    \end{align*}
    This group becomes a $\Gamma_F$-module by the natural action of $\Gamma_F$.
\end{definition}

\begin{lemma}\label{lemma:facts-in-abelian-Galois-cohomology}
    Let $\bG, \bG'$ be reductive groups over $F$.
    Assume that we have a faithfully flat morphism $p \colon \bG' \to \bG$ such that it induces an isogeny map $\bG'_{\der} \to \bG_{\der}$.
    \begin{enumerate}
        \item (The Kottwitz isomorphism)      
        We have a functorial isomorphism
        \begin{align*}
            H^1(F, G) \xrightarrow{\sim} H^1_{ab}(F, G) \xrightarrow{\sim} \pi_0(Z(\widehat{G})^{\Gamma_F})^{\cD},
        \end{align*}
        and
        \begin{align*}
            H^2_{ab}(F, G) &\xrightarrow{\sim} (\pi_1(G)_{\Gamma_F})_{\mathrm{fr}} \otimes \Q/\Z,
        \end{align*}
        where $(\pi_1(G)_{\Gamma_F})_{\mathrm{fr}} = \pi_1(G)_{\Gamma_F}/(\pi_1(G)_{\Gamma_F})_{\mathrm{tors}}$ is the free part of the $\Gamma_F$-coinvariant of the fundamental group of $\bG$.
        \item    
        We have an exact sequence 
        \begin{align}
            1 \to H^1(F, G) \to H^1(F, G^{\natural}) \to H^2(F, A_G) \to H^2_{ab}(F, G).
        \end{align}
        \item    
        We have an exact sequence 
        \begin{multline}
            1 \to G/p(G') \to H^1(F, \Ker(p)) \to H^1(F, G') \to H^1(F, G) \\
            \to H^2(F, \Ker(p)) \to H^2_{ab}(F, G') \to H^2_{ab}(F, G).
        \end{multline}
        \item     
        Assume that $\Ker(p)$ is finite. 
        Then, we have 
        \begin{align*}
            \abs{ \# \Ker(p)(\overline{F}) }_F =  \frac{\abs{H^0(F, \Ker(p))}\abs{H^2(F, \Ker(p))}}{\abs{H^1(F, \Ker(p))}}.
        \end{align*}
        Here, We write $\abs{\Ker(p)(\overline{F})}$ as $\# \Ker(p)(\overline{F})$ to avoid confusion.
    \end{enumerate}
\end{lemma}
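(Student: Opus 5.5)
The four assertions are standard facts from Borovoi's theory of abelian Galois cohomology and from local Tate duality. I would prove them largely by citation and reduction, treating each part separately.

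For (1), I would use Borovoi's abelianization map $H^1(F,G)\to H^1_{ab}(F,G)$. Over a non-archimedean local field it is bijective, because $H^1(F,G_{sc})=1$ by Kneser's theorem together with Borovoi's exact sequence. Composing with Borovoi's duality for the two-term complex $Z(\bG_{sc})\to Z(\bG)$ identifies $H^1_{ab}(F,G)$ with $\pi_0(Z(\widehat G)^{\Gamma_F})^{\cD}$, which recovers Kottwitz's isomorphism. For $H^2_{ab}$, I would replace the complex $Z^\bullet$ by the quasi-isomorphic complex $T_{sc}\to T$, where $T$ is a maximal torus. Tate--Nakayama duality then gives $H^2(F,T)\simeq (X_*(T)_{\Gamma_F})_{\mathrm{tors}}$-type expressions, and the hypercohomology long exact sequence reduces the computation to the cokernel description $\pi_1(G)=X_*(T)/X_*(T_{sc})$. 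I would cite Borovoi's results here rather than redo them.

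For (2), I would write $1\to A_G\to G\to G^\natural\to 1$ and take the nonabelian cohomology sequence. Hilbert 90 gives $H^1(F,A_G)=1$, because $A_G$ is split. Since $A_G$ is central, there is a connecting map to $H^2(F,A_G)$, and the sequence is exact at $H^1(F,G^\natural)$. For injectivity of $H^1(F,G)\to H^1(F,G^\natural)$, I would twist by cocycles and use $H^1(F,A_G)=1$ for the inner twists, which are again split central tori. To extend the sequence by $H^2(F,A_G)\to H^2_{ab}(F,G)$, I would use abelianization. The central sequence induces a distinguished triangle of complexes $A_G[0]\to Z^\bullet_G\to Z^\bullet_{G^\natural}$, because $G_{sc}=G^\natural_{sc}$. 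The hypercohomology long exact sequence then gives exactness, after identifying $H^1$ with $H^1_{ab}$ via (1).

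For (3), the argument is the same. I would take the abelian cohomology triangle attached to $\ker(p)\to G'\to G$, noting that $G'_{sc}\to G_{sc}$ is an isomorphism onto $G_{sc}$ up to the isogeny. The degree-$0$ term is computed using the surjectivity $H^0_{ab}$ versus $G/p(G')$; this is the main technical point. I would handle it by Borovoi's result that $G(F)\to H^0_{ab}(F,G)$ has image compatible with $p$, together with the fact that $p(G')$ contains the image of $G_{sc}(F)$.

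For (4), I would apply Tate's local Euler characteristic formula to the finite Galois module $\Ker(p)(\overline F)$:
\begin{align*}
\frac{|H^0||H^2|}{|H^1|}=|\#\Ker(p)(\overline F)|_F.
\end{align*}

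The main obstacle is keeping the degree shifts consistent when matching the nonabelian sequences with the hypercohomology triangles, and in particular establishing the final arrows into $H^2_{ab}$.
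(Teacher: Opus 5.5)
Your proposal follows the paper's proof: (1) by citing Borovoi, (3) from the hypercohomology long exact sequence of $1\to(1\to\Ker p)\to(Z(G'_{sc})\to Z(G'))\to(Z(G_{sc})\to Z(G))\to 1$ (exact in degree $-1$ because $G'_{sc}\to G_{sc}$ is an honest isomorphism, not merely one ``up to the isogeny''), (2) via your triangle $A_G[0]\to Z^\bullet_G\to Z^\bullet_{G^\natural}$ together with $H^1(F,A_G)=1$, which is exactly the special case $G'=G$, $G=G^\natural$ of (3) that the paper invokes, and (4) by Tate's local Euler characteristic formula. One slip in your sketch of (1): Tate--Nakayama gives $H^2(F,T)\cong\Hom(X^*(T)^{\Gamma_F},\Q/\Z)\cong(X_*(T)_{\Gamma_F})_{\mathrm{fr}}\otimes\Q/\Z$, which is a divisible group, whereas $(X_*(T)_{\Gamma_F})_{\mathrm{tors}}$ is $H^1(F,T)$; since you ultimately cite Borovoi, this does not affect the argument.
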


\begin{proof}
    The statement $(1)$ follows from \cite{Borovoi1998abeliangalois}*{Corollary 5.4.1}.
    The statement $(3)$ follows from the long exact sequence associated with the short exact sequence of 2-term complexes
    \begin{align*}
        1 \to (1 \to \Ker(p)) \to (Z(G'_{sc}) \to Z(G')) \to (Z(G_{sc}) \to Z(G)) \to 1.
    \end{align*}
    The statement $(2)$ follows from $(3)$ by taking $\bG$ as $\bG'$ and $\bG^{\natural}$ as $\bG$.
    The statement $(4)$ is well known as the local Euler characteristic formula.
\end{proof}

\begin{lemma}\label{lemma:canonical-isom-of-quotient-Levi}
    Let $\bG, \bG'$ be reductive groups over $F$.
    Assume that we have a faithfully flat morphism $p \colon \bG' \to \bG$, which induces the isogeny map $\bG'_{\der} \to \bG_{\der}$.
    Let $\bM$ be a Levi subgroup of $\bG$ and $\bM'$ be the inverse image in $\bG'$ under the map $p$.
    Then, we have an isomorphism
    \begin{align*}
        M/p(M') \xrightarrow{\sim} G/p(G')
    \end{align*}
    which is induced by the inclusion $\bM \hookrightarrow \bG$.
\end{lemma}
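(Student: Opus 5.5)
The plan is to compare the two exact sequences of \cref{lemma:facts-in-abelian-Galois-cohomology}~(3), one for $p \colon \bG' \to \bG$ and one for its restriction $p|_{\bM'} \colon \bM' \to \bM$. First I check that $p|_{\bM'}$ satisfies the same hypotheses. The morphism $p$ is faithfully flat and restricts to an isogeny on derived groups, so $\bM' = p^{-1}(\bM)$ is a Levi subgroup of $\bG'$ and $p|_{\bM'} \colon \bM' \to \bM$ is again faithfully flat. Moreover $\Ker(p|_{\bM'}) = \Ker(p)$, and $p$ induces an isogeny $\bM'_{\der} \to \bM_{\der}$. Concretely, $\Ker(p) \subset Z(\bG')$: it is a normal subgroup whose intersection with $\bG'_{\der}$ is finite, hence central. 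Consequently $\Ker(p) \subset \bM'$.

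With this in hand, the inclusions $\bM \hookrightarrow \bG$ and $\bM' \hookrightarrow \bG'$ induce a morphism between the two exact sequences
\begin{align*}
    1 \to M/p(M') \to H^1(F, \Ker(p)) \to H^1(F, M') \to \cdots
\end{align*}
and
\begin{align*}
    1 \to G/p(G') \to H^1(F, \Ker(p)) \to H^1(F, G') \to \cdots .
\end{align*}
This follows from the functoriality of the hypercohomology long exact sequence attached to the short exact sequence of two-term complexes. The map on the terms $H^1(F, \Ker(p))$ is the identity. Since $M/p(M')$ and $G/p(G')$ are the kernels of $H^1(F,\Ker(p)) \to H^1(F, M')$ and $H^1(F,\Ker(p)) \to H^1(F,G')$ respectively, the map $M/p(M') \to G/p(G')$ is injective. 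It is also compatible with the map induced by the inclusion, because the connecting map $g \mapsto [\sigma \mapsto \tilde g^{-1}\sigma(\tilde g)]$ commutes with the inclusion.

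For surjectivity it suffices to show that the map $H^1(F, M') \to H^1(F, G')$ is injective. Then the two kernels of the maps out of $H^1(F, \Ker(p))$ coincide, and the two quotients are identified. Injectivity of $H^1(F, M') \to H^1(F, G')$ for a Levi subgroup $M'$ of $G'$ is standard: it factors through $H^1(F, P')$ for a parabolic subgroup $P' = M'N'$, where $H^1(F,M') \cong H^1(F,P')$ because $N'$ is split unipotent, and $H^1(F, P') \to H^1(F, G')$ is injective (Borel--Tits/Serre). Alternatively, since $F$ is non-archimedean, one can use the Kottwitz isomorphism in \cref{lemma:facts-in-abelian-Galois-cohomology}~(1), which identifies the map with $\pi_0(Z(\widehat{M'})^{\Gamma_F})^{\cD} \to \pi_0(Z(\widehat{G'})^{\Gamma_F})^{\cD}$, and check injectivity dually. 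For quasi-split groups the surjectivity of $\pi_0(Z(\widehat{G'})^{\Gamma_F}) \to \pi_0(Z(\widehat{M'})^{\Gamma_F})$ is not obvious this way, so I would prefer the parabolic argument.

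The main obstacle is this injectivity step, together with the care needed to verify that the connecting maps are natural. As a fallback, there is a direct argument for surjectivity. Given $g \in G$, the fibre $p^{-1}(g)$ is a $\Ker(p)$-torsor, which is central. Using the Bruhat decomposition $G = P \overline{N} \cdots$, I would write $g$ as a product of unipotent elements and an element of $M$. Unipotent elements lift through $p$ because $p$ restricted to unipotent radicals is an isomorphism. This reduces $g$ modulo $p(G')$ to an element of $M$, giving surjectivity.
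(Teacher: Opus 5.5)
Your argument is correct, but it is organized differently from the paper's. The paper argues directly on $F$-points. Since $\bG'/\bP' \xrightarrow{\sim} \bG/\bP$ and $G'(F)\to(\bG'/\bP')(F)$ is surjective, any $g\in G$ can be multiplied by some $p(g')$ to land in $P$. The equality $p(N')=N$ then moves it into $M$. Injectivity is dismissed as clear, because $\bM'=p^{-1}(\bM)$, so $p(g')\in M$ forces $g'\in M'$.

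You instead embed both $M/p(M')$ and $G/p(G')$ in $H^1(F,\Ker(p))$ via the connecting maps. You identify them with the kernels of $H^1(F,\Ker(p))\to H^1(F,M')$ and $H^1(F,\Ker(p))\to H^1(F,G')$, and reduce surjectivity to the triviality of the kernel of $H^1(F,M')\to H^1(F,G')$; only trivial kernel is needed, not full injectivity.

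The essential input is the same in both proofs. From the exact sequence $G'(F)\to(\bG'/\bP')(F)\to H^1(F,P')\to H^1(F,G')$, trivial kernel of $H^1(F,P')\to H^1(F,G')$ is equivalent to the transitivity of $G'(F)$ on $(\bG'/\bP')(F)$, which is exactly what the paper invokes. Your bijection $H^1(F,M')\cong H^1(F,P')$ plays the role of the paper's step $p(N')=N$.

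The paper's route is shorter and uses no Galois cohomology; in particular your cohomological proof of injectivity is more than is needed. Your route has the advantage of showing directly that $\cC_M\cong\cC_G$ is compatible with the embeddings of both groups into $H^1(F,\Ker(p))$. That is precisely how these groups enter \cref{lemma:facts-in-abelian-Galois-cohomology}~(3) and \cref{proposition:computation-scaler-isogeny}.

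Your fallback sketch is imprecise as written, since $P\overline{N}$ is only the big cell. A correct version would use that $G(F)$ is generated by $Z_G(A_0)(F)$ and the $F$-points of unipotent radicals, which lie in the normal subgroup $p(G')$. You do not need it, though.
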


\begin{proof}
    Let $\bP$ be a parabolic subgroup of $\bG$ with a Levi factor $\bM$.
    There exists the following exact sequences (of pointed sets):
    \begin{align*}
        1 \to \bP(F) \to \bG(F) \to (\bG/\bP)(F) \to 1,
    \end{align*}
    and 
    \begin{align*}
        1 \to \bP'(F) \to \bG'(F) \to (\bG'/\bP')(F) \to 1.
    \end{align*}
    The last quotient sets are isomorphic.

    Let $g$ be an element of $G$.
    Then, the surjectivity in the second exact sequence implies that there exists an element $g' \in G'$ such that the image of $g$ and $p(g')$ in $(\bG/\bP)(F)$ are the same.
    By replacing $g'$ by the element $p(g')^{-1} g$, we may assume that $g$ is contained in $P$.
    Furthermore, we have the Levi decompositions $P = MN$ and $P' = M'N'$.
    Since $p(N') = N$, we may assume that $g$ is contained in $M$.
    Thus, we have shown the surjectivity of the map $M/p(M') \to G/p(G')$.
    The injectivity is clear and we have completed the proof.
\end{proof}

\begin{corollary}
    We have a canonical isomorphism 
    \begin{align*}
        W(G', M') \xrightarrow{\sim} W(G, M).
    \end{align*}
\end{corollary}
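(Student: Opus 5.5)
The plan is to define the map on transporters and then descend it to double cosets. Write $\bM'=p^{-1}(\bM)$. Since $\bG'_{\der}\to\bG_{\der}$ is an isogeny and $p$ is faithfully flat, $\Ker(p)$ is central in $\bG'$. Hence $\bM'$ is a Levi subgroup of $\bG'$, and $p$ restricts to a surjection of $\overline{F}$-groups $\bM'\to\bM$.

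For $g'\in T(G',(M',M'))$, the element $p(g')$ normalizes $p(\bM')=\bM$. So $p$ induces a map $T(G',(M',M'))\to T(G,(M,M))$, and it is compatible with left and right multiplication by $M'$ and $M$. Passing to double cosets gives a homomorphism
\[
W(G',M')=M'\backslash \Norm_{G'}(M')/M' \to W(G,M)=M\backslash\Norm_G(M)/M .
\]

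Injectivity is immediate. If $p(g')\in M$, then $g'\in p^{-1}(M)\cap G'$. Since $\bM'=p^{-1}(\bM)$ scheme-theoretically, $p^{-1}(M)\cap G'=M'$, so $g'\in M'$.

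Surjectivity is the main step and uses \cref{lemma:canonical-isom-of-quotient-Levi}. Let $n\in\Norm_G(M)$. By that lemma, applied with the Levi $\bM$, we have $G=M\cdot p(G')$. Write $n=m\,p(g')$ with $m\in M$ and $g'\in G'$. Then $p(g')=m^{-1}n$ normalizes $M$, so $p(g')$ normalizes $\bM$. Therefore $g'$ normalizes $p^{-1}(\bM)=\bM'$, that is, $g'\in\Norm_{G'}(M')$. Its image in $W(G,M)$ is the class of $m^{-1}n$, which equals the class of $n$.

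The only point I expect to need care is the passage between normalizing $\bM$ as an algebraic group and normalizing the group $M$ of $F$-points. I would handle this by working with semi-standard Levi subgroups, for which the Weyl set is computed through $N_G(A_0)$ (equivalently, through normalizing $A_M$). The centrality of $\Ker(p)$ shows that $p$ identifies $A_{M'}$ with $A_M$ up to isogeny, and so the normalizers correspond. With that in place, the map is a bijection, and it is canonical because it is induced by $p$.
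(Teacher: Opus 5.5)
Your proposal is correct and follows the paper's intended argument: the paper states this corollary without proof, immediately after \cref{lemma:canonical-isom-of-quotient-Levi}, and the deduction is yours, with injectivity from $p^{-1}(M)\cap G'=M'$ and surjectivity from $G=M\cdot p(G')$. The subtlety in your last paragraph is handled more simply by Zariski density: $M$ and $M'$ are dense in $\bM$ and $\bM'$ (connected reductive groups in characteristic $0$), so normalizing the $F$-points is the same as normalizing the algebraic subgroups; this avoids your claim that $A_{M'}\to A_M$ is an isogeny, which holds only when $\Ker(p)$ is finite.
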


\begin{remark}
    Let $\bG, \bG'$ be reductive groups over $F$.
    Assume that we have a faithfully flat morphism $p \colon \bG' \to \bG$ which induces the isogeny map $\bG'_{\der} \to \bG_{\der}$. 
    We set $\cC_G = G/p(G')$ for any such pair.
    By \cref{lemma:canonical-isom-of-quotient-Levi}, we have the canonical isomorphism $\cC_G \xrightarrow{\sim} \cC_M$.
\end{remark}

\begin{proposition}\label{proposition:computation-scaler-isogeny}
    Let $G, G'$ be reductive groups over $F$.
    Assume that we have a central isogeny $p \colon G' \to G$.
    Then, we have 
    \begin{align*}
        \frac{\abs{\Ker(p)(F)}}{\abs{
            \# \Ker(p)(\overline{F})
            }_F \abs{\cC_G}} 
        = 
        \frac{\abs{H^1(F, (G')^{\natural})}}{\abs{H^1(F, G^{\natural})}}  
        \frac{1}{[X_*(A_G):X_*(A_{G'})]}.
    \end{align*}
\end{proposition}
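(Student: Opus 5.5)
The plan is to express the left-hand side through the cohomology of $\Ker(p)$, to translate it into a ratio of first abelian cohomologies times a correction from $H^2_{ab}$, and then to trade $H^1(F,G)$ for $H^1(F,G^{\natural})$ using \cref{lemma:facts-in-abelian-Galois-cohomology}(2). The correction terms should then collapse to the index $[X_*(A_G):X_*(A_{G'})]$ by lattice bookkeeping.

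\emph{Step 1: Euler characteristic.} Write $K=\Ker(p)$, which is finite since $p$ is a central isogeny. We have $K(F)=H^0(F,K)$, so \cref{lemma:facts-in-abelian-Galois-cohomology}(4) turns the left-hand side into
\begin{align*}
\frac{\abs{H^1(F,K)}}{\abs{H^2(F,K)}\,\abs{\cC_G}}.
\end{align*}

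\emph{Step 2: the exact sequence for $p$.} Apply the exact sequence of \cref{lemma:facts-in-abelian-Galois-cohomology}(3) to $p$. By the Kottwitz isomorphism in \cref{lemma:facts-in-abelian-Galois-cohomology}(1), $H^1(F,G)\simeq H^1_{ab}(F,G)$ is a finite abelian group. The maps in the sequence factor through abelian cohomology, so we may treat it as an exact sequence of abelian groups and count cardinalities along it. The alternating product then gives
\begin{align*}
\frac{\abs{H^1(F,K)}}{\abs{\cC_G}\abs{H^2(F,K)}}=\frac{\abs{H^1(F,G')}}{\abs{H^1(F,G)}}\cdot\frac{1}{\abs{\Ker(H^2_{ab}(F,G')\to H^2_{ab}(F,G))}}.
\end{align*}
By \cref{lemma:facts-in-abelian-Galois-cohomology}(1), $H^2_{ab}(F,G)=L\otimes\Q/\Z$ with $L=(\pi_1(G)_{\Gamma_F})_{\mathrm{fr}}$, and similarly $L'$ for $G'$. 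The map $\pi_1(G')\to\pi_1(G)$ is injective with finite cokernel, so $L'\to L$ is an inclusion of finite index; I will check that it stays injective on free parts. The kernel of $L'\otimes\Q/\Z\to L\otimes\Q/\Z$ is $\mathrm{Tor}(L/L',\Q/\Z)$, which has order $[L:L']$.

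\emph{Step 3: passing to $G^{\natural}$.} Use the sequence of \cref{lemma:facts-in-abelian-Galois-cohomology}(2):
\begin{align*}
1\to H^1(F,G)\to H^1(F,G^{\natural})\to H^2(F,A_G)\to H^2_{ab}(F,G).
\end{align*}
It gives $\abs{H^1(F,G^{\natural})}/\abs{H^1(F,G)}=\abs{\Ker(H^2(F,A_G)\to H^2_{ab}(F,G))}$. Since $A_G$ is split, $H^2(F,A_G)=X_*(A_G)\otimes\Q/\Z$. I expect the map to $L\otimes\Q/\Z$ to be induced by the natural map $X_*(A_G)\to\pi_1(G)\to L$. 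After $\otimes\R$ both sides are $\fa_G$, so this map is injective with finite-index image. Its kernel therefore has order $c_G=[L:X_*(A_G)]$, and likewise $c_{G'}=[L':X_*(A_{G'})]$.

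\emph{Step 4: combining.} Substituting gives
\begin{align*}
\frac{\abs{H^1(F,G'^{\natural})}}{\abs{H^1(F,G^{\natural})}}\cdot\frac{c_G}{c_{G'}\,[L:L']}.
\end{align*}
Computing $[L:X_*(A_{G'})]$ along the two chains $X_*(A_{G'})\subset L'\subset L$ and $X_*(A_{G'})\subset X_*(A_G)\subset L$ shows that the last factor equals $[X_*(A_G):X_*(A_{G'})]^{-1}$, which is the claim.

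The main obstacle is the identification in Step 3. There I must check that the connecting map $H^2(F,A_G)\to H^2_{ab}(F,G)$, transported through Borovoi's isomorphism, is exactly the map induced by $X_*(A_G)\to\pi_1(G)_{\Gamma_F}\to L$ (up to sign). I would first test this on tori, where everything is Tate–Nakayama, and then reduce to that case via a $z$-extension or a maximal torus containing $A_G$. A secondary point is to justify counting along the pointed-set sequences; this follows because every $H^1$ appearing is identified functorially with abelian cohomology.
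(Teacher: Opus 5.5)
Your proposal is correct and follows the paper's proof step for step: the Euler characteristic for $\Ker(p)$, the exact sequence of \cref{lemma:facts-in-abelian-Galois-cohomology}(3), the sequence (2) for both $G$ and $G'$, and then comparing degrees of the isogenies in the square formed by $H^2(F,A_{G'})$, $H^2(F,A_G)$, $H^2_{ab}(F,G')$ and $H^2_{ab}(F,G)$, which is your two-chain index computation. The point you flag in Step 3 is milder than you fear: $H^2(F,A_G)\to H^2_{ab}(F,G)$ is not a connecting map but the map induced by the inclusion of complexes coming from $A_G\hookrightarrow Z(G)$, so identifying it with $X_*(A_G)\to L$ is just functoriality of Borovoi's isomorphism, which the paper also uses without comment.
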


\begin{proof}
    By $(4)$ of \cref{lemma:facts-in-abelian-Galois-cohomology}, we obtain 
    \begin{align*}
        \abs{\# \Ker(p)(\overline{F})}_F 
        = 
        \abs{H^0(F, \Ker(p))} \abs{H^2(F, \Ker(p))} / \abs{H^1(F, \Ker(p))}.
    \end{align*}
    Thus, we have 
    \begin{align*}
        \frac{\abs{\Ker(p)(F)}}
            {\abs{\# \Ker(p)(\overline{F})
            }_F \abs{\cC_G}}  
        = 
        \frac{\abs{H^1(F, \Ker(p))}}{\abs{H^2(F, \Ker(p))} \abs{\cC_G}}.
    \end{align*}
    Then, we can apply the long exact sequence in $(3)$ of \cref{lemma:facts-in-abelian-Galois-cohomology} to obtain the following:
    \begin{align*}
         \frac{\abs{H^1(F, \Ker(p))}}{\abs{H^2(F, \Ker(p))} \abs{\cC_G}} 
         = 
         \frac{\abs{H^1(F, G')}}{\abs{H^1(F, G)}} \frac{1}{\abs{\Ker(H^2_{ab}(p \colon G' \to G))}}.
    \end{align*}
    By a similar consideration by using $(2)$ of \cref{lemma:facts-in-abelian-Galois-cohomology}, we have 
    \begin{align*}
        \abs{H^1(F, G)} &= \abs{H^1(F, G^{\natural})} \frac{1}{\abs{\Ker H^2_{ab}(A_G \hookrightarrow G)}} \\
        \abs{H^1(F, G')} &= \abs{H^1(F, (G')^{\natural})} \frac{1}{\abs{\Ker H^2_{ab}(A_{G'} \hookrightarrow G')}}.
    \end{align*}
    By substituting these equations into the previous equation, we obtain
    \begin{multline*}
        \frac{\abs{H^1(F, G')}}{\abs{H^1(F, G)}} \frac{1}{\abs{\Ker(H^2_{ab}(p \colon G' \to G))}} \\
        = 
        \frac{\abs{H^1(F, (G')^{\natural})}}{\abs{H^1(F, G^{\natural})}}  
        \frac{1}{[X_*(A_G):X_*(A_{G'})]}
        \times \\
        \frac{[X_*(A_G):X_*(A_{G'})] \abs{\Ker H^2_{ab}(A_G \hookrightarrow G)}}{\abs{\Ker H^2_{ab}(A_{G'} \hookrightarrow G')} \abs{\Ker(H^2_{ab}(p \colon G' \to G))}}.
    \end{multline*}
    It suffices to show that the last term is equal to $1$. 
    We consider the following diagram: 
        % https://q.uiver.app/#q=WzAsNCxbMCwwLCJIXjIoRiwgQV97Ryd9KSJdLFswLDMsIkheMihGLCBBX3tHfSkiXSxbMiwwLCJIXjJfe2FifShGLCBHJykiXSxbMiwzLCJIXjJfe2FifShGLCBHKSJdLFswLDJdLFsyLDNdLFswLDFdLFsxLDNdXQ==
        \[
        \begin{tikzcd}
            {H^2(F, A_{G'})} && {H^2_{ab}(F, G')} \\
            \\
            \\
            {H^2(F, A_{G})} && {H^2_{ab}(F, G)}
            \arrow[from=1-1, to=1-3]
            \arrow[from=1-1, to=4-1]
            \arrow[from=1-3, to=4-3]
            \arrow[from=4-1, to=4-3].
        \end{tikzcd}
        \]
    All the maps appearing in this diagram are isogenies between groups consisting of the torsion points of some tori, by $(1)$ of \cref{lemma:facts-in-abelian-Galois-cohomology}.
    Thus, we obtain the result by considering the covering degree of the composition map $H^2(F, A_{G'}) \to H^2_{ab}(F, G)$.
\end{proof}

\subsection{The restriction of representations from $\GSO_{2n}(F)$ to $\SO_{2n}(F)$}

\begin{proposition}[\cite{GeeTaibi2019-GSp4}*{Proposition 8.3.1}] \label{proposition:restriction-GSO2n-to-SO2n}
    Let $\pi$ be an irreducible representation of $\GSO_{2n}(F)$.
    Then, the restriction $\pi_{\restriction{\SO_{2n}(F)}}$ of $\pi$ to $\SO_{2n}(F)$ is a multiplicity-free direct sum of irreducible representations of $\SO_{2n}(F)$.
\end{proposition}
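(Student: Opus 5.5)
The plan is to reduce to an intermediate group $H$ with $\SO_{2n}(F) \subset H \subset \GSO_{2n}(F)$ in which $\GSO_{2n}(F)$ acts through an abelian quotient and $H$ is "close" to $\SO_{2n}(F)$. Concretely, put $H = Z(\GSO_{2n})(F)\,\SO_{2n}(F)$. The group $\GSO_{2n}(F)/H$ is an abelian group; it is a quotient of $F^\times/(F^\times)^2$ via the similitude $\lambda$, hence finite.

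Since $Z(\GSO_{2n})(F)$ acts on $\pi$ by a character, restriction from $H$ to $\SO_{2n}(F)$ preserves irreducibility and distinctness. It therefore suffices to prove that $\pi_{\restriction{H}}$ is multiplicity-free.

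\textbf{Step 1: structure of the restriction.} Because $[\GSO_{2n}(F):H]$ is finite and $H$ is normal, Clifford theory shows that $\pi_{\restriction{H}}$ is a finite direct sum. Its summands are $\GSO_{2n}(F)$-conjugates of one irreducible $\rho$, each occurring with a common multiplicity $m$. Moreover $m^2 \le [\mathrm{Stab}(\rho):H]$, and $m$ is governed by a projective representation of the abelian group $\mathrm{Stab}(\rho)/H$.

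\textbf{Step 2: the key claim, $m=1$.} By Frobenius reciprocity, $m^2\cdot(\#\text{conjugates})$ equals
\[
\dim \Hom_{\GSO_{2n}(F)}\bigl(\pi, \ind^{\GSO_{2n}(F)}_{H}\pi_{\restriction H}\bigr) = \#\{\omega \in (\GSO_{2n}(F)/H)^{\cD} : \pi\otimes\omega\cong\pi\}.
\]
On the other hand, the number of conjugates equals $[\GSO_{2n}(F):\mathrm{Stab}(\rho)]$. So $m=1$ is equivalent to the assertion that $\pi\otimes\omega\cong\pi$ for every $\omega$ trivial on $\mathrm{Stab}(\rho)$, and for no other $\omega$. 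Equivalently, the projective representation of $\mathrm{Stab}(\rho)/H$ attached to $\rho$ must be genuinely linear, i.e. its $2$-cocycle must be trivial.

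To kill the cocycle, I would follow the standard argument for $\GL_n \supset \SL_n$ and $\GSp_4 \supset \Sp_4$ (Gelbart--Knapp, Gan--Takeda). I would use a generic, or more generally a degenerate Whittaker, model. By uniqueness of the relevant twisted Jacquet module (Mœglin--Waldspurger: multiplicity one of degenerate Whittaker models attached to the leading orbit in the wave-front set), each $H$-constituent has a nonzero functional for a character $\psi_N$ of a unipotent subgroup. The torus elements of $\GSO_{2n}(F)$ permuting these characters act simply transitively enough to show that distinct constituents are distinguished by distinct $\psi_N$-orbits. This forces $m=1$.

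\textbf{Main obstacle.} The main obstacle is Step 2 for non-generic $\pi$. The cleanest route is to use the leading-coefficient multiplicity-one statement: the space of degenerate Whittaker functionals for a maximal orbit in the wave-front set is finite-dimensional, and $\GSO_{2n}(F)$ permutes these orbits. One then has to check that the stabilizer in the similitude torus of an $\SO_{2n}(F)$-orbit of such a character acts trivially, so that the orbit-counting matches $[\GSO_{2n}(F):\mathrm{Stab}(\rho)]$.

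Alternatively, one can avoid this by taking the concrete description of $\GSO_{2n}$ as $(\GL_1\times\SO_{2n})\rtimes$ (the similitude torus) up to finite index, and proving that the Schur multiplier of $\mathrm{Stab}(\rho)/H$ is killed. The quotient $\mathrm{Stab}(\rho)/H$ is an elementary abelian $2$-group, and a nontrivial cocycle would produce an $m\ge 2$ which contradicts the theta-correspondence multiplicity-one (Howe duality via \cref{theorem:similitude-theta}-type dual pairs). I would try the degenerate-Whittaker argument first, as in \cite{GeeTaibi2019-GSp4}.
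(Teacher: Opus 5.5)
\textbf{There is a genuine gap in Step 2.} The Clifford-theoretic set-up is correct. With $H=Z(\GSO_{2n})(F)\,\SO_{2n}(F)$ you have $\GSO_{2n}(F)/H\cong F^\times/(F^\times)^2$, and the identity $m^2\,[\GSO_{2n}(F):\mathrm{Stab}(\rho)]=\#\{\omega:\pi\otimes\omega\cong\pi\}$ correctly turns $m=1$ into the triviality of the cocycle of $\rho$ on $\mathrm{Stab}(\rho)/H$. But that is only a reformulation, and the step meant to kill the cocycle relies on a result that does not exist. M\oe glin--Waldspurger do not prove multiplicity one for degenerate Whittaker models attached to a maximal orbit $\cO$ of the wave-front set. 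What they prove is that this space is finite-dimensional, with dimension equal to the leading coefficient $c_{\cO}(\pi)$ of the local character expansion. That coefficient is $1$ for $\GL_n$, via Zelevinsky's classification, which is why this strategy works for $\GL_n\supset\SL_n$. No such statement is known for $\SO_{2n}$ or $\GSO_{2n}$. If $c_{\cO}(\pi)>1$, your count only gives $c_{\cO}(\pi)=m\sum_i\dim W_{\cO}(\rho_i)$, which does not force $m=1$. No analysis of the torus stabilizer of the degenerate character repairs this, because the multiplicity space $\C^m$ and the spaces $W_{\cO}(\rho_i)$ can jointly carry a nontrivial cocycle. So your argument proves the proposition only for generic $\pi$, where $1=\dim\Hom_N(\pi,\psi_N)=m\sum_i\dim\Hom_N(\rho_i,\psi_N)$ gives $m=1$ immediately. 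The statement, and its use in \cref{section:5} for arbitrary discrete series of Levi subgroups, also covers non-generic $\pi$.

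The theta fallback does not close the gap. \cref{theorem:similitude-theta} only concerns tempered representations of $\PGSO_8(F)$ in the image of the lift from $\PGSp_6(F)$, so it says nothing about a general irreducible $\pi$ of $\GSO_{2n}(F)$. Moreover, the similitude versions of Howe duality in the literature are typically derived from the isometry case by a Clifford-theory argument that takes multiplicity-freeness of restriction as an input, so invoking them here is circular. You also do not say how $m\ge 2$ would contradict them. The paper does not prove this proposition itself; it quotes it from Gee--Ta\"ibi. What you need is a mechanism that does not depend on models. One example is a Gelfand--Kazhdan-type argument as in Adler--Prasad: $m=1$ is equivalent to commutativity of $\End_{\SO_{2n}(F)}(\pi)$, and one proves commutativity with an anti-involution of the similitude group that preserves the isometry group and fixes every distribution invariant under isometry-group conjugation. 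Such an involution is built from $g\mapsto\lambda(g)g^{-1}$ and an MVW-type element. Note that the $\GO_{2n}\supset\O_{2n}$ case of Adler--Prasad does not formally give the $\GSO_{2n}\supset\SO_{2n}$ case. If $\pi$ is stable under $\O_{2n}(F)$-conjugation, the restriction to $\O_{2n}(F)$ of an extension of $\pi$ to $\GO_{2n}(F)$ can contain both $\eta$ and $\eta\otimes\det$, and these restrict to the same $\rho$. So the argument has to be carried out for the pair at hand.
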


We set $G' = \SO_{2n}(F) \times \GL_1(F)$ and $G = \GSO_{2n}(F)$.
Then, we have the natural isogeny $p \colon G' \to G$ induced by inclusion maps.
We set $Z = \Ker(p)$.
For any parabolic subgroup $P = MN$ of $G$, let $P'=M'N'$ denote the inverse image of $P = MN$ in $G'$.

\begin{lemma}\label{lemma:comparison-S-natural-groups}
    Let $M$ be a standard Levi subgroup of $G$.
    Let $\phi^{M}$ be a discrete $L$-parameter for $M$. 
    Let $\phi^{M'} \in \Phi_{\disc}(M')$ denote the push-forward ${}^L p \circ \phi^M$.
    \begin{enumerate}
        \item      
        We have a canonical isomorphism 
        \begin{align*}
            H^1(W_F, Z(\widehat{M})) &\xrightarrow{\sim} \Hom(M(F), \C^{\times}).
        \end{align*}
        \item     
        We have a canonical isomorphism 
        \begin{align*}
            \cC_M^{\cD} \xrightarrow{\sim} \Hom(W_F, \Ker({}^L p)).
        \end{align*}
        \item   
        We have a canonical isomorphism 
        \begin{align*}
            S^{\natural}_{\phi^M}/Z(\widehat{M^{\natural}})^{\Gamma_F} \xrightarrow{\sim} S_{\phi^M}/Z(\widehat{M})^{\Gamma_F}.
        \end{align*}
        \item     
        We have an exact sequence 
        \begin{align*}
            1 \to S_{\phi^M}/Z(\widehat{M})^{\Gamma_F} \to S_{\phi^{M'}}/Z(\widehat{M'})^{\Gamma_F} \to \mathrm{Stab}_{\cC^{\cD}_M} (\phi^{M}) \to 1,
        \end{align*}
        where $\mathrm{Stab}_{\cC^{\cD}_M} (\phi^{M})$ is the stabilizer in the group $\cC_M^{\cD}$ of the $L$-parameter $\phi^{M}$.
    \end{enumerate}
\end{lemma}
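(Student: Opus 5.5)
The plan is to work throughout with the dual isogeny ${}^L p \colon {}^L M \to {}^L M'$. Here $G = \GSO_{2n}$, $G' = \SO_{2n} \times \GL_1$, and $p \colon G' \to G$ is a central isogeny with kernel $Z \simeq \mu_2$. Since $p$ is a central isogeny, the dual map $\widehat{p} \colon \widehat{M} \to \widehat{M'}$ is also a central isogeny; its kernel $\Ker({}^L p)$ is finite, central and Cartier dual to $Z$. All the groups involved are split, so $\Gamma_F$ acts trivially on the duals and the Weil-group actions are harmless.

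\textbf{Part (1).} This is Langlands' correspondence for quasi-characters, i.e.\ the map $H^1(W_F, Z(\widehat{M})) \to \Hom(M, \C^{\times})$ of Borel \cite{Borel1979-AutomorphicLfunctions}*{10.2}. Its bijectivity for split $M$ can be checked directly. Every Levi subgroup of $G$ is a product of $\GL$'s and a $\GSO_m$. Reduce to the abelianization $M/M_{\der}$, which is a torus up to a finite quotient. Then use the fact that $Z(\widehat{M})$ is dual to the cocenter. Finally apply local class field theory, functorially in the torus.

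\textbf{Part (2).} First, $\cC_M = M/p(M')$ is a finite abelian group. By \cref{lemma:facts-in-abelian-Galois-cohomology}(3) it injects into $H^1(F, Z)$, so it is killed by $2$, and $\cC_M \simeq \cC_G$ by \cref{lemma:canonical-isom-of-quotient-Levi}.

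Characters of $\cC_M$ are exactly the characters of $M$ trivial on $p(M')$. Apply the exact sequence
\[ 1 \to \Ker \widehat{p} \to Z(\widehat{M}) \to Z(\widehat{M'}) \]
and take $W_F$-cohomology. Using (1) for both $M$ and $M'$ (naturality of Borel's map under ${}^L p$), the characters of $M$ trivial on $p(M')$ correspond to $\Ker\bigl(H^1(W_F, Z(\widehat{M})) \to H^1(W_F, Z(\widehat{M'}))\bigr)$. This kernel is the image of $\Hom(W_F, \Ker {}^L p)$ modulo the image of $Z(\widehat{M'})^{W_F}$ under the connecting map. Since $\widehat{p}$ is surjective on centers' identity components, that last image is trivial, which yields the isomorphism in (2).

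\textbf{Part (3).} The group $M^{\natural} = M/A_M$ has dual $\widehat{M^{\natural}} = \widehat{M}_{\der}$-type subgroup, the kernel of $\widehat{M} \to \widehat{A_M}$. Hence
\[ \widehat{M} = \widehat{M^{\natural}} \cdot Z(\widehat{M})^{\circ}, \qquad \widehat{M^{\natural}} \cap Z(\widehat{M}) = Z(\widehat{M^{\natural}}). \]
The isomorphism $S^{\natural}_{\phi^M}/Z(\widehat{M^{\natural}}) \to S_{\phi^M}/Z(\widehat{M})$ is then induced by inclusion. Injectivity follows from the intersection identity. Surjectivity follows because any $s \in S_{\phi^M}$ can be written $s = s^{\natural} z$ with $z \in Z(\widehat{M})^{\circ}$, and this $z$ centralizes $\phi^M$.

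\textbf{Part (4).} Take $s' \in S_{\phi^{M'}}$ and lift it to $s \in \widehat{M}$, which is possible because $\widehat{p}$ is surjective. For $w \in L_F$ set $a(w) = s\,\phi^M(w)\,s^{-1}\phi^M(w)^{-1}$; this lies in $\Ker {}^L p$, which is central. The map $w \mapsto a(w)$ is a homomorphism $L_F \to \Ker {}^L p$. It is trivial on $\SL_2(\C)$, which is connected, so it defines an element of $\Hom(W_F, \Ker {}^L p) \simeq \cC_M^{\cD}$. Moreover $\phi^M \otimes a = \Ad(s)\phi^M$, so $a$ lies in the stabilizer of the class of $\phi^M$.

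Changing the lift $s$ by an element of $\Ker \widehat{p}$ does not change $a$. The assignment $s' \mapsto a$ is then a homomorphism: because $a$ takes central values, $a_{s_1 s_2} = a_{s_1} a_{s_2}$. Its kernel consists of the images of $S_{\phi^M}$. Surjectivity onto $\mathrm{Stab}_{\cC_M^{\cD}}(\phi^M)$ is immediate: if $\phi^M \otimes a = \Ad(s)\phi^M$, then $\widehat{p}(s) \in S_{\phi^{M'}}$.

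It remains to pass to the quotients by the central subgroups, and this is the main point to check: that the kernel of
\[ S_{\phi^M}/Z(\widehat{M}) \to S_{\phi^{M'}}/Z(\widehat{M'}) \]
is trivial. If $\widehat{p}(s) \in Z(\widehat{M'})$, then $s \in \widehat{p}^{-1}(Z(\widehat{M'})) = Z(\widehat{M})$, since $\widehat{p}$ is a central isogeny. One must also check that $Z(\widehat{M'})$ maps to the trivial element of the stabilizer; this holds because its preimage is $Z(\widehat{M})$, which commutes with $\phi^M$.
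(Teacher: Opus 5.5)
Your argument follows the paper's route. The paper cites Xu's appendix for (1), deduces (2) from (1), calls (3) trivial, and cites Xu's Lemma 3.3 for (4). Your proofs of (3) and (4) are correct reconstructions of these; the key facts are that $\Ker \widehat{p}$ is central in ${}^LM$ because $M$ is split, and that $\widehat{p}^{-1}(Z(\widehat{M'})) = Z(\widehat{M})$. Two of your justifications need repair, however.

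\textbf{Part (2).} Triviality of the connecting map $Z(\widehat{M'}) \to \Hom(W_F, \Ker {}^L p)$ requires $\widehat{p}(Z(\widehat{M})) = Z(\widehat{M'})$ for the full centers. Surjectivity on identity components does not give this. Indeed, $Z(\widehat{M'})$ is disconnected as soon as $M'$ has a factor $\SO_m$ with $m \geq 4$ (for instance $M = G$), because it contains $Z(\SO_m(\C)) = \{\pm 1\}$.

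The correct reason is the one you use yourself in (4). If $\widehat{p}(s)$ is central, the commutator map $x \mapsto s x s^{-1} x^{-1}$ sends the connected group $\widehat{M}$ into the finite group $\Ker \widehat{p}$, so it is trivial. Hence $\widehat{p}^{-1}(Z(\widehat{M'})) = Z(\widehat{M})$. Together with surjectivity of $\widehat{p}$, this shows that $Z(\widehat{M}) \to Z(\widehat{M'})$ is onto, which is what you need.

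\textbf{Part (1).} Your proposed ``direct check'' through the cocenter $M/M_{\der}$ only recovers $H^1(W_F, Z(\widehat{M})^{\circ})$. It misses the characters attached to $\pi_0(Z(\widehat{M}))$, for example the spinor-norm characters of $\SO_m(F)$. These are invisible on $M'/M'_{\der}$, yet they are exactly what you need when you apply (1) to $M'$ in (2). A direct proof has to go through a $z$-extension, or through Borovoi's abelianization $M(F)/\mathrm{im}\, M_{\mathrm{sc}}(F)$. Otherwise, simply rely on the citation: Borel and Labesse, or Xu's appendix as in the paper.
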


\begin{proof}
    The first statement follows from \cite{BinXu2016liftingproblem}*{Appendix}. 
    The second statement follows from the first. 
    The third statement holds trivially.
    The fourth statement follows from \cite{BinXu2016liftingproblem}*{Lemma 3.3}.
\end{proof}  

\begin{lemma}\label{lemma:Gan-Ichino-computation}
    For any irreducible representation $\pi$ of $G$ and any smooth function $f \in \Cc(G)$, then we have 
    \begin{align*}
        \tr (\pi \circ p) (p^*f)
         =
        \frac
        {\abs{\Ker(p)(F)}}
        {
            \abs{
                \# \Ker(p)(\overline{F})
            }_{F}\abs{\cC_G}} \sum_{\chi \in \cC_G^{\cD}
        } \tr (\pi \otimes \chi) (f).
    \end{align*}
\end{lemma}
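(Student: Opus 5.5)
The identity will follow from a direct computation of $\tr(\pi\circ p)(p^*f)$, where $p^*f=f\circ p$ and both sides use the measures $\rd_{\psi}g'$ on $G'$ and $\rd_{\psi}g$ on $G$. I use three facts. First, $p(G')$ is an open normal subgroup of $G$ of finite index; the quotient $\cC_G=G/p(G')$ is abelian since $p$ is central. Second, $p\colon G'\to p(G')$ is a topological covering with fibres $\Ker(p)(F)$. Third, the two normalized Haar measures differ on $p(G')$ by an explicit factor. Write $\pi(f)=\int_G f(g)\pi(g)\rd_\psi g$. Then
\begin{align*}
\tr(\pi\circ p)(p^*f)=\tr\int_{G'}f(p(g'))\,\pi(p(g'))\,\rd_\psi g',
\end{align*}
so it suffices to compare the push-forward $p_*(\rd_\psi g')$ with $\rd_\psi g$ restricted to $p(G')$.

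\textbf{Measure comparison.} Let $\omega$ and $\omega'$ be the invariant top forms defining $\rd_\psi g$ and $\rd_\psi g'$; both groups are split here, so we may take $\lambda=1$. Since $p$ is an isogeny, $p^*\omega=a\,\omega'$ for a constant $a\in F^\times$. The map $p$ is a local $F$-analytic isomorphism with fibres of cardinality $\abs{\Ker(p)(F)}$, so the change of variables formula gives
\begin{align*}
\int_{G'}h(p(g'))\,\rd_\psi g'=\abs{a}_F^{-1}\,\abs{\Ker(p)(F)}\int_{p(G')}h(g)\,\rd_\psi g
\end{align*}
for $h\in\Cc(G)$. The key claim is $\abs{a}_F=\abs{\#\Ker(p)(\overline F)}_F$.

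\textbf{Main obstacle.} Proving this claim is the step I expect to be hardest. I would compute $a$ as the determinant of $\Lie(p)$ with respect to the $\Z$-bases coming from the Chevalley $\Z$-forms $\bG_{\spl}$ and $\bG'_{\spl}$. On root spaces, $p$ identifies root vectors, so it contributes a unit. On Cartan subalgebras, $\Lie(p)$ is the map $X_*(T')\otimes\Z\to X_*(T)\otimes\Z$. Its determinant is $\pm[X_*(T):X_*(T')]$, which equals $\pm\#\Ker(p)(\overline F)$. As a sanity check, for the squaring map on $\GL_1$ we get $p^*(dx/x)=2\,dx/x$. Hence $\abs{a}_F=\abs{\#\Ker(p)(\overline F)}_F$, and this is where the factor $\abs{\#\Ker(p)(\overline F)}_F^{-1}$ in the statement comes from.

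\textbf{Conclusion.} Apply the measure comparison to $h=f\cdot\pi$ (operator-valued, and the argument is the same). This gives
\begin{align*}
\tr(\pi\circ p)(p^*f)=\frac{\abs{\Ker(p)(F)}}{\abs{\#\Ker(p)(\overline F)}_F}\,\tr\pi\bigl(f\cdot\mathbf 1_{p(G')}\bigr).
\end{align*}
Finally, orthogonality of characters on the finite abelian group $\cC_G$ gives
\begin{align*}
\mathbf 1_{p(G')}=\abs{\cC_G}^{-1}\sum_{\chi\in\cC_G^{\cD}}\chi,
\end{align*}
with each $\chi$ viewed as a character of $G$. Together with $\tr\pi(f\chi)=\tr(\pi\otimes\chi)(f)$, this yields the stated formula. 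The trace-class properties are standard, since $\pi$ is admissible and $f$ is smooth and compactly supported.
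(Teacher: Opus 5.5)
Your proof is correct and is essentially the argument the paper delegates to Gan--Ichino (Lemma 13.2). There, one shows that $p_*(\rd_{\psi}g')$ equals $\abs{\Ker(p)(F)}/\abs{\#\Ker(p)(\overline{F})}_F$ times $\rd_{\psi}g$ on $p(G')$, and then expands $\mathbf{1}_{p(G')}$ in the characters of the finite abelian group $\cC_G$. In this case the constant is $2/\abs{2}_F$, since $\det \Lie(p)=\pm 2$ on the Chevalley lattices of $\SO_{2n}\times\GL_1\to\GSO_{2n}$.
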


\begin{proof}
    This can be proved in the same way as the proof of \cite{GI14}*{Lemma 13.2}.
\end{proof}

Let $\chi_{A_G}$ be a central character of $G$. 
Let $\chi_{Z(G')}$ (resp. $\chi_{A_{G'}}$) be the restriction of $\chi_{A_G}$ to $Z(G')$ (resp. $A_{G'}$).
Let $\Pi_{\disc}(M')_{\chi_{Z(G')}}$ be the subset of $\Pi_{\disc}(M')_{\chi_{A_{G'}}}$ which consists of representations with central character $\chi_{Z(G')}$ on $Z(G')$.
This set is a union of connected components of $\Pi_{\disc}(M')_{\chi_{A_{G'}}}$.
Similarly, we define the space ${\Temp_{\ind}(G')}_{\chi_{Z(G')}}$.

\begin{definition}
    We have the action of $\cC^{\cD}_{M}$ on the set $\Pi_{\disc}(M)_{\chi_{A_G}}$ defined by 
    \begin{align*}
        (\pi^M, \chi) \in \Pi_{\disc}(M)_{\chi_{A_G}} \times \cC_M^{\cD} \mapsto \pi^{M} \otimes \chi \in \Pi_{\disc}(M)_{\chi_{A_G}}.
    \end{align*} 
    We denote the quotient set by $\Pi_{\disc}(M)_{\chi_{A_G}}/\cC^{\cD}_M$ and the class of the representation $\pi^M$ by $[\pi^M]_{\cC^{\cD}_M}$ or $p_{\cC_M^{\cD}}(\pi^{M})$. 
    
    We have the action of $\cC_M$ on the set $\Pi_{\disc}(M')_{\chi_{Z(G')}}$ defined by 
    \begin{align*}
        (\pi^{M'}, m) \in \Pi_{\disc}(M')_{\chi_{Z(G')}} \times \cC_{M} \mapsto {}^{\Ad(m)}\pi^{M'} \in \Pi_{\disc}(M')_{\chi_{Z(G')}}.
    \end{align*}
    We denote the quotient set by $\Pi_{\disc}(M')_{\chi_{Z(G')}}/\cC_M$ and the class of the representation $\pi^{M'}$ by $[\pi^{M'}]_{{\cC_M}}$ or $p_{\cC_M}(\pi^{M'})$. 

    Let $d[\pi^M]_{\cC^{\cD}_M}$ be the quotient measure 
    \begin{align*}
        {p_{\cC_M^{\cD}}}_{*}
        (\frac{\abs{\mathrm{Stab}_{\cC_M^{\cD}}(\pi^M)}}{\abs{\cC_M^{\cD}}} \rd \pi^M)
    \end{align*}
    on $\Pi_{\disc}(M)_{\chi_{A_G}}/\cC^{\cD}_M$, of the measure $\rd \pi^M$ on $\Pi_{\disc}(M)_{\chi_{A_G}}$.
    Similarly, we define the quotient measure $d[\pi^{M'}]_{\cC_{M}}$ on the set $\Pi_{\disc}(M')_{\chi_{Z(G')}}/\cC_M$.
\end{definition}

\begin{remark}
    The quotient measure $d[\pi^M]_{\cC^{\cD}_M}$ satisfies the following property. 
    Let $\Phi \in L^1(\Pi_{\disc}(M)_{\chi_{A_G}})$, we have 
    \begin{align*}
        \int_{\Pi_{\disc}(M)_{\chi_{A_G}}/\cC^{\cD}_M} (\sum_{\pi^M \in [\pi^M]_{\cC^{\cD}_M}} \Phi(\pi^M)) d[\pi^M]_{\cC^{\cD}_M}  
        = 
        \int_{\Pi_{\disc}(M)_{\chi_{A_G}}} \Phi(\pi^M) \rd \pi^M.
    \end{align*}
\end{remark}

\begin{construction}
    We take a representation $\pi^M \in \Pi_{\disc}(M)_{\chi_{A_G}}$. 
    Then, by \cref{proposition:restriction-GSO2n-to-SO2n}, the pullback $\pi^M \circ p$ is a multiplicity-free direct sum of representations in $\Pi_{\disc}(M')_{\chi_{Z(G')}}$. 
    It is easy to see that these representations are in the same class of $\Pi_{\disc}(M')_{\chi_{Z(G')}}/\cC_M$. 
    Also, the restriction of $\pi^M$ does not change if we replace $\pi^M$ by  $\pi^M \otimes \chi$ for $\chi \in \cC_M^{\cD}$.
    Thus, we have obtained a map 
    \begin{align}
        \Pi_{\disc}(M)_{\chi_{A_G}}/\cC^{\cD}_M \to \Pi_{\disc}(M')_{\chi_{Z(G')}}/\cC_M.
    \end{align}
    
    We construct the inverse of this map.
    Let $\pi^{M'}$ be an element of $\Pi_{\disc}(M')_{\chi_{Z(G')}}$.
    Then, the representation $\pi^{M'}$ descends to the representation $\pi^{p(M')}$ of $p(M')$ and then we can take the induction $\ind^{M}_{p(M')}(\pi^{p(M')})$, which we simply denote by $\ind^{M}_{M'}(\pi^{M'})$. 
    By \cref{proposition:restriction-GSO2n-to-SO2n}, this induced representation is a multiplicity-free direct sum of irreducible representations in $\Pi_{\disc}(M)_{\chi_{A_G}}$. 
    It is easy to see that these representations are in the same class in $\Pi_{\disc}(M)_{\chi_{A_G}}/\cC^{\cD}_M$.
    This gives the inverse map of $(5.3)$ by the Frobenius reciprocity.

    In summary, we obtained a bijection 
    \begin{align*}
        r_{p} \colon 
        \Pi_{\disc}(M)_{\chi_{A_G}}/\cC^{\cD}_M \xrightarrow{\sim} \Pi_{\disc}(M')_{\chi_{Z(G')}}/\cC_M\colon [\pi^M]_{\cC^{\cD}_M} \mapsto [\pi^{M'}]_{\cC_M}.
    \end{align*}
    It is easy to see that this map is a homeomorphism.
\end{construction}

\begin{lemma}\label{lemma:push-forward-quotient-measure}
     We have 
     \begin{align*}
        {r_p}_{*}(\rd[\pi^M]_{\cC_M^{\cD}}) 
        =
        \frac{1}{\abs{[X_*(A_M) : X_*(A_{M'})]}}
        \rd[\pi^{M'}]_{\cC_M}.
     \end{align*}
\end{lemma}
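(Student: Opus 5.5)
The plan is to reduce the statement to a local comparison of Haar measures on unramified twists, since both measures are defined through the local homeomorphisms
\begin{align*}
    \lambda \mapsto \pi^M \otimes \chi_\lambda, \qquad \lambda \mapsto \pi^{M'} \otimes \chi_\lambda,
\end{align*}
with $\lambda$ in the unitary unramified characters of $A_M$ (resp.\ $A_{M'}$) trivial on $A_G$ (resp.\ $A_{G'}$). Since $r_p$ is a homeomorphism and both sides are Radon measures, I will check the identity on a small neighbourhood of an arbitrary point $[\pi^M]_{\cC_M^{\cD}}$. The constant comes from an index of lattices, and the stabilizer weights in the quotient measures are then matched separately.

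\textbf{Step 1 (compatibility of twisting with restriction).} For $\lambda \in \cX_{\unit}(M)$ we have $(\pi^M\otimes\chi_\lambda)\circ p = (\pi^M\circ p)\otimes(\chi_\lambda\circ p)$. Pull-back along $p$ induces the restriction map $\cX_{\unit}(A_M)\to\cX_{\unit}(A_{M'})$ coming from the isogeny $p\colon A_{M'}\to A_M$. On character lattices this is the inclusion $X^*(A_M)\hookrightarrow X^*(A_{M'})$, of index $k=[X_*(A_M):X_*(A_{M'})]$, and the two lattices span the same real space $i\fa^*_M=i\fa^*_{M'}$. For our pair $G=\GSO_{2n}$, $G'=\SO_{2n}\times\GL_1$, the map $A_{G'}\to A_G$ is an isomorphism. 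Hence fixing the central characters $\chi_{A_G}$ and $\chi_{A_{G'}}$ (and $\chi_{Z(G')}$, which is locally constant in the twist) does not change the index on the quotient tori $\cX_{\unit}(A_M/A_G)$ and $\cX_{\unit}(A_{M'}/A_{G'})$.

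\textbf{Step 2 (local measure comparison).} By \cref{subsubsection:normalization-tempered-spectra}, the measure on $\cX_{\unit}(A)$ has total volume $1$. Through the exponential map this is Lebesgue measure on $i\fa^*$ normalised so that the lattice $\frac{2\pi i}{\log q}X^*(A)$ has covolume $1$. The lattice for $A_{M'}$ is finer by index $k$, so the corresponding Lebesgue measure on the same space $i\fa^*_M$ is $k$ times larger. On a neighbourhood $U$ of $\lambda=0$ on which both parametrisations are injective, the twisting maps intertwine $r_p$ with the identity of $U$. This yields $\rd\pi^{M'} = k\,(r_p)_*\rd\pi^M$ locally, before passing to quotients.

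\textbf{Step 3 (quotients and stabilizers).} The quotient measures carry the weights $|\mathrm{Stab}|/|\text{group}|$, which by the remark after the definition make them exactly the measures for which summing over orbits recovers the original integral. So it suffices to check that, for generic $\lambda\in U$, summing over a $\cC_M^{\cD}$-orbit upstairs corresponds under restriction to summing over the associated $\cC_M$-orbit of constituents of $\pi^M\circ p$. This correspondence is provided by the Frobenius-reciprocity description of $r_p$ in the preceding construction, together with multiplicity-freeness (\cref{proposition:restriction-GSO2n-to-SO2n}). The generic stabilizers are locally constant off a null set, because the twists by the finite group $\cC_M^{\cD}$ move $\lambda$ only by finitely many translations. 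Combining with Step 2 gives ${r_p}_*(\rd[\pi^M]_{\cC_M^{\cD}})=k^{-1}\rd[\pi^{M'}]_{\cC_M}$.

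The main obstacle is Step 3: tracking the stabilizer weights consistently so that no extra factor such as $|\cC_M|/|\cC_M^{\cD}|$ or an orbit-size ratio survives. I would first verify it on a single orbit by a direct counting argument. In that count, $|\mathrm{Stab}_{\cC_M^{\cD}}(\pi^M)|$ equals the number of constituents of $\ind^M_{M'}\pi^{M'}$ times a normalising factor, and dually for $\cC_M$, using that $\cC_M$ and $\cC_M^{\cD}$ have the same order.
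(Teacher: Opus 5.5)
Your Steps 1 and 2 are exactly the content behind the paper's one-line proof (``this follows from our normalization of the measures on the tempered spectra''). In the twist coordinates, both sides are Lebesgue measure on $i\fa^{G,*}_M = i\fa^{G',*}_{M'}$, normalized by the lattices $X^*(A_M/A_G)\subset X^*(A_{M'}/A_{G'})$ of index $[X_*(A_M):X_*(A_{M'})]$, using $A_{G'}\xrightarrow{\sim}A_G$. In these coordinates $r_p$ becomes $\lambda\mapsto\lambda$.

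Step 3, however, is misframed, and the obstacle you anticipate does not exist. You need neither a correspondence between $\cC_M^{\cD}$-orbit sums and $\cC_M$-orbit sums, nor a count relating $\abs{\mathrm{Stab}_{\cC_M^{\cD}}(\pi^M)}$ to the constituents of $\ind^M_{M'}(\pi^{M'})$. Each quotient measure is, separately, locally the slice measure:
\begin{itemize}
\item Twisting by $\chi_\lambda$ commutes with twisting by $\cC_M^{\cD}$, and $\chi_\lambda\circ p$ is invariant under $M$-conjugation. Hence $\mathrm{Stab}_{\cC_M^{\cD}}(\pi^M\otimes\chi_\lambda)=\mathrm{Stab}_{\cC_M^{\cD}}(\pi^M)$ and $\mathrm{Stab}_{\cC_M}(\pi^{M'}\otimes\chi_\lambda)=\mathrm{Stab}_{\cC_M}(\pi^{M'})$ for every $\lambda$, not merely off a null set.
\item Shrink $U$ so that the image $V$ of the slice is disjoint from its translates by the finitely many non-stabilizing elements. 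Then the preimage of the image of $V$ in the quotient is a disjoint union of orbit-size many measure-preserving translates of $V$.
\item The weight $\abs{\mathrm{Stab}}/\abs{\cC_M^{\cD}}$ (resp.\ $\abs{\mathrm{Stab}}/\abs{\cC_M}$) is exactly the reciprocal of the orbit size. So the quotient measure on the image of $V$ is the Lebesgue measure on $U$, on each side independently, and Step 2 finishes the proof.
\end{itemize}

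Do not try to match the two weights: if $\pi^M\circ p$ has $r$ constituents, they are $r/\abs{\cC_M^{\cD}}$ and $1/r$, which differ in general. The stabilizer counting you describe belongs to the trace computation in the proof of \cref{theorem:formal-degree-conjecture-GSO2n}, where the factor $\abs{\mathrm{Stab}_{\cC^{\cD}_M}(\pi^{M'})}/\abs{\cC^{\cD}_M}$ appears explicitly. It plays no role in this measure lemma.
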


\begin{proof}
    This follows from our normalization of the measures on the tempered spectra; see \cref{subsubsection:normalization-tempered-spectra}.
\end{proof}

\begin{remark}
    In the following, by abuse of notation, we write $\ind^{G}_{G'}$ for the induction $\ind^{G}_{p(G')}$ from $p(G')$ to $G$.
\end{remark}

The following lemma follows from a simple computation by utilizing the Frobenius reciprocity.

\begin{lemma}\label{lemma:associativity-induction}
    \begin{enumerate}
        \item     
        Let $\pi^M$ be a representation of $M$.
        The parabolic induction $i^{G'}_{P'} (\pi^M \circ p)$ is isomorphic to the representation $i^{G}_{P}(\pi^M) \circ p$.  
        \item     
        Let $\pi^{M'}$ be a representation of $p(M')$, which we view as a representation of $M'$.
        The parabolic induction $i^{G}_{P} (\ind^{M}_{M'}(\pi^{M'}))$  is isomorphic to the representation $\ind^{G}_{G'}(i^{G'}_{P'}(\pi^{M'}))$.
    \end{enumerate}
\end{lemma}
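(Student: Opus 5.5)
Both parts will be proved by writing each parabolic induction as a function space and writing down an explicit isomorphism; Frobenius reciprocity then serves as a consistency check. Throughout, the key inputs are: $p(N') = N$; $P' = p^{-1}(P)$ and $M' = p^{-1}(M)$; the map $p(G')\backslash G/P$ is trivial (by \cref{lemma:canonical-isom-of-quotient-Levi}, $G = p(G')M = p(G')P$); and the modulus characters satisfy $\delta_{P'} = \delta_P \circ p$, since $p$ induces an isomorphism on unipotent radicals compatible with the adjoint action of $M'$ on $\Lie(N')$.

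For (1), I will realize $i^{G'}_{P'}(\pi^M\circ p)$ as the space of smooth functions $f' \colon G' \to V$ with $f'(m'n'g') = \delta_{P'}^{1/2}(m')\pi^M(p(m'))f'(g')$. I will then define the map $f \mapsto f\circ p$ from $i^G_P(\pi^M)$, which is clearly $G'$-equivariant and well defined by the modulus identity. For injectivity, I note that if $f\circ p = 0$, then $f$ vanishes on $p(G')$, and hence on $p(G')P = G$. For surjectivity, given $f'$, I will first observe that $f'$ is constant on fibers of $p$: for $z \in \Ker(p) \subset Z(M')$ we have $\pi^M(p(z)) = 1$ and $\delta_{P'}(z) = 1$. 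So $f'$ descends to $p(G')$. I then extend it to $G = p(G')P$ by the equivariance rule $f(pg) = \delta_P^{1/2}\pi^M(\cdot)f'(\cdot)$. Well-definedness of this extension rests on $p(G') \cap P = p(P')$, and smoothness is automatic because $p(G')$ is open of finite index. Equivalently, I can argue via Mackey theory for the open normal subgroup $p(G')$.

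For (2), I will use transitivity of induction. Compact induction from the open finite-index subgroup $p(M')$ to $M$ agrees with ordinary induction, and both sides of (2) are subrepresentations of smooth functions on $G$. Concretely, I identify $\ind^G_{p(G')} i^{G'}_{P'}(\pi^{M'})$ with functions $F \colon G \to V$ satisfying $F(p(m')n x) = \delta_P^{1/2}(p(m'))\pi^{M'}(m')F(x)$. Here I use $p(G')\cdot P = G$ and $p(P') = p(M')N$ to see that both sides become induction from $p(M')N$ to $G$. Similarly, $i^G_P\ind^M_{p(M')}$ is $\operatorname{Ind}_{p(M')N}^{G}$ after inflating through $N$, by transitivity $\operatorname{Ind}^G_P\operatorname{Ind}^{P}_{p(M')N}$. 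The other side is $\operatorname{Ind}^G_{p(G')}\operatorname{Ind}^{p(G')}_{p(P')}$, which is also $\operatorname{Ind}^G_{p(P')} = \operatorname{Ind}^G_{p(M')N}$. So both equal the single induced representation, with the $\delta^{1/2}$ twists matching by the modulus identity.

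I expect the main obstacle to be bookkeeping rather than mathematics. The points needing care are:
\begin{itemize}
\item checking $p(G')\cap P = p(P')$, which holds because $P' = p^{-1}(P)$;
\item checking that the normalizing characters agree when restricted along $p$;
\item using that all inductions from finite-index open subgroups are both compact and smooth, so transitivity applies without topological issues.
\end{itemize}
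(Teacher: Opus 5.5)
Your proposal is correct. Part (2) is the paper's argument: transitivity of induction, with $p(P') = p(M')N$ because $p \colon N' \xrightarrow{\sim} N$, and the normalizations matching because $\delta_{P'} = \delta_P \circ p$.

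For part (1) you take a different route.

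\emph{The paper's route.} It argues by adjunction. For an arbitrary smooth representation $\sigma'$ of $p(G')$, it combines Frobenius reciprocity for $\ind^G_{G'}$ and $\ind^M_{M'}$ with the adjunction between the Jacquet functor and parabolic induction. This rewrites $\Hom_{G'}(\sigma', i^G_P(\pi^M)\circ p)$ and $\Hom_{G'}(\sigma', i^{G'}_{P'}(\pi^M\circ p))$ as $\Hom_M$-spaces out of $r^G_P(\ind^G_{G'}\sigma')$ and $\ind^M_{M'}(r^{G'}_{P'}\sigma')$ respectively. It then asserts that these two Jacquet modules are naturally isomorphic because $G/p(G') \cong M/p(M')$, and concludes by Yoneda.

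\emph{Your route.} You use the same geometric input, namely $G = p(G')P$ and $P \cap p(G') = p(P')$ (from \cref{lemma:canonical-isom-of-quotient-Levi} and $P' = p^{-1}(P)$). But you apply it on the induced side rather than the Jacquet side. Concretely, you do a one-double-coset Mackey restriction $i^G_P(\pi^M)|_{p(G')} \cong i^{p(G')}_{p(P')}(\pi^M|_{p(M')})$ and then pull back along $G'/\Ker(p) \cong p(G')$.

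\emph{Comparison.} Your version gives an explicit, visibly natural isomorphism $f \mapsto f\circ p$ rather than one produced by Yoneda. It also avoids the Jacquet-module Mackey identity, which the paper only sketches and which rests on the same double-coset fact. Your checks are complete: descent along the central subgroup $\Ker(p)$, where $\delta_{P'}$ and $\pi^M\circ p$ are trivial, and well-definedness of the extension via $P\cap p(G') = p(P')$. The paper's version is shorter and purely formal once the Jacquet-module identity is granted.

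One small bookkeeping remark. Where you pass from $p(G')$ to all of $G$ via left $P$-equivariance (injectivity, and defining the extension on elements $q\,x$ with $q \in P$, $x \in p(G')$), you are using $G = P\,p(G')$ rather than $G = p(G')P$. This follows from the latter by taking inverses, so no normality of $p(G')$ is needed.
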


\begin{proof}
    By the Frobenius reciprocity, for any representation $\sigma'$ of $p(G')$, we have 
    \begin{align*}
        \Hom_{G'}(\sigma', i^{G}_{P}(\pi^M) \circ p) 
        &\xrightarrow{\sim} \Hom_{G}(\ind^{G}_{G'}(\sigma'), i^G_P(\pi^M)) \\
        &\xrightarrow{\sim} \Hom_{M}(r^G_P(\ind^{G}_{G'}(\sigma')), \pi^M).
    \end{align*}
    Similarly, we have 
    \begin{align*}
        \Hom_{G'}(\sigma', i^{G'}_{P'} (\pi^M \circ p)) 
        &\xrightarrow{\sim} \Hom_{G}(r^{G'}_{P'}(\sigma'), \pi^M \circ p) \\
        &\xrightarrow{\sim} \Hom_{M}(\ind^{M}_{M'}(r^{G'}_{P'}(\sigma')), \pi^M).
    \end{align*}
    Thus, it suffices to show that we have a natural isomorphism
    \begin{align*}
        r^G_P(\ind^{G}_{G'}(\sigma')) \xrightarrow{\sim} \ind^{M}_{M'}(r^{G'}_{P'}(\sigma')).
    \end{align*}
    This follows from the isomorphism $G/p(G') \xrightarrow{\sim} M/p(M')$ in \cref{lemma:canonical-isom-of-quotient-Levi}.
    This proves the first statement. 
    The second statement follows from the transitivity of the induction, since we have a natural isomorphism $p \colon N' \xrightarrow{\sim} N$.
\end{proof}

\subsection{Proof of the conjecture}

\begin{theorem}\label{theorem:formal-degree-conjecture-GSO2n}
    \cref{conjecture:formal-degree-conjecture} holds for $G = \GSO_{2n}$ and $\PGSO_{2n}$, i.e., we have 
    \begin{align*}
        \mu^G_{\psi}(\pi) = \mu^{G}_{\HII, \psi}(\pi)
    \end{align*}
    for almost all $\pi \in \Temp_{\ind}(G)$.
\end{theorem}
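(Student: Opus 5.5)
We transfer the known identity for $G' = \SO_{2n}\times\GL_1$ (\cref{theorem:formal-degree-conjecture-SO2n} together with \cref{theorem:formal-degree-conjecture-GLn} and \cref{remark:direct-product-and-LLC}) to $G = \GSO_{2n}$ through the central isogeny $p\colon G'\to G$, following \cite{GI14}*{Lemma 13.2}. By \cref{remark:reduction-fixing-central-character}, it suffices to work with a fixed central character $\chi_{A_G}$, and hence with $\chi_{Z(G')}$ on $G'$. The case $\PGSO_{2n}$ then follows by taking $\chi_{A_G}$ trivial, or by running the same argument with $\SO_{2n}\to\PGSO_{2n}$.

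\textbf{Comparing the Plancherel formulas.} Take $f\in\Cc(G,\chi_{A_G})$ and apply the Plancherel formula of \cref{proposition:Plancherel-formula-central-character} for $G'$ to $p^*f$. The left side is $f(1)$, up to the measure comparison between $\rd_\psi g'$ and $\rd_\psi g$, which is absorbed into the constant below. For the right side, express $\tr\pi'((p^*f)^\vee)$ through $G$ using the bijection $r_p$ and \cref{lemma:associativity-induction}: the sum of $\tr\pi'$ over a $\cC_M$-orbit equals the trace of the restriction of $i^G_M(\sigma)$. Then \cref{lemma:Gan-Ichino-computation} turns this into a sum over $\chi\in\cC_G^\cD$ of $\tr(i^G_M(\sigma)\otimes\chi)(f^\vee)$, with constant $\frac{\abs{\Ker(p)(F)}}{\abs{\#\Ker(p)(\overline F)}_F\abs{\cC_G}}$. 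Next change variables using \cref{lemma:push-forward-quotient-measure}, which produces the index $[X_*(A_M):X_*(A_{M'})]$, and \cref{proposition:computation-scaler-isogeny}, which converts the constant into the ratio $\abs{H^1(F,(M')^\natural)}/\abs{H^1(F,M^\natural)}$ times an index factor. The result is a Plancherel-type formula for $G$ whose density on $i^G_M(\sigma)$ is $\mu^{G'}_{\HII,\psi}(\pi')$ multiplied by $\abs{\mathrm{Stab}_{\cC_M^\cD}(\sigma)}$, the number of constituents of the restriction, and these explicit constants. By \cref{proposition:sakellaridis-venkatesh}, this density equals $\mu^G_\psi$ almost everywhere.

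\textbf{Matching with $\mu^G_{\HII,\psi}$.} The adjoint $\gamma$-factors agree, because $\Lie\widehat G$ and $\Lie\widehat{G'}$ differ only by a central torus direction, and the discrepancy in $\gamma^*$ is a power of $\gamma^*(0,\mathbf 1,\psi)$ compensated by the central-character normalization of \cref{proposition:Plancherel-formula-central-character}. The remaining factors to match are the component groups and the integers $\langle\cdot,1\rangle$. \cref{lemma:comparison-S-natural-groups}(3),(4) gives $\abs{S^\natural_{\sigma'}}/\abs{S^\natural_\sigma}=\abs{\mathrm{Stab}_{\cC^\cD_M}(\phi^M)}$ up to the index of $Z(\widehat{M})^{\Gamma_F}$-type tori. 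These indices should cancel against the $H^1$ and $[X_*(A_M):X_*(A_{M'})]$ factors, using the Kottwitz isomorphism in \cref{lemma:facts-in-abelian-Galois-cohomology}(1). The numbers $\langle\sigma,1\rangle$ are all $1$ in Xu's sense, or are matched via \cref{theorem:local-Langlands-correspondence-GSO2n}(b),(c).

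\textbf{Main obstacle.} The step I expect to be hardest is the bookkeeping between $\mathrm{Stab}_{\cC_M^\cD}(\sigma)$, which is representation-theoretic, and $\mathrm{Stab}_{\cC_M^\cD}(\phi^M)$, which is Galois-theoretic. Xu's correspondence is only defined up to $\O_{2n}\times\GL_1$ and quadratic twists, so it has to be checked that the twist-stabilizers of $\sigma$ and of its Xu packet combine, via \cref{theorem:local-Langlands-correspondence-GSO2n}(c),(d), into exactly the extension in \cref{lemma:comparison-S-natural-groups}(4). I would attack this by counting: the $G'$-packet is a disjoint union of restrictions of twisted Xu packets, so summing $\abs{\mathrm{Stab}}$ over an orbit should reproduce $\abs{S^\natural_{\phi^{M'}}}/\abs{S^\natural_{\phi^M}}$.
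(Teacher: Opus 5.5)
Your proposal is essentially the paper's proof. You fix the central character, pull $f$ back along $p\colon \SO_{2n}\times\GL_1\to\GSO_{2n}$, regroup $\Pi_{\disc}(M')$ into $\cC_M$-orbits matched with $\cC_M^{\cD}$-orbits via $r_p$, and combine \cref{lemma:associativity-induction}, \cref{lemma:Gan-Ichino-computation}, \cref{lemma:push-forward-quotient-measure}, \cref{proposition:computation-scaler-isogeny} and \cref{lemma:comparison-S-natural-groups} to identify the density with $\mu^G_{\HII,\psi}$. Writing the orbit sum directly as $\tr(i^G_M(\sigma)\circ p)$ instead of passing through $\ind^G_{G'}$ is a harmless simplification.

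Two small corrections.

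First, $\Lie\widehat G$ and $\Lie\widehat{G'}$ do not differ by a central direction. The dual map $\GSpin_{2n}(\C)\to\SO_{2n}(\C)\times\C^\times$ is an isogeny, so $\Ad_G\cong\Ad_{G'}\circ{}^Lp$ and the adjoint $\gamma$-factors coincide exactly. Since $\dim A_G=\dim A_{G'}=1$, there is nothing to compensate.

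Second, the stabilizer issue you flag has to be settled pointwise, not by counting over packets, which would only give an averaged identity. \cref{theorem:local-Langlands-correspondence-GSO2n}(c) with $\Sigma=1$, applied to the $\GSO$-factor of $M$, gives $\mathrm{Stab}_{\cC_M^{\cD}}(\sigma)=\mathrm{Stab}_{\cC_M^{\cD}}(\phi^M_\sigma)$ for every $\sigma$. This is exactly the substitution the paper makes.
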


\begin{proof} 
    We fix a unitary central character $\chi_{A_G}$ of $A_G$ and prove the Plancherel formula for the fixed central character; see \cref{remark:reduction-fixing-central-character}.
    We obtain the character $\chi_{Z(G')}$ of $Z(G')$.
    
    We take a smooth function $f \in \Cc(G, \chi_{A_G})$.
    The pullback $p^*f$ is in the space $\Cc(G', \chi_{Z(G')})$.
    By \cref{theorem:formal-degree-conjecture-SO2n}, we have 
    \begin{align*}
        p^*f(1) 
        = 
        \gamma^*(0, \mathbf{1}, \psi)^{-1}
        \int_{{\Temp_{\ind}(G')}_{\chi_{Z(G')}}}
        \tr \pi' (p^*f^{\vee}) 
        \mu^{G'}_{\HII, \psi}(\pi')   
        \rd \pi'.
    \end{align*}
    By definition of the measure $\rd \pi'$, this can be rewritten as 
    \begin{multline*}
        \gamma^*(0, \mathbf{1}, \psi)^{-1} \\
        \sum_{M' \in \cL^{G'}_0}
        \frac{1}{\abs{W(G', M')}} 
        \int_{\pi^{M'} \in \Pi_{\disc}(M')_{\chi_{Z(G')}}} 
        \tr i^{G'}_{M'} (\pi^{M'}) (p^{*}f^{\vee})
        \mu_{\HII, \psi}^{G'}(i^{G'}_{M'}(\pi^{M'})) 
        \rd \pi^{M'}.
    \end{multline*}
    We also have 
    \begin{multline}
        \int_{\pi^{M'} \in \Pi_{\disc}(M')_{\chi_{Z(G')}}} 
        \tr i^{G'}_{M'} (\pi^{M'}) (p^{*}f^{\vee})
        \mu_{\psi}^{G'}(i^{G'}_{M'}(\pi^{M'})) 
        \rd \pi^{M'} \\
        = 
        \int_{\Pi_{\disc}(M')_{\chi_{Z(G')}}/\cC_M} 
        \lbrace
            \sum_{\pi^{M'} \in [\pi^{M'}]_{\cC_{M}}} 
            \tr i^{G'}_{M'} (\pi^{M'}) (p^*f^{\vee})
            \mu_{\HII, \psi}^{G'}(i^{G'}_{M'}(\pi^{M'})) 
        \rbrace 
        \rd [\pi^{M'}]_{\cC_M}. 
    \end{multline}
    By using \cref{remark:GSO2n-invariant-independence-of-choice}, it is easy to see that the function
    \begin{align*}
        \mu_{\HII, \psi}^{G'}(i^{G'}_{M'}(\pi^{M'})) 
    \end{align*}
    is constant on the class $[\pi^{M'}]_{\cC_M}$; we denote it by  $\mu_{\HII, \psi}^{G'}(i^{G'}_{M'}([\pi^{M'}]_{\cC_M}))$.
    The sum 
    \begin{align*}
        \sum_{\pi^{M'} \in [\pi^{M'}]_{\cC_{M}}} 
        \tr i^{G'}_{M'} (\pi^{M'}) (p^{*}f^{\vee})
    \end{align*}
    is equal to 
    \begin{multline*}
        \frac{\abs{\mathrm{Stab}_{\cC^{\cD}_M}(\pi^{M'})}}{\abs{\cC^{\cD}_{M}}}
        \tr i^{G}_{M} (\ind^{M}_{M'}(\pi^{M'}) \circ p ) (p^*f^{\vee}) \\
        = 
        \frac{\abs{\mathrm{Stab}_{\cC^{\cD}_M}(\pi^{M'})}}{\abs{\cC^{\cD}_{M}}}
        \tr (\ind^G_{G'} (i^{G'}_{M'} (\pi^{M'})) \circ p) (p^*f^{\vee}).
    \end{multline*}
    Here, we applied \cref{lemma:associativity-induction}.
    By \cref{lemma:Gan-Ichino-computation}, this is equal to  
    \begin{align*}
        \frac{\abs{\mathrm{Stab}_{\cC^{\cD}_M}(\pi^{M'})}}
        {\abs{\cC^{\cD}_{M}}}
        \frac{\abs{\Ker(p)(F)}}{\abs{\# \Ker(p)(\overline{F})}_{F}\abs{\cC_G}} 
        \sum_{\chi \in \cC_G^{\cD}}  
        \tr (\ind^G_{G'} (i^{G'}_{M'} (\pi^{M'})) \otimes \chi) (f^{\vee}) \\
        =   
        \frac{\abs{\mathrm{Stab}_{\cC^{\cD}_M}(\pi^{M'})}}{\abs{\cC^{\cD}_{M}}}
        \frac{\abs{\Ker(p)(F)}}{\abs{\# \Ker(p)(\overline{F})}_{F}}
        \tr (\ind^G_{G'} (i^{G'}_{M'} (\pi^{M'}))) (f^{\vee}).
    \end{align*}
    Here, we have used 
    \begin{align*}
        \ind^G_{G'} (i^{G'}_{M'} (\pi^{M'})) \otimes \chi 
        \xrightarrow{\sim} 
        \ind^G_{G'} (i^{G'}_{M'} (\pi^{M'})),
    \end{align*}
    for $\chi \in \cC^{\cD}_G$.
    By applying \cref{lemma:associativity-induction} once more, we obtain 
    \begin{align*}
        \tr (\ind^G_{G'} (i^{G'}_{M'} (\pi^{M'}))) (f^{\vee}) 
        = 
        \sum_{\pi^M \in r_p^{-1}([\pi^{M'}]_{\cC_M})} \tr (i^G_{M}(\pi^M)) (f^{\vee}).
    \end{align*}
    In summary, we have 
    \begin{multline}
        \gamma^*(0, \mathbf{1}, \psi)f(1) 
        = 
        \gamma^*(0, \mathbf{1}, \psi)p^*f(1)   \\
        = 
        \sum_{M \in \cL^G_0}   
        \frac{1}{\abs{W(G, M)}} 
        \int_{\Pi_{\disc}(M')_{\chi_{Z(G')}}/\cC_M}
        \lbrace
        \sum_{\pi^M \in r_p^{-1}([\pi^{M'}]_{\cC_M})} \tr (i^G_{M}(\pi^M)) (f^{\vee})
        \rbrace
        \\
        \times
        \frac{\abs{\mathrm{Stab}_{\cC^{\cD}_M}(\pi^{M})}}{\abs{\cC^{\cD}_{M}}}
        \frac{\abs{\Ker(p)(F)}}{\abs{\# \Ker(p)(\overline{F})}_{F}}
        \mu_{\HII, \psi}^{G'}(i^{G'}_{M'}([\pi^{M'}]_{\cC_M}))
        \rd[\pi^{M'}]_{\cC_M}. 
    \end{multline}
    Finally, we compute the remaining product of the density function and the measure. 
    By \cref{proposition:computation-scaler-isogeny} and \cref{lemma:facts-in-abelian-Galois-cohomology}, we have 
    \begin{multline*}
        \frac{\abs{\mathrm{Stab}_{\cC^{\cD}_M}(\pi^{M})}}{\abs{\cC^{\cD}_{M}}}
        \frac{\abs{\Ker(p)(F)}}{\abs{\# \Ker(p)(\overline{F})}_{F}}
        \mu_{\HII, \psi}^{G'}(i^{G'}_{M'}([\pi^{M'}]_{\cC_M})) \\
        =
        \abs{\mathrm{Stab}_{\cC^{\cD}_M}(\pi^{M})}
        \frac{\abs{H^1(F, (M')^{\natural})}}{\abs{H^1(F, M^{\natural})}}
        \frac{1}{[X_*(A_M) : X_*(A_{M'})]}
        \mu_{\HII, \psi}^{G'}(i^{G'}_{M'}([\pi^{M'}]_{\cC_M})) \\
        =
        \frac{1}{[X_*(A_M) : X_*(A_{M'})]}
        \frac{\abs{Z((M')^{\natural})^{\Gamma_F}}}{\abs{Z(M^{\natural})^{\Gamma_F}}}
        \abs{\mathrm{Stab}_{\cC^{\cD}_M}(\phi^{M}_{\pi^M})}
        \mu_{\HII, \psi}^{G'}(i^{G'}_{M'}([\pi^{M'}]_{\cC_M})). 
    \end{multline*}
    Furthermore, by \cref{lemma:comparison-S-natural-groups}, we have 
    \begin{align*}
        \frac{\abs{Z((M')^{\natural})^{\Gamma_F}}}{\abs{Z(M^{\natural})^{\Gamma_F}}}
        \abs{\mathrm{Stab}_{\cC^{\cD}_M}(\phi^{M}_{\pi^M})}
        \frac{1}{\abs{S^{\natural}_{\pi^{M'}}}}
        = 
        \frac{1}{\abs{S^{\natural}_{\pi^{M}}}}
    \end{align*}
    for any $\pi^{M'} \in [\pi^{M'}]_{\cC_M}$ and $\pi^M \in r_p^{-1}([\pi^{M'}]_{\cC_M})$.
    Thus, we obtain 
    \begin{multline*}
        \frac{1}{[X_*(A_M) : X_*(A_{M'})]}
        \frac{\abs{Z((M')^{\natural})^{\Gamma_F}}}{\abs{Z(M^{\natural})^{\Gamma_F}}}
        \abs{\mathrm{Stab}_{\cC^{\cD}_M}(\phi^{M}_{\pi^M})}
        \mu_{\HII, \psi}^{G'}(i^{G'}_{M'}([\pi^{M'}]_{\cC_M})) \\
        = 
        \frac{1}{[X_*(A_M) : X_*(A_{M'})]}
        \frac{\abs{\gamma^{*}(0, \pi^{M'}, \Ad_{G'}, \psi)}}{\abs{S^{\natural}_{\pi^M}}}
        d [\pi^{M'}]_{\cC_M}
    \end{multline*}
    for any $\pi^{M'} \in [\pi^{M'}]_{\cC_M}$ and $\pi^M \in r_p^{-1}([\pi^{M'}]_{\cC_M})$.
    Let $\mu^{G}_{\HII, \psi}(i^G_M([\pi^M]_{\cC^{\cD}_M}))$ denote the common value of $\mu^G_{\HII, \psi}(i^G_M(\pi^M))$ for any $\pi^M \in [\pi^M]_{\cC^{\cD}_M}$, see the stability condition in \cref{conjecture:local-Langlands-conjecture} and \cref{theorem:local-Langlands-correspondence-SO2n}.
    Lastly, by \cref{lemma:push-forward-quotient-measure} and the equalities
    \begin{align*}
        \gamma^{*}(0, \pi^{M'}, \Ad_{G'}, \psi) = \gamma^{*}(0, \pi^{M}, \Ad_{G}, \psi)
    \end{align*}
    and 
    \begin{align*}
        \begin{cases}
             \langle \pi^{M}, 1 \rangle = 1, \\
             \langle \pi^{M'}, 1 \rangle = 1,
        \end{cases} 
    \end{align*}
    the last expression in the previous equation is equal to 
    \begin{multline*}
        \sum_{M \in \cL^G_0}   
        \frac{1}{\abs{W(G, M)}} 
        \int_{\Pi_{\disc}(M)_{\chi_{A_G}}/\cC^{\cD}_M} 
        \lbrace
        \sum_{\pi^M \in [\pi^{M}]_{\cC^{\cD}_M}} \tr (i^G_{M}(\pi^M)) (f^{\vee})
        \rbrace  \\
        \times 
        \mu_{\HII, \psi}^{G}(i^{G}_{M}([\pi^{M}]_{\cC^{\cD}_M}))
        \rd[\pi^{M}]_{\cC^{\cD}_M}.
    \end{multline*}
    Thus, we obtain
    \begin{align*}
        f(1) 
        = 
        \gamma^*(0, \mathbf{1}, \psi)^{-1}
        \int_{\Temp_{\ind}(G)_{\chi_{A_G}}}  
        \tr \pi (f^{\vee})  
        \mu_{\HII, \psi}^{G}(\pi)
        \rd \pi.
    \end{align*}
    This completes the proof. 
\end{proof}

\subsection{Computations of $\gamma$-factors}

Recall that for a tempered induced representation $\pi \in \Temp_{\ind}(\SO_{2n})$, we write one of its $L$-parameters as $\phi^{\SO_{2n}}_{\pi}$. 
We denote the standard $L$-parameter $\Std_{\SO_{2n}} \circ \phi^{\SO_{2n}}_{\pi}$ by $\phi^{\GL_{2n}}_{\pi}$.
This standard parameter does not depend on the choice of $\phi^{\SO_{2n}}_{\pi}$.

We recall some explicit computations of the standard and the adjoint $\gamma$-factors for $\SO_{2n}(F)$, which we need later.

The following are well known.
\begin{lemma}\label{lemma:adjoint-orthogonal-wedge-standard}
    For any tempered $L$-parameter $\phi^{\SO_{2n}} \in \Phi_{\temp}(\SO_{2n})$, we have 
    \begin{align*}
        \Ad_{\SO_{2n}} \circ \phi^{\SO_{2n}} = \wedge^2 \circ \iota^{\GL_{2n}}_{\SO_{2n}} \circ \phi^{\SO_{2n}}.
    \end{align*}
\end{lemma}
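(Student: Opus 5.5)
The statement is a purely representation-theoretic identity for the dual group $\SO_{2n}(\C)$, so I will prove it at the level of $\widehat{\SO_{2n}} = \SO_{2n}(\C)$ and then compose with $\phi^{\SO_{2n}}$. Since $\SO_{2n}$ is split, ${}^L\SO_{2n} = \SO_{2n}(\C) \times W_F$. The Weil group therefore acts trivially on $\widehat{G}$, so both $\Ad_{\SO_{2n}}$ and $\wedge^2 \circ \iota^{\GL_{2n}}_{\SO_{2n}}$ factor through the projection to $\SO_{2n}(\C)$. It then suffices to exhibit an $\SO_{2n}(\C)$-equivariant isomorphism
\begin{align*}
    \Lie(\SO_{2n}(\C)) = \mathfrak{so}_{2n}(\C) \xrightarrow{\sim} \wedge^2 \C^{2n},
\end{align*}
where $\SO_{2n}(\C)$ acts on the left by $\Ad$ and on the right by $\wedge^2$ of the standard representation.

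For the isomorphism, let $V = \C^{2n}$ with the symmetric bilinear form $B$ preserved by $\SO_{2n}(\C)$, and realize $\mathfrak{so}(V)$ as the set of $X \in \End(V)$ with $B(Xv,w) + B(v,Xw) = 0$. I will use the map
\begin{align*}
    \wedge^2 V \to \mathfrak{so}(V) \colon v \wedge w \mapsto \bigl( u \mapsto B(w,u)v - B(v,u)w \bigr).
\end{align*}
The right-hand side is skew with respect to $B$, and the map is visibly $\SO(V)$-equivariant, since $g(v\wedge w)$ maps to $g X g^{-1}$. It is injective because $B$ is nondegenerate: under the identification $V \otimes V \cong \End(V)$ induced by $B$, it is the restriction to the antisymmetric tensors. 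Both sides have dimension $n(2n-1)$, so the map is an isomorphism.

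Composing with $\phi^{\SO_{2n}}$ then gives an equality of $L_F$-representations up to isomorphism, which is all that is needed for the $\gamma$-factors used later. The only point requiring care is the $L$-group bookkeeping, namely that $\iota^{\GL_{2n}}_{\SO_{2n}}$ is the standard embedding on $\widehat{G}$ and the identity on $W_F$, and this is immediate from the split case. There is no real obstacle here.
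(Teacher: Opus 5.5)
Your proof is correct and is the standard one. The paper gives no argument, only calling the identity well known, and what it has in mind is exactly your $\SO_{2n}(\C)$-equivariant isomorphism $\wedge^2 \C^{2n} \xrightarrow{\sim} \mathfrak{so}_{2n}(\C)$, $v \wedge w \mapsto B(w,\cdot)v - B(v,\cdot)w$, together with the fact that for the split group both representations of ${}^L\SO_{2n}$ are trivial on $W_F$.
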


\begin{corollary}\label{corollary:adjoint-factors-through-L-parameter}
     The adjoint $L$-function $\gamma(s, \pi, \Ad_{\SO_{2n}}, \psi)$ factors through $\Temp_{\ind}(\SO_{2n}(F))/\sim_{st, w}$, and we write it as 
     \begin{align*}
        \gamma(s, \phi, \Ad_{\SO_{2n}}, \psi)
     \end{align*}
     for $\phi \in \Temp_{\ind}(\SO_{2n}(F))/\sim_{st, w}$.
\end{corollary}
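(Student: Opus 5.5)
The corollary should follow directly from \cref{lemma:adjoint-orthogonal-wedge-standard} once we check that the standard parameter is a function of the class in $\Temp_{\ind}(\SO_{2n}(F))/\sim_{st,w}$. Take $\pi \in \Temp_{\ind}(\SO_{2n}(F))$ with an $L$-parameter $\phi^{\SO_{2n}}_{\pi}$. By definition, $\gamma(s, \pi, \Ad_{\SO_{2n}}, \psi) = \gamma(s, \phi^{\SO_{2n}}_{\pi}, \Ad_{\SO_{2n}}, \psi)$. By \cref{lemma:adjoint-orthogonal-wedge-standard}, this equals the Artin factor $\gamma(s, \wedge^2 \circ \phi^{\GL_{2n}}_{\pi}, \psi)$ of the representation $\wedge^2 \circ \iota^{\GL_{2n}}_{\SO_{2n}} \circ \phi^{\SO_{2n}}_{\pi}$ of $L_F$. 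Here $\phi^{\GL_{2n}}_{\pi}$ is the standard parameter.

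Next we check independence of choices. Replacing $\phi^{\SO_{2n}}_{\pi}$ by an $\O_{2n}(\C)$-conjugate conjugates $\iota^{\GL_{2n}}_{\SO_{2n}} \circ \phi^{\SO_{2n}}_{\pi}$ by an element of $\GL_{2n}(\C)$. This does not change the isomorphism class of the $2n$-dimensional representation of $L_F$, nor that of its exterior square. So the $\gamma$-factor depends only on $[\phi^{\SO_{2n}}_{\pi}]_{\O_{2n}}$. This is exactly the observation already made before \cref{lemma:adjoint-orthogonal-wedge-standard}, that the standard parameter does not depend on the choice. It also refines \cref{remark:invariant-in-formal-degree-even-orthogonal}.

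Now suppose $\pi \sim_{st,w} \pi'$. By \cref{remark:stability-SO_2n}, this means $[\phi^{\SO_{2n}}_{\pi}]_{\O_{2n}} = [\phi^{\SO_{2n}}_{\pi'}]_{\O_{2n}}$. Hence the standard parameters coincide, and by the previous step the $\gamma$-factors coincide. Therefore the function $\pi \mapsto \gamma(s, \pi, \Ad_{\SO_{2n}}, \psi)$ descends to the quotient, and we may write it as $\gamma(s, \phi, \Ad_{\SO_{2n}}, \psi)$.

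The only potentially delicate point is that the parameter of an induced tempered representation is well-defined at all. This comes from compatibility with parabolic induction in \cref{theorem:local-Langlands-correspondence-SO2n}, which makes all constituents share the same class $[\phi]_{\O_{2n}}$, as in \cref{remark:reduction-to-G}. I expect no real obstacle beyond this bookkeeping.
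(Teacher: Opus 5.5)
Your argument is correct and matches the paper's reasoning: the paper states this as an immediate corollary of \cref{lemma:adjoint-orthogonal-wedge-standard}. The adjoint $\gamma$-factor is the $\gamma$-factor of $\wedge^2$ of the standard parameter, and that depends only on $[\phi^{\SO_{2n}}_{\pi}]_{\O_{2n}}$, which is exactly the datum recorded by $\sim_{st,w}$.
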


We now describe local behavior of the adjoint $\gamma$-factor for $\SO_{2n}(F)$.

\begin{lemma}\label{lemma:adjoint-L-explicit}
     We take a point $\phi_0 \in \Temp_{\ind}(\SO_{2n}(F))/\sim_{st, w}$.
     Then, there exist smooth functions $P(\phi)$ and $Q(\phi)$ defined in an open neighborhood $U_{\phi_0} \subset \Temp_{\ind}(\SO_{2n}(F))/\sim_{st, w}$ of $\phi_0$, such that $P(\phi)$ takes its values in $\R$, $Q(\phi_0) \neq 0$, and 
     \begin{align*}
         \gamma^{*}(0, \phi, \Ad_{\SO_{2n}}, \psi)  
         = 
         P(\phi)^2 Q(\phi)
     \end{align*}
     at almost all $\phi \in U_{\phi_0}$ with respect to $\rd \phi$.
\end{lemma}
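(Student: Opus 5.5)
Near a fixed point $\phi_0$ we parametrize explicitly. Write $\phi_0 = i^{\SO_{2n}}_{M}(\sigma_0)$ with $M = \GL_{n_1}\times\cdots\times\GL_{n_r}\times\SO_{2m}$ standard and $\sigma_0$ discrete. A neighborhood $U_{\phi_0}$ in $\Temp_{\ind}(\SO_{2n}(F))/\sim_{st,w}$ is the image of a small ball $\lambda \in i\fa^{*}_{M}$ near $0$, via $\lambda\mapsto \phi_{\lambda}=i^{\SO_{2n}}_M(\sigma_0\otimes\chi_\lambda)$, possibly modulo a finite stabilizer. The standard parameter is
\begin{align*}
    \phi^{\GL_{2n}}_{\lambda} = \bigoplus_{i=1}^r \bigl(\tau_i\abs{\cdot}^{\lambda_i} \oplus \tau_i^{\vee}\abs{\cdot}^{-\lambda_i}\bigr) \oplus \phi_{0}',
\end{align*}
where $\tau_i$ is the discrete parameter of the $\GL_{n_i}$-component and $\phi_0'$ is the (discrete, orthogonal, multiplicity-free) parameter of the $\SO_{2m}$-part. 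By \cref{lemma:adjoint-orthogonal-wedge-standard}, $\Ad\circ\phi_\lambda=\wedge^2\phi^{\GL_{2n}}_\lambda$. We expand this by additivity into a finite sum of terms of three kinds. Type (A) terms are $\tau_i\otimes\tau_j^{\vee}\abs{\cdot}^{\lambda_i-\lambda_j}$ together with their duals, and $\tau_i\otimes\tau_j\abs{\cdot}^{\lambda_i+\lambda_j}$ together with their duals, for $i\ne j$. Type (B) terms are $\tau_i\otimes\phi_0'\abs{\cdot}^{\pm\lambda_i}$, $\wedge^2\tau_i\abs{\cdot}^{\pm 2\lambda_i}$, and $\tau_i\otimes\tau_i^{\vee}$. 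Type (C) is the $\lambda$-independent term $\wedge^2\phi_0'$. Each term has $\gamma(s,\cdot,\psi)$ meromorphic in $(s,\lambda)$.

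The key step pairs every $\lambda$-dependent term $\rho\abs{\cdot}^{\mu}$ with its dual $\rho^{\vee}\abs{\cdot}^{-\mu}$. Such pairs always occur together because $\wedge^2$ of a self-dual representation is self-dual; the self-paired terms like $\tau_i\otimes\tau_i^\vee$ are constant in $\lambda$. For $\mu\in i\R$ we use the functional equation $\gamma(s,\rho,\psi)\gamma(1-s,\rho^{\vee},\psi)=\rho(-1)$. We also use that for tempered $\rho$ the factor $\epsilon$ is a nonvanishing entire function and $L(1\pm\mu,\cdot)$ is holomorphic and nonvanishing near $\mu\in i\R$. Together these give, near $s=0$,
\begin{align*}
    \gamma(s,\rho\abs{\cdot}^{\mu},\psi)\,\gamma(s,\rho^{\vee}\abs{\cdot}^{-\mu},\psi) = (\text{nonvanishing smooth})\cdot \frac{1}{L(s,\rho\abs{\cdot}^{\mu})L(s,\rho^\vee\abs{\cdot}^{-\mu})}.
\end{align*}
For unitary $\mu$, $L(s,\rho^{\vee}\abs{\cdot}^{-\mu})$ at $s=0$ is the complex conjugate of $L(0,\rho\abs{\cdot}^{\mu})$. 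So, generically in $\lambda$, the regularized value of the pair is $\abs{L(0,\rho\abs{\cdot}^{\mu})}^{-2}$ times a nonvanishing smooth function. Set $P(\phi)=\prod \abs{L(0,\rho\abs{\cdot}^{\mu})}^{-1}$ over representatives of the pairs. This is real-valued and smooth, since $1/L(0,\cdot)$ is a polynomial in $q^{-\mu}$. We collect all remaining factors into $Q$: the nonvanishing $\epsilon$- and $L(1,\cdot)$-parts, and the $\lambda$-independent contributions from $\tau_i\otimes\tau_i^\vee$ and $\wedge^2\phi_0'$, taken as regularized values.

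The main obstacle is the regularization $\gamma^{*}$. The order $n_\phi$ of the zero at $s=0$ jumps exactly on the measure-zero locus where some $L(s,\rho\abs{\cdot}^{\mu})$ acquires a pole at $0$. Off that locus (``almost all $\phi$''), the zero order comes only from the constant terms, so it is locally constant. There $\gamma^{*}$ is obtained by applying $\lim\zeta_F(s)^{n}$ only to the constant factors, whose regularized value is a fixed nonzero number. This gives $Q(\phi_0)\ne0$, and $Q$ is smooth because all its constituents are.

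We also need well-definedness on $\sim_{st,w}$-classes and under the finite Weyl stabilizer. These follow because $P^2Q$ is expressed through symmetric functions of the $\lambda_i$ that are invariant under $\lambda_i\mapsto-\lambda_i$ and under permutations of the blocks. If necessary, we shrink $U_{\phi_0}$ so that $P$ and $Q$ descend.
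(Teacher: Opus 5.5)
Your route is the paper's. You expand $\wedge^2\phi^{\GL_{2n}}_{\lambda}$ in a chart around $\phi_0$, observe that near $\phi_0$ the only factors vanishing at $s=0$ come from $\lambda$-dependent constituents containing the trivial character, and let the $\lambda$-independent constituents ($\tau_i\otimes\tau_i^{\vee}$, $\wedge^2\phi_0'$) carry the regularization. Pairing each constituent with its dual explains more cleanly than the paper's explicit enumeration why the vanishing part is a square.

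\textbf{Gap: your $P$ is not smooth.} $P(\phi)=\prod\abs{L(0,\rho\abs{\cdot}^{\mu})}^{-1}$ is real-valued, but your justification only shows that $1/L(0,\rho\abs{\cdot}^{\mu})$ is smooth. Its modulus is not smooth where it vanishes, and it vanishes at $\phi_0$ exactly in the cases that matter. For example, let $\rho$ contain $\mathbf{1}$ and let $\mu=it$ with $t$ a real linear form in $\lambda$, such as $t=(\lambda_i-\lambda_j)/i$ when $\tau_i=\tau_j$. Then the factor $\abs{1-q^{-it}}=2\abs{\sin(\tfrac{t\log q}{2})}$ behaves like $(\log q)\abs{t}$. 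It is therefore not differentiable along the hyperplane $t=0$ through $\phi_0$.

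\textbf{Repair.} Keep the square but do not take moduli. Near $\mu=0$, the only factors of $1/L(0,\rho\abs{\cdot}^{\mu})$ that can vanish are the factors $1-q^{-\mu}$, one for each copy of $\mathbf{1}$ in $\rho$. The other Frobenius eigenvalues and the $\SL_2(\C)$-shifted factors stay away from poles. Write $\abs{1-q^{-it}}^2=t^2h(t)$ with $h(t)=4\sin^2(\tfrac{t\log q}{2})/t^2$ smooth and positive. Put $h$ into $Q$, and let $P$ be the product of these real linear forms $t$. This is the paper's $P$: an explicit real polynomial in $\lambda$ built from the $\lambda_{k_i}$ and the $\lambda_{k_i^1}\pm\lambda_{k_i^2}$. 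Its polynomial nature is what is used later, in \cref{lemma:regularity}.

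\textbf{Descent.} The repaired $P$ lives on the $\lambda$-chart and is invariant under the finite stabilizer only up to sign (e.g.\ $\lambda_i-\lambda_j\mapsto\lambda_j-\lambda_i$). So you should drop your closing claim that $P$ descends. Only $P^2$, and hence $\gamma^*$, is a genuine function on $U_{\phi_0}$, which is all that is ever used; the paper likewise works on the chart.
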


Before proving this lemma, we give an application.

\begin{corollary}\label{corollary:adjoint-L-abs-value}
    In the notation of the previous lemma, there exists a smooth function $c(\phi)$ on $U_{\phi_0}$ which takes its value in $\bS^1$, and we have 
    \begin{align*}
        \abs{\gamma^{*}(0, \phi, \Ad_{\SO_{2n}}, \psi)}  
        = 
        c(\phi) P(\phi)^2 Q(\phi)
    \end{align*}
    at almost all $\phi \in U_{\phi_0}$ with respect to $\rd \phi$. 
    Indeed, $c(\phi)$ is given as the function 
    \begin{align*}
        c(\phi) = \frac{Q(\phi)}{\abs{Q(\phi)}}
    \end{align*}
    on $U_{\phi_0}$.
    In particular, the functions $\abs{\gamma^{*}(0, \phi, \Ad_{\SO_{2n}}, \psi)}$ and $c(\phi)$ coincide with smooth functions up to null set in $U_{\phi_0}$.
\end{corollary}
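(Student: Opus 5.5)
The statement to prove is \cref{corollary:adjoint-L-abs-value}. It follows formally from \cref{lemma:adjoint-L-explicit}, so the plan is short.

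On $U_{\phi_0}$, write
\begin{align*}
    \gamma^{*}(0, \phi, \Ad_{\SO_{2n}}, \psi) = P(\phi)^2 Q(\phi)
\end{align*}
for almost all $\phi$, as provided by \cref{lemma:adjoint-L-explicit}. Taking absolute values gives
\begin{align*}
    \abs{\gamma^{*}(0, \phi, \Ad_{\SO_{2n}}, \psi)} = P(\phi)^2 \abs{Q(\phi)} \quad \text{almost everywhere.}
\end{align*}
Here we use that $P$ is real-valued, so $\abs{P(\phi)^2} = P(\phi)^2$. This is the one place where the realness of $P$ in the lemma is essential.

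Since $Q(\phi_0)\neq 0$ and $Q$ is smooth, hence continuous, we may shrink $U_{\phi_0}$ to an open neighborhood on which $Q$ does not vanish. On this neighborhood $\abs{Q}$ is smooth: locally $\Temp_{\ind}(\SO_{2n}(F))/\sim_{st,w}$ is a quotient of a compact torus by a finite group, and $\abs{Q} = (Q\overline{Q})^{1/2}$ with $Q\overline{Q}>0$. Consequently $\abs{Q}/Q$ and $Q/\abs{Q}$ are smooth and $\bS^1$-valued.

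Now define the unimodular factor as the phase correcting $Q$ to $\abs{Q}$. Then
\begin{align*}
    \abs{\gamma^{*}(0, \phi, \Ad_{\SO_{2n}}, \psi)} = P(\phi)^2 \abs{Q(\phi)},
\end{align*}
which is a smooth function on $U_{\phi_0}$. This identifies $\abs{\gamma^*}$ with a smooth function off a null set.

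There is a sign-convention point to check. Literally, $\abs{Q} = \overline{c}\,Q$ when $c = Q/\abs{Q}$. So one should either read $c$ as $\abs{Q}/Q$, or equivalently note that $c^{-1}=\overline{c}$ is again smooth and $\bS^1$-valued. Either way the conclusion is unchanged: $\abs{\gamma^*}$ and the phase function agree almost everywhere with smooth functions on $U_{\phi_0}$.

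The only genuine input is \cref{lemma:adjoint-L-explicit}. The remaining steps need just two things: the nonvanishing of $Q$ on a smaller neighborhood, and the smoothness of $z\mapsto \abs{z}$ away from $0$. So there is no real obstacle.
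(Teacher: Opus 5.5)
Your argument is correct and is the immediate deduction from \cref{lemma:adjoint-L-explicit} that the paper intends; the paper gives no separate proof. The deduction is to take absolute values using that $P$ is real-valued, after shrinking $U_{\phi_0}$ so that $Q$ does not vanish and $\abs{Q}$ is smooth there. Your convention remark is also right: read literally, the identity needs $c=\abs{Q}/Q=\overline{Q}/\abs{Q}$ rather than $Q/\abs{Q}$ (the two coincide when $Q$ is real), and this is harmless because later only the smoothness of $c$ and the fact that it is $\bS^1$-valued are used.
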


\begin{proof}[The proof of \cref{lemma:adjoint-L-explicit}]
    The standard $L$-parameter $\phi_0^{\GL_{2n}}$ for $\phi_0$ can be written as 
    \begin{align}
        \bigoplus_{i \in I} (\phi_i \oplus \phi_i^{\vee})^{\oplus m_i} 
        \oplus 
        \bigoplus_{j \in J} \phi_j,
    \end{align}
    where 
    \begin{itemize}
        \item    
        $I, J$ are totally ordered finite sets.
        \item     
        $\phi_i \in \Phi_{\disc}(\GL_{k_i})$ and $\phi_j \in \Phi_{\disc}(\GL_{l_j})$ and $\sum_{i} 2k_i + \sum_{j} l_j = 2n$.
        Furthermore, $\phi_j$ is self-dual and orthogonal.
        \item     
        For distinct $i_1, i_2 \in I$, we have 
        \begin{align*}
            \phi_{i_1} \neq \phi_{i_2}, \phi_{i_2}^{\vee}.
        \end{align*}
        For distinct $j_1, j_2 \in J$,  we have 
        \begin{align*}
            \phi_{j_1} \neq \phi_{j_2}.
        \end{align*}
    \end{itemize}

    The standard $L$-function of any point in a sufficiently small open neighborhood is given by 
    \begin{align}
        \phi^{\GL_{2n}}_{\lambda} =
        \bigoplus_{i \in I} 
        \lbrace
        \bigoplus_{k_i = 1}^{m_i} (\phi_i \otimes \chi_{i\lambda_{k_i}} \oplus \phi_i^{\vee} \otimes \chi_{-i\lambda_{k_i}})
        \rbrace
        \oplus 
        \bigoplus_{j \in J} \phi_j, \label{eq:parameter-standard}
    \end{align}
    for sufficiently small $\lambda_{k_i} \in \R$ and we set $\lambda = ((\lambda_{k_i})_{k_i=1}^{m_i})_{i \in I}$ and $\chi_{s}(w) = \abs{w}_F^s$ for $w \in L_F$.
    We will compute 
    \begin{align*}
        \wedge^2 \phi^{\GL_{2n}}_{\lambda}.
    \end{align*}
    We set 
    \begin{align*}
        \psi_i = \bigoplus_{k_i = 1}^{m_i} (\phi_i \otimes \chi_{i\lambda_{k_i}} \oplus \phi_i^{\vee} \otimes \chi_{-i\lambda_{k_i}}),
    \end{align*}
    for $i \in I$.
    Then, we can see that 
    \begin{align}
        \wedge^2 \phi^{\GL_{2n}}_{\lambda} 
        &= 
        \wedge^2 (\bigoplus_{i \in I} \psi_i \oplus \bigoplus_{j \in J} \phi_j) \\
        &= 
        \wedge^2(\bigoplus_{i \in I} \psi_i ) 
        \oplus 
        \wedge^2(\bigoplus_{j \in J} \phi_j)
        \oplus   
        (\bigoplus_{i \in I} \psi_i) \otimes (\bigoplus_{j \in J} \phi_j). \label{eq:parameter-wedge}
    \end{align}
    We have obtained the three terms. 

    The first term of \cref{eq:parameter-wedge} is equal to 
    \begin{align}
        \wedge^2(\bigoplus_{i \in I} \psi_i)  
        = 
        \bigoplus_{i \in I} \wedge^2 \psi_i 
        \oplus   
        \bigoplus_{(i_1, i_2) \in I^2, i_1 < i_2} \psi_{i_1} \otimes \psi_{i_2}.
    \end{align}
    We also have 
    \begin{multline}
        \wedge^2 \psi_i  
        =
        \bigoplus_{k_i=1}^{m_i} 
        \lbrace
        (\wedge^2\phi_i) \otimes \chi_{2i\lambda_{k_i}} \oplus 
        (\wedge^2\phi_i^{\vee}) \otimes \chi_{-2i\lambda_{k_i}}  
        \oplus 
        (\phi_i \otimes \phi_i^{\vee})
        \rbrace
        \\
        \oplus   
        \bigoplus_{1 \leq k_i^{1} < k_i^{2} \leq m_i} 
        (\phi_i \otimes \chi_{i\lambda_{k^{1}_i}} \oplus \phi_i^{\vee} \otimes \chi_{-i\lambda_{k^{1}_i}})
        \otimes 
        (\phi_i \otimes \chi_{i\lambda_{k^{2}_i}} \oplus \phi_i^{\vee} \otimes \chi_{-i\lambda_{k^{2}_i}}).
    \end{multline}
    
    The second term of \cref{eq:parameter-wedge} is equal to 
    \begin{align}
        \wedge^2 (\bigoplus_{j \in J} \phi_j) 
        = 
        \bigoplus_{j \in J}  \wedge^2 \phi_j 
        \oplus  
        \bigoplus_{(j_1, j_2) \in J^2, j_1 < j_2} \phi_{j_1} \otimes \phi_{j_2}.
    \end{align}

    The third term of \cref{eq:parameter-wedge} is equal to
    \begin{align}
        \bigoplus_{(i, j) \in I \times J} 
        \bigoplus_{k_i = 1}^{m_i}
        \lbrace
        (\phi_i \otimes \chi_{i\lambda_{k_i}} \oplus \phi_i^{\vee} \otimes \chi_{-i\lambda_{k_i}}) \otimes \phi_j
        \rbrace
        .
    \end{align}

    From now on, we will ignore null sets with respect to $\rd \phi$.
    We have 
    \begin{align*}
        \gamma^*(0, \phi, \Ad_{\SO_{2n}}, \psi)   
        =   
        \lim_{s \to 0+} 
        \gamma(s, \mathbf{1}, \psi)^{n_{\phi_0}}   
        \gamma(s, \phi, \Ad_{\SO_{2n}}, \psi),
    \end{align*}
    where 
    \begin{align*}
        n_{\phi_0}  
        = 
        \sum_{i \in I} m_i.
    \end{align*}

    We now consider the regularized standard $\gamma$-factor for each term of \cref{eq:parameter-wedge}.
    We introduce some notation.
    \begin{itemize}
        \item  
        Let $I_o$ be the subset of $I$ consisting of $i \in I$ such that $\gamma(0, \wedge^2 \phi_i, \psi) \neq 0$.   
        \item  
        Let $I_s$ be the subset of $I$ consisting of $i \in I$ such that $\gamma(0, \wedge^2 \phi_i, \psi) = 0$.
        \item     
        Let $I_{nd}$ be the subset of $I$ consisting of $i \in I$ such that $\phi_i$ is not self-dual.
    \end{itemize}

    The regularized $\gamma$-factor for the first term can be written as 
    \begin{align*}
        \{
        \prod_{i \in I_s} \prod_{k_i = 1}^{m_i} 4\lambda_{k_i}^2  
        \prod_{i \in I_s \sqcup I_o} 
        \prod_{k_i^1 < k_i^2}
        (\lambda_{k^1_i}^2 - \lambda_{k^2_i}^2 )^2
        \prod_{i \in I_{nd}} 
        \prod_{k_i} \lambda_{k_i}^2 \}
        \times
        Q_1(\lambda),
    \end{align*}
    for a function $Q_1(\lambda)$ non-vanishing at $\lambda = 0$.
    The $\gamma$-factor for the second term is smooth and nonzero at $\lambda = 0$, which we write $Q_2(\lambda)$.
    The $\gamma$-factor for the third term can be written as 
    \begin{align*}
        \prod_{(i, j) \in I \times J, \phi_i = \phi_j} 
        \prod_{k_i = 1}^{m_i}    
        \lambda_{k_i}^2 
        \times Q_3(\lambda),
    \end{align*}
    with a function $Q_3(\lambda)$ non-vanishing at $\lambda = 0$.
    Thus, we have
    \begin{multline}
        \{
        \prod_{i \in I_s} \prod_{k_i = 1}^{m_i} 2\lambda_{k_i}
        \prod_{i \in I_s \sqcup I_o} 
        \prod_{k_i^1 < k_i^2}
        (\lambda_{k^1_i}^2 - \lambda_{k^2_i}^2 ) \\
        \times
        \prod_{i \in I_{nd}} 
        \prod_{k_i} \lambda_{k_i}
        \prod_{(i, j) \in I \times J, \phi_i = \phi_j} 
        \prod_{k_i = 1}^{m_i}    
        \lambda_{k_i}
        \}^2  \\
        \times Q_1(\lambda)Q_2(\lambda)Q_3(\lambda).
        \label{eq:adjoint-L-explicit}
    \end{multline}
    and thus it has the form 
    \begin{align*}
        P(\lambda)^2 Q(\lambda)
    \end{align*}
    as in the statement of this lemma.
\end{proof}

\begin{remark}\label{remark:standard-poles}
    We describe the standard $\gamma$-factor $\gamma(s, \phi^{\GL_{2n}}_{\lambda}, \psi)$ by using the notation in \cref{eq:parameter-standard}.
    Let $I_{\triv}$ (resp. $J_{\triv}$) be the subset of $I$ (resp. $J$) consisting of $i \in I$ (resp. $j \in J$) with $\phi_i = \mathbf{1}$ (resp. $\phi_j = \mathbf{1}$).
    Note that $\abs{J_{\triv}} \leq 1$.
    Then, there exists a smooth function $R(s, \lambda)$ which is nonzero at $(0, 0) \in \C_{+} \times \R^{\sum_{i \in I} m_i}$, such that $\gamma(s, \phi^{\GL_{2n}}_{\lambda}, \psi)$ is equal to
    \begin{align*}
         s^{\abs{J_{\triv}}}
         \times 
         \prod_{i \in I_{\triv}} \prod_{k_i = 1}^{m_i} (\lambda_{k_i}^2 + s^2)
         \times    
         R(s, \lambda).
    \end{align*}
    We set $\gamma(s, \mathbf{0}, \psi) = 1$.
    Then, we have $I_{\triv} \subset I_{o}$ in the notation in the proof of \cref{lemma:adjoint-L-explicit}.
\end{remark}

\section{$\PGSO_8$ with triality}\label{section:6}

\subsection{The extension of representations of $\PGSO_8$ to $\PGSO_8$ with the triality automorphism} 

By \cref{definition:automorphism-of-D4}, we have the group $\PGSO_8(F) \rtimes S_3$.

\begin{lemma}
    We have $H^2(S_3, \C^{\times}) = 0$.
\end{lemma}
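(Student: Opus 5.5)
We will show that the Schur multiplier of $S_3$ vanishes, where $\C^{\times}$ carries the trivial $S_3$-action. The plan is to reduce to Sylow subgroups, using that $\C^{\times}$ is divisible, so $H^2(S_3,\C^{\times})$ is a torsion group killed by $\abs{S_3}=6$.

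First we will recall that for a finite group $H$ and $i\ge 1$, the group $H^i(H,\C^{\times})$ is annihilated by $\abs{H}$. Consequently it decomposes into its $p$-primary parts for $p=2,3$. For each prime $p$ with Sylow subgroup $P$, the restriction map $H^2(S_3,\C^{\times})_{(p)}\to H^2(P,\C^{\times})$ is injective, since corestriction composed with restriction is multiplication by the index $[S_3:P]$, which is prime to $p$.

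It therefore suffices to show $H^2(C,\C^{\times})=0$ for cyclic $C=\Z/n\Z$ with $n=2,3$. For cyclic groups with trivial action, periodic cohomology gives
\begin{align*}
H^2(C,\C^{\times})\cong \hat H^0(C,\C^{\times})=(\C^{\times})^{C}/N(\C^{\times})=\C^{\times}/(\C^{\times})^n.
\end{align*}
This is trivial because $\C$ is algebraically closed, so every element of $\C^{\times}$ is an $n$-th power. Combining this with the injectivity of restriction, both primary parts of $H^2(S_3,\C^{\times})$ vanish, and hence so does the whole group.

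There is no real obstacle here. The only points needing care are the transfer argument and the fact that the action on $\C^{\times}$ is trivial, which is the setting relevant for extending projective representations of $\PGSO_8(F)\rtimes S_3$. As an alternative, one could cite the standard fact that the Schur multiplier $H^2(S_n,\C^{\times})$ is $\Z/2\Z$ for $n\ge 4$ and trivial for $n\le 3$.
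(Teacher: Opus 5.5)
Your proof is correct, but it takes a different route from the paper. The paper's proof is the one line ``this follows from the Lyndon--Hochschild--Serre spectral sequence,'' presumably applied to $1 \to A_3 \to S_3 \to \Z/2\Z \to 1$. In that spectral sequence the three terms contributing to $H^2$ all vanish:
\begin{itemize}
\item $E_2^{2,0}=H^2(\Z/2\Z,\C^{\times})$ vanishes as a cyclic-group computation;
\item $E_2^{1,1}=H^1(\Z/2\Z,H^1(A_3,\C^{\times}))$ vanishes because $H^1(A_3,\C^{\times})\cong\Z/3\Z$ has order coprime to $2$;
\item $E_2^{0,2}=H^2(A_3,\C^{\times})^{\Z/2\Z}$ vanishes, again as a cyclic-group computation.
\end{itemize}

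Your Sylow/transfer argument needs no spectral sequence and no knowledge of how $S_3$ is built as a semidirect product. It only uses that $H^2(S_3,\C^{\times})$ is killed by $6$ and that restriction to a Sylow $p$-subgroup is injective on $p$-primary parts. It therefore proves the more general fact that any finite group with cyclic Sylow subgroups has trivial Schur multiplier.

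The LHS route exploits the specific extension structure of $S_3$ and is equally short. Both arguments ultimately reduce to the same input: $H^2(\Z/n\Z,\C^{\times})\cong\C^{\times}/(\C^{\times})^n=1$, which holds because $\C^{\times}$ is divisible.

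Your remark that the action on $\C^{\times}$ is trivial is the right one. That is exactly the setting of the cocycle $c(a,b)$ in the subsequent corollary on extending $S_3$-invariant representations.
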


\begin{proof}
    This follows from the Lyndon-Hochschild-Serre spectral sequence.
\end{proof}

\begin{corollary}\label{corollary:extension-to-S_3}
    If an irreducible representation $\pi$ of $\PGSO_8(F)$ satisfies the condition 
    \begin{align*}
        \pi^{a} \xrightarrow{\sim} \pi 
    \end{align*} 
    for any $a \in S_3$ (i.e., $\pi$ is $S_3$-invariant), the representation $\pi$ has an extension $\widehat{\pi}$ to $\PGSO_8(F) \rtimes S_3$ with the same underlying vector space. 
\end{corollary}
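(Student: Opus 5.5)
The plan is the standard Clifford-theoretic argument: use Schur's lemma to get a projective representation of $S_3$, then use $H^2(S_3, \C^{\times}) = 0$ (the preceding lemma) to rescale it into a genuine representation.

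First we construct the intertwiners. For each $a \in S_3$, the hypothesis $\pi^{a} \cong \pi$ gives a nonzero operator $A_a \colon V_\pi \to V_\pi$ with
\begin{align*}
    A_a \, \pi(a^{-1}(g)) = \pi(g) \, A_a \quad \text{for all } g \in \PGSO_8(F),
\end{align*}
or the equivalent convention matching the semidirect product structure $(g,a)(g',a') = (g\, a(g'), aa')$. Since $\pi$ is irreducible and smooth admissible, Schur's lemma holds (the group is countable at infinity, so $\End_{\PGSO_8(F)}(\pi) = \C$). Hence each $A_a$ is unique up to a scalar, and we normalize $A_1 = \id$.

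Next we extract the cocycle. The operator $A_aA_b$ intertwines $\pi$ with $\pi$ twisted by $ab$, and so does $A_{ab}$. By Schur's lemma, $A_aA_b = c(a,b)A_{ab}$ with $c(a,b) \in \C^{\times}$. Associativity of operator composition shows that $c$ is a $2$-cocycle on $S_3$ with values in $\C^{\times}$, where the action on $\C^\times$ is trivial. Changing the choices $A_a \mapsto \lambda_a A_a$ changes $c$ by a coboundary.

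Then we trivialize it. Since $H^2(S_3,\C^{\times}) = 0$, there are scalars $\lambda_a$ with $c(a,b) = \lambda_{ab}\lambda_a^{-1}\lambda_b^{-1}$. Replacing $A_a$ by $\lambda_a A_a$ gives $A_aA_b = A_{ab}$. We then define
\begin{align*}
    \widehat{\pi}(g \rtimes a) = \pi(g)A_a
\end{align*}
on the same space $V_\pi$. A direct check using the intertwining relation shows that this is a homomorphism. Smoothness holds because $\PGSO_8(F)$ is open of finite index in $\PGSO_8(F) \rtimes S_3$.

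There is no serious obstacle. The only points needing care are the sign and ordering conventions in the intertwining relation, and Schur's lemma for smooth irreducible representations, which is standard. If needed, one can avoid the Lyndon--Hochschild--Serre computation entirely: the Schur multiplier of $S_3$ vanishes because its Sylow subgroups are cyclic.
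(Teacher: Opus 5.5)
Your proposal is correct and follows essentially the same route as the paper: use Schur's lemma to fix intertwiners, obtain a $\C^{\times}$-valued $2$-cocycle on $S_3$, and trivialize it using $H^2(S_3,\C^{\times})=0$. You also spell out details the paper leaves implicit, namely the formula $\widehat{\pi}(g\rtimes a)=\pi(g)A_a$ and smoothness, and your cyclic-Sylow argument is a valid alternative to the paper's Lyndon--Hochschild--Serre computation of $H^2(S_3,\C^{\times})$.
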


\begin{proof}
    For any $a \in S_3$, we fix $I_a \colon \pi^a \xrightarrow{\sim} \pi$.
    Then, for any $a, b \in S_3$, we have 
    \begin{align*}
        I_{ab} = c(a, b) I_{a} I_{b} \colon \pi^{ab} \to \pi^a \to \pi,
    \end{align*}
    for a constant $c(a, b) \in \C^{\times}$.
    The function $c \colon S_3 \times S_3 \to \C^{\times}$ defines a $2$-cocycle on $S_3$ valued in $\C^{\times}$.
    This proves the result.
\end{proof}

\begin{remark}
    If $\pi$ is unitary in the previous corollary, then $\widehat{\pi}$ can be taken as a unitary representation.
\end{remark}

\begin{lemma}\label{lemma:canonical-extension-to-A3}
    Let $(\pi, V)$ be an irreducible representation of $\PGSO_8(F)$ which is $S_3$-invariant. 
    Then, the restriction of any extension $\widehat{\pi}$ as in \cref{corollary:extension-to-S_3} to the group $\PGSO_8(F) \rtimes A_3$ is uniquely determined by $\pi$.
\end{lemma}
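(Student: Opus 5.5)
Any two extensions of $\pi$ to $\PGSO_8(F) \rtimes S_3$ on the same space $V$ differ by a character of $S_3$. The restriction to $\PGSO_8(F) \rtimes A_3$ should then be unique because every character of $S_3$ is trivial on $A_3$. I will make this precise via Schur's lemma.

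First, let $\widehat{\pi}_1, \widehat{\pi}_2$ be two extensions of $\pi$ as in \cref{corollary:extension-to-S_3}, both acting on $V$. Fix $a \in S_3$. The operators $\widehat{\pi}_1(a)$ and $\widehat{\pi}_2(a)$ both intertwine $\pi^a$ (or ${}^a\pi$, depending on convention) with $\pi$. Hence $\widehat{\pi}_2(a)^{-1}\widehat{\pi}_1(a)$ commutes with $\pi(\PGSO_8(F))$. Since $\pi$ is irreducible and admissible, Schur's lemma gives a scalar $\chi(a) \in \C^{\times}$ with $\widehat{\pi}_1(a) = \chi(a)\widehat{\pi}_2(a)$.

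Next, compare the two homomorphism identities $\widehat{\pi}_k(ab) = \widehat{\pi}_k(a)\widehat{\pi}_k(b)$ for $k = 1, 2$. Because scalars commute with everything, this yields $\chi(ab) = \chi(a)\chi(b)$, so $\chi$ is a character of $S_3$. The abelianization of $S_3$ is $S_3/A_3 \cong \Z/2\Z$, so $\chi$ is trivial on $A_3$. Therefore $\widehat{\pi}_1$ and $\widehat{\pi}_2$ agree on $A_3$, and they agree on $\PGSO_8(F)$ by construction. Since these two subgroups generate $\PGSO_8(F) \rtimes A_3$, the restrictions coincide.

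The argument is essentially formal. The only point needing care is that Schur's lemma applies in the smooth setting. This holds because $\pi$ is irreducible smooth (hence admissible) and $V$ has countable dimension. I do not expect a substantive obstacle.
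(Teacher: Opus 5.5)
Your proof is correct and is essentially the paper's argument: both use Schur's lemma to reduce the ambiguity between two extensions to scalars, and the paper then checks by hand (each involution is determined up to $\pm 1$, and the Coxeter relation $(\sigma_1\sigma_2)^3=1$ forces the two signs to agree) that this ambiguity cancels on the generator $\sigma_1\sigma_2$ of $A_3$. Your observation that the discrepancy is a character of $S_3$, hence trivial on $[S_3,S_3]=A_3$, packages the same computation more cleanly.
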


\begin{proof}
    Let $\widehat{\pi}$ be an extension.
    Then, for any involution $a$ in $S_3$, the operator $\widehat{\pi}(a) \colon \pi \to \pi^{a}$ is determined up to $\pm 1$, since the order of $a$ is two and by Schur's lemma.
    Let $\sigma_1, \sigma_2$ be generators of $S_3$ with the relation $(\sigma_1 \sigma_2)^3 = 1$.
    Then, the possible choice of the pair $\widehat{\pi}(\sigma_1), \widehat{\pi}(\sigma_2)$ is determined up to $\pm 1$ by the Coxeter relation.
    In each case, we obtain the same operator $\widehat{\pi}(\sigma_1)\widehat{\pi}(\sigma_2)$ as $(\pm 1)^2=1$.
    This completes the proof.
\end{proof}

\begin{definition}
    Let $(\pi, V)$ be an irreducible representation of $\PGSO_8(F)$ which is $S_3$-invariant.  
    We call the restriction of any extension $\widehat{\pi}$ as in \cref{corollary:extension-to-S_3} to the group $\PGSO_8(F) \rtimes A_3$, the canonical extension of $\pi$ to $\PGSO_8(F) \rtimes A_3$.
    We write it as $\widetilde{\pi}$.
\end{definition}

\subsection{The canonical extension for functorial lifts from $G_2$}

\begin{proposition}\label{proposition:invariance-S3}
    Let $\phi^{G_2} \in \Phi_{\temp}(G_2)$ be a tempered $L$-parameter for $G_2$.
    Let $\Pi^{X}_{\phi^{G_2}}$ be the corresponding distinguished Xu $L$-packet.
    Then, any representation $\pi \in \Pi^{X}_{\phi^{G_2}}$ is invariant under the action of $S_3$ and it has the canonical extension $\widetilde{\pi}$ to $\PGSO_8(F) \rtimes A_3$. 
\end{proposition}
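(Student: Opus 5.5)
We need to show every $\pi$ in the distinguished Xu packet $\Pi^X_{\phi^{G_2}}$ satisfies ${}^{a}\pi \cong \pi$ for all $a \in S_3$. The second statement then follows from \cref{corollary:extension-to-S_3} and \cref{lemma:canonical-extension-to-A3}. The plan has three steps.

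\emph{Step 1: the packet is $S_3$-stable.} Since $S_3$ is generated by $s_1$ and $s_6$, it suffices to show that each of ${}^{s_1}\pi$ and ${}^{s_6}\pi$ lies again in the distinguished packet attached to $\phi^{G_2}$.

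The automorphism $s_6$ fixes $\beta_4$ and swaps $\beta_1,\beta_3$. Up to inner automorphisms it is the outer automorphism coming from $\GO_8 \setminus \GSO_8$ (fixing the standard-representation node). Hence ${}^{s_6}\pi$ and $\pi$ have the same image in $\Phi(\GSO_8)_{/\O_8\times\GL_1}$, so $\Std_{\PGSO_8}\circ\phi^{\PGSO_8}_{{}^{s_6}\pi} = \Std_{\PGSO_8}\circ\phi^{\PGSO_8}_{\pi}$. One must also control the other condition in \cref{eq:condition-distinguished-packet}. For this, use that $s_1 s_6 s_1^{-1}$ is the transposition fixing the node permuted to position $\beta_4$ by $s_1$. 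Then ${}^{s_1}({}^{s_6}\pi)$ is conjugate, up to a $\GO_8$-outer twist, to ${}^{s_1 s_6 s_1}\,{}^{s_1}\pi$, and its standard parameter is obtained from that of ${}^{s_1}\pi$ or ${}^{s_6 s_1}\pi$.

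The cleanest route is through Kret--Shin parameters. For $\pi$ discrete, \cref{proposition:construction-of-distinguished-Xu-packet}(2) says the unique Kret--Shin parameter is $\iota^{\PGSO_8}_{G_2}\circ\phi^{G_2}$. Its image lies in ${}^LG_2 = \Spin_8(\C)^{S_3}\times W_F$. Therefore for every $a\in S_3$, $\Std\circ\widehat{a}\circ\iota_{G_2}\circ\phi^{G_2} = \Std\circ\iota_{G_2}\circ\phi^{G_2}$, which equals $\phi^{G_2}\oplus\mathbf 1$ in $\Phi(\GL_8)$. Combined with the defining property of Kret--Shin parameters (and the analogue of \cref{proposition:Kret--Shin-parameters} applied to ${}^{a}\pi$), this gives $\Std\circ\phi^{\PGSO_8}_{{}^{a}\pi} = \phi^{G_2}\oplus \mathbf 1$ and $\Std\circ\phi^{\PGSO_8}_{{}^{s_1 a}\pi} = \phi^{G_2}\oplus\mathbf 1$. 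Both have a zero of $\gamma(0,\cdot,\psi)$ because of the trivial summand. So ${}^{a}\pi$ satisfies \cref{eq:condition-distinguished-packet}, and by \cref{proposition:construction-of-distinguished-Xu-packet} together with \cref{lemma:injectivity-G_2-to-GL7}, $\cL_{\dist}({}^{a}\pi)=\phi^{G_2}$. The non-discrete case runs the same way using the canonical parameters of \cref{remark:Kret--Shin-not-discrete-tempered}, which are compatible with parabolic induction since Levi subgroups are products of $\GL$'s modulo a split torus.

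\emph{Step 2: the action on the packet is trivial.} Now $S_3$ permutes the finite set $\Pi^X_{\phi^{G_2}}$, and we must show the permutation is trivial. The generic member is preserved: $a$ fixes the chosen splitting $\mathbf{spl}_{\PGSO_8}$, so it preserves the Whittaker datum $\fw$ (with $\psi$). Hence ${}^{a}\pi_{\mathrm{gen}}$ is $\fw$-generic and lies in the same packet, and uniqueness of the generic member (\cite{Var17}) gives ${}^{a}\pi_{\mathrm{gen}}\cong\pi_{\mathrm{gen}}$.

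For the other members, I would transport the question to $G_2$ via the bijection of theta correspondences. The map $\pi\mapsto\pi^{G_2}$ of \cref{proposition:construction-of-distinguished-Xu-packet}(1) is a bijection from $\Pi^X_{\phi^{G_2}}$ onto $\Pi_{\phi^{G_2}}(G_2)$, by \cref{theorem:similitude-theta}(c) and \cref{theorem:exceptional-theta}(c). If one shows $({}^{a}\pi)^{G_2}=\pi^{G_2}$, injectivity finishes the argument.

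For $a = s_6$ this follows because the similitude theta lift to $\PGSO_8$ of a representation of $\PGSp_6$ extends to $\PGO_8$, so it is $s_6$-invariant. For $a=s_1$, I would use the symmetric characterization in \cref{eq:condition-distinguished-packet}: the representation ${}^{s_1}\pi$ is again the $\theta$-lift of some $\pi'$, and one compares component groups. The element $\iota_{\fw}(\pi)$ is a character of $\pi_0(S_{\phi})$ for the $\PGSO_8$-parameter, on which $S_3$ acts through $\widehat{a}$. The centralizer of $\iota_{G_2}\circ\phi^{G_2}$ is $S_3$-stable and contains $\mathrm{Cent}_{G_2(\C)}(\phi^{G_2})$, on which $S_3$ acts trivially. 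If the enhanced LLC for $\PGSO_8$ is $S_3$-equivariant, this yields invariance.

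\emph{Main obstacle.} The key difficulty is the non-generic members under $s_1$. Xu's correspondence lacks full $S_3$-equivariance, and the theta towers break the triality symmetry. I would first try a count: $S_3$ permutes the packet fixing the generic member, and the packet has size $|\Irr(\pi_0(S_\phi))|$ with $\pi_0(S_\phi)$ small ($\le S_3$ for $G_2$-type parameters, from Gan--Savin). A case-by-case check using ${}^{s_6}\pi\cong\pi$ and $s_6$-conjugates of $s_1$ would then force triviality.
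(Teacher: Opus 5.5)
\textbf{There is a genuine gap.} Your proposal never proves $s_1$-invariance of the non-generic members, and you flag this yourself as the main obstacle. For it you offer only two things. The first is an $S_3$-equivariance of the enhanced correspondence for $\PGSO_8$, which is not available; Xu's correspondence contains no such statement. The second is a counting argument that you do not carry out. It cannot work on its own: an $S_3$-action on a packet that fixes the generic member can still be nontrivial, for instance with transpositions swapping two non-generic members through the sign character.

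There is also a labeling error that hides the fix. In the paper's conventions the standard node is $\beta_1$ (in the explicit model $\beta_1=f_1-f_2$ and $\beta_4=f_3+f_4$). The nontrivial element of $\O_8(F)/\SO_8(F)$ acts as $\theta=s_1s_6s_1$, the transposition that fixes $\beta_1$ and swaps $\beta_3,\beta_4$. The transposition $s_6$ moves $\beta_1$. So your claim that ``the theta lift extends to $\PGO_8$, hence is $s_6$-invariant'' actually gives $\theta$-invariance, not $s_6$-invariance.

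\textbf{The missing idea} is to apply $\O_8$-invariance to ${}^{s_1}\pi$ as well as to $\pi$. Let $\pi'$ be tempered with $\gamma(0,\Std\circ\phi^{\PGSO_8}_{\pi'},\psi)=0$. Then its standard parameter contains $\mathbf{1}$, so it is centralized by the reflection in that line. The equivalence (i)$\Leftrightarrow$(ii) of \cref{theorem:local-Langlands-correspondence-GSO2n}(c), with $\Sigma=\theta$ and $\omega=1$, then gives ${}^{\theta}\pi'\cong\pi'$. The two lines of \cref{eq:condition-distinguished-packet} say exactly that this applies both to $\pi'=\pi$ and to $\pi'={}^{s_1}\pi$. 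Hence ${}^{s_1s_6s_1}\pi\cong\pi$, and also ${}^{s_1}\pi\cong{}^{s_1s_6s_1}({}^{s_1}\pi)={}^{s_1s_6}\pi$, that is, ${}^{s_6}\pi\cong\pi$. The distinct transpositions $s_6$ and $s_1s_6s_1$ generate $S_3$. The canonical extension then follows from \cref{corollary:extension-to-S_3} and \cref{lemma:canonical-extension-to-A3}.

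This is the paper's proof. It needs no packet-stability step, Kret--Shin parameters, generic members, theta bijections, or component-group counts. Alternatively, your own Step 1 already finishes the job once the labeling is corrected. The kernel of the permutation action of $S_3$ on $\Pi^X_{\phi^{G_2}}$ is a normal subgroup containing the transposition $\theta$, so it is all of $S_3$.
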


\begin{proof}
    We first note that the action of nontrivial element in the coset $\O_{2n}(F)/\SO_{2n}(F)$ is equal to $s_1s_6s_1$.
    The equivalence between (i) and (ii) in \cref{theorem:local-Langlands-correspondence-GSO2n}(c) and the condition \cref{eq:condition-distinguished-packet} imply that $\pi$ and ${}^{s_1}\pi$ are invariant under the action of $s_1 s_6 s_1$. 
    Thus, we have 
    \begin{align*}
        \begin{cases}
            \pi \xrightarrow{\sim} {}^{s_1s_6s_1}\pi, \\ 
            {}^{s_1}\pi \xrightarrow{\sim} {}^{s_1s_6}\pi.
        \end{cases}
    \end{align*}
    The second condition implies that we have $\pi \xrightarrow{\sim} {}^{s_6}\pi$.
    The elements $s_6$ and $s_1s_6s_1$ generate $S_3$ as $(s_1s_6)^3 = 1$ and thus we obtained the result.
\end{proof}

\section{On normalization of intertwining operators}\label{section:7}

\subsection{On central morphisms}

Let $\bG$ and $\bG'$ be reductive groups over $F$.
Assume that we have a central morphism $p \colon \bG' \to \bG$ which induces the isogeny $\bG'_{\der} \to \bG_{\der}$.

\begin{lemma}\label{lemma:central-morphism-image-closed} 
    The image $p(G') \subset G$ is closed with respect to the $p$-adic topology.
\end{lemma}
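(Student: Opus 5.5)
The plan is to show that $p(G')$ is an open subgroup of $G$ of finite index. An open subgroup of a topological group is automatically closed, since its complement is a union of open cosets, so this suffices. The argument splits into an openness statement and a finite-index statement; the finite-index part is the one I expect to be harder.

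For openness, I would use that $p$ is central and induces an isogeny on derived groups. Together with the standing assumption of faithful flatness, as in \cref{lemma:canonical-isom-of-quotient-Levi}, this makes $p$ smooth or at least surjective on Lie algebras. Concretely, $\Lie(\bG') \to \Lie(\bG)$ should be surjective in characteristic $0$: the derived parts map isomorphically, since an isogeny in characteristic $0$ is étale, and the central tori map onto the central torus of $\bG$ by faithful flatness. The $p$-adic implicit function theorem then shows that $p \colon G' \to G$ is open near the identity, so $p(G')$ contains a neighbourhood of $1$. Hence $p(G')$ is an open subgroup of $G$.

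Closedness already follows from openness. Still, I would also record that $p(G')$ has finite index, since later sections use the finiteness of $\cC_G = G/p(G')$. For this I would invoke the exact sequence in \cref{lemma:facts-in-abelian-Galois-cohomology}(3). It embeds $G/p(G')$ into $H^1(F, \Ker(p))$. When $\Ker(p)$ is a finite group scheme or a torus, $H^1(F, \Ker(p))$ is finite because $F$ is a $p$-adic field. In the general case I would dévissage the diagonalizable group $\Ker(p)$ into a torus and a finite part. Alternatively, I would argue directly: $G/p(G')$ is a discrete abelian group, because $p(G')$ is normal (it contains the derived subgroup image and $p$ is central), and it is a quotient of $\cC_{M_0}$ by \cref{lemma:canonical-isom-of-quotient-Levi}. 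This reduces finiteness to the case of the torus $M_0$, where it follows from the finiteness of $H^1$ of the kernel.

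The main obstacle is justifying surjectivity on Lie algebras, and hence openness, when $\Ker(p)$ is not finite: one needs $\bG' \to \bG$ to be smooth. Here I would use that in characteristic $0$ every morphism of smooth groups that is surjective on $\overline{F}$-points is smooth at the generic point, hence everywhere by homogeneity. Surjectivity on $\overline{F}$-points comes from faithful flatness.
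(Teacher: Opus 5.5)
Your argument proves the lemma only when $p$ is surjective, i.e.\ faithfully flat, and that is not a hypothesis here. In Section 7 the standing assumption is only that $p$ is a central morphism inducing an isogeny $\bG'_{\der}\to\bG_{\der}$. This is the Labesse--Schwermer notion, which the paper also applies to the inclusion $\SO_4\hookrightarrow\GSO_4$. The faithful flatness you invoke belongs to the setup of Section 5, not to this lemma.

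For a non-surjective such $p$ the key step fails. Take $\SL_n\hookrightarrow\GL_n$ or $\SO_{2n}\hookrightarrow\GSO_{2n}$: the quotient $G/p(G')$ is $F^\times$, which is not discrete. So $p(G')$ is neither open nor of finite index in $G$, and ``$p(G')$ is an open subgroup of $G$'' is false in this generality. For the same reason the finite-index discussion does not apply, including the discreteness of $G/p(G')$ used in your alternative argument.

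The missing idea is to pass to the image. Factor $p$ as $\bG'\to p(\bG')\hookrightarrow\bG$, where the image $p(\bG')$ is a closed subgroup of $\bG$.
\begin{enumerate}
\item The map $\bG'\to p(\bG')$ is faithfully flat, hence smooth in characteristic $0$. Your Lie-algebra and implicit-function argument applies verbatim to it, so $p(G')$ is an open subgroup, hence closed, in $p(\bG')(F)$.
\item Since $p(\bG')\hookrightarrow\bG$ is a closed immersion, $p(\bG')(F)$ is closed in $G$.
\end{enumerate}
Therefore $p(G')$ is closed in $G$. This is exactly the paper's proof, and no finite-index statement is needed.
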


\begin{proof}
    The map $p$ decomposes into $\bG' \to p(\bG') \to \bG$.
    The first map is smooth and hence open on $F$-points.
    Thus, the map $\bG'(F) \to p(\bG')(F)$ has an open image and thus it has a closed image. 
    The second map is a closed immersion.
    This completes the proof.
\end{proof}

For any Levi subgroup $M$ of $G$, we write the corresponding Levi subgroup of $G'$ as $M'$. 
We set $M^{\sharp} = p(M')Z(G)$. 
We define the group $P^{\sharp}$ by $P^{\sharp} = p(P')Z(G)$.
The map $\bM' \times Z(\bG) \to \bM$ is smooth and thus the group $M^{\sharp}$ is an open subgroup of $M$ as the image of $M' \times Z(G)$.
The group $p(M') \cap Z(G) = p(Z(G'))$ is a closed subgroup of both $Z(G)$ and $p(M')$ by \cref{lemma:central-morphism-image-closed}.

\begin{lemma}\label{lemma:identification-character-space}
    We have an isomorphism 
    \begin{align*}
          \fa^{G, *}_{M} \xrightarrow{\sim}  \fa^{G', *}_{M'}
    \end{align*}
    which is induced by the map $X^*(A_M/A_G) \to X^*(A_{M'}/A_{G'})$.
\end{lemma}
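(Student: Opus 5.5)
We have to show that restriction of characters $X^*(A_M/A_G)\to X^*(A_{M'}/A_{G'})$ becomes an isomorphism once we tensor with $\R$. The plan is to pass to split tori and use that $p$ induces an isogeny on derived groups, so that it identifies the root data "modulo the centre".

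First we set up the map. Since $p$ is central, it sends $A_{M'}$ into $A_M$ and $A_{G'}$ into $A_G$. Indeed, $p(Z(\bM')^\circ)\subset Z(\bM)^\circ$, because $p(\bM')\cdot Z(\bG)$ contains $\bM_{\der}$ and hence is normal in $\bM$; and the image of a split torus is split. Therefore pulling back characters along $p$ gives a homomorphism $X^*(A_M)\to X^*(A_{M'})$, and this carries $X^*(A_M/A_G)$ into $X^*(A_{M'}/A_{G'})$. Tensoring with $\R$ and using the decomposition \eqref{eq:decomposition-of-character-space} (applied with $M'=G$), we obtain a linear map $\fa^{G,*}_M\to\fa^{G',*}_{M'}$. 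This is the map in the statement.

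The key step is to identify both sides with spans of roots. For any reductive $G$ and Levi subgroup $M$, the space $\fa^{G,*}_M$ equals the $\R$-span of the restrictions to $A_M$ of the roots $\Phi(G,A_M)$; equivalently, $\Delta_P$ is a basis of $\fa^{G,*}_M$, as recalled in the root-system subsection. Now $p$ restricts to an isogeny $\bG'_{\der}\to\bG_{\der}$ and is compatible with the adjoint actions on the Lie algebras of the derived groups, which it identifies. Hence pullback along $p$ maps $\Phi(G,A_M)$ bijectively onto $\Phi(G',A_{M'})$, sending the simple roots $\Delta_P$ to $\Delta_{P'}$. A linear map that sends a basis bijectively to a basis is an isomorphism, so this proves the claim.

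The main technical point is checking that the pullback of a root is again a root: for a root space $\fg_\alpha$ of $A_M$, the differential $dp$ is an isomorphism on $\fg'_{\der}\to\fg_{\der}$ in characteristic $0$. It is also equivariant, in the sense that $\Ad(p(a))\,dp(X)=dp(\Ad(a)X)$. From this, $\alpha\circ p|_{A_{M'}}$ is the weight of $dp^{-1}(\fg_\alpha)$, and it is nonzero. Otherwise $p(A_{M'})$ would centralize $\fg_\alpha$, and since $A_M/p(A_{M'})$ is finite (both have the same rank, because $\bM'=p^{-1}(\bM)$ and $p$ has kernel central with the same derived rank), $\alpha$ would be torsion, which is impossible. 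The same rank comparison also gives $\dim\fa^{G,*}_M=\dim\fa^{G',*}_{M'}$ directly, so injectivity alone would suffice. I expect this dimension count to be the step needing the most care, since $p$ need not be surjective on centres. I would handle it by working with $\overline{F}$-tori $Z(\bM)^\circ\cap\bM_{\der}$ versus the analogous torus for $\bM'$, which are isogenous via $p$.
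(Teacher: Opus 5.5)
Your strategy is the paper's. Its proof is the single sentence that both spaces are spanned by the relative roots, which $p$ identifies as $\Phi(G,A_M)\xrightarrow{\sim}\Phi(G',A_{M'})$, and you are filling in the details. One of those details is wrong as written. You assert that $A_M/p(A_{M'})$ is finite because ``both have the same rank''. This fails whenever $p$ is not surjective, which the hypotheses permit, since only centrality and an isogeny on derived groups are assumed. For example, for $\SL_2\hookrightarrow\GL_2$ with $M=T$ diagonal, $A_M=T$ has dimension $2$ while $p(A_{M'})=T\cap\SL_2$ has dimension $1$. In the other direction, for the projection $\bG\times\GL_1\to\bG$ one has $\dim A_{M'}=\dim A_M+1$, so the ranks need not agree either. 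Consequently two things are not justified by what you wrote: your argument that $\alpha\circ p\neq 0$, and your ``direct'' dimension count. The same goes for injectivity of $\alpha\mapsto\alpha\circ p$ on roots, which you also need to get a bijection.

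The fix is to bring in $A_G$, on which every root in $\Phi(G,A_M)$ is trivial.
\begin{itemize}
\item The restriction of $p$ maps $\bM'\cap\bG'_{\der}=(p|_{\bG'_{\der}})^{-1}(\bM\cap\bG_{\der})$ onto the Levi subgroup $\bM\cap\bG_{\der}$ of $\bG_{\der}$, with finite kernel. So it restricts to an $F$-isogeny $(Z(\bM')\cap\bG'_{\der})^{\circ}\to(Z(\bM)\cap\bG_{\der})^{\circ}$.
\item Hence $p(A_{M'})$ contains the maximal split subtorus of $(Z(\bM)\cap\bG_{\der})^{\circ}$.
\item Since $Z(\bM)^{\circ}$ is isogenous to $(Z(\bM)\cap\bG_{\der})^{\circ}\times Z(\bG)^{\circ}$, this gives $p(A_{M'})\cdot A_G=A_M$ as algebraic tori.
\end{itemize}
Then $\alpha\circ p=0$ on $A_{M'}$ would force $\alpha=0$. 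The same argument applied to $\alpha-\beta$ gives injectivity on roots. Moreover $\dim\fa^{G,*}_M$ and $\dim\fa^{G',*}_{M'}$ both equal the common split rank of these isogenous tori. This is the comparison your last sentence reaches for, but $Z(\bM)^{\circ}\cap\bM_{\der}$ is finite; the right torus is $(Z(\bM)\cap\bG_{\der})^{\circ}$.

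A minor point about your first paragraph: what makes $p(Z(\bM')^{\circ})$ central in $\bM$ is the equality $\bM=p(\bM')Z(\bG)$. This follows from $\bG=p(\bG')Z(\bG)$ and $\bM'=p^{-1}(\bM)$. Normality of $p(\bM')Z(\bG)$ is not what is needed.
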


\begin{proof}
    Both spaces are generated by the relative roots $\Phi(G, A_M) \xrightarrow{\sim} \Phi(G', A_{M'})$.
\end{proof}

\subsection{Normalizing factors}\label{subsection:normalizing-factors}

Let $G$ be a reductive group over $F$ and let $M$ be a Levi subgroup over $F$ of $G$.
Let $P = M N_P, Q = M N_{Q} \in \cP^G(M)$ be parabolic subgroups.
Let $(\sigma, W)$ be a tempered representation of $M$ of finite length. 
We denote an element of $\fa^{G, *}_{M, \C}$ by $\lambda$.

We recall the facts on unnormalized intertwining operators; see \cite{Wal03}*{IV.1, IV.2} for details.
We define the unnormalized intertwining operator 
\begin{align*}
    J_{Q \vert P}(\sigma_{\lambda}) \colon i^G_{P}(\sigma_{\lambda}) \to i^{G}_{Q}(\sigma_{\lambda})
\end{align*}
by the absolute convergent integral
\begin{align*}
    (J_{Q \vert P}(\sigma_{\lambda})\phi)(g) 
    = 
    \int_{N_P \cap N_Q \backslash N_{Q}} \phi(ng) dn
\end{align*}
for $g \in G$, when $\langle \mathrm{Re}(\lambda), \alpha^{\vee} \rangle \gg 0$ for $\alpha \in \Phi(A_M, P) \cap \Phi(A_M, \overline{Q})$. 
If we fix a special maximal compact subgroup $K$ which is in a good position relative to $M$, then the representations $i^G_{P}(\sigma_{\lambda})$ are realized in a fixed space.
The operator $J_{Q \vert P}(\sigma_{\lambda})$ is meromorphically continued to $\fa^{G, *}_{M, \C}$.

\begin{definition}\label{definition:normalizing-factor}
    A normalizing factor 
    \begin{align*}
        r_{Q \vert P} (\sigma_{\lambda})
    \end{align*}    
    is a meromorphic function on the orbit of $\sigma$ under the action of $\fa^{G, *}_{M, \C}$ which satisfies conditions $(R_1)$ to $(R_7)$ of \cite{KuokFaiLi2014-R-group}*{Definition 2.2.1} for the tempered representation $\sigma$ and $\lambda \in \fa^{G, *}_{M, \C}$.
    We define the normalized intertwining operator with respect to the normalizing factor $r_{Q \vert P} (\sigma_{\lambda})$, by 
    \begin{align*}
        R_{Q \vert P}(\sigma_{\lambda}) = r_{Q \vert P} (\sigma_{\lambda})^{-1} J_{Q \vert P}(\sigma_{\lambda}).
    \end{align*}
    This operator is holomorphic and unitary at $\lambda = 0$.
\end{definition}

\begin{remark}\label{remark:reduction-corank-one}
    By the reduction of Arthur \cite{Art89IntResI} which is also explained in \cite{KuokFaiLi2014-R-group}*{Remark 2.2.3}, we can reduce the computation and construction of normalizing factors to the corank-one cases, i.e., the cases in which we have $\dim_{\R}(\fa^{G, *}_{M}) = 1$. 
    For example, the normalizing factor $r_{Q \vert P}(\sigma_{\lambda})$ is obtained by
    \begin{align*}
        r_{Q \vert P} (\sigma_{\lambda})
        = 
        \prod_{\beta \in 
        \Phi(P, A_M)_{\red}
        \cap
        \Phi(\overline{Q}, A_M)_{\red}
        }
        r^{M_{\beta}}_{\overline{P^{M_{\beta}}} \vert P^{M_{\beta}}} (\sigma_{\lambda}), 
    \end{align*}
    where $r^{M_{\beta}}_{\overline{P^{M_{\beta}}} \vert P^{M_{\beta}}} (\sigma_{\lambda})$ is a normalizing factor for the triple $M_{\beta}, M, \sigma_{\lambda}$.
\end{remark}

\begin{definition}\label{definition:rho-Q-vert-P}
    We take $P, Q \in \cP^G(M)$.
    We denote the representation of ${}^L M$ on the space $\widehat{\fn_Q}/\widehat{\fn_Q} \cap \widehat{\fn_P}$ by $\rho_{Q \vert P}$.
\end{definition}

\subsection{R-groups}

In this section, we review the theory of R-groups, following \cite{KuokFaiLi2014-R-group}*{2.3}.

Let $M$ be a Levi subgroup of $G$ and let $\sigma \in \Pi_{\disc}(M)$ be a discrete series representation.
We consider the stabilizer
\begin{align*}
    \mathrm{Stab}_{W(M)}(\sigma) = W(M)_{\sigma} = W_{\sigma}
\end{align*}
in $W(M) = W(G, M)$ of $\sigma$.
We fix $P \in \cP^G(M)$.
Then, for any $w \in W(M)_{\sigma}$, we have a map 
\begin{align*}
    R_P(w, \sigma, \psi) \colon i^{G}_{P}(\sigma) \overset{R_{w^{-1}Pw \vert P}}{\to} i^G_{w^{-1}Pw}(\sigma)  \overset{l(\widetilde{w})}{\to} i^G_{P}({}^{\widetilde{w}}\sigma).
\end{align*}
If we choose an isomorphism $\sigma(\widetilde{w}) \colon {}^{\widetilde{w}}\sigma \to \sigma$, then we obtain an isomorphism 
\begin{align*}
    R'_{P}(w, \sigma, \psi, \sigma(\widetilde{w})) \in \End_{G}(i^G_P(\sigma)).
\end{align*}

For a root $\alpha \in \Phi(P, A_M)_{\red}$, we can construct the Levi subgroup $M_{\alpha}$ of $G$, which contains $M$ as a corank-one Levi subgroup and whose unique simple root is equal to $\alpha$.
We denote the parabolic subgroup $P \cap M_{\alpha}$ by $P^{M_{\alpha}}$.
Then, the inverse of the $j$-function associated to $M_{\alpha}, M, \sigma$ is defined and is denoted by $\mu_{\alpha}(\sigma)$.
Then, there exists a root system $\Phi_{\sigma}$ on $\fa^{*}_M$ which consists of the multiples of a root $\alpha \in \Phi(P, A_M)_{\red}$ which satisfies the condition $\mu_{\alpha}(\sigma)=0$.
The Weyl group of this root system $\Phi_{\sigma}$ is denoted by $W^{0}_{\sigma}$.

\begin{definition}[\cite{KuokFaiLi2014-R-group}*{Proposition 2.3.1}]
    The R-group $R_{\sigma}$ of $\sigma$ is defined by $R_{\sigma} = W_{\sigma}/W^{0}_{\sigma}$.
    By fixing a Weyl chamber $C_{\sigma} \subset \fa^{*}_M$, we obtain the splitting 
    \begin{align*}
        s_{C_{\sigma}}  \colon R_{\sigma} \hookrightarrow W_{\sigma}, 
    \end{align*}
    whose image is the subgroup which stabilizes $C_{\sigma}$.
\end{definition}

\begin{theorem}[Harish-Chandra, Silberger \cite{Silberger1978Knapp-Stein}]
    Fix a section $s=s_{C_{\sigma}}  \colon R_{\sigma} \hookrightarrow W_{\sigma}$ as above.
    The operators $R'_P(w, \sigma, \psi, \sigma(\widetilde{w}))$ for $w \in W^0_{\sigma}$ are scalar multiplication operators.
    The operators $R'_P(s(r), \sigma, \psi, \sigma(\widetilde{w}))$ for $r \in R_{\sigma}$ form a basis of $\End_G(i^G_{P}(\sigma))$.
\end{theorem}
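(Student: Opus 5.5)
The plan is to reduce the statement to the Knapp--Stein/Silberger theory of the commuting algebra of $i^G_P(\sigma)$. That theory rests on three inputs: the cocycle relation for normalized intertwining operators, the rank-one analysis via Harish-Chandra's $\mu$-function, and a dimension count for $\End_G(i^G_P(\sigma))$.

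First, I will record the basic properties of the normalized operators from conditions $(R_1)$–$(R_7)$ in \cref{definition:normalizing-factor}. These give the product formula $R_{Q''\vert P}(\sigma)=R_{Q''\vert Q'}(\sigma)R_{Q'\vert P}(\sigma)$, holomorphy and unitarity at $\lambda=0$, and compatibility with conjugation by $\widetilde{w}$. From these, $w\mapsto R'_P(w,\sigma,\psi,\sigma(\widetilde{w}))$ is a projective representation of $W_\sigma$ on $i^G_P(\sigma)$ by intertwiners. The ambiguity comes only from the choices of $\sigma(\widetilde w)$ and the Tits lifts, and these are scalars by Schur's lemma.

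Next comes the scalar statement for $W^0_\sigma$. By the reduction in \cref{remark:reduction-corank-one}, it suffices to treat a simple reflection $s_\alpha$ with $\alpha\in\Phi_\sigma$, i.e.\ $\mu_\alpha(\sigma)=0$. Here I will work in the corank-one Levi subgroup $M_\alpha$. By Harish-Chandra's rank-one theorem, $\mu_\alpha(\sigma)=0$ forces $i^{M_\alpha}_{P^{M_\alpha}}(\sigma)$ to be irreducible. Hence the operator $R'$ attached to $s_\alpha$ on $i^{M_\alpha}_{P^{M_\alpha}}(\sigma)$ is a scalar, and inducing in stages to $G$ keeps it scalar. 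Since $W^0_\sigma$ is generated by such reflections, the cocycle relation gives the claim for all of $W^0_\sigma$.

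Then I will prove that the operators $R'_P(s(r),\ldots)$ for $r\in R_\sigma$ are linearly independent and span $\End_G(i^G_P(\sigma))$. For spanning, Harish-Chandra's commuting algebra theorem (Bruhat theory via Jacquet modules, plus the Plancherel theorem) gives $\End_G(i^G_P(\sigma))=\mathrm{span}\{R'_P(w,\ldots):w\in W_\sigma\}$. Combining this with the previous step and $W_\sigma=W^0_\sigma\rtimes s(R_\sigma)$, the span reduces to the operators indexed by $s(R_\sigma)$.

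The main obstacle is linear independence, or equivalently $\dim\End_G(i^G_P(\sigma))=\abs{R_\sigma}$. I will follow Silberger: deform to $\sigma_\lambda$ with $\lambda$ in general position inside the fixed chamber $C_\sigma$, and compare leading terms of the operators along the various Bruhat cells. For $r\neq 1$, $s(r)$ does not fix $C_\sigma$ pointwise in the relevant sense. The key fact I need to verify is that $s(r)$ has no roots of $\Phi_\sigma$ changing sign, so the rank-one factors $\mu_\beta$ that would create cancellation are nonvanishing. Then the associated operators have distinct "supports", which forces independence. Alternatively, one can compute $\dim\End_G$ through the Plancherel density: it equals $\abs{W_\sigma}/\abs{W^0_\sigma}$ by the zero-order analysis of $j(\sigma_\lambda)$ near $\lambda=0$. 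I expect this counting to be the technically hardest part, and I would cite \cite{Silberger1978Knapp-Stein} for it.
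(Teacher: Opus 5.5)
Your outline agrees with the paper, which gives no proof and simply cites this as the classical theorem of Harish-Chandra and Silberger; like the paper, you rest on Harish-Chandra's rank-one and commuting-algebra results and defer the dimension count $\dim\End_G(i^G_P(\sigma))=\abs{R_\sigma}$ to Silberger. One caution: drop the suggested alternative via the Plancherel density, because near $\lambda=0$ the density only records where the $\mu_\alpha$ vanish (hence $W^0_\sigma$), while reducibility of $i^G_P(\sigma_\lambda)$ occurs only on a null set of $\lambda$, so the density cannot detect $\dim\End_G(i^G_P(\sigma))$.
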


\begin{corollary}
    The parabolic induction $i^G_P(\sigma)$ is irreducible if and only if $R_{\sigma} = 1$.
\end{corollary}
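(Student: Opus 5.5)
The plan is to deduce the statement from the Harish-Chandra--Silberger theorem quoted just before it. We use that theorem together with the elementary fact that a finite-length semisimple representation is irreducible if and only if its commutant is one-dimensional.

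First, I will record that $i^G_P(\sigma)$ is a finite direct sum of irreducible tempered representations. This holds because $\sigma$ is a discrete series (hence unitary) representation of $M$, so the normalized induction $i^G_P(\sigma)$ is unitary and of finite length, and therefore completely reducible. Consequently, by Schur's lemma, $i^G_P(\sigma)$ is irreducible if and only if $\dim_{\C}\End_G(i^G_P(\sigma)) = 1$. Indeed, if $i^G_P(\sigma)=\bigoplus_k \pi_k^{\oplus m_k}$ with the $\pi_k$ pairwise non-isomorphic, then $\dim \End_G = \sum_k m_k^2$. This sum equals $1$ exactly when there is a single summand occurring with multiplicity one.

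Second, I will fix a Weyl chamber $C_\sigma$ and the associated section $s = s_{C_\sigma}$, and choose isomorphisms $\sigma(\widetilde{w})$. By the theorem of Harish-Chandra and Silberger, the operators $R'_P(s(r),\sigma,\psi,\sigma(\widetilde{w}))$ for $r \in R_\sigma$ form a basis of $\End_G(i^G_P(\sigma))$, so $\dim_{\C}\End_G(i^G_P(\sigma)) = \abs{R_\sigma}$. Combining this with the first step gives that $i^G_P(\sigma)$ is irreducible if and only if $\abs{R_\sigma}=1$, that is, $R_\sigma = 1$.

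The only point needing care is that the basis statement presupposes that the normalized operators are defined and invertible at $\lambda=0$. This is guaranteed by Definition \ref{definition:normalizing-factor}, where the normalized operators are holomorphic and unitary at $\lambda = 0$, and by the fact that $l(\widetilde{w})$ and $\sigma(\widetilde{w})$ are isomorphisms. No real obstacle arises beyond citing the theorem, since the corollary is a direct dimension count.
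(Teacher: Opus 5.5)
Your argument is correct and is the intended one. Since $i^G_P(\sigma)$ is unitary of finite length and hence semisimple, irreducibility is equivalent to $\dim_{\C}\End_G(i^G_P(\sigma))=1$, and the Harish-Chandra--Silberger basis gives $\dim_{\C}\End_G(i^G_P(\sigma))=\abs{R_\sigma}$. The paper records the corollary without proof, as an immediate consequence of that theorem, so this dimension count is all that is needed.
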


\begin{proposition}\label{proposition:normalizing-factor-and-mu-function}
    Let $r_{\overline{P^{M_{\alpha}}} \vert P^{M_{\alpha}}}(\sigma_{\lambda})$ and $r_{{P^{M_{\alpha}}} \vert \overline{P^{M_{\alpha}}}}(\sigma_{\lambda})$ be normalizing factors for $M_{\alpha}, M, \sigma$.
    Then, the following are equivalent.
    \begin{enumerate}
        \item $\mu_{\alpha}(\sigma) = 0$.
        \item $r_{\overline{P^{M_{\alpha}}} \vert P^{M_{\alpha}}}(\sigma)^{-1} = 0$.
        \item $r_{{P^{M_{\alpha}}} \vert \overline{P^{M_{\alpha}}}}(\sigma)^{-1} = 0$.
    \end{enumerate}
\end{proposition}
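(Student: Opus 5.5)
The plan is to compare both sides through the identity relating the Harish-Chandra $\mu$-function to the product of the two unnormalized intertwining operators in the corank-one group $M_{\alpha}$. I work entirely inside $M_{\alpha}$, so I write $P_{\alpha}=P^{M_{\alpha}}$ and $\overline{P}_{\alpha}=\overline{P^{M_{\alpha}}}$, and take $\lambda\in\fa^{M_{\alpha},*}_{M,\C}$, a one-dimensional space.

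\textbf{Step 1 (factor the $j$-function through the normalizing factors).} By definition,
\begin{align*}
    J_{P_{\alpha}\vert\overline{P}_{\alpha}}(\sigma_{\lambda})\,J_{\overline{P}_{\alpha}\vert P_{\alpha}}(\sigma_{\lambda})=j(\sigma_{\lambda})\cdot\id ,
\end{align*}
and $\mu_{\alpha}(\sigma_{\lambda})$ equals $j(\sigma_{\lambda})^{-1}$ up to a positive constant coming from the choice of measures. Substituting $J=r\,R$ from \cref{definition:normalizing-factor} gives
\begin{align*}
    r_{P_{\alpha}\vert\overline{P}_{\alpha}}(\sigma_{\lambda})\,r_{\overline{P}_{\alpha}\vert P_{\alpha}}(\sigma_{\lambda})\,R_{P_{\alpha}\vert\overline{P}_{\alpha}}(\sigma_{\lambda})R_{\overline{P}_{\alpha}\vert P_{\alpha}}(\sigma_{\lambda}).
\end{align*}
Among the conditions $(R_1)$--$(R_7)$ of \cite{KuokFaiLi2014-R-group}*{Definition 2.2.1} is the cocycle/product relation, which forces $R_{P_{\alpha}\vert\overline{P}_{\alpha}}R_{\overline{P}_{\alpha}\vert P_{\alpha}}=R_{P_{\alpha}\vert P_{\alpha}}=\id$. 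Hence, as meromorphic functions of $\lambda$,
\begin{align*}
    \mu_{\alpha}(\sigma_{\lambda})=c\cdot r_{P_{\alpha}\vert\overline{P}_{\alpha}}(\sigma_{\lambda})^{-1}\,r_{\overline{P}_{\alpha}\vert P_{\alpha}}(\sigma_{\lambda})^{-1},\qquad c>0 .
\end{align*}

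\textbf{Step 2 (relate the two factors).} I use the adjointness condition among $(R_1)$--$(R_7)$, which reflects $J_{Q\vert P}(\sigma_{\lambda})^{*}=J_{P\vert Q}(\sigma_{-\bar\lambda})$ for unitary $\sigma$. It gives $\overline{r_{\overline{P}_{\alpha}\vert P_{\alpha}}(\sigma_{\lambda})}=r_{P_{\alpha}\vert\overline{P}_{\alpha}}(\sigma_{-\bar\lambda})$. For $\lambda\in i\fa^{M_{\alpha},*}_M$ we have $-\bar\lambda=\lambda$, so the two factors are complex conjugate on the imaginary axis. By analytic continuation, their orders at $\lambda=0$ coincide. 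This already gives (2)$\Leftrightarrow$(3), where the value at $\lambda=0$ of each $r^{-1}$ is understood as its regular value, or limit, along the imaginary line.

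\textbf{Step 3 (conclude).} Harish-Chandra's theory makes $\mu_{\alpha}(\sigma_{\lambda})$ holomorphic and non-negative on $i\fa^{*}_M$. Since $R$ is holomorphic and unitary at $0$, the functions $r^{-1}$ are holomorphic there as well, because $r^{-1}J=R$ and $J$ can only have poles there. By Step 1, $\mu_{\alpha}(\sigma)=c\,|r_{\overline{P}_{\alpha}\vert P_{\alpha}}(\sigma)^{-1}|^{2}$, so $\mu_{\alpha}(\sigma)=0$ exactly when $r_{\overline{P}_{\alpha}\vert P_{\alpha}}(\sigma)^{-1}=0$. This gives (1)$\Leftrightarrow$(2).

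\textbf{Main obstacle.} The delicate point is justifying the holomorphy of $r^{-1}$ at $\lambda=0$, and matching precisely which of Li's axioms supply the product and adjointness relations. If holomorphy is not directly available, I would instead compare orders of vanishing along the imaginary line. The product formula together with the conjugate symmetry shows that the order of $\mu_{\alpha}$ at $0$ is twice that of either $r^{-1}$. That order is $\geq 0$, so both sides vanish simultaneously.
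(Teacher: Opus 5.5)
Your proposal is correct and is essentially the paper's argument. The paper's proof just invokes the two identities $\mu_{\alpha}(\sigma) = r_{\overline{P^{M_{\alpha}}} \vert P^{M_{\alpha}}}(\sigma)^{-1} r_{P^{M_{\alpha}} \vert \overline{P^{M_{\alpha}}}}(\sigma)^{-1}$ and $r_{\overline{P^{M_{\alpha}}} \vert P^{M_{\alpha}}}(\sigma) = \overline{r_{P^{M_{\alpha}} \vert \overline{P^{M_{\alpha}}}}(\sigma)}$, which you derive from the multiplicativity and adjointness axioms, and then concludes from $\mu_{\alpha}(\sigma) = \lvert r(\sigma)^{-1}\rvert^{2}$ exactly as you do; holomorphy of $r^{-1}$ at $\lambda = 0$ is better justified by your order-of-vanishing fallback (or boundedness of $\mu_{\alpha}$ on the imaginary line) than by the remark that $J$ can only have poles, and the paper leaves this point implicit.
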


\begin{proof}
    As we have 
    \begin{align*}
    \mu_{\alpha}(\sigma) = r_{\overline{P^{M_{\alpha}}} \vert P^{M_{\alpha}}}(\sigma)^{-1}r_{{P^{M_{\alpha}}} \vert \overline{P^{M_{\alpha}}}}(\sigma)^{-1}
    \end{align*}
    and 
    \begin{align*}
        r_{\overline{P^{M_{\alpha}}} \vert P^{M_{\alpha}}}(\sigma) 
        = 
        \overline{r_{{P^{M_{\alpha}}} \vert \overline{P^{M_{\alpha}}}}(\sigma)},
    \end{align*}
    We obtain the result.
\end{proof}

\subsection{Central morphisms and normalizing factors}

Let $\bG$ and $\bG'$ be reductive groups over $F$.
Assume that we have a central morphism $p \colon \bG' \to \bG$ which induces the isogeny $\bG'_{\der} \to \bG_{\der}$.

For any Levi subgroup $M$ of $G$, we write the corresponding Levi subgroup of $G'$ as $M'$. 
Recall that we set $M^{\sharp} = p(M')Z(G)$. 
Then, we can define the parabolic induction functor $i^{G^{\sharp}}_{P^{\sharp}}$ and the unnormalized intertwining operator $J_{Q^{\sharp} \vert P^{\sharp}}$ in the same way as $i^G_P$ and $J_{Q \vert P}$. 

Giving a representation $\sigma^{\sharp}$ of $M^{\sharp}$ is equivalent to giving representations $(\sigma', \chi)$ of $M'$ and $Z(G)$ such that, for any $z' \in Z(G')$, we have $\sigma'(z') = \chi(p(z'))$, since we have $p(G') \cap Z(G) = p(M') \cap Z(G) = p(Z(G'))$.

\begin{lemma}\label{lemma:condition-existence-extension-to-M}
    Let $\sigma'$ be an irreducible representation  of $M'$.
    Then, the following are equivalent:
    \begin{enumerate}
        \item  There exists a character $\chi \colon Z(G) \to \C^{\times}$ such that for any $z' \in Z(G')$ we have $\sigma'(z') = \chi(p(z'))$.
        \item  We have $\sigma'(z') = 1$ for all $z' \in \Ker(p) \subset Z(M')$.
    \end{enumerate}

    If the central character of $\sigma'$ is unitary, then we can take $\chi$ as a unitary character.
\end{lemma}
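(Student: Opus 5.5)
The plan is to treat this as an extension problem for characters of abelian groups. Since $p$ is central, $\Ker(p) \subset Z(G') \subset Z(M')$. Because $\sigma'$ is irreducible, Schur's lemma gives a central character $\omega_{\sigma'}$ of $Z(M')$, so both conditions only involve the character $\omega_{\sigma'}|_{Z(G')}$.

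For (1) $\Rightarrow$ (2): if $z' \in \Ker(p)$, then $\sigma'(z') = \chi(p(z')) = \chi(1) = 1$.

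For (2) $\Rightarrow$ (1), first descend the character. Condition (2) says $\omega_{\sigma'}|_{Z(G')}$ factors through $Z(G')/\Ker(p)(F)$. This quotient maps bijectively onto $p(Z(G'))$.

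We want this bijection to be a topological isomorphism. The map $Z(G') \to p(Z(G'))$ is a continuous surjective homomorphism of locally compact, second countable groups. By \cref{lemma:central-morphism-image-closed}, $p(Z(G')) = p(G') \cap Z(G)$ is closed in $Z(G)$. Hence the open mapping theorem for $\sigma$-compact locally compact groups shows that $Z(G')/\Ker(p)(F) \to p(Z(G'))$ is a homeomorphism. We therefore obtain a continuous character $\chi_0$ of the closed subgroup $p(Z(G'))$ with $\chi_0(p(z')) = \sigma'(z')$.

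Next, extend $\chi_0$ from the closed subgroup $p(Z(G'))$ to all of $Z(G)$. This is the main point. For unitary characters, extension from a closed subgroup $H$ of a locally compact abelian group $A$ follows from Pontryagin duality: restriction $A^{\cD} \to H^{\cD}$ is surjective, since $H^{\cD} \cong A^{\cD}/H^{\perp}$. This already gives the last assertion: if $\omega_{\sigma'}$ is unitary, so is $\chi_0$, and a unitary extension $\chi$ exists.

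For a general (possibly non-unitary) $\chi_0$, we reduce to the unitary case by splitting off an unramified positive part. Write $\abs{\chi_0} \colon p(Z(G')) \to \R_{>0}$. This homomorphism kills the compact part, so it factors through the image of $p(Z(G'))$ in $Z(G)/Z(G)^1 \cong$ a lattice. A homomorphism from a subgroup of a finitely generated free abelian group to the divisible group $\R_{>0}$ extends to the whole lattice by injectivity of divisible groups. This gives an extension $\nu$ of $\abs{\chi_0}$ to $Z(G)$, trivial on $Z(G)^1$ and hence continuous. Then $\chi_0\nu^{-1}$ is unitary and extends by the previous step; multiplying back by $\nu$ yields $\chi$.

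The only delicate points are the topological identification of $Z(G')/\Ker(p)(F)$ with a closed subgroup, which is supplied by \cref{lemma:central-morphism-image-closed}, and the extension of non-unitary characters; both are handled as above.
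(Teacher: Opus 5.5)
Your proposal is correct and follows the same route as the paper: it descends the central character to $p(Z(G'))$ and extends it to $Z(G)$ from this closed subgroup, which is closed by \cref{lemma:central-morphism-image-closed}. You also supply two steps the paper leaves implicit, and both are sound: the open-mapping identification of $Z(G')/\Ker(p)(F)$ with $p(Z(G'))$ as topological groups, and the reduction of the non-unitary extension to the unitary Pontryagin-duality case by splitting off an unramified character.
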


\begin{proof}
    The sufficiency is trivial.
    We prove the necessity.
    If $\sigma'$ satisfies the condition of the statement, then the representation $\sigma'$ can be seen as the representation of $p(M')$.
    The restriction of the central character of $Z(p(M'))$ to $p(Z(G'))$ can be extended to the character $\chi$ of $Z(G)$, as the subgroup $p(Z(G')) \subset Z(G)$ is closed by \cref{lemma:central-morphism-image-closed}.
    This completes the proof.
\end{proof}

Let $\sigma^{\sharp}$ be a tempered irreducible representation of $M^{\sharp}$.
This corresponds to a pair of representations $(\sigma', \chi)$ as above.
The induction $\ind^{M}_{M^{\sharp}}(\sigma^{\sharp})$ is semisimple and of finite length. 
Let $\sigma \subset \ind^{M}_{M^{\sharp}}(\sigma^{\sharp})$ be one of its irreducible components. 
By the Frobenius reciprocity, we have an injection
\begin{align*}
    \sigma^{\sharp} \hookrightarrow \sigma_{\restriction{M^{\sharp}}} .
\end{align*}

Then, we have the following diagram:
% https://q.uiver.app/#q=WzAsNSxbMCwwLCJcXG1hdGhybXtpbmR9Xkdfe0dee1xcc2hhcnB9fShpXntHXntcXHNoYXJwfX1fe1Bee1xcc2hhcnB9fShcXHNpZ21hXntcXHNoYXJwfSkpIl0sWzQsMCwiaV57R157XFxzaGFycH19X3tQXntcXHNoYXJwfX0oXFxzaWdtYV57XFxzaGFycH0pIl0sWzAsMiwiaV5HX3tQfShcXG1hdGhybXtpbmR9Xk1fe01ee1xcc2hhcnB9fShcXHNpZ21hXntcXHNoYXJwfSkpIl0sWzIsMiwiaV5HX1AoXFxzaWdtYSkiXSxbNCwyLCJpXntHXntcXHNoYXJwfX1fe1Bee1xcc2hhcnB9fShcXHNpZ21hX3tcXHJlc3RyaWN0aW9ue01ee1xcc2hhcnB9fX0pIl0sWzEsMCwiXFxtYXRocm17aW5kfV5HX3tHXntcXHNoYXJwfX0iLDAseyJzdHlsZSI6eyJib2R5Ijp7Im5hbWUiOiJzcXVpZ2dseSJ9fX1dLFsyLDMsIiIsMCx7InN0eWxlIjp7ImhlYWQiOnsibmFtZSI6ImVwaSJ9fX1dLFszLDQsIlxcdGV4dHJte3Jlc3RyaWN0aW9ufSJdLFsxLDQsIiIsMix7InN0eWxlIjp7InRhaWwiOnsibmFtZSI6Imhvb2siLCJzaWRlIjoiYm90dG9tIn19fV0sWzAsMiwiXFxjb25nIl1d
\[\begin{tikzcd}
	{\mathrm{ind}^G_{G^{\sharp}}(i^{G^{\sharp}}_{P^{\sharp}}(\sigma^{\sharp}))} &&&& {i^{G^{\sharp}}_{P^{\sharp}}(\sigma^{\sharp})} \\
	\\
	{i^G_{P}(\mathrm{ind}^M_{M^{\sharp}}(\sigma^{\sharp}))} && {i^G_P(\sigma)} && {i^{G^{\sharp}}_{P^{\sharp}}(\sigma_{\restriction{M^{\sharp}}})}
	\arrow["\varphi", from=1-1, to=3-1]
	\arrow["{\mathrm{ind}^G_{G^{\sharp}}}", squiggly, from=1-5, to=1-1]
	\arrow[hook', from=1-5, to=3-5]
	\arrow[two heads, from=3-1, to=3-3]
	\arrow["{\textrm{restriction}}", from=3-3, to=3-5].
\end{tikzcd}\]
Here, the morphism $\varphi$ is the functorial isomorphism constructed in a similar way to \cref{lemma:associativity-induction} (2).
Also, as in the proof of \cref{lemma:canonical-isom-of-quotient-Levi}, we have 
\begin{align*}
    P \backslash G \xrightarrow{\sim} P^{\sharp} \backslash G^{\sharp}. 
\end{align*}
Thus, the restriction map in the diagram is also an isomorphism.
The following lemma follows from a simple computation.

\begin{lemma}
    The isomorphism $\varphi$ in the above diagram satisfies
    \begin{align*}
        J_{Q \vert P}(\ind^M_{M^{\sharp}}(\sigma^{\sharp}_{\lambda})) \circ \varphi
        =
        \varphi \circ \ind^{G}_{G^{\sharp}}(J_{Q^{\sharp} \vert P^{\sharp}}(\sigma^{\sharp}_{\lambda}))
    \end{align*}
    for any $\lambda \in \fa^{G, *}_{M, \C}$.
\end{lemma}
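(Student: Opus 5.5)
We verify the identity on the dense open region of absolute convergence, where both intertwining operators are the integrals
\begin{align*}
    (J_{Q \vert P}\Phi)(g) = \int_{N_P \cap N_Q \backslash N_Q} \Phi(ng)\,dn ,
\end{align*}
and then conclude for all $\lambda \in \fa^{G,*}_{M,\C}$ by meromorphic continuation. The spaces $\fa^{G,*}_{M,\C}$ and the corresponding space for $M^{\sharp}$ are identified because $A_M \subset Z(M)$ and, via \cref{lemma:identification-character-space}, with $\fa^{G',*}_{M',\C}$. Hence the two sides are meromorphic families on the same parameter space, and equality on an open set propagates.

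First we make $\varphi$ explicit. An element of $\ind^G_{G^{\sharp}}(i^{G^{\sharp}}_{P^{\sharp}}(\sigma^{\sharp}_\lambda))$ is a function $F \colon G \to i^{G^{\sharp}}_{P^{\sharp}}(\sigma^{\sharp}_\lambda)$ with $F(hg) = h\cdot F(g)$ for $h \in G^{\sharp}$. We set
\begin{align*}
    \varphi(F)(g) = \bigl( m \mapsto F(mg)(1) \bigr),
\end{align*}
which is a function of $m \in M$ valued in $\ind^M_{M^{\sharp}}(\sigma^{\sharp}_\lambda)$. This is the same Frobenius-reciprocity construction as in \cref{lemma:associativity-induction}(2). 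Bijectivity uses $P^{\sharp}\backslash G^{\sharp} \cong P\backslash G$ (as in \cref{lemma:canonical-isom-of-quotient-Levi}) and $G = M G^{\sharp}$, which follows from $M/M^{\sharp} \cong G/G^{\sharp}$.

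Next we compare the integrals. The key input is that $p$ restricts to an isomorphism on unipotent radicals, so $N_P = N_{P^{\sharp}} \subset G^{\sharp}$, and likewise for $Q$. In particular the quotient $N_P\cap N_Q\backslash N_Q$ and its Haar measure are literally the same for $G$ and $G^{\sharp}$, provided the normalizations of \cref{subsubsection:measures-unipotent} are transported along $p$. We then compute
\begin{align*}
    (J_{Q\vert P}\varphi(F))(g)(m) = \int F(mng)(1)\,dn .
\end{align*}
Since $M$ normalizes $N_Q$ and $N_P \cap N_Q$, we substitute $n \mapsto m^{-1} n m$. Because $m$ acts trivially on the relevant modulus after the normalization of $i^G_P$, the Jacobian is absorbed into the $\delta_P^{1/2}$ factors on both sides, which agree since $\delta_P$ restricted to $M^{\sharp}$ equals $\delta_{P^{\sharp}}$. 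This yields $\int F(nmg)(1)\,dn$. Applying $F(nx) = n\cdot F(x)$ with $n \in G^{\sharp}$ gives $\int F(mg)(n)\,dn = (J_{Q^{\sharp}\vert P^{\sharp}}F(mg))(1)$, which is exactly $\varphi(\ind(J_{Q^{\sharp}\vert P^{\sharp}})F)(g)(m)$.

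The main obstacle is the bookkeeping in the substitution step: checking that the conjugation by $m \in M\setminus M^{\sharp}$ and the modulus characters match, and that convergence holds on a common region. We would handle this by reducing to $m \in M^{\sharp}$ through the decomposition $G = MG^{\sharp}$ and the left-equivariance of $F$.
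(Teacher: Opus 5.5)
Your argument is essentially the paper's. The paper only calls this ``a simple computation''; the intended route is to identify both $\ind^G_{G^{\sharp}}(i^{G^{\sharp}}_{P^{\sharp}}(\sigma^{\sharp}_{\lambda}))$ and $i^G_P(\ind^M_{M^{\sharp}}(\sigma^{\sharp}_{\lambda}))$ with the induction from $P^{\sharp}$ to $G$ by transitivity of induction. In that model both operators are the same integral over $N_P\cap N_Q\backslash N_Q\subset p(G')\subset G^{\sharp}$, which is your computation evaluated at $m=1$.

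Two corrections to your bookkeeping.

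First, with the usual realizations the map must be $\varphi(F)(g)(m)=\delta_P^{-1/2}(m)F(mg)(1)$. As you wrote it, $m\mapsto F(mg)(1)$ transforms under $M^{\sharp}$ by $\delta_{P^{\sharp}}^{1/2}\sigma^{\sharp}_{\lambda}$, not by $\sigma^{\sharp}_{\lambda}$.

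Second, your justification of the substitution step is off. Its Jacobian is $\abs{\det(\Ad(m)\colon \Lie(N_Q)/\Lie(N_P\cap N_Q))}_F^{-1}=\delta_P^{1/2}(m)\delta_Q^{-1/2}(m)$. It is absorbed because the two maps $\varphi$, for $P$ and for $Q$, carry the different factors $\delta_P^{-1/2}(m)$ and $\delta_Q^{-1/2}(m)$. It is not absorbed because $\delta_P|_{M^{\sharp}}=\delta_{P^{\sharp}}$.

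Your closing suggestion avoids this altogether. Every $\Phi\in i^G_Q(\ind^M_{M^{\sharp}}(\sigma^{\sharp}_{\lambda}))$ satisfies $\Phi(g)(m)=\delta_Q^{-1/2}(m)\Phi(mg)(1)$, so it suffices to compare both sides at $m=1$. There $F(ng)(1)=F(g)(n)$ for $n\in N_Q\subset G^{\sharp}$ gives the identity with no change of variables.
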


%Just apply \ind^{G}_{G^{\sharp}} i^{G^{\sharp}}_{P^{\sharp}} \xrightarrow{\sim} i^{G}_{P^{\sharp}} and so on.

\begin{corollary}\label{corollary:central-morphism-normalizing-factor-compatible}
    A family $r_{Q' \vert P'}(\sigma'_{\lambda})$ is a normalizing factor for $G', M', \sigma'$ if and only if it is a normalizing factor for $G, M, \sigma$ through the isomorphism in \cref{lemma:identification-character-space}. 
    In this case, we set 
    \begin{align*}
        r_{Q \vert P}(\sigma_{\lambda}) = r_{Q' \vert P'}(\sigma'_{\lambda}).
    \end{align*}
\end{corollary}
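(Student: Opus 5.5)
The plan is to verify, one by one, that each of the conditions $(R_1)$--$(R_7)$ of \cite{KuokFaiLi2014-R-group}*{Definition 2.2.1} holds for the data $(G', M', \sigma')$ exactly when it holds for $(G, M, \sigma)$. The bridge is the preceding lemma, which says that $\varphi$ intertwines $J_{Q \vert P}(\ind^M_{M^{\sharp}}(\sigma^{\sharp}_{\lambda}))$ with $\ind^G_{G^{\sharp}}(J_{Q^{\sharp} \vert P^{\sharp}}(\sigma^{\sharp}_{\lambda}))$. All parameters $\lambda$ are matched through \cref{lemma:identification-character-space}.

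First I will reduce from $G^{\sharp}$ to $G'$. Since $Z(G)$ is central and acts on $i^{G^{\sharp}}_{P^{\sharp}}(\sigma^{\sharp}_\lambda)$ by the character $\chi$, restricting to $p(G')$ identifies
\begin{align*}
    i^{G^{\sharp}}_{P^{\sharp}}(\sigma^{\sharp}_{\lambda})\big|_{G'} \cong i^{G'}_{P'}(\sigma'_{\lambda}).
\end{align*}
This uses $P^{\sharp}\backslash G^{\sharp} \cong P'\backslash G'$ (as in \cref{lemma:canonical-isom-of-quotient-Levi}) together with the isomorphism $p \colon N' \xrightarrow{\sim} N$. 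Under this identification $J_{Q^{\sharp}\vert P^{\sharp}}$ becomes $J_{Q'\vert P'}$, because the defining integrals are over $N_P\cap N_Q\backslash N_Q \cong N_{P'}\cap N_{Q'}\backslash N_{Q'}$ with matching measures; the measures match because both are products of the root-group measures of \cref{subsubsection:measures-unipotent}, and $p$ is an isomorphism on root groups. So the unnormalized operators for $\sigma'$ and for $\ind^M_{M^{\sharp}}(\sigma^{\sharp})$ correspond, and the operators for $\sigma$ are obtained from the latter by restricting to the summand $i^G_P(\sigma)$. This restriction is compatible with $J$, since $J$ is functorial in the inducing representation.

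Next I go through the conditions themselves. The product rule $r_{R\vert P}=r_{R\vert Q}r_{Q\vert P}$ is purely a statement about the functions and transfers trivially. The conditions stated through operators transfer directly through the correspondence above:
\begin{itemize}
    \item[] holomorphy and nonvanishing of $r^{-1}J$ on $\mathrm{Re}\,\lambda$ in the positive cone, and unitarity at $\lambda=0$, because $\varphi$ and restriction are unitary for the $K$-realizations;
    \item[] $R_{P\vert Q}R_{Q\vert P}=1$;
    \item[] compatibility with conjugation by $w\in W(G,M)$, using $W(G',M')\cong W(G,M)$;
    \item[] compatibility with Levi subgroups $L\supset M$, because the constructions for $L, L'$ are the same restricted versions.
\end{itemize}
The corank-one reduction of \cref{remark:reduction-corank-one} lets me check everything root by root, and the relative roots coincide: $\Phi(G,A_M)\cong\Phi(G',A_{M'})$.

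The step I expect to be the main obstacle is the rationality condition, namely that each $r$ is a rational function of $q^{-\langle\lambda,\alpha^\vee\rangle}$. The characters $\abs{\chi}^\lambda$ of $M$ pull back to $M'$, but the lattices $X^*(M)$ and $X^*(M')$, and a priori the normalization of $\alpha^\vee$, could differ by the index $[X_*(A_M):X_*(A_{M'})]$. I would first try to show that the coroots used in the definition come from the simply connected cover of the derived group, which is shared because $p$ induces an isogeny $\bG'_{\der}\to\bG_{\der}$. Then the pairings $\langle\lambda,\alpha^\vee\rangle$ agree under \cref{lemma:identification-character-space}, and rationality transfers. If a finite scaling of coroots appears instead, it only replaces $q^{-\langle\lambda,\alpha^\vee\rangle}$ by a fixed integral power of itself, and rationality in that power still gives rationality in the required variable in the direction needed; I would check that the converse direction survives by using that the two lattices are commensurable. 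Finally I would check the condition on $r$ under complex conjugation and contragredients, which transfers because $\sigma^{\vee}$ is a summand of $\ind^M_{M^{\sharp}}((\sigma^{\sharp})^{\vee})$.
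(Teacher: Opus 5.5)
Your proposal is correct and is essentially the paper's argument: the paper deduces this corollary directly from the lemma that $\varphi$ intertwines $J_{Q \vert P}(\ind^M_{M^{\sharp}}(\sigma^{\sharp}_{\lambda}))$ with $\ind^G_{G^{\sharp}}(J_{Q^{\sharp} \vert P^{\sharp}}(\sigma^{\sharp}_{\lambda}))$, together with $P \backslash G \cong P^{\sharp} \backslash G^{\sharp}$, and your condition-by-condition check just spells out what the paper leaves implicit. Two small points: for the direction from $\sigma$ back to $\sigma'$, use the other side of the paper's diagram, namely $i^G_P(\sigma)|_{G^{\sharp}} \cong i^{G^{\sharp}}_{P^{\sharp}}(\sigma|_{M^{\sharp}}) \supset i^{G^{\sharp}}_{P^{\sharp}}(\sigma^{\sharp})$; and your rationality worry does not arise, because a central isogeny carries coroots to coroots, so $\langle \lambda, \alpha^{\vee} \rangle$ is unchanged under $\fa^{G,*}_{M} \cong \fa^{G',*}_{M'}$.
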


\section{Relation between adjoint $E_6$ and $\PGSO_8$ with triality}\label{section:8}

\subsection{Preliminaries on adjoint $E_6$}

\begin{lemma}\label{lemma:center-connected}
    Let $G$ be a split adjoint group over $F$.
    Then, for any Levi subgroup $L$ of $G$, the center $Z(L)$ is connected and split, and hence it is equal to $A_L$. 
    The character group $X^*(A_L)$ has the basis $\Delta_Q$, where $Q$ is any parabolic subgroup $Q \in \cP^G(L)$.
\end{lemma}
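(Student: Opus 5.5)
The plan is to reduce everything to a statement about character lattices of a split maximal torus. Fix a split maximal torus $T \subset L$ and a Borel subgroup $B \subset Q$ containing $T$. Let $\Delta$ be the simple roots of $(G,T)$ attached to $B$, and let $\Delta^L \subset \Delta$ be the simple roots of $L$. Since $G$ is adjoint, $X^*(T)$ is the root lattice, and $\Delta$ is a $\Z$-basis of it.

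The center is
\[
Z(L) = \bigcap_{\alpha \in \Delta^L} \Ker(\alpha) \subset T,
\]
a diagonalizable group defined over $F$. Its character group is therefore
\[
X^*(Z(L)) = X^*(T)/\langle \Delta^L \rangle_{\Z} \cong \bigoplus_{\alpha \in \Delta \setminus \Delta^L} \Z \alpha .
\]
This quotient is torsion-free because $\Delta^L$ is part of a $\Z$-basis of $X^*(T)$. Hence $Z(L)$ is a torus, in particular connected. It is split because $T$ is split: the Galois action on $X^*(T)$ is trivial, so it is trivial on the quotient as well. The maximal split torus in $Z(L)$ is then $Z(L)$ itself, that is, $Z(L) = A_L$.

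For the basis statement, restriction identifies $X^*(A_L) = X^*(Z(L))$ with the free $\Z$-module on the images of $\Delta \setminus \Delta^L$. These images are exactly the restrictions of the simple roots to $A_L$, i.e.\ the set $\Delta_Q$ defined in the root-systems subsection. The only point to check is that restriction to $A_L$ is injective on $\Delta \setminus \Delta^L$ and that the images remain a basis rather than just a spanning set. Both follow from the direct-sum description above, since the images of distinct elements of $\Delta \setminus \Delta^L$ are distinct basis vectors of the quotient.

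Independence of the choice of $Q \in \cP^G(L)$ is automatic: every such $Q$ contains a Borel subgroup containing $T$ to which the argument applies, and any semi-standard configuration can be conjugated to a standard one. I expect no real obstacle. The one thing to state carefully is the identification $X^*(Z(L)) = X^*(T)/\Z\Delta^L$, which holds because $Z(L)$ is the scheme-theoretic intersection of the kernels of the roots of $L$ in $T$.
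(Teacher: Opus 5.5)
Your proposal is correct and follows the paper's proof. Both arguments use that $X^*(T)$ is freely generated by $\Delta$ for a split adjoint group. Hence $X^*(Z(L)) = X^*(T)/\Z\Delta^L$ is free on the images of $\Delta \setminus \Delta^L$, which gives connectedness, splitness, $Z(L) = A_L$ and the basis $\Delta_Q$ at once. Your version also spells out points the paper leaves implicit, namely the description of $Z(L)$ as the scheme-theoretic intersection of root kernels and the torsion-freeness of the quotient.
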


\begin{proof}
    Let $(B, T)$ be a Borel pair over $F$ of $G$ such that $T \subset L$.
    Since the group $\bG$ is adjoint and split, the maximal torus $\bT$ is split and the character group $X^*(\bT)$ is generated by the set of simple roots $\Delta$.
    Then, the character group $X^*(Z(L))$ is generated by the image of simple roots in $\Delta \setminus \Delta^L$, where $\Delta^L$ is the set of simple roots of $L$ with respect to $T$.
    Hence, the group $Z(L)$ is connected and split.
    The second assertion follows easily from this point of view.
\end{proof}

Let $\bG$ be the split semisimple adjoint group of type $E_6$ over $F$.
We fix a Borel pair $(B, T)$ of $G$.
We denote the Dynkin diagram of $\bG$ as follows:
% https://q.uiver.app/#q=WzAsNixbMCwxLCJcXGFscGhhXzEiXSxbMSwxLCJcXGFscGhhXzMiXSxbMiwxLCJcXGFscGhhXzQiXSxbMywxLCJcXGFscGhhXzUiXSxbNCwxLCJcXGFscGhhXzYiXSxbMiwwLCJcXGFscGhhXzIiXSxbMCwxLCIiLDAseyJzdHlsZSI6eyJoZWFkIjp7Im5hbWUiOiJub25lIn19fV0sWzEsMiwiIiwwLHsic3R5bGUiOnsiaGVhZCI6eyJuYW1lIjoibm9uZSJ9fX1dLFsyLDMsIiIsMCx7InN0eWxlIjp7ImhlYWQiOnsibmFtZSI6Im5vbmUifX19XSxbMyw0LCIiLDAseyJzdHlsZSI6eyJoZWFkIjp7Im5hbWUiOiJub25lIn19fV0sWzUsMiwiIiwwLHsic3R5bGUiOnsiaGVhZCI6eyJuYW1lIjoibm9uZSJ9fX1dXQ==
\[\begin{tikzcd}
	&& {\alpha_2} \\
	{\alpha_1} & {\alpha_3} & {\alpha_4} & {\alpha_5} & {\alpha_6}
	\arrow[no head, from=1-3, to=2-3]
	\arrow[no head, from=2-1, to=2-2]
	\arrow[no head, from=2-2, to=2-3]
	\arrow[no head, from=2-3, to=2-4]
	\arrow[no head, from=2-4, to=2-5].
\end{tikzcd}\]
We denote the root $\sum_{i=1, \dots, 6} n_i \alpha_i$ by the labeled diagram
\[
  \esixroot{n_1}{n_2}{n_3}{n_4}{n_5}{n_6}
\].

The highest root is equal to 
\begin{align*}
    \widetilde{\alpha} 
    = \alpha_1 + 2\alpha_2 + 2\alpha_3 + 3\alpha_4 + 2\alpha_5 + \alpha_6 
    =
    \esixroot{1}{2}{2}{3}{2}{1}.
\end{align*}

Let $M$ be the standard Levi subgroup of type $D_4$.
Then, we have an isomorphism $\bM_{\der} \xrightarrow{\sim} \Spin_8$.
For any root $\alpha \in \Phi(G, T)$, let $\alpha_M$ denote its restriction to $A_M$.

\begin{lemma}
     The set of roots $\Phi(G, A_M)$ consists of $6$ elements. 
     Also, this set forms a root system in $\fa^{G, *}_M$ with the Weyl group $W(G, M) \xrightarrow{\sim} S_3$.
\end{lemma}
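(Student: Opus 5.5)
The plan is to compute everything inside the $2$-dimensional space $\fa^{G,*}_M$, which by \cref{lemma:center-connected} has basis the restrictions of $\alpha_1,\alpha_2,\alpha_6$, since $\Delta\setminus\Delta^M=\{\alpha_1,\alpha_2,\alpha_6\}$ when $M$ is the $D_4$-Levi with simple roots $\alpha_3,\alpha_4,\alpha_5$ and \dots{} wait: the $D_4$ subdiagram is $\{\alpha_2,\alpha_3,\alpha_4,\alpha_5\}$, so $\Delta\setminus\Delta^M=\{\alpha_1,\alpha_6\}$ and $\fa^{G,*}_M$ is $2$-dimensional with basis $\alpha_{1,M},\alpha_{6,M}$ (consistent with $A_M=Z(M)$ being a rank-two split torus). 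A root $\alpha=\sum n_i\alpha_i$ restricts to $n_1\alpha_{1,M}+n_6\alpha_{6,M}$, and $\alpha_M\neq 0$ iff $(n_1,n_6)\neq(0,0)$.

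First step: enumerate the possible pairs $(n_1,n_6)$ among positive roots of $E_6$. Since the highest root has $n_1=n_6=1$, the pairs are $(1,0),(0,1),(1,1)$; all three occur (e.g. $\alpha_1$, $\alpha_6$, and the highest root). Hence $\Phi(G,A_M)=\{\pm\alpha_{1,M},\pm\alpha_{6,M},\pm(\alpha_{1,M}+\alpha_{6,M})\}$, exactly $6$ elements. One can also count multiplicities: the roots with $(n_1,n_6)=(1,0)$ form the $8$-dimensional module of $\Spin_8$ (a half-spin or vector rep), and similarly for the other two, giving $3\cdot 8\cdot 2=48$ roots outside $D_4$, matching $72-24$; this is a useful sanity check but not needed.

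Second step: show this set is a root system of type $A_2$ in $\fa^{G,*}_M$. Choose a $W(G,T)$-invariant inner product; restricting to $\fa^{G,*}_M$ amounts to orthogonal projection onto the orthogonal complement of the span of $\Delta^M$. I would check that the three projections of $\alpha_1$, $\alpha_6$, $\widetilde\alpha$ have equal length and pairwise angles $120^\circ$ (equivalently, the set is stable under reflections and the Cartan integers are $-1$). The cleanest route is the outer symmetry: $N_G(M)$ acts on $Z(M)=A_M$, and the elements of $W(G,T)$ normalizing $\Delta^M$ (e.g. the longest element of $W(E_6)$ times the longest of $W(D_4)$, and the element permuting $\{\alpha_1,\alpha_6,-\widetilde\alpha\}$ coming from the extended diagram symmetry of order $3$) permute the three lines transitively, forcing the equilateral configuration.

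Third step: identify $W(G,M)=N_G(M)/M$ with $S_3$. It acts faithfully on $A_M$ (since $M=Z_G(A_M)$), preserves $\Phi(G,A_M)$, and hence injects into $\Aut(A_2\text{ root system})\cong S_3\times\Z/2$; it also injects into $\Out(M_{\der})=\Aut(\mathrm{Dyn}(D_4))\cong S_3$ for the same reason, being realized by Weyl elements normalizing $\Delta^M$. Combined with the elements exhibited in the second step (an order-$3$ element and the reflections $s_{\alpha_{1,M}}$, realized by $w_0^{M_{\alpha_1}}w_0^M$), $W(G,M)$ has order at least $6$, so it is $S_3$, equal to the Weyl group of the $A_2$ system. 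The main obstacle is producing explicit Weyl elements realizing the order-$3$ symmetry; I would first try the composite $w_0^{M_{\alpha_1}}w_0^{M}\cdot w_0^{M_{\alpha_6}}w_0^M$ of two relative reflections, which automatically normalizes $M$.
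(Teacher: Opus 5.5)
Your Step 1 matches the paper's argument: the coefficients of $\alpha_1,\alpha_6$ in the highest root give the positive relative roots $\alpha_{1,M},\alpha_{6,M},\widetilde\alpha_M$. The relative reflections $w_0^{M_{\alpha_i}}w_0^M$ you use for the lower bound are also the paper's second ingredient; the paper gets that they normalize $M$ from the uniqueness of the standard $D_4$ Levi.

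\textbf{The gap: the upper bound.} You need $\lvert W(G,M)\rvert\le 6$. Without it you only have $W(A_2)\subseteq W(G,M)\subseteq\Aut(\Phi(G,A_M))$, so the order could be $6$ or $12$. You assert that $W(G,M)$ injects into $\Out(M_{\der})$ ``for the same reason'', but this does not follow. The identity $M=Z_G(A_M)$ gives faithfulness on $A_M$; it says nothing about automorphisms of $M_{\der}$ modulo inner ones. For instance, take $M=\GL_1\times\GL_1\times\GL_2\subset\GL_4$: swapping the two $\GL_1$ blocks is a nontrivial element of $W(G,M)$ that acts trivially on $M_{\der}=\SL_2$. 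Moreover, in the paper the injectivity into $\Out$ is the \emph{next} lemma, and it is deduced from the present one, so it cannot be imported here.

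For $D_4\subset E_6$ the injectivity is true, but it needs a specific check. By \cref{lemma:Weyl-group-normalizer}, $W(G,M)/W_2$ embeds in $\Aut(\Delta^M)\cong S_3$, where $W_2$ is the Weyl group of $\Phi\cap(\Delta^M)^{\perp}$. This set is empty. Indeed, a root $\sum n_i\alpha_i$ orthogonal to $\alpha_2,\alpha_3,\alpha_5$ satisfies $n_4=2n_2$, $2n_3=n_1+n_4$ and $2n_5=n_4+n_6$. This forces $n_1,n_6$ to be even, hence zero, so the root would be a root of $D_4$ orthogonal to all of $D_4$, which is impossible.

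A shorter alternative: $W(G,M)$ acts freely on $\cP^G(M)$, since an element normalizing both $M$ and $P$ lies in $N_P(M)=M$. Moreover, $\cP^G(M)$ corresponds to the $6$ chambers cut out of the plane $\fa_M$ by $\ker\alpha_{1,M}$, $\ker\alpha_{6,M}$ and $\ker\widetilde\alpha_M$. With either bound your argument closes.

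\textbf{Smaller points.} Delete the false start about $\{\alpha_1,\alpha_2,\alpha_6\}$.

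The $120^\circ$ claim should be about $\alpha_{1,M},\alpha_{6,M},-\widetilde\alpha_M$; the angle between $\alpha_{1,M}$ and $\widetilde\alpha_M$ is $60^\circ$.

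You also do not need the extended-diagram rotation, and the order-$3$ element is not an obstacle. Put $a=\alpha_{1,M}$ and $b=\alpha_{6,M}$. The relative reflection $s_{\alpha_{1,M}}$ negates $a$, fixes the complementary line $\fa^*_{M_{\alpha_1}}$, and preserves $\{\pm a,\pm b,\pm(a+b)\}$. Hence it must send $b\mapsto a+b$, because $b\mapsto b$ would send $a+b$ to $b-a$. Symmetrically, $s_{\alpha_{6,M}}(a)=a+b$. Then $s_{\alpha_{1,M}}s_{\alpha_{6,M}}$ cycles $a\mapsto b\mapsto -(a+b)\mapsto a$, and the two reflections generate a group of order $6$ acting as $W(A_2)$. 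With a $W$-invariant inner product this also forces $\lvert a\rvert=\lvert b\rvert$ at angle $2\pi/3$, which is all your Step 2 needs.
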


\begin{proof}
    From the description of the highest root as above, we can see that the set of positive roots is given by 
    \begin{align*}
        \alpha_{1, M}, \alpha_{6, M}, \widetilde{\alpha}_{M}.
    \end{align*}
    The group $M$ is the only standard Levi subgroup of $G$ of type $D_4$.
    Thus, the minimal length element in the coset $s_{\alpha_i} W(M, T)$, for each $i=1, 6$, preserves the Levi subgroup $M$.
    This implies that $\Phi(G, A_M)$ is a root system in $\fa^{G, *}_M$ with the Weyl group $W(G, M)$.
\end{proof}

\begin{remark}
    We have the simple reflections $s_1$ and $s_6$ corresponding to $\alpha_{1, M}$ and $\alpha_{6, M}$. 
    They are given by the minimal length elements in the coset $s_{\alpha_i} W(M, T)$ for $i = 1, 6$.
    We have 
    \begin{align*}
        s_1 \alpha_{6, M} = s_6 \alpha_{1, M} = \widetilde{\alpha}_{M}.
    \end{align*}
\end{remark}

\begin{lemma}
     The map 
     \begin{align*}
        W(G, M) \to \Aut_{F}(\bM) \to \Out_{F}(\bM_{\ad}) \xrightarrow{\sim} S_3
     \end{align*}
     is an isomorphism.
     We can take this isomorphism in the way that the elements $s_1$ and $s_6$ in $W(G, M)$ map to the automorphism $s_1$ and $s_6$ in $\Out_{F}(\bM_{\ad})$ respectively; see \cref{definition:automorphism-of-D4}.
\end{lemma}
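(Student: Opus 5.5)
We need to show that the composite $W(G,M)\to \Aut_F(\bM)\to\Out_F(\bM_{\ad})\xrightarrow{\sim} S_3$ is an isomorphism, and that $s_1,s_6$ go to the diagram automorphisms $s_1,s_6$ of \cref{definition:automorphism-of-D4}. The plan is to compute the images of the two generators explicitly. By the preceding lemma, $W(G,M)\cong S_3$ is generated by the two simple reflections $s_1,s_6$. Hence it suffices to show the following: the image of $s_i$ ($i=1,6$) in $\Out(\bM_{\ad})\cong\Aut(\mathrm{Dyn}(D_4))$ is the transposition of the Dynkin diagram determined below. Two distinct transpositions generate $S_3$, so surjectivity follows; injectivity then follows because both groups have order $6$.

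To identify the image of $s_i$, recall that the remark before the statement takes $s_i$ to be represented by the minimal length element $w_i$ in the coset $s_{\alpha_i}W(M,T)$. It then lifts to $N_G(T)\cap N_G(M)$. Being of minimal length, $w_i$ maps the positive roots of $M$ to positive roots, so it sends $\Delta^M=\{\alpha_2,\alpha_3,\alpha_4,\alpha_5\}$ to itself. Conjugation by a lift $\widetilde{w_i}$ therefore preserves the Borel pair $(B\cap M,T)$ of $M$, and its class in $\Out(\bM_{\ad})$ is exactly the induced permutation of $\Delta^M$. So we must compute $w_i$ on $\Delta^M$.

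The concrete choice is $w_1=w_0^{M_1}w_0^{M}$, where $M_1$ is the Levi of type $D_5$ with simple roots $\{\alpha_1,\dots,\alpha_5\}$. This is the standard formula for the minimal length representative normalizing $M$ in a rank-one extension. The element $-w_0^{M}$ is trivial on $D_4$. The element $-w_0^{M_1}$ on $D_5$ swaps the two spin nodes $\alpha_2,\alpha_5$ (in Bourbaki labeling for $E_6$, the $D_5$ diagram has branch node $\alpha_4$, with $\alpha_2$ and $\alpha_5$ as the short ends and $\alpha_3$–$\alpha_1$ as the long arm). Hence $w_1$ fixes $\alpha_3,\alpha_4$ and swaps $\alpha_2\leftrightarrow\alpha_5$. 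Symmetrically, $w_6$ fixes $\alpha_5,\alpha_4$ and swaps $\alpha_2\leftrightarrow\alpha_3$. These are two distinct transpositions, which gives the isomorphism.

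For the second assertion, identify the $D_4$ diagram of \cref{definition:automorphism-of-D4} with $\Delta^M$, choosing $\beta_2=\alpha_4$, $\beta_1=\alpha_2$, $\beta_3=\alpha_3$, $\beta_4=\alpha_5$. With this choice $s_6$ is $\beta_1\leftrightarrow\beta_3$ and $s_1$ is $\beta_1\leftrightarrow\beta_4$, matching the picture. The isomorphism $\Out_F(\bM_{\ad})\xrightarrow{\sim}S_3$ depends on this labeling, which is precisely the freedom the statement allows.

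The main obstacle is verifying the action of $w_0^{M_1}$ on $\Delta^M$. I will check it directly from the $D_5$ diagram inside $E_6$, and confirm it against the remark's relation $s_1\alpha_{6,M}=\widetilde{\alpha}_M$.
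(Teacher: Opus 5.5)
Your proof is correct, but it takes a different route from the paper's. The paper never computes the images of generators. It observes that $W(G,M)$ acts on $X^*(Z(M))\cong\Z^2$ through the reflection representation of $S_3$, hence on $Z(\bM_{\der})\cong\mu_2^2$ through its reduction mod $2$, which is faithful. Since inner automorphisms act trivially on the center, this action factors through $\Out_F(\bM_{\ad})$, so the map is injective, and bijective by counting.

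You instead get surjectivity by showing that $w_0^{M_1}w_0^M$ and $w_0^{M_6}w_0^M$ preserve $\Delta^M$ and induce the two distinct transpositions $\alpha_2\leftrightarrow\alpha_5$ and $\alpha_2\leftrightarrow\alpha_3$. This uses that $-w_0$ is trivial on $D_4$ and swaps the spin nodes on $D_5$, and these computations are right.

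The paper's route is shorter and avoids longest elements. However, it leaves the second assertion implicit; it follows formally, since two distinct transpositions of the outer nodes move a common node, which one then names $\beta_1$. Your route produces the labeling explicitly: $\beta_1=\alpha_2$, $\beta_2=\alpha_4$, $\beta_3=\alpha_3$, $\beta_4=\alpha_5$. This labeling is also the one compatible with \cref{remark:fix-isomorphism-PGSO8}. Indeed, $\widetilde{\alpha}$ pairs nontrivially only with $\alpha_2^\vee$, so $\Lie(N_{\widetilde{\alpha}_M})$ is the fundamental representation at $\alpha_2=\beta_1$, i.e.\ the standard one.

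One point of justification needs correcting. Your element $w_0^{M_1}w_0^M$ does not lie in the coset $s_{\alpha_1}W(M,T)$. The minimal length element of that coset is $s_{\alpha_1}$ itself, and it does not normalize $M$, since $s_{\alpha_1}\alpha_3=\alpha_1+\alpha_3$. So you should not justify your choice by that description.

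The correct reason your $w_1$ represents $s_1$ is this: it lies in $W(M_1,T)$, preserves $\Delta^M$ (which you verify directly), and is not in $W(M,T)$. Hence it represents the unique nontrivial element of $W(M_1,M)$, which is the reflection in $\alpha_{1,M}$. The same argument applies to $w_6$.
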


\begin{proof}
    It is easy to see that the action of $W(G, M)$ on $X^*(Z(M))$ is isomorphic to the standard representation of $S_3$ on $\Z^2$.
    Thus, the action on $Z(M_{\der}) \cong \mu_2^2$ is also given by the standard representation, which is faithful.
    The lemma follows from this observation.
\end{proof}

Let $W(G, M)_{ev}$ be a unique normal subgroup of $W(G, M)$ of index $2$.
Let $\H$ be the split semisimple group of type $G_2$.

\begin{corollary}
    We have $\bM_{\ad}^{W(G, M)_{ev}} \xrightarrow{\sim} \H$. 
    Here, the action of $W(G, M)_{ev}$ is given by Tits liftings.
\end{corollary}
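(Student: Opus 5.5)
We need to show that $\bM_{\ad}^{W(G,M)_{ev}} \cong \H$, where $W(G,M)_{ev}\cong A_3$ acts on $\bM_{\ad}\cong\PGSO_8$ through Tits liftings. The strategy is to reduce to the classical fact that the fixed points of a pinned triality automorphism of an adjoint $D_4$ form a split group of type $G_2$.

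\textbf{Step 1: the action is pinned.} Fix the splitting $\mathbf{spl}_G=(T,B,\{X_\alpha\})$ of $G$; it restricts to a splitting $\mathbf{spl}_M$ of $M$. Take $w\in W(G,M)$ to be the minimal length representative in its coset modulo $W(M,T)$. Then $w$ maps $\Delta^M_0$ onto itself. By the standard property of Tits liftings, $\Ad(\widetilde w)$ sends $X_\alpha$ to $\pm X_{w\alpha}$ for each simple root $\alpha$ of $M$.

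The signs are the real issue. I would first invoke the lemma of Langlands--Shelstad/Tits that minimal length elements permuting a set of simple roots map the corresponding root vectors exactly to root vectors, with sign $+1$. If this fails, I would compose with a suitable element of $T_{\ad}(F)$ to restore the pinning; this changes neither the outer class nor the isomorphism class of the fixed group. Granting this, the induced automorphisms of $\bM_{\ad}$ preserve $\mathbf{spl}_{M_{\ad}}$.

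\textbf{Step 2: identify the action with $A_3\subset S_3$.} By the preceding lemma, $W(G,M)\to\Out_F(\bM_{\ad})\cong S_3$ is an isomorphism. A pinned automorphism is determined by its image in $\Out$, so the action of $W(G,M)_{ev}$ on $\bM_{\ad}$ coincides with the pinned action of $A_3$ from \cref{definition:automorphism-of-D4}.

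One subtlety is that $w\mapsto\Ad(\widetilde w)$ need not be a homomorphism on the nose: products of Tits lifts differ by elements of $T$. After normalizing to pinned automorphisms in Step 1, however, the composite is a homomorphism, because pinned automorphisms compose as their images in $\Out$. So the fixed points are well defined.

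\textbf{Step 3: compute the fixed group.} We must identify $\PGSO_8^{A_3}$ for the pinned triality. Fixed points of a finite group of pinned automorphisms of $\Spin_8$ form a connected group (Steinberg), here the simply connected split $G_2$.

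Since $Z(\Spin_8)=\mu_2^2$ with $A_3$ acting without nonzero fixed points, we have $H^1(A_3,\mu_2^2)=0$ and the fixed-point center is trivial. Comparing the exact sequences $1\to\mu_2^2\to\Spin_8\to\PGSO_8\to1$ on $A_3$-invariants then shows $\PGSO_8^{A_3}$ is the image of $G_2$. This step also needs connectedness of the fixed subscheme of $\PGSO_8$, which I would check on Lie algebras: the invariants have dimension $14$, and the invariants of the torus are $T^{A_3}$, which is connected.

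The resulting group is split because the pinning is defined over $F$. I expect the sign normalization of Tits liftings in Step 1 to be the main obstacle.
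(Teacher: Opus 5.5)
Your argument is correct and is essentially the paper's route: the corollary rests on the preceding lemma $W(G,M)\cong\Out_F(\bM_{\ad})\cong S_3$ together with the fact that Tits liftings preserve $\mathbf{spl}_M$. The paper proves that fact by reducing to $s_1, s_6$ and computing the Tits lift explicitly inside $\SO_{10}$, and remarks that it also follows from Springer; the fixed group of the resulting pinned $A_3$-action is then $G_2$, and your $H^1(A_3,\mu_2^2)=0$ descent from $\Spin_8$ supplies a step the paper leaves implicit. Do drop the fallback in Step 1, though: composing triality with $\Ad(t)$ for $t\in T_{\ad}$ can change the fixed group (for instance to one of type $A_2$), so the proof genuinely needs the sign-$+1$ statement, which does hold here.
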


For each root $\alpha \in \Phi(A_M, G)_{\red}$, we have a Levi subgroup $M_{\alpha} \subset G$, which contains $M$ as a Levi subgroup of corank-one.

\begin{lemma}\label{lemma:M-alpha-isogeny-GSpin10}
    We have an isomorphism 
    \begin{align*}
        \bM_{\alpha} \xrightarrow{\sim} (\GL_1 \times \Spin_{10}) / \mu_4.
    \end{align*}
    Thus, we have an isogeny
    \begin{align*}
        \bM_{\alpha} \twoheadrightarrow \PGSO_{10} \times \GL_1.
    \end{align*}
\end{lemma}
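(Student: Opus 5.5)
By the lemma on $W(G,M)$ above, the three Levi subgroups $M_\alpha$ ($\alpha = \alpha_{1,M}, \alpha_{6,M}, \widetilde{\alpha}_M$) are conjugate under Tits liftings. So it suffices to treat $\alpha = \alpha_{1,M}$, where $M_\alpha$ is the standard Levi subgroup with simple roots $\Delta \setminus \{\alpha_6\}$. The derived group $(\bM_\alpha)_{\der}$ then has Dynkin diagram $\alpha_1,\alpha_3,\alpha_4,\alpha_2,\alpha_5$, which is of type $D_5$. Since $\alpha_6$ is a minuscule node of $E_6$ (its complement is a maximal Levi subgroup with a one-dimensional center), the derived group of a Levi subgroup of a simply connected group is simply connected. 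The plan is therefore to work first in the simply connected group $E_6^{sc}$ and then pass to the adjoint quotient.

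Concretely, let $\widetilde{\bM}_\alpha$ be the corresponding Levi subgroup of $E_6^{sc}$. Its derived group is $\Spin_{10}$, and its center contains the one-dimensional torus $S$, the image of the fundamental coweight $\varpi_6^\vee$ (which lies in $X_*$ of the adjoint torus; for the simply connected torus, use instead the cocharacter $3\varpi_6^\vee$, or better, the connected center). The multiplication map $S \times \Spin_{10} \to \widetilde{\bM}_\alpha$ is surjective with finite central kernel. Then $\bM_\alpha = \widetilde{\bM}_\alpha / Z(E_6^{sc})$ with $Z(E_6^{sc}) \cong \mu_3$. The cleaner route is to compute directly with cocharacter lattices. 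We have $X_*(\bT) = $ the coweight lattice $P^\vee(E_6)$, since $\bG$ is adjoint. This lattice contains $\Z\varpi_6^\vee \oplus Q^\vee(D_5)$ with some finite index, and $\bM_\alpha \cong (\GL_1 \times \Spin_{10})/K$, where $\GL_1$ has cocharacter $\varpi_6^\vee$ (or a suitable primitive element orthogonal to $D_5$), and $K$ is the kernel. The key computation is to express $\varpi_6^\vee$ in the basis of simple coroots. Using the inverse Cartan matrix of $E_6$, we get $\varpi_6^\vee = \tfrac13(2,3,4,6,5,4)$ in the order $\alpha_1,\dots,\alpha_6$. One then computes the quotient $P^\vee(E_6)/(\Z\lambda \oplus Q^\vee(D_5))$, where $\lambda$ generates $X_*(Z(\bM_\alpha)^\circ) \cap$ (the orthogonal complement of the $D_5$-roots). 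By \cref{lemma:center-connected}, $X^*(Z(\bM_\alpha)) = \Z\alpha_{6}|_{A}$, so $X_*(A_{M_\alpha}) = \Z\varpi_6^\vee$.

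The quotient $P^\vee(E_6)/(\Z\varpi_6^\vee + Q^\vee(D_5))$ is identified, via projection to $P^\vee(D_5)/Q^\vee(D_5) \cong \Z/4$, with a cyclic group. The main obstacle is verifying that this group is exactly $\mu_4$, embedded diagonally: a generator of $Z(\Spin_{10}) \cong \mu_4$ paired with a primitive fourth root of unity in $\GL_1$. I would check this by computing $\varpi_1^\vee$ (or $\varpi_5^\vee$), whose $D_5$-projection is a spin coweight, modulo $\Z\varpi_6^\vee$, and comparing denominators. An index computation then finishes the argument: $\det$ of the $E_6$ Cartan matrix is $3$, $\det$ of $D_5$ is $4$, and the pairing $\langle \alpha_6, \varpi_6^\vee\rangle = 1$. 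This shows the kernel has order $4$, and the generator's projection to $Z(\Spin_{10})$ is of order $4$. Hence $\bM_\alpha \cong (\GL_1 \times \Spin_{10})/\mu_4$.

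For the isogeny, divide further by the full center $Z(\Spin_{10}) \times \mu_4 \subset \GL_1 \times \Spin_{10}$ restricted appropriately. Concretely, the map $(t,g) \mapsto (\bar g, t^4)$ from $\GL_1 \times \Spin_{10}$ to $\PGSO_{10}\times\GL_1$ (with $\bar g$ the image in $\PSO_{10} \subset \PGSO_{10}$, multiplied by the image of $t$ in the center of $\GSO_{10}$ modulo scalars) kills the diagonal $\mu_4$. It therefore factors through $\bM_\alpha$, and is surjective with finite kernel. Equivalently, one takes the product of $\bM_\alpha \to \bM_\alpha/Z(\bM_\alpha) = \PSO_{10}$ with the character $\alpha_6$ (up to a power), and checks that the kernel is finite, since these two maps separate the derived group and the center.
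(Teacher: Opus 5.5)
Your proposal is correct, but it works on the cocharacter side, whereas the paper works with characters and relies on \cref{lemma:center-connected}. For the Levi subgroup with simple roots $\alpha_1,\dots,\alpha_5$ (the same one you chose), the paper computes $X^*(T\cap(M_\alpha)_{\der})\cong\bigoplus_i\Z\alpha_i/\Z\cdot 3\varpi_6$. From this it reads off $X^*(Z((M_\alpha)_{\der}))\cong\Z\alpha_6/4\Z\alpha_6\cong\Z/4$. A group of type $D_5$ with center $\mu_4$ must be $\Spin_{10}$. Since $Z(M_\alpha)=A_{M_\alpha}$ is connected, it contains $Z((M_\alpha)_{\der})$, so $A_{M_\alpha}\cap(M_\alpha)_{\der}=\mu_4$ and the diagonal embedding is automatic.

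You instead compute the kernel of $\GL_1\times\Spin_{10}\to M_\alpha$ directly as $P^\vee(E_6)/(\Z\varpi_6^\vee\oplus Q^\vee(D_5))\cong\tfrac13\Z/\tfrac43\Z\cong\Z/4$. This makes explicit which $\mu_4\subset A_{M_\alpha}$ occurs (through $\varpi_6^\vee$). The cost is that you must check separately that the kernel meets $1\times\Spin_{10}$ trivially, which is exactly the simple connectedness of $(M_\alpha)_{\der}$ inside the adjoint group. That fact is not automatic from the statement about Levi subgroups of simply connected groups. Your ``comparing denominators'' step supplies it, as would the coprimality of $|Z(E_6^{sc})|=3$ and $4$ when passing to the adjoint quotient. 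The paper gets this for free from $|Z((M_\alpha)_{\der})|=4$.

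Two small slips. First, in your labelling $\alpha_1$ is the vector node of the $D_5$, not a spin node. So $\varpi_1^\vee$ only yields the element of order $2$: its $\varpi_6^\vee$-component is $\tfrac12$. Use $\varpi_5^\vee$ or $\varpi_2^\vee$ instead, whose components are $\tfrac54$ and $\tfrac34$. Second, $\alpha_6$ is not a character of $M_\alpha$. In fact $X^*(M_\alpha)=\Z\cdot 3\varpi_6$, and this restricts to $4\,\alpha_6|_{A_{M_\alpha}}$. Your map $(t,g)\mapsto(\bar g,t^4)$ already gives the isogeny correctly. The clause about the center of $\GSO_{10}$ is vacuous, since that center maps trivially to $\PGSO_{10}$.
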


\begin{proof}
    We may assume that $\alpha_M = \alpha_{6, M}$.
    By computing the character group $X^*(T \cap (M_{\alpha})_{\der})$, we can see that the center of $(M_{\alpha})_{\der}$ is isomorphic to $\mu_4$.
    In fact, we have 
    \begin{align*}
        X^*(T \cap (M_{\alpha})_{\der}) \xrightarrow{\sim} \bigoplus_{i=1, \dots, 6} \Z\alpha_i / \Z(\esixroot{2}{3}{4}{6}{5}{4})
    \end{align*}
    and 
    \begin{align*}
        X^*(Z((M_{\alpha})_{\der})) \xrightarrow{\sim} \bigoplus_{i=1, \dots, 6} \Z\alpha_i / \{\Z(\esixroot{2}{3}{4}{6}{5}{4}) + \bigoplus_{i=1, \dots, 5} \Z \alpha_i\}.
    \end{align*}
    Since the center of $M_{\alpha}$ is connected by \cref{lemma:center-connected}, the intersection $A_{M_{\alpha}} \cap (M_{\alpha})_{\der}$ equals the center of $(M_{\alpha})_{\der}$. 
    This completes the proof.
\end{proof}

\begin{remark}\label{remark:fix-isomorphism-PGSO8}
    We note that we do not have the canonical isomorphism $M_{\der} \xrightarrow{\sim} \Spin_8(F)$.
    Thus, we fix one such isomorphism in the following way.
    We fix the isomorphism $\Spin_{8} \to \bM_{\der}$ such that the action of $\Spin_8$ on the space $\Lie(N_{\widetilde{\alpha}_M})$ is the standard representation of $\Spin_8$.
    Then, the adjoint action of $M_{\der}$ on the spaces $N_{\alpha_{1, M}}$ and $N_{\alpha_{6, M}}$ gives spin representations.
\end{remark}

\subsection{Notes on root systems and their Weyl groups}\label{subsection:notes-on-Weyl-group}

Let $\Phi = (\Phi, \Phi^{\vee}, V)$ be a root system.  
Let $\Phi_1$ be a root subsystem of $\Phi$.
We denote the Weyl group of $\Phi$ (resp. $\Phi_1$) by $W$ (resp. $W_1$).
We fix a set of simple roots $I_1$ of $\Phi_1$.

In this section, we review the computation of the normalizer $N_{W}(W_1) = \Norm_{W}(W_1)$ of $W_1$ in $W$, following \cite{Carter1972-Conjugacy-Weyl-group}.

\begin{lemma}[\cite{Carter1972-Conjugacy-Weyl-group}*{Proposition 28}]\label{lemma:Weyl-group-normalizer}
    Let $\Phi_2$ be the root subsystem in $\Phi_1^{\perp} =  \bigcap_{\alpha \in \Phi_1} \Ker(\alpha^{\vee})$ defined as $\Phi_1^{\perp} \cap \Phi$.
    \begin{enumerate}
        \item     
        The Weyl group $W_2$ of $\Phi_2$ is contained in $N_{W}(W_1)$.  
        The element $w \in W$ is in $W_2$ if and only if it fixes the roots in $\Phi_1$.
        \item    
        The product $W_1 \times W_2$ is a normal subgroup of $N_{W}(W_1)$. 
        The quotient $N_{W}(W_1)/(W_1 \times W_2)$ is isomorphic to the group of automorphisms $\Aut_{W}(I_1)$ of the Dynkin diagram of $\Phi_1$ induced by $W$.
    \end{enumerate}
\end{lemma}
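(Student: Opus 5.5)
The plan is to prove the two parts of the lemma in order, using only the reflection description of $W$ and standard facts on Weyl chambers. For (1), first note that a reflection $s_\beta$ with $\beta \in \Phi_2$ satisfies $s_\beta(\alpha) = \alpha - \langle \alpha, \beta^\vee\rangle \beta = \alpha$ for every $\alpha \in \Phi_1$. Here we use $\langle \alpha, \beta^\vee \rangle = 0$, which is equivalent to $\langle \beta, \alpha^\vee\rangle = 0$. Hence every element of $W_2$ fixes $\Phi_1$ pointwise. It therefore commutes with each $s_\alpha$, $\alpha\in\Phi_1$, and so normalizes (indeed centralizes) $W_1$. For the converse, let $w$ fix $\Phi_1$ pointwise, so $w$ fixes the subspace $U=\operatorname{span}(\Phi_1)$ pointwise. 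By Steinberg's theorem (equivalently, Chevalley's theorem on pointwise stabilizers), the pointwise stabilizer in $W$ of a subspace is generated by the reflections $s_\beta$ whose hyperplanes contain $U$, i.e.\ $\beta\perp\Phi_1$, i.e.\ $\beta\in\Phi_2$. So $w\in W_2$.

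For (2), the subgroup $W_1$ is normal in $N_W(W_1)$ by definition. Next, $W_2$ is normal: for $n\in N_W(W_1)$, we have $n\Phi_1=\Phi_1$, because $\Phi_1$ is exactly the set of roots $\alpha$ with $s_\alpha\in W_1$. Taking orthogonal complements, $n\Phi_2=\Phi_2$, hence $nW_2n^{-1}=W_2$. The two subgroups intersect trivially, since $W_1$ acts trivially on $\Phi_1^{\perp}$ and $W_2$ acts trivially on $\operatorname{span}\Phi_1$, and these two spaces together span $V$. They commute by (1), so $W_1\times W_2$ is a normal subgroup.

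To identify the quotient, let $n\in N_W(W_1)$. Then $nI_1$ is a base of $\Phi_1$. Since $W_1$ acts simply transitively on the bases of $\Phi_1$, there is a unique $w_1\in W_1$ with $w_1 nI_1=I_1$. This gives a homomorphism $N_W(W_1)\to\Aut_W(I_1)$, sending $n$ to the diagram automorphism induced by $w_1n$. The map is surjective by the definition of $\Aut_W(I_1)$ as the automorphisms induced by elements of $W$ stabilizing $I_1$; such elements automatically normalize $W_1$.

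The main obstacle is computing the kernel. If $n$ maps to the identity, then $w_1n$ fixes $I_1$ pointwise, hence fixes $\Phi_1$ pointwise, and so lies in $W_2$ by (1); thus $n\in W_1W_2$. Conversely, $W_1\times W_2$ maps trivially. Indeed, for $n=uv$ with $u\in W_1$ and $v\in W_2$, the element $u^{-1}$ brings $nI_1=uI_1$ back to $I_1$, and the induced permutation is trivial. This gives the claimed isomorphism $N_W(W_1)/(W_1\times W_2)\cong\Aut_W(I_1)$. The only nontrivial input is the pointwise-stabilizer theorem used in (1), which I would quote from the standard references (e.g.\ Carter, loc.\ cit.).
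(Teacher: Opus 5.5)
Your proof is correct. The paper does not actually prove this lemma: it quotes Carter's Proposition 28 and uses it as a black box, so there is no internal argument to match.

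You instead give a self-contained derivation with two external inputs. Steinberg's theorem (the pointwise stabilizer of a subspace is generated by the reflections it contains) gives the characterization of $W_2$ in (1). The simple transitivity of $W_1$ on bases of $\Phi_1$ produces the homomorphism $N_W(W_1)\to\Aut_W(I_1)$ and identifies its kernel as $W_1W_2$.

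What writing it out buys is that the hypotheses become visible. Your step $n\Phi_1=\Phi_1$ for $n\in N_W(W_1)$ goes through $\Phi_1=\{\alpha\in\Phi : s_\alpha\in W_1\}$. That identity uses that proportional roots of $\Phi$ are $\pm$ each other, i.e.\ that $\Phi$ is reduced (or at least that $\Phi_1$ contains every root of $\Phi$ proportional to one of its roots).

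This is not a defect of your argument, because the statement itself fails without it. Take $\Phi=BC_2$ and $\Phi_1=\{\pm e_1,\pm 2e_2\}$. Then $W_1$ is normal of index $2$ in $W$ and $\Phi_2=\emptyset$, so $N_W(W_1)/(W_1\times W_2)$ has order $2$. But $\Aut_W(I_1)$ is trivial, since $W$ preserves root lengths. The assumption is harmless here: in all of the paper's applications the ambient root system is that of a split group of type $D_4$ or $E_6$, hence reduced.

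Your approach also matches how the paper uses the lemma. The proof of \cref{lemma:computation-normalizer} performs the same reduction you do: it first translates by $W_1$ to stabilize $I_1$, then notes that the resulting element stabilizes $\Phi_2$.
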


 \begin{lemma}\label{lemma:computation-normalizer}
     We fix a set of simple roots $I_2$ of $\Phi_2$.
     Then, the normalizer $N_{W}(I_1, I_2) = \Norm_{W}(I_1) \cap \Norm_{W}(I_2)$ of the subsets $I_1$ and $I_2$ in $W$ gives a splitting of the map 
     \begin{align*}
          N_{W}(W_1) \twoheadrightarrow  N_{W}(W_1)/(W_1 \times W_2).
     \end{align*}
     Thus, we have an isomorphism
     \begin{align*}
        W_2 \rtimes N_{W}(I_1, I_2) \xrightarrow{\sim} N_{W}(W_1)/W_1
     \end{align*}
     as subquotients of $W$.
 \end{lemma}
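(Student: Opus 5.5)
The plan is to prove three things: that $N_{W}(I_1, I_2)$ lies in $N_W(W_1)$, that it meets $W_1 \times W_2$ trivially, and that it surjects onto $N_W(W_1)/(W_1 \times W_2)$. The semidirect product description then follows formally. Throughout I use two standard facts. First, the Weyl group of a root system acts simply transitively on its bases. Second, by \cref{lemma:Weyl-group-normalizer}(1), $W_2$ fixes $\Phi_1$ pointwise. Dually, $W_1$ is generated by reflections $s_\alpha$ with $\alpha \in \Phi_1$, and these fix $\Phi_1^{\perp}$ pointwise, so $W_1$ fixes $\Phi_2$ pointwise.

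\emph{Containment.} If $n \in W$ stabilizes $I_1$, then it stabilizes $\Phi_1 = W_1 I_1$ and conjugates each $s_\alpha$ ($\alpha \in \Phi_1$) to $s_{n\alpha}$. Hence it normalizes $W_1$, and $N_W(I_1, I_2) \subset N_W(W_1)$.

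\emph{Trivial intersection.} Let $w = w_1 w_2$ with $w_i \in W_i$, and suppose $w$ stabilizes $I_1$ and $I_2$. Since $w_2$ fixes $\Phi_1$ pointwise, $w(I_1) = w_1(I_1)$. Simple transitivity of $W_1$ on bases of $\Phi_1$ then forces $w_1 = 1$. Symmetrically, $w(I_2) = w_2(I_2)$ forces $w_2 = 1$. This also shows $W_1 \cap W_2 = 1$, so $W_1 \times W_2$ really is an internal direct product.

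\emph{Surjectivity.} Let $n \in N_W(W_1)$. The first step is to see that $n(\Phi_1) = \Phi_1$. The element $n$ permutes the reflections lying in $W_1$, and I identify the roots of those reflections with $\Phi_1$. This identification is the one delicate point: it holds when $\Phi_1$ is the full set of roots $\alpha$ with $s_\alpha \in W_1$. That is the setting of \cref{lemma:Weyl-group-normalizer}, from Carter, and it holds in our application, where the subsystems are the ones cut out by Levi subgroups. I would state this convention explicitly.

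Granting it, $n(I_1)$ is a base of $\Phi_1$, so there is $w_1 \in W_1$ with $w_1 n(I_1) = I_1$. Since $n$ preserves $\Phi_1$, it preserves $\Phi_1^{\perp}$ and hence $\Phi_2$. The element $w_1$ fixes $\Phi_2$ pointwise, so $w_1 n(I_2)$ is still a base of $\Phi_2$. Choose $w_2 \in W_2$ with $w_2 w_1 n(I_2) = I_2$. Because $w_2$ fixes $\Phi_1$ pointwise, $w_2 w_1 n$ still stabilizes $I_1$. Thus $w_2 w_1 n \in N_W(I_1, I_2)$, and $n$ lies in $(W_1 \times W_2)\, N_W(I_1, I_2)$.

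\emph{Conclusion.} We now have $N_W(W_1) = (W_1 \times W_2) \rtimes N_W(I_1, I_2)$, with $W_1 \times W_2$ normal by \cref{lemma:Weyl-group-normalizer}(2). Passing to the quotient by $W_1$, the subgroup $W_2$ maps injectively and normally into $N_W(W_1)/W_1$, and the image of $N_W(I_1, I_2)$ is a complement to it. This gives the isomorphism $W_2 \rtimes N_W(I_1, I_2) \xrightarrow{\sim} N_W(W_1)/W_1$. The main obstacle I expect is the step $n(\Phi_1) = \Phi_1$; everything else is simple transitivity.
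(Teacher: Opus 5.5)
Your proposal is correct and follows the paper's proof essentially step for step. It shows containment, then trivial intersection via simple transitivity of $W_1$ and $W_2$ on bases (the paper phrases this as injectivity of $W_2 \rtimes N_W(I_1,I_2) \to N_W(W_1)/W_1$), then surjectivity by first moving $n(I_1)$ back to $I_1$ with $W_1$ and then $n(I_2)$ back to $I_2$ with $W_2$. The point you flag, that every $n \in N_W(W_1)$ satisfies $n(\Phi_1)=\Phi_1$, is used tacitly in the paper; it is automatic for reduced $\Phi$, since every reflection in $W_1$ is some $s_\alpha$ with $\alpha\in\Phi_1$, and this covers the split $E_6$ application.
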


 \begin{proof}
    Let $w$ be an element of $N_{W}(I_1, I_2)$.
    Then, we have $wI_1 = I_1$ and $w W_1 w^{-1} = W_1$.
    Thus, we have an inclusion $N_{W}(I_1, I_2) \subset N_{W}(W_1)$.
    Of course, we also have $w W_2 w^{-1} = W_2$ in the above situation.

    We will show the injectivity of the map 
    \begin{align*}
        W_2 \rtimes N_{W}(I_1, I_2) \xrightarrow{\sim}
         N_{W}(W_1)/W_1.
    \end{align*}
    We take $(w_2, w) \in W_2 \rtimes N_{W}(I_1, I_2)$ which maps to the identity in $N_{W}(W_1)/W_1$.
    Then, we have $w_1 w_2 w  = 1$ for some $w_1 \in W_1$.
    Acting with the left-hand side on $I_1$, we have
    \begin{align*}
        w_1w_2w(I_1) = w_1(I_1)
    \end{align*}
    and $w_1 = 1$.
    Similarly, we have $w_2 = 1$.
    Thus, we have proved the injectivity of the map.

    We will prove the surjectivity.
    Let $w \in N_W(W_1)$ be any element.
    Then, by translating by an element of $W_1$, we may assume that $wI_1 = I_1$. 
    In this situation, $w$ also normalizes $\Phi_2$.
    Thus, there exists an element $w_2 \in W_2$ such that $w_2 w I_2 = I_2$ and $w_2 w I_1 = w_2 I_1 = I_1$.
    Thus, we obtained the result.
 \end{proof}

\begin{remark}\label{remark:levi-weyl}
    Let $\bG$ be a reductive group over $F$.
    Let $(P_0, A_0)$ be a minimal parabolic pair of $G$ over $F$.
    Then, we have the root system $\Phi(G, A_0)$.
    Let $M$ be a Levi subgroup of $G$ containing $A_0$.
    Then, the set of roots $\Phi(M, A_0)$ is a root subsystem of $\Phi(G, A_0)$.
    In this case, there exists a canonical isomorphism
    \begin{align*}
        N_{W(G, A_0)}(W(M, A_0)) = W(G, M).
    \end{align*}
    Thus, we can apply \cref{lemma:Weyl-group-normalizer,lemma:computation-normalizer} to this situation.
\end{remark}

\subsection{Relation between splittings and Tits liftings}

Let $\mathbf{spl}_G = (T, B, \{X_{\alpha}\}_{\alpha})$ be an $F$-splitting of $G$.
This gives the splitting $\mathbf{spl}_M$ of $M$ which can be identified with the splitting of $M_{\ad}$.
We denote it by $\mathbf{spl}_{M_{\ad}}$. 

In this section, we prove the following.
\begin{proposition}\label{proposition:Tits-lifting-preserves-splitting}
    The Tits lifting $\widetilde{w}$ of any element $w \in W(G, M)$ preserves the splitting $\mathbf{spl}_{M}$.
\end{proposition}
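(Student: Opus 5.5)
The plan is to reduce the statement to a concrete description of the representatives of $W(G, M)$ in the absolute Weyl group $W = W(G, T)$, and then check the three parts of $\mathbf{spl}_M$ separately. The three parts are the torus $T$, the Borel subgroup $B \cap M$, and the simple root vectors $X_{\alpha}$ for $\alpha \in \Delta^M$.

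\emph{Step 1: choice of representatives.} By \cref{remark:levi-weyl}, we have $W(G, M) = N_W(W_1)/W_1$ with $\Phi_1 = \Phi(M, T)$ of type $D_4$ and $I_1 = \Delta^M = \{\alpha_2, \alpha_3, \alpha_4, \alpha_5\}$.
\begin{itemize}
\item I will first check that $\Phi_2 = \Phi_1^{\perp} \cap \Phi(G, T)$ is empty. This is a direct check on the $E_6$ roots. It is consistent with $W(G, M) \cong S_3 \cong \Aut(D_4)$, which leaves no room for a nontrivial $W_2$ in \cref{lemma:Weyl-group-normalizer}.
\item \Cref{lemma:computation-normalizer} then identifies $W(G, M)$ with the subgroup $N_W(I_1) = \{ w \in W \mid w(I_1) = I_1 \}$.
\item The Tits lifting attached to $w \in W(G, M)$ is the Tits lifting $\widetilde{w}$ of this representative, which is the minimal length element of its coset. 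Concretely, $\widetilde{w} = n_{\beta_1} \cdots n_{\beta_r}$ for a reduced expression $w = s_{\beta_1} \cdots s_{\beta_r}$, where $n_{\beta} = x_{\beta}(1) x_{-\beta}(-1) x_{\beta}(1)$.
\end{itemize}

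\emph{Step 2: torus and Borel.} Since $\widetilde{w} \in N_G(T)$, it preserves $T$. Since $w(I_1) = I_1$, it maps $\Phi(M, T)^+$ onto $\Phi(M, T)^+$. Hence $\Ad(\widetilde{w})$ preserves $M$ and $B \cap M$.

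\emph{Step 3: root vectors.} It remains to show that $\Ad(\widetilde{w}) X_{\alpha} = X_{w\alpha}$ for every $\alpha \in I_1$. A priori we have $\Ad(\widetilde{w}) X_{\alpha} = c\, X_{w\alpha}$ with $c = \pm 1$, because the Chevalley structure constants for Tits liftings of a split group are signs. The main obstacle is to show that the sign $c$ is $+1$.

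I would argue as follows. Put $\beta = w\alpha$, which is again a simple root.
\begin{itemize}
\item Since $w\alpha > 0$, we have $\ell(w s_{\alpha}) = \ell(w) + 1$.
\item Since $w^{-1}\beta = \alpha > 0$, we have $\ell(s_{\beta} w) = \ell(w) + 1$.
\item Both elements equal $w s_{\alpha} = s_{\beta} w$. Because Tits liftings are multiplicative on length-additive products, $\widetilde{w}\, n_{\alpha} = n_{\beta}\, \widetilde{w}$, so $\widetilde{w}\, n_{\alpha}\, \widetilde{w}^{-1} = n_{\beta}$.
\item On the other hand, conjugating $n_{\alpha} = x_{\alpha}(1) x_{-\alpha}(-1) x_{\alpha}(1)$ gives
\begin{align*}
x_{\beta}(c)\, x_{-\beta}(-c')\, x_{\beta}(c) = x_{\beta}(1)\, x_{-\beta}(-1)\, x_{\beta}(1)
\end{align*}
inside the image of $\SL_2$ attached to $\beta$, for some signs $c, c' \in \{\pm 1\}$.
\item A direct $2 \times 2$ matrix computation forces $c = c' = 1$. (The matrix identity is routine and I would not expect trouble there.)
\end{itemize}

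This gives $\Ad(\widetilde{w}) X_{\alpha} = X_{w\alpha}$ for all $\alpha \in I_1$. Together with Step 2, $\widetilde{w}$ preserves $\mathbf{spl}_M$, which proves \cref{proposition:Tits-lifting-preserves-splitting}.
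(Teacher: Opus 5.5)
Your proof is correct, but it takes a different route from the paper's.

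\textbf{The paper's route.} The paper argues concretely, in three steps.
\begin{enumerate}
\item By length additivity ($\widetilde{s_1s_6}=\widetilde{s_1}\widetilde{s_6}$, etc.) it reduces to $\widetilde{s_1}$ and $\widetilde{s_6}$.
\item Via the isogeny of \cref{lemma:M-alpha-isogeny-GSpin10} and $\Lie(\PGSO_{10})\cong\Lie(\SO_{10})$, it reduces further to the Levi $\GL_1\times\SO_8\subset\SO_{10}$.
\item There it writes the Tits lifting as $\widetilde{\sigma_5}\widetilde{\sigma_4}\widetilde{\sigma_3}\widetilde{\sigma_1}\widetilde{\sigma_2}\widetilde{\sigma_3}\widetilde{\sigma_4}\widetilde{\sigma_5}$, using Stumbo's minimal-length representative. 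It computes this as an explicit signed permutation of the hyperbolic basis, which visibly permutes the simple root vectors of $\SO_8$.
\end{enumerate}

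\textbf{Your route.} You instead prove a general fact, which the paper only alludes to in a remark citing Springer. The fact is that $\Ad(\widetilde{w})X_\alpha=X_{w\alpha}$ whenever $\alpha$ and $w\alpha$ are both simple, and you get it from $\widetilde{w}n_\alpha\widetilde{w}^{-1}=n_{w\alpha}$.

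\textbf{Comparison.} Your argument is uniform. It needs no reduction to corank one, no transport of Tits liftings through the isogeny to $\PGSO_{10}$, and no explicit reduced word. It works for any Levi subgroup and any $w$ with $w(\Delta^M)=\Delta^M$. The paper's computation has the side benefit of producing the explicit matrix of $\widetilde{s}$ in $\SO_{10}$. That is the kind of data used again in \cref{lemma:bigcell-intersection-SO10}.

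\textbf{One step needs a sentence.} The identity $\widetilde{w}n_\alpha\widetilde{w}^{-1}=n_\beta$ holds in $G$, not in $\SL_2$. So the $2\times 2$ computation forces $c=1$ only if $\varphi_\beta\colon\SL_2\to G$ is injective. Otherwise $\begin{pmatrix}0&1\\-1&0\end{pmatrix}$ and $\begin{pmatrix}0&-1\\1&0\end{pmatrix}$ have the same image, and you only get $c=\pm1$. Such non-injectivity does occur, e.g.\ for short roots of $\SO_{2n+1}$.
\begin{itemize}
\item In adjoint $E_6$ there is no problem. Take a simple root $\gamma$ adjacent to $\beta$; then $\beta^\vee(-1)$ acts by $-1$ on $\fg_\gamma$, so $\beta^\vee(-1)\neq 1$ and $\varphi_\beta$ is injective.
\item Alternatively, run the argument in the simply connected cover, where each $\varphi_\beta$ is injective and the Tits representatives lift.
\end{itemize}

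Incidentally, you do not need the a priori fact that $c=\pm1$. Compatibility of the $\mathfrak{sl}_2$-triples gives $c'=c^{-1}$. Then $x_\beta(c)x_{-\beta}(-c^{-1})x_\beta(c)$ is the image of $\begin{pmatrix}0&c\\-c^{-1}&0\end{pmatrix}$, and injectivity gives $c=1$ directly.
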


\begin{remark}
    As any two $F$-splittings of $G$ are $G(F)$-conjugate, it suffices to show the proposition for an $F$-splitting of $G$.
\end{remark}

\begin{lemma}
    We have 
    \begin{align*}
        \widetilde{s_1s_6} &= \widetilde{s_1}\widetilde{s_6} \\
        \widetilde{s_6s_1} &= \widetilde{s_6}\widetilde{s_1} \\
        \widetilde{s_1s_6s_1} &= \widetilde{s_1}\widetilde{s_6}\widetilde{s_1}.
    \end{align*}
\end{lemma}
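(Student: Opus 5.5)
The three identities will follow from the multiplicativity property of Tits liftings: if $w, w' \in W$ satisfy $\ell(ww') = \ell(w) + \ell(w')$, then $\widetilde{ww'} = \widetilde{w}\,\widetilde{w'}$. Here $W = W(G,T)$ is the absolute Weyl group of $E_6$ and $\ell$ is the length with respect to the simple reflections $s_{\alpha_1}, \dots, s_{\alpha_6}$. The property is immediate from the definition of $\widetilde{w}$ as $\widetilde{s_{\beta_1}}\cdots\widetilde{s_{\beta_r}}$ along a reduced expression $w = s_{\beta_1}\cdots s_{\beta_r}$; this definition is independent of the reduced word by the braid relations, which the Tits elements satisfy. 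So the task is to check that the lengths add.

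In our situation $s_1, s_6 \in W(G,M)$ are the minimal length representatives in the cosets $s_{\alpha_1}W(M,T)$ and $s_{\alpha_6}W(M,T)$. Under $W(G,M) \cong N_W(W(M,T))/W(M,T)$ (\cref{remark:levi-weyl}), these representatives are exactly the elements of $N_W(I_M)$, where $I_M = \{\alpha_2,\alpha_3,\alpha_4,\alpha_5\}$. One checks directly that the set of minimal length coset representatives that normalize $W(M,T)$ is precisely
\begin{align*}
    \{ w \in W \mid w(I_M) = I_M \}.
\end{align*}
This set is a subgroup, and the description is compatible with \cref{lemma:computation-normalizer}, since here $\Phi_2$ is empty.

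For $w$ in this subgroup, $\ell(w)$ equals the number of positive roots sent to negative roots. No root of $\Phi(M,T)$ is inverted, because $w$ preserves $I_M$ and hence $\Phi(M,T)^+$. Thus $\ell(w)$ counts the inverted roots of $\Phi(N,T)$, where $N$ is the unipotent radical of the standard parabolic subgroup containing $M$. These roots are grouped by their restrictions to $A_M$. Hence
\begin{align*}
    \ell(w) = \sum_{\beta} \dim \fn_{\beta},
\end{align*}
where $\beta$ runs over those positive roots in $\Phi(G,A_M) = \{\pm\alpha_{1,M}, \pm\alpha_{6,M}, \pm\widetilde{\alpha}_M\}$ with $w\beta < 0$. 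In other words, $\ell$ restricted to $N_W(I_M)$ is a weighted length function on the rank-two root system of type $A_2$ with Weyl group $W(G,M) \cong S_3$.

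By the remark preceding the lemma, $s_1\alpha_{6,M} = s_6\alpha_{1,M} = \widetilde{\alpha}_M$, and all three root spaces are $8$-dimensional (they carry the standard and the two spin representations of $\Spin_8$, \cref{remark:fix-isomorphism-PGSO8}). This gives
\begin{align*}
    \ell(s_1) = \ell(s_6) = 8, \quad \ell(s_1s_6) = \ell(s_6s_1) = 16, \quad \ell(s_1s_6s_1) = 24,
\end{align*}
since the corresponding $A_2$-lengths are $1, 2, 3$. The lengths are therefore additive in each product, and the three identities follow from multiplicativity.

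The only delicate point is the identification of $\ell$ on $N_W(I_M)$ with the weighted $A_2$-length, specifically that no roots of $M$ are inverted. This will be handled by the preservation of $I_M$ noted above.
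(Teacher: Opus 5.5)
Your proof is correct and follows the paper's own argument, which says only that the identities follow from computing the length of each element. You fill in that computation: Tits liftings are multiplicative when lengths add, and the weighted $A_2$-length on $N_W(I_M)$ gives $\ell(s_1)=\ell(s_6)=8$, $\ell(s_1s_6)=\ell(s_6s_1)=16$, $\ell(s_1s_6s_1)=24$. One wording quibble, inherited from the paper: the representative of $s_1$ is $w_0^{M_{\alpha_1}}w_0^{M}$, the element of $N_W(I_M)$ acting on $\fa_M$ by the reflection in $\alpha_{1,M}$, not literally the minimal-length element of $s_{\alpha_1}W(M,T)$ (that would be $s_{\alpha_1}$, of length $1$); your length count already uses the correct element.
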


\begin{proof}
    This easily follows from the computation of the length of each element.
\end{proof}

\begin{corollary}
     To prove \cref{proposition:Tits-lifting-preserves-splitting}, it suffices to check that the elements $\widetilde{s_1}$ and $\widetilde{s_6}$ preserve the splitting of $M$.
\end{corollary}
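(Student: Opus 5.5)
The plan is to reduce to generators, using the group structure of $W(G, M)$ together with the preceding lemma on products of Tits liftings. By the earlier lemma identifying $W(G, M)$ with $S_3$, the group $W(G, M)$ is generated by the simple reflections $s_1$ and $s_6$, subject to the Coxeter relations $s_1^2 = s_6^2 = (s_1 s_6)^3 = 1$. Its six elements are therefore
\begin{align*}
    1, \quad s_1, \quad s_6, \quad s_1 s_6, \quad s_6 s_1, \quad s_1 s_6 s_1 = s_6 s_1 s_6 .
\end{align*}
I will go through this list and express the Tits lifting of each element through $\widetilde{s_1}$ and $\widetilde{s_6}$.

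First, the Tits lifting of the identity is the identity element of $G$, which trivially preserves $\mathbf{spl}_M$. The liftings $\widetilde{s_1}$ and $\widetilde{s_6}$ preserve $\mathbf{spl}_M$ by hypothesis. For the three remaining elements, the preceding lemma gives
\begin{align*}
    \widetilde{s_1s_6} = \widetilde{s_1}\widetilde{s_6}, \quad
    \widetilde{s_6s_1} = \widetilde{s_6}\widetilde{s_1}, \quad
    \widetilde{s_1s_6s_1} = \widetilde{s_1}\widetilde{s_6}\widetilde{s_1}.
\end{align*}
So each of these liftings is a product of elements that preserve $\mathbf{spl}_M$.

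Next, I use that the stabilizer of a splitting under the conjugation action is a subgroup. Conjugation by an element $g$ of $N_G(M)$ sends a splitting $(T, B \cap M, \{X_\alpha\})$ of $M$ to another splitting, and this action is compatible with composition. Hence the set of elements of $N_G(M)$ fixing $\mathbf{spl}_M$ is closed under products. It follows that all six Tits liftings preserve $\mathbf{spl}_M$, which is exactly the statement of \cref{proposition:Tits-lifting-preserves-splitting}.

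The only point requiring care is consistency of the Tits liftings. Here $\widetilde{w}$ for $w \in W(G, M)$ must be taken as the Tits lifting of the minimal length representative in $W(G, A_0)$, as in the remark identifying $s_1, s_6$ with minimal length coset representatives. The preceding lemma, whose proof is a length additivity computation, is precisely what ensures these liftings multiply without correction terms. Since that lemma is already available, I expect no real obstacle. The substantive work lies in verifying the generator case for $\widetilde{s_1}$ and $\widetilde{s_6}$ afterwards, not in this reduction.
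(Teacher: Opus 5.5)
Your proposal is correct and matches the paper's argument: the corollary follows immediately from the preceding lemma, which writes $\widetilde{s_1s_6}$, $\widetilde{s_6s_1}$ and $\widetilde{s_1s_6s_1}$ as products of $\widetilde{s_1}$ and $\widetilde{s_6}$. Combined with the fact that the elements preserving $\mathbf{spl}_M$ are closed under products, this gives the claim for all six elements of $W(G, M) \cong S_3$.
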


\begin{lemma}\label{lemma:reduction-splitting-preserved}
    To prove \cref{proposition:Tits-lifting-preserves-splitting}, it suffices to check the following. 
    We consider the Levi subgroup $\GL_1 \times \SO_{8}$ of $\SO_{10}$. 
    If we fix the splitting $\mathbf{spl}_{\SO_{10}}$ of $\SO_{10}$, then the Tits lifting $\widetilde{s}$ of the nontrivial element $s \in W(\SO_{10}, \GL_1 \times \SO_8)$ preserves the splitting.
\end{lemma}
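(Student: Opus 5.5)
The reduction is to localize the statement of \cref{proposition:Tits-lifting-preserves-splitting} inside the corank-one Levi subgroups $M_{\alpha}$ for $\alpha = \alpha_{1,M}, \alpha_{6,M}$. By the preceding corollary it suffices to treat $\widetilde{s_1}$ and $\widetilde{s_6}$. By the symmetry of the $E_6$ diagram it suffices to treat $\widetilde{s_6}$, with $\alpha_M = \alpha_{6,M}$.

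\emph{Step 1: the Tits lifting lives in $M_{\alpha}$.} The element $s_6$ is the minimal length representative of $s_{\alpha_6}W(M,T)$. It lies in the Weyl group $W(M_{\alpha},T)$, since it is the longest element of $W(M_\alpha,T)$ times the longest element of $W(M,T)$. Tits liftings are computed from reduced expressions in simple reflections using the root vectors $X_{\beta}$ of $\mathbf{spl}_G$. Hence $\widetilde{s_6}$ is the Tits lifting of the same Weyl element computed in $M_{\alpha}$ with respect to the induced splitting $\mathbf{spl}_{M_{\alpha}}$. Whether $\Ad(\widetilde{s_6})$ preserves $\mathbf{spl}_M$ is then a question inside $M_{\alpha}$.

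\emph{Step 2: pass through the isogeny and the center.} The question only concerns the action of $\Ad(\widetilde{s_6})$ on $T$, $B\cap M$ and the root vectors $X_{\beta}$, $\beta\in\Delta^M$. This action factors through $(M_{\alpha})_{\ad}$, and central elements act trivially. By \cref{lemma:M-alpha-isogeny-GSpin10}, $M_{\alpha}$ maps by a central isogeny onto $\PGSO_{10}\times \GL_1$, and its adjoint group is $\PSO_{10}$. Tits liftings and splittings are compatible with central isogenies, because they are defined via the root subgroups and $\SL_2$-homomorphisms $\varphi_{\beta}$, which lift uniquely to the derived groups. So it suffices to verify the claim in any group with the same adjoint quotient, for instance $\SO_{10}$. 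Under this identification, $M$ corresponds to the Levi $\GL_1\times\SO_8$, and $s_6$ corresponds to the nontrivial element $s$ of $W(\SO_{10},\GL_1\times\SO_8)$.

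\emph{Step 3: the core check.} With the same splitting transported to $\SO_{10}$, one must see that $\Ad(\widetilde{s})$ fixes the induced splitting of $\GL_1\times\SO_8$. Here $\widetilde{s}$ is the Tits lifting of the minimal length element, which normalizes the simple root set $\Delta^{M}$, since it is the minimal representative of a Weyl normalizer element. It therefore permutes the simple roots of $M$ and sends $X_{\beta}$ to $\pm X_{s\beta}$. The remaining issue is that all signs equal $+1$.

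The main obstacle is pinning down these signs. Write $s$ as a reduced word in the simple reflections of $D_5$ and use the Tits relations. Then $\Ad(\widetilde{w})X_{\beta}=X_{w\beta}$ whenever $w$ sends the simple root $\beta$ to a simple root and $\ell(ws_{\beta})=\ell(w)+1$; this is the standard Tits lemma. Alternatively, realize $\SO_{10}$ concretely with the antidiagonal form. In that model $s$ acts on $X^*(T)$ by $e_1\mapsto -e_1$ together with the diagram automorphism of $D_4$ swapping $e_4\leftrightarrow -e_4$ (up to the standard identification). Then compute the signed permutation matrix $\widetilde{s}$ directly and check its action on the simple root matrices $E_{i,i+1}-E_{\bar{i+1},\bar i}$. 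I would first try the Tits lemma approach, since it avoids sign bookkeeping.
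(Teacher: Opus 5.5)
Your Steps 1--2 are correct and follow the paper's approach: its proof is the one-line reduction via \cref{lemma:M-alpha-isogeny-GSpin10} (the central isogeny $M_{\alpha}\twoheadrightarrow \PGSO_{10}\times\GL_1$) together with $\Lie(\PGSO_{10})\cong\Lie(\SO_{10})$, and your localization to $M_\alpha$ plus passage through the adjoint group spells this out. Step 3 is not needed for the lemma, since it is the content of \cref{proposition:Tits-lifting-preserves-splitting} itself, which the paper settles by the explicit signed-permutation computation in $\SO_{10}$ you list as your second option; one small correction is that the minimal-length element of the coset $s_{\alpha_6}W(M,T)$ is $s_{\alpha_6}$ itself, which does not normalize $M$, so the right description of $s_6$ is the one you also give, $s_6=w_0^{M_\alpha}w_0^{M}$, and that is all Step 1 uses.
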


\begin{proof}
    This follows from \cref{lemma:M-alpha-isogeny-GSpin10} and $\Lie(\PGSO_{10}) \xrightarrow{\sim} \Lie(\SO_{10})$.
\end{proof}

\begin{proof}[The proof of \cref{proposition:Tits-lifting-preserves-splitting}]\label{proof:Tits-lifting-preserves-splitting}

    By \cref{lemma:reduction-splitting-preserved}, it suffices to show that if we fix the splitting $\mathbf{spl}_{\SO_{10}}$ of $\SO_{10}$, then the Tits lifting $\widetilde{s}$ of the nontrivial element $s \in W(\SO_{10}, \GL_1 \times \SO_8)$ preserves our splitting. 
    We will do this by taking an explicit splitting for $\SO_{10}$.
    
    Let $(V, q)$ be the split quadratic space defining the group $\SO_{10}$. 
    We identify the quadratic form with the symmetric bilinear form on $V$.
    Let $e_1, e_2, \dots, e_9, e_{10}$ be the hyperbolic basis of $V$ such that 
    \begin{align*}
        q(e_i, e_j) = \delta_{i, 11-j}
    \end{align*}
    for all $i, j = 1, \dots, 10$.
    Then, the space spanned by the vectors $e_2, \dots, e_9$ (resp. $e_1, e_{10}$) is non-degenerate and is denoted by $W \subset V$ (resp. $W' \subset V$).
    We denote the restriction of $q$ to $W$ (resp. $W'$) by $q_W$ (resp. $q_{W'}$).
    Then, the inclusion $\GL_1 \times \SO_8 \hookrightarrow \SO_{10}$ is identified with the inclusion $\SO(W') \times \SO(W) \hookrightarrow \SO(V)$.

    We fix the maximal torus $T$ of $\SO(V)$ defined by the basis $e_1, \dots, e_{10}$ and we fix the Borel subgroup $B$ of $\SO(V)$ defined by the flag $\langle e_1 \rangle \subset \langle e_1, e_2 \rangle \subset \dots \subset V$.
    We will take an explicit splitting for the Borel pair $(B, T)$. The Lie algebra $\mathfrak{so}(W)$ can be identified with the space of the skew symmetric endomorphism of $W$.
    For $i = 1, 2, 3, 4$, we define a root vector for the simple root $\gamma_i$ in $\Phi(\SO_{10}, T)$ by 
    \begin{align*}
        X_{\gamma_i} (e_j) =
        \begin{cases}
            0,        &j \neq i+1, 11-i, \\
            e_i,      &j = i+1, \\
            -e_{10-i} &j = 11-i.
        \end{cases}
    \end{align*}
    and for $i=5$, we define a root vector for the simple root $\gamma_5$ in $\Phi(\SO_{10}, T)$ by
    \begin{align*}
        X_{\gamma_5}(e_j) = 
        \begin{cases}
            0,        &j \neq 6, 7, \\
            e_4,      &j = 6, \\
            -e_5      &j = 7.
        \end{cases}
    \end{align*}
    This defines an $F$-splitting $(T, B, \{X_{\gamma_i}\}_{i=1, \dots, 5})$.
    We denote the simple reflection corresponding to $\gamma_i$ by $\sigma_{i}$.
    Then, by \cite{Stumbo2000minimallength}*{Theorem 4}, the element $\widetilde{s}$ is equal to 
    \begin{align*}
        \widetilde{\sigma_5}\widetilde{\sigma_4}\widetilde{\sigma_3}\widetilde{\sigma_1}\widetilde{\sigma_2}\widetilde{\sigma_3}\widetilde{\sigma_4}\widetilde{\sigma_5}.
    \end{align*}
    A simple computation shows that this product is equal to the element
    \begin{align*}
        \begin{cases}
           e_1 \to -e_{10}, \\ 
           e_{10} \to -e_1, \\
           e_5 \to -e_6,  \\
           e_6 \to -e_5, \\
           e_i \to -e_i, \quad i=2, 3, 4, 7, 8, 9
        \end{cases}
   \end{align*}
   in $\SO(V)$.
   This implies the result.
\end{proof}

\begin{remark}
    This proposition also follows from \cite{Springer1984-linearalgebraicgroups}*{Proposition 9.3.5}.
\end{remark}

\begin{corollary}
    For any $w_1, w_2 \in W(G, M)$, there exists an element $z(w_1, w_2) \in Z(M)$ such that 
    \begin{align*}
        \widetilde{w_1w_2} = z(w_1, w_2) \widetilde{w_1} \widetilde{w_2}.
    \end{align*}
\end{corollary}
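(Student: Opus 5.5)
The corollary follows directly from \cref{proposition:Tits-lifting-preserves-splitting}. The idea is to compare the two elements $\widetilde{w_1w_2}$ and $\widetilde{w_1}\widetilde{w_2}$ of $N_G(T)$ through their action on $M$ by conjugation.

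First, both elements normalize $M$, and they have the same image $w_1w_2$ in $W(G, M)$. Hence
\begin{align*}
    m = \widetilde{w_1}\widetilde{w_2}\,\widetilde{w_1w_2}^{-1}
\end{align*}
lies in $N_G(T) \cap M$. More precisely, its image in $W(G,T)$ lies in $W(M,T)$: both elements induce the same automorphism of $A_M$, and the Weyl set $W(G,M)$ is the quotient of the transporter by $M$.

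Second, by \cref{proposition:Tits-lifting-preserves-splitting}, each of $\widetilde{w_1}$, $\widetilde{w_2}$ and $\widetilde{w_1w_2}$ preserves $\mathbf{spl}_M$ under conjugation. It follows that $\Ad(m)$ preserves $\mathbf{spl}_M = (T \cap M, B \cap M, \{X_\alpha\}_{\alpha \in \Delta^M})$. I will then use the standard fact that an inner automorphism of $M$ preserving a splitting is trivial on $M_{\ad}$. Indeed, $\Ad(m)$ fixes $B\cap M$ and $T$, so the image of $m$ in $W(M,T)$ stabilizes the positive system and is therefore trivial, giving $m \in T$. Next, $\Ad(m)$ fixes each $X_\alpha$ for $\alpha \in \Delta^M$, so $\alpha(m)=1$ for every simple root of $M$. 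An element $t \in T$ killed by all roots of $M$ lies in $Z(M)$. Hence $m \in Z(M)$.

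Finally, $\widetilde{w_1w_2} = m^{-1}\widetilde{w_1}\widetilde{w_2}$, and since $Z(M)$ is central in $M$ the ordering does not matter, so $z(w_1,w_2) = m^{-1} \in Z(M)$. By \cref{lemma:center-connected}, $Z(M) = A_M$, so $z(w_1,w_2)$ is in fact an $F$-point of the split torus $A_M$. Rationality is automatic, since the Tits liftings lie in $G(F)$.

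The only mildly delicate point is the claim $m \in M$, i.e.\ that the two liftings differ by an element of $M$ rather than merely of $N_G(T)$. I will handle it through the isomorphism $N_{W(G,A_0)}(W(M,A_0))/W(M,A_0) = W(G,M)$ of \cref{remark:levi-weyl}. Since $\widetilde{w}$ is defined as the Tits lift of a chosen representative in $W(G,T)$, the images of $\widetilde{w_1}\widetilde{w_2}$ and $\widetilde{w_1w_2}$ in $W(G,T)$ agree modulo $W(M,T)$. Any such discrepancy in $W(M,T)$ is then killed by the splitting argument above.
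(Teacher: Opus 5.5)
Your proof is correct and is essentially the deduction the paper intends; the paper states the corollary without proof immediately after \cref{proposition:Tits-lifting-preserves-splitting}, and you supply exactly the missing argument: the quotient $\widetilde{w_1}\widetilde{w_2}\,\widetilde{w_1w_2}^{-1}$ lies in $M$ because $W(G,M)=N_G(M)/M$, and an element of $M$ whose inner automorphism preserves $\mathbf{spl}_M$ is central in $M$. Your final paragraph is more cautious than it needs to be: the Tits liftings are taken of the minimal-length representatives in $W(G,T)$, which send $\Delta^M$ to $\Delta^M$, so $\widetilde{w_1}\widetilde{w_2}$ and $\widetilde{w_1w_2}$ already have the same image in $W(G,T)$ and the quotient lies in $T$ from the start.
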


\subsection{Normalized intertwining operators for $D_4 \subset D_5$}

Let $\alpha$ be an element of $\Phi(G, A_M)_{\red}$.
We take a tempered representation $\sigma$ of $M$ such that the central character of $\sigma$ on $A_{M_{\alpha}}$ is trivial.
Note that the group $M_{\alpha}/A_{M_{\alpha}}$ is isomorphic to $\PGSO_{10}(F)$ by \cref{lemma:center-connected}.

By \cref{lemma:M-alpha-isogeny-GSpin10} and \cref{corollary:central-morphism-normalizing-factor-compatible}, the computation of a normalizing factors for $M_{\alpha}, M, \sigma$ is reduced to the computation of the normalizing factors for $L'=\SO_{10}(F), M'=\GL_1(F) \times \SO_{8}(F)$.
For general $\lambda \in \fa^{G, *}_{M, \C}$ and $\sigma \in \Pi_{\temp}(M)$, we set $\phi^M_{\sigma_{\lambda}} = \phi^M_{\sigma} \otimes \chi_{\lambda}$.

\begin{theorem}[\cite{Art13}*{Proposition 2.3.1}]
    Let $\sigma'$ be an irreducible tempered representation of $M'=\GL_1(F) \times \SO_{8}(F)$.
    We take $P' \in \cP^{L'}(M')$. 
    Then, the function
    \begin{align*}
       \epsilon(\frac{1}{2}, \phi^{\GL_1 \times \SO_8}_{\sigma'_{\lambda}}, \rho_{\overline{P'} \vert P'}^{\vee}, \psi)
        \gamma_A(0, \phi, \rho_{\overline{P'} \vert P'}^{\vee}, \psi)^{-1}
    \end{align*}
    is well-defined, and we can take this function as a normalizing factor $r_{\overline{P'} \vert P'}(\sigma'_{\lambda})$.
\end{theorem}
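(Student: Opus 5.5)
The plan is to reduce to the classical statement of Arthur for the pair $(\GL_1 \times \SO_8) \subset \SO_{10}$, restricted to the corank-one setting here. First, by \cref{remark:reduction-corank-one}, we only need a normalizing factor $r_{\overline{P'} \vert P'}(\sigma'_\lambda)$ satisfying the axioms $(R_1)$–$(R_7)$ of \cite{KuokFaiLi2014-R-group}*{Definition 2.2.1} for the maximal Levi $M'$ of $L'$. Since $\phi^{\GL_1 \times \SO_8}_{\sigma'}$ is only defined up to $\O_8(\C)$-conjugacy (\cref{theorem:local-Langlands-correspondence-SO2n}), the first task is well-definedness. The representation $\rho_{\overline{P'} \vert P'}$ of ${}^L M' = \GL_1(\C) \times \SO_8(\C) \times W_F$ on $\widehat{\fn}_{\overline{P'}}/\widehat{\fn}_{\overline{P'}} \cap \widehat{\fn}_{P'}$ is $\Std_{\GL_1}^{\pm 1} \otimes \Std_{\SO_8}$, with no $\wedge^2$ of the $\GL_1$-part since its rank is one. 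This is a representation of $\GL_1(\C) \times \O_8(\C)$, so the $\epsilon$- and $\gamma_A$-factors depend only on the $\O_8(\C)$-class, that is, only on the standard parameter $\phi^{\GL_8}_{\sigma'}$ together with the $\GL_1$-character.

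Next I would check the axioms. Arthur's Proposition 2.3.1 (via the local intertwining relation framework) says precisely that for $\sigma'$ in a tempered $L$-packet of $M'$, the quotient $r^{-1}J$ is holomorphic and unitary on the unitary axis and satisfies the cocycle and adjointness relations. The rationality in $q^{-\lambda}$ and the compatibility with unramified twists are immediate from the explicit formula with $\phi_{\sigma'_\lambda} = \phi_{\sigma'} \otimes \chi_\lambda$. The nontrivial axioms are the following.

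The first is that the analytic behavior, namely poles and zeros at $\lambda = 0$ matching $\mu_\alpha$, agrees with Shahidi's Langlands--Shahidi $\gamma$-factors. I would handle this by invoking the equality of Arthur's parameters with the Langlands--Shahidi local factors for generic members and transferring to the whole packet through the fact that $\mu_\alpha$, and hence $J_{\overline{P'}\vert P'}J_{P'\vert\overline{P'}}$, is constant on the packet. This constancy follows from stability and the Plancherel-measure identity in \cite{Art13}. Equivalently, I would compare with Harish-Chandra's $\mu$-function formula $\mu_\alpha(\sigma') = \gamma(0,\ldots)\gamma(0,\ldots)^{\vee}$ up to constants.

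The second is the dependence on $\psi$, which is absorbed by the $\epsilon(\frac12, \cdot)$ factor exactly as in Arthur, since $\epsilon(\frac12,\rho)\gamma_A(0,\rho)^{-1} = L(0,\rho)/L(1,\rho)$ up to $\epsilon(0)/\epsilon(\frac12)$, which is an unramified exponential in $\lambda$.

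The main obstacle I expect is matching Arthur's normalization, which is stated for the full $\SO_{2n}$ with Whittaker-normalized operators and the $\O_{2n}$-ambiguity, to the plain normalizing-factor axioms used here. In particular, one must confirm that the relevant Levi $\GL_1 \times \SO_8$ carries no extra $\wedge^2$ term. I would settle this by directly computing $\rho_{\overline{P'}\vert P'}$ from the root data of $D_5$ relative to $D_4$: the roots of $N_{P'}$ are the $e_1 \pm e_j$, giving $\Std_1 \otimes \Std_8$ and nothing else. Once that is confirmed, the theorem is a direct specialization of \cite{Art13}*{Proposition 2.3.1}.
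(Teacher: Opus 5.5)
Your proposal is correct and follows the paper's approach: the paper gives no argument beyond citing \cite{Art13}*{Proposition 2.3.1}, and your computation $\rho_{\overline{P'}\vert P'} \cong \Std_{\GL_1}^{\pm 1}\otimes\Std_{\SO_8}$, which extends to $\GL_1(\C)\times\O_8(\C)$, correctly explains why the factor does not depend on the choice of $\phi^{\GL_1 \times \SO_8}_{\sigma'}$ within its $\O_8(\C)$-class. Your middle paragraph re-verifying the axioms is superfluous, and as a standalone argument it would be circular, since packet-constancy of $\mu_\alpha$ is normally deduced from this very proposition. Also, $\epsilon(\frac{1}{2},\cdot)/\epsilon(0,\cdot)$ is a positive constant independent of $\lambda$, not an exponential in $\lambda$.
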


For any $\alpha \in \Phi(G, A_M)_{\red}$, there exists a unique $w_{\alpha} \in W(G, M)$ such that $w_{\alpha}(\widetilde{\alpha}_M) = \alpha$.
As before, we take the isomorphism $M_{\widetilde{\alpha}_M}/A_{M_{\widetilde{\alpha}_M}} \xrightarrow{\sim} \PGSO_{10}(F)$ such that, via this isomorphism, the action of $\Spin_8 \xrightarrow{\sim} \widehat{M}_{\mathrm{sc}}$ on the space $\Lie(N_{\widetilde{\alpha}_M})$ is equal to the standard representation of $\Spin_8$.
We fix an isomorphism $M_{\alpha}/A_{M_{\alpha}} \xrightarrow{\sim} \PGSO_{10}(F)$ by using the Tits lifting $\widetilde{w_{\alpha}}$.

\begin{corollary}\label{corollary:normalizing-factor-D5-D4}
    We can take the function 
    \begin{align*}
        \epsilon(\frac{1}{2},\phi^{M/A_{M_{\alpha}}}_{{w_{\alpha}^{-1}}(\sigma_{\lambda})}, \Std_{\GL_1 \times \GSO_8}, \psi)
        \gamma_A(0, \phi^{M/A_{M_{\alpha}}}_{{w_{\alpha}^{-1}}(\sigma_{\lambda})}, \Std_{\GL_1 \times \GSO_8}, \psi)^{-1}
    \end{align*}
    as a normalizing factor $r_{\overline{P^{M_{\alpha}}} \vert P^{M_{\alpha}}}(\sigma_{\lambda})$.
\end{corollary}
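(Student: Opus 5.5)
The plan is to transport Arthur's normalizing factor for the pair $(L', M') = (\SO_{10}, \GL_1 \times \SO_8)$ to the triple $(M_\alpha, M, \sigma)$. The transport goes along the chain of central morphisms and isomorphisms already set up, and the only real content is identifying the representation $\rho^{\vee}_{\overline{P'} \vert P'}$ with $\Std_{\GL_1 \times \GSO_8}$ after this transport.

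First, I reduce to the case $\alpha = \widetilde{\alpha}_M$. Conjugation by the Tits lifting $\widetilde{w_\alpha}$ carries $(M_{\widetilde{\alpha}_M}, M, P^{M_{\widetilde{\alpha}_M}})$ to $(M_\alpha, M, P^{M_\alpha})$. It sends $\sigma$ to $w_\alpha(\sigma)$, and it intertwines the unnormalized operators $J$ through $l(\widetilde{w_\alpha})$. Since the defining conditions $(R_1)$–$(R_7)$ are transport-invariant under isomorphisms of triples, a normalizing factor for $(M_{\widetilde{\alpha}_M}, M, w_\alpha^{-1}(\sigma))$ is one for $(M_\alpha, M, \sigma)$. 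This is precisely why $w_\alpha^{-1}(\sigma_\lambda)$ appears in the formula, and why the identification $M_\alpha/A_{M_\alpha} \cong \PGSO_{10}(F)$ was fixed via $\widetilde{w_\alpha}$. By \cref{proposition:Tits-lifting-preserves-splitting}, this conjugation preserves $\mathbf{spl}_M$, so the resulting identification of $M$ is compatible with the fixed splittings.

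Second, I treat $\alpha = \widetilde{\alpha}_M$. Because the central character of $\sigma$ on $A_{M_\alpha}$ is trivial, I may pass to $M_\alpha/A_{M_\alpha} \cong \PGSO_{10}$; normalizing factors are unchanged, since the intertwining integrals only involve $N$. I then apply the isogeny $\SO_{10} \to \PGSO_{10}$ from \cref{lemma:M-alpha-isogeny-GSpin10}, together with the central morphism $\GL_1 \times \SO_8 \to M/A_{M_\alpha}$. \cref{corollary:central-morphism-normalizing-factor-compatible} lets me take $r_{\overline{P} \vert P}(\sigma_\lambda) = r_{\overline{P'} \vert P'}(\sigma'_\lambda)$ for any irreducible constituent $\sigma'$ of the pullback of $\sigma$, using \cref{lemma:condition-existence-extension-to-M} to realize $\sigma$ inside $\ind^M_{M^\sharp}(\sigma^\sharp)$. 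Arthur's theorem then gives the factor
\[
\epsilon(\tfrac12, \phi^{M'}_{\sigma'_\lambda}, \rho^\vee_{\overline{P'}\vert P'}, \psi)\,\gamma_A(0, \phi^{M'}_{\sigma'_\lambda}, \rho^\vee_{\overline{P'}\vert P'}, \psi)^{-1}.
\]

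Third, and this is the main point to check, I must identify $\rho^\vee_{\overline{P'}\vert P'} \circ \phi^{M'}_{\sigma'_\lambda}$ with $\Std_{\GL_1\times\GSO_8} \circ \phi^{M/A_{M_\alpha}}_{\sigma_\lambda}$ as representations of $L_F$. For $\GL_1 \times \SO_8 \subset \SO_{10}$, the dual representation on $\widehat{\fn}$ is $\chi \otimes \Std_8$, possibly up to the contragredient, which is harmless since $\Std_8$ is self-dual. This is the dual of the action of $M_{\der}$ on $\Lie(N_{\widetilde{\alpha}_M})$, and by the normalization in \cref{remark:fix-isomorphism-PGSO8} that action is the standard, not a spin, representation of $\Spin_8$. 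So the $L$-parameter pushed through ${}^L p$ becomes the standard parameter of $M/A_{M_\alpha} \cong \GL_1 \times \GSO_8$ (modulo the central torus), tensored with the $\GL_1$-character, with the $\lambda$-twist matching through \cref{lemma:identification-character-space}.

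The remaining checks are the following. The result must be independent of the choice of constituent $\sigma'$; this holds because the $\Std$-composite only depends on the $\O_8$-class, which Xu's correspondence (\cref{theorem:local-Langlands-correspondence-GSO2n}(a)) controls. The $\GL_1$-character must also correspond correctly under the isogeny. I expect this bookkeeping of the triality normalization, namely making sure the standard rather than a half-spin representation appears, to be the only delicate step.
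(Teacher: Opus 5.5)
Your proposal is correct and matches the paper's argument. The paper's proof is exactly your first step: conjugation by $\widetilde{w_{\alpha}}$ preserves $M$ and carries $N_{\widetilde{\alpha}_M}$, $\overline{N}_{\widetilde{\alpha}_M}$ to $N_{\alpha}$, $\overline{N}_{\alpha}$, so the case $\alpha=\widetilde{\alpha}_M$ transports to general $\alpha$ with $\sigma$ replaced by $w_{\alpha}^{-1}(\sigma)$. Your second and third steps are the reduction to $\GL_1 \times \SO_8 \subset \SO_{10}$ via \cref{lemma:M-alpha-isogeny-GSpin10} and \cref{corollary:central-morphism-normalizing-factor-compatible}, Arthur's factor, and the choice in \cref{remark:fix-isomorphism-PGSO8} that makes the standard rather than a half-spin representation appear; the paper sets these up in the paragraphs just before the statement, and you have simply made them explicit.
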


\begin{proof}
    This follows from 
    \begin{align*}
        \begin{cases}
        \widetilde{w_{\alpha}} M \widetilde{w_{\alpha}}^{-1} = M, \\
        \widetilde{w_{\alpha}} N_{{\widetilde{\alpha}_M}} \widetilde{w_{\alpha}}^{-1} = N_{\alpha}, \\
        \widetilde{w_{\alpha}} \overline{N}_{\widetilde{\alpha}_M} \widetilde{w_{\alpha}}^{-1} = \overline{N}_{\alpha}.
        \end{cases}
    \end{align*}
\end{proof}

\subsection{Normalized intertwining operators for $D_4 \subset E_6$}

Let $\sigma$ be a representation of $M$ with trivial central character.
Then, by \cref{remark:reduction-corank-one}, we can obtain a normalizing factor $r_{Q \vert P}(\sigma_{\lambda})$ in general.
In particular, we have the following result.

\begin{proposition}\label{proposition:normalizing-factor-for-s1s6}
    Let $P = MN$ be the standard parabolic subgroup of $G$ with standard Levi factor $M$. 
    Let $\sigma$ be a representation of $M$ with a trivial central character.
    We set $w = s_1 s_6$.
    Then, we have 
    \begin{align*}
        r_{w^{-1}Pw \vert P}(\sigma_{\lambda})
        = 
        r^{M_{\widetilde{\alpha}_M}}_{\overline{P^{M_{\widetilde{\alpha}_M}}} \vert P^{M_{\widetilde{\alpha}_M}}} (\sigma_{\lambda})  
        r^{M_{{\alpha_{6, M}}}}_{\overline{P^{M_{{\alpha_{6, M}}}}} \vert P^{M_{{\alpha_{6, M}}}}}(\sigma_{\lambda}).
    \end{align*}
    If we take $\lambda = 2s \rho_P$, then we have 
    \begin{align*}
        r^{M_{\widetilde{\alpha}_M}}_{\overline{P^{M_{\widetilde{\alpha}_M}}} \vert P^{M_{\widetilde{\alpha}_M}}} (\sigma_{\lambda})    
        = 
        \epsilon(\frac{1}{2} + 16s,\sigma, \Std_{\PGSO_8}, \psi)
        \gamma_A(16s, \sigma, \Std_{\PGSO_8}, \psi)^{-1} 
    \end{align*}
    and 
    \begin{align*}
        r^{M_{{\alpha_{6, M}}}}_{\overline{P^{M_{{\alpha_{6, M}}}}} \vert P^{M_{{\alpha_{6, M}}}}}(\sigma_{\lambda}) 
        = 
        \epsilon(\frac{1}{2} + 8s,{}^{s_1}\sigma, \Std_{\PGSO_8}, \psi)
        \gamma_A(8s, {}^{s_1}\sigma, \Std_{\PGSO_8}, \psi)^{-1}.
    \end{align*}  
\end{proposition}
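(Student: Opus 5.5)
The plan has three steps: factor the normalizing factor, identify each factor, and then evaluate at $\lambda=2s\rho_P$.

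\textbf{Factorization.} I would apply the product formula of \cref{remark:reduction-corank-one} to $Q=w^{-1}Pw$ with $w=s_1s_6$. The reduced roots of $A_M$ in $P$ are $\alpha_{1,M},\alpha_{6,M},\widetilde{\alpha}_M$. I would compute which of them become negative under $w$, i.e.\ lie in $\Phi(\overline{Q},A_M)$. The rank-two root system $\Phi(G,A_M)$ has Weyl group $S_3$, and the remark after its description gives $s_1\alpha_{6,M}=s_6\alpha_{1,M}=\widetilde{\alpha}_M$. Using these relations, $w^{-1}=s_6s_1$ sends exactly two positive roots to negative ones, since $w$ has length two. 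A direct check with the reflection formulas in the $A_2$ system should show that these two roots are $\widetilde{\alpha}_M$ and $\alpha_{6,M}$. This gives the first displayed identity as the product of the two corank-one normalizing factors.

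\textbf{Identifying each factor.} For each corank-one factor I would invoke \cref{corollary:normalizing-factor-D5-D4}. It expresses $r^{M_\alpha}$ as $\epsilon(\tfrac12,\cdot)\gamma_A(0,\cdot)^{-1}$ of the parameter of $w_\alpha^{-1}(\sigma_\lambda)$, composed with $\Std_{\GL_1\times\GSO_8}$.
\begin{itemize}
\item For $\alpha=\widetilde{\alpha}_M$ we have $w_\alpha=1$. By \cref{remark:fix-isomorphism-PGSO8}, the $\Spin_8$-module $\Lie(N_{\widetilde{\alpha}_M})$ on the dual side is the standard representation. So the relevant representation is $\Std_{\PGSO_8}$ of $\sigma$, tensored with the $\GL_1$-character coming from $\lambda$.
\item For $\alpha=\alpha_{6,M}$, $w_\alpha$ is the element with $w_\alpha(\widetilde{\alpha}_M)=\alpha_{6,M}$, namely $s_1$, since $s_1\widetilde{\alpha}_M=\alpha_{6,M}$. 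By \cref{proposition:Tits-lifting-preserves-splitting}, the Tits lifting $\widetilde{s_1}$ preserves $\mathbf{spl}_M$. Hence conjugation by it induces the diagram automorphism $s_1$ of $M_{\ad}=\PGSO_8$, and $w_\alpha^{-1}\sigma$ becomes ${}^{s_1}\sigma$, up to the harmless identification $s_1=s_1^{-1}$.
\end{itemize}
Since $\sigma$ has trivial central character, the parameters are those of $\PGSO_8$ twisted by the $\GL_1$-part.

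\textbf{Evaluation at $\lambda=2s\rho_P$.} I would compute the pairings $\langle 2\rho_P,\widetilde{\alpha}_M^\vee\rangle$ and $\langle 2\rho_P,\alpha_{6,M}^\vee\rangle$, which should equal $16$ and $8$. Here $2\rho_P$ is the sum of the $A_M$-weights on $\Lie N_P$, counted with the dimensions of the root spaces. The spaces $N_{\alpha_{1,M}}$, $N_{\alpha_{6,M}}$ and $N_{\widetilde{\alpha}_M}$ are each $8$-dimensional, carrying the two spin representations and the standard representation, and there is no $2\widetilde{\alpha}_M$ root space. Computing the pairings in $E_6$ coordinates then gives the shifts $16s$ and $8s$ in $\epsilon(\tfrac12+\cdot)$ and $\gamma_A(\cdot)$. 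The factor $|\cdot|^{\langle\lambda,\alpha^\vee\rangle}$ in the $\GL_1$-part contributes exactly these shifts.

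\textbf{Main obstacle.} I expect the hardest step to be this coroot and normalization bookkeeping: getting the exact numbers $16$ and $8$. This depends on how $\fa^{*}_M$ is identified with the $\GL_1$-factor of $\GL_1\times\SO_8$ under the isogeny of \cref{lemma:M-alpha-isogeny-GSpin10}, including the $\mu_4$ quotient. I would check it by restricting $\rho_P$ to the cocharacter generating $X_*(A_M/A_{M_\alpha})$ and tracking the degrees through that isogeny.
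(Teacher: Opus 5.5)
Your proposal is correct and follows essentially the paper's proof. It uses the factorization via \cref{remark:reduction-corank-one}, the identification via \cref{corollary:normalizing-factor-D5-D4} with $w_\alpha \in \{1, s_1\}$, and $2\rho_P = 16\widetilde{\alpha}_M$. The paper handles the $\alpha_{6,M}$ factor by writing $s_1(2\rho_P) = 16\alpha_{6,M}$ and taking its $\fa^{M_{\widetilde{\alpha}},*}_M$-component $8\widetilde{\alpha}_M$. This is equivalent to your pairing $\langle 2\rho_P, \alpha_{6,M}^\vee\rangle = 8$, provided $\alpha^\vee$ is the projection to $\fa_M$ of the absolute coroot, so that $\langle \widetilde{\alpha}_M, \widetilde{\alpha}_M^\vee\rangle = 1$.

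One slip needs fixing in the factorization step. Since $\Phi(w^{-1}Pw, A_M) = w^{-1}\Phi(P, A_M)$, the roots entering the product are the positive roots made negative by $w = s_1 s_6$, namely $\alpha_{6,M}$ and $\widetilde{\alpha}_M$. By contrast, $w^{-1} = s_6 s_1$ makes $\alpha_{1,M}$ and $\widetilde{\alpha}_M$ negative, so the check as you phrased it would not produce the claimed pair.
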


\begin{proof}
    The first statement follows from \cref{remark:reduction-corank-one}.
    Since we have 
    \begin{align*}
        2\rho_P = 16 \widetilde{\alpha}_M,
    \end{align*}
    we obtain the first computation from \cref{corollary:normalizing-factor-D5-D4}.
    For the second computation, we have
    \begin{align*}
        s_1(2 \rho_P) = 16 \alpha_{6, M} = 8 \widetilde{\alpha}_M + 24(\varpi_{\alpha_6}-\varpi_{\alpha_1}) \in \fa^{M_{\widetilde{\alpha}}, *}_{M} \oplus \fa^{*}_{M_{\widetilde{\alpha}}}
    \end{align*}
    since the roots $\alpha_{1, M}$ and $\alpha_{6, M}$ spans a root system of type $A_2$ in $\fa^{*}_M$.
    Thus, we obtain the second computation by \cref{corollary:normalizing-factor-D5-D4}.
\end{proof}

\begin{remark}\label{remark:replace-arthur-by-artin}
    The ratio of  $\gamma$-factors 
    \begin{align*}
        \frac{
        \gamma_A(s, \sigma, \Std_{\PGSO_8}, \psi)}
        {\gamma(s, \sigma, \Std_{\PGSO_8}, \psi)}
    \end{align*}
    converges to $1$, as $s$ goes to $0$ if $\sigma$ is a tempered representation of $M_{\ad} = \PGSO_8$.
\end{remark}

\section{On twisted endoscopy for $\PGSO_8$ with triality and $G_2$}\label{section:9}

Let $\bG$ be the split semisimple adjoint group of type $E_6$ over $F$.
Let $\bP = \bM \ltimes \N$ be the standard parabolic subgroup of $\bG$ of type $D_4$.

\subsection{The setting and explicit $L$-embeddings from endoscopic groups}\label{subsection:setting-and-L-embeddings}

In this section, we will give explicit description of the $L$-embeddings related to the functorial lifts from $G_2$.
To fix the notation, we will give an explicit model of $\PGSO_{8}$.

\begin{construction}\label{construction:explicit-model-D_4}
    Let $k$ be the $p$-adic field $F$ or $\C$.

    \begin{enumerate}
    \item
    Let $(W, q_W)$ be the split quadratic space over $k$ of dimension $8$.
    Let $\{ f_1, \dots, f_8 \}$ be a hyperbolic basis such that 
    \begin{align*}
        q_{W}(f_i, f_j) = \delta_{i, 9-j}.
    \end{align*}
    
    \item
    We fix a Borel pair of $\SO(W, q_W)$ defined as follows.
    Let $T_{\SO(W, q_W)}$ be the maximal torus corresponding to the ordered basis $f_1, \dots, f_8$.
    Let $B_{\SO(W, q_W)}$ be the Borel subgroup corresponding to the isotropic flag 
    \begin{align*}
        \langle f_1 \rangle \subset \langle f_1, f_2 \rangle \subset \dots \subset \langle f_1, f_2, f_3, f_4 \rangle.
    \end{align*}
    This also gives a Borel pair for $\Spin(W, q_W)$ or $\PGSO(W, q_W)$.
    The basis $\{ f_1, \dots, f_8 \}$ gives an isomorphism 
    \begin{align*}
        W(D_4) \xrightarrow{\sim} (\Z/2\Z)^4_0 \rtimes S_4, 
    \end{align*}
    where $(\Z/2\Z)^4_0$ is the subgroup of $(\Z/2\Z)^4$ which consists of the elements $(a_1, a_2, a_3, a_4)$ with $\sum_{i=1, 2, 3, 4} a_i = 0$.
    
    \item 
    We denote the Dynkin diagram of type $D_4$ by 
    % https://q.uiver.app/#q=WzAsNCxbMSwwLCJcXGJldGFfMSJdLFsxLDEsIlxcYmV0YV8yIl0sWzAsMiwiXFxiZXRhXzMiXSxbMiwyLCJcXGJldGFfNCJdLFswLDEsIiIsMCx7InN0eWxlIjp7ImhlYWQiOnsibmFtZSI6Im5vbmUifX19XSxbMSwyLCIiLDAseyJzdHlsZSI6eyJoZWFkIjp7Im5hbWUiOiJub25lIn19fV0sWzEsMywiIiwwLHsic3R5bGUiOnsiaGVhZCI6eyJuYW1lIjoibm9uZSJ9fX1dXQ==
    \[
    \begin{tikzcd}
	& {\beta_1} \\
	& {\beta_2} \\
	{\beta_3} && {\beta_4}
	\arrow[no head, from=1-2, to=2-2]
	\arrow[no head, from=2-2, to=3-1]
	\arrow[no head, from=2-2, to=3-3].
    \end{tikzcd}
    \]
    We will give an explicit splitting $\{ X_{\beta_i} \}_{\{i=1, 2, 3, 4\}}$  with the explicit Borel pair above.
    We identify $\mathfrak{so}(W, q_W)$ with the space of the skew-symmetric endomorphisms of $W$. 
    For $i=1,2,3$, we define 
    \begin{align*}
        X_{\beta_{i}} (e_j) =
        \begin{cases}
            0 \quad j \neq i+1, 9-i, \\
            e_j \quad j = i+1, \\
            -e_{8-i} \quad j=9-i
        \end{cases}
    \end{align*}
    and for $i=4$, we set 
    \begin{align*}
        X_{\beta_{4}} (e_j) = 
        \begin{cases}
            0 \quad j=1, 2, 3, 4, 7, 8, \\
            e_3 \quad j=5, \\
            -e_4 \quad j=6. 
        \end{cases}
    \end{align*}
    \item    
    We use the same notation as \cref{definition:automorphism-of-D4} for the group of automorphisms $\Aut(D_4) \xrightarrow{\sim} S_3$ of $D_4$.
    This gives the splitting-preserving automorphisms of $\PGSO(W, q_W)$ and $\Spin(W, q_W)$.
    We set 
    \begin{align*}
        \tau = s_6s_1.
    \end{align*}
    Dually, we set $\tau = \widehat{\tau}^{-1} = \widehat{s_6}\widehat{s_1}$.
    Recall that we have the following diagram:
    \[\begin{tikzcd}
	& {\beta_1} \\
	& {\beta_2} \\
	{\beta_3} && {\beta_4}
	\arrow[no head, from=1-2, to=2-2]
	\arrow["{s_6}"', curve={height=18pt}, tail reversed, from=1-2, to=3-1]
	\arrow["{s_1}", curve={height=-18pt}, tail reversed, from=1-2, to=3-3]
	\arrow[no head, from=2-2, to=3-1]
	\arrow[no head, from=2-2, to=3-3].
    \end{tikzcd}\] 

    \item     
    The group $\Spin(W,q_W)^{S_3}$ is the split semisimple group of type $G_2$. 
    If $k=\C$, we identify this group with $\widehat{H}$.
    Thus, we obtained the $L$-embedding
    \begin{align*}
         \iota^{M_{\ad}}_{H} \colon {}^L H \hookrightarrow {}^L M_{\ad}.
    \end{align*}
    Regarding the folding construction, the simple short root of $G_2$ corresponds to the $S_3$-orbit of the root $\beta_1$ and the simple long root of $G_2$ corresponds to the orbit of the root $\beta_2$.
    \item
    The root subsystem of type $A_2$ in the root system of type $G_2$, whose simple roots are given by $\{ \beta^{\vee}_2, \sum_{i=1,\dots,4}\beta^{\vee}_i \}$, determines the subgroup  $\SL_3 = \widehat{\PGL_3} \subset \Spin(W,q_W)^{S_3}$.
    \end{enumerate}
\end{construction}

Let $\phi^{H}$ be a tempered $L$-parameter.
We set $\phi^{M_{\ad}} = \iota^{M_{\ad}}_{H} \circ \phi^{H}$.
Let $\Std_{H} \colon {}^L H \to {}^L \GL_7$ be the $L$-embedding corresponding to the standard representation of $G_2(\C)$.

\begin{remark} \label{remark:divide-five-cases}
    We divide the discussion into five cases:
    \begin{enumerate}
        \item     
        The parameter $\phi^{H}$ is discrete and $\gamma(0, \phi^{H}, \Std_{H}, \psi) \neq 0$.
        \item   
        The parameter $\phi^{H}$ is discrete and $\gamma(0, \phi^{H}, \Std_{H}, \psi) = 0$.
        \item      
        The parameter $\phi^{H}$ factors through the short root ${}^L \GL_2 \subset {}^L H$ as a discrete $L$-parameter.
        \item     
        The parameter $\phi^{H}$ factors through the long root ${}^L \GL_2 \subset {}^L H$ as a discrete $L$-parameter.   
        \item     
        The parameter $\phi^{H}$ factors through the maximal torus ${}^L T_H \subset {}^L H$ as a discrete $L$-parameter.  
    \end{enumerate}
\end{remark}

\begin{definition}\label{definition:Levi-subgroup-notation}
    Cases $(1)$ and $(2)$ are treated in \cite{GS23LLC}*{Lemma 2.4}.
    In Cases $(3), (4), (5)$ in \cref{remark:divide-five-cases}, we write the corresponding standard Levi subgroup of $H$ as $I$.
    We also take a corresponding discrete $L$-parameter for $I$ and we denote it by $\phi^I$.
    The representation of $I$ corresponding to $\phi^I$ is written by $\sigma^I$.
\end{definition}

In the following subsections, we will give explicit description of the endoscopic lift $\Pi^{X}_{\phi^H}$ of $\phi^H$ for each case in \cref{remark:divide-five-cases}.
We also prove some irreducibility results for the parabolic induction $i^G_P(\pi)$ for each element $\pi \in \Pi^{X}_{\phi^H}$.
Note that the group $G$ is of type $E_6$ and of adjoint type.

\subsubsection{The endoscopic lift from $H$: Case (1)}

\begin{proposition}[\cite{GS23LLC}*{Lemma 2.4(ii)}]\label{proposition:description-of-endoscopic-lift-case-1}
    In Case $(1)$, the $L$-packet $\Pi^X_{\phi^H}$ consists of discrete series representations.
\end{proposition}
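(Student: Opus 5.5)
The plan is to show that in Case (1) the transferred parameter $\phi^{M_{\ad}} = \iota^{M_{\ad}}_{H} \circ \phi^H$ is a discrete $L$-parameter for $\PGSO_8$. Once this is known, the distinguished Xu packet $\Pi^X_{\phi^H}$ will consist of discrete series representations. This will follow from property (4) of \cref{conjecture:local-Langlands-conjecture} in the form available for Xu's correspondence (\cref{theorem:local-Langlands-correspondence-GSO2n}), together with the identification of the Kret--Shin parameter in \cref{proposition:construction-of-distinguished-Xu-packet}(2). Concretely, discreteness of $\phi^{M_{\ad}}$ means that the standard parameter $\Std_{\PGSO_8}\circ\phi^{M_{\ad}}$ does not factor through a proper Levi subgroup of ${}^L\SO_8$ up to $\O_8(\C)$-conjugacy. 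The aim is therefore to show that the $8$-dimensional orthogonal representation $\Std_{\PGSO_8}\circ \iota^{M_{\ad}}_H\circ\phi^H$ is a multiplicity-free sum of orthogonal irreducibles.

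First I will compute this representation. Restricted to $G_2(\C)=\Spin_8(\C)^{S_3}$, the $8$-dimensional standard representation of $\Spin_8$ decomposes as $\Std_H \oplus \mathbf{1}$. This gives
\begin{align*}
\Std_{\PGSO_8}\circ\phi^{M_{\ad}} = (\Std_H\circ\phi^H)\oplus\mathbf{1}.
\end{align*}
Since $\phi^H$ is discrete, $S_{\phi^H}$ is finite. The $7$-dimensional piece $\Std_H\circ\phi^H$ is orthogonal. I then need it to be a multiplicity-free sum of orthogonal irreducibles, for otherwise its orthogonal centralizer would contain a torus. A torus in the centralizer would mean that $\phi^H$ factors through a proper Levi subgroup of $G_2$, contradicting discreteness. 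For this step I would use the standard fact that a non-discrete $\phi^H$ is exactly one with positive-dimensional centralizer, combined with $\Cent_{\SO_7}(\phi)^{\circ}\supset \Cent_{G_2}(\phi)^{\circ}$. The converse direction is the delicate point, so I would rather argue directly from the classification of discrete $G_2$ parameters in \cite{GS23LLC}.

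The condition $\gamma(0,\phi^H,\Std_H,\psi)\neq 0$ means that $\Std_H\circ\phi^H$ does not contain $\mathbf{1}$. Hence the extra summand $\mathbf{1}$ occurs with multiplicity one in the $8$-dimensional representation. The full $8$-dimensional orthogonal representation is then multiplicity-free, and so $[\phi^{M_{\ad}}]_{\O_8}$ is discrete for $\SO_8$. The $\GL_1$-similitude part is trivial, so $\phi^{M_{\ad}}$ is discrete for $\PGSO_8$. The same argument applies to the $s_1$-twisted parameter $\widehat{s_1}\circ\phi^{M_{\ad}}$, since $\phi^{M_{\ad}}$ is $S_3$-invariant.

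Finally, by \cref{proposition:construction-of-distinguished-Xu-packet} every $\pi\in\Pi^X_{\phi^H}$ has $L$-parameter $\phi^{M_{\ad}}$. By \cref{remark:Kret--Shin-not-discrete-tempered} and property (4) in Xu's correspondence, a tempered representation with a discrete parameter is a discrete series representation. The main obstacle is the multiplicity-freeness of $\Std_H\circ\phi^H$ for discrete $\phi^H$. I expect to settle it by the centralizer-torus argument, falling back on the case list in \cite{GS23LLC}*{Lemma 2.4} if needed.
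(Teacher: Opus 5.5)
Your reduction is the same as the paper's. You show that $\Std_{\PGSO_8}\circ\iota^{M_{\ad}}_H\circ\phi^H=(\Std_H\circ\phi^H)\oplus\mathbf{1}$ is a multiplicity-free sum of orthogonal irreducibles, then pass to the packet through the theta-compatibility behind \cref{proposition:construction-of-distinguished-Xu-packet} and property $(4)$.

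The gap is the step where you derive multiplicity-freeness of the $7$-dimensional piece from the discreteness of $\phi^H$ alone. That implication is false, so the centralizer-torus argument cannot be completed. The inclusion $\Cent_{\SO_7(\C)}(\phi^H)^{\circ}\supset\Cent_{G_2(\C)}(\phi^H)^{\circ}$ only says that a non-discrete $G_2$-parameter stays non-discrete in $\SO_7$. A torus in the $\SO_7$-centralizer does \emph{not} force $\phi^H$ into a proper Levi subgroup of $G_2$.

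The parameters of Case $(2)$ are counterexamples, since they are discrete by definition:
\begin{itemize}
\item there $\phi^H=\iota^H_{H'}\circ\phi^{H'}$ factors through $\SL_3\subset G_2(\C)$, and $\Std_H\circ\phi^H=\phi^{H'}\oplus(\phi^{H'})^{\vee}\oplus\mathbf{1}$;
\item its $\SO_7$-centralizer contains the torus acting by $a$, $a^{-1}$, $1$ on the three summands, and this torus meets $G_2(\C)$ only in $Z(\SL_3)\cong\mu_3$;
\item if $\phi^{H'}$ is self-dual, the summand $\phi^{H'}$ even occurs twice.
\end{itemize}

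So the Case $(1)$ hypothesis $\gamma(0,\phi^H,\Std_H,\psi)\neq 0$ must be used already at this step, not merely to see that the extra $\mathbf{1}$ occurs once in dimension $8$. It is what excludes the $\SL_3$-type discrete parameters. This is exactly the input the paper takes from \cite{GS23LLC}*{Lemma 2.4(ii)}: in Case $(1)$ the lift $\iota^{\PGSp_6}_H\circ\phi^H$ to $\widehat{\PGSp_6}=\Spin_7(\C)$ is discrete. That is equivalent to $\Std_H\circ\phi^H$ being a multiplicity-free sum of orthogonal irreducibles.

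Your fallback to \cite{GS23LLC}*{Lemma 2.4} is therefore not optional, and it works only with the $\gamma$-condition as input. With that citation replacing the torus argument, the rest of your proposal is the paper's proof: no trivial summand in $\Std_H\circ\phi^H$, hence discreteness of $\iota^{M_{\ad}}_H\circ\phi^H$, hence discrete series by property $(4)$.
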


\begin{proof}
    It suffices to see that $\iota^{M_{\ad}}_{H} \circ \phi^H$ is discrete. 
    However, this follows from the facts that the lift $\iota^{\PGSp_6}_{H} \circ \phi^H$ is discrete by \cite{GS23LLC}*{Lemma 2.4(ii)} and that the parameter $\iota^{\GL_7}_{H} \circ \phi^H$ does not contain a trivial representation of $L_F$.
\end{proof}

\begin{proposition}\label{proposition:irreducibility-of-parabolic-induction-case-1}
    The parabolic induction $i^{G}_{P}(\pi)$ of any element $\pi \in \Pi^X_{\phi^H}$ is irreducible.
\end{proposition}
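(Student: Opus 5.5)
We reduce the irreducibility to the Knapp--Stein theory of R-groups and show that $R_\pi$ is trivial. Here $P=MN$ is the maximal parabolic subgroup of type $D_4$ in the adjoint $E_6$ group $G$, and $\pi\in\Pi^X_{\phi^H}$ is a discrete series representation of $M_{\ad}=\PGSO_8(F)$ by \cref{proposition:description-of-endoscopic-lift-case-1}. We view $\pi$ as a representation of $M$ with trivial central character, twisted by a unitary character of $A_M$ if needed. Since $W(G,M)\cong S_3$, the stabilizer $W_\pi$ is a subgroup of $S_3$, and $\pi$ is $S_3$-invariant by \cref{proposition:invariance-S3}. Hence $W_\pi=W(G,M)$, and the irreducibility of $i^G_P(\pi)$ is equivalent to $W^0_\pi=W_\pi$. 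Since $W(G,M)$ is the Weyl group of the rank-two root system $\Phi(G,A_M)$ of type $A_2$, it suffices to show that $\mu_\alpha(\pi)=0$ for at least two reduced roots $\alpha$ spanning $\Phi(G,A_M)$. Then $\Phi_\pi$ contains them, and therefore $\Phi_\pi=\Phi(G,A_M)$.

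Step 1 computes the Plancherel factors via \cref{proposition:normalizing-factor-and-mu-function}: $\mu_\alpha(\pi)=0$ holds if and only if $r^{M_\alpha}_{\overline{P^{M_\alpha}}\vert P^{M_\alpha}}(\pi)^{-1}=0$. By \cref{corollary:normalizing-factor-D5-D4} and \cref{remark:replace-arthur-by-artin}, this normalizing factor is, near $\lambda=0$, a nonvanishing factor times
\begin{align*}
  \gamma(s,\ {}^{w_\alpha^{-1}}\pi,\ \Std_{\PGSO_8},\psi)^{-1}
\end{align*}
along the one-parameter family in the $\alpha$-direction. The shape of this family is as in \cref{proposition:normalizing-factor-for-s1s6}, applied for $\alpha=\widetilde{\alpha}_M$ and $\alpha=\alpha_{6,M}$. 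Thus $\mu_\alpha(\pi)=0$ is equivalent to $\gamma(s,{}^{w_\alpha^{-1}}\pi,\Std,\psi)$ having a zero at $s=0$, i.e.\ to $\gamma(0,\Std_{\PGSO_8}\circ\phi_{{}^{w_\alpha^{-1}}\pi},\psi)=0$, where we write $\phi_{{}^{w_\alpha^{-1}}\pi}$ for the parameter of ${}^{w_\alpha^{-1}}\pi$.

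Step 2 verifies this vanishing. Because $\pi$ lies in the distinguished packet, condition \cref{eq:condition-distinguished-packet} gives
\begin{align*}
  \gamma(0,\Std\circ\phi^{\PGSO_8}_\pi,\psi)=0, \qquad \gamma(0,\Std\circ\phi^{\PGSO_8}_{{}^{s_1}\pi},\psi)=0.
\end{align*}
More concretely, the standard parameter is $\Std_H\circ\phi^H\oplus\mathbf 1$ by \cref{proposition:construction-of-distinguished-Xu-packet}, and the trivial summand produces the zero of $\gamma$ at $s=0$. By the $S_3$-invariance of $\pi$, every conjugate ${}^{w}\pi$ is isomorphic to $\pi$, so the same vanishing holds for every $\alpha$. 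Thus $\mu_\alpha(\pi)=0$ for all three positive roots, so $W^0_\pi=W(G,M)=W_\pi$. Therefore $R_\pi=1$, and $i^G_P(\pi)$ is irreducible by the Knapp--Stein/Silberger theorem and its corollary.

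The main obstacle is Step 1: matching the normalizing factor for $M_\alpha\cong(\GL_1\times\Spin_{10})/\mu_4$ correctly with the standard $\gamma$-factor. This requires the identifications from the Tits liftings, namely \cref{proposition:Tits-lifting-preserves-splitting}, and the choice of isomorphism fixed in \cref{remark:fix-isomorphism-PGSO8}. The zero must come from $\Std$ evaluated on the correct triality twist of $\pi$, and each twist must yield a genuine zero rather than one cancelled by an $\epsilon$-factor. The $\epsilon$-factors are nonzero and the other $L$-factors are holomorphic at $0$ for tempered parameters, so the plan is to check only the order of vanishing of $L(s,\cdot)$ at $s=0$. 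This order equals the multiplicity of the trivial representation in the standard parameter, which is positive here because of the $\mathbf 1$ summand.
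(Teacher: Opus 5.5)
Your proposal is correct and follows essentially the same route as the paper. You get $W_\pi=W(G,M)$ from the $S_3$-invariance in \cref{proposition:invariance-S3}. For every reduced root $\alpha$, the inverse normalizing factor from \cref{corollary:normalizing-factor-D5-D4} vanishes at $0$ because of the distinguished condition (the trivial summand in the standard parameter). Then \cref{proposition:normalizing-factor-and-mu-function} gives $W^0_\pi=W(G,M)$ and hence $R_\pi=1$. Your remarks on the $\epsilon$-factors and on using invariance for ${}^{w_\alpha^{-1}}\pi$ only make explicit details the paper leaves implicit.
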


\begin{proof}
    We have $W_{\pi} = W(G, M)$ by \cref{proposition:invariance-S3}.
    For any reduced relative root $\alpha \in \Phi(G, A_M)$, the inverse of the normalizing factor $r^{M_{\alpha}}_{\overline{P^{M_{\alpha}}} \vert P^{M_{\alpha}}}(\pi)^{-1}$ is equal to $\gamma(0, \pi, \Std_{\GSO_8}, \psi) = 0$ by \cref{proposition:construction-of-distinguished-Xu-packet} and \cref{corollary:normalizing-factor-D5-D4}.
    Thus, by \cref{proposition:normalizing-factor-and-mu-function}, the group $W^0_{\pi}$ is generated by $s_1, s_6$ and it equals the group $W(G, M)$.
    Hence $R_{\pi} = W_{\pi}/W^0_{\pi} = 1$.
\end{proof}

\subsubsection{The endoscopic lift from $H$: Case (2)}

We set $H' = \PGL_3$.
This group is one of the elliptic endoscopic groups of $H$.
We have the $L$-embedding
\begin{align*}
    \iota^{H}_{H'} \colon {}^L H' \to {}^L H
\end{align*}
described in \cref{construction:explicit-model-D_4}.
In this case, the $L$-parameter $\phi^H$ can be expressed as $\iota^{H}_{H'} \circ \phi^{H'}$ for some discrete $L$-parameter $\phi^{H'}$ for $H'$.
Let $\sigma^{H'}$ be the corresponding discrete series representation of $H'$.
This representation is determined up to taking the contragredient representation from $\phi^{H}$; this follows from \cite{GS23LLC}*{Lemma 2.4(i)}.

On the other hand, we have the Levi subgroup $L/A_M$ of $M_{\ad}$ which corresponds to the simple roots $\beta_2, \beta_3$.
This group corresponds to the Levi subgroup $L \subset G$ determined by the simple roots $\alpha_3, \alpha_4$.
Let $P_L \subset P = P_M$ be the standard parabolic subgroup of $G$ with Levi factor $L$.
Note that we have the isomorphism 
\begin{align*}
    L/A_M \xrightarrow{\sim} (\GL_1 \times \GL_3 \times \GL_1)/ \GL_1,
\end{align*}
where the group $\GL_1$ is embedded by the map 
\begin{align*}
    \GL_1 \hookrightarrow \GL_1 \times \GL_3 \times \GL_1 \colon z \mapsto (z, z, z^{2}).
\end{align*}

\begin{lemma}\label{lemma:description-of-L-homomorphism-case-2}
    The composition $\iota^{M_{\ad}}_{H} \circ \iota^{H}_{H'}$ factors through $\iota^{M_{\ad}}_{L/A_M}$.
    The image of $\widehat{\PGL_3} = \SL_3$ is conjugate to the derived group of $\widehat{L/A_M}$ by a Weyl element $\cW = (1, 0, 0, 1) \rtimes (14) \in W(D_4) \cong (\Z/2\Z)^4_0 \rtimes S_4$.
\end{lemma}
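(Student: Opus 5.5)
This is a root-system computation inside the dual group $\Spin_8(\C)$, carried out with the explicit model of \cref{construction:explicit-model-D_4}. I would work in the dual coordinates: $\widehat{T}$ is the diagonal torus for the basis $f_1,\dots,f_8$. The coroots of $\Spin_8$ are then identified with the $D_4$ roots $\pm e_i \pm e_j$, with $\beta_1^{\vee} = e_1 - e_2$, $\beta_2^{\vee} = e_2 - e_3$, $\beta_3^{\vee} = e_3 - e_4$ and $\beta_4^{\vee} = e_3 + e_4$. The matching of labels with the splitting in (3) of the construction has to be fixed once and for all.

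The first step is to compute the subgroup $\SL_3 = \widehat{H'}$ from (6) of the construction. It is generated by the root subgroups attached to $\beta_2^{\vee} = e_2 - e_3$ and to the folded long root $\sum_i \beta_i^{\vee}$. Because the $S_3$-fixed subgroup is built from the diagonal products of root subgroups over each $S_3$-orbit of $D_4$ roots, a long root of $G_2$ whose orbit is a single $D_4$ root just gives that root subgroup. Thus $\beta_2^{\vee}$ and $\sum_i \beta_i^{\vee} = e_1 + e_2$ are honest roots of $\Spin_8$. They span an $A_2$ subsystem of $D_4$ with positive roots $e_2 - e_3$, $e_1 + e_2$ and $e_1 + e_3$, and this subsystem is exactly the root system of $\widehat{\PGL_3} = \SL_3$.

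The second step is to identify $\widehat{L/A_M}$. Its derived group is the subgroup generated by the coroots $\beta_2^{\vee}$ and $\beta_3^{\vee}$, namely the $A_2$ subsystem spanned by $e_2 - e_3$ and $e_3 - e_4$. I would then check that $\cW = (1,0,0,1) \rtimes (14)$ carries one subsystem to the other. The permutation $(14)$ sends $e_2 - e_3 \mapsto e_2 - e_3$ and $e_3 - e_4 \mapsto e_3 - e_1$. The sign change on the first and fourth coordinates then adjusts these to $e_2 - e_3$ and $e_3 + e_1$, whose span is precisely $\{e_2 - e_3,\, e_1 + e_2,\, e_1 + e_3\}$. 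Since conjugation by a representative of $\cW$ in $N_{\widehat{T}}$ maps root subgroups onto root subgroups, the image of $\SL_3$ is conjugate to $\widehat{L/A_M}_{\der}$. The precise order of applying sign and permutation depends on the convention in (2) of the construction, and I would verify that convention against the computation.

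The third step is the factorization. Here $H'$ is split and $\phi^{H'}$ is a homomorphism into $\SL_3 \times W_F$, so it suffices to see that $\widehat{H'}$ lands in a Levi subgroup conjugate to $\widehat{L/A_M}$. I would use that $\Cent_{\Spin_8}(\SL_3)^{\circ}$ contains a torus whose centralizer is a Levi subgroup of type $A_2$; it contains the image of $\SL_3$, which is its derived group by the second step. Together with \cref{lemma:discrete-parameter}, this shows the composition factors through $\iota^{M_{\ad}}_{L/A_M}$ after conjugation by $\cW$.

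I expect the main obstacle to be the bookkeeping in the second step. The sign conventions relating the skew-symmetric splitting in the construction to the coordinate coroots $\pm e_i \pm e_j$, and the definition of $\sum_i \beta_i^{\vee}$ as a root of $G_2$ versus of $D_4$, must be pinned down correctly for the explicit $\cW$ to come out right. The rest follows formally from the description of Levi subgroups by root subsystems.
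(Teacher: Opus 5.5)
Your proposal is correct and is essentially the paper's proof, which just records $\cW(\beta_2^{\vee})=\beta_2^{\vee}$ and $\cW(\beta_3^{\vee})=\sum_i\beta_i^{\vee}$ and notes that conjugacy of the two $A_2$ subsystems suffices. Your appeal to \cref{lemma:discrete-parameter} in the third step is unnecessary: once $\cW$ carries one subsystem onto the other, the subgroup generated by $\widehat{T}$ and the image root subgroups is the $\cW$-conjugate of $\widehat{L/A_M}$. There is one harmless slip: in your coordinates $\sum_i\beta_i^{\vee}=e_1+e_3$, not $e_1+e_2$ (the latter is the highest root), but your computed $\cW(\beta_3^{\vee})=e_1+e_3$ is exactly this root, so the $A_2$ subsystem you obtain is the same.
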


\begin{proof}
    It suffices to show that in the group $\widehat{M_{\ad}}$, the group $\widehat{\PGL_3}$ is conjugate to the derived group of $\widehat{L/A_M}$ by the Weyl element $\cW = (1, 0, 0, 1) \rtimes (14) \in W(D_4)$.
    We have 
    \begin{align*}
        \cW(\beta^{\vee}_2) &= \beta^{\vee}_2 \\
        \cW(\beta^{\vee}_3) &= \sum_{i} \beta^{\vee}_i.
    \end{align*} 
    Hence we have the result.
\end{proof}

\begin{proposition}\label{proposition:description-of-endoscopic-lift-case-2}
    The direct sum of the elements in $\Pi^X_{\phi^H}$ is given by $i^{M_{\ad}}_{L/A_M}(\sigma^{L})$, where $\sigma^{L}$ is given by 
    \begin{align*}
        \sigma^{L} = \mathbf{1} \boxtimes \sigma^{H'} \boxtimes \mathbf{1},
    \end{align*}
    as the representation of $L/A_M \xrightarrow{\sim} (\GL_1 \times \GL_3 \times \GL_1)/ \GL_1$ or $L$.
    The cardinality of $\Pi^X_{\phi^H}$ is given by 
    \begin{align*}
        \begin{cases}
            1 & \text{ if } \sigma^{H'} \text{ is not self-dual}, \\
            2 & \text{ if } \sigma^{H'} \text{ is self-dual}.
        \end{cases}
    \end{align*}
\end{proposition}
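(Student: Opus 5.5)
The plan is to identify the $L$-parameter of every constituent of $i^{M_{\ad}}_{L/A_M}(\sigma^{L})$ with $\iota^{M_{\ad}}_{H}\circ\phi^H$, and then to show that this induced representation is exactly the distinguished Xu packet $\Pi^X_{\phi^H}$, with the stated multiplicities.

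\emph{Step 1 (parameters).} By \cref{lemma:description-of-L-homomorphism-case-2}, $\iota^{M_{\ad}}_{H}\circ\iota^H_{H'}\circ\phi^{H'}$ is $\widehat{M_{\ad}}$-conjugate to $\iota^{M_{\ad}}_{L/A_M}\circ\phi^{L}$. Here $\phi^L$ is the parameter whose $\GL_3$-component is $\phi^{H'}$ (viewed in $\SL_3=\widehat{\PGL_3}$) and whose $\GL_1$-components are trivial; these are exactly the coordinates fixed by the conjugating element $\cW$. Compute the standard $8$-dimensional parameter. Under $\Std_{\PGSO_8}$, the $\SL_3$ coming from $\{\beta_2^\vee,\sum\beta_i^\vee\}$ acts as $\Std\oplus\Std^\vee\oplus\mathbf 1^2$, so we get $\phi^{H'}\oplus(\phi^{H'})^\vee\oplus\mathbf 1\oplus\mathbf 1$. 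The $\GL_3\subset$ Levi of $\SO_8$ gives the same decomposition. Then the compatibility with parabolic induction in \cref{theorem:local-Langlands-correspondence-GSO2n}, together with (a) of that theorem and the trivial central character, shows that every constituent $\pi$ of $i^{M_{\ad}}_{L/A_M}(\sigma^L)$ satisfies $\phi^{\GL_8}_\pi=\Std\circ\iota^{M_{\ad}}_H\circ\phi^H=\phi^{H'}\oplus(\phi^{H'})^\vee\oplus\mathbf 1^2$. The same holds for ${}^{s_1}\pi$: twisting by $s_1$ just replaces $\sigma^L$ by its $s_1$-conjugate, and $\iota^{M_{\ad}}_H\circ\phi^H$ is $\widehat{s_1}$-invariant. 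In particular $\gamma(0,\cdot,\psi)=0$ for both, because of $\mathbf 1^2$. So each constituent satisfies \cref{eq:condition-distinguished-packet}.

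\emph{Step 2 (identification with $\Pi^X_{\phi^H}$).} By \cref{proposition:construction-of-distinguished-Xu-packet}, the constituents form a subset of a single fiber of $\cL_{\dist}$. Remark~\ref{remark:Kret--Shin-not-discrete-tempered} and \cref{lemma:injectivity-G_2-to-GL7} identify that fiber as $\Pi^X_{\phi^H}$. Conversely, every member of $\Pi^X_{\phi^H}$ is non-discrete, since its parameter lies in ${}^L(L/A_M)$. So by property (1) it is a constituent of an induction from some conjugate of a discrete series representation of $L/A_M$ with the same parameter. Discrete series packets for $L/A_M$ (a quotient of a product of $\GL$'s) are singletons, and Harish-Chandra's \cref{proposition:induced-tempered-representations-HC} then puts it inside $i^{M_{\ad}}_{L/A_M}(\sigma^L)$, up to $W$-conjugacy and the choice of $\sigma^{H'}$ versus its contragredient. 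These conjugates give isomorphic inductions.

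\emph{Step 3 (cardinality).} Use the R-group. The induction is multiplicity free, with $|R_{\sigma^L}|$ constituents, because $R$ is abelian of order at most $2$ here. Inside $W(M_{\ad},L/A_M)$, the nontrivial element normalizing the $\GL_3$ block sends $\sigma^{H'}$ to its contragredient (twisted by $\GL_1$ factors that are trivial here). So $W_{\sigma^L}$ is nontrivial exactly when $\sigma^{H'}\cong(\sigma^{H'})^\vee$. The Plancherel $\mu$-function for this rank-one reflection vanishes iff the relevant $\gamma$-factor (essentially $\gamma(0,\phi^{H'},\wedge^2\text{ or }\Sym^2)$ times the trivial contributions) has a zero. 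Using the $\SO_8$ description, this means $R$ is of order $2$ exactly when the standard parameter has $\mathbf 1$ with multiplicity two, in accordance with Arthur's R-group $=$ component group for $\SO_8$. Equivalently, apply \cref{theorem:local-Langlands-correspondence-GSO2n}(b)(3) directly. The packet size equals $|\pi_0(S_{\phi}/Z)|$ for the $\PGSO_8$ parameter, and $S_\phi$ picks up an extra $O(2)$-type component precisely when $\phi^{H'}$ is self-dual, giving $2$, and $1$ otherwise.

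I expect this last computation to be the main obstacle. It requires checking carefully the component group $\pi_0(S_\phi)$ in $\Spin_8$ modulo center, and matching it with the Knapp--Stein R-group of the induction. Equivalently, it requires ruling out extra reducibility from Xu's twisting by quadratic characters via (d).
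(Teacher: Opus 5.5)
Your Steps 1 and 2 are fine, and more explicit than the paper, which reads the first statement directly off \cref{lemma:description-of-L-homomorphism-case-2}. One point there needs tightening. Xu's compatibility with induction holds only up to $\O_8\times\GL_1$-equivalence, so a priori a member $\pi$ of $\Pi^X_{\phi^H}$ is only known to be a constituent of $i^{M_{\ad}}_{L/A_M}(\sigma^L)\otimes(\omega\circ\lambda)$ for some quadratic $\omega$. Parts (c)--(d) of \cref{theorem:local-Langlands-correspondence-GSO2n}, combined with the $s_1s_6s_1$-invariance from \cref{proposition:invariance-S3}, give $\pi\otimes(\omega\circ\lambda)\cong\pi$ in that situation, which removes the twist.

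The genuine gap is in Step 3. You treat the nontrivial element of $W(M_{\ad},L/A_M)\cong\Z/2\Z$ as a rank-one reflection whose $\mu$-function is governed by $\gamma(0,\phi^{H'},\wedge^2\text{ or }\Sym^2,\psi)$. It is not a reflection: it is the class of $w_0^{M_{\ad}}w_0^{L/A_M}$, and it acts by $-1$ on the plane spanned by $\beta_{1,L},\beta_{4,L}$.

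The reduced roots of $\Phi(M_{\ad},A_L/A_M)$ are $\beta_{1,L}$, $\beta_{4,L}$ and $\beta_{1,L}+\beta_{4,L}$. Each $M_\alpha$ is of type $A_3$ and contains $L/A_M$ as a non-self-associate Levi subgroup. The reciprocals of the corresponding normalizing factors are $\gamma_A(s,\sigma^{H'},\psi)$, $\gamma_A(s,\sigma^{H'},\wedge^2,\psi)$ and $\gamma_A(s,(\sigma^{H'})^\vee,\psi)$, where $\wedge^2\phi^{H'}\cong(\phi^{H'})^\vee$ because $\det\phi^{H'}=\mathbf{1}$. None vanishes at $s=0$, since $\sigma^{H'}$ is a discrete series of $\PGL_3(F)$. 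Hence $W^0_{\sigma^L}=1$ and $R_{\sigma^L}=W_{\sigma^L}$. Together with your correct observation that $W_{\sigma^L}\neq 1$ exactly when $\sigma^{H'}$ is self-dual, this is the paper's proof.

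Your criterion would instead fail. $\gamma(0,\phi^{H'},\Sym^2,\psi)=0$ precisely when $\phi^{H'}$ is self-dual (hence orthogonal), which would force $R=1$ in the self-dual case, the opposite of the claim. Moreover, ``the standard parameter contains $\mathbf{1}$ twice'' holds for every parameter in Case (2), so it cannot separate the two cases.

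Your fallback through part (b)(3) of \cref{theorem:local-Langlands-correspondence-GSO2n} is a valid and genuinely different route; the paper mentions it but chooses the R-group. However, you leave its only real step undone, and that step is short.
\begin{itemize}
\item If $\phi^{H'}\not\cong(\phi^{H'})^\vee$, the centralizer in $\SO_8(\C)$ of $\phi^{H'}\oplus(\phi^{H'})^\vee\oplus\mathbf{1}^{2}$ is $\GL_1\times\SO_2$, which is connected.
\item If $\phi^{H'}$ is self-dual, it is orthogonal, and the $6$-dimensional part is $\phi^{H'}\otimes U$ for a quadratic plane $U$. The centralizer is then $S(\O(U)\times\O_2)$, which has two components, with $-1$ in the identity component.
\item The non-identity component lifts into $\Cent_{\Spin_8}(\phi)$. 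Indeed, $\phi$ factors through the connected subgroup $\SO_3\subset\SL_3\subset\Spin_8$, and on it the $\mu_2$-valued commutator with any lift is continuous, hence trivial.
\end{itemize}
So $\pi_0(S_\phi/Z(\Spin_8))$ is trivial or $\Z/2\Z$.

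To turn this into $|\Pi^X_{\phi^H}|$ you still need two facts. First, each $\GO_8$-class in Xu's packet is a single representation, by \cref{proposition:invariance-S3}. Second, for the direct-sum statement, $i^{M_{\ad}}_{L/A_M}(\sigma^L)$ is multiplicity free, which follows from $|W(M_{\ad},L/A_M)|=2$.

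The paper's R-group route avoids these compatibility checks with Xu's bijection and yields the decomposition of the induced representation directly. The component-group route matches the Langlands-theoretic count, but it leans more on the fine structure of Xu's correspondence.
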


\begin{proof}
    The first statement is a direct consequence of the previous lemma.
    We can prove the second statement by computing the group $S_{\phi^{M_{\ad}}}$, but we prove the statement in another way: We will compute the $R$-group of $\sigma^{L}$.

    We will apply \cref{lemma:Weyl-group-normalizer} to the set $\{ \beta_2, \beta_3 \}$.
    Then, the set $I_2$ is empty and the group $\Aut_{W}(I_1)$ is isomorphic to $\Z/2\Z$ which switches the two roots $\beta_2, \beta_3$.
    Thus, the group $W(M_{\ad}, L/A_M)$ is isomorphic to $\Z/2\Z$, see \cref{remark:levi-weyl}.
    The class representing the nontrivial element is given by the class of the element $w_0^{M_{\ad}} w_0^{L/A_M}$, where the element $w_0^{M_{\ad}}$ is the longest element of the Weyl group of $M_{\ad}$ and the element $w_0^{L/A_M}$ is the longest element of the Weyl group of $L/A_M$.
    The automorphism of $L_{\ad} \xrightarrow{\sim} \PGL_3$ induced by the element $w_0^{M_{\ad}}$ is the self-dual involution.

    Assume that the representation $\sigma^{H'}$ is not self-dual. Then, the group $W(M_{\ad}, L/A_M)_{\sigma^L}$ is trivial, and thus we obtain the result.
    Thus, we obtain the result for this case.

    If the representation $\sigma^{H'}$ is self-dual, the normalizer $W(M_{\ad}, L/A_M)_{\sigma^L}$ in the Weyl group is equal to $\Z/2\Z$.
    We will show that the group $W(M_{\ad}, L/A_M)_{\sigma^L}^{0}$ is trivial.
    By \cref{proposition:normalizing-factor-and-mu-function}, it suffices to compute the normalizing factors for $\sigma^{L}$ and each reduced relative root in $\Phi(M_{\ad}, A_L/A_M)$.
    For each root $\beta$ of $M_{\ad}$, we write its restriction to $A_L/A_M$ by $\beta_L$.
    The set of relative roots $\Phi(M_{\ad}, A_L/A_M)$ consists of the roots $\beta_{1, L}, \beta_{4, L}, \beta_{1, L} + \beta_{4, L}$ and their negations.
    By \cref{corollary:central-morphism-normalizing-factor-compatible}, we can compute each normalizing factor by replacing $M_{\ad}$ by $\SO_8$.
    It is easy to check that the reciprocals of corresponding normalizing factors are given as follows:
    \begin{align*}
        \begin{cases}
        &\gamma_A(s, \sigma^{H'}, \psi), \\
        &\gamma_A(s, \sigma^{H'}, \wedge^2, \psi), \\
        &\gamma_A(s, (\sigma^{H'})^{\vee}, \psi) = \gamma_A(s, \sigma^{H'}, \psi).
        \end{cases}
    \end{align*}
    These functions do not vanish at $s = 0$, as the representation $\sigma^{H'}$ is a discrete series representation of $\PGL_3(F)$.
    Hence, the group $W(M_{\ad}, L/A_M)_{\sigma^L}^{0}$ is trivial and we have the result.
\end{proof}

\begin{proposition}\label{proposition:irreducibility-of-parabolic-induction-case-2}
    The parabolic induction $i^{G}_{P}(\pi)$ of any element $\pi \in \Pi^X_{\phi^H}$ is irreducible.
\end{proposition}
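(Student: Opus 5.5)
The plan is to reduce to a Knapp--Stein $R$-group computation for a discrete series representation of a Levi subgroup of $G$, because $\pi$ is tempered but not discrete. By \cref{proposition:description-of-endoscopic-lift-case-2}, every $\pi \in \Pi^X_{\phi^H}$ is a direct summand of $i^{M_{\ad}}_{L/A_M}(\sigma^L)$. Transitivity of parabolic induction then gives
\begin{align*}
    \bigoplus_{\pi \in \Pi^X_{\phi^H}} i^G_P(\pi) \simeq i^G_{P_L}(\sigma^L),
\end{align*}
with $\sigma^L = \mathbf{1} \boxtimes \sigma^{H'} \boxtimes \mathbf{1}$ a discrete series representation of $L$. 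So it suffices to show that $i^G_{P_L}(\sigma^L)$ has exactly $\abs{\Pi^X_{\phi^H}}$ irreducible constituents. By the Harish-Chandra--Silberger theorem, this amounts to showing $\abs{R^G_{\sigma^L}} = \abs{R^{M_{\ad}}_{\sigma^L}}$, where the right-hand side equals $1$ or $2$ according to whether $\sigma^{H'}$ is self-dual, as computed in the proof of \cref{proposition:description-of-endoscopic-lift-case-2}. Since $i^G_{P_L}(\sigma^L)$ has at least $\abs{\Pi^X_{\phi^H}}$ summands, and $\dim \End = \abs{R}$ with multiplicity-free decomposition, I would prove $\abs{R^G_{\sigma^L}} \le \abs{\Pi^X_{\phi^H}}$.

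First I compute $W(G,L)$ using \cref{lemma:Weyl-group-normalizer,lemma:computation-normalizer}, with $I_1 = \{\alpha_3,\alpha_4\}$ inside $E_6$ (see \cref{remark:levi-weyl}). The orthogonal subsystem $\Phi_2$ should have rank $\le 4$ and is to be determined explicitly. Then I determine the stabilizer $W(G,L)_{\sigma^L}$. The $\GL_3$-block can only be sent to itself or to its contragredient, and the two $\GL_1$-characters are trivial. I expect the stabilizer to be a group whose image in $W(G,M)$ is all of $S_3$; this is compatible with the $S_3$-invariance in \cref{proposition:invariance-S3}.

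Next I determine the root system $\Phi_{\sigma^L}$ of reduced roots $\beta \in \Phi(G, A_L)$ with $\mu_\beta(\sigma^L) = 0$, using \cref{proposition:normalizing-factor-and-mu-function}. For the roots of $\Phi(M_{\ad}, A_L/A_M)$, the normalizing factors were already computed in the proof of \cref{proposition:description-of-endoscopic-lift-case-2} and do not vanish at $0$. For roots not in $M$, I would reduce via \cref{corollary:central-morphism-normalizing-factor-compatible} to Levi subgroups of rank-one type inside $\SO_{10}$-type Levis ($D_5 \supset D_4$, or smaller $\GL$-type Levis). The reciprocal normalizing factors are then Rankin--Selberg or standard $\gamma$-factors of the $\GL_1$-characters and $\sigma^{H'}$. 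Roots pairing the two trivial $\GL_1$ blocks, or a $\GL_1$ block with the $D_4$ direction, give $\gamma(s,\mathbf{1},\psi)$, which has a zero at $s=0$. These are exactly the zeros reflected in $\gamma(0,\Std \circ \phi^{M_{\ad}},\psi)=0$ and in the analogous condition for ${}^{s_1}\pi$ from \cref{eq:condition-distinguished-packet}. This should show that the reflections in $\Phi_{\sigma^L}$ generate a subgroup $W^0_{\sigma^L}$ covering the $W(G,M)$-part of the stabilizer.

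Finally, $R^G_{\sigma^L} = W(G,L)_{\sigma^L}/W^0_{\sigma^L}$ is compared with $R^{M_{\ad}}_{\sigma^L}$, and I show it is no larger. Concretely, the only surviving nontrivial element should be the class of $w_0^{M_{\ad}} w_0^{L/A_M}$ in the self-dual case, and nothing in the non-self-dual case. The main obstacle is the bookkeeping in $E_6$: explicitly listing $\Phi(G,A_L)$ and identifying $W(G,L)_{\sigma^L}$. It is also essential to check that each relative root with vanishing $\mu$ really is a reduced root whose reflection lies in the stabilizer, so that no extra $R$-group element arises. I would first carry out this check by explicit root computations in coordinates adapted to $\alpha_3, \alpha_4$ and the labelled-diagram notation.
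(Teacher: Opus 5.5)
Your plan follows the paper's proof: pass to $i^G_{P_L}(\sigma^L)$, compute $W(G,L)\cong W_2\rtimes\Aut_W(I_1)$ for $I_1=\{\alpha_3,\alpha_4\}$ via \cref{lemma:Weyl-group-normalizer,lemma:computation-normalizer}, detect vanishing $\mu$-functions by reducing to $\GL_1\times\SO_8\subset\SO_{10}$ through \cref{corollary:central-morphism-normalizing-factor-compatible}, and compare with $R^{M_{\ad}}_{\sigma^L}$. Your bound $\abs{R^G_{\sigma^L}}\le\abs{\Pi^X_{\phi^H}}$ is a slightly cleaner way to phrase the comparison and handles the non-self-dual case explicitly; the one thing to sharpen is the target of your $\mu$-step, since $W^0_{\sigma^L}$ has to contain all of $W_2\cong S_3\times S_3$, which has index $2$ in $W(G,L)$ and is much larger than the part of the stabilizer that normalizes $M$ (the paper gets this from $\Phi_2\cong A_2\times A_2$, all four of whose simple reflections have reciprocal normalizing factor $\gamma_A(s,\mathbf{1},\psi)$), and this leaves only the class of $w_0^{M_{\ad}}w_0^{L/A_M}$.
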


\begin{proof}
    Recall the parabolic subgroup $P_L \subset P$ with Levi factor $L$.
    Then, the parabolic induction $i^{G}_{P}(i^{M}_{L}(\sigma^L))$ is equal to the parabolic induction $i^{G}_{P_L}(\sigma^L)$.
    Thus, it suffices to show that the $R$-group of $\sigma^L$ and $L \subset G$ is isomorphic to the $R$-group of $\sigma^L$ and $L/A_M \subset M_{\ad}$.
    We denote the former by $R_{\sigma^L}$ and the latter by $R_{\sigma^L}^{M_{\ad}}$.

    We will compute the $R$-group $R_{\sigma^L}$ by applying the results in \cref{subsection:notes-on-Weyl-group} and \cref{proposition:normalizing-factor-and-mu-function} as the previous proposition. 
    We will use the notation in \cref{subsection:notes-on-Weyl-group}.
    Of course, we will set $I_1 = \{\alpha_3, \alpha_4\}$ which  corresponds to the set of simple roots of $A_2 \subset D_4$.

    We now compute $\Phi_2$ in $\Phi = \Phi(G, T)$.
    We first note that the highest root is given by 
    \begin{align*}
        \esixroot{1}{2}{2}{3}{2}{1}.
    \end{align*}
    Also, if the root 
    \begin{align*}
        \esixroot{n_1}{n_2}{n_3}{n_4}{n_5}{n_6}
    \end{align*}
    is orthogonal to $\alpha_3, \alpha_4$, 
    then we have 
    \begin{align*}
        2n_3 &= n_1 + n_4 \\
        2n_4 &= n_2 + n_3 + n_5. 
    \end{align*}
    Thus, a simple computation using two facts above shows that the root system $\Phi_2$ in \cref{subsection:notes-on-Weyl-group} is isomorphic to $A_2 \times A_2$, and the simple roots are given by the roots 
    \begin{align*}
        \esixroot{0}{0}{0}{0}{0}{1}, \quad \esixroot{1}{0}{1}{1}{1}{0}
    \end{align*}
    and 
    \begin{align*}
        \esixroot{0}{1}{1}{2}{2}{1}, \quad \esixroot{1}{1}{1}{1}{0}{0}.
    \end{align*}
    Also, it is proved in the previous proposition that the group $\Aut_{W}(I_1)$ is isomorphic to $\Z/2\Z$ and the nontrivial element is given by the class of the element $w_0^{M_{\ad}} w_0^{L/A_M} \in W(M_{\ad}, L/A_M)$.

    We finally show that the group $W(G, L)^0_{\sigma^L}$ is isomorphic to the group $W_2 = W(A_2) \times W(A_2)$.
    This implies the proposition, as this shows that $\abs{R_{\sigma^L}} \leq 2$ and a priori we have $\abs{R_{\sigma^L}} \geq \abs{R_{\sigma^L}^{M_{\ad}}}$.
    To show this, it suffices to show that for each simple root $\gamma \in \Phi_2$, the corresponding reflection $s_{\gamma}$ gives an element of $W(G, L)^{0}_{\sigma^L}$.
    As $\gamma$ is orthogonal to $I_1$, the Levi group $L_{\gamma} \subset G$ is of type $A_1 \times A_2$.
    The first and third simple roots are contained in the root system generated by $\alpha_2, \alpha_3, \alpha_4, \alpha_5, \alpha_6$.
    The second and fourth simple roots are contained in the root system generated by $\alpha_1, \alpha_2, \alpha_3, \alpha_4, \alpha_5$.
    In the both cases, the group $L_{\gamma}$ is contained in a Levi subgroup of type $D_5 \subset E_6$.

    Now, we apply \cref{corollary:central-morphism-normalizing-factor-compatible}.
    Then, we can compute the normalizing factor for $\sigma^L$ and $\gamma$ by using $\GL_1 \times \SO_8 \subset \SO_{10}$.
    We will show that the reciprocal of the corresponding normalizing factor for each simple root is equal to
    \begin{align*}
        \gamma_A(s, \mathbf{1}, \psi).
    \end{align*}
    This implies that the class of $s_{\gamma}$ is contained in $W(G, L)^{0}_{\sigma^L}$ and we have the result. 

    For the first simple root, we consider the group $\SO_{10}$.
    The Levi subgroup corresponding to $L_{\gamma}$ (resp. $L$) is standard and isomorphic to $\GL_2 \times \GL_3$ (resp. $\GL_1 \times \GL_1 \times \GL_3$).
    The simple root $\esixroot{0}{0}{0}{0}{0}{1}$ corresponds to the simple root of $\GL_2$ and the simple roots of $\GL_3$ correspond to $\alpha_4, \alpha_3$.
    The representation corresponds to $\sigma^L$ is given by
    \begin{align*}
        \sigma^L = \mathbf{1} \boxtimes \mathbf{1} \boxtimes (\sigma^{H'})^{\vee}.
    \end{align*}
    In this case, we can easily check that the normalizing factor is given by $\gamma(s, \mathbf{1}, \psi)$.
    In the third case, the computation is similar and the root $\esixroot{0}{1}{1}{2}{2}{1}$ corresponds to the highest root of $\SO_{10}$.

    In the second case, we take a basis $e_1, \dots, e_5$ for $F^5$ and the dual basis $e_1^{\vee}, \dots, e_5^{\vee}$ for $F^5$ with respect to the quadratic form defining $\SO_{10}$.
    Then, the group $L_{\gamma}$ is isomorphic to the Levi subgroup $\GL_2 \times \GL_3$ corresponding to the subspaces spanned by  $\{e_1, e_5\}$, $\{e_1^{\vee}, e_5^{\vee}\}$, $\{e_2, e_3, e_4\}$, and $\{e_2^{\vee}, e_3^{\vee}, e_4^{\vee}\}$.
    This easily shows the result. 
    By replacing $e_5$ with $e_5^{\vee}$ and $e_1$ with $e_1^{\vee}$, we obtain the result for the fourth case, similarly.
\end{proof}

\subsubsection{The endoscopic lift from $H$: Case (3)}

In Case $(3)$, the short root of $G_2(\C)$ corresponds to the orbit of the root $\beta^{\vee}_1$ under the triality automorphism $\tau$.
Thus, the group ${}^L I$ factors through the Levi subgroup ${}^L (L/A_M) \subset {}^L M_{\ad}$, where the Levi subgroup $L/A_M$ is of type $A_1^3$ corresponding to the simple roots $\beta_1, \beta_3, \beta_4$.

We have an isomorphism
\begin{align*}
    L/A_M \xrightarrow{\sim} (\GL_2 \times \GL_2 \times \GL_2)/(\GL_1^{(1, 1, 0)} \cdot \GL_1^{(0, 1, -1)}).
\end{align*}
Also, through this isomorphism, the action of $\tau$ is given by 
\begin{align*}
    (g_1, g_2, g_3) \mapsto (g_2^{\vee}, g_3, g_1^{\vee}).
\end{align*}
On the dual side, the $L$-embedding $\iota^{L/A_M}_{I} \colon {}^L I \hookrightarrow {}^L (L/A_M)$ is given by the map
\begin{multline*}
    \GL_2 \hookrightarrow \{ (g_1, g_2, g_3) \in \GL_2^{3} \mid \det(g_1)^{-1} = \det(g_2) = \det(g_3) \}   \\
    \colon g \mapsto (g^{\vee}, g, g),
\end{multline*}
as the image is $\tau = \widehat{\tau}^{-1}$-invariant.

\begin{proposition}\label{proposition:description-of-endoscopic-lift-case-3}
    The direct sum of the elements in $\Pi^X_{\phi^H}$ is given by $i^{M_{\ad}}_{L/A_M}(\sigma^{L})$, where $\sigma^{L}$ is given by 
    \begin{align*}
        \sigma^{L} = (\sigma^I)^{\vee} \boxtimes \sigma^I \boxtimes \sigma^I,
    \end{align*}
    as the representation of $L/A_M \xrightarrow{\sim} (\GL_2 \times \GL_2 \times \GL_2)/(\GL_1^{(1, 1, 0)} \cdot \GL_1^{(0, 1, -1)})$ or $L$.

    If the representation $\sigma^I$ is not self-dual, the representation $i^{M_{\ad}}_{L/A_M}(\sigma^L)$ is irreducible.
\end{proposition}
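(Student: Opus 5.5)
The plan mirrors the treatment of Case (2) in \cref{proposition:description-of-endoscopic-lift-case-2}. The argument has two parts: first identify the $L$-packet through its $L$-parameter, then determine irreducibility through an $R$-group computation.

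\textbf{Identifying the packet.} By the discussion preceding the statement, $\phi^{M_{\ad}} = \iota^{M_{\ad}}_{H}\circ\iota^H_I\circ\phi^I$ factors through ${}^L(L/A_M)$, and there it equals $\iota^{L/A_M}_I\circ\phi^I$, which is
\[
g\mapsto (g^{\vee},g,g).
\]
The local Langlands correspondence for $L/A_M$, a quotient of $\GL_2^3$, is compatible with contragredients (property (5) of \cref{theorem:local-Langlands-GLn}). Hence the representation of $L/A_M$ with parameter $\iota^{L/A_M}_I\circ\phi^I$ is $(\sigma^I)^{\vee}\boxtimes\sigma^I\boxtimes\sigma^I$. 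Before using it, I need to check that it is trivial on $\GL_1^{(1,1,0)}\cdot\GL_1^{(0,1,-1)}$; this follows from the central character relations $\omega^{-1}\cdot\omega=1$ and $\omega\cdot\omega^{-1}=1$.

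Since $\phi^I$ is discrete for $I$, the parameter for $L/A_M$ is discrete. By compatibility with parabolic induction in \cref{theorem:local-Langlands-correspondence-GSO2n}, the Xu $L$-packet attached to $\iota^{M_{\ad}}_{L/A_M}\circ\phi^{L}$ consists of the constituents of $i^{M_{\ad}}_{L/A_M}(\sigma^L)$. Here I use that discrete series packets of proper Levis are singletons, as noted in the proof of \cref{proposition:construction-of-distinguished-Xu-packet}. By \cref{proposition:construction-of-distinguished-Xu-packet}, this packet is the distinguished packet $\Pi^X_{\phi^H}$, which gives the first assertion. The induced representation is multiplicity-free, since its intertwining algebra is the twisted group algebra of an abelian $R$-group; I will check this below.

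\textbf{Irreducibility.} I will compute $W(M_{\ad},L/A_M)_{\sigma^L}$ using \cref{lemma:Weyl-group-normalizer} with $I_1=\{\beta_1,\beta_3,\beta_4\}$. The orthogonal complement $\Phi_2$ is $\{\pm\gamma\}$ with $\gamma=\beta_1+2\beta_2+\beta_3+\beta_4$, the highest root, so $W_2\cong\Z/2$. The group $\Aut_W(I_1)$ permutes the three $A_1$ factors, at least by an $S_3$ whose elements are realized up to sign changes. Each Weyl element acts on $(g_1,g_2,g_3)$ by permuting the factors and possibly taking contragredients, and $W_2$ acts by dualizing a suitable pair of factors. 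This is the step I expect to be the main obstacle: the exact signed action of each element on $\GL_2^3$ has to be pinned down explicitly.

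Granting that description, suppose $\sigma^I\not\cong(\sigma^I)^{\vee}$. Then any element stabilizing $(\sigma^I)^{\vee}\boxtimes\sigma^I\boxtimes\sigma^I$ must preserve the distinguished first factor, up to the dualization pattern. So $W_{\sigma^L}$ is a small group, I expect of order $\le 2$, generated essentially by the reflection in a relative root $\beta$. I will then show that this $\beta$ lies in $\Phi_{\sigma^L}$. To do so, compute its normalizing factor through \cref{corollary:central-morphism-normalizing-factor-compatible} on the $\SO_8$ side, as in Case (2). The reciprocal should be $\gamma_A(s,\sigma^I\times\sigma^I{}^{\vee},\psi)$-type, equivalently $\gamma_A(s,\mathbf 1,\psi)\cdot\gamma_A(s,\Ad\sigma^I,\psi)$, which vanishes at $s=0$. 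By \cref{proposition:normalizing-factor-and-mu-function} the reflection then lies in $W^0_{\sigma^L}$, so $R_{\sigma^L}=1$, and the induced representation is irreducible by the Knapp--Stein/Silberger theorem.
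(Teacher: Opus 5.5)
Your identification of the packet is fine; the paper treats it as immediate from the shape of $\iota^{L/A_M}_I$. The irreducibility argument, however, rests on a false description of $W(M_{\ad},L/A_M)$.

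The Levi subgroup $L/A_M$ has semisimple rank $3$ inside the rank-$4$ group $M_{\ad}$, so it is maximal. Hence $W(M_{\ad},L/A_M)$ acts faithfully on the line $\fa^{M_{\ad},*}_{L/A_M}$ and has order at most $2$. You correctly found $\Phi_2=\{\pm\widetilde{\beta}\}$, so $W_2$ already has order $2$. \Cref{lemma:Weyl-group-normalizer} then forces $\Aut_W(I_1)=1$: inside $W(D_4)$ there is no $S_3$, signed or not, permuting the three $A_1$ factors.

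Those permutations are the triality automorphisms $s_1,s_6$, which are outer for $D_4$. Concretely, an element of $W(D_4)$ fixing $\beta_1=e_1-e_2$ and swapping $\beta_3=e_3-e_4$ with $\beta_4=e_3+e_4$ would need an odd number of sign changes. They become Weyl elements only inside $E_6$, that is, in $W(G,L)$. That is the setting of \cref{proposition:irreducibility-of-parabolic-induction-case-3}, whose proof your outline actually resembles: there one finds a nontrivial stabilizer and kills it with vanishing normalizing factors. It is not the setting of this statement.

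So $W(M_{\ad},L/A_M)=\{1,s_{\widetilde{\beta}}\}$. The element $s_{\widetilde{\beta}}$ fixes $\beta_1,\beta_3,\beta_4$ and acts by $-1$ on $\fa_{L/A_M}$. It therefore sends $\sigma^L$ to $(\sigma^L)^{\vee}$, dualizing all three factors at once rather than a pair.

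Consequently the stabilizing reflection you plan to place in $W^0_{\sigma^L}$ does not exist. Moreover, the vanishing of the normalizing factor you predict cannot hold for the unique reduced relative root. Since $W^0_{\sigma}\subseteq W_{\sigma}$, a zero of $\mu_{\widetilde{\beta}}$ at $\sigma^L$ would force $s_{\widetilde{\beta}}\sigma^L\cong\sigma^L$, which fails when $\sigma^I$ is not self-dual.

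The correct argument is shorter and needs no normalizing factors. If $\sigma^I\not\cong(\sigma^I)^{\vee}$, then $(\sigma^L)^{\vee}=\sigma^I\boxtimes(\sigma^I)^{\vee}\boxtimes(\sigma^I)^{\vee}\not\cong\sigma^L$. Hence $W(M_{\ad},L/A_M)_{\sigma^L}=1$, so $R_{\sigma^L}=1$ and $i^{M_{\ad}}_{L/A_M}(\sigma^L)$ is irreducible. This is exactly the paper's proof.
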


\begin{proof}
    Again, we apply \cref{subsection:notes-on-Weyl-group} to the case $I_1 = \{ \beta_1, \beta_3, \beta_4 \}$.
    By the maximality of $L \subset M$, we have 
    \begin{align*}
        \abs{W(M_{\ad}, L/A_M)} \leq 2.
    \end{align*}
    As the set $I_2$ consists of the highest root $\widetilde{\beta}$ of the root system of $M_{\ad}$, we have 
    \begin{align*}
        \abs{W(M_{\ad}, L/A_M)} = 2,
    \end{align*} 
    and this group is generated by the class of the reflection $s_{\widetilde{\beta}}$.
    As we have $s_{\widetilde{\beta}} \sigma^{L} = (\sigma^{L})^{\vee}$, we obtain the result.
\end{proof}

\begin{proposition}\label{proposition:irreducibility-of-parabolic-induction-case-3}
    If $\sigma^I$ is not self-dual, then the parabolic induction $i^{G}_{P}(\pi)$ of any element $\pi \in \Pi^X_{\phi^H}$ is irreducible.
\end{proposition}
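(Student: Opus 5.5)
We follow the pattern of the proof of \cref{proposition:irreducibility-of-parabolic-induction-case-2}. By \cref{proposition:description-of-endoscopic-lift-case-3}, when $\sigma^I$ is not self-dual, $\Pi^X_{\phi^H}$ consists of the single representation $\pi = i^{M_{\ad}}_{L/A_M}(\sigma^L)$ with $\sigma^L = (\sigma^I)^{\vee}\boxtimes\sigma^I\boxtimes\sigma^I$. By transitivity of induction, $i^G_P(\pi) = i^G_{P_L}(\sigma^L)$, where $P_L\subset P$ is the standard parabolic with Levi $L$, the Levi of $G$ of type $A_1^3$ attached to $\{\alpha_1,\alpha_6,\alpha_4\}$ (the simple roots of $G$ corresponding to $\beta_1,\beta_3,\beta_4$ under $M_{\der}\cong \Spin_8$). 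It therefore suffices to show that the $R$-group $R_{\sigma^L}=W(G,L)_{\sigma^L}/W(G,L)^0_{\sigma^L}$ is trivial.

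\emph{Step 1: compute $W(G,L)$.} We apply \cref{lemma:Weyl-group-normalizer}, \cref{lemma:computation-normalizer} and \cref{remark:levi-weyl} with $I_1$ the three mutually orthogonal simple roots above. We find $\Phi_2=\Phi_1^{\perp}\cap\Phi(G,T)$ by solving the orthogonality equations in the labeled-diagram notation. We expect $\Phi_2$ to be of type $A_2$, containing the highest root $\widetilde\alpha$, so that $W_2\cong S_3$. We also expect $\Aut_W(I_1)\cong S_3$, realized by the Tits lifts of $W(G,M)\cong S_3$ acting on $L$ through triality. Hence $W(G,L)\cong W_2\rtimes S_3$.

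\emph{Step 2: the stabilizer.} The triality part acts on $L/A_M$ by permuting the three $\GL_2$ factors, with dualities as in the formula for $\tau$. The element $\tau$ fixes $\sigma^L$, since the image of $\iota^{L/A_M}_I$ is $\tau$-invariant. An involution in the triality part, or a reflection in $W_2$ such as $s_{\widetilde\beta}$, sends $\sigma^L$ to a twist involving $(\sigma^L)^{\vee}$ on some factors. Because $\sigma^I\not\cong(\sigma^I)^{\vee}$, these elements do not fix $\sigma^L$ unless they are compensated by an element of $W_2$. We therefore need to identify $W(G,L)_{\sigma^L}$ explicitly.

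\emph{Step 3: show $W(G,L)_{\sigma^L}\subset W^0_{\sigma^L}$.} For each reduced root $\gamma$ of $\Phi(G,A_L)$ whose reflection lies in the stabilizer, $L_\gamma$ lies in a Levi of type $D_5$. By \cref{corollary:central-morphism-normalizing-factor-compatible} we can compute its normalizing factor inside $\GL_1\times\SO_8\subset\SO_{10}$, or inside a Levi of $\SO_{10}$ such as $\GL_2\times\GL_3$ or $\GL_2\times\GL_2\times\GL_1$. We expect the reciprocal normalizing factor to be a Rankin--Selberg factor $\gamma_A(s,\sigma^I\times(\sigma^I)^{\vee}\otimes\omega,\psi)$ or $\gamma_A(s,\mathbf 1,\psi)$, which vanishes at $s=0$. \cref{proposition:normalizing-factor-and-mu-function} then puts $s_\gamma$ into $W^0_{\sigma^L}$. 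Once this holds for a set of reflections generating the stabilizer, we get $R_{\sigma^L}=1$.

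The main obstacle is Steps 2 and 3: describing precisely the action of $W(G,L)$, including the Tits-lift twists and central characters, on $\sigma^L$, and checking that the whole stabilizer is generated by reflections with vanishing $\mu$-function. If the stabilizer contains a triality-type element that is not a product of such reflections, the argument breaks. As a fallback in that case we would compare with $R^{M_{\ad}}_{\sigma^L}=1$ and the $A_2$ factor as in Case (2), bounding $|R_{\sigma^L}|$ by the order of the part of the stabilizer acting through $W(M_{\ad},L/A_M)$, which is trivial on $\sigma^L$.
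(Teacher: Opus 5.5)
There is a genuine gap. It sits where you suspected, but it comes from incorrect combinatorics in Steps 1--2 rather than from a real obstruction.

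First, $L$ is misidentified. $M$ is attached to $\{\alpha_2,\alpha_3,\alpha_4,\alpha_5\}$, and $L\subset M$ is attached to the three outer nodes $\{\alpha_2,\alpha_3,\alpha_5\}$ of this $D_4$. The Levi attached to $\{\alpha_1,\alpha_4,\alpha_6\}$ is not contained in $M$, so $i^G_P(\pi)=i^G_{P_L}(\sigma^L)$ does not hold for it. Second, for the correct $I_1=\{\alpha_2,\alpha_3,\alpha_5\}$, the orthogonality conditions $n_4=2n_2$, $n_1+n_4=2n_3$, $n_4+n_6=2n_5$ leave only $\pm(\alpha_2+\alpha_3+2\alpha_4+\alpha_5)$. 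This is the highest root $\widetilde\beta$ of $D_4$; the $E_6$ highest root is not even orthogonal to $\alpha_2$. So $\Phi_2$ is of type $A_1$, not $A_2$, and $W(G,L)\cong\langle s_{\widetilde\beta}\rangle\times S_3$. Here the $S_3$ consists of the Tits liftings of $W(G,M)$, which preserve the splitting of $M$, fix $\alpha_4$, and permute $I_1$.

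The missing idea is the determination of the stabilizer, and your Step 2 points the wrong way. Your reason for $\tau$ fixing $\sigma^L$ works for all of $S_3$. The map $\iota^{M_{\ad}}_H$ lands in $\Spin_8^{S_3}=\widehat H$, and $\widehat{L/A_M}$ is stable under the triality automorphisms. Hence the parameter of $\sigma^L$ is fixed by them, and every Tits lifting of $W(G,M)$, including the involutions, fixes $\sigma^L$ with no compensation by $W_2$. On the other hand, $s_{\widetilde\beta}\sigma^L=(\sigma^L)^\vee\not\cong\sigma^L$. Thus $W(G,L)_{\sigma^L}\cong S_3$, generated by the Tits liftings of $s_1$ and $s_6$.

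By the argument of \cref{proposition:irreducibility-of-parabolic-induction-case-1}, it then suffices to put these two generators in $W^0_{\sigma^L}$. The Tits lifting of $s_1$ acts on $\fa_L^*$ as the reflection in the root $(\alpha_1+\alpha_3+\alpha_4)|_{A_L}$. Its Levi has type $A_3\times A_1$ inside the $D_5$-Levi; in $\SO_{10}$ terms this is $\GL_2\times\SO_6\supset\GL_2\times\GL_1\times\SO_4$. Its $\mu$-function vanishes at $0$ for the same reason as in Case (1). The relevant normalizing factor is, up to $\epsilon$, the inverse of a standard-type $\gamma$-factor of $\pi$ or a triality twist of it. Its zero at $s=0$, guaranteed by distinguishedness, comes from the trivial constituent of $\phi^I\otimes(\phi^I)^\vee$. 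The case of $s_6$ is symmetric.

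So your guessed Rankin--Selberg factor is the right one, but without identifying the stabilizer your Step 3 has no definite target. Your fallback also fails. The stabilizer does not act through $W(M_{\ad},L/A_M)$, and $\abs{R_{\sigma^L}}\ge\abs{R^{M_{\ad}}_{\sigma^L}}$ is a lower bound, not an upper one.
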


\begin{proof}
    By the previous proposition, the $L$-packet $\Pi^X_{\phi^H}$ is a singleton.
    Also, we have 
    \begin{align*}
        i^{G}_{P}(\pi) = i^{G}_{P_L}(\sigma^L).
    \end{align*}
    We have to prove the irreducibility of this representation.

    In this case, a similar computation to \cref{proposition:irreducibility-of-parabolic-induction-case-2} shows that we have $I_2 = \{ \sum_{i=2, 3, 5}\alpha_i + 2 \alpha_4 = \esixroot{0}{1}{1}{2}{1}{0} \}$.
    Also, we have 
    \begin{align*}
        \Aut_{W(G, L)}(I_1) \subset \Aut(I_1) =  S_3.
    \end{align*}
    However, all the elements of $S_3$ are  realized as Tits liftings of the elements of $W(G, M) \xrightarrow{\sim} \Norm_{W(G, T)}(\{\alpha_2, \alpha_3, \alpha_4, \alpha_5\})$. 
    The Tits liftings fix the root $\alpha_4$.
    Thus, we have the equality
    \begin{align*}
        W(G, L) \xrightarrow{\sim} \langle s_{\sum_{i=2, 3, 5}\alpha_i + 2 \alpha_4} \rangle \times S_3.
    \end{align*} 
    If $\sigma^I$ is not self-dual, then we have 
    \begin{align*}
        W(G, L)_{\sigma^L} \xrightarrow{\sim} S_3.
    \end{align*}
    It suffices to show that we have 
    \begin{align*}
        W(G, L)_{\sigma^L}^{0} = W(G, L)_{\sigma^L}.
    \end{align*}
    In our situation, each element of the right-hand side is represented by the Tits liftings of the elements of $W(G, M)$.
    Thus, by the same argument as \cref{proposition:irreducibility-of-parabolic-induction-case-1}, all the elements of $W(G, L)_{\sigma^L}$ are in $W(G, L)_{\sigma^L}^{0}$ if we check that the classes of Tits liftings of $s_{\alpha_{1, M}}$ and $s_{\alpha_{6, M}}$ are in $W(G, L)_{\sigma^L}^{0}$.
    However, the same computation in \cref{proposition:irreducibility-of-parabolic-induction-case-1} shows the inclusion.
    This completes the proof.
\end{proof}

\subsubsection{The endoscopic lift from $H$: Case (4)}

In Case $(4)$, we note that the group ${}^L I$ is contained in ${}^L H'$ in Case $(2)$ and $I$ is a Levi subgroup of $H'$ and $H$ simultaneously.

Let $L$ be the Levi subgroup of $G$ defined by $\alpha_4$.
Then, the group $L/A_M$ is the Levi subgroup of $M_{\ad}$ corresponding to $\beta_2$.
We have 
\begin{align*}
    L/A_M \xrightarrow{\sim} (\GL_1 \times \GL_2 \times \GL_1 \times \GL_1) / \GL_1^{(1, 1, 1, 2)}.
\end{align*}
We have the $L$-homomorphism
\begin{align*}
    \iota^{L/A_M}_{I} \colon {}^L I \hookrightarrow {}^L (L/A_M)
\end{align*}
induced by $\iota^{M_{\ad}}_{H}$.

\begin{lemma}\label{lemma:finite-index}
    Let $A$ be a split torus. 
    Let $\alpha \colon A \to \bG_m$ be a nontrivial character over $F$. 
    Then, the image $\alpha(A(F)) \subset F^{\times}$ is an open subgroup of finite index.
\end{lemma}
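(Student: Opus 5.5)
The plan is to reduce to the case $A = \bG_m^n$ and then to a single power map on $F^{\times}$. Since $A$ is split, we fix an isomorphism $A \xrightarrow{\sim} \bG_m^n$. Then the character $\alpha$ is given by
\begin{align*}
    (t_1, \dots, t_n) \mapsto \prod_{i=1}^n t_i^{a_i}
\end{align*}
for some $(a_1, \dots, a_n) \in \Z^n \setminus \{0\}$, because $\alpha$ is nontrivial. Consequently $\alpha(A(F))$ is the subgroup of $F^{\times}$ generated by the subgroups $(F^{\times})^{a_i}$. This is exactly the subgroup $(F^{\times})^{d}$, where $d = \gcd(a_1, \dots, a_n) \neq 0$: one inclusion is clear, and for the other we use a B\'ezout relation $d = \sum_i c_i a_i$ with $c_i \in \Z$.

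It therefore suffices to show that $(F^{\times})^{d}$ is open and of finite index in $F^{\times}$ for any nonzero integer $d$. Replacing $d$ by $-d$ does not change the subgroup, so we may assume $d > 0$.

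\textbf{Openness.} By Hensel's lemma, or via the $p$-adic logarithm and exponential, every element of $1 + \fp_F^{m}$ is a $d$-th power once $m$ is large relative to $\ord_F(d)$. Thus $(F^{\times})^d$ contains an open subgroup, and is therefore open.

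\textbf{Finite index.} We use the decomposition $F^{\times} \cong \Z \times \fo_F^{\times}$. The quotient $F^{\times}/(F^{\times})^d$ is then a quotient of $\Z/d\Z \times \fo_F^{\times}/(1+\fp_F^{m})$, and the latter group is finite. Alternatively, since $F$ has characteristic $0$, one can invoke the standard formula
\begin{align*}
    [F^{\times} : (F^{\times})^{d}] = d \, \abs{d}_F^{-1} \abs{\mu_d(F)}.
\end{align*}

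I expect no real obstacle here. The only point needing care is the Hensel step, which requires choosing $m > 2\ord_F(d)$; this is routine.
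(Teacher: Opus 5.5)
Your proof is correct, and it takes a different, more hands-on route than the paper.

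The paper argues abstractly. Since $\alpha$ is nontrivial, it is surjective onto $\bG_m$, so the sequence $1 \to \Ker(\alpha) \to A \to \bG_m \to 1$ gives an injection $F^{\times}/\alpha(A(F)) \hookrightarrow H^1(F, \Ker(\alpha))$. This group is finite because $\Ker(\alpha)$ is of multiplicative type over a $p$-adic field of characteristic $0$. Openness comes from the implicit function theorem: in characteristic $0$ the differential of a nontrivial character is nonzero, so $\alpha$ is a submersion on $F$-points.

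You instead fix coordinates and identify the image exactly as $(F^{\times})^{d}$ with $d = \gcd(a_1, \dots, a_n)$. After that you need only Hensel's lemma, with $m > 2\ord_F(d)$, and the decomposition $F^{\times} \cong \Z \times \fo_F^{\times}$; both steps are carried out correctly.

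Your route is more elementary and yields the image and its index explicitly. The paper's route is coordinate-free and would apply without change to a nontrivial $F$-rational character of a non-split torus, or to any surjective homomorphism of algebraic groups whose kernel has finite $H^1$. In those settings no explicit description of the image like $(F^{\times})^d$ is available.
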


\begin{proof}
    This follows from the finiteness of Galois cohomology of the kernel of $\alpha$ and the implicit function theorem.
\end{proof}

\begin{proposition}\label{proposition:description-of-endoscopic-lift-case-4}
    The direct sum of the elements in $\Pi^X_{\phi^H}$ is given by $i^{M_{\ad}}_{L/A_M}(\sigma^{L})$, where $\sigma^{L}$ is given by 
    \begin{align*}
        \sigma^{L} = \mathbf{1} \boxtimes \sigma^I \boxtimes \chi_{\sigma^I}^{-1}  \boxtimes \mathbf{1},
    \end{align*}
    as the representation of $L/A_M \xrightarrow{\sim} (\GL_1 \times \GL_2 \times \GL_1 \times \GL_1)/\GL_1^{(1, 1, 1, 2)}$ or $L$.

    Let $\cO_{\sigma^I} \subset \Pi_{\disc}(I)$ be the connected component of $\sigma^I$.
    Then, there exists a dense open subset $\cO' \subset \cO_{\sigma^I}$ such that the representation $i^{M_{\ad}}_{L/A_M}(\mathbf{1} \boxtimes \rho \boxtimes \chi_{\rho}^{-1} \boxtimes \mathbf{1})$ is irreducible for $\rho \in \cO'$.
\end{proposition}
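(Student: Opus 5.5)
We argue as in \cref{proposition:description-of-endoscopic-lift-case-2} and \cref{proposition:description-of-endoscopic-lift-case-3}, in two steps: first identify the $L$-parameter, then compute an $R$-group.

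\emph{Step 1: the parameter.} The long root $\GL_2$ of ${}^L H$ sits inside $\SL_3 = \widehat{H'}$. By \cref{lemma:description-of-L-homomorphism-case-2}, after conjugating by the Weyl element $\cW$, the image of ${}^L I$ lands in the Levi ${}^L(L/A_M)$ attached to $\beta_2$. Write $\widehat{L/A_M}$ as the dual of $(\GL_1 \times \GL_2 \times \GL_1 \times \GL_1)/\GL_1^{(1,1,1,2)}$. The embedding of $\GL_2 = \widehat{I}$ is then $g \mapsto (1, g, \det g^{-1}, 1)$. This is pinned down by the determinant condition defining the dual of the quotient torus, together with the requirement that the standard $8$-dimensional representation restrict to $\Std_H \oplus \mathbf{1}$. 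Concretely, this restriction is $\phi^I \oplus (\phi^I)^{\vee} \oplus \mathbf{1}^{3} \oplus$ twists; one checks it against the restriction of $\Std_H$ to the long-root $\GL_2$, which is $\phi \oplus \phi^{\vee} \oplus \det \oplus \det^{-1} \oplus \mathbf{1}$. Local class field theory then turns $\det\phi^I$ into the central character $\chi_{\sigma^I}$, and we obtain $\sigma^L = \mathbf{1}\boxtimes\sigma^I\boxtimes\chi_{\sigma^I}^{-1}\boxtimes\mathbf{1}$. By compatibility with parabolic induction in \cref{theorem:local-Langlands-correspondence-GSO2n}, the Xu packet consists of the constituents of $i^{M_{\ad}}_{L/A_M}(\sigma^L)$. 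Since we also know $\Pi^X_{\phi^H}$ is a single Xu packet by \cref{proposition:construction-of-distinguished-Xu-packet}, the direct sum statement follows.

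\emph{Step 2: irreducibility on a dense open set.} Apply \cref{lemma:Weyl-group-normalizer} and \cref{lemma:computation-normalizer} with $I_1 = \{\beta_2\}$ in $D_4$. Here $\Phi_2 = \beta_2^{\perp}\cap D_4$ is of type $A_1^3$, with simple roots $\beta_1,\beta_3,\beta_4$ (more precisely, the roots orthogonal to $\beta_2$). So $W(M_{\ad}, L/A_M) \cong W_2 \rtimes N_W(I_1,I_2)$, a finite group acting on $\cX_{\unit}(A_L/A_M)$. Moving $\rho$ in $\cO_{\sigma^I}$ amounts to twisting by an unramified character $\abs{\det}^{it}$, which makes $\chi_\rho = \chi_{\sigma^I}\abs{\cdot}^{2it}$. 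Every nontrivial $w$ acts on the unramified twist parameter; we check that the fixed locus of each such $w$ is a proper analytic subset. It is enough to see that $w$ acts nontrivially on the line $\{it\}$, i.e.\ that $w$ does not fix the cocharacter direction corresponding to $\det$ of the $\GL_2$-block, and this can be checked on the explicit $(\Z/2\Z)^4_0\rtimes S_4$ model of \cref{construction:explicit-model-D_4}. For such $w$ the condition $w\rho_L \simeq \rho_L$ fails off a proper analytic subset of $\cO_{\sigma^I}$. The finitely many elements that act trivially on the twist line have to be handled directly. Removing the bad locus leaves a dense open $\cO'$ on which $W_{\rho_L} = 1$, so $R = 1$ and the induction is irreducible by the Harish-Chandra--Silberger theorem.

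\emph{Expected obstacle.} The hardest part is the bookkeeping of which Weyl elements act trivially on the one-parameter twist family. If some nontrivial $w$ does stabilize every $\rho_L$ (for instance, sending $\rho \mapsto \rho^{\vee}\otimes\chi_\rho$ with the $\GL_1$-factors permuted), then I would instead show that the corresponding root lies in $W^0$. To do this I would compute the normalizing factor via \cref{corollary:central-morphism-normalizing-factor-compatible} in $\SO_8$, where it reduces to $\gamma_A(s,\mathbf{1},\psi)$ as in \cref{proposition:irreducibility-of-parabolic-induction-case-2}. \cref{lemma:finite-index} would be used to see that the determinant map covers an open subgroup, so that $\chi_\rho$ genuinely varies over the component.
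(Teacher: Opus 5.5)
\textbf{There is a gap at the decisive step.} Your set-up agrees with the paper's. Twisting $\rho$ by $|\det|^{it}$ moves $\sigma^L$ along the character $\mu$ of $L/A_M$ given by $|\det|$ on the $\GL_2$-block and $|\cdot|^{-2}$ on the next $\GL_1$-block, and $\mu$ restricts on $A_L/A_M$ to $2\beta_{3,L}$. So your criterion ``$w\mu\neq\mu$'' is exactly the paper's criterion $w\beta_{3,L}\neq\beta_{3,L}$, which the paper obtains from the central character $\chi_\rho\circ\beta_{3,L}$ of $\sigma^L$ together with \cref{lemma:finite-index}.

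However, you never show that \emph{every} nontrivial $w\in W(M_{\ad},L/A_M)$ moves $\mu$. You say it ``can be checked'', then allow for exceptional elements to be ``handled directly'' by placing them in $W^0$ via a normalizing factor $\gamma_A(s,\mathbf{1},\psi)$. That fallback is not available here. The factor $\gamma_A(s,\mathbf{1},\psi)$ in \cref{proposition:irreducibility-of-parabolic-induction-case-2} came from trivial characters specific to that situation, whereas here the rank-one factors involve $\chi_\rho$. For example, the reflection in $\beta_1+\beta_2+\beta_3$ pairs the $\GL_1$-blocks carrying $\mathbf{1}$ and $\chi_\rho^{-1}$, and its $\mu$-function vanishes at $0$ only when $\chi_\rho=\mathbf{1}$. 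So an element stabilizing a generic $\rho_L$ would genuinely make the induction reducible. The entire content of the second assertion is that no such element exists, and your sketch does not prove it.

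\textbf{How to close it.} First, the simple roots you list for $\Phi_2$ are wrong: $\beta_1,\beta_3,\beta_4$ are not orthogonal to $\beta_2$. The roots orthogonal to $\beta_2$ are $\beta_1+\beta_2+\beta_3$, $\beta_1+\beta_2+\beta_4$ and $\beta_2+\beta_3+\beta_4$. Since $\Aut_W(\{\beta_2\})$ is trivial, $W(M_{\ad},L/A_M)\cong W_2\cong(\Z/2\Z)^3$.

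The paper's argument then finishes quickly:
\begin{enumerate}
\item After multiplying by $s_{\beta_2}$, a representative $w$ fixes $\beta_2$, hence preserves $\beta_2^{\perp}$.
\item If $w\beta_{3,L}=\beta_{3,L}$, then $w\beta_3-\beta_3$ lies in $\R\beta_2$ (the kernel of restriction to $A_L$) and also in $\beta_2^{\perp}$, so $w\beta_3=\beta_3$.
\item Thus $w$ fixes the $A_2$-subsystem spanned by $\beta_2,\beta_3$. Its orthogonal complement in $D_4$ contains no roots, as in \cref{proposition:description-of-endoscopic-lift-case-2}, so \cref{lemma:Weyl-group-normalizer} gives $w=1$.
\end{enumerate}
A direct check on the seven nontrivial elements of $W_2$ confirms this. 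One of them sends $\mu\mapsto-\mu$, which still suffices for your discreteness argument in $t$. With this step supplied, your unramified-twist argument becomes a complete proof, equivalent to the paper's central-character argument.
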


\begin{proof}
    The first statement is trivial. 
    We will prove the second statement.   
    The basis of the center $A_L/A_M$ is given by the restriction $\beta_{1, L}, \beta_{3, L}, \beta_{4, L}$ of $\beta_{1}, \beta_{3}, \beta_{4}$.
    A simple direct computation shows that the central character of $\sigma^L$ is given by 
    \begin{align*}
       a \in A_L/A_M \mapsto \chi_{\sigma^I} \circ \beta_{3, L}(a).
    \end{align*}
    Thus, if $w \in W(G, L)_{\sigma^L}$, then we have 
    \begin{align*}
        \chi_{\sigma^I} \circ (w \beta_3 - \beta_3) (a) = 1 
    \end{align*}
    for all $a \in A_L/A_M$.
    By \cref{lemma:finite-index}, if $\chi_{\sigma^I}$ is in a dense subset of $\widehat{A_I}$, then this condition holds only if $w \beta_3 = \beta_3$. 
    As we may assume $w \beta_2 = \beta_2$, now we are in a similar situation to \cref{proposition:description-of-endoscopic-lift-case-2}.

    By the same reasoning as the proof for the non-self-dual case in \cref{proposition:description-of-endoscopic-lift-case-2}, we have 
    \begin{align*}
        W(M_{\ad}, L/A_M)_{\sigma^L} = 1.
    \end{align*}
    This completes the proof.
\end{proof}

By the same argument as \cref{proposition:irreducibility-of-parabolic-induction-case-2}, we can also obtain the following result. As the proof is easier than that, we omit it.

\begin{proposition}\label{proposition:irreducibility-of-parabolic-induction-case-4}
    Let $\cO_{\sigma^I} \subset \Pi_{\disc}(I)$ be the connected component of $\sigma^I$.
    Then, there exists a dense open subset $\cO' \subset \cO_{\sigma^I}$ such that
    the parabolic induction $i^{G}_{P}(\pi)$ of any element $\pi \in \Pi^X_{\phi^H}$ is irreducible.
\end{proposition}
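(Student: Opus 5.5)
The plan is to reduce to the Levi $L$ of $G$ determined by $\alpha_4$ and run the $R$-group argument of \cref{proposition:irreducibility-of-parabolic-induction-case-2}, but only generically in the central character of $\rho\in\cO_{\sigma^I}$. By \cref{proposition:description-of-endoscopic-lift-case-4}, after shrinking to a dense open subset the packet $\Pi^X_{\phi^H}$ is a singleton $\pi=i^{M_{\ad}}_{L/A_M}(\sigma^L)$. Transitivity of induction then gives $i^G_P(\pi)=i^G_{P_L}(\sigma^L)$. By the Knapp--Stein--Silberger theorem, it therefore suffices to show that $R_{\sigma^L}=W(G,L)_{\sigma^L}/W(G,L)^0_{\sigma^L}$ is trivial for $\rho$ in a dense open subset $\cO'$.

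First I will describe $W(G,L)$ using \cref{lemma:Weyl-group-normalizer}, \cref{lemma:computation-normalizer} and \cref{remark:levi-weyl} with $I_1=\{\alpha_4\}$. The system $\Phi_2$ of roots of $E_6$ orthogonal to $\alpha_4$ is of type $A_5$, the centralizer of a root in $E_6$, and $\Aut_W(I_1)$ is trivial. Hence $W(G,L)\cong W_2\cong S_6$, acting on $X^*(A_L)$, which has basis $\alpha_{1},\alpha_2,\alpha_3,\alpha_5,\alpha_6$ restricted to $A_L$ by \cref{lemma:center-connected}.

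Next I will compute the central character of $\sigma^L$ on $A_L$. It has the form $a\mapsto \chi_\rho(\mu(a))$ for a single character $\mu$, namely the pullback of $\beta_{3,L}$ from \cref{proposition:description-of-endoscopic-lift-case-4}, possibly twisted by a fixed character independent of the unramified twist. As in that proof, if $w\in W(G,L)_{\sigma^L}$ then $\chi_\rho\circ(w\mu-\mu)=1$ on $A_L$. By \cref{lemma:finite-index}, for $\chi_\rho$ outside finitely many closed subsets of positive codimension this forces $w\mu=\mu$. So, generically, $W(G,L)_{\sigma^L}$ lies in the stabilizer of $\mu$ in $W_2$, which is a parabolic subgroup $W(A_4)$ or $W(A_2\times A_2)$ generated by reflections $s_\gamma$ with $\gamma\in\Phi_2$, $\gamma\perp\mu$.

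The final step, which I expect to be the main obstacle, is to show that every such reflection $s_\gamma$ lies in $W(G,L)^0_{\sigma^L}$. By \cref{proposition:normalizing-factor-and-mu-function}, this amounts to showing that the reciprocal normalizing factor for $(L_\gamma,L,\sigma^L)$ vanishes at $0$. For each simple root $\gamma$ of the stabilizer, $L_\gamma$ has type $A_1\times A_1$ and lies in a $D_5$ Levi. So, by \cref{corollary:central-morphism-normalizing-factor-compatible}, the factor can be computed inside $\GL_1\times\SO_8\subset\SO_{10}$ exactly as in the proof of \cref{proposition:irreducibility-of-parabolic-induction-case-2}. I expect these roots $\gamma$ to pair only the trivial $\GL_1$-components $\mathbf 1\boxtimes\cdots\boxtimes\mathbf 1$ of $\sigma^L$, giving factors $\gamma_A(s,\mathbf 1,\psi)^{\pm1}$, which have the required zero. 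Roots touching $\rho$ or $\chi_\rho^{-1}$ produce twisted factors, but these roots should be excluded generically by the stabilizer condition, since they move $\mu$. Carrying this out needs an explicit identification of $\mu$ and of the $A_5$ simple roots, in the spirit of the $E_6$ root tables used in case (2). If some stabilizer reflection instead pairs $\rho$ with a trivial component, I would further shrink $\cO'$ so that the twisted $\gamma$-factor is generic, which would make that reflection fail to stabilize $\sigma^L$. Either way $R_{\sigma^L}=1$ on a dense open $\cO'$, and $\cO'$ is open because these conditions are closed of positive codimension in the compact torus $\cO_{\sigma^I}$.
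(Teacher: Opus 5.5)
Your proposal is correct and is essentially the argument the paper omits when it says the proof is ``the same as, and easier than'' \cref{proposition:irreducibility-of-parabolic-induction-case-2}: generically in $\chi_\rho$, the central-character argument of \cref{proposition:description-of-endoscopic-lift-case-4} confines $W(G,L)_{\sigma^L}$ to the stabilizer of $\mu=\beta_{3,L}$ in $W_2\cong W(A_5)$, and the reflections generating that stabilizer all lie in $W(G,L)^0_{\sigma^L}$. Your hedges can be removed: the stabilizer is exactly $W(A_2\times A_2)$ (the roots orthogonal to $\alpha_4$ and to the leg root restricting to $\mu$, as in Case $(2)$), and for such $\gamma$ the group $N_\gamma$ is one-dimensional with relevant character $\chi_\rho\circ\mu\circ\gamma^\vee=\chi_\rho^{\langle\mu,\gamma^\vee\rangle}=\mathbf{1}$, so the reciprocal normalizing factor is $\gamma_A(s,\mathbf{1},\psi)$ (not $\gamma_A^{\pm1}$) and your fallback contingency never arises.
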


\subsubsection{The endoscopic lift from $H$: Case (5)}

In Case $(5)$, the group ${}^L I$ is the maximal torus of ${}^L H$.
It is also contained in ${}^L H'$ in Case $(2)$.

As the group ${}^L I$ can be seen as the maximal torus of ${}^L H'$ = ${}^L \PGL_3$, we can view $I$ as a maximal torus of $\PGL_3$.
Any character of $I$ is given by a triple $(\chi_1, \chi_2, \chi_3)$ of characters $F^{\times} \to \bS^1$ or a character of $(F^{\times})^3$, such that $\prod_{i} \chi_i = 1$.
We denote this character by $\sigma^{I}$.

Let $L = T$ be the maximal torus of $G$.

\begin{proposition}\label{proposition:description-of-endoscopic-lift-case-5}
    The direct sum of the elements in $\Pi^X_{\phi^H}$ is given by $i^{M_{\ad}}_{L/A_M}(\sigma^{L})$, where $\sigma^{L}$ is given by 
    \begin{align*}
        \sigma^{L}(t) 
        &= \chi_1(\beta_2(t)) \cdot \chi_1 \chi_2(\beta_3(t)) \\
        &= \chi_1 \circ (\beta_2 + \beta_3) (t) \cdot \chi_2 \circ \beta_3(t),
    \end{align*}
    for $t \in L/A_M$ or $t \in L$.

    Let $\cO_{\sigma^I} \subset \Pi_{\disc}(I) = I^{\cD}$ be the connected component of $\sigma^I$.
    Then, there exists a dense open subset $\cO' \subset \cO_{\sigma^I}$ such that the representation $i^{M_{\ad}}_{L/A_M}(\rho_1 \circ (\beta_2 + \beta_3) \cdot \rho_2 \circ \beta_3)$ is irreducible for $\rho = (\rho_1, \rho_2, \rho_3) \in \cO'$.
\end{proposition}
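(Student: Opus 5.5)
The plan mirrors the proofs of \cref{proposition:description-of-endoscopic-lift-case-3} and \cref{proposition:description-of-endoscopic-lift-case-4}. For the first statement, I would trace the torus embedding ${}^L I \hookrightarrow {}^L H' = {}^L\PGL_3 \hookrightarrow {}^L H \hookrightarrow {}^L M_{\ad}$. By \cref{lemma:description-of-L-homomorphism-case-2}, the image of $\SL_3$ is conjugated by $\cW$ into the derived group of $\widehat{L_2/A_M}$, where $L_2$ is the Levi from Case (2). So the maximal torus of $\widehat{\PGL_3}$ is carried into $\widehat{T}$, and $\phi^{M_{\ad}}$ factors through ${}^L(T/A_M)$. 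Dually, the resulting character of $T/A_M$ is a character of the $\PGL_3$-torus composed with a map $T/A_M \to I$.

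To identify that map, I would use the cocharacters $\beta_2^\vee,\beta_3^\vee$. These correspond, after $\cW$, to $\beta_2^\vee$ and $\sum_i\beta_i^\vee$, the simple coroots of $A_2\subset G_2$. Pairing with the roots $\beta_2,\beta_3$ then gives the formula $\sigma^L=\chi_1\circ(\beta_2+\beta_3)\cdot\chi_2\circ\beta_3$. Compatibility of the Langlands correspondence with parabolic induction ((1) of \cref{conjecture:local-Langlands-conjecture}, via \cref{theorem:local-Langlands-correspondence-GSO2n}) then shows that $\Pi^X_{\phi^H}$ consists of the constituents of $i^{M_{\ad}}_{L/A_M}(\sigma^L)$. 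These are multiplicity free, since the $R$-group is abelian for Xu packets, or equivalently by multiplicity one in $\End$. The main risk in this part is sign and convention bookkeeping, i.e.\ $\tau=\widehat{\tau}^{-1}$ and $\cW$; I would verify it by checking that $\sigma^L$ is $A_3$-invariant up to Weyl conjugation.

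For the irreducibility statement, I would use the Knapp--Stein theory: $R_{\sigma^L}=W_{\sigma^L}/W^0_{\sigma^L}$, so it suffices to show that $W(M_{\ad},T/A_M)_{\rho}$ is trivial for generic $\rho$. Here $W(M_{\ad},T/A_M)=W(D_4)$ acts on unitary characters of $T/A_M$. A character of the form $\rho_1\circ(\beta_2+\beta_3)\cdot\rho_2\circ\beta_3$ is fixed by $w$ exactly when
\begin{align*}
\rho_1\circ(w(\beta_2+\beta_3)-(\beta_2+\beta_3))\cdot \rho_2\circ(w\beta_3-\beta_3)=1
\end{align*}
on $T/A_M$. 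The connected component $\cO_{\sigma^I}$ is a torsor under the unitary unramified characters $(\rho_1|\cdot|^{it_1},\rho_2|\cdot|^{it_2})$. For each of the finitely many $w$, the set of $(t_1,t_2)$ satisfying this identity is either everything or a proper closed analytic subset. By \cref{lemma:finite-index}, it is a proper subset unless $w$ fixes both $\beta_3$ and $\beta_2+\beta_3$ modulo the relevant relations.

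So the key step is the following root-system check: every $w\in W(D_4)$ fixing $\beta_3$ and $\beta_2+\beta_3$ as characters of $T/A_M$, i.e.\ fixing the pullback of $X^*(I)$, must satisfy that $w\sigma^L=\sigma^L$ always. Any such $w$ lies in the Weyl group of $\Phi_2=(\{\beta_2,\beta_3\})^\perp$, which here is of type $A_1\times A_1$ generated by $s_{\beta_1}, s_{\beta_4}$, or contains such reflections. For those $w$, I then need $w\in W^0_{\sigma^L}$. I would verify this by computing the corank-one normalizing factors for $\beta_1,\beta_4$ through $\SO_8$ with \cref{corollary:central-morphism-normalizing-factor-compatible}. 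These are $\gamma_A(s,\mathbf 1,\psi)^{-1}$, because $\sigma^L$ is trivial on $\beta_1^\vee,\beta_4^\vee$, so the Plancherel $\mu$-function vanishes and \cref{proposition:normalizing-factor-and-mu-function} applies.

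I expect this step to be the main obstacle. If the fixator turns out larger, I would instead restrict to the open set where $\rho_1,\rho_2,\rho_1\rho_2$ and their inverses are pairwise distinct and nontrivial, and then do a direct case check on $W(D_4)\cong(\Z/2\Z)^4_0\rtimes S_4$. Taking $\cO'$ to be the complement of the finite union of proper closed subsets gives a dense open set, completing the proof.
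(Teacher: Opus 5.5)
Your reduction is the same as the paper's. By \cref{lemma:finite-index}, for $\rho$ in a dense open subset of $\cO_{\sigma^I}$, every $w\in W(M_{\ad},T/A_M)_{\rho^L}$ must fix $\beta_3$ and $\beta_2+\beta_3$. However, the root-system computation in your ``key step'' is wrong, and the verification you plan there would fail.

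In this $D_4$ diagram $\beta_2$ is the central node, adjacent to each of $\beta_1,\beta_3,\beta_4$. So $s_{\beta_1}$ and $s_{\beta_4}$ do not fix $\beta_2+\beta_3$; for example, $s_{\beta_1}(\beta_2+\beta_3)=\beta_1+\beta_2+\beta_3$. In fact $\Phi_2=\{\beta_2,\beta_3\}^{\perp}\cap\Phi(D_4)$ is empty, not of type $A_1\times A_1$. With $\beta_2=\epsilon_2-\epsilon_3$ and $\beta_3=\epsilon_3-\epsilon_4$, the orthogonal complement is spanned by $\epsilon_1$ and $\epsilon_2+\epsilon_3+\epsilon_4$, and it contains no root $\pm\epsilon_i\pm\epsilon_j$. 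This is the statement ``$I_2$ is empty'' in the proof of \cref{proposition:description-of-endoscopic-lift-case-2}.

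Your claim that $\sigma^L$ is trivial on $\beta_1^\vee,\beta_4^\vee$ is also false. Since $\langle\beta_2+\beta_3,\beta_1^\vee\rangle=\langle\beta_2+\beta_3,\beta_4^\vee\rangle=-1$ and $\langle\beta_3,\beta_1^\vee\rangle=\langle\beta_3,\beta_4^\vee\rangle=0$, one gets $\sigma^L\circ\beta_1^\vee=\sigma^L\circ\beta_4^\vee=\chi_1^{-1}$. This is nontrivial for generic $\rho$, so those corank-one normalizing factors are not $\gamma_A(s,\mathbf 1,\psi)^{-1}$, and the corresponding $\mu$-functions do not vanish.

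The correct computation removes the need for normalizing factors altogether. By \cref{lemma:Weyl-group-normalizer}(1), or Steinberg's theorem on pointwise stabilizers, any element of $W(D_4)$ fixing $\beta_2$ and $\beta_3$ lies in $W(\Phi_2)=1$. Hence $W(M_{\ad},T/A_M)_{\rho^L}=1$ for $\rho$ in a dense open subset, so the $R$-group is trivial and $i^{M_{\ad}}_{L/A_M}(\rho^L)$ is irreducible. This is exactly the paper's argument, which reduces to the non-self-dual case of \cref{proposition:description-of-endoscopic-lift-case-2}. Your fallback case check on $(\Z/2\Z)^4_0\rtimes S_4$ would also land here.
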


\begin{proof}
    In the statement of the proposition, we write 
    \begin{align*}
        \rho^L = \rho_1 \circ (\beta_2 + \beta_3) \cdot \rho_2 \circ \beta_3.
    \end{align*}
    The first statement follows from a simple computation.
    We utilize a similar reasoning to \cref{proposition:description-of-endoscopic-lift-case-4} using \cref{lemma:finite-index}.
    Then, for any $w \in W(M_{\ad}, L/A_M)_{\rho^L}$, we have 
    \begin{align*}
        w (\beta_2 + \beta_3) &= \beta_2 + \beta_3, \\
        w \beta_3 &= \beta_3,
    \end{align*}
    if $\rho$ lies in a dense open subset of $I^{\cD}$.
    By the same reasoning as \cref{proposition:description-of-endoscopic-lift-case-4}, we obtain the result.
\end{proof}

By the same argument as \cref{proposition:irreducibility-of-parabolic-induction-case-2}, we can also obtain the following result. As the proof is easier than that, we omit it.

\begin{proposition}\label{proposition:irreducibility-of-parabolic-induction-case-5}
    Let $\cO_{\sigma^I} \subset \Pi_{\disc}(I) = I^{\cD}$ be the connected component of $\sigma^I$.
    Then, there exists a dense open subset $\cO' \subset \cO_{\sigma^I}$ such that
    the parabolic induction $i^{G}_{P}(\pi)$ of any element $\pi \in \Pi^X_{\phi^H}$ is irreducible.
\end{proposition}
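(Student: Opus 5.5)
The plan is to reduce to an $R$-group computation for the torus, in the same way as \cref{proposition:irreducibility-of-parabolic-induction-case-2}. By \cref{proposition:description-of-endoscopic-lift-case-5}, for $\rho$ in a dense open subset of $\cO_{\sigma^I}$ the packet $\Pi^X_{\phi^H}$ is the singleton $i^{M_{\ad}}_{L/A_M}(\rho^L)$. Here $L = T$ and $\rho^L = \rho_1 \circ (\beta_2+\beta_3)\cdot \rho_2\circ\beta_3$, viewed as a character of $T$ that is trivial on $A_M$. By transitivity of induction, $i^G_P(\pi) = i^G_B(\rho^L)$. It therefore suffices to show that the $R$-group $R_{\rho^L} = W(G,T)_{\rho^L}/W(G,T)^0_{\rho^L}$ is trivial for $\rho$ in a possibly smaller dense open subset.

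First, bound the stabilizer. The character $\rho^L$ is $\chi\circ\lambda$ for the coweight-side data $\rho_1,\rho_2$ composed with the cocharacter pairings $\beta_2+\beta_3$ and $\beta_3$, extended trivially along $A_M$. Apply \cref{lemma:finite-index} as in \cref{proposition:description-of-endoscopic-lift-case-4}. Take $w\in W(G,T)$ with ${}^w\rho^L=\rho^L$. Comparing the characters on $T$ forces a linear relation between $w(\beta_2+\beta_3)$, $w\beta_3$ and the original characters, and also forces $w$ to fix the restriction to $A_M$. For generic $(\rho_1,\rho_2)$, \cref{lemma:finite-index} shows that each such relation cuts out a proper closed subset unless it holds identically. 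Discarding the finitely many proper closed conditions (one for each $w$ in the finite group $W(G,T)$), I expect $W(G,T)_{\rho^L}$ to consist exactly of those $w$ fixing the characters $\beta_2+\beta_3$ and $\beta_3$ of $T$, with the trivial part along $A_M$ allowed to move. Using \cref{lemma:Weyl-group-normalizer}, this is the Weyl group $W_2$ of the subsystem $\Phi_2$ of roots of $E_6$ orthogonal to $\beta_2$ and $\beta_3$ (equivalently to $\alpha_3,\alpha_4$), possibly extended by diagram automorphisms. Those automorphisms are excluded generically because they move $\beta_3$. By the computation in \cref{proposition:irreducibility-of-parabolic-induction-case-2}, $\Phi_2\cong A_2\times A_2$ with the explicit simple roots listed there.

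Second, show $W_2\subset W(G,T)^0_{\rho^L}$. By \cref{proposition:normalizing-factor-and-mu-function}, it suffices to check, for each of the four simple roots $\gamma$ of $\Phi_2$, that the reciprocal normalizing factor for $\rho^L$ and $T\subset T_\gamma$ vanishes at $0$. Since $\gamma$ is orthogonal to $\beta_2,\beta_3$, the restriction $\rho^L\circ\gamma^\vee$ is trivial. The rank-one Levi attached to $\gamma$ is a $\GL_2$ or $\SL_2$ type group, and the corresponding reciprocal normalizing factor is $\gamma_A(s,\rho^L\circ\gamma^\vee,\psi)=\gamma_A(s,\mathbf 1,\psi)$, which vanishes at $s=0$. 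This is where I would either run the $D_5\supset \GL_1\times\SO_8$ reduction via \cref{corollary:central-morphism-normalizing-factor-compatible} as in Case (2), or simply note that for a torus the rank-one factor is the Tate $\gamma$-factor of $\rho^L\circ\gamma^\vee$. Either way, $W(G,T)^0_{\rho^L}\supset W_2 = W(G,T)_{\rho^L}$, so $R_{\rho^L}=1$ and $i^G_P(\pi)$ is irreducible by the Harish-Chandra–Silberger theorem.

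The main obstacle is the first step, namely making the genericity argument precise. The characters $\rho_1,\rho_2$ are unitary characters of $F^\times$, not of a split torus of full rank, and the condition ${}^w\rho^L=\rho^L$ involves the pullback along $w$ of the cocharacter combination. I would write $\rho^L$ as the composition of $(\rho_1,\rho_2)$ with a homomorphism $T\to \bG_m^2$ and compare kernels. It must be checked that the induced relation is nontrivial on the connected component (through the unramified twists) whenever $w$ fails to fix $\beta_3$ and $\beta_2+\beta_3$. If a residual finite-order coincidence survives, the dense open subset still avoids it, since every connected component of $I^{\cD}$ is a compact torus and each exceptional condition defines a proper closed subtorus coset.
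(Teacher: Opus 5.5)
Your argument is correct and is essentially the proof the paper intends. The paper omits it, saying it follows the argument of \cref{proposition:irreducibility-of-parabolic-induction-case-2}: shrink the stabilizer generically via \cref{lemma:finite-index}, as in \cref{proposition:description-of-endoscopic-lift-case-5}, and then show each simple reflection of $\Phi_2\cong A_2\times A_2$ lies in $W^0_{\rho^L}$ because the reciprocal rank-one normalizing factor is $\gamma_A(s,\mathbf{1},\psi)$. Your version differs only in observing that the generic stabilizer is exactly the pointwise fixer $W_2$ of $\{\alpha_3,\alpha_4\}$, so $R_{\rho^L}=1$ directly, without the bound $\abs{R_{\sigma^L}}\le 2$ and the comparison with $\abs{R^{M_{\ad}}_{\sigma^L}}$ used in Case (2); this is why the paper calls this case easier.
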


\subsection{An irreducibility theorem and an application}\label{subsection:irreducibility-theorem-and-application}

Recall that $H$ is the split semisimple group of type $G_2$ over $F$.

\begin{proposition}\label{proposition:smoothness-of-endoscopic-lift}
    The map 
    \begin{align*}
        \Temp_{\ind}(H)/\sim_{st} \to \Temp_{\ind}(M_{\ad})/\sim_{st} 
    \end{align*}
    obtained by the endoscopic lift is smooth.
\end{proposition}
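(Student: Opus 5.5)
The map is defined on $\Temp_{\ind}(H)/\sim_{st}$, which is identified with $\Phi_{\temp}(H)$ via Gan--Savin. It sends $\phi^H$ to the class of the distinguished Xu packet $\Pi^X_{\phi^H}$, i.e. to the stable class of $\iota^{M_{\ad}}_H\circ\phi^H$. Smoothness is a local statement on connected components. So I will fix a standard Levi subgroup $I$ of $H$ (namely $H$ itself, the short- or long-root $\GL_2$, or the torus $T_H$), a discrete series $\sigma^I$, and the connected component $\cO_{\sigma^I}\subset\Pi_{\disc}(I)$. On $\cO_{\sigma^I}$ the parameters are $\phi^I\otimes\chi_\lambda$ for $\lambda\in i\fa^*_I$, and I must show that the composite
\[
\lambda\mapsto [\iota^{M_{\ad}}_H\circ\iota^H_I\circ(\phi^I\otimes\chi_\lambda)]
\]
is, locally in $\lambda$, the composite of a smooth (in fact affine-linear) map $i\fa^*_I\to i\fa^*_{L/A_M}$ with the chart of a component of $\Temp_{\ind}(M_{\ad})/\sim_{st}$.

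The argument follows the five cases of \cref{remark:divide-five-cases}, using the explicit descriptions of the lift from \cref{proposition:description-of-endoscopic-lift-case-1} through \cref{proposition:description-of-endoscopic-lift-case-5}.

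\emph{Cases (1) and (2).} Here $I=H$ and $\fa^*_H=0$, so the component is a point and there is nothing to prove.

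\emph{Cases (3), (4) and (5).} The lift has the form $i^{M_{\ad}}_{L/A_M}(\sigma^L)$, with $\sigma^L$ given by an explicit formula in the $I$-data:
\begin{itemize}
\item in Case (3), $\sigma^L=(\sigma^I)^\vee\boxtimes\sigma^I\boxtimes\sigma^I$;
\item in Case (4), $\sigma^L=\mathbf 1\boxtimes\sigma^I\boxtimes\chi_{\sigma^I}^{-1}\boxtimes\mathbf 1$;
\item in Case (5), $\sigma^L$ is the character $\chi_1\circ(\beta_2+\beta_3)\cdot\chi_2\circ\beta_3$.
\end{itemize}
These formulas come from the $L$-embedding $\iota^{L/A_M}_I$ of $L$-groups, which restricts to an algebraic homomorphism $Z(\widehat I)^\circ\to Z(\widehat{L/A_M})^\circ$. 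By the explicit isomorphism $\cX(G)\xrightarrow{\sim}Z(\widehat G)^\circ$ recalled after \cref{proposition:local-Langlands-for-unramified-characters}, twisting $\sigma^I$ by $\chi_\lambda$ twists $\sigma^L$ by $\chi_{d\iota(\lambda)}$, where $d\iota\colon\fa^*_{I,\C}\to\fa^*_{L/A_M,\C}$ is the induced linear map on character spaces. One sees this directly from the formulas: $\lambda\mapsto(-\lambda,\lambda,\lambda)$ in Case (3), and similarly in the other cases.

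The twisted representation $\sigma^L\otimes\chi_{d\iota(\lambda)}$ remains discrete series for $L/A_M$. The map $\lambda\mapsto\sigma^L\otimes\chi_{d\iota(\lambda)}$ is therefore the restriction of a linear map in the charts $\lambda\mapsto\sigma\otimes\chi_\lambda$ used in \cref{subsubsection:normalization-tempered-spectra}. Composing with the smooth quotient map $\Pi_{\disc}(L/A_M)\to\Pi_{\disc}(L/A_M)/W(M_{\ad},L/A_M)\to\Temp_{\ind}(M_{\ad})/\sim_{st}$ gives smoothness.

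The main point to check carefully is compatibility with the passage to stable classes. Points of $\Temp_{\ind}(M_{\ad})/\sim_{st}$ are only Xu packets, and in Cases (3)--(5) the target is the class of the \emph{whole} induced representation $i^{M_{\ad}}_{L/A_M}(\sigma^L_\lambda)$. This is legitimate because proper Levi subgroups of $M_{\ad}$ have singleton packets, as observed in the proof of \cref{proposition:construction-of-distinguished-Xu-packet}. I also need that the identification $\lambda\mapsto\iota^{M_{\ad}}_H\circ(\phi^H\otimes\chi_\lambda)$ agrees with $\cL_{\dist}^{-1}$ on the whole neighbourhood, and not only at the base point. For this I will use compatibility with unramified twists, property (2) of \cref{conjecture:local-Langlands-conjecture}, for $H$, for $M_{\ad}$ (via \cref{theorem:local-Langlands-correspondence-GSO2n}), and for the theta correspondences of \cref{theorem:similitude-theta} and \cref{theorem:exceptional-theta}, together with \cref{lemma:injectivity-G_2-to-GL7}.
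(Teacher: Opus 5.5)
Your proposal is correct and takes the same approach as the paper, which deduces the statement directly from the case-by-case explicit descriptions of the lift. You also handle Cases (1) and (2) correctly: they are trivial because $\fa^*_H=0$, and in Cases (3)--(5) the twist $\lambda\mapsto\sigma^L\otimes\chi_{d\iota(\lambda)}$ is linear in the charts. Your last worry, that the formula must agree with $\cL_{\dist}^{-1}$ away from the base point, is already settled by applying those description results to each twisted parameter $\iota^H_I\circ(\phi^I\otimes\chi_\lambda)$, which lies in the same case, so the appeal to twist-compatibility of the theta correspondences is unnecessary.
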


\begin{proof}
    This proposition is a direct consequence of \cref{proposition:description-of-endoscopic-lift-case-2}, \cref{proposition:description-of-endoscopic-lift-case-3}, \cref{proposition:description-of-endoscopic-lift-case-4} and \cref{proposition:description-of-endoscopic-lift-case-5}.
\end{proof}

\begin{theorem}\label{theorem:generic-irreducibility-functorial-lifts-G2}
    For representations $\sigma \in \Temp_{\ind}(H)$ in a dense open subset, we have the following two results:
    \begin{enumerate}
    \item The distinguished Xu packet $\Pi^{X}_{\phi^{H}_{\sigma}}$ is a singleton if $\iota^{M_{\ad}}_{H} \circ \phi^H_{\sigma}$ is not discrete.
    \item The representations $\pi$ in the corresponding distinguished Xu packet $\Pi^{X}_{\phi^{H}_{\sigma}}$ have the irreducible parabolic induction $i^G_{P}(\pi)$ to $G$.
    \end{enumerate}
\end{theorem}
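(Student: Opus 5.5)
Since $\Temp_{\ind}(H)$ is a countable disjoint union of connected components $\cO$, each of the form $\Pi_{\disc}(I)/W(H,I)$ for a standard Levi $I$ of $H$, I will prove both assertions separately on each component. A countable union of dense open subsets of the individual components is then dense open in $\Temp_{\ind}(H)$. By compatibility of the Gan--Savin correspondence with parabolic induction (\cref{theorem:local-Langlands-correspondence-G2}), the parameter $\phi^H_\sigma$ along a component is $\iota^H_I\circ\phi^I_\rho$ with $\rho\in\Pi_{\disc}(I)$. So each component falls into exactly one of the five cases of \cref{remark:divide-five-cases}: Cases (1), (2) when $I=H$, and Cases (3), (4), (5) when $I$ is the short-root $\GL_2$, the long-root $\GL_2$, or the torus.

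On the discrete components ($I=H$) I take the whole component. In Case (1) the lift $\iota^{M_{\ad}}_H\circ\phi^H$ is discrete by \cref{proposition:description-of-endoscopic-lift-case-1}, so assertion (1) is vacuous, and assertion (2) is \cref{proposition:irreducibility-of-parabolic-induction-case-1}. In Case (2) the lift is not discrete. Assertion (2) is \cref{proposition:irreducibility-of-parabolic-induction-case-2}. For assertion (1), \cref{proposition:description-of-endoscopic-lift-case-2} shows the packet is a singleton exactly when $\sigma^{H'}$ is not self-dual, whereas for self-dual $\sigma^{H'}$ it has two elements. Here I must check whether $\phi^H=\iota^H_{H'}\circ\phi^{H'}$ with $\phi^{H'}$ self-dual really lies in Case (2). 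When $\phi^{H'}$ is self-dual, the image of $\phi^H$ is centralised by the element of $N_{\widehat H}(\SL_3)$ inducing the outer involution. This makes the component group of $S_{\phi^H}$ larger, and I expect the parameter then to fall instead into Case (1), or the set of such $\sigma$ to be discrete inside the component. In either event it does not affect a dense open subset, since discrete components are single points up to the finite ambiguity and their union can simply be included. I expect to spell this out via the explicit centraliser computation, or else simply cite \cite{GS23LLC}*{Lemma 2.4}.

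On the non-discrete components I take the dense open set to be the one produced by the earlier propositions. In Case (3) the self-dual $\sigma^I$ form a proper closed subset of the component: fixing the central character pins down a discrete set of twists $\rho\otimes\chi_\lambda$. Its complement is dense open, and there \cref{proposition:description-of-endoscopic-lift-case-3} and \cref{proposition:irreducibility-of-parabolic-induction-case-3} give irreducibility of $i^{M_{\ad}}_{L/A_M}(\sigma^L)$, hence a singleton packet, together with irreducibility of $i^G_P(\pi)$. In Cases (4) and (5), \cref{proposition:description-of-endoscopic-lift-case-4}, \cref{proposition:irreducibility-of-parabolic-induction-case-4}, \cref{proposition:description-of-endoscopic-lift-case-5} and \cref{proposition:irreducibility-of-parabolic-induction-case-5} each provide dense open subsets $\cO'$; I intersect the two $\cO'$ obtained in each case. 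In every non-discrete case the packet is a singleton because the direct sum of its elements is the irreducible induced representation $i^{M_{\ad}}_{L/A_M}(\sigma^L)$. Finally, I transport these open sets from $\Pi_{\disc}(I)$ to $\Temp_{\ind}(H)$ through the finite quotient by $W(H,I)$, which is open and preserves density.

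The main obstacle is the self-dual subcase of Case (2) described above. The other steps are bookkeeping on top of the preceding case-by-case propositions.
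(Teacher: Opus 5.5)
Your reduction to the five cases of \cref{remark:divide-five-cases}, followed by an appeal to the case-by-case propositions, is the same route as the paper; its proof is a one-line citation of the five irreducibility propositions. The gap is the self-dual subcase of Case $(2)$. You isolate it correctly but dispose of it incorrectly, because neither of your two escapes exists.

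First, a self-dual $\phi^{H'}$ does not move $\phi^H$ into Case $(1)$. For every $\phi^{H'}$ one has $\Std_H\circ\iota^H_{H'}\circ\phi^{H'}=\mathbf{1}\oplus\phi^{H'}\oplus(\phi^{H'})^{\vee}$, which contains $\mathbf{1}$, so $\gamma(0,\phi^H,\Std_H,\psi)=0$. The extra involution only enlarges the centraliser from $\mu_3$ to a group of order $6$. That group is still finite, so $\phi^H$ stays discrete and in Case $(2)$. The paper itself uses $\abs{S^{\natural}_{\phi^I_0}}=6$ for exactly this subcase in \cref{lemma:comparison-Weyl-and-S-group}. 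Such parameters do occur: for example, take $\phi^{H'}$ to be the parameter of the Steinberg representation of $\PGL_3$, which gives the subregular unipotent parameter of $G_2$.

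Second, these points cannot be removed by density. Since $A_H$ is trivial, every $\sigma\in\Pi_{\disc}(H)$ is an open point of $\Temp_{\ind}(H)$, so every dense subset contains it. Including such points, on the other hand, means assertion $(1)$ must hold there.

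It does not hold there. For such $\sigma$, the lift $\iota^{M_{\ad}}_H\circ\phi^H_\sigma$ is conjugate into ${}^L(L/A_M)$ by \cref{lemma:description-of-L-homomorphism-case-2}, hence not discrete. Yet $\abs{\Pi^X_{\phi^H_\sigma}}=2$ by \cref{proposition:description-of-endoscopic-lift-case-2}. So assertion $(1)$, as literally stated, is false at these isolated points, and no argument can prove it. This is a defect of the statement itself. The paper's one-line proof does not address it either, since it cites only the irreducibility propositions.

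What your argument does establish is the following:
\begin{itemize}
\item assertion $(2)$ on a dense open subset, where \cref{proposition:irreducibility-of-parabolic-induction-case-2} also covers the self-dual subcase;
\item assertion $(1)$ for $\sigma\notin\Pi_{\disc}(H)$, or equivalently away from the self-dual subcase of Case $(2)$.
\end{itemize}
You should state and prove that corrected version. It suffices for the later applications. \cref{theorem:canonical-extension-PGSO8-S3} uses only $(2)$. In \cref{theorem:main-theorem}, the two members of such a packet are the two constituents of $i^{M_{\ad}}_{L/A_M}(\sigma^L)$, so the sum of twisted traces is unchanged by additivity.
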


\begin{proof}
    This theorem is a direct consequence of \cref{proposition:irreducibility-of-parabolic-induction-case-1}, \cref{proposition:irreducibility-of-parabolic-induction-case-2}, \cref{proposition:irreducibility-of-parabolic-induction-case-3}, \cref{proposition:irreducibility-of-parabolic-induction-case-4} and \cref{proposition:irreducibility-of-parabolic-induction-case-5}.
\end{proof}

\begin{remark}
    It seems that an irreducibility result of this type holds for any endoscopic lift from $H$.
    However, we neither need nor prove this general result in this paper.
\end{remark}

The following two lemmas are formal consequences of the normalization of intertwining operators.

\begin{lemma}\label{lemma:normalized-intertwiner-homomorphism}
    For any tempered representation $\sigma$ with trivial central character, we have 
    \begin{align*}
        R_P(w', {}^w \sigma, \psi) \circ R_P(w, \sigma, \psi) = R_P(w'w, \sigma, \psi) 
    \end{align*}
    for any $w, w' \in W(G, M)$.
\end{lemma}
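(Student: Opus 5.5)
We reduce the cocycle relation to the standard properties of normalized intertwining operators and of Tits liftings. Recall that $R_P(w,\sigma,\psi) = l(\widetilde{w}) \circ R_{w^{-1}Pw \vert P}(\sigma)$. The first step is the transport property of the normalized operators under the left translation $l(\widetilde{w})$:
\begin{align*}
    l(\widetilde{w}) \circ R_{Q \vert P'}(\sigma) = R_{\widetilde{w} Q \widetilde{w}^{-1} \vert \widetilde{w} P' \widetilde{w}^{-1}}({}^{\widetilde{w}}\sigma) \circ l(\widetilde{w}).
\end{align*}
This holds for unnormalized operators by a change of variables in the defining integral, and then for normalized ones because the normalizing factors are invariant under conjugation. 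That invariance is condition $(R_1)$–type behavior in \cite{KuokFaiLi2014-R-group}*{Definition 2.2.1}; concretely, our factors are given by $\gamma$-factors of $L$-parameters, which are conjugation invariant. With this identity, $R_P(w',{}^w\sigma,\psi)\circ R_P(w,\sigma,\psi)$ can be rewritten as $l(\widetilde{w'})\,l(\widetilde{w})$ composed with a product of two normalized operators between parabolics in $\cP^G(M)$.

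The second step is the multiplicativity $R_{Q\vert P'}R_{P'\vert P}=R_{Q\vert P}$. I intend to apply this without a length condition. For normalized operators this cocycle identity is exactly condition $(R_2)$ of \cite{KuokFaiLi2014-R-group}*{Definition 2.2.1}, which holds for the factors produced by \cref{remark:reduction-corank-one}, since $R_{P\vert \overline{P}}R_{\overline{P}\vert P}=1$ holds in the corank-one cases by $(R_4)$-type unitarity. The composite then collapses to $l(\widetilde{w'}\widetilde{w})\circ R_{(w'w)^{-1}Pw'w\vert P}(\sigma)$.

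The final step compares $\widetilde{w'}\widetilde{w}$ with $\widetilde{w'w}$. By the corollary to \cref{proposition:Tits-lifting-preserves-splitting}, we have $\widetilde{w'w}=z(w',w)\,\widetilde{w'}\widetilde{w}$ with $z(w',w)\in Z(M)$. Because $G$ is adjoint, \cref{lemma:center-connected} gives $Z(M)=A_M$. Hence $z$ acts on the induced space through the central character of $\sigma$ (after the $w$-twists), and since that character is trivial by hypothesis, $l(z)$ is the identity. This gives the claim.

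The point I expect to need the most care is that $l(z)$ acts on $i^G_P({}^{\widetilde{w'w}}\sigma)$ by the central character of ${}^{\widetilde{w'w}}\sigma$ at $z$, possibly with a modulus factor. Since $\delta_P$ is trivial on $Z(M)$ only up to the $\rho_P$-shift, this must be checked. Triviality of the central character of $\sigma$ is preserved by Weyl twists, so the twisted character is trivial too. For the modulus factor, I would argue that on $A_M$ the normalization $\delta_P^{1/2}$ disappears because the operator $l(z)$ is a left translation commuting with the realization on $K$. If this fails, I would instead note that $z$ can be chosen of order dividing $2$, since Tits cocycles take values in $\{\pm1\}$-images of coroots. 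Then $\delta_P(z)=1$ automatically, because $\abs{\cdot}_F$ is trivial on torsion.
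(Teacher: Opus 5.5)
Your argument is correct and is essentially what the paper means when it calls this lemma a formal consequence of the normalization. Multiplicativity and $W$-equivariance of the normalized operators reduce the identity to $l(\widetilde{w'w})=l(z(w',w))\,l(\widetilde{w'}\widetilde{w})$, where $z(w',w)\in Z(M)=A_M$ comes from the corollary to \cref{proposition:Tits-lifting-preserves-splitting}.

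Only your second justification for $l(z)=1$ is valid. The operator $l(z)$ acts on $i^G_P(\tau)$ by the scalar $\delta_P(z)^{-1/2}\omega_\tau(z)^{-1}$, which is nontrivial for a non-torsion $z\in A_M$, so the remark about the realization on $K$ does not work. What does work is that $z$ lies in the Tits kernel generated by the $\alpha^\vee(-1)$. Hence $z^2=1$ and $\delta_P(z)=1$, and the trivial central character then gives $\omega_\tau(z)=1$. Note that $z$ is determined by the Tits section, not chosen.
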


\begin{lemma}\label{lemma:intertwiner-intertwines-induction-of-isom}
    For any homomorphism $A \colon \sigma \to \sigma'$ of irreducible representations of $M$ and $P, Q \in \cP^G(M)$, we have 
    \begin{align*}
        R_{Q \vert P}(\sigma') \circ i^G_P(A) = i^G_Q(A) \circ  R_{Q \vert P}(\sigma). 
    \end{align*}
\end{lemma}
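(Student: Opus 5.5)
The plan is to unwind the definition of $R_{Q\vert P}(\sigma)=r_{Q\vert P}(\sigma)^{-1}J_{Q\vert P}(\sigma)$ and reduce to the corresponding statement for the unnormalized operators $J_{Q\vert P}$. The normalizing factor $r_{Q\vert P}(\sigma_\lambda)$ in \cref{definition:normalizing-factor} is attached to the representation through its $L$-parameter, by \cref{remark:reduction-corank-one} and \cref{corollary:normalizing-factor-D5-D4}. If $A\colon\sigma\to\sigma'$ is nonzero, then by Schur's lemma and irreducibility $A$ is an isomorphism, so $\sigma\simeq\sigma'$ and the two factors coincide:
\begin{align*}
    r_{Q\vert P}(\sigma_\lambda)=r_{Q\vert P}(\sigma'_\lambda).
\end{align*}
If $A=0$, both sides of the identity vanish and there is nothing to prove. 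It therefore suffices to show
\begin{align*}
    J_{Q\vert P}(\sigma'_\lambda)\circ i^G_P(A)=i^G_Q(A)\circ J_{Q\vert P}(\sigma_\lambda)
\end{align*}
as meromorphic families in $\lambda\in\fa^{G,*}_{M,\C}$, and then specialize to $\lambda=0$, where the normalized operators are holomorphic.

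For $\lambda$ in the cone of absolute convergence, I would compute directly from the integral formula. The operator $i^G_P(A)$ acts pointwise on functions, $\phi\mapsto A\circ\phi$, and $A$ commutes with the twist by $\chi_\lambda$ because twisting only multiplies by a scalar character. Hence
\begin{align*}
    \bigl(J_{Q\vert P}(\sigma'_\lambda)(A\circ\phi)\bigr)(g)=\int_{N_P\cap N_Q\backslash N_Q}A\,\phi(ng)\,dn=A\int_{N_P\cap N_Q\backslash N_Q}\phi(ng)\,dn.
\end{align*}
Moving the continuous linear map $A$ through the integral is justified because the integral is absolutely convergent and, for each $g$, reduces to a finite sum of vectors on a compact support in the smooth setting.

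To remove the convergence restriction, I would realize all the induced spaces in the compact picture on $K$, as in \cref{subsection:normalizing-factors}. There $i^G_P(A)$ is independent of $\lambda$, and both sides of the identity are meromorphic in $\lambda$ and agree on an open set, so the identity of meromorphic continuations follows. Dividing by the common scalar $r_{Q\vert P}$ and evaluating at $\lambda=0$ gives the claim. The only delicate point I expect is the equality of normalizing factors, which rests on their depending on $\sigma$ only through its isomorphism class; this is immediate once $A$ is known to be an isomorphism or zero.
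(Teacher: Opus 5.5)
Your proof is correct and is essentially the formal argument the paper has in mind; the paper gives no proof and simply calls the lemma a formal consequence of the normalization. Schur's lemma reduces to $A=0$ or $\sigma\simeq\sigma'$. In the latter case $r_{Q\vert P}(\sigma_\lambda)=r_{Q\vert P}(\sigma'_\lambda)$, simply because a normalizing factor is by definition a function on the orbit of the isomorphism class of $\sigma$; no $L$-parameters are needed for this. The unnormalized operators then commute with $i^G_P(A)$ by the integral formula and meromorphic continuation in the compact picture.

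One justification should be corrected. The function $n\mapsto\phi(ng)$ on $N_P\cap N_Q\backslash N_Q$ is in general not compactly supported. For example, take $G=\SL_2$, $P=B$ and $Q=\overline{B}$: the $K$-fixed vector of an unramified principal series is nowhere zero on $\overline{N}$. So the integral is not a finite sum of vectors. Instead, use that the vector-valued integral is characterized by its matrix coefficients: for $v^{\vee}\in(\sigma')^{\vee}$,
\[
\langle A\,(J_{Q\vert P}(\sigma_\lambda)\phi)(g),v^{\vee}\rangle=\int\langle\phi(ng),A^{\vee}v^{\vee}\rangle\,dn=\int\langle A\phi(ng),v^{\vee}\rangle\,dn=\langle (J_{Q\vert P}(\sigma'_\lambda)(A\circ\phi))(g),v^{\vee}\rangle .
\]
Here the transpose $A^{\vee}$ maps $(\sigma')^{\vee}$ into $\sigma^{\vee}$ because $A$ is $M$-equivariant, and the scalar integrals converge absolutely in the cone of convergence. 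With this repair, the rest of your argument (continuation in $\lambda$, division by the common factor $r_{Q\vert P}$, evaluation at $\lambda=0$) goes through unchanged.
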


\begin{theorem}\label{theorem:canonical-extension-PGSO8-S3}
    Let $\phi^{H}$ be a tempered $L$-parameter for $H$.
    Then, any representation $\pi$ in the corresponding distinguished Xu packet $\Pi^{X}_{\phi^{H}}$ has a unique unitary extension $\widehat{\pi}$ to $M_{\ad} \rtimes W(G, M)$ such that
    \begin{align*}
         i^G_P(\widehat{\pi}(w)) = R_{P}(w^{-1}, \pi, \psi) 
    \end{align*} 
    for any element $w \in W(G, M)$ and any parabolic subgroup $P \in \cP^G(M)$.
\end{theorem}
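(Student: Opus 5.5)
Here $M_{\ad}\rtimes W(G,M)$ means $M_{\ad}$ with $W(G,M)\cong S_3$ acting through the Tits liftings $\widetilde w$. By \cref{proposition:Tits-lifting-preserves-splitting} these act by splitting-preserving automorphisms, so they realize the $S_3$-action of \cref{definition:automorphism-of-D4} compatibly with the isomorphism $W(G,M)\xrightarrow{\sim}S_3$.

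The plan is to first settle existence and uniqueness on a dense open set of parameters where $i^G_P(\pi)$ is irreducible, and then extend to all tempered $\phi^H$ by analytic continuation. Fix $\pi\in\Pi^X_{\phi^H}$. By \cref{proposition:invariance-S3}, $\pi$ is $S_3$-invariant. For $w\in W(G,M)$ the operator $R_P(w^{-1},\pi,\psi)$ is an intertwiner $i^G_P(\pi)\to i^G_P({}^{\widetilde w^{-1}}\pi)$, and ${}^{\widetilde w^{-1}}\pi\cong\pi$.

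For existence, choose any isomorphism $A_w\colon {}^{\widetilde w^{-1}}\pi\to\pi$, realized on the underlying space of $\pi$. Then $i^G_P(A_w)\circ R_P(w^{-1},\pi,\psi)$ lies in $\End_G(i^G_P(\pi))$. When $i^G_P(\pi)$ is irreducible, as guaranteed by \cref{theorem:generic-irreducibility-functorial-lifts-G2} on a dense open set, this endomorphism is a scalar. Because $R_P$ is unitary at $\lambda=0$, the scalar has absolute value $1$ once $A_w$ is taken unitary. Rescaling $A_w$ by this scalar gives the unique $\widehat\pi(w)$ with $i^G_P(\widehat\pi(w))=R_P(w^{-1},\pi,\psi)$; uniqueness holds because $A\mapsto i^G_P(A)$ is injective.

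The multiplicativity $\widehat\pi(w'w)=\widehat\pi(w')\widehat\pi(w)$ follows from \cref{lemma:normalized-intertwiner-homomorphism} together with \cref{lemma:intertwiner-intertwines-induction-of-isom}. These let us move $i^G_P(A)$ past $R_P$ and compose the cocycle relation, and injectivity of $i^G_P$ then yields the homomorphism property; the same injectivity gives independence of $P$. The Tits-lifting cocycle $z(w_1,w_2)\in Z(M)$ causes no trouble because $\pi$ has trivial central character. Independence of the choice of $P\in\cP^G(M)$ then follows from transporting by $R_{Q\vert P}$ via \cref{lemma:intertwiner-intertwines-induction-of-isom}.

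The main obstacle is the non-generic case, where $i^G_P(\pi)$ may be reducible. For uniqueness it still suffices that $i^G_P$ is injective on $\Hom$-spaces, so uniqueness holds everywhere. For existence in general, I would use the explicit descriptions in Cases (1)–(5). In Cases (1)–(2), $i^G_P(\pi)$ is always irreducible by \cref{proposition:irreducibility-of-parabolic-induction-case-1} and \cref{proposition:irreducibility-of-parabolic-induction-case-2}. In Cases (3)–(5), the packet comes from inducing $\sigma^L$ from $L$, and $\pi$ varies in a connected family $\pi_\rho$ through the smooth map of \cref{proposition:smoothness-of-endoscopic-lift}. I would realize all $i^G_P(\pi_\rho)$ on a common compact model and define $\widehat\pi_\rho(w)$ for generic $\rho$. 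Both sides of the defining identity are continuous in $\rho$, with $R_P$ holomorphic at the unitary axis. Unitarity bounds the $\widehat\pi_\rho(w)$, so limits exist along the family. A limit intertwining ${}^{\widetilde w^{-1}}\pi$ with $\pi$ satisfies the identity by continuity, and uniqueness makes it well defined.

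The delicate point is choosing the generic extensions continuously, since each is fixed only by its rigidity. I would handle this by applying \cref{lemma:intertwiner-intertwines-induction-of-isom} with the natural transport ${}^{\widetilde w^{-1}}\sigma^L_\rho\cong\sigma^L_{w\rho}$ at the level of $L$. This makes the candidate operators manifestly continuous in $\rho$.
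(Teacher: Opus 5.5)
Your overall strategy matches the paper's. You use irreducibility of $i^G_P(\pi)$ on a dense open set (\cref{theorem:generic-irreducibility-functorial-lifts-G2}) together with Schur's lemma and unitarity of $R_P$. You then run a compactness argument along the smooth family of \cref{proposition:smoothness-of-endoscopic-lift}, get uniqueness from faithfulness of $i^G_P$, and get independence of $P$ via $R_{Q\vert P}$. Your multiplicativity step is a valid and slightly more direct variant of the paper's. Since $l(\widetilde{w'})$ commutes with $i^G(A)$, \cref{lemma:intertwiner-intertwines-induction-of-isom} extends to $R_P(w'^{-1},\cdot\,)$ and yields $\widehat{\pi^{\widetilde w}}(w')=\widehat\pi(w)\widehat\pi(w')\widehat\pi(w)^{-1}$. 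Combined with $\widehat{\pi^{\widetilde w}}(w')\widehat\pi(w)=\widehat\pi(ww')$ from \cref{lemma:normalized-intertwiner-homomorphism}, this gives the homomorphism property. The paper instead transports $R_P$ along $l(\widetilde w)$ to $R_{wPw^{-1}}$ and uses $P$-independence.

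What is missing is the passage from the limit to the individual members of the packet. In Cases (3)--(5), the objects that vary continuously in a fixed model are the full induced representations $\Pi_\rho=i^{M_{\ad}}_{L}(\sigma^L_\rho)$, that is, the direct sum of the packet. At a non-generic $\rho_0$ this need not be irreducible (for example Case (3) with $\sigma^I$ self-dual). So your limit is a unitary $\widehat{\Pi}_{\rho_0}(w)\colon \Pi_{\rho_0}\to\Pi_{\rho_0}^{\widetilde w}$ satisfying the identity for $\Pi_{\rho_0}$. It is not yet an operator on a given $\pi\in\Pi^X_{\phi^H}$.

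To conclude, you need two facts: $\Pi_{\rho_0}$ is multiplicity-free, and each member is $\widetilde w$-stable (\cref{proposition:invariance-S3}). Schur's lemma then forces $\widehat{\Pi}_{\rho_0}(w)$ to preserve each summand. Its restriction to $\pi$ satisfies $i^G_P(\widehat\pi(w))=R_P(w^{-1},\pi,\psi)$ because $R_P$ is compatible with direct sums. This is exactly how the paper finishes.

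Relatedly, the ``delicate point'' in your last paragraph is not one. The generic operators are uniquely determined, so no continuous choice via transport at the level of $L$ is needed. Existence at $\rho_0$ already follows from two things. First, the unitary group of the finite-dimensional space $\Pi_\rho^{K_M}$ is compact, for a $W(G,M)$-stable open compact $K_M$; you should work there so that ``bounded implies convergent subsequence'' is legitimate. Second, the identity has a unique solution at $\rho_0$.
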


\begin{proof}
    We note that the uniqueness is trivial, as the parabolic induction functor is faithful. 
    We first show the existence for fixed $P$.
    We take an element $\sigma \in \Temp_{\ind}(H)$.
    Then, for $\sigma$ in a dense open subset, the parabolic induction $i^G_{P}(\pi)$ of any element $\pi$ in $\Pi^X_{\phi^H_{\sigma}}$ is irreducible.
    We note that we have $\pi \xrightarrow{\sim} \pi^{\widetilde{w}}$ for $w \in W(G, M)$.
    Then, we have a unique unitary automorphism $\widehat{\pi}(w) \colon \pi \to \pi^{\widetilde{w}}$ as in the statement of the theorem by Schur's lemma.
    In general, we take the direct sum $\Pi_{\sigma}$ of elements in $\Pi^X_{\phi^H_{\sigma}}$.
    By \cref{proposition:smoothness-of-endoscopic-lift}, the unitary operators $R_P(w^{-1}, \Pi_{\sigma}, \psi)$ depend smoothly on $\sigma \in \Temp_{\ind}(H)$. 
    By the limiting argument explained below, we obtain the unitary operator $\widehat{\Pi_{\sigma}}(w) \colon \Pi_{\sigma} \to \Pi_{\sigma}^{w}$ such that, for $\sigma$ in a dense open subset, the operator is the sum of the operators $\widehat{\pi}(w)$ for $\pi \in \Pi^X_{\phi^H_{\sigma}}$.
    Indeed, if we fix an open compact subgroup $K_M$ which is $W(G, M)$-invariant, then the space $\Pi^{K_M}_{\sigma}$ is realized in a fixed finite-dimensional $\C$-vector space $V$ with the positive definite hermitian product.
    Thus, the space of the unitary operators on $V$ is compact and we can apply the limiting argument.
    As the sum $\Pi_{\sigma}$ is multiplicity-free, we obtain the operator $\widehat{\pi}(w)$ by restricting $\widehat{\Pi_{\sigma}}(w)$ to $\pi \in \Pi^X_{\phi^H_{\sigma}}$.
    This proves the existence for fixed $P$.

    We denote the isomorphism constructed above for $P$ by $\widehat{\pi}_P(w)$.
    We have to show that $\widehat{\pi}_P(w) = \widehat{\pi}_Q(w)$.
    We have 
    \begin{align*}
        R_{Q \vert P}({}^{w^{-1}}\pi) \circ i^G_P(\widehat{\pi}_P(w)) 
        =
        i^G_Q(\widehat{\pi}_P(w)) \circ R_{Q \vert P}(\pi),
    \end{align*}
    by \cref{lemma:intertwiner-intertwines-induction-of-isom}.
    On the other hand, we have 
    \begin{align*}
        R_{Q \vert P}({}^{w^{-1}}\pi) \circ R_P(w^{-1}, \pi, \psi)
        =
        R_Q(w^{-1}, \pi, \psi) \circ R_{Q \vert P}(\pi).
    \end{align*}
    This implies that $\widehat{\pi}_P(w) = \widehat{\pi}_Q(w)$.

    Lastly, we prove that 
    \begin{align*}
        \widehat{\pi}(w') \widehat{\pi}(w) 
        = 
        \widehat{\pi}(w'w)
    \end{align*}
    for any $w', w \in W(G, M)$.
    By \cref{lemma:normalized-intertwiner-homomorphism}, we have 
    \begin{align*}
        \widehat{\pi^w}(w') \widehat{\pi}(w) 
        = 
        \widehat{\pi}(ww').
    \end{align*}
    We will show that 
    \begin{align*}
        \widehat{\pi^w}(w') = \widehat{\pi}(ww'w^{-1}).
    \end{align*}
    This obviously implies the result.
    From the independence of the definition of $\widehat{\pi^w}(w')$ on parabolic subgroups $P$, we have 
    \begin{align*}
        i^G_P(\widehat{\pi^w}(w')) 
        = 
        R_{P}(w'^{-1}, \pi^w, \psi).
    \end{align*}
    By conjugating $l_{w}$, we have 
    \begin{align*}
        i^G_{wPw^{-1}}(\widehat{\pi^w}(w')) 
        = 
        R_{wPw^{-1}}(ww'^{-1}w^{-1}, \pi, \psi)
        =
        i^G_{wPw^{-1}}(\widehat{\pi}(ww'w^{-1})).
    \end{align*}
    This implies the result.
\end{proof}

\subsection{On twisted endoscopy for $\PGSO_8$ with triality}

As we have $\Spin_8^{S_3} = \Spin_8^{A_3} = \widehat{H}$, the group $H$ is an endoscopic group of $M_{\ad} \rtimes \tau$.
Strictly speaking, the datum given by the group $H$, the $L$-group $\widehat{H} \rtimes W_F$, the element $s=1$ and the $L$-embedding ${}^L H \to {}^L M_{\ad}$ constitutes an endoscopic datum for $(M_{\ad}, \tau, 1)$. 
For a general definition of endoscopic data, see \cite{KS99}*{2.1}.
We have the following setup:
\begin{itemize}
    \item 
    The norm mapping, \cite{KS99}*{Theorem 3.3 A}.
    We have the canonical morphism of $F$-varieties
    \begin{align*}
        \cA_{M_{\ad}/H} \colon H // H   \to    M_{\ad} // (M_{\ad}, \tau),  
    \end{align*}
    where the space $M_{\ad} // (M_{\ad}, \tau)$ is the quotient of $M_{\ad}$ by the $\tau$-twisted conjugacy action.
    This morphism is called the norm mapping, and it satisfies the following condition:
    If $T_H \xrightarrow{\sim} T_{\tau} = T/(1-\tau)T$ is an admissible embedding in the sense of \cite{KS99}*{p.28, line 13}, then we have the commutative diagram 
    \[
         % https://q.uiver.app/#q=WzAsNCxbMCwwLCJUX0giXSxbMiwwLCJUX3tcXHRhdX0iXSxbMCwyLCJILy9IIl0sWzIsMiwiTS8vX3tcXHRhdX0gTSJdLFswLDJdLFsyLDNdLFswLDFdLFsxLDNdXQ==
        \begin{tikzcd}
            {T_H} && {T_{\tau}} \\
            \\
            {H//H} && {M_{\ad}// (M_{\ad}, \tau)}
            \arrow[from=1-1, to=1-3]
            \arrow[from=1-1, to=3-1]
            \arrow[from=1-3, to=3-3]
            \arrow[from=3-1, to=3-3].
        \end{tikzcd}
    \]
    For semisimple elements $\delta \in M_{\ad}(F)$ and $\gamma \in H(F)$, we say that $\gamma$ is a norm of $\delta$ if the image of $\gamma$ in $H//H(F)$ maps to the image of $\delta$ in $M_{\ad} // (M_{\ad}, \tau)$.
    We give explicit description of this mapping in \cref{subsection:explicit-matching} below.
    \item 
    The Whittaker normalized transfer factor $\Delta = \Delta_{M_{\ad} \rtimes \tau^{-1}, H}$, see \cite{kottwitz2012splittinginvariantssignconventions}*{5.5} for the details.
    In our particular case, it is equal to the constant function $1$, see \cref{lemma:transfer-factor}.
    \item 
    The existence of a transfer $f^H \in \Cc(H)$ for any smooth function $f^{M_{\ad}} \in \Cc(M_{\ad})$. 
    By the definition of transfers, then, we have the equality of stable orbital integrals (normalized by the Weyl discriminant)
    \begin{align*}
        SI^{H}_{\gamma}(f^{H}) 
        = 
        \sum_{\delta} \Delta(\delta, \gamma) I^{M_{\ad} \rtimes \tau}_{\delta}(f^{M_{\ad}}),
    \end{align*}
    if $\gamma \in H(F)$ is a strongly regular element and $\delta$ runs over the set of $\tau$-twisted conjugacy classes in $M_{\ad}(F)$ whose norms are $\gamma$.
\end{itemize}

We also consider the twisted endoscopy for $M_{\ad} \rtimes \tau^{-1}$ and the analogous constructions in this setting.
We have the following results for these constructions.

\begin{lemma}\label{lemma:transfer-factor}
    We have $\Delta = 1$. 
\end{lemma}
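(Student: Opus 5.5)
We will show that the Whittaker-normalized transfer factor $\Delta_{M_{\ad}\rtimes\tau^{-1},H}$ of \cite{kottwitz2012splittinginvariantssignconventions}*{5.5} is identically $1$ by checking each term of its definition. For the Whittaker normalization, $\Delta$ is a product of the form $\epsilon(\tfrac12, V, \psi)\,\Delta_I\,\Delta_{II}\,\Delta_{III}\,\Delta_{IV}$. Here $V$ is the virtual representation $X^*(T)^{\tau}\otimes\C - X^*(T_H)\otimes\C$ of $\Gamma_F$, and $\Delta_{IV}$ is absorbed by the normalization of orbital integrals with Weyl discriminants.

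\textbf{Global and cohomological terms.} Since $M_{\ad}$, $\tau$ and $H$ are all split and defined by splitting-preserving data, $\Gamma_F$ acts trivially on $V$, and $V$ has virtual dimension $0$ because the $\tau$-coinvariants of the torus of $\Spin_8$ and the torus of $G_2$ both have rank $2$. Hence $\epsilon(\tfrac12,V,\psi)=1$. Next, the endoscopic datum has $s=1$ and a trivial cocycle: the $L$-embedding ${}^LH\to{}^LM_{\ad}$ is the direct product of the inclusion $\widehat H=\Spin_8^{S_3}\hookrightarrow\Spin_8$ with the identity of $W_F$ (Construction~\ref{construction:explicit-model-D_4}). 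Thus $\Delta_{III}$ is a pairing of an invariant with the trivial element $s=1$, so $\Delta_{III}=1$. The $\Delta_{III_2}$ part, which depends on the $\chi$-data and the $L$-embedding, is trivial because the embedding is Weyl-group-trivially twisted.

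\textbf{Local terms $\Delta_I$ and $\Delta_{II}$.} These are products over (restricted/folded) roots of $\chi$-data and $a$-data evaluated at root values. With the Whittaker normalization, $\Delta_I$ is the pairing of a splitting invariant $\lambda(T_\tau)\in H^1(F,T_\tau)$ with $s$. Since $s=1$, this gives $\Delta_I=1$ whatever the splitting invariant is.

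\textbf{$\Delta_{II}$: the main obstacle.} The root systems of $H$ and of the $\tau$-twisted group correspond under folding. This uses the classification into roots of types $R_1,R_2,R_3$ in \cite{KS99}; type $R_3$ (non-reduced) does not occur for a triality folding of $D_4$, since triality has odd order. The $\Delta_{II}$ product runs over roots of $(M_{\ad},\tau)$ modulo those coming from $H$, counted with multiplicities. Here the $\tau$-orbits of roots of $D_4$ map bijectively onto the roots of $G_2$: orbits of size $1$ give long roots and orbits of size $3$ give short roots. So there are no ``outside'' roots, and $\Delta_{II}$ is an empty product, equal to $1$.

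I expect this bookkeeping to be the delicate step: verifying, with the conventions of \cite{kottwitz2012splittinginvariantssignconventions} (the twist by $\tau^{-1}$ and the orbit norms $N\alpha$), that no residual contribution survives. If needed, I would treat strongly regular elements near the identity by Kottwitz--Shelstad descent as a fallback to confirm the constant is $1$.
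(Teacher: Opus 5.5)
Your factor-by-factor check ($\epsilon(\tfrac12,V,\psi)=1$ since $V=0$; $\Delta_I=1$ since $s_T=1$; $\Delta_{II}$ an empty product because every $\tau$-orbit of roots of $D_4$ is of type $R_1$, hence comes from $H$ when $s=1$) is correct, and it is essentially the argument the paper invokes by citing Yamauchi's Theorem 7.1 with $s=1$. The one loosely stated step is $\Delta_{III}$: it pairs the invariant with a class built from both $s$ and the $\chi$-data cocycle comparing $\xi\circ\xi_{T_H}$ with $\xi_T$, not from $s$ alone. That cocycle is trivial here because the $\tau$-orbits consist of mutually orthogonal roots, so with $\chi$-data for $H$ induced from those of $(M_{\ad},\tau)$, the Tits sections and $r_p$-factors of $\widehat H=\Spin_8^{S_3}$ agree with those of $\Spin_8$.
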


\begin{proof}
    This follows from the same argument as \cite{Yamauchi2024}*{Theorem 7.1} as we have $s=1$ for our endoscopic datum.
\end{proof}

\begin{lemma}
    If the strongly $M_{\ad}$-regular element $\gamma \in H(F)$ is a norm of $\delta \in M(F)$ in the setting of the twisted endoscopy for $M_{\ad} \rtimes \tau$, then the element $\gamma^{-1}$ is a norm of $\tau(\delta)^{-1} \in M(F)$ in the setting of the twisted endoscopy for $M_{\ad} \rtimes \tau^{-1}$.
\end{lemma}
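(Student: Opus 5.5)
Since the norm mapping is characterized on maximal tori via admissible embeddings $T_H \xrightarrow{\sim} T_{\tau} = T/(1-\tau)T$ and the commutative diagram above, I will reduce the statement to a computation on tori. The plan is to show that the assignment $\delta \mapsto \tau(\delta)^{-1}$ on $M_{\ad}$ is compatible, through the twisted-conjugacy quotients, with $\gamma \mapsto \gamma^{-1}$ on $H$. Concretely, I would check that it descends to an isomorphism
\begin{align*}
    M_{\ad}//(M_{\ad}, \tau) \xrightarrow{\sim} M_{\ad}//(M_{\ad}, \tau^{-1}),
\end{align*}
and that this isomorphism intertwines the two norm mappings $\cA_{M_{\ad}/H}$ (for $\tau$ and for $\tau^{-1}$) with the inversion on $H//H$.

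First, the descent. If $\delta' = g\delta\tau(g)^{-1}$, then
\begin{align*}
    \tau(\delta')^{-1} = \tau^2(g)\,\tau(\delta)^{-1}\,\tau(g)^{-1}.
\end{align*}
Putting $h = \tau(g)$, this reads $h\,\tau(\delta)^{-1}\,\tau^{-1}(h)^{-1}$ because $\tau^3 = 1$. So $\tau$-twisted classes go to $\tau^{-1}$-twisted classes, and the map is algebraic over $F$. Inversion on $H$ obviously preserves conjugacy classes.

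Next, the torus computation. Fix the $\tau$-stable pair $(T, B)$ from the splitting. Since $\tau$ has order $3$, we get $T_{\tau} = T/(1-\tau)T = T_{\tau^{-1}}$ as quotients of $T$: indeed $(1-\tau^{-1})T = \tau^{-1}(1-\tau)T$ is the same subtorus, as it is $\tau$-stable. On $T_{\tau}$ the map $t \mapsto \tau(t)^{-1}$ equals $t \mapsto t^{-1}$, since $\tau(t) \equiv t$ modulo $(1-\tau)T$. Now take an admissible embedding $T_H \to T_{\tau}$ for $\tau$. I will check that the same map is admissible for $\tau^{-1}$. The dual statement to verify is that the identification $\widehat{T_H} = \widehat{T}^{\tau}$ is the same for $\tau$ and $\tau^{-1}$: fixed points of $\widehat{\tau}$ and $\widehat{\tau}^{-1}$ coincide, and the endoscopic datum for $\tau^{-1}$ uses the same embedding ${}^L H \to {}^L M_{\ad}$, because $\Spin_8^{A_3}$ is the same group. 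Given that, the norm diagram for $\tau^{-1}$ with inversion on both tori follows from the diagram for $\tau$, because inversion commutes with the admissible embedding, which is a homomorphism. For a strongly regular $\gamma$, choose $T_H$ containing $\gamma$ and a $\tau$-stable maximal torus through $\delta$ via the twisted-conjugacy description. Then $\gamma^{-1} \in T_H$ maps to the image of $\tau(\delta)^{-1}$ in $T_{\tau^{-1}}$, which gives the claim.

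The main obstacle is this admissibility transfer. Admissible embeddings are defined over $\overline{F}$ up to the relevant Weyl groups and descend to $F$ only for suitable choices. I therefore need to make sure that the $F$-structure on $T_{\tau}$ coming from a $\tau$-stable pair containing $\delta$ matches the one coming from the $\tau^{-1}$-stable pair containing $\tau(\delta)^{-1}$. My first move is to take the same pair $(T_\delta, B_\delta)$; it is stable under both $\tau$ and $\tau^{-1}$ (after conjugating $\tau$ by $\mathrm{Int}(g)$). Then I would check, following \cite{KS99}*{3.3}, that the $\overline{F}$-isomorphism is unchanged, so Galois-equivariance carries over.
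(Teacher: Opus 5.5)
Your argument is correct, but it is not the paper's route. The paper gives no computation for this lemma; it cites \cite{atobe2024localintertwiningrelationscotempered}*{Lemma F.8.5}, a general statement from twisted endoscopy.

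You instead verify the claim by hand in this special situation, using three facts:
\begin{enumerate}
\item the map $x\mapsto\tau(x)^{-1}$ carries $\tau$-twisted classes to $\tau^{-1}$-twisted classes;
\item for the torus $T$ of the fixed $\tau$-stable $F$-splitting, $T_\tau=T_{\tau^{-1}}$, and $\tau$ acts trivially on this quotient;
\item because $s=1$ and $\Spin_8^{\widehat\tau}=\Spin_8^{\widehat\tau^{-1}}$, the endoscopic datum, and hence the identification $T_H\cong T_\tau$, is literally the same for $\tau$ and $\tau^{-1}$, and it is a homomorphism, so it commutes with inversion.
\end{enumerate}
The citation buys generality over arbitrary twisted endoscopic data. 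Your route makes transparent why the statement holds here: $s=1$ and $\tau$ preserves an $F$-splitting.

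Two small corrections.
\begin{itemize}
\item In the descent step, $h$ must be $\tau^{-1}(g)=\tau^{2}(g)$, not $\tau(g)$. Then $\tau(\delta')^{-1}=h\,\tau(\delta)^{-1}\,\tau^{-1}(h)^{-1}$ holds for any automorphism, so $\tau^3=1$ is not needed there.
\item The ``main obstacle'' you flag is not one. In the paper, the norm relation is defined as equality of images under the morphism of $F$-varieties $H//H\to M_{\ad}//(M_{\ad},\tau)$. So you may twisted-conjugate $\delta$ over $\overline{F}$ into the fixed torus $T$, which is stable under $\tau^{\pm1}$, and conjugate $\gamma$ into a split maximal torus of $H$. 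No question of $F$-structures or descent of admissible embeddings then arises.
\end{itemize}

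If you nevertheless want to use a pair adapted to $\delta$, note the following. A pair preserved by $\mathrm{Int}(\delta)\circ\tau$ is preserved by its inverse $\mathrm{Int}(\tau^{-1}(\delta)^{-1})\circ\tau^{-1}$. So it is adapted to $\tau^{-1}(\delta)^{-1}$ rather than $\tau(\delta)^{-1}$. These two elements are $\tau^{-1}$-twisted conjugate in $M_{\ad}(F)$, because $x$ and $\theta(x)$ are always $\theta$-twisted conjugate via $g=x^{-1}$.
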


\begin{proof}
    See \cite{atobe2024localintertwiningrelationscotempered}*{Lemma F.8.5}.
\end{proof}

\begin{corollary}\label{corollary:matching-inversion}
    Let $f^{M_{\ad}}$ be any smooth function on $M_{\ad}$ and let $f^{H}$ be any transfer of $M_{\ad}$ with respect to the space $M_{\ad} \rtimes \tau^{-1}$.
    Then, the function ${f^H}^{\vee}$ is a transfer of the function ${{}^{\tau}f^{M_{\ad}}}^{\vee}$ with respect to the space $M_{\ad} \rtimes \tau$.
\end{corollary}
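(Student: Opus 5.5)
The approach is to unwind the definition of transfer, using the preceding lemma on norms under inversion and $\tau$, and the triviality of the transfer factors from \cref{lemma:transfer-factor}. Concretely, let $f^H$ be a transfer of $f^{M_{\ad}}$ for the twisted space $M_{\ad} \rtimes \tau^{-1}$. Since both transfer factors are $1$, this means
\begin{align*}
    SI^H_{\gamma'}(f^H) = \sum_{\delta'} I^{M_{\ad} \rtimes \tau^{-1}}_{\delta'}(f^{M_{\ad}})
\end{align*}
for every strongly regular $\gamma' \in H(F)$, where $\delta'$ runs over $\tau^{-1}$-twisted classes with norm $\gamma'$. We must show that for every strongly regular $\gamma \in H(F)$,
\begin{align*}
    SI^H_{\gamma}({f^H}^{\vee}) = \sum_{\delta} I^{M_{\ad} \rtimes \tau}_{\delta}({{}^{\tau}f^{M_{\ad}}}^{\vee}),
\end{align*}
where $\delta$ runs over $\tau$-twisted classes with norm $\gamma$.

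The left-hand side is easy. Inversion maps the stable class of $\gamma$ to that of $\gamma^{-1}$, preserves the Haar measures on $H$ and on the centralizer torus, and preserves the Weyl discriminant $|D(\gamma)| = |D(\gamma^{-1})|$. Hence $SI^H_\gamma({f^H}^{\vee}) = SI^H_{\gamma^{-1}}(f^H)$.

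For the right-hand side, the twisted orbital integral of ${{}^{\tau}f^{M_{\ad}}}^{\vee}$ at $\delta$ involves the values $f^{M_{\ad}}(\tau^{\pm1}(g^{-1}\delta\tau(g))^{-1})$. The computation
\begin{align*}
    \tau^{-1}\bigl((g^{-1}\delta\tau(g))^{-1}\bigr) = \tau^{-1}(g)^{-1}\,\tau^{-1}(\delta)^{-1}\,\tau^{-1}(\tau^{-1}(g))
\end{align*}
shows that $h = \tau^{-1}(g)$ moves $\tau^{-1}(\delta)^{-1}$ along a $\tau^{-1}$-twisted orbit. After adjusting which of $\tau^{\pm1}$ appears in the definition of ${}^{\tau}f$, we get
\begin{align*}
    I^{M_{\ad}\rtimes\tau}_\delta({{}^{\tau}f^{M_{\ad}}}^{\vee}) = I^{M_{\ad}\rtimes\tau^{-1}}_{\tau(\delta)^{-1}}(f^{M_{\ad}}),
\end{align*}
with the twisted centralizers identified by $\tau$ and matching measures. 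The twisted Weyl discriminants also agree, since $\tau$ and inversion preserve the relevant determinant on $\Lie(M_{\ad})$.

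By the preceding lemma, $\delta \mapsto \tau(\delta)^{-1}$ sends $\tau$-twisted classes with norm $\gamma$ into $\tau^{-1}$-twisted classes with norm $\gamma^{-1}$. Applying the symmetric version of the lemma, with the roles of $\tau$ and $\tau^{-1}$ swapped, shows this map is a bijection. Summing over $\delta$ therefore turns the right-hand side into the transfer identity for $f^H$ at $\gamma^{-1}$, which equals $SI^H_{\gamma^{-1}}(f^H)$. Combined with the left-hand side computation, this proves the claim.

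The main obstacle is bookkeeping rather than anything conceptual. First, the conventions for ${}^{\tau}f$ and for twisted conjugation must be matched so that exactly $\tau(\delta)^{-1}$, and not $\tau^{-1}(\delta)^{-1}$, appears; otherwise a further $\tau$-twist is needed, which is harmless because $\tau$-twisted classes are $\tau$-stable. Second, one must check that the strongly regular loci correspond. If there were nontrivial transfer factors, one would also need $\Delta(\tau(\delta)^{-1},\gamma^{-1})=\Delta(\delta,\gamma)$, but \cref{lemma:transfer-factor} removes that issue.
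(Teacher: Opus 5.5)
Your proposal is correct and follows the argument the paper intends. The paper states the corollary as an immediate consequence of the preceding norm lemma ($\gamma \mapsto \gamma^{-1}$, $\delta \mapsto \tau(\delta)^{-1}$) together with $\Delta = 1$ from \cref{lemma:transfer-factor}, obtained by unwinding the definition of transfer as you do. One harmless slip: in fact $\tau^{-1}\bigl((g^{-1}\delta\tau(g))^{-1}\bigr) = g^{-1}\tau^{-1}(\delta)^{-1}\tau^{-1}(g)$, so $g$ itself moves $\tau^{-1}(\delta)^{-1}$ along its $\tau^{-1}$-twisted orbit and the substitution $h=\tau^{-1}(g)$ is unnecessary; your conclusion is unaffected.
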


\subsection{Explicit matching}\label{subsection:explicit-matching}

We describe the matching of orbits explicitly in our particular case.
We fix a Borel pair $(B_H, T_H)$ of $H = M_{\ad}^{\tau}$ over $F$.
Let $S_H$ be a maximal torus of $H$ defined over $F$. 
Then, we can find an element $h \in H(\overline{F})$ such that $S_H = h T_H h^{-1}$.
In this case, corresponding to $S_H, C_H = h B_H h^{-1}$, we obtain the $\tau$-stable Borel pair $S, C$ of $M_{\ad}$. 
For example, we have $S = \Cent_{M_{\ad}}(S_H)$.
We have the isomorphism 
\begin{align*}
   S_H \xrightarrow{\sim} S/(1-\tau)S
\end{align*}
induced from $T \to T_H$.
This morphism induces the matching of orbits.

For any regular element $\widetilde{\delta} \in S \rtimes \tau$, we have 
\begin{align*}
    \Cent_{M_{\ad}}(\widetilde{\delta}) = S^{\tau}.
\end{align*}
Thus, the conjugacy classes in the stable conjugacy class of $\widetilde{\delta}$ are parametrized by the Galois cohomology set $H^1(F, S^{\tau}) = H^1(F, S_H)$.
This set also parametrizes the conjugacy classes in the stable conjugacy class of any norm $\gamma$ of $\widetilde{\delta}$ in $H(F)$.

\subsection{Matching at a certain singular orbit}

Let $f^{M_{\ad}}$ be any smooth function on $M_{\ad}$ and let $f^{H}$ any transfer of $f^{M_{\ad}}$ with respect to the space $M_{\ad} \rtimes \tau^{-1}$.
We use a generalized result on Shalika's germ expansion by Vigneras \cite{Vigneras1981orbitalintegral}*{2.5 Proposition}.
The following proposition is an analog of \cite{beuzartplessis2025hiragaichinoikedaconjectureformaldegrees}*{Proposition 4.1}.

\begin{proposition}\label{proposition:transfer-singular-orbit}
    We have 
    \begin{align*}
        f^H(1) 
        = 
        \int_{M^{\tau}_{ad} \backslash M_{\ad}} 
        f^{M_{\ad}}(m^{-1}\tau^{-1}(m))
        dm
        = 
        \int_{M^{\tau}_{ad} \backslash M_{\ad}}
        f^{M_{\ad}}(\tau(m)^{-1}m)
        dm.
    \end{align*}
\end{proposition}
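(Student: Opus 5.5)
The plan is to compare the Shalika germ expansions of both sides near the identity and read off the constant germ, following \cite{beuzartplessis2025hiragaichinoikedaconjectureformaldegrees}*{Proposition 4.1}.

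\emph{The $H$-side.} Take a maximal torus $S_H\subset H$ defined over $F$ and strongly regular $\gamma\in S_H(F)$ tending to $1$. By Harish-Chandra's (or Vign\'eras') germ expansion, the normalized stable orbital integral $SI^H_\gamma(f^H)$ is a finite sum of unipotent orbital integrals of $f^H$ weighted by stable germs. Exactly as in the untwisted case, for $\gamma$ in an elliptic torus the germ attached to the trivial orbit is (after summing over the stable class) a nonzero constant, essentially $\abs{H^1(F,S_H)}$ divided by a volume factor, while all the other germs are homogeneous of positive degree and so tend to zero. Hence $f^H(1)$ is the limit of a suitably normalized $SI^H_\gamma(f^H)$, and dually $f^H(1)$ is picked out by the leading term.

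\emph{The twisted side.} Use \cite{Vigneras1981orbitalintegral}*{2.5 Proposition}, the twisted analogue of the germ expansion around the semisimple twisted element $1\rtimes\tau^{-1}$, whose twisted centralizer is $M_{\ad}^{\tau}=H$. For $\widetilde\delta\in S\rtimes\tau^{-1}$ regular and close to $\tau^{-1}$, this expansion writes $I^{M_{\ad}\rtimes\tau^{-1}}_{\widetilde\delta}(f^{M_{\ad}})$ as a sum of germs times twisted orbital integrals over the twisted unipotent classes $u\rtimes\tau^{-1}$ with $u$ unipotent in $H$. The germs are given by the germs of $H$ transported by descent. The trivial class contributes the twisted orbital integral of $\tau^{-1}$ itself, namely
\[
\int_{M_{\ad}^\tau\backslash M_{\ad}} f^{M_{\ad}}(m^{-1}\tau^{-1}(m))\,dm .
\]

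\emph{Comparison.} Insert both expansions into the transfer identity from the previous subsection, with $\Delta=1$ by \cref{lemma:transfer-factor}. The explicit matching of \cref{subsection:explicit-matching} identifies the index sets $H^1(F,S^\tau)=H^1(F,S_H)$, so the constant-germ terms on the two sides have the same shape. Letting $\gamma\to1$ in an elliptic torus then gives the first equality, provided the Haar measures on $M_{\ad}^\tau$ and $H$ agree; they do, since both are $\rd_\psi$-normalized. The second equality follows by the substitution $m\mapsto\tau(m)$: this preserves $dm$ on $M^\tau_{\ad}\backslash M_{\ad}$ because $\tau$ has finite order, and it turns $m^{-1}\tau^{-1}(m)$ into $\tau(m)^{-1}m$.

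\emph{Main obstacle.} The hardest step is controlling the descent. Concretely, one must check that the twisted germ attached to the trivial class equals exactly the stable germ of $H$ at $1$, with the right constant and Jacobian. This needs the Jacobian of $m\mapsto m^{-1}\tau^{-1}(m)$ near $1$ and the Weyl-discriminant normalizations. I would first attempt this by linearizing: pass to Lie algebras via exponential maps, and compare $\det(1-\Ad(\tau))$ on $\Lie(M_{\ad})/\Lie(H)$ with the discriminant ratio $\abs{D^{M_{\ad}\rtimes\tau}(\widetilde\delta)}/\abs{D^H(\gamma)}\to$ a constant. The goal is to verify that this constant is $1$ in the normalizations used.
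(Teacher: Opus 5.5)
Your route is the paper's. Both compare constant Shalika germs at elliptic strongly regular elements near $1$ (Kottwitz's argument), use Vign\'eras' expansion on the twisted side with $\Delta=1$, and finish with the substitution $m\mapsto\tau(m)$, which you justify correctly. However, your proposal stops exactly where the constant is decided, and your comparison paragraph skips a point.

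\textbf{The class count.} On the twisted side, the number of rational twisted classes in the stable class of $\delta\rtimes\tau^{-1}$ is $\abs{\Ker(H^1(F,S)\to H^1(F,M_{\ad}))}$, where $S$ is the twisted centralizer. Since $H^1(F,M_{\ad})=H^1(F,\PGSO_8)$ is nontrivial, this is not a priori $\abs{H^1(F,S)}$. The paper takes $\delta\in M_{\ad}^{\tau}(F)$ close to $1$ and applies the $\GL_n$ centralizer lemma stated right after the proposition to get $S\subset M_{\ad}^{\tau}=H$. This inclusion does two jobs.
\begin{itemize}
\item It is the real content of your descent step. By Fubini, the unnormalized twisted orbital integral at $\delta\rtimes\tau^{-1}$ equals the $H$-orbital integral at $\delta$ of $h\mapsto\int_{M_{\ad}^{\tau}\backslash M_{\ad}}f^{M_{\ad}}(m^{-1}h\tau^{-1}(m))\,dm$. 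The value of this function at $h=1$ is the twisted orbital integral in the statement, so no Jacobian enters at this stage.
\item It makes $H^1(F,S)\to H^1(F,M_{\ad})$ factor through $H^1(F,H)=1$ (as $G_2$ is simply connected), so the twisted count is $\abs{H^1(F,S)}$.
\end{itemize}
Kottwitz's argument then gives $(-1)^{q(H)}c^H f^H(1)=(-1)^{q(M_{\ad}^{\tau})}c^{M_{\ad}\rtimes\tau^{-1}}\int_{M_{\ad}^{\tau}\backslash M_{\ad}}f^{M_{\ad}}(m^{-1}\tau^{-1}(m))\,dm$. The signs agree because $H=M_{\ad}^{\tau}$, and the paper identifies the two counts via $S\cong S_H$.

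\textbf{The Jacobian.} The paper never carries out the Jacobian and discriminant linearization you single out as the main obstacle; it reads the constants directly off Kottwitz's argument. If you pursue it, do not expect the outcome to be trivially $1$. The quotient $\Lie(M_{\ad})/\Lie(H)$ is two copies of the $7$-dimensional representation of $G_2$, and $\tau$ acts on them by the two primitive cube roots of unity, so $\det(1-\Ad(\tau))=3^{7}$ there. Moreover, for a $\tau$-stable maximal torus $T'$, the natural map $T'^{\tau}\to T'/(1-\tau)T'$ underlying the matching in \cref{subsection:explicit-matching} is an isogeny of degree $3$, not an isomorphism. In residue characteristic $3$ these factors are non-units. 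They must be balanced against the Weyl-discriminant normalization and the Haar measures on the two tori, so your caution here is warranted. For the application this is harmless: in \cref{theorem:main-theorem} the proposition is only used up to the positive constant $K$, which is fixed afterwards by the Steinberg representation.
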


\begin{proof}
    We will follow the argument of Kottwitz in \cite{Kottwitz1988-Tamagawa}*{p.638 line 24} in combination with the Shalika germ expansion for disconnected groups proved by Vigneras \cite{Vigneras1981orbitalintegral}*{Proposition 2.5}.
    By the definition of transfers, we have the equality of stable orbital integrals
    \begin{align*}
        SI^{H}_{\gamma}(f^{H}) 
        = 
        \sum_{\delta} I^{M_{\ad} \rtimes \tau}_{\delta}(f^{M_{\ad}}),
    \end{align*}
    where a strongly regular element $\gamma \in H(F)$ is a norm of elements $\delta \in M_{\ad}(F)$.
    We may assume that $\delta \in M_{\ad}^{\tau}(F)$ is sufficiently close to $1 \in M_{\ad}(F)$ and $\delta, \gamma$ are elliptic strongly regular elements which matches the construction as \cref{subsection:explicit-matching}.
    By applying the germ expansion, we obtain 
    \begin{align*}
        (-1)^{q(H)}c^H f^H(1) = (-1)^{q(M^{\tau}_{\ad})} c^{M_{\ad} \rtimes \tau^{-1}} \int_{M^{\tau}_{ad} \backslash M_{\ad}}
        f^{M_{\ad}}(m^{-1}\tau^{-1}(m))
        dm,
    \end{align*}
    where we set
    \begin{align*}
        \begin{cases}
        c^H = \abs{\Ker(H^1(F, S_H) \to H^1(F, H))} = \abs{H^1(F, S_H)}, \\
        c^{M_{\ad} \rtimes \tau^{-1}} = \abs{\Ker(H^1(F, S) \to H^1(F, M_{\ad}))}
        \end{cases}
    \end{align*}
    for the anisotropic subtorus $S =\Cent_{M_{\ad}}(\delta \rtimes \tau^{-1})$ of $M_{\ad}$ and $S_H = \Cent_H(\gamma)$ of $H$.
    Note that we have $S \subset M_{\ad}^{\tau}$ as $\delta$ is sufficiently close to $1$ by the lemma below and the map 
    \begin{align*}
        H^1(F, S) \to H^1(F, M_{\ad})
    \end{align*}
    factors through $H^1(F, M_{\ad}^{\tau}) = 1$.
    Thus, we have $c^H = c^{M_{\ad} \rtimes \tau^{-1}}$ since $S \cong S_H$.
    Hence, we obtain the result by the change of variables $m \mapsto \tau(m)$.
\end{proof}

\begin{lemma}
    Let $k$ be a field.
    Let $A$ be a semisimple matrix in $\GL_n$.
    Then, for any semisimple element $B \in \Cent_{\GL_n(k)}(A)$ which is sufficiently close to $1$ in the Zariski topology, we have $\Cent_{\GL_n(k)}(AB) = \Cent_{\GL_n(k)}(A)^{B}$.
\end{lemma}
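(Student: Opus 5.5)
We prove the equality after extending scalars to an algebraic closure $\overline{k}$, and then take $k$-points. Both $\Cent_{\GL_n}(AB)$ and $\Cent_{\GL_n}(A)^{B} = \Cent_{\GL_n}(A) \cap \Cent_{\GL_n}(B)$ are algebraic subgroups defined over $k$. So equality of their $\overline{k}$-points gives equality of their $k$-points.

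The inclusion $\Cent_{\GL_n}(A)^{B} \subset \Cent_{\GL_n}(AB)$ is immediate: anything commuting with $A$ and $B$ commutes with $AB$. The work is the reverse inclusion.

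Over $\overline{k}$ we set up a joint eigenspace decomposition. Write $V = \overline{k}^n = \bigoplus_{a} V_a$ for the eigenspace decomposition of the semisimple matrix $A$. Then $\Cent_{\GL_n}(A) = \prod_a \GL(V_a)$ and $B = (B_a)_a$ with each $B_a$ semisimple. Since $A$ and $B$ commute and are semisimple, they are simultaneously diagonalizable, so
\begin{align*}
    V = \bigoplus_{(a,b)} V_{a,b},
\end{align*}
where $V_{a,b}$ is the $b$-eigenspace of $B_a$ in $V_a$. The element $AB$ acts on $V_{a,b}$ by the scalar $ab$. Hence the $c$-eigenspace of $AB$ is $\bigoplus_{ab=c} V_{a,b}$, and $\Cent_{\GL_n}(AB)$ is the product of the $\GL$'s of these spaces. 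The reverse inclusion therefore reduces to the following claim: for $B$ near $1$, the map $(a,b) \mapsto ab$ is injective on the pairs that occur. Given the claim, each $AB$-eigenspace is a single $V_{a,b}$, and any $g \in \Cent_{\GL_n}(AB)$ preserves every $V_{a,b}$, so $g$ commutes with both $A$ and $B$.

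The key step is to realize "sufficiently close to $1$" by a Zariski open condition. If $a = a'$, then $ab = a'b'$ forces $b = b'$, so only pairs with $a \neq a'$ matter. For each pair of distinct eigenvalues $a \neq a'$ of $A$, consider the polynomial function on $\Cent_{\GL_n}(A)$
\begin{align*}
    D_{a,a'}(B) = \det\bigl( a\, B_a \otimes 1 - a'\, 1 \otimes B_{a'} \bigr) \quad \text{on } V_a \otimes V_{a'}.
\end{align*}
Its value at $B = 1$ is $(a - a')^{\dim V_a \dim V_{a'}} \neq 0$. The eigenvalues of $a B_a \otimes 1 - a' 1 \otimes B_{a'}$ are $ab - a'b'$, with $b$ an eigenvalue of $B_a$ and $b'$ one of $B_{a'}$. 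So $D_{a,a'}(B) \neq 0$ means exactly that $ab \neq a'b'$ for all such $b, b'$. The set
\begin{align*}
    U = \{ B \in \Cent_{\GL_n}(A) \mid D_{a,a'}(B) \neq 0 \text{ for all } a \neq a' \}
\end{align*}
is therefore a Zariski open neighbourhood of $1$, and it is defined over $k$ since the collection of $D_{a,a'}$ is Galois-stable up to permutation. For semisimple $B \in U$ the claim holds, which finishes the argument.

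The only mildly delicate point is this choice of open set. One must allow $B$ to have several eigenvalues on a single $V_a$, and the set of eigenvalues of $A$ need not be Galois-stable pointwise. Using determinants on tensor products handles both issues uniformly.
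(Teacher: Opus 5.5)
Your proof is correct and follows the paper's argument: both pass to $\overline{k}$, decompose along the eigenspaces of $A$, and define the neighbourhood of $1$ by nonvanishing determinants. Your condition $D_{a,a'}(B)\neq 0$ is exactly the paper's condition $\det(l_{ij}(B)-\id)\neq 0$ for the operator $l_{ij}(B)\colon M_{ij}\mapsto a_ia_j^{-1}B_iM_{ij}B_j^{-1}$ on off-diagonal blocks. The one difference is that the paper argues directly that invertibility of $l_{ij}(B)-\id$ forces the off-diagonal blocks of any $g$ commuting with $AB$ to vanish, which does not use semisimplicity of $B$, whereas your joint-eigenspace formulation does.
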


\begin{proof}
    By extending the field if necessary, we may assume that $A$ is diagonalizable.
    We write $A$ as a block diagonal matrix
    \begin{align*}
        A = \begin{pmatrix}
            a_1 I_{n_1} & 0 & \cdots & 0 \\
            0 & a_2 I_{n_2} & \cdots & 0 \\
            \vdots & \vdots & \ddots & \vdots \\
            0 & 0 & \cdots & a_r I_{n_r}
        \end{pmatrix},
    \end{align*}
    with distinct eigenvalues $a_1, a_2, \ldots, a_r$.
    Then, any element in $\Cent_{\GL_n(k)}(A)$ is also written as a block diagonal matrix with the same block decomposition, i.e.,
    \begin{align*}
        B = \begin{pmatrix}
            B_1 & 0 & \cdots & 0 \\
            0 & B_2 & \cdots & 0 \\
            \vdots & \vdots & \ddots & \vdots \\
            0 & 0 & \cdots & B_r
        \end{pmatrix}.
    \end{align*}
    The adjoint action of the matrix $AB$ is then given by 
    \begin{align*}
        M = (M_{ij})_{1 \leq i, j \leq r} \mapsto (a_i a_j^{-1} B_i M_{ij} B_j^{-1})_{1 \leq i, j \leq r}.
    \end{align*}
    If we set 
    \begin{align*}
        l_{ij}(B) \colon M_{ij} \mapsto a_i a_j^{-1} B_i M_{ij} B_j^{-1},
    \end{align*}
    then, in the Zariski open set 
    \begin{align*}
        \bigcap_{i \neq j} D(\det(l_{ij}( \cdot ) - \id)),
    \end{align*}
    we have $\Cent_{\GL_n}(AB) = \Cent_{\GL_n}(A)^{B}$.
    We note that this open set contains $1$ and we obtain the result.
\end{proof}

\subsection{Conjectural character identities}

Let $f^{M_{\ad}}$ be any smooth function on $M_{\ad}$ and let $f^{H}$ be any transfer of $f^{M_{\ad}}$ with respect to the space $M_{\ad} \rtimes \tau$.

\begin{conjecture}\label{conjecture:endoscopic-character-relation}
    Let $\phi^H$ be a tempered $L$-parameter for $H$.
    Let $f^{M_{\ad}}$ be any smooth function on $M_{\ad}$ and let $f^{H}$ be any transfer of $f^{M_{\ad}}$ with respect to the space $M_{\ad} \rtimes \tau$.
    Then, we have 
    \begin{align*}
        \sum_{\pi \in \Pi^X_{\phi^H}} \tr (\widetilde{\pi}(\tau) \pi(f^{M_{\ad}}))  
        = 
        \sum_{\pi^{H} \in \Pi_{\phi^H}(H)} 
        \dim_{\C} \rho_{\pi^H} \cdot
        \tr \pi^H (f^H).
    \end{align*}
\end{conjecture}

\begin{remark}\label{remark:character-relation-inversion}
    This conjecture is equivalent to the following conjecture.
    Let $\phi^H$ be a tempered $L$-parameter for $H$.
    Let $f^{M_{\ad}}$ be any smooth function on $M_{\ad}$ and let $f^{H}$ be any transfer of $f^{M_{\ad}}$ with respect to the space $M_{\ad} \rtimes \tau^{-1}$. 
    Then, we have
    \begin{align*}
        \sum_{\pi \in \Pi^X_{\phi^H}} \tr (\widetilde{\pi}(\tau) \pi({f^{M_{\ad}}}^{\vee}))  
        = 
        \sum_{\pi^{H} \in \Pi_{\phi^H}(H)} 
        \dim_{\C} \rho_{\pi^H} \cdot
        \tr \pi^H ({f^H}^{\vee}).
    \end{align*}
    The equivalence follows from \cref{corollary:matching-inversion}.
\end{remark}

\begin{theorem}\label{lemma:transfer-surjection}
    For any function $f^H \in \Cc(H)$, there exists a function $f^{M_{\ad}} \in \Cc(M_{\ad})$ such that $f^H$ is a transfer of $f^{M_{\ad}}$.
\end{theorem}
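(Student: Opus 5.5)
The plan is to construct $f^{M_{\ad}}$ from $f^H$ by a local argument on orbital integrals, using the explicit matching of \cref{subsection:explicit-matching} and the fact that $\Delta = 1$ (\cref{lemma:transfer-factor}). By definition, a transfer is characterized by the identity
\begin{align*}
    SI^{H}_{\gamma}(f^{H}) = \sum_{\delta} I^{M_{\ad} \rtimes \tau}_{\delta}(f^{M_{\ad}})
\end{align*}
for strongly regular $\gamma$. So it suffices to produce $f^{M_{\ad}}$ whose twisted orbital integrals realize the function $\gamma \mapsto SI^H_\gamma(f^H)$, or any function with the same stable orbital integrals. The strategy is the usual one: the image of the transfer map is characterized by the stable orbital integrals, and one shows every stable orbital integral function on $H$ lies in this image.

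First, I would localize via a partition of unity on $H//H$. The norm map $\cA_{M_{\ad}/H} \colon H//H \to M_{\ad}//(M_{\ad},\tau)$ is a finite morphism. On strongly regular points, \cref{subsection:explicit-matching} shows that the conjugacy classes inside a stable class on both sides are parametrized by the same set $H^1(F, S_H) = H^1(F, S^\tau)$. Hence above a small neighborhood of a semisimple point $\gamma_0$, the twisted orbits sitting above stable classes of $H$ correspond bijectively to the conjugacy classes in those stable classes. Near such $\gamma_0$, I would use twisted descent (Harish-Chandra's method, as in Kottwitz--Shelstad) to pass to the twisted centralizer $M_{\ad}^{\tau\delta_0}$ versus $\Cent_H(\gamma_0)$, and reduce to the Lie algebra. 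There, for $\gamma_0$ in the image of $H \hookrightarrow M_{\ad}$, the lemma on centralizers of $AB$ proved above shows that the twisted centralizer of $\delta_0\tau$ is a group with the same identity component as $\Cent_H(\gamma_0)$, up to the finite component group.

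Second, near each point I would construct $f^{M_{\ad}}$ directly. Take $f$ supported in a small twisted-invariant neighborhood of the twisted class of $\delta_0$, defined by
\begin{align*}
    f(m^{-1}\delta\tau(m)) = f^H(\gamma)\,\alpha(m)
\end{align*}
where $\delta \leftrightarrow \gamma$ runs over a slice and $\alpha$ is a cutoff on $M_{\ad}/M_{\ad}^{\tau}$ of total mass one. Its twisted orbital integral at $\delta$ then equals the ordinary orbital integral of $f^H$ at $\gamma$. The bijection of classes inside stable classes, together with $\Delta = 1$, turns the sum over $\delta$ into $SI^H_\gamma(f^H)$.

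Finally, I would glue the local pieces via the partition of unity; compactness of the support of $f^H$ gives finitely many pieces, so $f^{M_{\ad}}$ is compactly supported and smooth.

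The main obstacle is the neighborhood of singular points $\gamma_0$ (for example $\gamma_0=1$): the slice construction breaks down there, and one needs that stable orbital integrals near $\gamma_0$ are characterized by germs on both sides in a compatible way. I would first try to reduce this to the known Lie algebra statement, namely surjectivity of the transfer for the endoscopic pair given by the twisted centralizer versus $\Cent_H(\gamma_0)$, where both are essentially the same group (cf.\ \cref{proposition:transfer-singular-orbit}). Failing that, I would invoke the general density and characterization results for twisted orbital integrals, namely Vign\'eras' germ expansion and the trace Paley--Wiener theorem.
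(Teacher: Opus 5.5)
The paper does not prove this statement itself; it cites \cite{gan2025trialityadjointliftinggl3}*{Corollary 4.8}. Your outline (localize on $H//H$, treat regular points by a slice argument, descend at singular points) is the standard architecture. However, the step that carries all the content is missing, and your plan for it rests on a misidentification.

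At $\gamma_0=1$ the twisted centralizer of $1\rtimes\tau$ is $M_{\ad}^{\tau}\cong G_2$, and $\Cent_H(1)=H\cong G_2$. But the norm identifies their Cartan subalgebras through $\mathfrak{t}^{\tau}\xrightarrow{\sim}\mathfrak{t}/(1-\tau)\mathfrak{t}$, for a $\tau$-stable maximal torus with Lie algebra $\mathfrak{t}$. Under this identification the root systems do not correspond:
\begin{itemize}
\item For a $\tau$-orbit $\{\alpha,\tau\alpha,\tau^2\alpha\}$ of size three, $M_{\ad}^{\tau}$ has the short root $\alpha|_{\mathfrak{t}^{\tau}}$. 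The group $H$ instead has the root $N\alpha=\alpha+\tau\alpha+\tau^2\alpha$, which restricts to $3\alpha|_{\mathfrak{t}^{\tau}}$ and is long.
\item The $\tau$-fixed roots are long in $M_{\ad}^{\tau}$ and short in $H$.
\end{itemize}
So the descended problem is not a transfer between \emph{essentially the same group}. It is Waldspurger's nonstandard endoscopic triple $(G_2,G_2)$ with long and short roots interchanged, the analogue of $\Sp_{2n}$ versus $\SO_{2n+1}$ for twisted $\GL_{2n}$. Matching stable orbital integrals for this triple near singular points is a deep theorem: Waldspurger's nonstandard transfer, which rests on Ng\^o's proof of the nonstandard fundamental lemma. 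Surjectivity follows from it only because the triple is symmetric, so transfer exists in both directions. You would then still need twisted semisimple descent at every other singular point, identification of the resulting products of standard and nonstandard pairs, and a gluing argument. Note also that \cref{proposition:transfer-singular-orbit} does not support your claim: it presupposes that $f^H$ is already a transfer and only compares the coefficient of the trivial germ.

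Your fallback does not close the gap either.
\begin{itemize}
\item Vign\'eras' result characterizes twisted orbital integrals by local germ conditions. To use it you must show that $\gamma\mapsto SI^H_\gamma(f^H)$, transported along the norm, satisfies the twisted germ conditions at each singular point, which is the same nonstandard comparison.
\item A trace Paley--Wiener route needs stable characters of $G_2$ spanning the stable distributions, and twisted character identities for all parameters, not only tempered ones. Stability is not part of Gan--Savin's theorem, and the identities are only conjectured in the paper, whereas this statement is unconditional.
\end{itemize}

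Two further points need attention.
\begin{itemize}
\item A transfer must have vanishing stable orbital integrals at $G$-regular $\gamma\in H(F)$ that are not norms. So you also need every such $\gamma$ to be a norm from $M_{\ad}(F)$; \cref{subsection:explicit-matching} presupposes this rather than proving it.
\item In your regular-point construction, the twisted centralizer is the torus $S^{\tau}$, not $M_{\ad}^{\tau}$. The prescribed values must be orbital integrals of $f^H$, not $f^H(\gamma)$.
\end{itemize}
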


\begin{proof}
    See \cite{gan2025trialityadjointliftinggl3}*{Corollary 4.8}.
\end{proof}

\section{Formal degree conjecture for $G_2$}\label{section:10}

\subsection{Preparatory computations} 

We consider convenient smooth sections on $A_M N \backslash G$ which are used in the construction of the key limit.

\begin{construction}
    We fix a function $\Psi \in \Cc(\overline{N})$. 
    We assume that $\Psi(1) = 1$.
    For any function $f \in \Cc(M_{\ad})$, we define the function $\phi_{s, f} \in \Cc(A_M N \backslash G, \delta_P^{\frac{1}{2}} \chi_{2s\rho_P})$ as follows.
    On the big cell $N M \overline{N} \xrightarrow{\sim} N \times M \times \overline{N}$, we set 
    \begin{align*}
        \phi_{s, f} (n m \overline{n}) = (\delta_P^{\frac{1}{2}} \chi_{2s\rho_P})(m) f(m) \Psi(\overline{n}).
    \end{align*}
    We take the extension by zero to $G$.
    It is easy to see that this function is actually smooth.
\end{construction}

Recall that we have $w = s_1s_6$ and $\widetilde{w} = \widetilde{s_1} \widetilde{s_6}$.
Later, we will consider the limit 
\begin{align*}
     \lim_{s \to 0+} \gamma(s, \mathbf{1}, \psi)^2 
     \int_{n \in N \cap w\overline{N}w^{-1}} 
     \phi_{s, f}(\widetilde{w}^{-1}n)
     dn.
\end{align*}

We consider the Levi subgroup $M_{\alpha}$ for $\alpha \in \Phi(P, A_M)$. 
We have the parabolic subgroup $P^{M_{\alpha}} = M N^{M_{\alpha}} \in \cP^{M_{\alpha}}(M)$ associated to $P$.

\begin{lemma}\label{lemma:intersection-density-simple-roots}
    The subset $N^{M_{\alpha}} M \overline{N^{M_{\alpha}}}$ is a Zariski dense open subset of $M_{\alpha}$. 
    Furthermore, this set intersects the subgroup $\widetilde{s_{\alpha}}^{-1}N^{M_{\alpha}}$ nontrivially. 
    Thus, the subspace $N^{M_{\alpha}} M \overline{N^{M_{\alpha}}} \cap \widetilde{s_{\alpha}}^{-1}N^{M_{\alpha}}$ is a Zariski dense open subset of $\widetilde{s_{\alpha}}^{-1}N^{M_{\alpha}}$.
\end{lemma}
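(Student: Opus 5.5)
The plan is to reduce everything to the rank-one group $M_{\alpha}$ and ultimately to an explicit computation in $\SL_2$ (or a Bruhat-cell computation in $M_{\alpha}$). I take $P^{M_{\alpha}} = M N^{M_{\alpha}}$ and its opposite $\overline{P^{M_{\alpha}}} = M\overline{N^{M_{\alpha}}}$ in $M_{\alpha}$. The product map $N^{M_{\alpha}} \times M \times \overline{N^{M_{\alpha}}} \to M_{\alpha}$ is an open immersion of varieties onto the big cell; this is the standard big-cell statement for a parabolic and its opposite, and it holds already at the level of algebraic groups. Since $\bM_{\alpha}$ is connected, the image $N^{M_{\alpha}} M \overline{N^{M_{\alpha}}}$ is therefore a nonempty Zariski open subset of $M_{\alpha}$, hence Zariski dense. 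This gives the first assertion.

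For the second assertion, I use the fact that $\widetilde{s_{\alpha}}$ normalizes $M$ and conjugates $N^{M_{\alpha}}$ to $\overline{N^{M_{\alpha}}}$ (the nontrivial element of $W(M_{\alpha}, M)$ swaps $P^{M_{\alpha}}$ and its opposite). Hence $\widetilde{s_{\alpha}}^{-1} N^{M_{\alpha}} = \overline{N^{M_{\alpha}}}\,\widetilde{s_{\alpha}}^{-1}$. So it suffices to find one $n \in N^{M_{\alpha}}$, $n \neq 1$, with $\widetilde{s_{\alpha}}^{-1} n$ in the big cell. I will do this by a rank-one reduction. Take a root $\beta \in \Phi(N^{M_{\alpha}}, T)$ whose restriction to $A_M$ is $\alpha$, and the corresponding $\SL_2$ (or $\PGL_2$) image $\varphi_\beta$. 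In $\SL_2$ the identity
\[
  \begin{pmatrix} 0 & -1 \\ 1 & 0 \end{pmatrix}^{-1}\begin{pmatrix} 1 & x \\ 0 & 1 \end{pmatrix} = \begin{pmatrix} 1 & -x^{-1} \\ 0 & 1 \end{pmatrix}\begin{pmatrix} x^{-1} & 0 \\ 0 & x \end{pmatrix}\begin{pmatrix} 1 & 0 \\ x^{-1} & 1 \end{pmatrix}
\]
(up to sign conventions) holds for $x \neq 0$. Thus the Weyl element times a nontrivial unipotent lies in the big cell. The complication is that $\widetilde{s_{\alpha}}$ is not $\widetilde{s_\beta}$: it is the Tits lift of the longest element of $W(M_{\alpha},T)$ times the longest element of $W(M,T)$. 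To avoid explicit words, I will instead argue abstractly. The set $Y = \{ n \in N^{M_{\alpha}} : \widetilde{s_{\alpha}}^{-1} n \in N^{M_{\alpha}} M \overline{N^{M_{\alpha}}}\}$ is Zariski open in the irreducible variety $N^{M_{\alpha}}$, so it suffices to show $Y \neq \emptyset$. By the Bruhat decomposition $M_{\alpha} = P^{M_{\alpha}} \sqcup P^{M_{\alpha}}\widetilde{s_{\alpha}}P^{M_{\alpha}}$, and the big cell $\overline{P^{M_{\alpha}}}$-translate relation, a nonempty intersection follows if $\widetilde{s_{\alpha}}^{-1}N^{M_{\alpha}} \not\subset$ the complement. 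The complement of the big cell is $N^{M_{\alpha}} M\overline{N^{M_{\alpha}}}$'s complement $=\{g : g \notin P^{M_\alpha}\overline{N^{M_\alpha}}\}$, a proper closed subset of dimension $< \dim M_{\alpha}$. The set $P^{M_{\alpha}}\,\widetilde{s_{\alpha}}^{-1}N^{M_{\alpha}}$ equals the open Bruhat cell $P^{M_{\alpha}}\widetilde{s_{\alpha}}^{-1} P^{M_{\alpha}}$, which is dense in $M_{\alpha}$. Since the big cell is left $P^{M_{\alpha}}$-stable, if $\widetilde{s_{\alpha}}^{-1}N^{M_{\alpha}}$ missed it entirely, so would the dense open Bruhat cell $P^{M_{\alpha}}\widetilde{s_{\alpha}}^{-1}N^{M_{\alpha}}$, contradicting that two nonempty opens in an irreducible variety meet. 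This gives the nonempty intersection, and hence the third assertion as a nonempty open subset of the irreducible $\widetilde{s_{\alpha}}^{-1}N^{M_{\alpha}}$.

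The main point to check carefully is the identification of left-$P^{M_{\alpha}}$-stability of $N^{M_{\alpha}}M\overline{N^{M_{\alpha}}} = P^{M_{\alpha}}\overline{N^{M_{\alpha}}}$ and that the open Bruhat cell is $P^{M_{\alpha}}\widetilde{s_{\alpha}}^{-1}N^{M_{\alpha}}$. Both are standard for a maximal parabolic with Levi $M$ in $M_{\alpha}$ where $\widetilde{s_{\alpha}}$ represents the unique nontrivial element of $W(M_{\alpha},M)$. All of this holds on $\overline{F}$-points, and Zariski density then passes to $F$-points because these are unirational varieties (unipotent groups, rational cells) with dense $F$-points.
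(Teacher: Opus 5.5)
Your argument is correct, but it follows a different route from the paper. The paper treats the first assertion as well known. For the second, it uses \cref{lemma:M-alpha-isogeny-GSpin10} to pass to $\GL_1 \times \SO_8 \subset \SO_{10}$, where \cref{lemma:bigcell-intersection-SO10} computes the intersection explicitly. There it is identified with $W^{\reg} = \{x \in W \mid q(x,x) \neq 0\}$, together with explicit formulas for the components $n'$, $m'$, $\overline{n}'$.

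You instead argue structurally. You write $\Omega = N^{M_\alpha} M \overline{N^{M_\alpha}} = P^{M_\alpha}\overline{N^{M_\alpha}}$ and observe that $\Omega$ is left $P^{M_\alpha}$-stable. Since $P^{M_\alpha}\widetilde{s_\alpha}^{-1}N^{M_\alpha}$ is a nonempty open subset of the irreducible variety $\bM_\alpha$, it meets $\Omega$. Left $P^{M_\alpha}$-stability then moves the intersection point back into $\widetilde{s_\alpha}^{-1}N^{M_\alpha}$.

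Your route is shorter and avoids both the isogeny reduction and any matrix computation. It also works verbatim for any corank-one Levi subgroup and any representative of the nontrivial element of $W(M_\alpha, M)$. What it does not give is the explicit parametrization of the intersection. The paper needs that parametrization later anyway: for the central element $z_M$ in the image of $m'$ in \cref{lemma:equivariant-map-general}, and for the $A_M$-scaling of $\overline{n}'$ in the proof of \cref{theorem:geometric-limit-formula}. So the explicit computation does double duty there.

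One statement in your write-up is false and should be removed. The decomposition $M_\alpha = P^{M_\alpha} \sqcup P^{M_\alpha}\widetilde{s_\alpha}P^{M_\alpha}$ does not hold here: $P^{M_\alpha}\backslash M_\alpha / P^{M_\alpha}$ has three elements. In $\SO_{10}$ terms, two isotropic lines are either equal, distinct but orthogonal, or non-orthogonal.

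Your argument never actually uses this decomposition. The only input needed is that $P^{M_\alpha}\widetilde{s_\alpha}^{-1}N^{M_\alpha}$ is open and dense. That follows in one line from the identity $\widetilde{s_\alpha}^{-1}N^{M_\alpha} = \overline{N^{M_\alpha}}\,\widetilde{s_\alpha}^{-1}$, which you already wrote down: it gives $P^{M_\alpha}\widetilde{s_\alpha}^{-1}N^{M_\alpha} = \Omega\,\widetilde{s_\alpha}^{-1}$, a right translate of the big cell. Justify it that way rather than by appealing to a two-cell Bruhat decomposition.
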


Note that we have $N^{M_{\alpha}} = N_{\alpha}$.

\begin{proof}
    The first statement is well known.
    We will move on to the second statement.
    By \cref{lemma:M-alpha-isogeny-GSpin10}, it suffices to show the corresponding statement for $\GL_1 \times \SO_8 \subset \SO_{10}$.
    We give explicit description of the intersection in \cref{lemma:bigcell-intersection-SO10} below.
\end{proof}

\begin{lemma}\label{lemma:bigcell-intersection-SO10}
    We use the notation in \cref{proof:Tits-lifting-preserves-splitting}.
    In particular, we have the standard parabolic subgroup $P' = M'N'$ of $L' = \SO_{10} = \SO(V)$ with the Levi component $M' = \SO_{8} \times \GL_1 = \SO(W) \times \SO(W')$.
    We have a unique simple root $\alpha' \in \Phi(P', A_{M'})$ and the Tits lifting $\widetilde{s_{\alpha'}}$ of the nontrivial element $s_{\alpha'} \in W(L', M')$.
    In this setting, we have the following statements:

    \begin{enumerate}
    \item 
    The intersection $\widetilde{s_{\alpha'}}^{-1}N_{\alpha'} \cap N_{\alpha'}M'N_{-\alpha'}$ is dense in $\widetilde{s_{\alpha'}}^{-1}N_{\alpha'}$.

    \item
    We identify the space $N_{\alpha'}$ with the vector space $W$ by the isomorphism which sends $x \in W$ to the element $n(x)$ given by 
    \begin{align*}
        n(x) e_i =
        \begin{cases}
            e_1 \quad i = 1, \\
            q(x, e_{11-i}) e_1 + e_i \quad i = 2, \dots, 9, \\
            \frac{1}{2}q(x, x) e_1 - q(x, e_i) e_i + e_{10}  \quad i = 10.
        \end{cases}
    \end{align*}
    We use a similar identification $\overline{n} \colon W \xrightarrow{\sim} \overline{N}_{\alpha'}$.
    Then, the subspace $N_{\alpha'}^{\reg} =  \widetilde{s_{\alpha'}}^{-1}N_{\alpha'} \cap N_{\alpha'}M'N_{-\alpha'}$ is identified with the subset $W^{\reg} = \{ x \in W \mid q(x, x) \neq 0 \}$.

    \item    
    Thus, we have the map    
    \begin{align*}
        \widetilde{s_{\alpha'}}^{-1}N_{\alpha'} \cap N_{\alpha'}M'N_{-\alpha'}   
        \hookrightarrow  
        N_{\alpha'}M'N_{-\alpha'} \\
        \colon  
        \widetilde{s_{\alpha'}}^{-1} n(x) 
        \mapsto   
        (n'(x), m'(x), \overline{n}'(x)).
    \end{align*}
    \item        
    For any element $v \in V$ with $q(v, v) \neq 0$, let $r_v \in \SO(V)$ be the corresponding orthogonal reflection.
    We set $r = r_{e_5-e_6}$.
    Then, the map $n'$ is equal to the map 
    \begin{align*}
        \widetilde{s_{\alpha'}}^{-1}n(x) \mapsto n(-rx'), 
    \end{align*}
    where 
    \begin{align*}
        x' = -\frac{2x}{q(x, x)} = -\frac{x}{q(x)}.
    \end{align*}
    \item 
    The map $\overline{n}'$ is given by 
    \begin{align*}
        \widetilde{s_{\alpha'}}^{-1}n(x) \mapsto \overline{n}(x').
    \end{align*}
    \item         
    For any $x \in W^{\reg}$, the element $m'(x)$ is equal to 
    \begin{align*}
    (-\frac{1}{q(x)},  -(r \circ r_{x})) \in \GL_1 \times \SO_8.
    \end{align*}
    \end{enumerate}
\end{lemma}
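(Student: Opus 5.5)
This is an explicit matrix computation in $\SO(V)$. We use the formula for $\widetilde{s_{\alpha'}}$ obtained in the proof of \cref{proposition:Tits-lifting-preserves-splitting}: it sends $e_1 \mapsto -e_{10}$, $e_{10}\mapsto -e_1$, swaps $e_5,e_6$ with signs $-1$, and acts by $-1$ on the other $e_i$. On $W$ it acts as $-r$ with $r=r_{e_5-e_6}$. Since $r$ is an involution and the element is an involution, $\widetilde{s_{\alpha'}}^{-1}=\widetilde{s_{\alpha'}}$.

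We first compute $g(x)=\widetilde{s_{\alpha'}}^{-1}n(x)$ column by column in the basis $e_1,\dots,e_{10}$. The block decomposition with respect to $V=\langle e_1\rangle\oplus W\oplus\langle e_{10}\rangle$ gives:
\begin{itemize}
\item the $(e_{10},e_1)$ entry is $-1$;
\item the $(e_{10},W)$ row is $-q(x,\cdot)$ up to sign;
\item the $(e_{10},e_{10})$ entry is essentially $-\tfrac12 q(x,x)$.
\end{itemize}

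Next, an element $g\in\SO(V)$ lies in the big cell $N_{\alpha'}M'N_{-\alpha'}$ exactly when the coefficient $\langle g e_{10}, e_1\rangle$, i.e. the corner entry governing the opposite-cell condition, is nonzero. This is the standard Bruhat criterion for the maximal parabolic stabilizing the isotropic line $\langle e_1\rangle$. For $g(x)$ this coefficient is a nonzero multiple of $q(x,x)$, which gives (2). Statement (1) follows because $W^{\reg}$ is Zariski dense in $W$.

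For (3)--(6), we solve $g(x)=n(y)\,m\,\overline{n}(z)$ directly by comparing blocks. Writing $m=(t,h)$ with $t\in\GL_1$ acting on $e_1$ by $t$ and on $e_{10}$ by $t^{-1}$, the corner entry forces $t^{-1}=-q(x)$ (or its inverse, depending on convention). The $(e_{10},W)$ row then determines $z$, and we get $z=x'=-x/q(x)$. The $(W,e_1)$ column determines $y$, and we get $y=-rx'$, where the sign and the $r$ come from the action of $\widetilde{s_{\alpha'}}$ on $W$. Finally, the $(W,W)$ block equals $h$ plus a rank-one correction $y\otimes z$. Using $x'=-x/q(x)$, this correction becomes $-r\circ(\id-\tfrac{2}{q(x,x)}x\,q(x,\cdot))=-r\circ r_x$. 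The $e_1$-component of $m$ then gives $-1/q(x)$. Uniqueness of the big-cell decomposition identifies these factors with $n'(x)$, $m'(x)$, $\overline{n}'(x)$.

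The main obstacle is sign and normalization bookkeeping: the convention $q(x)=\tfrac12q(x,x)$, the precise form of $n(x)$ (which must lie in $\SO(V)$), and the identification of the $\GL_1$ factor with its action on $e_1$ versus $e_{10}$. To control this, I would first check $x=e_5-e_6$ (or $x=e_2+e_9$) numerically and then extend by $\SO(W)$-equivariance. Indeed, $M'$ acts compatibly on both sides, and $\widetilde{s_{\alpha'}}$ normalizes $\SO(W)$ via conjugation by $r$, so the general formula follows from the special case together with transitivity of $\SO(W)\times\GL_1$ on vectors of nonzero norm.
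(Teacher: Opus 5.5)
Your plan is the same computation the paper has in mind; its proof only says the check is straightforward and omits it. The big-cell criterion $q(ge_{10},e_1)\neq 0$ and your deduction of (1) and (2) are fine.

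The equivariance shortcut fails as stated, because over the $p$-adic field $F$ the group $\SO(W)\times\GL_1$ is not transitive on $W^{\reg}$. By Witt's theorem $\SO(W)$ is transitive on each level set $\{q(x)=c\}$ with $c\neq 0$. However, $\GL_1$ rescales $q$ only by squares, so the orbits are the fibres of $x\mapsto q(x) \bmod (F^{\times})^2$. The single point $x=e_5-e_6$ therefore covers only one square class. You can fix this in any of three ways:
\begin{enumerate}
\item argue over $\overline{F}$, since statements (4)--(6) are identities of algebraic maps on $W^{\reg}$;
\item use representatives such as $e_4+c\,e_7$, which has $q=c$;
\item simply solve the block equations for general $x$, which is short.
\end{enumerate}
When you solve them, read $y$ off the $W$-component of $ge_{10}$, which equals $t^{-1}y$. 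The $(W,e_1)$ column only gives the relation $hz=y$.

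The signs you assert also cannot all come out of one consistent computation. As printed, $n(x)$ is not in $\SO(V)$ for $x\in W^{\reg}$: isotropy of $n(x)e_{10}$ forces its $e_1$-coefficient to be $-\frac12 q(x,x)$, not $+\frac12 q(x,x)$. Your corner entry $-\frac12 q(x,x)$, and hence $t^{-1}=-q(x)$, comes from that non-orthogonal matrix, which has no big-cell decomposition at all. For every orthogonal choice of $n(x)$ the corner entry is $+q(x)$, so the $\GL_1$-component of $m'(x)$, acting on $e_1$, is $1/q(x)$.

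Concretely, take $n(x)w=w+q(w,x)e_1$ and $n(x)e_{10}=e_{10}-x-\frac12 q(x,x)e_1$. Take the analogous $\overline{n}(x)$ with $e_1$ and $e_{10}$ interchanged, and let $m(t,h)$ act by $t$ on $e_1$. Then one finds
\[
\widetilde{s_{\alpha'}}^{-1}n(x)=n(rx')\,m\bigl(1/q(x),\,-r\circ r_x\bigr)\,\overline{n}(x').
\]
So (5) and the $\SO_8$-component of (6) hold as stated, while (4) and the $\GL_1$-component of (6) hold only up to sign. Separately, the printed columns $q(x,e_{11-i})e_1+e_i$ amount to precomposing with the isometry $\sigma\colon e_i\mapsto e_{11-i}$ of $W$. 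With that convention $r_x$ becomes $r_{\sigma x}$.

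None of this affects the later uses of the lemma: density, the centrality of $m'(e_5-e_6)=-\id_V$, and the homogeneity of $\overline{n}'$. Your write-up should nonetheless record what the computation actually gives rather than match the printed signs.
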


\begin{proof}
   These statements are checked by a straightforward calculation, which we omit.
\end{proof}

\begin{corollary}
    The intersection $\widetilde{w}^{-1} (N \cap w \overline{N} w^{-1})^{\reg} = \widetilde{w}^{-1} (N \cap w \overline{N} w^{-1}) \cap NM\overline{N}$ is a Zariski dense open subset of the space $\widetilde{w}^{-1} (N \cap w \overline{N} w^{-1})$.
\end{corollary}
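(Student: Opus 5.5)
We reduce the corollary to the corank-one statement in \cref{lemma:intersection-density-simple-roots}, using the factorization $w = s_1 s_6$ and $\widetilde{w} = \widetilde{s_1}\widetilde{s_6}$. The inclusion $\widetilde{w}^{-1}(N \cap w\overline{N}w^{-1}) \cap NM\overline{N} \subset \widetilde{w}^{-1}(N \cap w\overline{N}w^{-1})$ is trivial. Openness is also immediate: $NM\overline{N}$ is Zariski open in $G$, so its intersection with the irreducible closed subvariety $\widetilde{w}^{-1}(N \cap w\overline{N}w^{-1})$ is open there. Since this subvariety is irreducible (it is isomorphic to an affine space), density follows once the intersection is shown to be nonempty. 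That nonemptiness is the real content.

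\textbf{Step 1: factor the unipotent group.} The positive roots $\beta$ with $w^{-1}\beta<0$ are $\widetilde{\alpha}_M$ and $\alpha_{6,M}$, matching the two factors in \cref{proposition:normalizing-factor-for-s1s6}. Accordingly, we decompose
\begin{align*}
N \cap w\overline{N}w^{-1} = N_{\alpha_{1,M}'} \cdot \widetilde{s_1} N_{\alpha_{6,M}} \widetilde{s_1}^{-1},
\end{align*}
where $N_{\alpha_{1,M}'}$ is the root group of $\alpha_{1,M}$, suitably conjugated so that the two factors are $N \cap s_1\overline{N}s_1^{-1}$ and $s_1(N \cap s_6 \overline{N} s_6^{-1})s_1^{-1}$. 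This is the standard decomposition $N_w = N_{s_1}\cdot s_1 N_{s_6} s_1^{-1}$ for a reduced expression, applied with relative roots and with $\ell(w)=\ell(s_1)+\ell(s_6)$.

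\textbf{Step 2: write a general element.} A general element of $\widetilde{w}^{-1}(N\cap w\overline N w^{-1})$ then has the form
\begin{align*}
\widetilde{s_6}^{-1}\widetilde{s_1}^{-1} n_1 \widetilde{s_1} n_6 \widetilde{s_1}^{-1} = \widetilde{s_6}^{-1}(\widetilde{s_1}^{-1}n_1)\widetilde{s_1}\, n_6\widetilde{s_1}^{-1},
\end{align*}
or, rearranged, $\widetilde{s_6}^{-1} n_6'\cdot \widetilde{s_6}\widetilde{s_6}^{-1}\widetilde{s_1}^{-1} n_1'$. We apply \cref{lemma:intersection-density-simple-roots} twice. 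First, for $x\in W^{\reg}$ the element $\widetilde{s_{\alpha}}^{-1}n(x)$ lies in $N_{\alpha}M\overline{N_{\alpha}}$, with explicit components given by \cref{lemma:bigcell-intersection-SO10}. Second, we move the resulting $N_\alpha$- and $\overline{N_\alpha}$-parts past the other Tits lifting. Here we use that $\widetilde{s_6}$ normalizes $M$ and, by \cref{proposition:Tits-lifting-preserves-splitting}, permutes the root groups of $N$ outside $N_{\alpha_{6,M}}$ into $N$.

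\textbf{Main obstacle.} The hardest step is ensuring that after the first big-cell decomposition, the leftover factor $\overline{n}'$ from $\overline{N_{\alpha}}$, conjugated by the second Tits element, still lands in $NM\overline{N}$. Our first approach is to compute explicitly inside the Levi subgroup of type $A_2$ in the relative root system. There we check nonemptiness at a single explicit point, taking $x$ anisotropic with $q(x)=1$ in each $\SO_{10}$-model and using the formulas for $n'$, $m'$, $\overline{n}'$ from \cref{lemma:bigcell-intersection-SO10}. It suffices to exhibit one point of the intersection, since openness and irreducibility then give density. If direct composition is messy, the fallback is to observe that $NM\overline{N}\cap \widetilde{w}^{-1}N\widetilde w$ is the preimage of the open Bruhat-type cell $P\backslash P\overline{N}$ under the map $\widetilde{w}^{-1}N_w\to P\backslash G$. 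This map is the standard open embedding of $N_w$ into the Schubert cell $P\backslash P\widetilde{w}^{-1}P$. The big cell $P\backslash P\overline{P}$ is open dense in $P\backslash G$, and it meets every $P$-orbit closure containing the identity coset. Since the identity coset lies in the closure of $P\widetilde{w}^{-1}P$, this intersection is nonempty.
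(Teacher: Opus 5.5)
Your Step 2 is never completed, but your fallback is a correct proof, and it is a genuinely different route from the paper's. To make the fallback airtight, spell out the following.
\begin{itemize}
\item $P\overline{N}=NM\overline{N}$ is Zariski open and contains $1$.
\item The Zariski closure of $P\widetilde{w}^{-1}P$ contains $1$, because it contains $\overline{B\widetilde{w}^{-1}B}\ni 1$ by the Bruhat order.
\item An open set meeting the closure of a set meets the set itself, so $P\overline{N}\cap P\widetilde{w}^{-1}P\neq\emptyset$.
\item $P\widetilde{w}^{-1}P=P\widetilde{w}^{-1}(N\cap w\overline{N}w^{-1})$, since $\widetilde{w}$ normalizes $M$ and $\widetilde{w}^{-1}(N\cap wNw^{-1})\widetilde{w}\subset N$.
\item $P\overline{N}$ is stable under left multiplication by $P$, so the previous two points give a point of $\widetilde{w}^{-1}(N\cap w\overline{N}w^{-1})\cap NM\overline{N}$.
\end{itemize}
Nonemptiness can be checked over $\overline{F}$, and a nonempty open subvariety of affine space has Zariski dense $F$-points. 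Your openness and irreducibility remark then finishes the corollary. Note also that you wrote $\widetilde{w}^{-1}N\widetilde{w}$ where you mean $\widetilde{w}^{-1}(N\cap w\overline{N}w^{-1})$.

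The paper takes your Step 2 route, and the ``main obstacle'' you describe does not actually arise. Use the ordering you already wrote as the rearranged form, $\widetilde{w}^{-1}(N\cap w\overline{N}w^{-1})=\widetilde{s_6}^{-1}N_{\alpha_6}\,\widetilde{s_1}^{-1}N_{\alpha_1}$. Apply \cref{lemma:intersection-density-simple-roots} to each factor separately. No Tits element is then left to move past anything, and all that is needed is
\[
N_{\alpha_6}MN_{-\alpha_6}N_{\alpha_1}MN_{-\alpha_1}=N_{\alpha_6}N_{\alpha_1}MN_{-\alpha_6}N_{-\alpha_1}.
\]
This holds because $N_{-\alpha_6}$ and $N_{\alpha_1}$ commute: no $j\alpha_{1,M}-i\alpha_{6,M}$ with $i,j\geq 1$ lies in $\Phi(G,A_M)$.

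The two routes buy different things.
\begin{itemize}
\item Your Bruhat-closure argument is more conceptual. It works for any Weyl element and needs no rank-one input.
\item The paper's computation proves more. On the product of the two rank-one regular loci, the big-cell components land in $N_{\alpha_6}N_{\alpha_1}\times M\times N_{-\alpha_6}N_{-\alpha_1}$. This refinement is what \cref{definition:equivariant-map} and the proof of \cref{theorem:geometric-limit-formula} rely on, where the $\overline{N}$-components scale by $\alpha_6(a)$ and $\alpha_1(a)$.
\end{itemize}

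There are two small errors.
\begin{itemize}
\item In Step 1, the positive roots $\beta$ with $w^{-1}\beta<0$ are $\alpha_{1,M}$ and $\widetilde{\alpha}_M$, not $\alpha_{6,M}$ and $\widetilde{\alpha}_M$; indeed $w^{-1}\alpha_{6,M}=s_6\widetilde{\alpha}_M=\alpha_{1,M}>0$. The pair $\alpha_{6,M},\widetilde{\alpha}_M$ consists of the roots with $w\beta<0$, which is what enters $r_{w^{-1}Pw\vert P}$. Your displayed decomposition $N_{\alpha_{1,M}}\cdot\widetilde{s_1}N_{\alpha_{6,M}}\widetilde{s_1}^{-1}=N_{\alpha_{1,M}}N_{\widetilde{\alpha}_M}$ is nevertheless correct.
\item That $\widetilde{s_6}$ conjugates $N_{\alpha_1}$ and $N_{\widetilde{\alpha}}$ into $N$ follows from Weyl-group combinatorics, not from \cref{proposition:Tits-lifting-preserves-splitting}.
\end{itemize}
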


\begin{proof}
    This follows from the decomposition 
    \begin{align*}
        \widetilde{w}^{-1} (N \cap w \overline{N} w^{-1}) 
        = 
        \widetilde{s_6}^{-1} N_{\alpha_6} \widetilde{s_1}^{-1} N_{\alpha_1}
    \end{align*}
    and the product formula
    \begin{align*}
        N_{\alpha_6} M N_{-\alpha_6} N_{\alpha_1} M N_{-\alpha_1}  
        = 
        N_{\alpha_6} N_{\alpha_1} M N_{-\alpha_6} N_{-\alpha_1},
    \end{align*}
    since the $-\alpha_{6, M} + \alpha_{1, M}$ is not in $\Phi(G, A_M)$.
\end{proof}

\begin{definition}\label{definition:equivariant-map}
    We set $(N \cap w \overline{N} w^{-1})^{\reg} = \widetilde{w}\{ \widetilde{w}^{-1} (N \cap w \overline{N} w^{-1})^{\reg} \}$.
    We denote the above natural injection $(N \cap w \overline{N} w^{-1})^{\reg} \hookrightarrow N \times M \times \overline{N}$ by $n \mapsto (n'(n), m'(n), \overline{n}'(n))$.
    The image is contained in the space $N_{\alpha_6} N_{\alpha_1} \times M \times N_{-\alpha_6} N_{-\alpha_1}$, and we set $n'(n) = (n_6(n), n_1(n)) \in N_{\alpha_6} \times N_{\alpha_1}$ and $\overline{n}'(n) = (\overline{n_1}'(n), \overline{n_6}'(n)) \in N_{-\alpha_6} \times N_{-\alpha_1}$.
\end{definition}

\begin{lemma}\label{lemma:equivariant-map-general}
    The following statements hold.
    \begin{enumerate}
        \item 
        The space $(N \cap w \overline{N} w^{-1})^{\reg}$ is a homogeneous space under the conjugation action of $M$ with  stabilizers isomorphic to $H$.
        \item     
        The image of the map $m'$ contains a central element $z_M$ of $M$.
        \item
        We have
        \begin{align*}
            &n'(m^{-1}nm) = \Ad(w^{-1}(m)^{-1})n'(n), \\
            &\overline{n}'(m^{-1}nm) = \Ad(m)^{-1}\overline{n}'(n)
        \end{align*}
        for any $m \in M$ and 
        \begin{align*}
            m'(m^{-1}nm) = \widetilde{w}^{-1}(m)^{-1} m'(n) m
        \end{align*}
        for any $m \in M$. 
        \item    
        The map $m'$ is proper. 
        \end{enumerate}
\end{lemma}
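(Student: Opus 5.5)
We argue by reduction to the rank-one Levi subgroups $M_{\alpha_6}$ and $M_{\alpha_1}$, using the explicit description of \cref{lemma:bigcell-intersection-SO10} and the product decomposition
\begin{align*}
    \widetilde{w}^{-1}(N \cap w\overline{N}w^{-1}) = \widetilde{s_6}^{-1}N_{\alpha_6}\,\widetilde{s_1}^{-1}N_{\alpha_1}
\end{align*}
from the preceding corollary. Transported through \cref{lemma:M-alpha-isogeny-GSpin10}, each factor $\widetilde{s_{\alpha}}^{-1}N_{\alpha}$ is modelled on the $8$-dimensional quadratic space $W$, with regular part $W^{\reg}$. Hence $(N \cap w\overline{N}w^{-1})^{\reg}$ is an open subset of a product of two $8$-dimensional spaces, namely the half-spin and vector representations of $M_{\der}$, on which $M$ acts by conjugation.

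\textbf{Step (3).} We prove equivariance first, since it is formal. For $m \in M$, write $\widetilde{w}^{-1}n = n'm'\overline{n}'$ and conjugate by $m$:
\begin{align*}
    \widetilde{w}^{-1}(m^{-1}nm) = \bigl(\widetilde{w}^{-1}m^{-1}\widetilde{w}\bigr)\,\widetilde{w}^{-1}n\,m .
\end{align*}
Now move $\widetilde{w}^{-1}(m)^{-1} \in M$ across $n'$, and move $m$ across $\overline{n}'$. Uniqueness of the big-cell decomposition $N \times M \times \overline{N}$ then gives the three stated formulas.

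\textbf{Step (1).} This is the key step. Dimension count: $\dim N_{\alpha_6} + \dim N_{\alpha_1} = 16$ and $\dim M_{\ad} - \dim H = 28 - 14 = 14$. Since the centre $A_M \cong \GL_1^2$ also acts (by two independent characters), the orbit dimension is $16$. So it suffices to show that the stabilizer of one point is $H$ and that the regular locus is a single $M(F)$-orbit. For the stabilizer, use the model of Construction~\ref{construction:explicit-model-D_4}. A generic pair (half-spin vector, vector) with nonzero norms has stabilizer in $\Spin_8$ equal to $\Spin_7 \cap \Spin_7' = G_2$, and the torus $A_M$ fixes the norms. Thus the stabilizer in $M$ is a form of $G_2$; because the norms lie in $F^\times$ and can be normalised by $A_M$, it is the split $H$.

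For transitivity over $F$, first check that the regularity conditions $q(x,x) \neq 0$ in each factor exhaust $(N\cap w\overline{N}w^{-1})^{\reg}$. Then transitivity reduces to two facts: triality-type transitivity of $\Spin_8$ on pairs of nondegenerate vectors with fixed norms, and surjectivity of $A_M$ onto the norm values. Rationally, the obstruction lies in $\Ker(H^1(F,H) \to H^1(F,M))$, which vanishes since $H^1(F,H)=1$ for split $G_2$. I expect this step, in particular checking that the regular set coincides with one open orbit rather than a union of orbits, to be the main obstacle. I would attack it by computing the invariant polynomials explicitly via \cref{lemma:bigcell-intersection-SO10}(6).

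\textbf{Steps (2) and (4).} For (2), take $x$ with $q(x)=-1$ in each factor and choose vectors so that $r\circ r_x$ composes to the identity, or to $-1$, which is central in $\SO_8$ and trivial in $M_{\ad}$. The formula of \cref{lemma:bigcell-intersection-SO10}(6) then gives $m'$ equal to a central element $z_M$.

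For (4), by (1) and (3) the map $m'$ is identified with
\begin{align*}
    H\backslash M \to M, \qquad m \mapsto \widetilde{w}^{-1}(m)^{-1} z_M m ,
\end{align*}
that is, a twisted-conjugation orbit map for the automorphism $\widetilde{w}^{-1}$, which acts as $\tau$-type triality on $M_{\ad}$. This element is $\tau$-semisimple with centralizer $H$, so its twisted orbit is closed. The orbit map from $H\backslash M$ onto a closed orbit is a homeomorphism onto its image, and therefore proper.
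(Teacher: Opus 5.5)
Your steps (1)--(3) match the paper.

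For (1), the paper identifies $N\cap w\overline{N}w^{-1}$, as an $M_{\der}\cong\Spin_8$-module, with the standard plus one half-spin representation. It then cites Kimura's classification of prehomogeneous spaces, which gives generic isotropy $G_2$. Your triality argument $\Spin_7\cap\Spin_7'=G_2$ reproves this by hand. Your use of $H^1(F,H)=1$ for $F$-rational transitivity supplies a step the paper leaves implicit.

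Step (2) is the paper's choice $x=e_5-e_6$ in \cref{lemma:bigcell-intersection-SO10}, so that $r_x=r$ and $m'=(1,-1)$ in each factor. Step (3) is the same big-cell computation.

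Step (4), however, rests on a false claim: the twisted centralizer of $z_M$ in $M$ is not $H$.

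\begin{itemize}
\item Since $z_M$ is central, $\widetilde{w}^{-1}(m)^{-1}z_Mm=z_M$ if and only if $\widetilde{w}^{-1}(m)=m$. So this centralizer is $M^{\widetilde{w}}$, which contains, besides $H\subset M_{\der}$, the $w$-fixed points of the centre $A_M$.
\item On $X^*(A_M)=\Z\alpha_{1,M}\oplus\Z\alpha_{6,M}$, $w=s_1s_6$ acts as the Coxeter element of $A_2$ and $\det(w-1)=3$. Hence $A_M^{w}\cong\mu_3$, consisting of the $a$ with $\alpha_{1,M}(a)=\alpha_{6,M}(a)=\omega$, $\omega^3=1$. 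So $M^{\widetilde{w}}\cong H\times\mu_3$, as the paper computes.
\item Compare with (1): there the only finite part of $A_M$ that can enter a stabilizer is $Z(M_{\der})\cong\mu_2^2$, which is absorbed into $\Spin_8$. Here $\mu_3\cap M_{\der}=1$, so nothing is absorbed. You are in effect computing fixed points in $M_{\ad}$, where they are $H$, but $m'$ takes values in $M$.
\item These elements act on the summands $N_{\alpha_{1,M}}$ and $N_{\widetilde{\alpha}_M}$ of $N\cap w\overline{N}w^{-1}$ by $\omega$ and $\omega^2$. So they move regular points while fixing $m'$.
\end{itemize}

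Consequently the orbit map $H\backslash M\to M$ is a degree-$3$ covering of its image, not a homeomorphism onto it. This fails even on $F$-points once $\mu_3(F)\neq 1$.

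The conclusion survives with the paper's argument, completed by your closedness remark. The map $m'$ factors as $M/H\to M/M^{\widetilde{w}}\hookrightarrow M$: a finite morphism followed by the inclusion of the twisted orbit of $z_M$. That orbit is closed because $z_M\rtimes\widetilde{w}^{-1}$ is semisimple, and finite morphisms are proper.

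Incidentally, this also removes the obstacle you flag in (1), namely showing the regular set is a single orbit:
\begin{itemize}
\item By (3), $\mathrm{Stab}_M(n)$ lies in the twisted stabilizer of $m'(n)$.
\item At the point $n_0$ with $m'(n_0)=z_M$ this gives $\dim M\cdot n_0=30-14=16$, so $M\cdot n_0$ is the open orbit.
\item Hence $m'$ maps the open orbit onto the closed twisted orbit of $z_M$. Since that orbit is dense in the regular set, $m'$ maps the whole regular set into this closed twisted orbit.
\item The same dimension count then shows every regular point lies in the open orbit.
\end{itemize}
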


\begin{proof}
    By \cref{remark:fix-isomorphism-PGSO8}, the action of $M_{\der}$ on $N \cap w \overline{N} w^{-1}$ is isomorphic to the direct sum of the standard representation and one of the spin representations of $M_{\der} \xrightarrow{\sim} \Spin_8$.
    Then, the first statement follows from \cite{Kimura1983prehomogeneousscalar}*{p.96 3.A.(15), Proposition 2.18}.

    The second statement follows from \cref{lemma:bigcell-intersection-SO10}, by taking $x = e_5 - e_6 \in W$ and considering the isogenies.

    The third statement follows from the definition of these maps.

    We will prove the fourth statement. 
    By the first, second and third statements, the image of $m'$ is the orbit 
    \begin{align*}
         \{ z_M \widetilde{w}^{-1}(m) m^{-1} \},
    \end{align*}
    under the $\widetilde{w}^{-1}$-twisted action of $M$.
    This orbit is isomorphic to $M/M^{\widetilde{w}}$ and we also have 
    \begin{align*}
        M^{\widetilde{w}} \xrightarrow{\sim} H \times \mu_3
    \end{align*}
    by a simple direct computation.
    Thus, the morphism $m'$ is isomorphic to 
    \begin{align*}
        M/H \twoheadrightarrow M/(H \times \mu_3)
    \end{align*}
    and it is finite.
    This completes the proof.
\end{proof}

\begin{proposition}\label{proposition:normalization-measure-twisted-orbit}
    We fix the Haar measure $\rd_{\psi}n$ on the group $N \cap w\overline{N}w^{-1}$ obtained from a splitting $(B, T, \{ X_{\alpha} \}_{\alpha})$ of $G$ and a nontrivial additive character $\psi \colon F \to \bS^1$.
    Then, the measure $\delta^{\frac{1}{2}}_P(m'(n)) \rd_{\psi}n$ is equal to the pullback of the quotient measure $\rd_{\psi}m/\rd_{\psi}h$ on $M/H$ by the isomorphism $m'$ in \cref{definition:equivariant-map} up to positive scalar multiplication.
\end{proposition}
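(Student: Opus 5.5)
The approach is to compare two $M$-quasi-invariant measures on the homogeneous space $(N \cap w\overline{N}w^{-1})^{\reg} \cong M/H$, and to invoke uniqueness of such measures up to scalar. By \cref{lemma:equivariant-map-general}(1), the space $(N \cap w\overline{N}w^{-1})^{\reg}$ is a single $M$-orbit under conjugation, with stabilizer $H$. Hence it is identified with $M/H$, and $m'$ is a finite $M$-equivariant map onto the twisted orbit $\{z_M \widetilde{w}^{-1}(m)m^{-1}\}\cong M/(H\times\mu_3)$. Since $H$ is semisimple and $M$ is reductive, both are unimodular. Thus $M/H$ carries a unique $M$-invariant measure up to scalar, namely $\rd_\psi m/\rd_\psi h$. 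The pullback statement then reduces to showing that $\delta_P^{1/2}(m'(n))\rd_\psi n$ is invariant under the conjugation action $n\mapsto m^{-1}nm$ of $M$. (Working on the source $M/H$ versus the target via the finite map $m'$ only changes things by a positive constant, because $m'$ is a local isomorphism of $F$-analytic manifolds with fibers of constant size.)

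\textbf{Step 1 (Jacobian of conjugation).} Under $n\mapsto m^{-1}nm$, the Haar measure $\rd_\psi n$ on the unipotent group $N\cap w\overline{N}w^{-1}$ scales by $\abs{\det \Ad(m)^{-1}|_{\Lie(N\cap w\overline{N}w^{-1})}}_F$. The roots in $N\cap w\overline{N}w^{-1}$ are those with restrictions $\alpha_{1,M}$ and $\alpha_{6,M}$. Let $\rho_1,\rho_6$ denote the corresponding modulus characters of $M$ on $N_{\alpha_1}$ and $N_{\alpha_6}$. The Jacobian is then $(\rho_1\rho_6)(m)^{-1}$, written multiplicatively via $\abs{\cdot}_F$.

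\textbf{Step 2 (transformation of the density).} By \cref{lemma:equivariant-map-general}(3), $m'(m^{-1}nm)=\widetilde{w}^{-1}(m)^{-1}m'(n)m$. Therefore
\[
\delta_P^{1/2}(m'(m^{-1}nm))=\delta_P^{-1/2}(\widetilde{w}^{-1}(m))\,\delta_P^{1/2}(m)\,\delta_P^{1/2}(m'(n)).
\]
For invariance we need
\[
\delta_P^{1/2}(m)\,\delta_P^{-1/2}(\widetilde{w}^{-1}m)=(\rho_1\rho_6)(m)
\]
for all $m\in M$. All these characters factor through $A_M$ up to unramified data; since $G$ is adjoint, \cref{lemma:center-connected} lets us check the identity on $\fa_M^*$. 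The condition becomes
\[
\rho_P - w\rho_P=\tfrac12(\text{sum of roots of } N\cap w\overline{N}w^{-1})\cdot(\text{multiplicities}).
\]
This is the standard identity $\rho_P - w\rho_P = \sum_{\beta>0,\ w^{-1}\beta<0}\beta$, weighted by multiplicities. Concretely, $2\rho_P=16\widetilde{\alpha}_M$ and each of $N_{\alpha_1},N_{\alpha_6}$ has dimension $8$. Also $\rho_P-w\rho_P = 8(\widetilde{\alpha}_M-w\widetilde{\alpha}_M)$, and, using $w=s_1s_6$ with $s_1\alpha_{6,M}=\widetilde{\alpha}_M$, one has $w^{-1}\widetilde{\alpha}_M=-\alpha_{1,M}$ or the analogous relation, which should yield $8(\alpha_{1,M}+\alpha_{6,M})$. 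This matches the Jacobian $\abs{\cdot}^{8}$ from each root space, with signs to be fixed by the $\delta^{1/2}$ versus $\delta^{-1/2}$ convention.

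\textbf{Step 3 (conclusion).} Both measures are positive, smooth, and $M$-invariant on the homogeneous space $M/H$. By uniqueness of invariant measures on $M/H$ (unimodular quotient), they agree up to a positive scalar. Properness of $m'$ (\cref{lemma:equivariant-map-general}(4)) ensures the pullback is a well-defined Radon measure.

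The main obstacle is Step 2, the careful bookkeeping of $\widetilde{w}^{-1}(m)$ versus $w(m)$ and the resulting sign in the exponent, i.e. verifying that the combination of $\delta_P^{1/2}$ twisted by $w$ exactly cancels the Jacobian rather than squaring it. I would check this first on the central element $z_M$ direction using the explicit $\SO_{10}$ formula of \cref{lemma:bigcell-intersection-SO10}(6), where $m'(x)$ has $\GL_1$-component $-1/q(x)$. There the scaling $x\mapsto tx$ gives Jacobian $\abs{t}^8$ against $\delta^{1/2}$ of $\abs{t}^{-2}$ in the $\GL_1$ factor raised to the appropriate modulus exponent $8$, and this calibrates the sign.
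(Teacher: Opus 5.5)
Your proposal is correct and is essentially the paper's proof: $M$-invariance of $\delta_P^{1/2}(m'(n))\rd_\psi n$ under conjugation via \cref{lemma:equivariant-map-general}(3), reduced to the identity $\rho_P - w\rho_P = \sum_{\beta \in \Phi(P, A_M)\cap \Phi(w\overline{P}w^{-1}, A_M)} m(\beta)\beta$, followed by uniqueness of invariant measures on $M/H$. Your explicit check of that identity is wrong, though the general identity you cite makes it unnecessary: the roots of $N \cap w\overline{N}w^{-1}$ are $\alpha_{1,M}$ and $\widetilde{\alpha}_M$ (the standard plus one spin representation), not $\alpha_{1,M}, \alpha_{6,M}$, and since $w\widetilde{\alpha}_M = -\alpha_{1,M}$ one gets $\rho_P - w\rho_P = 8(\alpha_{1,M} + \widetilde{\alpha}_M)$; also drop the factor $\tfrac12$ in your displayed condition, since the Jacobian is the full sum of roots weighted by multiplicity.
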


\begin{proof}
    This follows from the uniqueness property of quotient measures and the equivariance property in \cref{lemma:equivariant-map-general} (3).
    Indeed, we have 
    \begin{align*}
        \delta^{\frac{1}{2}}_P(m'(\Ad(m)^{-1}n)) & \rd_{\psi} \Ad(m)^{-1}n \\
        &=
        \delta^{\frac{1}{2}}_P(\widetilde{w}^{-1}(m)^{-1} m'(n) m) \rd_{\psi} \Ad(m)^{-1}n \\
        &= 
        (\delta^{\frac{1}{2}}_P/w \delta^{\frac{1}{2}}_P)(m) \abs{\det(\Ad(m) \colon \Lie(N \cap w\overline{N}w^{-1}))}_F^{-1} \\
        & \quad \quad \times \delta^{\frac{1}{2}}_P(m'(n)) \rd_{\psi} n
    \end{align*}
    for any $m \in M$. 

    It suffices to show that we have 
    \begin{align*}
        (\delta^{\frac{1}{2}}_P/w \delta^{\frac{1}{2}}_P)(m) 
        = 
        \abs{\det(\Ad(m) \colon \Lie(N \cap w\overline{N}w^{-1}))}_F.
    \end{align*}
    This follows from the lemma below.
\end{proof}

\begin{lemma}
    Let $G$ be a connected reductive group over $F$.
    Let $M$ be a Levi subgroup of $G$ and let $P, Q$ be parabolic subgroups of $G$ with a Levi component $M$. 
    Let $A_M$ be the maximal split central torus of $M$.
    Then, we have 
    \begin{align*}
        \rho_{P} - \rho_{Q} = \sum_{\alpha \in \Phi(P, A_M) \cap \Phi(\overline{Q}, A_M)} m(\alpha) \alpha \in X^*(A_M) \otimes_{\Z} \Q,
    \end{align*}
    where $m(\alpha)$ is the multiplicity of the root $\alpha$ in $\Lie(G)$.
\end{lemma}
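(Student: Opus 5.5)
The plan is to compute both sides on the maximal split central torus $A_M$, or more precisely on $\fa_M$. On the left, $\rho_P$ is by definition half the sum of the roots of $A_M$ in $\Lie(N_P)$, counted with multiplicity. Equivalently, for $a \in A_M$,
\begin{align*}
\delta_P(a)=\abs{\det(\Ad(a)\colon \Lie(N_P))}_F,
\end{align*}
so that $2\rho_P=\sum_{\alpha \in \Phi(P, A_M)} m(\alpha)\alpha$, where $m(\alpha)=\dim \fg_\alpha$ is the multiplicity of $\alpha$ in $\Lie(G)$. The same formula holds for $Q$. Hence
\begin{align*}
2(\rho_P-\rho_Q)=\sum_{\alpha \in \Phi(P,A_M)} m(\alpha)\alpha-\sum_{\alpha\in\Phi(Q,A_M)} m(\alpha)\alpha .
\end{align*}

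Next I split $\Phi(P,A_M)$ as the disjoint union of $\Phi(P,A_M)\cap\Phi(Q,A_M)$ and $\Phi(P,A_M)\cap\Phi(\overline{Q},A_M)$. This is possible because every root of $A_M$ in $\Lie(G)$ lies in exactly one of $\Phi(Q,A_M)$ and $\Phi(\overline{Q},A_M)=-\Phi(Q,A_M)$. I split $\Phi(Q,A_M)$ similarly against $P$. The common terms cancel, and what remains is
\begin{align*}
\sum_{\alpha\in\Phi(P,A_M)\cap\Phi(\overline{Q},A_M)} m(\alpha)\alpha-\sum_{\beta\in\Phi(Q,A_M)\cap\Phi(\overline{P},A_M)} m(\beta)\beta .
\end{align*}
The map $\beta\mapsto -\beta$ is a bijection from $\Phi(Q,A_M)\cap\Phi(\overline{P},A_M)$ onto $\Phi(P,A_M)\cap\Phi(\overline{Q},A_M)$. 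It satisfies $m(-\beta)=m(\beta)$, because $\fg_{-\beta}$ and $\fg_{\beta}$ are dual under the Killing form (or, more simply, are exchanged by a Cartan-type involution). Therefore the second sum equals minus the first, so the whole difference is twice the first sum. Dividing by $2$ gives the identity in $X^*(A_M)\otimes_{\Z}\Q$.

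The only point needing care is the equality $m(-\alpha)=m(\alpha)$ together with the convention that $\rho_P$ is taken with multiplicities, including non-reduced multiples such as $2\alpha$. Both are standard; the first follows from the nondegeneracy of the invariant pairing on the semisimple part of $\fg$ between $\fg_\alpha$ and $\fg_{-\alpha}$. In the application in \cref{proposition:normalization-measure-twisted-orbit}, taking $Q=w^{-1}Pw$ (or its conjugate) turns this identity into the equality $(\delta_P^{1/2}/w\delta_P^{1/2})(m)=\abs{\det(\Ad(m)\mid \Lie(N\cap w\overline N w^{-1}))}_F$. There one also uses that $\abs{\chi}_F$ on $M$ is determined by its restriction to $\fa_M$, via $X^*(M)\otimes\R\cong\fa_M^*$.
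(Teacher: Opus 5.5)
Your proposal is correct and follows essentially the same route as the paper: split $\Phi(P,A_M)$ and $\Phi(Q,A_M)$ according to the other parabolic and its opposite, cancel the common part $\Phi(P,A_M)\cap\Phi(Q,A_M)$, and identify the remaining $Q$-sum with minus the sum over $\Phi(P,A_M)\cap\Phi(\overline{Q},A_M)$. The only difference is that you state and justify the symmetry $m(-\alpha)=m(\alpha)$, via the nondegenerate invariant pairing of $\fg_\alpha$ with $\fg_{-\alpha}$ on the derived algebra; the paper uses this silently when it rewrites $\rho_Q$.
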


\begin{proof}
    We have 
    \begin{align*}
        \rho_{P} 
        &= \frac{1}{2} \sum_{\alpha \in \Phi(P, A_M)} m(\alpha) \alpha, \\
        &= \frac{1}{2} (\sum_{\alpha \in \Phi(P, A_M) \cap \Phi(Q, A_M)} m(\alpha) \alpha 
        + \sum_{\alpha \in \Phi(P, A_M) \cap \Phi(\overline{Q}, A_M)} m(\alpha) \alpha), \\
        \rho_{Q} 
        &= \frac{1}{2} \sum_{\alpha \in \Phi(Q, A_M)} m(\alpha) \alpha, \\
        &= \frac{1}{2} (\sum_{\alpha \in \Phi(P, A_M) \cap \Phi(Q, A_M)} m(\alpha) \alpha 
        - \sum_{\alpha \in \Phi(P, A_M) \cap \Phi(\overline{Q}, A_M)} m(\alpha) \alpha).
    \end{align*}
    Thus, we obtain 
    \begin{align*}
        \rho_{P} - \rho_{Q} = \sum_{\alpha \in \Phi(P, A_M) \cap \Phi(\overline{Q}, A_M)} m(\alpha) \alpha.
    \end{align*}
\end{proof}

\begin{definition}
    We set $\rd^{\times}_{\psi} n = \delta^{\frac{1}{2}}_P(m'(n)) \rd_{\psi}n = \rho_P(m'(n)) \rd_{\psi}n$.
\end{definition}

\begin{lemma}\label{lemma:change-of-variables-by-isogeny}
    Let $F$ be a $p$-adic field. 
    Let $A$ be a split torus and let $\phi \colon A \to A$ be an isogeny over $F$. 
    Then, we have 
    \begin{align*}
        C_{\phi} \int_{a \in A(F)} f(a) \rd a = \sum_{t \in A(F)/\phi(A(F))} \int_{a \in A(F)} f(t \phi(a)) \rd a,
    \end{align*} 
    for any function $f \in \Cc(A)$, where 
    \begin{align*}
         C_{\phi} = \frac{\abs{\Ker \phi(F)}}{\abs{\#\Ker \phi(\overline{F})}_F} > 0.
    \end{align*}
\end{lemma}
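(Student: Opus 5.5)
The plan is to reduce to the one-dimensional case and then apply the general fibre-counting formula for a surjective finite-kernel map of $p$-adic manifolds. A split torus over $F$ is isomorphic to $\bG_m^r$, so $A(F)$ is a locally compact abelian group. The isogeny $\phi$ induces a continuous homomorphism $\phi \colon A(F) \to A(F)$ with finite kernel $\Ker\phi(F)$. Its image $\phi(A(F))$ is an open subgroup of finite index, because $\phi$ is étale, so it is a local analytic isomorphism by the implicit function theorem, and because $A(F)/\phi(A(F))$ injects into $H^1(F, \Ker\phi)$, which is finite.

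First, I decompose the left-hand side over cosets:
\begin{align*}
    \int_{A(F)} f(a)\rd a = \sum_{t \in A(F)/\phi(A(F))} \int_{\phi(A(F))} f(ta)\rd a .
\end{align*}
Translation invariance of $\rd a$ lets me move the coset representative $t$ inside $f$. It therefore suffices to prove, for every $g \in \Cc(A)$,
\begin{align*}
    \int_{A(F)} g(\phi(a))\rd a = C_\phi \int_{\phi(A(F))} g(a)\rd a .
\end{align*}

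Second, I prove this identity. The pushforward of $\rd a$ under $\phi$, restricted to the open subgroup $\phi(A(F))$, is a Haar measure on that subgroup. Near any point, $\phi$ is $\abs{\Ker\phi(F)}$-to-one and is locally a diffeomorphism with constant Jacobian $\abs{\det d\phi}_F$. Hence $\phi_*(\rd a) = \abs{\Ker\phi(F)}\,\abs{\det d\phi}_F^{-1}\rd a$ on $\phi(A(F))$.

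The step I expect to be the only real issue is identifying $\abs{\det d\phi}_F^{-1}$ with $\abs{\#\Ker\phi(\overline F)}_F^{-1}$. Here $d\phi$ is the map induced on $\Lie(A) = X_*(A)\otimes F$ by the integer matrix $\phi_*$ on cocharacters. Its determinant equals $\pm[X_*(A):\phi_*X_*(A)]$, and since $A$ is split this index equals $\#\Ker\phi(\overline F)$. The ratio of measures is then $C_\phi$, which gives the identity and hence the lemma.

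An alternative route is to apply \cref{lemma:facts-in-abelian-Galois-cohomology}(4) to $\Ker\phi$ to rewrite $C_\phi$ as $\abs{H^1}/\abs{H^2}$. However, the direct Jacobian argument above suffices, and it needs no Galois cohomology beyond the finiteness of the index. Smithing $\phi_*$ into diagonal form reduces everything to the maps $z \mapsto z^n$ on $\bG_m$, where the computation can be checked directly.
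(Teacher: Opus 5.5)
Your proof is correct, but it takes a different route from the paper.

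The paper computes no Jacobian. Since both sides are Haar measures on $A(F)$, it tests the identity on $f=\mathbf{1}_{A(\fo_F)}$. It then uses $\phi(A(F))\cap A(\fo_F)=\phi(A(\fo_F))$ to turn the right-hand side into $[A(\fo_F):\phi(A(\fo_F))]\vol(A(\fo_F))$, and computes this index arithmetically:
\begin{itemize}
\item the snake lemma gives $0\to A(\fo_F)/\phi(A(\fo_F))\to A(F)/\phi(A(F))\to X_*(A)/\phi(X_*(A))\to 0$, whose last term has order $\#\Ker\phi(\overline F)$;
\item Hilbert 90 gives $\abs{A(F)/\phi(A(F))}=\abs{H^1(F,\Ker\phi)}$;
\item the local Euler--Poincar\'e formula, together with $\abs{H^2(F,\Ker\phi)}=\#\Ker\phi(\overline F)$ (since $\Ker\phi\cong\prod_i\mu_{n_i}$), finishes the computation.
\end{itemize}

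Your coset decomposition followed by $p$-adic change of variables replaces all of this Galois cohomology with one observation. Namely, $d\phi$ is the integer matrix $\phi_*$ on $X_*(A)\otimes F$, and $\abs{\det\phi_*}=\#\Ker\phi(\overline F)$. Your route is more elementary, and it explains $C_\phi$ transparently as the fibre cardinality $\abs{\Ker\phi(F)}$ divided by the normalized absolute value of the Jacobian. It also works verbatim for any \'etale homomorphism of $p$-adic groups with finite kernel and finite-index image.

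The paper's argument, in exchange, avoids the analytic change-of-variables formula. It also yields the extra fact $C_\phi=[A(\fo_F):\phi(A(\fo_F))]$, so $C_\phi$ is a positive integer.

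One small point: your phrase ``near any point, $\phi$ is $\abs{\Ker\phi(F)}$-to-one'' conflates a local and a global statement. The cleanest way to write your second step is to choose an open compact subgroup $K$ with $K\cap\Ker\phi(F)=1$ on which $\phi$ is an analytic isomorphism onto its image. Then evaluate the Haar measure $\phi_*(\rd a)$ on $\phi(K)$: you get $\abs{\Ker\phi(F)}\vol(K)$, while $\vol(\phi(K))=\abs{\det\phi_*}_F\vol(K)$.
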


\begin{remark}
    We will not use the explicit value of $C_{\phi}$.
\end{remark}

\begin{proof}
    By the uniqueness property of Haar measures, it suffices to check the value on the both sides at $f = \mathbf{1}_{A(\fo_F)}$.
    We note that we have 
    \begin{align*}
        \phi(A(F)) \cap A(\fo_F) = \phi(A(\fo_F))
    \end{align*}
    as the group $A(\fo_F)$ is maximally compact in $A(F)$.
    Then, the statement is equivalent to 
    \begin{align*}
        C_{\phi} \vol(A(\fo_F)) 
        = 
        \abs{A(\fo_F)/(\phi(A(F)) \cap A(\fo_F))}
        \vol(A(\fo_F)). 
    \end{align*}

    By the snake lemma, we have the exact sequence 
    \begin{multline*}
        0 \to A(\fo_F)/(\phi(A(F)) \cap A(\fo_F)) \to A(F)/\phi(A(F)) \\ \to X_*(A)/\phi(X_*(A)) \to 0.
    \end{multline*}
    The size of the last term is equal to 
    \begin{align*}
        \# \Ker\phi(\overline{F}).
    \end{align*}
    Also, as $A$ is split, we have 
    \begin{align*}
        \abs{A(F)/\phi(A(F))} = \abs{H^1(F, \Ker\phi)}. 
    \end{align*}
    From the local Euler--Poincar\'e characteristic formula, we have 
    \begin{align*}
       \frac{\abs{H^1(F, \Ker\phi)}}{\# \Ker\phi(\overline{F})} 
       = 
       \frac{\abs{H^0(F, \Ker\phi)}\abs{H^2(F, \Ker\phi)}}{\abs{\# \Ker\phi(\overline{F})}_F \#\Ker\phi(\overline{F})}.
    \end{align*}
    As the group $\Ker\phi$ is a product of $\mu_n$, we have 
    \begin{align*}
        \abs{H^2(F, \Ker\phi)} 
        = 
        \# \Ker\phi(\overline{F}).
    \end{align*}
    Hence, we obtain the result.
\end{proof}

The following lemma is well known as a computation in `Tate's thesis'.
\begin{lemma}\label{lemma:Tate's-thesis}
    Let $f \in \Cc(F)$ be a smooth function.
    Then, the integral
    \begin{align*}
        \int_{z \in F^{\times}} f(z) \abs{z}^s_F \rd_{\psi} z
    \end{align*}
    defines a meromorphic function in $s \in \C$, which is holomorphic on $\C_{+}$.
    We have 
    \begin{align*}
        \lim_{s \to 0+} \gamma(s, \mathbf{1}, \psi) \int_{z \in F^{\times}} f(z) \abs{z}^s_F \, \rd_{\psi} z
        =
        f(0).
    \end{align*}
    The convergence is uniform in the support of $f$, smoothness of $f$, and the supremum of $f$.
\end{lemma}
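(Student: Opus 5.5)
Since $f$ is locally constant and compactly supported, I would write $f = f(0)\mathbf{1}_{\fp_F^k} + g$ for $k$ large enough that $f$ is constant on $\fp_F^k$. Then $g$ vanishes near $0$, so $g\in\Cc(F^\times)$. The integral of $g(z)\abs{z}_F^s$ is then a finite Laurent polynomial in $q^{-s}$, which is entire and bounded near $s=0$.

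The remaining term is the explicit Tate integral. With $\psi$ trivial on $\fo_F$ but not on $\varpi_F^{-1}\fo_F$, the self-dual measure gives $\fo_F$ volume $1$. Summing over the shells $\varpi_F^j\fo_F^\times$ gives
\begin{align*}
\int_{\fp_F^k\setminus\{0\}} \abs{z}_F^s \rd_{\psi} z = (1-q^{-1})\sum_{j\ge k} q^{-j(1+s)} = (1-q^{-1})\frac{q^{-k(1+s)}}{1-q^{-1-s}}.
\end{align*}
This converges for $\mathrm{Re}(s)>-1$ and continues meromorphically. So the whole integral is meromorphic, holomorphic on $\C_+$, and in fact holomorphic near $s=0$.

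Next I use Tate's local functional equation, $\gamma(s,\mathbf{1},\psi) = \epsilon(s,\mathbf{1},\psi)L(1-s,\mathbf{1})/L(s,\mathbf{1})$ with $\epsilon=1$ for this unramified $\psi$. This gives
\begin{align*}
\gamma(s,\mathbf{1},\psi) = \frac{1-q^{-s}}{1-q^{-(1-s)}},
\end{align*}
which has a simple zero at $s=0$. Multiplying by $\gamma(s,\mathbf{1},\psi)$ therefore kills the $g$-contribution in the limit, because that contribution is bounded near $s=0$. The Tate-integral contribution at $s=0$ equals $f(0)(1-q^{-1})q^{-k}/(1-q^{-1})=f(0)q^{-k}$. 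This is finite and nonzero rather than divergent, so multiplying by $\gamma(s,\mathbf{1},\psi)$ would make it tend to $0$ as well, not to $f(0)$.

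\textbf{This is the main obstacle.} With the additive measure $\rd_\psi z$ there is no pole at $s=0$ to cancel the zero of $\gamma$. The intended reading must be that $\rd_{\psi}z$ on $F^\times$ means the multiplicative Haar measure normalized as in \cref{subsubsection:normalization-tempered-spectra}, i.e. $\zeta_F(1)\,\rd_\psi z/\abs{z}_F$ (equivalently, the shift $s\mapsto s+1$ in the additive normalization). I would first fix this normalization against the definitions of $\gamma^*(0,\mathbf 1,\psi)$ and $d\chi$ given there. Under it, the $\fp_F^k$-part becomes $\zeta_F(1)(1-q^{-1})q^{-ks}/(1-q^{-s}) = q^{-ks}\zeta_F(s)$. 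Since $\gamma(s,\mathbf{1},\psi)\zeta_F(s) = 1/(1-q^{-(1-s)})$, this tends to $1/(1-q^{-1})$, which gives the limit $f(0)$ up to the constant $\zeta_F(1)$ absorbed by the chosen normalization. The $g$-part still vanishes in the limit because $\gamma(0,\mathbf 1,\psi)=0$.

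For uniformity, I bound the $g$-part by $\sup\abs{f}$ times the measure of $\mathrm{supp}\,f$ times $\sup_{\mathrm{supp}\,g}\abs{z}_F^{\mathrm{Re}\,s}$. The index $k$ is controlled by the smoothness level of $f$, and the support radius by $\mathrm{supp}\,f$. This gives an error of size $O(\abs{\gamma(s,\mathbf{1},\psi)})+O(\abs{q^{-ks}-1})$, uniform over $f$ with bounded support, bounded smoothness level and bounded supremum.
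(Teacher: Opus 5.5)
Your decomposition (the germ $f(0)\mathbf{1}_{\fp_F^k}$ plus a piece in $\Cc(F^\times)$ whose integral is a Laurent polynomial in $q^{-s}$, killed by the simple zero of $\gamma(s,\mathbf{1},\psi)$) is exactly the paper's argument, and your uniformity estimate is essentially the paper's bound for $f_1$. The gap is the measure, which is precisely what fixes the constant in the limit. Your final choice $\zeta_F(1)\,\rd_\psi z/\abs{z}_F$ is Tate's normalization, with $\mathrm{vol}(\fo_F^\times)=1$. Under it you get $\gamma(s,\mathbf{1},\psi)\,q^{-ks}\zeta_F(s)=q^{-ks}/(1-q^{-(1-s)})\to\zeta_F(1)$, so the limit is $\zeta_F(1)f(0)=f(0)/(1-q^{-1})$, not $f(0)$. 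Saying this constant is ``absorbed by the chosen normalization'' does not prove the statement, which has no free constant. The section on tempered spectra is also the wrong reference: it normalizes measures on spaces of characters, not on $F^\times$.

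The fix is to treat $F^\times=\GL_1(F)$ as a reductive group. Then $\rd_\psi z$ is the Gross--Gan measure from Section~\ref{section:4}: the invariant form is $dz/z$, so $\rd_\psi z$ is $\abs{z}_F^{-1}$ times the self-dual additive measure, and $\mathrm{vol}(\fo_F^\times)=1-q^{-1}$ with no factor $\zeta_F(1)$. With this measure
\begin{align*}
\int_{\fp_F^k\setminus\{0\}}\abs{z}_F^s\,\rd_\psi z &= (1-q^{-1})\frac{q^{-ks}}{1-q^{-s}}, \\
\gamma(s,\mathbf{1},\psi)\int_{\fp_F^k\setminus\{0\}}\abs{z}_F^s\,\rd_\psi z &= \frac{(1-q^{-1})\,q^{-ks}}{1-q^{-(1-s)}} \longrightarrow 1 \quad (s\to 0+).
\end{align*}
This is exactly the paper's computation for $f_0$: the simple pole at $s=0$ cancels the zero of $\gamma$, and you get $f(0)$ on the nose. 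Make this correction and drop the additive-measure discussion, where you wrongly conclude that the $\fp_F^k$-part is holomorphic at $s=0$. The rest of your argument, including the uniformity, then goes through unchanged.
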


\begin{proof}
    We write the function $f$ as 
    \begin{align*}
        f = \sum_{t \in \varpi^{-m}\fo_F/\varpi^n\fo_F} c_t  \mathbf{1}_{t + \varpi^n\fo_F},
    \end{align*}
    for some $m, n \in \Z_{\geq 1}$.
    We set 
    \begin{align*}
        f_0 &= c_0 \mathbf{1}_{\varpi^n\fo_F}, \\
        f_1 &= \sum_{t \in \varpi^{-m}\fo_F/\varpi^n\fo_F, t \neq 0} c_t  \mathbf{1}_{t + \varpi^n\fo_F}.
    \end{align*}
    We can see that
    \begin{align*}
        \| f \|_{\infty} = \max_{t} \abs{c_t}.
    \end{align*}
    Then, we have 
    \begin{align*}
        \abs{\int_{z \in F^{\times}} f_1(z) \abs{z}^s_F \, \rd_{\psi} z} 
        \leq 
        \| f \|_{\infty}(1-q^{-1})(q^{-ns} + \cdots + q^{ms}),
    \end{align*}
    and 
    \begin{align*}
        \int_{z \in F^{\times}} f_0(z) \abs{z}^s_F \, \rd_{\psi} z
        = 
        f(0) q^{-ns} \frac{1-q^{-1}}{1-q^{-s}}.
    \end{align*}
    This implies the result.
\end{proof}

\begin{lemma}\label{lemma:extension-functions-on-tori-to-vector-spaces}
    Let $V$ be a finite-dimensional vector space over $F$ on which a split torus $\GL_1$ acts by multiplication.
    Let $C$ be a compact subset in $V \setminus \{ 0 \}$.
    Let $h \in \Cc(V)$ be a smooth function on $V$.
    Then, the family of functions $h_v \colon t \mapsto h(tv)$ for $v \in C$ defines the family of elements in $\Cc(F)$.
    The supports of the functions $\{ h_v \}_{v \in C}$ are contained in a compact set which only depends on the support of $h$ and $C$.
    If the function $h$ is $L$-invariant for an $\fo_F$-lattice $L \subset V$, then there exists an $\fo_F$-lattice $L' \subset F$ such that the functions $h_v$ is $L'$-invariant.
\end{lemma}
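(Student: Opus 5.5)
The three assertions are elementary once we exploit that the scaling action of $\GL_1$ on $V$ is jointly continuous, so I would prove them in order.

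First, smoothness and compact support of each $h_v$. The map $t \mapsto tv$ is a continuous (indeed linear) map $F \to V$, so $h_v = h \circ (t \mapsto tv)$ is locally constant. For compact support, fix a norm $\|\cdot\|$ on $V$ (say the sup-norm with respect to a basis). Since $C$ is compact and avoids $0$, the constant $c = \min_{v \in C} \|v\| > 0$ exists. Choose $R$ with $\mathrm{supp}(h) \subset \{ \|x\| \le R \}$. If $h_v(t) \neq 0$, then $\abs{t}_F \|v\| \le R$, hence $\abs{t}_F \le R/c$. Thus every $h_v$ is supported in the compact ball $\{ t \in F \mid \abs{t}_F \le R/c \}$, which depends only on $\mathrm{supp}(h)$ and $C$. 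This gives the first two assertions simultaneously.

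Second, the uniform invariance lattice. Suppose $h$ is $L$-invariant. Compactness of $C$ gives a constant $D$ with $\|v\| \le D$ for all $v \in C$. Since $L$ is an $\fo_F$-lattice, it contains a ball $\{ \|x\| \le \epsilon \}$ for some $\epsilon > 0$. Choose $k$ large enough that $q^{-k} D \le \epsilon$, and set $L' = \varpi^k \fo_F$. For $u \in L'$ and $v \in C$, we have $\|uv\| \le q^{-k} D \le \epsilon$, so $uv \in L$. Therefore
\begin{align*}
    h_v(t+u) = h(tv + uv) = h(tv) = h_v(t),
\end{align*}
and each $h_v$ is $L'$-invariant, with $L'$ independent of $v$.

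I do not expect a genuine obstacle here. The only point needing care is that the constants $c$, $D$ and $\epsilon$ are uniform in $v$, and this is exactly what compactness of $C \subset V \setminus \{0\}$ provides. This uniformity is what later allows \cref{lemma:Tate's-thesis} to be applied uniformly over $v \in C$.
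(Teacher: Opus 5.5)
Your proposal is correct and follows essentially the same argument as the paper. The paper bounds $\abs{t}_F \le M/m_C$ with $m_C = \min_{v \in C}\max_i \abs{v_i}_F$, which is your sup-norm constant $c$, and gets a lattice $L'$ with $L' \cdot C \subset L$ from continuity of the scaling action; you simply make that last step explicit through the constants $D$ and $\epsilon$.
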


\begin{proof}
    We take a coordinate of $V$ and denote its elements in the form $v = (v_1, \dots, v_n)$.
    Set $m_C = \min_{v \in C} \max_{i} \abs{v_i}_F > 0$.
    If the support of $h$ is contained in the subset $K_{< M} = \{ v \in V \mid \abs{v_i} \leq M \}$ for some $M > 0$ and we have $tv \in K_{< M}$, then we have $\abs{t}_F \leq \frac{M}{m_C}$.
    Thus, the supports of the functions $\{ h_v \}_{v \in C}$ are contained in a compact set which only depends on the support of $h$ and $C$.
    If the function $h$ is $L$-invariant for an $\fo_F$-lattice $L \subset V$, then the continuity of the action $F \times V \to V$ implies that there exists an $\fo_F$-lattice $L' \subset F$ such that $L' \cdot C \subset L$. 
    Then, we have
    \begin{align*}
        h_v(t+a) = h(tv + av) = h(tv) = h_v(t),
    \end{align*}
    for any $t \in F, a \in L'$ and $v \in C$.
    This completes the proof.
\end{proof}

\subsection{The limit formulas} 

In the rest of this section, we will prove the following two formulas for the limit 
\begin{align*}
    \lim_{s \to 0+} \gamma(s, \mathbf{1}, \psi)^2 
    \int_{n \in N \cap w\overline{N}w^{-1}} 
    \phi_{s, f}(\widetilde{w}^{-1}n)
    dn.
\end{align*}

We note that $H = M^{\widetilde{w}}_{\ad}$ is connected and simply-connected.

\begin{theorem}\label{theorem:geometric-limit-formula} 
    For any function $f \in \Cc(M_{\ad})$, we have 
    \begin{multline*}
    \lim_{s \to 0+} \gamma(s, \mathbf{1}, \psi)^2 
    \int_{n \in N \cap w\overline{N}w^{-1}} 
    \phi_{s, f}(\widetilde{w}^{-1}n)
    \rd_{\psi}n \\
    = 
    \frac{K}{512}
    \int_{H \backslash M_{\ad}} 
    f(\widetilde{w}^{-1}(m)^{-1} m) \rd_{\psi}\overline{m},
    \end{multline*}
    where $\rd_{\psi} \overline{m}$ is the quotient measure of $\rd_{\psi}m$ by $\rd_{\psi}h$ and $K$ is a positive constant.
\end{theorem}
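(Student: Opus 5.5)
We begin by restricting the integral to the dense open subset $(N \cap w\overline{N}w^{-1})^{\reg}$, which is harmless since its complement is a proper Zariski closed subset (Corollary after \cref{lemma:bigcell-intersection-SO10}). For $n$ in this set we write $\widetilde{w}^{-1}n = n'(n)\, m'(n)\, \overline{n}'(n)$ as in \cref{definition:equivariant-map}. By construction of $\phi_{s,f}$ the integrand becomes
\begin{align*}
    \delta_P^{\frac12}(m'(n))\,\chi_{2s\rho_P}(m'(n))\, f(m'(n))\, \Psi(\overline{n}'(n)).
\end{align*}
Absorbing $\delta_P^{1/2}(m'(n))$ into $\rd^{\times}_\psi n$, \cref{proposition:normalization-measure-twisted-orbit} and the homogeneity in \cref{lemma:equivariant-map-general}(1) let us rewrite the integral as an integral over $M/H$. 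Changing variables, it becomes an integral over $H\backslash M$ of $f(\widetilde{w}^{-1}(m)^{-1} z_M m)$, times $\chi_{2s\rho_P}$ of the same element, times $\Psi$ evaluated on the $\overline{n}'$-coordinate, up to a positive constant. Here we use the equivariance formulas in \cref{lemma:equivariant-map-general}(3) and the fact that $z_M$ lies in the image of $m'$.

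The key point is that $M$ is not $M_{\ad}$: the twisted orbit map $m \mapsto \widetilde{w}^{-1}(m)^{-1} z_M m$ factors through $M_{\ad}$ only after we split off the central torus $A_M \cong \GL_1^2$. Accordingly we decompose $M$ (up to an isogeny) as $A_M \times M_{\der}$-type coordinates. On $A_M$, the twisted map $a \mapsto \widetilde{w}^{-1}(a)^{-1}a$ is an isogeny of the rank-two split torus, since $w$ acts on $X^*(A_M)$ through the standard representation of $S_3$ and $w=s_1s_6$ has no fixed vectors. Applying \cref{lemma:change-of-variables-by-isogeny} converts the $A_M$-integral into an integral over $A_M$, with finitely many translates and a positive constant $C_\phi$. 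The factor $\chi_{2s\rho_P}$ then becomes $|z_1|^{as}|z_2|^{bs}$ in suitable coordinates $z_i$ on $A_M\cong (F^\times)^2$. Via \cref{lemma:bigcell-intersection-SO10}(5),(6), these coordinates are the $\GL_1$-parts $-1/q(x)$ of the two rank-one factors $\alpha_{6,M},\alpha_{1,M}$, with exponents $16s$ and $8s$ as in \cref{proposition:normalizing-factor-for-s1s6}.

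Meanwhile $\overline{n}'$ is $\overline{n}(x')$ with $x' = -x/q(x)$, which tends to $0$ as $z_i\to 0$. So $\Psi(\overline{n}'(n))$ is a smooth compactly supported function of $z_1, z_2$ extended across $0$; this extension is supplied by \cref{lemma:extension-functions-on-tori-to-vector-spaces}, applied with $v$ ranging over a compact set of directions determined by the support of $f$. We then apply \cref{lemma:Tate's-thesis} twice, once per $\GL_1$-factor, with the uniformity statement to justify interchanging the limit with the remaining integral over $H\backslash M_{\ad}$. Each application gives $\gamma(s,\mathbf{1},\psi)\int f(z)|z|^{cs}\,\rd z \to c^{-1}f(0)$ after the rescaling $s\mapsto cs$; we also use $\lim \gamma(cs,\mathbf 1,\psi)/\gamma(s,\mathbf 1,\psi)=c^{-1}$ from the simple zero. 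This produces $\Psi(1)=1$ and a rational constant coming from the exponents $16$ and $8$ and the isogeny degrees. We expect this constant to account for $1/512$ (for instance $16\cdot 8\cdot 4$, the $4$ from $|\mu_2^2|$ in $Z(M_{\der})$), with everything else collected into $K>0$. Finally, the $z_M$ and the $\mu_3$ in $M^{\widetilde w}\cong H\times\mu_3$ disappear upon passing to $M_{\ad}$, where $H$ is the full twisted centralizer.

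The main obstacle is the middle step: disentangling the central torus coordinates from the $M_{\ad}$ coordinates uniformly on the support of $f$, so that the Tate-type limits are legitimate and dominated convergence applies. This requires showing that for $f\in\Cc(M_{\ad})$ the set of $H\backslash M_{\ad}$-points contributing is compact. For this we use properness of $m'$ (\cref{lemma:equivariant-map-general}(4)) and closedness of twisted orbits, and we track the exact exponents of $\chi_{2s\rho_P}$ on the two rank-one pieces. We would first carry out the whole computation in the $\GL_1\times\SO_8\subset\SO_{10}$ model of each rank-one step, then glue the two steps using the product decomposition $\widetilde w^{-1}(N\cap w\overline N w^{-1})=\widetilde{s_6}^{-1}N_{\alpha_6}\widetilde{s_1}^{-1}N_{\alpha_1}$.
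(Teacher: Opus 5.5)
Your outline has the right ingredients, and passing to $H\backslash M$ first and integrating out the central directions afterwards is a legitimate variant of the paper's order. The gap is the step that actually produces the limit: it is set up in the wrong variables and fails as written.

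The Tate integrals must be taken in the conjugation parameter $a\in A_M$, with coordinates $\alpha_1(a),\alpha_6(a)$. These give an isomorphism $A_M\cong\GL_1^2$ by \cref{lemma:center-connected}. By \cref{lemma:equivariant-map-general}(3) and $2\rho_P=16\widetilde{\alpha}_M$,
\[
\overline{n}'(a^{-1}na)=\bigl(\alpha_6(a)\,\overline{n}'_6(n),\ \alpha_1(a)\,\overline{n}'_1(n)\bigr),\qquad \chi_{2s\rho_P}(m'(a^{-1}na))=\abs{\alpha_1(a)}_F^{32s}\abs{\alpha_6(a)}_F^{16s}\,\chi_{2s\rho_P}(m'(n)),
\]
since $2\rho_P-w(2\rho_P)=32\alpha_{1,M}+16\alpha_{6,M}$. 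So $\Psi$ enters through a genuine element of $\Cc(F^2)$, and the constant is $1/(32\cdot 16)=1/512$.

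You instead use the $\GL_1$-parts of $m'(n)$ as coordinates. Under $n\mapsto a^{-1}na$ these move by the degree-$3$ isogeny $\phi(a)=\widetilde{w}^{-1}(a)^{-1}a$. With the paper's conventions, $\alpha_1(\phi(a))=\alpha_1(a)\alpha_6(a)^{-1}$ and $\alpha_6(\phi(a))=\alpha_1(a)\alpha_6(a)^2$.

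In those coordinates $\Psi\circ\overline{n}'$ is not the restriction of an element of $\Cc(F^2)$:
\begin{itemize}
\item it lives on cosets of the finite-index subgroup $\phi(A_M(F))$;
\item it is unbounded in the first coordinate;
\item it is not locally constant at $0$.
\end{itemize}
Hence \cref{lemma:extension-functions-on-tori-to-vector-spaces} and \cref{lemma:Tate's-thesis} cannot be applied there. The rank-one model already shows this: replacing $x$ by $tx$ multiplies $-1/q(x)$ by $t^{-2}$ but $x'=-x/q(x)$ by $t^{-1}$. So the $\Psi$-factor depends on a square root of your coordinate $z$.

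Likewise, $16s$ and $8s$ are the shifts in the normalizing factors of \cref{proposition:normalizing-factor-for-s1s6}, which belong to the spectral side; they are not the exponents of $\chi_{2s\rho_P}$ here. The factorization $512=16\cdot 8\cdot\abs{\mu_2^2}$ is numerology, not a derivation. Since $K$ is left unspecified, the numerical value is harmless for the statement, but the mechanism producing the limit is not.

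The paper's repair runs as follows.
\begin{itemize}
\item Lift $f$ to $F_f\in\Cc(M)$ with $\int_{A_M}F_f(am)\,\rd_\psi a=f(m)$, so that $F_f\circ m'$ is compactly supported by properness of $m'$.
\item Use \cref{lemma:change-of-variables-by-isogeny} with $\phi=\Ad(\widetilde{w})^{-1}-1$ to trade the multiplicative variable on $m'(n)$ for the conjugation variable: $\int_{A_M}F_f(a\,m'(n))\,\rd_\psi a=C_\phi^{-1}\sum_t\int_{A_M}F_f(t\,m'(ana^{-1}))\,\rd_\psi a$.
\item Substitute $n\mapsto a^{-1}na$, using the $M$-invariance of $\rd^{\times}_\psi n$.
\item Take the two Tate limits in $\alpha_6(a)$ and $\alpha_1(a)$.
\item Only afterwards pass to $H\backslash M$ via \cref{proposition:normalization-measure-twisted-orbit}, and recollapse $\sum_t\int_{A_M}$ with the isogeny lemma.
\end{itemize}

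Within your own route there is a simpler repair. Since $Z(M)=A_M$ is split, $M(F)\to M_{\ad}(F)$ is surjective with kernel $A_M(F)$. Moreover $A_M(F)$ acts freely on $H\backslash M(F)$, because $H\cap A_M\subset Z(H)=1$. So you can fiber $H\backslash M$ over $H\backslash M_{\ad}$ and do the Tate integrals directly in the fiber variable $a$. This needs no isogeny lemma and no $A_M\times M_{\der}$ splitting. Compactness in the base comes from properness of the twisted orbit map. The gluing of rank-one computations you sketch is unnecessary, since the global $M$-equivariance already handles it.

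Minor: $\gamma(cs,\mathbf{1},\psi)/\gamma(s,\mathbf{1},\psi)\to c$, not $c^{-1}$.
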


\begin{theorem}\label{theorem:spectral-limit-formula}
    There exists a smooth function $c \colon \Temp_{\ind}(H)/\sim_{st} \to \bS^1$ such that, for any function $f \in \Cc(M_{\ad})$, we have 
    \begin{multline*}
    \lim_{s \to 0+} \gamma(s, \mathbf{1}, \psi)^2 
    \int_{n \in N \cap w\overline{N}w^{-1}} 
    \phi_{s, f}(\widetilde{w}^{-1}n)
    \rd_{\psi}n \\
    = 
    \frac{1}{512}
    \int_{\phi \in \Temp_{\ind}(H)/{\sim_{st}}}
    \lbrace
    \sum_{\pi \in \Temp_{\ind}(M_{\ad}), \pi \subset \Pi^X_\phi(M_{\ad})} \tr (\widetilde{\pi}(w^{-1}) \pi(f)) 
    \rbrace   \\
    \times
    c(\phi)
    \mu^{H}_{\HII, \psi}(\phi) 
    \rd \phi.
    \end{multline*}
    Here, the notation $\pi \subset \Pi^X_\phi(M_{\ad})$ means that the representation $\pi$ is a tempered subrepresentation of $\bigoplus_{\pi' \in \Pi^X_{\phi}(M_{\ad})} \pi'$.
\end{theorem}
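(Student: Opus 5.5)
The plan is to compute the left-hand side spectrally. First I rewrite the inner integral through unnormalized intertwining operators, then use the normalizing factors of \cref{proposition:normalizing-factor-for-s1s6}, and finally study the limit $s \to 0+$ as a concentration of the Plancherel measure of $M_{\ad}$ onto the image of $\Temp_{\ind}(H)/\sim_{st}$.

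\textbf{Step 1: spectral expansion of the integral.} Since $\phi_{s,f}$ has trivial central character on $A_M$, I may regard $f$ as a function on $M_{\ad}$. I expand $f$ by the Plancherel formula of $M_{\ad}$, which is known by \cref{theorem:formal-degree-conjecture-GSO2n}:
\[
f(m) = \int_{\Temp_{\ind}(M_{\ad})} \tr\bigl(\pi(m^{-1})\pi(f)\bigr)\, \mu^{M_{\ad}}_{\HII,\psi}(\pi)\, \rd\pi .
\]
Inserting this into $\phi_{s,f}$ realizes $\phi_{s,f}$ as an integral of sections of $i^G_P(\pi_{2s\rho_P})$ built from $\Psi$ and matrix coefficients. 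The integral over $N \cap w\overline{N}w^{-1}$ of $\phi_{s,f}(\widetilde{w}^{-1}n)$ then becomes
\[
\int \tr\bigl(\pi(f)\circ A_s(\pi)\bigr)\,\mu^{M_{\ad}}_{\HII,\psi}(\pi)\,\rd\pi ,
\]
where $A_s(\pi)\colon {}^{\widetilde w}\pi \to \pi$ is the $M$-component at the identity of $l(\widetilde w)\,J_{w^{-1}Pw\vert P}(\pi_{2s\rho_P})$. Next I write $J = r\,R$. By \cref{proposition:normalizing-factor-for-s1s6} and \cref{remark:replace-arthur-by-artin}, the factor $r(\pi_{2s\rho_P})^{-1}$ equals, up to $\epsilon$-factors that are harmless at $s=0$,
\[
\gamma(16s,\pi,\Std_{\PGSO_8},\psi)\,\gamma(8s,{}^{s_1}\pi,\Std_{\PGSO_8},\psi).
\]
Multiplying by $\gamma(s,\mathbf 1,\psi)^2$ therefore produces the ratio
\[
\frac{\gamma(s,\mathbf 1,\psi)^2}{\gamma(16s,\pi,\Std_{\PGSO_8},\psi)^{-1}\,\gamma(8s,{}^{s_1}\pi,\Std_{\PGSO_8},\psi)^{-1}}\cdot(\text{trace of }R).
\]
This can have a nonzero limit only near those $\pi$ for which both standard $\gamma$-factors vanish at $0$, that is, the condition \eqref{eq:condition-distinguished-packet}. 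By \cref{proposition:construction-of-distinguished-Xu-packet}, these $\pi$ are exactly the members of the distinguished Xu packets.

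\textbf{Step 2: local analysis of the limit.} Near a point $\phi_0$, I use the coordinates $\lambda$ of \cref{lemma:adjoint-L-explicit} and \cref{remark:standard-poles}. The standard $\gamma$-factor behaves like $s^{\abs{J_{\triv}}}\prod(\lambda_{k}^2+s^2)\,R(s,\lambda)$. The product of the two shifted factors then contributes kernels of the form $s/(\lambda^2 + c s^2)$, and these converge as $s \to 0+$ to delta masses on the sublocus where $\phi$ factors through $\iota^{M_{\ad}}_H$. I will carry out this convergence with a Tate-thesis type argument, in the spirit of \cref{lemma:Tate's-thesis}, and I need the convergence to hold uniformly in smooth compactly supported test data. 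The scalars $16$ and $8$, together with the residual factors, should produce the constant $1/512$. Restricted to the sublocus, the operator $R_P(w^{-1},\pi,\psi)$ equals $i^G_P(\widehat\pi(w^{-1}))$ by \cref{theorem:canonical-extension-PGSO8-S3}. On the dense open set supplied by \cref{theorem:generic-irreducibility-functorial-lifts-G2} the induced representation is irreducible, so the trace collapses to $\tr(\widetilde\pi(w^{-1})\pi(f))$. A null set does not affect the integral, and \cref{proposition:smoothness-of-endoscopic-lift} guarantees that the pushforward to $\Temp_{\ind}(H)/\sim_{st}$ is smooth.

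\textbf{Step 3: matching densities.} Restricted to ${}^L H$, the adjoint representation of $\widehat{M_{\ad}}$ decomposes as $\Ad_H \oplus \Std_H \oplus \Std_H$ (dimensions $28 = 14+7+7$). Hence
\[
\abs{\gamma^*(0,\phi,\Ad_{M_{\ad}},\psi)} = \abs{\gamma^*(0,\phi,\Ad_H,\psi)}\cdot(\text{two standard factors}).
\]
The two standard factors are exactly the ones cancelled in Step 2, so what remains is $\mu^H_{\HII,\psi}$. The absolute value, however, is not the same as the analytic expression, and the difference is a phase. By \cref{corollary:adjoint-L-abs-value}, this phase is a smooth $\bS^1$-valued function, which I take as $c(\phi)$. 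The remaining bookkeeping compares $\langle\pi,1\rangle$ and $\abs{S^\natural}$ for $M_{\ad}$ and $H$ via the explicit packet descriptions in Cases (1)--(5), namely \cref{proposition:description-of-endoscopic-lift-case-2} and the analogous propositions for the other cases.

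\textbf{Main obstacle.} I expect the main difficulty to be justifying the delta-concentration limit in Step 2. It requires uniform control of the normalized operators and of $\abs{\gamma^*}$, which is only almost-everywhere smooth, near the non-generic strata, and it requires the normalizations of all the constants to line up. I will first attempt it chart by chart in the coordinates of \cref{lemma:adjoint-L-explicit}, dominating the integrand by $P(\lambda)^2 Q(\lambda)$ with $Q$ nonvanishing, so that dominated convergence applies away from the concentration locus.
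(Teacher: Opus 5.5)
Your outline follows the paper's architecture: a Plancherel expansion via \cref{theorem:formal-degree-conjecture-GSO2n}, the reduction in \cref{lemma:summary}, a limit of $\gamma(s,\mathbf 1,\psi)^2\, r_{w^{-1}Pw\vert P}\,\mu^{M_{\ad}}_{\psi}$ as in \cref{theorem:key-computation}, and then \cref{theorem:canonical-extension-PGSO8-S3}. The gap is that the step carrying the real content, the limit at distinguished points in Cases (2), (4), (5), is only described heuristically, and the heuristic does not work as stated.

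Near such a $\pi_0$ you must integrate, against $\Phi$ and up to smooth nonvanishing factors, the quantity $\gamma(s,\mathbf 1,\psi)^2 P(\pi_\lambda)^2/\bigl(R_{16s}(\cW\lambda)R_{8s}(s_1\cW\lambda)\bigr)$. Suppose the standard parameter has several pairs of trivial constituents: this happens in Case (4) with $\det\phi^I_0=\mathbf 1$ and in Case (5) with some $\chi_i=\mathbf 1$, so $\abs{I_{\triv}}\in\{4,8\}$. Then the denominator has up to eight Poisson-type factors against only $s^2$ in front. All their singular hyperplanes pass through $\lambda=0$, which is exactly the point where you have localized. So ``the kernels converge to delta masses'' is not an argument there, and dominating by $P^2Q$ away from the locus says nothing at the locus.

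The paper's mechanism has three ingredients.
\begin{itemize}
\item Split by partial fractions (\cref{lemma:residue-theorem}) into $\abs{w(\phi^H_0)/\{\pm1\}}^2$ terms. Each term has exactly one kernel $s/(\varpi(\cW\lambda)^2+(16s)^2)$ and one kernel $s/(\varpi'(s_1\cW\lambda)^2+(8s)^2)$.
\item Show that the leftover factor $P(\pi_\lambda)^2/\bigl(P_{[\varpi]}(\cW\lambda)P_{[\varpi]}(s_1\cW\lambda)\bigr)$ is a polynomial (\cref{lemma:regularity}). This uses the $s_1$-invariance of the adjoint factor and the fact that $\cW$ commutes with $s_1$. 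Only then does the Poisson-kernel limit $s/(x^2+(cs)^2)\to(\pi/c)\delta_0$ apply; this, not \cref{lemma:Tate's-thesis}, is the relevant tool on the spectral side.
\item Show that $W(M_{\ad},L/A_M)_{\sigma^L_0}$ acts transitively on the pairs $([\varpi],[\varpi'])$ (\cref{lemma:transitivity}). Then every term concentrates on a Weyl translate of $\{\varpi^\vee_{\beta_1}(\cW\lambda)=\varpi^\vee_{\beta_4}(\cW\lambda)=0\}$, the image of $\theta(\cO^H)$ in the chart, and all terms contribute equally.
\end{itemize}
The same partial-fraction argument is what proves the vanishing at non-distinguished points (\cref{proposition:non-distinguished}), which you only assert.

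Second, your Step 3 bookkeeping is missing a term: comparing $\langle\pi,1\rangle$ and $\abs{S^\natural}$ alone cannot produce $\mu^H_{\HII,\psi}$. The local measures are $\abs{W(M_{\ad},L/A_M)_{\sigma^L_0}}^{-1}\rd\lambda$ on the $M_{\ad}$ side and $\abs{W(H,I)_{\sigma^I_0}}^{-1}\rd\lambda^H$ on the $H$ side. The partial-fraction sum also contributes a multiplicity $(\abs{I_{\triv}}/2)^2$. The identity converting one density into the other is \cref{lemma:comparison-Weyl-and-S-group}: $(\abs{I_{\triv}}/2)^2/(\abs{W(M_{\ad},L/A_M)_{\sigma^L_0}}\,\abs{S^\natural_{\phi_{\sigma^L_0}}})=1/(\abs{W(H,I)_{\sigma^I_0}}\,\abs{S^\natural_{\phi^I_0}})$, checked case by case. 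For example, in Case (2) the order-two Weyl stabilizer for self-dual $\sigma^{H'}$ is what matches $\abs{S^\natural_{\phi^I_0}}=6$ versus $3$.

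Cases (1) and (3) are different in nature. There $\abs{J_{\triv}}=1$, no concentration occurs, and $\theta$ is a measure-preserving isomorphism of components. The limit is pointwise, and $1/512$ comes from $16\cdot 8$ together with $\abs{S^\natural_{\phi^{L/A_M}}}=4\abs{S^\natural_{\phi^I}}$.

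Finally, three slips.
\begin{itemize}
\item Your Step 1 ratio should be $\gamma(s,\mathbf 1,\psi)^2/\bigl(\gamma(16s,\pi,\Std,\psi)\gamma(8s,{}^{s_1}\pi,\Std,\psi)\bigr)$; as written it tends to $0$ everywhere.
\item The operator that occurs is $R_P(w,\pi,\psi)=i^G_P(\widehat\pi(w^{-1}))$, not $R_P(w^{-1},\pi,\psi)$.
\item The trace collapses simply because $\widetilde{\phi_{0,v}}(1)=\Psi(1)v=v$. Irreducibility of $i^G_P(\pi)$ is needed only to construct $\widehat\pi$.
\end{itemize}
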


Recall that $w = s_1s_6$ and $\tau = s_6s_1$.

\subsection{Proof of the geometric limit formula}

\begin{proof}[Proof of \cref{theorem:geometric-limit-formula}]
    First, we have 
    \begin{align*}
        \int_{n \in N \cap w\overline{N}w^{-1}} 
        \phi_{s, f}(\widetilde{w}^{-1}n)
        \rd_{\psi}n
        = 
        \int_{n \in (N \cap w\overline{N}w^{-1})^{\reg}} 
        \phi_{s, f}(\widetilde{w}^{-1}n)
        \rd_{\psi}n,
    \end{align*}
    by Zariski density of $(N \cap w\overline{N}w^{-1})^{\reg}$.
    We take a function $F_f \in \Cc(M)$ such that 
    \begin{align*}
        \int_{A_M} F_f(am) \rd_{\psi}a = f(m).
    \end{align*}
    From \cref{definition:equivariant-map}, we have 
    \begin{multline*}
        \int_{n \in N \cap w\overline{N}w^{-1}} 
        \phi_{s, f}(\widetilde{w}^{-1}n)
        \rd_{\psi}n \\
        =
        \int_{(N \cap w\overline{N}w^{-1})^{\reg} \times A_M}
        \delta_P^{\frac{1}{2}}(m'(n)) F_f(am'(n)) \Psi(\overline{n}'(n)) 
        \rd_{\psi}n 
        \rd_{\psi}a.
    \end{multline*}
    Applying \cref{lemma:change-of-variables-by-isogeny} with $A = A_M, \phi = \Ad(\widetilde{w})^{-1} - 1$, we have 
    \begin{multline*}
        \int_{(N \cap w\overline{N}w^{-1})^{\reg} \times A_M}
        \delta_P^{\frac{1}{2}}(m'(n)) F_f(am'(n)) \Psi(\overline{n}'(n)) 
        \rd_{\psi}n 
        \rd_{\psi}a \\
        =
        C_{\phi}^{-1}
        \int_{(N \cap w\overline{N}w^{-1})^{\reg} \times A_M}
        (\delta_P^{\frac{1}{2}}\chi_{2s\rho_P})(m'(n)) \\
        \times
        \sum_{t \in A_M/\phi(A_M)} F_f(t m'(ana^{-1})) \Psi(\overline{n}'(n)) 
        \rd_{\psi}n 
        \rd_{\psi}a,
    \end{multline*}
    by \cref{lemma:equivariant-map-general}.
    For any $t \in A_M$, we have 
    \begin{multline*}
        \int_{(N \cap w\overline{N}w^{-1})^{\reg} \times A_M}
        (\delta_P^{\frac{1}{2}}\chi_{2s\rho_P})(m'(n)) F_f(tm'(ana^{-1})) \Psi(\overline{n}'(n)) 
        \rd_{\psi}n 
        \rd_{\psi}a 
        \\
        = 
        \int_{(N \cap w\overline{N}w^{-1})^{\reg} \times A_M}
        \chi_{2s\rho_P}(m'(a^{-1}na)) 
        F_f(tm'(n)) 
        \Psi(\overline{n}'(a^{-1}na)) 
        \rd^{\times}_{\psi}n 
        \rd_{\psi}a,
    \end{multline*}
    by the invariance of $d^{\times}_{\psi}n$.
    We can see that 
    \begin{align*}
        \chi_{2s\rho_P}(\widetilde{w}^{-1}(a)^{-1}a) 
        = 
        \abs{\alpha_1(a)}_F^{32s} \abs{\alpha_6(a)}_F^{16s} 
    \end{align*}
    for any $a \in A_M$ and 
    \begin{align*}
        (\overline{n}'_6(a^{-1}na),\overline{n}'_1(a^{-1}na))
        = 
        (\alpha_6(a)\overline{n}'_6(n), \alpha_1(a)\overline{n}'_1(n))
    \end{align*}
    for any $a \in A_M$ and $n \in (N \cap w\overline{N}w^{-1})^{\reg}$.
    As the map $m'$ is proper, the support $C_F$ of the function $F \circ m' \colon (N \cap w\overline{N}w^{-1})^{\reg} \to \C$ is compact. 
    Now, we apply \cref{lemma:extension-functions-on-tori-to-vector-spaces} to the function $\Psi \in \Cc(N_{-\alpha_6} \times N_{-\alpha_1})$.
    Also applying \cref{lemma:Tate's-thesis}, we can see that, each limit 
    \begin{multline*}
        \lim_{s \to 0+} \gamma(s, \mathbf{1}, \psi)^2 
        \int_{(N \cap w\overline{N}w^{-1})^{\reg} \times A_M}
        \chi_{2s\rho_P}(m'(a^{-1}na))
        F_f(t m'(n))  \\
        \times 
        \Psi(\overline{n}'(a^{-1}na)) 
        \rd^{\times}_{\psi}n 
        \rd_{\psi}a
    \end{multline*}
    for $t \in A_M(F)/\phi(A_M(F))$, uniformly converges to 
    \begin{align*}
        \frac{1}{512}
        \int_{(N \cap w\overline{N}w^{-1})^{\reg}} 
        F_f(t m'(n))
        \rd^{\times}_{\psi}n.
    \end{align*}
    We applied the normalization $\Psi(1) = 1$ of the function $\Psi$ here.
    Also, there exists a central element $z_M \in M$ such that
    \begin{align*}
        \int_{(N \cap w\overline{N}w^{-1})^{\reg}} 
        F_f(t m'(n))
        d^{\times}_{\psi}n 
        = 
        K
        \int_{H \backslash M} 
        F_f(t z_M \widetilde{w}^{-1}(m)^{-1}m)
        \rd_{\psi}\overline{m}
    \end{align*}
    by \cref{proposition:normalization-measure-twisted-orbit}, where $K > 0$ is a positive constant.
    Again, we apply \cref{lemma:change-of-variables-by-isogeny} to rewrite the integral
    \begin{multline*}
        \frac{K}{512}
        C_{\phi}^{-1}
        \int_{H \backslash M_{\ad} \times A_M} 
        \sum_{t \in A_M/\phi(A_M)} F_f(t \widetilde{w}^{-1}(a)^{-1}a z_M  \widetilde{w}^{-1}(m)^{-1}m)
        \rd_{\psi}\overline{m}
        \rd_{\psi}a
    \end{multline*}
    by
    \begin{align*}
        \frac{K}{512}
        \int_{H \backslash M_{\ad}} 
        \int_{a \in A_M}
        F_f(a \widetilde{w}^{-1}(m)^{-1}m)
        \,
        \rd_{\psi} a 
        \,
        \rd_{\psi}\overline{m}.
    \end{align*}
    This is equal to
    \begin{align*}
        \frac{K}{512}
        \int_{H \backslash M_{\ad}} 
        f(\widetilde{w}^{-1}(m)^{-1}m) 
        \rd_{\psi}\overline{m}.
    \end{align*}
    This completes the proof.
\end{proof}

\subsection{Proof of the spectral limit formula} 

In this section, we will prove \cref{theorem:spectral-limit-formula}.

\subsubsection{A preparatory computation}

We have the isomorphism 
\begin{align*}
    \Cc(A_M N \backslash G, \delta_P^{\frac{1}{2}} \chi_{2s\rho_P}) 
    \xrightarrow{\sim} 
    i^G_P(\Cc(M_{\ad})\otimes \chi_{2s\rho_P})
\end{align*}
given as follows:
For any function $\phi \in \Cc(A_M N \backslash G, \delta_P^{\frac{1}{2}} \chi_{2s\rho_P})$, we define the section $\widetilde{\phi} \in i^G_P(\Cc(M_{\ad})\otimes \chi_{2s\rho_P})$ by
\begin{align*}
    (\widetilde{\phi}(g))(m)
     = (\delta_P^{\frac{1}{2}} \chi_{2s\rho_P}(m))^{-1} \phi(mg).
\end{align*}
The inverse map is given by 
\begin{align*}
    \psi \mapsto (g \mapsto (\psi(g))(1)).
\end{align*}

We see the function $\phi_{s, f}$ as the element of the right-hand side and we write it as $\widetilde{\phi_{s, f}}$.
We have 
\begin{align*}
    \widetilde{\phi_{s, f}}(\widetilde{w}^{-1}n)(1) 
    = 
    \phi_{s, f}(\widetilde{w}^{-1}n).
\end{align*}
This implies that 
\begin{multline*}
    \int_{n \in N \cap w\overline{N}w^{-1}} 
    \phi_{s, f}(\widetilde{w}^{-1}n)
    \rd_{\psi}n 
    = 
    \int_{n \in N \cap w\overline{N}w^{-1}} 
    \widetilde{\phi_{s, f}}(\widetilde{w}^{-1}n)(1) 
    \rd_{\psi}n  \\
    = 
    \int_{n \in N \cap w\overline{N}w^{-1}} 
    \lbrace
    \int_{\pi \in \Temp_{\ind}(M_{\ad})} 
    \tr \pi ( \widetilde{\phi_{s, f}}(\widetilde{w}^{-1}n)^{\vee} )
    \mu^{M_{\ad}}_{\psi}(\pi) \rd \pi
    \rbrace
    \rd_{\psi}n.
\end{multline*}

By \cref{theorem:formal-degree-conjecture-GSO2n}, we may and do take 
\begin{align*}
    \mu^{M_{\ad}}_{\psi}(\pi) 
    = 
    \mu^{M_{\ad}}_{\HII, \psi}(\pi).
\end{align*}

For any $\pi \in \Temp_{\ind}(M_{\ad})$ and $v \in \pi$, we define the section $\widetilde{\phi_{s, v}}$ of $i^G_{P}(\pi \otimes \chi_{2s \rho_P})$ by 
\begin{align*}
    \widetilde{\phi_{s, v}}(nm\overline{n}) 
    = 
    (\delta_P^{\frac{1}{2}} \chi_{2s\rho_P})(m)  \Psi(\overline{n}) \pi(m)v
\end{align*}
for $n m \overline{n} \in NM\overline{N}$ and take the zero extension to $G$.
Then, we have 
\begin{multline*}
    \int_{n \in N \cap w\overline{N}w^{-1}} 
    \lbrace
    \int_{\pi \in \Temp_{\ind}(M_{\ad})} 
    \tr \pi ( \widetilde{\phi_{s, f}}(\widetilde{w}^{-1}n)^{\vee} )
    \mu^{M_{\ad}}_{\psi}(\pi) \rd \pi
    \rbrace
    \rd_{\psi}n \\
    = 
    \int_{n \in N \cap w\overline{N}w^{-1}} 
    \lbrace
    \int_{\pi \in \Temp_{\ind}(M_{\ad})} 
    \tr ( v \in \pi \mapsto \widetilde{\phi_{s, \pi(f^{\vee})v}}(\widetilde{w}^{-1}n))
    \mu^{M_{\ad}}_{\psi}(\pi) \rd \pi
    \rbrace
    \rd_{\psi}n.
\end{multline*}

We note the following facts:
\begin{itemize}
    \item    
    The function $\mu^{M_{\ad}}_{\psi}(\pi)$ is smooth.
    \item    
    The integral 
    \begin{align*}
        \int_{n \in N \cap w\overline{N}w^{-1}}  
        \widetilde{\phi_{s, \pi(f^{\vee})v}}(\widetilde{w}^{-1}n)
        \rd_{\psi}n
    \end{align*}
    is equal to the unnormalized intertwining operator
    \begin{align*}
        (l(\widetilde{w})J_{w^{-1}Pw \vert P}(\pi \otimes \chi_{2s\rho_P}, \psi)\widetilde{\phi_{s, \pi(f^{\vee})v}})(1),
    \end{align*}
    which is compactly uniformly convergent in $(s, \pi) \in \C_+ \times \Temp_{\ind}(M_{\ad})$.
    Thus, we can interchange the order of two integrals.
    \item      
    The unnormalized intertwining operator is normalized by the factor 
    \begin{align*}
        r_{w^{-1}Pw \vert P}(\pi \otimes \chi_{2s\rho_P})^{-1}.
    \end{align*}
    The normalized operator
    \begin{align*}
         r_{w^{-1}Pw \vert P}(\pi \otimes \chi_{2s\rho_P})^{-1}
        l(\widetilde{w}) J_{w^{-1}Pw \vert P}(\pi \otimes \chi_{2s\rho_P}, \psi)
    \end{align*}
    is denoted by $R_{P}(w, \pi \otimes \chi_{2s\rho_P}, \psi)$.
    This operator is holomorphic in $s$ and analytic in $\pi \in \Temp_{\ind}(M_{\ad})$.
\end{itemize}
Thus, the last expression is equal to 
\begin{align*}
    \int_{\pi \in \Temp_{\ind}(M_{\ad})} 
    \tr(v \in \pi \mapsto R(w, \pi \otimes \chi_{2s\rho_P}, \psi) \widetilde{\phi_{s, \pi(f^{\vee})v}} (1)) \\
    \times 
    r_{w^{-1}Pw \vert P}(\pi \otimes \chi_{2s\rho_P}) \mu^{M_{\ad}}_{\psi}(\pi) \rd \pi.
\end{align*}
We give the summary of this computation.
\begin{lemma}\label{lemma:summary}
    We have 
    \begin{multline*}
        \lim_{s \to 0+}
        \gamma(s, \mathbf{1}, \psi)^2
        \int_{n \in N \cap w\overline{N}w^{-1}} 
        \phi_{s, f}(\widetilde{w}^{-1}n)
        \rd_{\psi}n = 
        \lim_{s \to 0+}
        \gamma(s, \mathbf{1}, \psi)^2 \\
        \times 
        \int_{\pi \in \Temp_{\ind}(M_{\ad})} 
        \tr(v \in \pi \mapsto R_{P}(w, \pi \otimes \chi_{2s\rho_P}, \psi) \widetilde{\phi_{s, \pi(f^{\vee})v}} (1)) \\
        \times
        r_{w^{-1}Pw \vert P}(\pi \otimes \chi_{2s\rho_P} )\mu^{M_{\ad}}_{\psi}(\pi)\rd \pi.
    \end{multline*}
\end{lemma}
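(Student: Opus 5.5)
The statement in question is \cref{lemma:summary}, which packages the preceding chain of equalities. I would prove it by verifying each step of that chain for fixed $s>0$, and then passing to the limit. The limit itself needs no separate argument: both sides carry the same factor $\gamma(s,\mathbf{1},\psi)^2$, so it suffices to prove the equality of the integrals for each fixed $s\in\C_+$ with $\mathrm{Re}(s)>0$ small.

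\textbf{Step 1: Plancherel inversion at a fixed point.} Fix $n\in N\cap w\overline{N}w^{-1}$. Under the isomorphism $\Cc(A_MN\backslash G,\delta_P^{1/2}\chi_{2s\rho_P})\xrightarrow{\sim} i^G_P(\Cc(M_{\ad})\otimes\chi_{2s\rho_P})$, the value $\widetilde{\phi_{s,f}}(\widetilde{w}^{-1}n)$ is a function in $\Cc(M_{\ad})$. Its value at $1$ equals $\phi_{s,f}(\widetilde{w}^{-1}n)$.
\begin{itemize}
\item I would apply the Harish-Chandra Plancherel formula on $M_{\ad}$ to this function, with density $\mu^{M_{\ad}}_{\psi}=\mu^{M_{\ad}}_{\HII,\psi}$ by \cref{theorem:formal-degree-conjecture-GSO2n}.
\item I would then rewrite $\tr\pi(\widetilde{\phi_{s,f}}(\widetilde{w}^{-1}n)^{\vee})$ as the trace of $v\mapsto\widetilde{\phi_{s,\pi(f^{\vee})v}}(\widetilde{w}^{-1}n)$. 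This is a direct unwinding on the big cell $NM\overline{N}$: the $M$-component $m$ of $\widetilde{w}^{-1}n$ contributes $\pi(m)\pi(f^\vee)$ against $f(m\,\cdot)$, and the factor $\Psi(\overline{n})$ is common to both sides. Off the big cell both sides vanish.
\end{itemize}

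\textbf{Step 2: Interchange of integrals (the main obstacle).} One has to justify exchanging $\int_n$ with $\int_\pi$. I would bound the integrand in three ways.
\begin{itemize}
\item Only finitely many connected components of $\Temp_{\ind}(M_{\ad})$ contribute, namely those whose members have nonzero $K_M$-fixed vectors for an open compact $K_M$ fixing $f$. Each such component is compact.
\item The trace is over a $K_M$-fixed space of bounded dimension.
\item For each $\pi$, $\int_n \widetilde{\phi_{s,v}}(\widetilde{w}^{-1}n)\,\rd_\psi n$ is the convergent integral defining $J_{w^{-1}Pw\vert P}(\pi\otimes\chi_{2s\rho_P})$. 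That integral converges absolutely, uniformly on compacta of $\C_+\times\Temp_{\ind}(M_{\ad})$, since $\mathrm{Re}(2s\rho_P)$ lies in the positive chamber for $\mathrm{Re}(s)>0$.
\end{itemize}
Together with smoothness of $\mu^{M_{\ad}}_{\psi}$ (\cref{remark:reduction-to-G}), these bounds justify Fubini.

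\textbf{Step 3: Normalization.} After the interchange, the inner integral is $(l(\widetilde{w})J_{w^{-1}Pw\vert P}(\pi\otimes\chi_{2s\rho_P})\widetilde{\phi_{s,\pi(f^\vee)v}})(1)$. I would factor $J=r_{w^{-1}Pw\vert P}\,R$, with the normalizing factor from \cref{proposition:normalizing-factor-for-s1s6}. This gives the stated integrand $\tr(v\mapsto R_P(w,\pi\otimes\chi_{2s\rho_P},\psi)\widetilde{\phi_{s,\pi(f^\vee)v}}(1))\,r_{w^{-1}Pw\vert P}\,\mu^{M_{\ad}}_\psi$.

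Finally, I would multiply by $\gamma(s,\mathbf{1},\psi)^2$ and take $s\to0+$. The two sides agree for every small $s>0$, so the limits agree, in the sense that if one exists so does the other.
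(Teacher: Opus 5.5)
Your proposal is correct and follows the paper's argument essentially step for step. Both proofs apply Plancherel inversion on $M_{\ad}$ to $\widetilde{\phi_{s,f}}(\widetilde{w}^{-1}n)$, rewrite the trace through the sections $\widetilde{\phi_{s,\pi(f^\vee)v}}$, interchange the $n$- and $\pi$-integrals using absolute convergence of the intertwining integral for $\mathrm{Re}(s)>0$, and factor $J_{w^{-1}Pw\vert P}=r_{w^{-1}Pw\vert P}\,R$. Your domination argument for Fubini (finitely many compact components carrying $K_M$-fixed vectors, uniformly bounded rank, absolute convergence of the intertwining integral) simply spells out what the paper compresses into ``compactly uniformly convergent''. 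The one slip is a citation: the smoothness, and hence local boundedness, of $\mu^{M_{\ad}}_{\psi}$ comes from Harish-Chandra's Plancherel theorem, not from \cref{remark:reduction-to-G}.
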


The following computation of the limit is the key to prove our main theorem:
\begin{theorem}\label{theorem:key-computation}
     There exists a smooth function $c(\phi^H)$ on the space  $\Temp_{\ind}(H)/\sim_{st}$ which takes its value in $\bS^{1}$ and satisfying the following condition:
     Let $\Phi$ be a compactly supported smooth function on $\Temp_{\ind}(G)$.
     Then, we have 
     \begin{multline*}
        \lim_{s \to 0+}
        \gamma(s, \mathbf{1}, \psi)^2
        \int_{\pi \in \Temp_{\ind}(M_{\ad})} 
        \Phi(\pi)    
        r_{w^{-1}Pw \vert P}(\pi \otimes \chi_{2s\rho_P} )\mu^{M_{\ad}}_{\psi}(\pi)
        \rd \pi \\
        = 
        \frac{1}{512}
        \int_{\phi^H \in \Temp_{\ind}(H)/\sim_{st}} 
        \lbrace
        \sum_{\pi \in \Pi^X_{\phi^H}} 
        \Phi(\pi)    
        \rbrace
        c(\phi^H)  
        \mu^H_{\HII, \psi}(\phi^H)
        \rd \phi^H.
     \end{multline*}
\end{theorem}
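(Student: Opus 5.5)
The plan is to localize on $\Temp_{\ind}(M_{\ad})$ and study the function
\[
\gamma(s,\mathbf{1},\psi)^2\, r_{w^{-1}Pw\vert P}(\pi\otimes\chi_{2s\rho_P})
\]
as a family of kernels that converges, as $s\to 0+$, to a measure supported on the distinguished locus. By a partition of unity, it suffices to treat $\Phi$ supported in a small neighbourhood $U$ of a point $\pi_0=i^{M_{\ad}}_{L/A_M}(\sigma_0)$ of one connected component. On $U$ I use the coordinates $\lambda$ of \cref{eq:parameter-standard}.

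\cref{proposition:normalizing-factor-for-s1s6} together with \cref{remark:replace-arthur-by-artin} gives, up to a smooth unit,
\[
r_{w^{-1}Pw\vert P}(\pi\otimes\chi_{2s\rho_P})^{-1}\sim\gamma(16s,\phi^{\GL_8}_{\pi},\psi)\,\gamma(8s,\phi^{\GL_8}_{{}^{s_1}\pi},\psi).
\]
Since $\gamma(s,\mathbf{1},\psi)$ has a simple zero at $s=0$, the integrand tends to $0$ locally uniformly wherever one of the two standard $\gamma$-factors is nonzero at $s=0$.

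\textbf{Step 1: where the mass concentrates.} By the previous paragraph, all mass concentrates on the locus where both $\gamma(0,\phi^{\GL_8}_\pi,\psi)$ and $\gamma(0,\phi^{\GL_8}_{{}^{s_1}\pi},\psi)$ vanish. By \cref{proposition:construction-of-distinguished-Xu-packet}, this is exactly the union of the distinguished Xu packets $\Pi^X_{\phi^H}$, and by \cref{proposition:smoothness-of-endoscopic-lift} it is the smooth image of $\Temp_{\ind}(H)/\sim_{st}$.

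\textbf{Step 2: the transverse limit.} Near a point of this locus, \cref{remark:standard-poles} describes each standard $\gamma$-factor as $s^{|J_{\triv}|}\prod_{i\in I_{\triv}}\prod_k(\lambda_k^2+s^2)$ times a nonvanishing smooth function. Then $\gamma(s,\mathbf{1},\psi)^2 r$ is a product of a constant and Poisson-type kernels such as
\[
\frac{16s}{(16s)^2+\lambda_k^2},\qquad \frac{8s}{(8s)^2+\lambda_k^2},
\]
in the directions transverse to the locus. These kernels converge to multiples of Dirac masses.

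In the discrete case (1), the factor $\mathbf{1}$ occurs in $\phi^{\GL_8}$ with multiplicity one. Hence no Poisson kernel appears, and the limit is just the constant coming from $\lim_{s\to 0}\gamma(s,\mathbf{1},\psi)^2/(\gamma(16s,\mathbf{1},\psi)\gamma(8s,\mathbf{1},\psi))$, which is $1/128$ up to sign. In cases (2)–(5) I will check case by case, using the explicit Levi data of \cref{proposition:description-of-endoscopic-lift-case-2}–\cref{proposition:description-of-endoscopic-lift-case-5}, that the combination of Dirac constants and the change of measure from $\rd\pi$ to $\rd\phi^H$ (Weyl group orders $|W(M_{\ad},L/A_M)|$ versus $|W(H,I)|$, and Jacobians of the lift of \cref{proposition:smoothness-of-endoscopic-lift}) produces the uniform constant $1/512$. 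I expect the extra factor $1/4$ to come from these Weyl group and Jacobian ratios. The Dirac limits pass through the integral by dominated convergence, which is justified because $\Phi$ and $\mu^{M_{\ad}}_{\psi}$ are smooth by \cref{theorem:formal-degree-conjecture-GSO2n} and \cref{corollary:adjoint-L-abs-value}.

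\textbf{Step 3: identifying the density.} On the locus, I compare densities using the decomposition
\[
\Ad_{M_{\ad}}\circ\iota^{M_{\ad}}_H=\Ad_H\oplus\Std_H\oplus\Std_H\quad(28=14+7+7)
\]
and $\Std_{\PGSO_8}\circ\iota^{M_{\ad}}_H=\Std_H\oplus\mathbf{1}$. These give
\[
\gamma^*(0,\Ad_{M_{\ad}})=\gamma^*(0,\Ad_H)\,\gamma^*(0,\Std_H)^2 .
\]
The two $\Std_H$-factors vanish to exactly the order of the transverse zeros: they form part of the $P(\lambda)^2$ of \cref{lemma:adjoint-L-explicit}. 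This vanishing is precisely what the Poisson kernels absorb, and the leftover smooth quotient of absolute values is a function $c(\phi^H)\in\bS^1$ in the sense of \cref{corollary:adjoint-L-abs-value}.

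It remains to match the arithmetic factors. The $\langle\pi,1\rangle$ are $1$ for $\PGSO_8$. The sum over $\pi\in\Pi^X_{\phi^H}$ weighted by $1/|S^\natural_\pi|$ must equal $\sum_{\pi^H}\dim\rho_{\pi^H}^2/|S_{\phi^H}|$, i.e.\ it must reproduce $\mu^H_{\HII,\psi}$. I plan to verify this from the packet sizes computed in \cref{proposition:description-of-endoscopic-lift-case-2}–\cref{proposition:description-of-endoscopic-lift-case-5}, \cref{lemma:comparison-S-natural-groups}, and the fact that $S^\natural_{\phi^H}$ is the $\tau$-fixed part of $S^\natural_{\phi^{M_{\ad}}}$.

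\textbf{Main obstacle.} I expect the hardest step to be Step 2 combined with Step 3 at non-generic points of the locus: where several $\lambda_k$ collide, or where $\phi_i$ is self-dual, the kernels and the factor $P(\lambda)^2$ must cancel exactly and leave a constant independent of the case. My first approach there is to prove the formula only on the dense open set of \cref{theorem:generic-irreducibility-functorial-lifts-G2}, and then extend it using that the complement is $\rd\phi^H$-null and that all quantities involved are smooth.
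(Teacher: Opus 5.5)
There is a genuine gap, and it sits exactly where you place the main obstacle; your proposed workaround does not close it. The architecture itself matches the paper's: localize, show non-distinguished points contribute nothing, let Poisson-type kernels concentrate the mass on the image of $\Temp_{\ind}(H)/\sim_{st}$, and read off $c=Q/\abs{Q}$ from $\Ad_{M_{\ad}}\circ\iota^{M_{\ad}}_H=\Ad_H\oplus\Std_H^{\oplus 2}$.

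Consider a distinguished point $\pi_0$ whose standard parameter contains $\mathbf{1}$ with multiplicity $\abs{I_{\triv}}=2m>2$. This happens in Case (4) with $\det\phi^I_0=\mathbf{1}$ and in Case (5) with some $\chi_i=\mathbf{1}$. There the kernel $\gamma(s,\mathbf{1},\psi)^2 r_{w^{-1}Pw\vert P}(\pi_\lambda\otimes\chi_{2s\rho_P})$ is, up to a unit, $s^2/(R_{16s}(\cW\lambda)R_{8s}(s_1\cW\lambda))$, with $2m$ quadratic factors in the denominator. It is not a product of two Poisson kernels transverse to one sheet. Instead it concentrates on several Weyl translates of the distinguished sheet through $\pi_0$.

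Proving the formula on a dense open part of the locus and noting that the complement is $\rd\phi^H$-null says nothing about mass of $\gamma(s,\mathbf{1},\psi)^2 r\,\mu^{M_{\ad}}_{\psi}\,\rd\pi$ piling up near those points as $s\to 0+$. Smoothness of $\Phi$ and $\mu^{M_{\ad}}_\psi$ does not help. Poisson kernels have no integrable majorant independent of $s$, so dominated convergence is the wrong tool. The weak limit needs the cofactor of the kernels to extend continuously across $\pi_0$, and that is precisely what is unclear there. Also, the dense set of \cref{theorem:generic-irreducibility-functorial-lifts-G2} concerns irreducibility of $i^G_P(\pi)$, not collisions of trivial constituents.

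The paper supplies the missing uniform control in three steps.
1. Since $(\varpi(\cW\lambda)^2+s^2)-(\varpi'(\cW\lambda)^2+s^2)$ is independent of $s$, \cref{lemma:residue-theorem} splits $1/(R_{16s}R_{8s})$ into a sum over pairs $([\varpi],[\varpi'])$ of two Poisson kernels times $s$-independent rational functions.
2. \cref{lemma:transitivity} moves every pair to a single one, producing the multiplicity $\abs{w(\phi^H_0)/\{\pm1\}}^2$.
3. \cref{lemma:regularity}, which uses the $s_1$-invariance of the adjoint $\gamma$-factor, shows that $P(\pi_\lambda)^2/(P_{[\varpi]}(\cW\lambda)P_{[\varpi]}(s_1\cW\lambda))$ is a polynomial. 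So each term is two Poisson kernels times a function smooth near $\pi_0$.

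Your bookkeeping also needs correcting.
1. In Cases (1) and (3), $\abs{J_{\triv}}=1$, no Poisson kernel appears, and $\theta$ is locally measure preserving. The $\frac14$ in $\frac{1}{512}=\frac{1}{128}\cdot\frac14$ comes from $\abs{S^{\natural}_{\phi^{L/A_M}}}=4\abs{S^{\natural}_{\phi^I}}$, not from Weyl or Jacobian ratios.
2. In Cases (2), (4), (5), the $\frac14$ comes from the Dirac constants $\frac{\pi}{16}\cdot\frac{\pi}{8}$ against the factor $(\log q/2\pi)^{2}$ in the measure normalization. The Weyl-stabilizer and $S$-group orders, together with the multiplicity above, must then match exactly; this is \cref{lemma:comparison-Weyl-and-S-group}, proved case by case.
3. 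Your Step 3 target is misformulated. The left-hand density is constant on Xu packets and $\Phi$ is arbitrary, so $\sum_{\pi\in\Pi^X_{\phi^H}}\Phi(\pi)$ passes unchanged to the right-hand side. No identity involving representations of $H$ or $\dim\rho_{\pi^H}$ is needed; those enter only later, through \cref{conjecture:endoscopic-character-relation}.
4. At non-distinguished points the integrand does not tend to $0$ locally uniformly. If the singular factor has $\abs{J_{\triv}}=0$ and $\abs{I_{\triv}}=1$, it behaves like $s^2/(\lambda^2+(16s)^2)$, which equals $1/256$ at $\lambda=0$. The vanishing of the limit needs the cancellation against $P(\lambda)^2$ or the partial-fraction argument of \cref{proposition:non-distinguished}.
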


\begin{proof}[Proof that \cref{theorem:key-computation} implies \cref{theorem:spectral-limit-formula}]
    We apply \cref{theorem:canonical-extension-PGSO8-S3} which states that
    \begin{align*}
        R(w^{-1}, \pi, \psi) = i^G_P(\widetilde{\pi}(w))
    \end{align*}
    for the canonical extension $\widetilde{\pi}$ of $\pi$.
    Thus, we have
    \begin{align*}
        R_{P}(w, \pi) \widetilde{\phi_{0, \pi(f^{\vee})v}} (1)
        = 
        i^G_P(\widetilde{\pi}(w^{-1})) \widetilde{\phi_{0, \pi(f^{\vee})v}} (1)
        = 
        \widetilde{\pi}(w^{-1}) \pi(f^{\vee}) v
    \end{align*}
    for any $\pi \in \Temp_{\ind}(M_{\ad})$ and any $v \in \pi$.
    By \cref{lemma:summary} and \cref{theorem:key-computation}, we have 
    \begin{multline*}
        \lim_{s \to 0+}
        \gamma(s, \mathbf{1}, \psi)^2
        \int_{n \in N \cap w\overline{N}w^{-1}} 
        \phi_{s, f}(\widetilde{w}^{-1}n)
        \rd_{\psi}n  \\
        = 
        \int_{\phi^H \in \Temp_{\ind}(H)/\sim_{st}} 
        \lbrace
        \sum_{\pi \in \Pi^X_{\phi^H}} 
        \tr(\widetilde{\pi}(w^{-1}) \pi(f^{\vee}))
        \rbrace
        c(\phi^H)  
        \mu_{\HII, \psi}^H(\phi^H)
        \rd \phi^H.
    \end{multline*}
    This completes the proof.
\end{proof}

\begin{remark}\label{remark:localization}
   By taking partitions of unity, we can reduce the computation of the limit for general functions $\Phi$ in \cref{theorem:key-computation}  to the computation for functions supported in a small neighborhood of each point $\pi \in \Temp_{\ind}(M_{\ad})$.
   In particular, we may and do assume that $\Phi$ is supported in a small neighborhood of a representation $\pi_0 \in \Temp_{\ind}(M_{\ad})$.
\end{remark}

\subsubsection{Proof of \cref{theorem:key-computation} for non-distinguished cases}

\begin{lemma}\label{lemma:residue-theorem}
    As meromorphic functions in the variables $\alpha_1, \dots, \alpha_n$, we have 
    \begin{align*}
        \frac{1}{\prod_{i=1}^{n} \alpha_i} 
        = 
        (-1)^{n-1} 
        \sum_{i=1}^n 
        \frac{1}{\alpha_i} 
        \times    
        \frac{1}{\prod_{j, j \neq i} (\alpha_i-\alpha_j)}.  
    \end{align*}
\end{lemma}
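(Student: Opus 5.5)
This is the classical partial fraction identity, and I would prove it by induction on $n$, or more directly by a residue argument in one variable. The case $n=1$ is trivial: the right-hand side is $1/\alpha_1$ and the empty product is $1$.

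For the direct argument I fix generic values of $\alpha_2,\dots,\alpha_n$ (pairwise distinct and nonzero). I then compare both sides as rational functions of a single variable, treating all variables symmetrically. Consider the rational function in an auxiliary variable $z$:
\begin{align*}
    R(z) = \frac{1}{z \prod_{j=1}^{n} (z-\alpha_j)}.
\end{align*}
Its simple poles are at $z=0$ and at $z=\alpha_i$, and it decays like $z^{-n-1}$ at infinity. Hence the sum of its residues is zero. The residue at $z=0$ is
\begin{align*}
    \frac{1}{\prod_j(-\alpha_j)} = \frac{(-1)^n}{\prod_j \alpha_j}.
\end{align*}
The residue at $z=\alpha_i$ is
\begin{align*}
    \frac{1}{\alpha_i\prod_{j\neq i}(\alpha_i-\alpha_j)}.
\end{align*}
Summing to zero and multiplying by $(-1)^{n-1}$ yields exactly the claimed identity on the generic locus.

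Since both sides are rational functions, agreement on a Zariski dense set gives equality as meromorphic functions. Because this is a purely formal residue computation, I expect no real obstacle; the only thing to check carefully is the sign bookkeeping $(-1)^n$ versus $(-1)^{n-1}$. As a cross-check I would verify the case $n=2$ by hand:
\begin{align*}
    -\frac{1}{\alpha_1(\alpha_1-\alpha_2)}-\frac{1}{\alpha_2(\alpha_2-\alpha_1)} = \frac{-\alpha_2+\alpha_1}{\alpha_1\alpha_2(\alpha_1-\alpha_2)} = \frac{1}{\alpha_1\alpha_2}.
\end{align*}
This agrees with the left-hand side.

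Alternatively, one can avoid complex analysis entirely by a partial fraction decomposition in $z$ over the field $\C(\alpha_1,\dots,\alpha_n)$. In that version the degree condition replaces the vanishing of the residue at infinity.
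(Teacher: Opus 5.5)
Your argument is correct and is essentially the paper's proof. The paper expands $1/\prod_{i}(z-\alpha_i)$ by the residue theorem as $\sum_i \frac{1}{z-\alpha_i}\prod_{j\neq i}(\alpha_i-\alpha_j)^{-1}$ and evaluates at $z=0$. This is the same identity as your vanishing sum of residues of $R(z)$, because the extra factor $1/z$ turns evaluation at $0$ into the residue at $0$. One small wording slip: the genericity you need is that all of $\alpha_1,\dots,\alpha_n$ are pairwise distinct and nonzero, not only $\alpha_2,\dots,\alpha_n$.
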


\begin{proof}
    Let $\alpha_1, \dots, \alpha_n$ be pairwise distinct nonzero complex numbers.
    We consider the meromorphic function 
    \begin{align*}
        \frac{1}{\prod_{i = 1}^{n} (z-\alpha_i)}. 
    \end{align*}
    By the residue theorem, this function can be written as 
    \begin{align*}
        \sum_{i = 1}^{n} \frac{1}{z-\alpha_i} 
        \times    
        \frac{1}{\prod_{i \neq j}(\alpha_i - \alpha_j)}.
    \end{align*} 
    Evaluating both functions at $z=0$, we obtain the result.
\end{proof}

\begin{proposition}\label{proposition:non-distinguished}
    Let $\pi_0 \in \Temp_{\ind}(M_{\ad})$ be an induced representation of $M_{\ad}$ such that $\gamma(0, \pi_0, \Std, \psi) \neq 0$ or $\gamma(0, {}^{s_1}\pi_0, \Std, \psi) \neq 0$.
    Then, \cref{theorem:key-computation} holds for $\Phi$ supported on a sufficiently small open neighborhood of $\pi_0$.
\end{proposition}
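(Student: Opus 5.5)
Near a non-distinguished $\pi_0$ both sides of \cref{theorem:key-computation} should vanish, and the plan is to show this. On the right-hand side I would first check that if $\Phi$ is supported in a small enough neighborhood $U$ of $\pi_0$, then $U$ contains no member of a distinguished Xu packet $\Pi^X_{\phi^H}$. By \cref{proposition:construction-of-distinguished-Xu-packet}, the distinguished packets are exactly the tempered representations satisfying \eqref{eq:condition-distinguished-packet}. The functions $\pi\mapsto\gamma(0,\pi,\Std,\psi)$ and $\pi\mapsto\gamma(0,{}^{s_1}\pi,\Std,\psi)$ are continuous on $\Temp_{\ind}(M_{\ad})$: on each connected component they are given by the parameter in \cref{eq:parameter-standard}, and the vanishing is a closed condition. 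Since one of them is nonzero at $\pi_0$, it stays nonzero on $U$, so the right-hand side is $0$. It therefore suffices to prove that the left-hand limit is $0$.

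By \cref{proposition:normalizing-factor-for-s1s6} and \cref{remark:replace-arthur-by-artin}, for $\pi$ in $U$ we have
\begin{align*}
r_{w^{-1}Pw\vert P}(\pi\otimes\chi_{2s\rho_P}) = \epsilon\cdot\gamma_A(16s,\pi,\Std,\psi)^{-1}\gamma_A(8s,{}^{s_1}\pi,\Std,\psi)^{-1},
\end{align*}
where the $\epsilon$-factors are holomorphic and nonvanishing. Using \cref{remark:standard-poles} in local coordinates $\lambda$ around $\pi_0$, each $\gamma(t,\cdot,\Std,\psi)$ equals
\begin{align*}
t^{|J_{\triv}|}\prod_{i\in I_{\triv}}\prod_{k}(\lambda_{k}^2+t^2)\,R(t,\lambda)
\end{align*}
with $R(0,0)\neq0$. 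The hypothesis says that for at least one of $\pi$, ${}^{s_1}\pi$ the corresponding factor has no zero, i.e.\ $J_{\triv}=I_{\triv}=\emptyset$. That factor's reciprocal is then bounded, so $r$ has at most one singular factor, of the form $(16s)^{-a}\prod_k(\lambda_k^2+c^2s^2)^{-1}$ with $a\le1$. Meanwhile $\gamma(s,\mathbf 1,\psi)^2\sim C s^2$ as $s\to0$. The Plancherel density $\mu^{M_{\ad}}_{\HII,\psi}$ is smooth (\cref{theorem:formal-degree-conjecture-GSO2n}), and by \cref{lemma:adjoint-L-explicit} it vanishes to second order along the $\lambda_k$ with $\phi_i=\mathbf 1$, via the factor $\prod\lambda_{k}^2$ attached to $I_{\triv}\subset I_o$.

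The main step, and the main obstacle, is to show that
\begin{align*}
s^2\int \Phi(\lambda)\,\frac{\mu(\lambda)}{s^{a}\prod_{k=1}^m(\lambda_k^2+c^2s^2)}\,d\lambda\to0.
\end{align*}
After the substitution $\lambda_k=s u_k$, each factor $(\lambda_k^2+c^2s^2)^{-1}d\lambda_k$ contributes $s^{-1}$ times an integrable function. However, the factor $\lambda_k^2$ hidden in $\mu$ (together with $\prod(\lambda_{k^1}^2-\lambda_{k^2}^2)^2$) kills these contributions; more simply, $\lambda_k^2/(\lambda_k^2+c^2s^2)\le1$. Hence the integrand is bounded by $s^{2-a}\sup|\Phi\mu/\prod\lambda_k^2|$, which tends to $0$ because $a\le1$, and dominated convergence finishes the argument. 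I would use \cref{lemma:residue-theorem} only if there are cross terms between the two singular factors, and in the present case there is just one. The care needed is in confirming that the order of vanishing of $\mu$ at each $\lambda_k$ is at least the order of the pole of $r$ there. This should follow from matching the index sets $I_{\triv}$ in \cref{remark:standard-poles} against the factors $\prod_{i\in I_o}$ and $\prod_{(i,j),\phi_i=\phi_j}$ in \eqref{eq:adjoint-L-explicit}. The analogous bookkeeping for ${}^{s_1}\pi$ goes through the isomorphism of $M_{\ad}$ induced by $s_1$, which preserves the Plancherel density.
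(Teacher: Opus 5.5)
Your argument that the right-hand side vanishes near $\pi_0$ is correct; the paper leaves this step implicit. Your treatment of the case $\abs{J_{\triv}}=1$ is also correct and matches the paper. The gap is in the case $\abs{J_{\triv}}=0$.

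\textbf{The false claim.} You assert that $\mu^{M_{\ad}}_{\HII,\psi}$ contains a factor $\prod_k\lambda_k^2$ for the trivial block, ``attached to $I_{\triv}\subset I_o$''. This misreads \cref{eq:adjoint-L-explicit}. The factors $\lambda_k^2$ there come only from three sources: from $I_s$, from $I_{nd}$, and from cross terms $(i,j)\in I\times J$ with $\phi_i=\phi_j$. The inclusion $I_{\triv}\subset I_o$ says exactly that a trivial block contributes \emph{no} such factor through $\wedge^2\phi_i$, since $\wedge^2\mathbf{1}=0$. So a trivial block gets a $\lambda_k^2$ only from the cross term with $\phi_j=\mathbf{1}$, which exists only when $\abs{J_{\triv}}=1$.

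\textbf{Why your bound fails.} When $\abs{J_{\triv}}=0$, the only vanishing of $\mu$ in the trivial variables is the Vandermonde factor $\prod_{k^1<k^2}(\lambda_{k^1}^2-\lambda_{k^2}^2)^2$. With a single trivial pair one even has $\mu(\pi_0)\neq 0$. Your bound $s^{2-a}\sup\abs{\Phi\mu/\prod_k\lambda_k^2}$ is then infinite. For $m=1$ this is harmless, since $\int(\lambda^2+c^2s^2)^{-1}\,d\lambda=O(s^{-1})$. For $m\geq 2$ trivial pairs it is not: your own substitution $\lambda_k=su_k$ shows that, without cancellation, the integral has size $s^{2-m}$, which does not tend to $0$. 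Such $m\geq2$ can occur here while ${}^{s_1}\pi_0$ stays non-distinguished, e.g.\ for suitable unitary principal series.

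\textbf{The missing step.} This is where the paper uses \cref{lemma:residue-theorem}. It is not needed for cross terms between the two $\gamma$-factors, as you suggest; it is needed inside the single singular factor $\prod_{k=1}^m(\lambda_k^2+(16s)^2)^{-1}$. Up to sign, the integrand equals
\begin{align*}
\frac{\prod_{k^1\neq k^2}(\lambda_{k^1}^2-\lambda_{k^2}^2)}{\prod_{k}(\lambda_k^2+(16s)^2)}\,\Phi(\pi)S(s,\lambda)
=(-1)^{m-1}\sum_{k}\frac{1}{\lambda_k^2+(16s)^2}\cdot\frac{\prod_{k^1\neq k^2}(\lambda_{k^1}^2-\lambda_{k^2}^2)}{\prod_{j\neq k}(\lambda_k^2-\lambda_j^2)}\,\Phi(\pi)S(s,\lambda).
\end{align*}
Here $S$ is smooth near $(0,0)$, and each second fraction is a polynomial. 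So every term has a single singular factor $(\lambda_k^2+(16s)^2)^{-1}$ and integrates to $O(s^{-1})$; after multiplying by $s$, it converges to a multiple of $\delta_0(\lambda_k)$. The prefactor $\gamma(s,\mathbf{1},\psi)^2=O(s^2)$ then forces the limit to be $0$. With this replacing your domination bound in the case $\abs{J_{\triv}}=0$, the rest of your proof goes through.
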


\begin{proof}
    By \cref{proposition:normalizing-factor-for-s1s6}, 
    we have 
    \begin{multline*}
        r_{w^{-1}Pw \vert P}(\pi \otimes \chi_{2s\rho_P}) \\
        =
        \epsilon(\frac{1}{2} + 16s, \pi, \Std_{\GSO_8}, \psi)
        \epsilon(\frac{1}{2} + 8s, {}^{s_1}\pi, \Std_{\GSO_8}, \psi) \\
        \times
        \gamma_A(16s, \pi, \Std_{\GSO_8}, \psi)^{-1} 
        \gamma_A(8s, {}^{s_1}\pi, \Std_{\GSO_8}, \psi)^{-1}.
    \end{multline*} 
    By the symmetry under $\pi \mapsto {}^{s_1}\pi$, we may assume $\gamma(0, {}^{s_1}\pi_0, \Std, \psi) \neq 0$.
    We use the notation in the proof of \cref{lemma:adjoint-L-explicit} and \cref{remark:standard-poles}.
    In particular, we set $\phi^{\GL_{8}}_0 = \phi^{\GL_{8}}_{\pi_0}$.
    We also set $m=\abs{I_{\triv}}$.

    We first assume $\abs{J_{\triv}} = 1$.
    Then, using the explicit description of regularized $\gamma$-factors in the proof of \cref{lemma:adjoint-L-explicit} and \cref{remark:standard-poles} with \cref{remark:replace-arthur-by-artin}, we see that if $\phi^{\GL_{8}}_{\pi} = \phi^{\GL_{8}}_{\lambda}$, then the integrand on the left-hand side of \cref{theorem:key-computation} is equal to 
    \begin{align*}
        \frac{1}{s}
        \prod_{i \in I_{\triv}} \frac{\lambda_i^2 }{\lambda_i^2 + (16s)^2} 
        \Phi(\pi)
        S(s, \lambda) 
    \end{align*} 
    for some smooth function $S(s, \lambda)$ at $(s, \lambda) = (0, 0)$. 
    As we have
    \begin{align*}
    \gamma(s, \mathbf{1}, \psi) = \frac{1-q^{-s}}{1-q^{-(1-s)}},
    \end{align*}
    and \cref{remark:replace-arthur-by-artin}, the resulting limit
    \begin{align*}
        \lim_{s \to 0+}
        \gamma(s, \mathbf{1}, \psi)^2
        \int_{\pi \in \Temp_{\ind}(M_{\ad})} 
        \Phi(\pi)    
        r_{w^{-1}Pw \vert P}(\pi \otimes \chi_{2s\rho_P} )
        \mu^{M_{\ad}}_{\psi}(\pi)\rd \pi 
    \end{align*}
    is zero if $\Phi$ is supported on a sufficiently small neighborhood of $\pi_0$.
    In fact, the limit 
    \begin{align*}
        \lim_{s \to 0+}
        \gamma(s, \mathbf{1}, \psi)
        \int_{\pi \in \Temp_{\ind}(M_{\ad})} 
        \Phi(\pi)    
        r_{w^{-1}Pw \vert P}(\pi \otimes \chi_{2s\rho_P} )
        \mu^{M_{\ad}}_{\psi}(\pi)\rd \pi 
    \end{align*}
    exists.

    We now assume $\abs{J_{\triv}} = 0$.
    Then, the integrand on the left-hand side of \cref{theorem:key-computation} is equal to 
    \begin{align*}
        \frac{\prod_{(i_1, i_2) \in I_{\triv}^2, i_1 \neq i_2}  
        (\lambda_{i_1}^2 - \lambda_{i_2}^2)}{\prod_{i \in I_{\triv}}  (\lambda_i^2 + (16s)^2)} 
        \Phi(\pi)
        S(s, \lambda) 
    \end{align*} 
    for some smooth function $S(s, \lambda)$ at $(s, \lambda) = (0, 0)$. 
    By \cref{lemma:residue-theorem}, this expression is equal to 
    \begin{align*}
         (-1)^{\abs{I_{\triv}}-1} 
         \sum_{i \in I_{\triv}}   
         \frac{1}{\lambda_i^2 + (16s)^2} \times 
         \frac{\prod_{(i_1, i_2) \in I_{\triv}^2, i_1 \neq i_2}  
         (\lambda_{i_1}^2 - \lambda_{i_2}^2)}
         {\prod_{j \in I_{\triv},  j \neq i} (\lambda_i^2 - \lambda_j^2)}
         \Phi(\pi) S(s, \lambda).
    \end{align*}
    Note that the factor 
    \begin{align*}
        \frac{\prod_{(i_1, i_2) \in I_{\triv}^2, i_1 \neq i_2}  
         (\lambda_{i_1}^2 - \lambda_{i_2}^2)}
         {\prod_{j \in I_{\triv},  j \neq i} (\lambda_i^2 - \lambda_j^2)}
         \Phi(\pi) S(s, \lambda)
    \end{align*}
    is smooth.
    As the distribution 
    \begin{align*}
           \frac{s}{\lambda_i^2 + (16s)^2} 
    \end{align*}
    converges to the delta distribution
    \begin{align*}
       \frac{1}{16} \pi \delta_0(\lambda_i)
    \end{align*}
    as $s \to 0+$, the resulting limit
    \begin{align*}
        \lim_{s \to 0+}
        \gamma(s, \mathbf{1}, \psi)^2
        \int_{\pi \in \Temp_{\ind}(M_{\ad})} 
        \Phi(\pi)    
        r_{w^{-1}Pw \vert P}(\pi \otimes \chi_{2s\rho_P} )\mu^{M_{\ad}}_{\psi}(\pi)\rd \pi 
    \end{align*}
    is zero if $\Phi$ is supported on a sufficiently small neighborhood of $\pi_0$.
\end{proof}

\subsubsection{Proof of \cref{theorem:key-computation} for distinguished cases}

By \cref{remark:localization} and \cref{proposition:non-distinguished}, it suffices to compute the limit at each point $\pi$ satisfying
\begin{align*}
    \begin{cases}
     \gamma(0, \pi, \Std, \psi) = 0, \\
     \gamma(0, {}^{s_1}\pi, \Std, \psi) = 0.
    \end{cases}
\end{align*}
Let $\Temp_{\ind, \mathrm{dist}}(M_{\ad})$ be the subset of $\Temp_{\ind}(M_{\ad})$ consisting of the representations satisfying the condition.
An element in $\Temp_{\ind, \mathrm{dist}}(M_{\ad})$ is said to be distinguished.

By \cref{proposition:construction-of-distinguished-Xu-packet} and its proof, we have a bijection
\begin{align*}
    \Temp_{\ind, \mathrm{dist}}(M_{\ad})/\sim_{st, w} \xrightarrow{\sim} \Phi_{\temp}(H).
\end{align*}
On the other hand, we have a bijection  
\begin{align*}
    \Temp_{\ind}(H)/\sim_{st} \xrightarrow{\sim} \Phi_{\temp}(H).
\end{align*}
By composing these two maps, we obtain the isomorphism
\begin{align*}
    \theta \colon \Temp_{\ind}(H)/\sim_{st} \xrightarrow{\sim} \Temp_{\ind, \mathrm{dist}}(M_{\ad})/\sim_{st, w}.
\end{align*}
By \cref{proposition:smoothness-of-endoscopic-lift}, this map is smooth.
Through this map $\theta$, we will identify the space $\Temp_{\ind}(H)/\sim_{st}$ as a closed subspace in the space $\Temp_{\ind}(M_{\ad})/\sim_{st, w}$.
Since every proper Levi subgroup of $H$ is a product of general linear groups, the relation $\sim_{st}$ is trivial on the subspace of non-discrete-series representations of $H$.

In this situation, we have 
\begin{align}
    \gamma(s, \theta(\phi^H), \Ad_{\PGSO_8}, \psi)
    = 
    \gamma(s, \phi^H, \Ad_{H}, \psi) \gamma(s, \phi^H, \Std_{H}, \psi)^2 \label{eq:decomposition-adjoint-restriction-to-G2}
\end{align}
by decomposing the restriction of the adjoint representation of $\widehat{\PGSO_8}$ to $\widehat{H}$.

We finally start the proof of \cref{theorem:key-computation}.
We follow the method in \cite{Beuzart-Plessis2021-Plancherel-GLnE-GLnF}*{Proposition 3.41}.

\begin{proof}[The proof of \cref{theorem:key-computation}]
    Let $\pi_0 \in \Temp_{\ind, \dist}(M_{\ad})$ be a distinguished induced representation of $M_{\ad}$. 
    By \cref{proposition:non-distinguished} and \cref{remark:localization}, it suffices to prove the theorem for any function $\Phi$ supported on a sufficiently small open neighborhood of $\pi_0 \in  \Temp_{\ind}(M_{\ad})$.

    Note that if $\pi_0$ is not a discrete series representation, we can attach an $L$-parameter for $M_{\ad}$ to any representation $\pi$ in the connected component of $\pi_0$ in $\Temp_{\ind}(M_{\ad})$ by \cref{proposition:refinement-GSO4} and \cref{theorem:local-Langlands-GLn}.
    If $\pi_0$ is discrete series, by \cref{proposition:construction-of-distinguished-Xu-packet}, we can attach an $L$-parameter of $\PGSO_8(F)$ of the form $\iota^{\PGSO_8}_{G_2} \circ \phi^{G_2}$ for a uniquely determined element $\phi^{G_2} \in \Phi_{\disc}(G_2)$.
    Let $\phi_{\pi}$ denote the $L$-parameter of $\pi$ in the cases above.
    From now on, we assume that the function $\Phi$ is supported on the connected component $\cO = \cO_{\pi_0}$ of $\pi_0$ if $\pi_0$ is not a discrete series representation.
    Also, we introduce the equivalence relation $\sim_{st}$ on $\cO$, defined by $\pi \sim_{st} \pi'$ if and only if $\phi_{\pi} = \phi_{\pi'}$.
    We will identify the class of $\pi$ with $\phi_{\pi}$.
    This relation is trivial on $\cO$ if $\pi_0$ is not a discrete series representation.
    Let $\phi_0$ be the class of $\pi_0$.
 
    First, it is easy to see that the functions 
    \begin{align*}
        \mu^{M_{\ad}}_{\psi}(\pi), r_{w^{-1}Pw \vert P}(\pi \otimes \chi_{2s\rho_P})
    \end{align*}
    only depend on the $L$-parameter of $\pi$, by \cref{lemma:adjoint-L-explicit} and \cref{proposition:normalizing-factor-for-s1s6}.
    We denote them by 
    \begin{align*}
        \mu^{M_{\ad}}_{\psi}(\phi_{\pi}), r_{w^{-1}Pw \vert P}(\phi_{\pi} \otimes \chi_{2s\rho_P}).
    \end{align*}
    Let $\phi_0$ denote the class of $\pi_0$.
    Then, we obtain 
    \begin{multline}
        \lim_{s \to 0+}
        \gamma(s, \mathbf{1}, \psi)^2
        \int_{\pi \in \Temp_{\ind}(M_{\ad})} 
        \Phi(\pi)    
        r_{w^{-1}Pw \vert P}(\pi \otimes \chi_{2s\rho_P} )\mu^{M_{\ad}}_{\psi}(\pi)\rd \pi  \\
        = 
        \lim_{s \to 0+}
        \gamma(s, \mathbf{1}, \psi)^2
        \int_{\phi \in \cO/\sim_{st}}
        \{
        \sum_{\phi_{\pi} = \phi}
        \Phi(\pi)    
        \}
        r_{w^{-1}Pw \vert P}(\phi \otimes \chi_{2s\rho_P} ) \mu^{M_{\ad}}_{\psi}(\phi)\rd \phi. 
    \end{multline}

    From now on, for the $L$-parameter $\phi_0$, we use the notation $I_{\triv}, J_{\triv}$ and so on  in \cref{remark:divide-five-cases},  \cref{lemma:adjoint-L-explicit} and \cref{remark:standard-poles}.
    In Cases $(1), (3)$, we have $\abs{J_{\triv}} = 1$ and in Cases $(2), (4), (5)$, we have $\abs{J_{\triv}} = 0$. 
    In any case, $\abs{I_{\triv}}$ is even.

    We also use the notation in \cref{subsection:setting-and-L-embeddings}, \cref{proposition:description-of-endoscopic-lift-case-2}, \cref{proposition:description-of-endoscopic-lift-case-3}, \cref{proposition:description-of-endoscopic-lift-case-4}, and \cref{proposition:description-of-endoscopic-lift-case-5}.
    In particular, we can write 
    \begin{align*}
    &\phi^H_0 = \iota^H_I \circ \phi^I_0 \\
    &\phi_0 = \iota^{M_{\ad}}_L \circ \phi^L_0,
    \end{align*}
    where $\phi^I_0$ (resp. $\phi^L_0$) is a discrete $L$-parameter for $I$ (resp. $L$). 

    We first consider Cases $(1)$ and $(3)$.
    We have $\abs{J_{\triv}} = 1$.
    Let $\cO^H$ be the connected component of $\Temp_{\ind}(H)$ containing $\phi_0^H = \theta^{-1}(\phi_0)$.
    Then, it is easy to see that, in these cases $(1), (3)$, the map $\theta$ induces the measure-preserving isomorphism 
    \begin{align*}
        \theta \colon (\cO^{H}, \rd \phi^{H}) \xrightarrow{\sim}
     (\cO, \rd \phi).
    \end{align*}
    Also, using \cref{proposition:smoothness-of-endoscopic-lift}, \cref{lemma:adjoint-L-explicit}, \cref{eq:decomposition-adjoint-restriction-to-G2}, and the equation
    \begin{align*}
        4\abs{S^{\natural}_{\phi^{I}}} = \abs{S^{\natural}_{\phi^{L/A_M}}},
    \end{align*}
    which follows from \cite{GS23LLC}*{Lemma 2.4(ii)} and a simple calculation, it is easy to see that 
    \begin{multline*}
        \lim_{s \to 0+} r_{w^{-1}Pw \vert P}(\theta(\phi^H) \otimes \chi_{2s\rho_P})\gamma(s, \mathbf{1}, \psi)^2 \mu^{M_{\ad}}_{\psi}(\theta(\phi^H)) \\
        =
        \frac{1}{512}
        c'(\theta(\phi^H)) \frac{\gamma^{*}(0, \phi^H, \Ad_{H}, \psi)}{\abs{S_{\phi^I}^{\natural}}},
    \end{multline*}
    for a smooth function $c'$ on $\cO^H/\sim_{st}$ valued in $\bS^1$.
    In this case, the Levi subgroup $I$ of $H$ is isomorphic to $H$ or $\GL_2(F)$ corresponding to the short root, we can easily see that the right-hand side can be written as
    \begin{align*}
        \frac{1}{512}
        c(\phi^H) 
        \mu^H_{\HII, \psi}(\phi^H),
    \end{align*}
    for a smooth function $c$ on $\cO^H/\sim_{st}$ valued in $\bS^1$ up to a null set in $\cO^H/\sim_{st}$ as in \cref{lemma:adjoint-L-explicit} and \cref{corollary:adjoint-L-abs-value}.
    Here, we can use, for example, the following computation of the restriction of the adjoint representation in Case $(3)$ based on \cref{eq:decomposition-adjoint-restriction-to-G2}: 
    \begin{multline*}
        \Ad_{H} \circ \iota^{H}_I \circ \phi^I
        = 
        \Ad(\phi^I) 
        \oplus 
        \mathbf{1} 
        \oplus 
        \det(\phi^I) 
        \oplus 
        \det(\phi^I)^{-1} \\
        \oplus \Sym^3(\phi^I) \otimes \det(\phi^I)^{-1} 
        \oplus \Sym^3(\phi^{I})^{\vee} \otimes \det(\phi^I).
    \end{multline*}
    We omit a similar computation.
    Hence, by the Lebesgue convergence theorem, we have 
    \begin{multline*}
        \lim_{s \to 0+}
        \gamma(s, \mathbf{1}, \psi)^2
        \int_{\phi \in \cO/\sim_{st}}
        \{
        \sum_{\phi_{\pi} = \phi}
        \Phi(\pi)    
        \}
        r_{w^{-1}Pw \vert P}(\phi \otimes \chi_{2s\rho_P} )\mu^{M_{\ad}}_{\psi}(\phi)\rd \phi \\
        =
        \frac{1}{512}
        \int_{\phi^H \in \cO^H/\sim_{st}}
        \{
        \sum_{\phi_{\pi} = \theta(\phi^H)}
        \Phi(\pi)    
        \}
        c(\phi^H) 
        \mu^H_{\HII, \psi}(\phi^H)
        \rd \phi^H,
    \end{multline*}
    for a smooth function $c$ on $\cO^H/\sim_{st}$ valued in $\bS^1$.
    This implies the theorem.

    We move on to the proof for cases $(2), (4), (5)$.
    In this case, we have the map 
    \begin{align*}
       \theta \colon \cO^H/{\sim_{st}} \hookrightarrow \cO,
    \end{align*}
    whose image is a proper closed subset of $\cO$.
   
    To compute explicitly, we take a local coordinate at $\pi_0$.
    Let $\cU$ be a sufficiently small neighborhood of $0 \in \fa_L^{*}$ which is $W(M_{\ad}, L)_{\sigma_0^L}$ invariant. 
    Then, there exists a sufficiently small neighborhood $\cV \subset \Temp_{\ind}(M_{\ad})$ of $\pi_0$ such that the map
    \begin{align*}
        \lambda \in \cU \mapsto \pi_{\lambda} =  i^{M_{\ad}}_{L} (\sigma^L_0 \otimes \chi_{i\lambda}) \in \cV,
    \end{align*}
    is an isomorphism between $\cU/W(M_{\ad}, L/A_M)_{\sigma_0^L}$ and $\cV$.
    If $\Psi$ is any $L^1$ function on $\cV$, then we have 
    \begin{align*}
        \int_{\pi \in \cV} \Psi(\pi) \rd \pi 
        = 
        \frac{1}{\abs{W(M_{\ad}, L/A_M)_{\sigma_0^L}}}
        \int_{\lambda \in \cU} \Psi(\pi_{\lambda}) \rd \lambda.
    \end{align*}
    From now on, we assume that $\Phi$ is supported on $\cV$.
    Then, we have
    \begin{multline}
        \gamma(s, \mathbf{1}, \psi)^2
        \int_{\pi \in \Temp_{\ind}(M_{\ad})} 
        \Phi(\pi)    
        r_{w^{-1}Pw \vert P}(\pi \otimes \chi_{2s\rho_P} )\mu^{M_{\ad}}_{\psi}(\pi)\rd \pi  \\
        = 
        \gamma(s, \mathbf{1}, \psi)^2
        \frac{1}
        {\abs{W(M_{\ad}, L/A_M)_{\sigma_0^L}}} 
        \int_{\lambda \in \cU} 
        \Phi(\pi_{\lambda})    \\
        \times 
        r_{w^{-1}Pw \vert P}(\pi_{\lambda} \otimes \chi_{2s\rho_P} )\mu^{M_{\ad}}_{\psi}(\pi_{\lambda})\rd \lambda.
        \label{eq:localization}
    \end{multline}

    In Cases $(2), (4), (5)$, the group ${}^L I$ factors through ${}^L H' = {}^L \PGL_3$. 
    Let $\iota^{H'}_{I}$ be the embedding ${}^L I \hookrightarrow {}^L H'$.
    We set $\phi^{H'}_0 = \iota^{H'}_I \circ \phi^{I}_0$.
    Also, by \cref{lemma:description-of-L-homomorphism-case-2}, we have a Weyl  element $\widetilde{\cW} \in \Spin_8$ such that $\widetilde{\cW}(\widehat{H'})$ is the derived group of the standard Levi subgroup of $\widehat{M_{\ad}}$ corresponding to $\{ \alpha_2^{\vee}, \alpha_3^{\vee} \}$.
  
    \begin{lemma}
        On the maximal torus $T_{M_{\ad}} = T/A_M \subset M_{\ad}$, the action of $\cW$ in \cref{lemma:description-of-L-homomorphism-case-2} commutes with the action of $s_1$. 
        in particular, we have 
        \begin{align*}
             s_1(\widetilde{\cW}) = \widetilde{\cW},
        \end{align*}
        for the Tits lifting $\widetilde{\cW}$ for $\cW$.
        We have a similar result for $\widehat{M_{\ad}}$.
    \end{lemma}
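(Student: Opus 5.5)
The plan is to check the commutation on the level of Weyl groups first, and then pass to Tits liftings using that $s_1$ preserves the fixed splitting.

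\textbf{Weyl group level.} Work in the explicit model of \cref{construction:explicit-model-D_4}, with $W(D_4) \cong (\Z/2\Z)^4_0 \rtimes S_4$ acting on the characters $e_1,\dots,e_4$ of $T_{M_{\ad}}$. The simple roots are $\beta_1=e_1-e_2$, $\beta_2=e_2-e_3$, $\beta_3=e_3-e_4$, $\beta_4=e_3+e_4$. The automorphism $s_1$ swaps $\beta_1$ and $\beta_4$ and fixes $\beta_2,\beta_3$. Hence $s_1$ acts on $X^*(T_{M_{\ad}})\otimes\Q$ by an explicit linear map $\theta_1$. One computes
\begin{align*}
\theta_1(e_1)=\tfrac12(e_1+e_2+e_3+e_4), \quad \theta_1(e_2)=\tfrac12(e_1+e_2-e_3-e_4),
\end{align*}
and similarly for $e_3,e_4$. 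The element $\cW=(1,0,0,1)\rtimes(14)$ acts by $e_1\mapsto -e_4$, $e_4\mapsto -e_1$, fixing $e_2,e_3$; this is consistent with $\cW(\beta_2)=\beta_2$ and $\cW(\beta_3)=e_3+e_1$, i.e.\ the highest-type root. I will check $\theta_1\cW=\cW\theta_1$ directly on the basis $e_1,\dots,e_4$. For example,
\begin{align*}
\theta_1\cW(e_1)=-\theta_1(e_4)=-\tfrac12(e_1-e_2-e_3+e_4),
\end{align*}
which has to match $\cW\theta_1(e_1)=\tfrac12(-e_4+e_2+e_3-e_1)$. The remaining three vectors are routine.

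\textbf{Pass to Tits liftings.} Since $s_1$ preserves the splitting $\mathbf{spl}_{M_{\ad}}$, it sends the Tits lifting of each simple reflection $\sigma_\beta$ to the Tits lifting of $\sigma_{s_1(\beta)}$. Indeed, $\widetilde{\sigma_\beta}=\exp(X_\beta)\exp(-X_{-\beta})\exp(X_\beta)$, and $s_1$ permutes the $X_\beta$ and the $X_{-\beta}$ compatibly. Applying this to a reduced expression $\cW=\sigma_{\gamma_1}\cdots\sigma_{\gamma_k}$, we get that $s_1(\widetilde{\cW})$ is the product of the $\widetilde{\sigma_{s_1\gamma_i}}$. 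That product is a reduced expression for $s_1\cW s_1^{-1}=\cW$, of the same length. By the Tits lemma (independence of the lifting from the reduced expression), it equals $\widetilde{\cW}$.

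\textbf{Dual side.} The dual statement for $\widehat{M_{\ad}}=\Spin_8(\C)$ is identical. It uses the $\Gamma_F$-splitting and $\widehat{s_1}$, and $\cW$ viewed in the dual Weyl group via the coroots.

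\textbf{Main obstacle.} The only real point is identifying precisely which linear map $s_1$ induces on the weight lattice in the chosen coordinates. A sign error in the half-spin change of basis would spoil the verification. To guard against this, I will double-check $\theta_1$ on all four simple roots before testing the commutation.
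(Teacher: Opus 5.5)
Your proposal is correct and is essentially the paper's argument. The paper checks the same commutation by writing the matrix of $\cW$ in the simple-root basis $\beta_1,\dots,\beta_4$, where $s_1$ is just the permutation matrix swapping the first and fourth coordinates; this makes the check a bit lighter than in your $e_i$-coordinates, but the two computations are equivalent. The paper then simply asserts $s_1(\widetilde{\cW})=\widetilde{\cW}$, and your argument correctly fills in that step: $s_1(\widetilde{\sigma_\beta})=\widetilde{\sigma_{s_1\beta}}$, the diagram automorphism preserves lengths of reduced words, and Tits' lifting is independent of the reduced expression.
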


    \begin{proof}
        If we take the basis $\alpha_1, \dots, \alpha_4$ for $X^{*}(T_{M_{\ad}})$, then the action of $\cW$ is given by the matrix
        \begin{align*}
            \begin{pmatrix}
                0 & 0 & 1 & -1 \\
                -1 & 1 & 1 & -1 \\
                0 & 0 & 1 & 0 \\
                -1 & 0 & 1 & 0
            \end{pmatrix}.
        \end{align*}
        It is easy to show that this matrix commutes with the matrix 
        \begin{align*}
            \begin{pmatrix}
                0 & 0 & 0 & 1 \\
                0 & 1 & 0 & 0 \\
                0 & 0 & 1 & 0 \\
                1 & 0 & 0 & 0
            \end{pmatrix},
        \end{align*}
        which represents the action of $s_1$ on $X^{*}(T_{M_{\ad}})$.
    \end{proof}

    Recall that the group $H'$ is equal to $\PGL_3$.
    The standard $L$-parameter of $\phi_0 = \iota^{M_{\ad}}_{I} \circ \phi^I_0$ is given by 
    \begin{align}
        \mathbf{1} \oplus \phi^{H'}_0 \oplus (\phi^{H'}_0)^{\vee} \oplus \mathbf{1}.
        \label{eq:parameter-from-PGL3}
    \end{align}
    In Case $(4)$, the group $I$ can be seen as both a Levi subgroup of $H'$ and a Levi subgroup of $H$.
    The parameter $\phi^{H'}_{0}$ is equal to 
    \begin{align*}
           (\det \phi^I_0)^{-1} \oplus \phi^I_0.
    \end{align*}
    Similarly, in Case $(5)$, the group $I$ is a maximal torus of $H' = \PGL_3$ and $\phi^I_0$ can be seen as the triple $\chi_1, \chi_2, \chi_3 \in \Phi_{\temp}(\GL_1)$ such that $\prod_{i} \chi_i = 1$.
    In this case, we have 
    \begin{align*}
        \phi^{H'}_{0} = \chi_1 \oplus \chi_2 \oplus \chi_3.
    \end{align*}
    Thus, we have the three possibilities:
    \begin{itemize}
        \item    
        $\chi_i \neq \mathbf{1}$ for $i=1, 2, 3$.
        \item     
        $\chi_1 = \mathbf{1}$ and $\chi_2 = \chi_3^{-1} \neq \mathbf{1}$.
        \item    
        $\chi_i = \mathbf{1}$ for $i=1, 2, 3$.
    \end{itemize}
    The $L$-parameter for $\pi_{\lambda}$ is given by $\widetilde{\cW}(\phi_0^L \otimes \chi_{\lambda})$.

    Depending on the form of parameters, we define the following set of weights in the standard representation of $\Spin_8(\C)$.
    \begin{definition}
        Let $w(\Std_{M_{\ad}})$ be the set of weights of the action of $\widehat{T_{M_{\ad}}}$ on the standard representation of $\widehat{M_{\ad}} = \Spin_8$.
        Let $w(\phi^H_0) \subset w(\Std_{M_{\ad}})$ be a subset of weights in the standard representation of $\Spin_8(\C)$ defined as follows:
        \begin{itemize}
          \item   
          In Case $(2)$, we define $w(\phi^H_0)$ as 
          \begin{align*}
             \{ \varpi_{\beta_1}^{\vee}, -\varpi_{\beta_1}^{\vee} \}.
          \end{align*}
          \item    
          In Case $(4)$, we define $w(\phi^H_0)$ as 
          \begin{align*} 
            \begin{cases}
                \{ \varpi_{\beta_1}^{\vee}, -\varpi_{\beta_1}^{\vee} \} &\quad \det(\phi^I_0) \neq \mathbf{1}, \\
                \{ \varpi_{\beta_1}^{\vee}, \varpi_{\beta_1}^{\vee} - \alpha_1, -\varpi_{\beta_1}^{\vee} + \alpha_1, -\varpi_{\beta_1}^{\vee} \} &\quad \det(\phi^I_0) = \mathbf{1}.
            \end{cases}
          \end{align*}
          In Case $(5)$, we define $w(\phi^H_0)$ as 
          \begin{align*}
            \begin{cases}
                \{ \varpi_{\beta_1}^{\vee}, -\varpi_{\beta_1}^{\vee} \} &\quad \chi_1, \chi_2, \chi_3 \neq \mathbf{1}, \\
                \{ \varpi_{\beta_1}^{\vee}, \varpi_{\beta_1}^{\vee} - \alpha_1, -\varpi_{\beta_1}^{\vee} + \alpha_1, -\varpi_{\beta_1}^{\vee} \} &\quad \chi_1 = \mathbf{1},\chi_2 = \chi_3^{-1} \neq \mathbf{1}, \\  
                w(\Std_{M_{\ad}})
                &\quad \chi_1  = \chi_2 = \chi_3 = \mathbf{1}.
            \end{cases}
          \end{align*}
        \end{itemize}
        We will see an elements of the set $w(\phi^H_0)$ as a linear function on $\cW \fa^*_{L, \C}$.
        Let $w(\phi^H_0)/\{\pm 1\}$ be the quotient of the set $w(\phi^H_0)$ by $\pm 1$.
        For an element $\varpi \in w(\phi^H_0)$, let $[\varpi]$ denote the class of $\varpi$ in $w(\phi^H_0)/\{\pm 1\}$.
    \end{definition}

    \begin{lemma}\label{lemma:transitivity}
        The action of $W(M_{\ad}, L/A_M)_{\sigma^L_0}$ on the space $\fa^*_{L, \C}$ induces an action on the product set $w(\phi^H_0)/\{\pm 1\} \times s_1w(\phi^H_0)/\{\pm 1\}$ through the isomorphism $\cW \colon \fa^*_{L, \C} \xrightarrow{\sim} \cW\fa^*_{L, \C}$.
        Also, this action is transitive.
    \end{lemma}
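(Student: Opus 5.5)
The plan is to work entirely on the dual side, with the standard representation of $\widehat{M_{\ad}} = \Spin_8(\C)$ restricted to the torus $\widehat{T_{M_{\ad}}}$. For $w \in W(M_{\ad}, L/A_M)_{\sigma^L_0}$, the stabilizer of $\sigma^L_0$ acts on $\fa^*_{L,\C}$. After transport by $\cW$, it therefore acts on the $\widehat{T_{M_{\ad}}}$-weights that appear in the standard $L$-parameter $\widetilde{\cW}(\phi^L_0 \otimes \chi_\lambda)$, which is a linear family in $\lambda$.

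The first step is well-definedness. By \cref{eq:parameter-from-PGL3}, the set $w(\phi^H_0)$ is exactly the set of weights $\varpi$ for which the $\varpi$-isotypic line of $\Std \circ \widetilde{\cW}(\phi^L_0\otimes\chi_\lambda)$ carries the trivial character of $L_F$ at $\lambda=0$. In Cases $(4)$ and $(5)$ this includes the extra trivial summands coming from $\det\phi^I_0 = \mathbf{1}$ or $\chi_i=\mathbf{1}$. Since $w$ stabilizes $\sigma^L_0$, it permutes the constituents of the parameter while preserving their isomorphism types. Hence it permutes $w(\phi^H_0)$, viewed as linear functions $\lambda \mapsto \langle \varpi, \cW\lambda\rangle$, and it commutes with $\varpi \mapsto -\varpi$. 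That gives an action on $w(\phi^H_0)/\{\pm1\}$. The same argument applies to ${}^{s_1}\pi_\lambda$: by the lemma just proved, $s_1$ commutes with $\cW$ and $s_1(\widetilde{\cW}) = \widetilde{\cW}$, so the weights relevant for $\gamma(0,{}^{s_1}\pi,\Std,\psi)$ are exactly $s_1 w(\phi^H_0)$, and $w$ also acts on $s_1 w(\phi^H_0)/\{\pm1\}$. The diagonal action is then well defined.

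The second step is transitivity, and I would check it case by case using the explicit Weyl groups computed earlier. In Case $(2)$, and in the generic subcases of $(4)$ and $(5)$, $w(\phi^H_0)/\{\pm1\}$ is a single point, and so is its $s_1$-image, so there is nothing to prove. In the subcase $\det\phi^I_0=\mathbf{1}$ of Case $(4)$, and in the middle subcase of Case $(5)$, each factor has two elements, so the product has four. Here I would exhibit explicit reflections in $W(M_{\ad},L/A_M)_{\sigma^L_0}$, namely $s_{\beta_1}$ and $s_{\beta_4}$ restricted to $A_L$. These fix $\sigma^L_0$ precisely because the relevant characters are trivial, and they swap the two classes in one factor while fixing the other. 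In the last subcase of Case $(5)$, $\sigma^L_0$ is trivial, so the stabilizer is all of $W(D_4)$. It acts on the $4\times 4$ product of pairs of opposite standard weights, and transitivity should follow because the stabilizer of one class $[\varpi]$ in $W(D_4)$ acts transitively on the four pairs of $s_1$-transformed (spin-type) weights.

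I expect the main obstacle to be this last subcase, together with the bookkeeping of which explicit Weyl elements actually stabilize $\sigma^L_0$ in the degenerate subcases of $(4)$ and $(5)$. For the final subcase I would first try to compute directly with the matrices of $\cW$ and $s_1$ on $X^*(T_{M_{\ad}})$ given in the preceding lemma.
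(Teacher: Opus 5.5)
Your proposal follows the paper's proof. The case split by the size of $w(\phi^H_0)/\{\pm1\}$ is the same, and the singleton cases are trivial in both. The two two-element cases (Case $(4)$ with $\det\phi^I_0=\mathbf{1}$, and the middle subcase of Case $(5)$) are handled by $s_{\beta_1}$ and $s_{\beta_4}$, each swapping the classes in one factor and fixing the other. In the last subcase of Case $(5)$, the reduction is the same: show that the stabilizer of one class in the first factor is transitive on the $s_1$-transformed classes. Your well-definedness paragraph is a sensible addition; the paper takes the induced action for granted.

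What is missing is the verification of that last step. You only assert it (``should follow''), yet it is the one nontrivial point of the lemma.

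The paper proves it as follows. Identify $W(M_{\ad},T/A_M)\cong(\Z/2\Z)^4_0\rtimes S_4$. The stabilizer of $[\varpi^\vee_{\beta_1}]$ is $(\Z/2\Z)^4_0\rtimes(S_1\times S_3)=\langle\sigma_3\sigma_4\rangle_{\mathrm{norm}}\rtimes\langle\sigma_2,\sigma_3\rangle$. Since the second factor is acted on through $s_1$-conjugates, one needs the $s_1$-image of this group, namely $\langle\sigma_3\sigma_1\rangle_{\mathrm{norm}}\rtimes\langle\sigma_2,\sigma_3\rangle$, to be transitive on $w(\Std_{M_{\ad}})/\{\pm1\}$. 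Its projection to $S_4$ contains the Klein four-group and $S_3$, so it is all of $S_4$.

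There is also a more direct check, which shows that no matrix computation with $\cW$ is needed. In this subcase the stabilizer is the whole Weyl group, so conjugating by $\cW$ changes nothing. Take standard weights $\pm e_i$, so that $s_1(e_1)=\tfrac12(1,1,1,1)$. The second factor then consists of the four $\pm$-classes of weights $\tfrac12(\pm1,\pm1,\pm1,\pm1)$ with an even number of minus signs. The sign changes in the coordinates $\{1,j\}$, $j=2,3,4$, fix $[e_1]$ and send $[\tfrac12(1,1,1,1)]$ to each of the other three classes. So the subgroup $(\Z/2\Z)^4_0$ of the stabilizer already acts transitively on the second factor, which completes your argument.
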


    \begin{proof}
    We prove this lemma also by case-by-case consideration. 
    If the cardinality of the set $w(\phi^H_0)/\{\pm 1\}$ is $1$ or $2$, the group generated by $s_{\beta_1}$ and $s_{\beta_4}$ acts the set $w(\phi^H_0)/\{\pm 1\} \times s_1w(\phi^H_0)/\{\pm 1\}$ transitively.

    So, we assume that the set $w(\phi^H_0)/\{\pm 1\}$ has the cardinality $4$.
    This occurs only if we are in Case $(5)$ and $\chi_1 = \chi_2 = \chi_3 = \mathbf{1}$.
    Let $(\Z/2\Z)^4_0 \subset (\Z/2\Z)^4$ be the subgroup consisting of elements $(v_i)_{i=1, 2, 3, 4}$ with $\sum_i v_i = 0$. 
    Then, the Weyl group of $M_{\ad}$ and $T/A_M$ is isomorphic to $(\Z/2\Z)^4_0 \rtimes S_4$.
    In this case, it is easy to see that the action of $S_4 \subset (\Z/2\Z)^4 \rtimes S_4$ on the set $w(\Std_{M_{\ad}})/{\pm 1}$ is transitive.
    Thus, to prove the transitivity stated in this lemma, it suffices to show that the map 
    \begin{align*}
        (\Z/2\Z)^4_0 \rtimes (S_1 \times S_3) 
        \overset{s_1}{\hookrightarrow} 
        (\Z/2\Z)^4_0 \rtimes S_4 
        \twoheadrightarrow 
        S_4
    \end{align*} 
    is surjective.
    Here, $s_1$ is the automorphism of $W(M_{\ad}, T/A_M)$ induced by $s_1$.

    Let $\sigma_i$ be the simple reflection corresponding to the simple root $\beta_i$ for $i= 1, 2, 3, 4$.
    Then, the group 
    \begin{align*}
        (\Z/2\Z)^4_0 \rtimes (S_1 \times S_3) 
    \end{align*}
    is equal to 
    \begin{align*}
        \langle \sigma_3 \sigma_4 \rangle_{\mathrm{norm}} \rtimes 
        \langle \sigma_2, \sigma_3 \rangle,
    \end{align*}
    where the group $\langle \sigma_3 \sigma_4 \rangle_{\mathrm{norm}}$ is the normal subgroup of the Weyl group generated by $\sigma_3 \sigma_4$.
    Thus, the image of the injection 
    \begin{align*}
        (\Z/2\Z)^4_0 \rtimes (S_1 \times S_3) 
        \overset{s_1}{\hookrightarrow} 
        (\Z/2\Z)^4_0 \rtimes S_4
    \end{align*}
    is isomorphic to the group
    \begin{align*}
            \langle \sigma_3 \sigma_1 \rangle_{\mathrm{norm}} \rtimes 
            \langle \sigma_2, \sigma_3 \rangle,
    \end{align*}    
    as the automorphism $s_1$ switches $\sigma_1$ and $\sigma_4$. 
    The group $\langle \sigma_2, \sigma_3 \rangle$ is isomorphic to $S_1 \times S_3$. 
    Also, the image of $\langle \sigma_3 \sigma_1 \rangle_{\mathrm{norm}}$ by the map 
    \begin{align*}
        (\Z/2\Z)^4_0 \rtimes S_4 
        \twoheadrightarrow 
        S_4
    \end{align*}
    is the Klein four-group in $S_4$.
    As these two groups generate $S_4$, we obtain the result.
    \end{proof}

    \begin{remark}
        The set $w(\phi^H_0)$ is bijective to the set $I_{\triv}$ for the parameter $\phi_0^{\GL_8} = \iota^{\GL_8}_{M_{\ad}} \circ \phi^{M_{\ad}}$.
        Thus, this set $w(\phi^H_0)$ parametrizes trivial representations in the standard $L$-parameter $\phi_0^{\GL_8} = \iota^{\GL_8}_{M_{\ad}} \circ \phi^{M_{\ad}}$ for $\phi^{M_{\ad}}$.
    \end{remark}

    From now on, for the sake of simplicity, let $\cW$ denote the action of $\widetilde{\cW}$.
    We define the polynomial related to the standard $\gamma$-factor of $\pi \otimes \chi_{2s \rho_P}$.
    \begin{definition}
        For an element $\varpi \in w(\phi^H_0)$, let $P_{[\varpi]}$ be the polynomial on $\fa_{L,\C}^{*}$ defined by 
        \begin{align*}
            P_{[\varpi]}(\lambda) 
            = 
            \prod_{[\varpi'] \in w(\phi^H_0)/\{\pm 1\}, [\varpi'] \neq [\varpi]} (\varpi(\cW\lambda)^2 -  \varpi'(\cW\lambda)^2).
        \end{align*}
        We also set 
        \begin{align*}
            R_{s}(\lambda) 
            = 
            \prod_{[\varpi] \in w(\phi^H_0)/\{\pm 1\}} (\varpi(\cW \lambda)^2 + s^2).
        \end{align*}
    \end{definition}
    We note that in \cref{lemma:adjoint-L-explicit}, we can write the adjoint $\gamma$-factor of $\pi_{\lambda}$ as 
    \begin{align*}
        P(\pi_{\lambda})^2 Q(\pi_{\lambda})
    \end{align*}
    with a function $P$ which is a polynomial in $\fa_{L,\C}^{*}$ with real coefficients with respect to $\fa^{*}_L$, and a non-vanishing smooth function $Q(\pi_{\lambda})$ at $\lambda = 0$.
    
    \begin{lemma}\label{lemma:regularity}
        Let $\varpi$ be an element in $w(\phi^H_0)$.
        Then, the rational function 
        \begin{align*}
            \frac{P(\pi_{\lambda})^2}
            {P_{[\varpi]}(\cW\lambda) 
            \, P_{[\varpi]}(s_1\cW\lambda)}. 
        \end{align*}
        is a polynomial on $\fa_{L,\C}^{*}$.
    \end{lemma}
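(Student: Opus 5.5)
Since \(P\) is a polynomial with real coefficients, the claim is equivalent to showing that the product \(P_{[\varpi]}(\cW\lambda)\,P_{[\varpi]}(s_1\cW\lambda)\) divides \(P(\pi_\lambda)^2\) in the polynomial ring on \(\fa^*_{L,\C}\).

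Both factors are products of linear forms. I would therefore compare multiplicities of irreducible linear factors: check that each linear form appearing in the denominator appears in \(P(\pi_\lambda)^2\) with at least the same multiplicity.

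The first step is to make \(P(\pi_\lambda)\) explicit. The standard parameter of \(\pi_\lambda\) is \(\iota^{\GL_8}\circ\cW(\phi_0^L\otimes\chi_\lambda)\), and by the remark above the trivial constituents are indexed by \(w(\phi^H_0)/\{\pm1\}\), i.e. by the set \(I_{\triv}\). In the formula \cref{eq:adjoint-L-explicit}, the factor \(\prod_{k^1<k^2}(\lambda_{k^1}^2-\lambda_{k^2}^2)\) for \(i\in I_{\triv}\subset I_o\) (see \cref{remark:standard-poles}) becomes, in our coordinates,
\[
\prod_{\{[\varpi'],[\varpi'']\}}\bigl(\varpi'(\cW\lambda)^2-\varpi''(\cW\lambda)^2\bigr),
\]
the product taken over unordered pairs of distinct classes. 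Hence \(P(\pi_\lambda)\) is divisible by this product, and therefore by \(P_{[\varpi]}(\cW\lambda)\), since the latter is the subproduct over pairs containing \([\varpi]\).

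The second step handles the factor \(P_{[\varpi]}(s_1\cW\lambda)\). Here I would use that \(\pi\) and \({}^{s_1}\pi\) both occur. By the preceding lemma, \(s_1\) commutes with \(\cW\), and the adjoint representation of \(\Spin_8\) is stable under the triality \(\widehat{s_1}\). Consequently the adjoint \(\gamma\)-factor of \(\pi_\lambda\) is also invariant under \(s_1\). Applying \cref{lemma:adjoint-L-explicit} to the parameter composed with \(\widehat{s_1}\), that is, to the standard parameter of \({}^{s_1}\pi_\lambda\), shows that \(P(\pi_\lambda)\) is also divisible, up to a unit, by \(\prod_{\text{pairs}}(\varpi'(s_1\cW\lambda)^2-\varpi''(s_1\cW\lambda)^2)\). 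In particular it is divisible by \(P_{[\varpi]}(s_1\cW\lambda)\).

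It then remains to show that \(P_{[\varpi]}(\cW\lambda)\,P_{[\varpi]}(s_1\cW\lambda)\) divides \(P^2\), not merely that each factor divides \(P\). Because \(P^2=P\cdot P\), this holds as long as the two factors are each absorbed by a separate copy of \(P\), and that is immediate from the two divisibilities just established.

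The main obstacle is the claim in the second step that the \(s_1\)-transformed factor genuinely appears in \(P\). This requires that \(\wedge^2\) of the \(s_1\)-twisted standard representation computes the same adjoint representation, which is true because \(\Ad\) is \(\widehat{s_1}\)-invariant. It also requires that the local normal form of \cref{lemma:adjoint-L-explicit} be applied in the \(s_1\)-coordinates near \({}^{s_1}\pi_0\), which is again distinguished, so \(I_{\triv}\) is nonempty there.

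I would verify the multiplicity bookkeeping case by case (Cases (2), (4), (5)) using the explicit \(w(\phi^H_0)\). When the two factors share linear forms, I would check that the exponent \(2\) in \(P^2\) still suffices; I expect it does, since each linear form appears in each of the two factors at most once.
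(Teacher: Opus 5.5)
Your proposal is correct and follows essentially the same route as the paper. Like the paper, you show that $P_{[\varpi]}(\cW\lambda)$ divides $P(\pi_\lambda)$ by the explicit formula in the proof of \cref{lemma:adjoint-L-explicit}, and that $P_{[\varpi]}(s_1\cW\lambda)$ divides it because the adjoint representation is $s_1$-stable and $s_1$ commutes with $\cW$.

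Your closing step (if $A\mid P$ and $B\mid P$ then $AB\mid P^2$) already completes the proof, so the planned case-by-case multiplicity check is unnecessary. It is also cleaner than the paper's split according to whether $\varpi(s_1\cW\lambda)^2-\varpi'(s_1\cW\lambda)^2$ is a scalar multiple of $\varpi(\cW\lambda)^2-\varpi'(\cW\lambda)^2$.
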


    \begin{proof}
    We first note that the action of $\cW$ and $s_1$ on $T$ commutes, which is used without comments.

    As in the proof of the previous lemma, we omit the easy argument for the cases where $w(\phi^H_0)/\{ \pm 1\} = 1, 2$.

    We consider the case where $w(\phi^H_0)/\{ \pm 1\} = 4$, which is the most difficult.
    We take a factor 
    \begin{align*}
        \varpi(\cW\lambda)^2 - \varpi'(\cW \lambda)^2
    \end{align*}
    in the definition of $P_{[\varpi]}$.
    Then, by the explicit description of the polynomial $P(\pi_{\lambda})$ in \cref{eq:adjoint-L-explicit}, it is easy to see that this factor divides the polynomial $P(\pi_{\lambda})$. 

    Assume first that the polynomial 
    \begin{align*}
        \varpi(s_1\cW\lambda)^2 - \varpi'(s_1\cW \lambda)^2
    \end{align*}
    is not a scalar multiple of $\varpi(\cW\lambda)^2 - \varpi'(\cW \lambda)^2$.
    Then, as the adjoint representation is stable under the action of $s_1$, we have 
    \begin{align*}
        P({}^{s_1} \pi_{\lambda}) =  P(\pi_{\lambda}).
    \end{align*}
    Thus, the polynomial $\varpi(s_1\cW\lambda)^2 - \varpi'(s_1\cW \lambda)^2
    =
    \varpi(\cW s_1\lambda)^2 - \varpi'(\cW s_1\lambda)^2
    $ divides the polynomial $P(\pi_{\lambda})$. 
    If the polynomial 
    \begin{align*}
        \varpi(s_1\cW\lambda)^2 - \varpi'(s_1\cW \lambda)^2
    \end{align*}
    is a scalar multiple of $\varpi(\cW\lambda)^2 - \varpi'(\cW \lambda)^2$, then, the product 
    \begin{align*}
        (\varpi(\cW\lambda)^2 - \varpi'(\cW \lambda)^2)
        (\varpi(s_1\cW\lambda)^2 - \varpi'(s_1\cW \lambda)^2)
    \end{align*}
    divides the polynomial $P(\pi_{\lambda})^2$.
    In any case, for any linear factor $\varpi(\cW\lambda)^2 - \varpi'(\cW \lambda)^2$ in the definition of $P_{\varpi}$, the product 
    \begin{align*}
        (\varpi(\cW\lambda)^2 - \varpi'(\cW \lambda)^2)
        (\varpi(s_1\cW\lambda)^2 - \varpi'(s_1\cW \lambda)^2)
    \end{align*}
    divides the polynomial $P(\pi_{\lambda})^2$.
    As the polynomial  $P_{[\varpi]}(\cW\lambda)$ is a product of linearly independent factors as above, the polynomial 
    \begin{align*}
        P_{[\varpi]}(\cW\lambda)
        P_{[\varpi]}(s_1\cW\lambda)
    \end{align*}
    divides the polynomial $P(\pi_{\lambda})^2$.
    This completes the proof.
\end{proof}

    We now present a key computation. 
    The integrand on the right-hand side of \cref{eq:localization} is equal to 
    \begin{align}
          \frac{1}{\abs{S_{\phi^L_0}^{\natural}}}
          \Phi(\pi_{\lambda}) 
          T(s, \pi_{\lambda})
          \frac{P(\pi_{\lambda})^2 \abs{Q(\pi_{\lambda})}}{R_{16s}(\cW\lambda) R_{8s}(s_1\cW \lambda)} 
          \label{eq:limitand}
    \end{align}
    for a smooth function $T(s, \pi)$ on $\C_+ \times \cV$. 
    We apply \cref{lemma:residue-theorem} for the product in the definition of $R_{s}(\cW\lambda)$, to rewrite this as 
    \begin{multline*}
        \frac{1}{\abs{S_{\phi^L_0}^{\natural}}}
        \Phi(\pi_{\lambda}) 
        T(s, \pi_{\lambda}) 
        \abs{Q(\pi_{\lambda})} \\
        \times    
        \sum_{[\varpi], [\varpi']} 
        \frac{1}{\varpi(\cW\lambda)^2 + (16s)^2} 
        \frac{1}{\varpi'(s_1\cW\lambda)^2 + (8s)^2} 
        \frac{P(\pi_{\lambda})^2}{P_{[\varpi]}(\cW\lambda)P_{[\varpi]}(s_1\cW\lambda)}.
    \end{multline*}
    By \cref{lemma:transitivity} and the invariance of measure $d\lambda$ under the action of Weyl groups, the integral of this function is equal to the integral of the following function:
    \begin{multline*}
        \frac{\abs{w(\phi_0^H)/{\pm 1}}^2}{\abs{S_{\phi^L_0}^{\natural}}}
        \Phi(\pi_{\lambda}) 
        T(s, \pi_{\lambda})
        \abs{Q(\pi_{\lambda})}  \\
        \times 
        \frac{1}{\varpi_{\beta_1}^{\vee}(\cW\lambda)^2 + (16s)^2} 
        \frac{1}{\varpi_{\beta_4}^{\vee}(\cW\lambda)^2 + (8s)^2} 
        \frac{P(\pi_{\lambda})^2}{P_{[\varpi^{\vee}_{\beta_1}]}(\cW\lambda)P_{[\varpi^{\vee}_{\beta_1}]}(s_1\cW\lambda)}.
    \end{multline*}
    By a similar reasoning to \cref{proposition:non-distinguished} by using the delta distribution, the limit as $s \to 0+$ of the integral of this function is equal to the integral of the function
    \begin{multline}
        \frac{1}{512}
        (\lim_{s \to 0+}s^{-1} \gamma(s, \mathbf{1}, \psi))^2
        (\lim_{s \to 0+}s^{-1} \zeta_F(s)^{-1})^2
        \frac{\abs{w(\phi_0^H)/{\pm 1}}^2}{\abs{W(M_{\ad}, L/A_M)_{\sigma^L_0}}\abs{S_{\phi^L_0}^{\natural}}}
         \\
        \times 
        \Phi(\pi_{\lambda}) 
        T(s, \pi_{\lambda})
        \abs{Q(\pi_{\lambda})}  
        \times   
        \frac{P(\pi_{\lambda})^2}{P_{[\varpi_{\beta_1}^{\vee}]}(\cW\lambda)P_{[\varpi_{\beta_1}^{\vee}]}(s_1\cW\lambda)}
        \label{eq:result-computation}
    \end{multline}
    on the subset of $\cU$ defined by $\varpi_{\beta_1}^{\vee}(\cW\lambda) = \varpi_{\beta_4}^{\vee}(\cW\lambda) = 0$.
    Here, we explain why we have the factor 
    \begin{align*}
        \frac{1}{512}
        (\lim_{s \to 0+}s^{-1} \gamma(s, \mathbf{1}, \psi))^2
        (\lim_{s \to 0+}s^{-1} \zeta_F(s)^{-1})^2.
    \end{align*}
    By \cref{lemma:center-connected} which gives a basis of the space of characters on $A_L$ and the normalization of measure in \cref{subsubsection:normalization-tempered-spectra}, we have the factor 
    \begin{align*}
        (\frac{\log(q)}{2\pi})^{\dim(\fa_L^{*})} 
        (\frac{\log(q)}{2\pi})^{-\dim(\fa_I^{*})} 
        = 
        \frac{\log(q)^2}{(2\pi)^2}.
    \end{align*}
    Also, by the description in \cref{eq:parameter-from-PGL3}, it is easy to see that the action of $\cW$ maps $\theta(\cO^H) \cap \cU$ onto the subset of $\cU$ defined by $\varpi_{\beta_1}^{\vee}(\cW\lambda) = \varpi_{\beta_4}^{\vee}(\cW\lambda) = 0$ and this map is measure-preserving.
    Furthermore, we obtain the scalar 
    \begin{align*}
        (\lim_{s \to 0+}s^{-1} \gamma(s, \mathbf{1}, \psi))^2 \frac{1}{128} \pi^2
    \end{align*}
    from the computation of the limit of the distribution
    \begin{align*}
        \gamma(s, \mathbf{1}, \psi)^2 
        \frac{1}{\varpi_{\beta_1}^{\vee}(\cW\lambda)^2 + (16s)^2} 
        \frac{1}{\varpi_{\beta_4}^{\vee}(\cW\lambda)^2 + (8s)^2},
    \end{align*}
    as $s \to 0+$.
    This completes the computation of scalar multiplication we have obtained.

    We now prove the following lemma by hand, which relates the orders of Weyl group for $M_{\ad}$ and $H$.
    \begin{lemma}\label{lemma:comparison-Weyl-and-S-group}
        In Cases $(2), (4), (5)$, we have 
        \begin{align*}
            \frac
            {(\abs{I_{\triv}}/2)^2}
            {\abs{W(M_{\ad}, L/A_M)_{\sigma^L_0}} \abs{S^{\natural}_{\phi_{\sigma^L_0}}}} 
            = 
            \frac{1}{\abs{W(H, I)_{\sigma^I_0}} \abs{S^{\natural}_{\phi^I_0}}}.
        \end{align*}
    \end{lemma}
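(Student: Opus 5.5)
I will prove \cref{lemma:comparison-Weyl-and-S-group} by a direct case-by-case computation of the four finite quantities in Cases $(2)$, $(4)$, $(5)$. For each case I determine $\abs{I_{\triv}}$, $W(M_{\ad}, L/A_M)_{\sigma^L_0}$, $S^{\natural}_{\phi_{\sigma^L_0}}$, $W(H,I)_{\sigma^I_0}$ and $S^{\natural}_{\phi^I_0}$, and then check the identity. The inputs are all available earlier in the paper:
\begin{itemize}
\item the explicit shape of $\sigma^L_0$ in \cref{proposition:description-of-endoscopic-lift-case-2}, \cref{proposition:description-of-endoscopic-lift-case-4} and \cref{proposition:description-of-endoscopic-lift-case-5};
\item the standard parameter \cref{eq:parameter-from-PGL3}, $\mathbf{1}\oplus\phi^{H'}_0\oplus(\phi^{H'}_0)^\vee\oplus\mathbf{1}$;
\item the description of the weights $w(\phi^H_0)$, whose cardinality is $\abs{I_{\triv}}$.
\end{itemize}

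\textbf{Case (2).} Here $L=I=$ maximal for $H'$, and $\abs{I_{\triv}}=2$, so the numerator is $1$. The proof of \cref{proposition:description-of-endoscopic-lift-case-2} gives $W(M_{\ad},L/A_M)_{\sigma^L_0}\cong \Z/2\Z$ or $1$, according to whether $\sigma^{H'}$ is self-dual. On the $H$-side, $I=H$ in the discrete case, so $W(H,I)=1$ and $\abs{S^\natural_{\phi^I_0}}=\abs{S_{\phi^H}}$. This group is computed in \cite{GS23LLC}*{Lemma 2.4(i)}: it is $\Z/3\Z$, or $\mu_2\times\ldots$ when the parameter factors through $\PGL_3$ with the self-duality twist.

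For $S^\natural_{\phi^L_0}$, I compute the centralizer in $\widehat{L/A_M}$. The dual group is the Levi of $\Spin_8$ with derived group $\SL_3$ and a $2$-dimensional central torus. The centralizer of a discrete parameter into it is the centre $Z(\widehat{L/A_M})$, whose $\Gamma_F$-fixed part modulo the connected part contributes the component group. I expect $\pi_0$ to have order $3$ times the $\mu_2$-factor from $Z(\Spin_8)$. I then verify that the ratio matches, using the identity $4\abs{S^\natural_{\phi^I}}=\abs{S^\natural_{\phi^{L/A_M}}}$ already used in Cases $(1),(3)$ as a model.

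\textbf{Cases (4) and (5).} I split according to whether $\det\phi^I_0$ is trivial, respectively according to the three possibilities for $(\chi_1,\chi_2,\chi_3)$, so that $\abs{I_{\triv}}/2\in\{1,2,4\}$. In each subcase:
\begin{itemize}
\item $W(M_{\ad},L/A_M)_{\sigma^L_0}$ is computed via \cref{lemma:Weyl-group-normalizer} and \cref{lemma:computation-normalizer}, as a stabilizer inside $W(D_4)\cong(\Z/2\Z)^4_0\rtimes S_4$ (or its relative version). The extra stabilizing elements come from the trivial characters: they are reflections $s_{\beta_1},s_{\beta_4}$ and, in the totally trivial case, permutations, as in \cref{lemma:transitivity}.
\item $W(H,I)_{\sigma^I_0}$ is a subgroup of $W(G_2)$ of order $\le 12$. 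It is computed by asking which Weyl elements fix the character $(\chi_1,\chi_2,\chi_3)$ up to the $G_2$-action by permutations and inversion.
\item The $S^\natural$ groups are tori-centre component groups modulo $A$. These are read off from \cref{lemma:center-connected}, which says centres of Levis of adjoint $E_6$ are connected; the same holds on the $G_2$ side since $G_2$ is adjoint and simply connected. Hence $\abs{S^\natural}$ reduces to counting component groups of $Z(\widehat{L})$, namely $\abs{Z(\Spin_8)}$-type contributions $=4$ for $L$, against $1$ for $I$.
\end{itemize}

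The identity then becomes the orbit count $(\abs{I_{\triv}}/2)^2\cdot\abs{W(H,I)_{\sigma^I_0}}\cdot\abs{S^\natural_{\phi^I_0}}=\abs{W(M_{\ad},L/A_M)_{\sigma^L_0}}\cdot\abs{S^\natural_{\phi^L_0}}$. This is naturally explained by \cref{lemma:transitivity}: $W(M_{\ad},L/A_M)_{\sigma^L_0}$ acts transitively on $w(\phi^H_0)/\{\pm1\}\times s_1w(\phi^H_0)/\{\pm1\}$. The stabilizer of the base point $([\varpi^\vee_{\beta_1}],[\varpi^\vee_{\beta_4}])$ should be identified with $W(H,I)_{\sigma^I_0}$ times the quotient $S^\natural_{\phi^L_0}/S^\natural_{\phi^I_0}$. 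I would try to prove that stabilizer identification uniformly: elements fixing both weights preserve the subspace $\theta(\cO^H)$, and so act through $\Norm$ of $\widehat H\cap\widehat L$.

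The main obstacle is the fully trivial subcase of Case $(5)$, where $\abs{I_{\triv}}/2=4$, the numerator is $16$, and $W(M_{\ad},T/A_M)_{\sigma^L_0}$ is all of $W(D_4)$, of order $192$. There the bookkeeping of the factor $4$ from $Z(\Spin_8)$ against $\abs{W(G_2)}=12$ must come out exactly: $16/(192\cdot 4)$ versus $1/(12\cdot 4)$ requires care about which $S^\natural$ contributions are present. I would check this subcase by explicit computation first to fix the conventions before doing the others.
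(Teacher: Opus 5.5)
Your strategy, an explicit case-by-case count of the four quantities, is the same as the paper's. As written, however, it does not prove the lemma: the values you assign to the $S^{\natural}$-groups are wrong, and with them the identity fails.

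By definition $S^{\natural}_{\phi^L_0}$ is the centralizer of $\phi^L_0$ in $\widehat{L^{\natural}}$. Moreover $(L/A_M)^{\natural}=L/A_L=L_{\ad}$, because centres of Levi subgroups of split adjoint groups are connected (\cref{lemma:center-connected}). In Cases $(2)$, $(4)$, $(5)$ this group is $\PGL_3$, $\PGL_2$ and the trivial group. Hence $S^{\natural}_{\phi^L_0}$ is $\mu_3$, $\mu_2$, $1$, of orders $3$, $2$, $1$. Neither the central torus of the Levi subgroup of $\Spin_8$ nor $Z(\Spin_8)$ contributes. The relation $4\abs{S^{\natural}_{\phi^I}}=\abs{S^{\natural}_{\phi^{L/A_M}}}$ belongs to Cases $(1)$, $(3)$, where $L^{\natural}$ is $\PGSO_8$, resp.\ $\PGL_2^3$, and it has no analogue here. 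On the $H$-side (note that $I=H$ in Case $(2)$; it is not ``$L=I$'') one has $\abs{S^{\natural}_{\phi^I_0}}=3$ or $6$ in Case $(2)$, according as $\sigma^{H'}$ is not or is self-dual. The extra elements come from the longest element of $W(G_2)$, which acts on $\widehat{H'}=\SL_3$ by the duality involution. In Case $(4)$ the order is $2$, and in Case $(5)$ it is $1$. With your values (``$3$ times a $\mu_2$'' in Case $(2)$, and ``$4$ for $L$ against $1$ for $I$'' in Cases $(4)$, $(5)$), the two sides of the lemma differ by a factor $2$, resp.\ $4$, in every subcase. For instance, $16/(192\cdot 4)\neq 1/12$ in the fully trivial subcase of Case $(5)$. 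The discrepancy you flag there is therefore not a matter of conventions: both $S^{\natural}$-groups are trivial, and the identity is just $16/192=1/12$.

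The second gap is that the Weyl-stabilizer orders, which carry the real content, are only gestured at. What must be established is the following.
In Case $(2)$, $\abs{W(M_{\ad},L/A_M)_{\sigma^L_0}}\in\{1,2\}$ according to self-duality of $\sigma^{H'}$ (this is in the proof of \cref{proposition:description-of-endoscopic-lift-case-2}), against $W(H,I)=1$.
In Case $(4)$, both stabilizers have order $1$ or $2$, according as $\sigma^I_0$ is not or is self-dual, when $\det\phi^I_0\neq\mathbf{1}$; they have orders $8$ and $2$ when $\det\phi^I_0=\mathbf{1}$.
In Case $(5)$, the orders are $192$ and $12$ in the fully trivial subcase. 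When $\chi_1=\mathbf{1}\neq\chi_2=\chi_3^{-1}$, they are $8$ or $16$ against $2$ or $4$, according as $\chi_2^2\neq\mathbf{1}$ or $\chi_2^2=\mathbf{1}$. In the generic subcase one must show that a stabilizing element $(v_i)_i\rtimes\sigma$ of $(\Z/2\Z)^4_0\rtimes S_4$ has $\sigma\in S_1\times S_3$ and $(v_i)_i\in\{0,(1,1,1,1)\}$. This requires tracking how sign changes alter the similitude part of the character.

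Your uniform alternative through \cref{lemma:transitivity} is not carried out, and as stated it is inverted. Orbit--stabilizer gives a stabilizer of order $\abs{W(H,I)_{\sigma^I_0}}\,\abs{S^{\natural}_{\phi^I_0}}/\abs{S^{\natural}_{\phi^L_0}}$, for example $6/3=2$ in Case $(2)$ with $\sigma^{H'}$ self-dual. It is not $W(H,I)_{\sigma^I_0}$ times $S^{\natural}_{\phi^L_0}/S^{\natural}_{\phi^I_0}$, and the sketch via normalizers of $\widehat{H}\cap\widehat{L}$ does not identify it.
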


    \begin{proof}
    As the author has no elegant proof for this lemma, we will prove the lemma by case-by-case consideration.
    We use the isomorphism
    \begin{align*}
        W(M_{\ad}, T/A_M) \cong (\Z/2\Z)^4_0 \rtimes S_4,
    \end{align*}
    where $(\Z/2\Z)^4_0$ is the subgroup of $(\Z/2\Z)^4$ consisting of elements $(v_i)_{i=1, 2, 3, 4}$ with $\sum_i v_i = 0$.

    Before starting the computation, we give a comment on the action of Weyl groups for the similitude orthogonal group.
    If we take an element $t = (t_1, t_2, t_3, t_4, z) \in \GL_1(F)^4 \times \GL_1(F)$ in the maximal torus of $\GSO_8(F)$, the corresponding element in $\GL_8(F)$ is given by the diagonal matrix 
    \begin{align*}
        \diag(t_1, t_2, t_3, t_4, zt_4^{-1}, zt_3^{-1}, zt_2^{-1}, zt_1^{-1}).
    \end{align*}
    Thus, for example the action of the Weyl element $w = (1, 1, 0, 0) \rtimes 1 \in (\Z/2\Z)^4 \rtimes S_4$ is equal to 
    \begin{align*}
        (t_1, t_2, t_3, t_4, z) \mapsto (zt_1^{-1}, zt_2^{-1}, t_3, t_4, z).
    \end{align*}
    Thus, for example, the character $\chi = \chi_1 \otimes \chi_2 \otimes \chi_3 \otimes \chi_4 \otimes \mu$ of $\GL_1(F)^4 \times \GL_1(F)$ transforms into the character $\chi_1^{-1} \otimes \chi_2^{-1} \otimes \chi_3 \otimes \chi_4 \otimes (\mu \chi_1 \chi_2)$ and the character on the torus corresponding to the similitude factor changes.
    Similar changes will occur in the computation for other Levi subgroups.

    We start with Case $(2)$.
    In this case, we have 
    \begin{align*}
        \abs{w(\phi^H_0)/\{\pm 1\}} = \abs{I_{\triv}}/2 = 1
    \end{align*}
    and 
    \begin{align*}
        \abs{S^{\natural}_{\phi_{\sigma^L_0}}} = 3.
    \end{align*}
    Also, we have 
    \begin{align*}
        \abs{W(M_{\ad}, L/A_M)_{\sigma^L_0}} 
        = 
        \begin{cases}
            1 \quad \text{if $\sigma^L_0$ is not self-dual}, \\
            2 \quad \text{if $\sigma^L_0$ is self-dual},
        \end{cases}
    \end{align*}
    and the nontrivial element in the second case is given by the longest element of $M_{\ad}$.
    On the other hand, we have 
    \begin{align*}
        \abs{W(H, I)_{\sigma^I_0}} = 1
    \end{align*}
    and 
    \begin{align*}
        \abs{S^{\natural}_{\phi^I_0}}
        = 
        \begin{cases}
            3 \quad \text{if $\sigma^L_0$ is not self-dual}, \\
            6 \quad \text{if $\sigma^L_0$ is self-dual},
        \end{cases}
    \end{align*}
    because the longest element of $\widehat{H}$ acts on the group $\widehat{\PGL_3}$ as the self-dual involution.
    Hence, we obtain the result in Case $(2)$.
    
    Next, we consider Case $(4)$.
    In this case, we have 
    \begin{align*}
        \abs{S^{\natural}_{\phi_{\sigma^L_0}}}
        =\abs{S^{\natural}_{\phi^I_0}}
        =2.
    \end{align*}
    We first assume that 
    \begin{align*}
        \abs{w(\phi^H_0)/\{\pm 1\}} = \abs{I_{\triv}}/2 = 1.
    \end{align*}
    In this case, we have 
    \begin{align*}
        \abs{W(M_{\ad}, L/A_M)_{\sigma^L_0}} 
        = 
        \abs{W(H, I)_{\sigma^I_0}}
        = 
        \begin{cases}
            1 \quad \text{if $\sigma^I_0$ is not self-dual}, \\
            2 \quad \text{if $\sigma^I_0$ is self-dual}.
        \end{cases}
    \end{align*}
    The nontrivial element is given by the longest element in this case.
    This implies the lemma.
    We then assume that 
    \begin{align*}
        \abs{w(\phi^H_0)/\{\pm 1\}} = \abs{I_{\triv}}/2 = 2.
    \end{align*}
    In this case, the representation $\sigma^I_0$ is self-dual and we have 
    \begin{align*}
        \abs{W(M_{\ad}, L/A_M)_{\sigma^L_0}} 
        = 
        8.
    \end{align*}
    The group $W(M_{\ad}, L/A_M)_{\sigma^L_0}$ is generated by the image of the elements 
    \begin{align*}
        0 \rtimes (12), (1, 1, 0, 0) \rtimes 1, (0, 0, 1, 1) \rtimes 1
    \end{align*}
    in $W(M_{\ad}, T/A_M) \subset (\Z/2\Z)^4 \rtimes S_4$.
    Also, we have
    \begin{align*}
        \abs{W(H, I)_{\sigma^I_0}}
        = 
        2.
    \end{align*}
    This completes the proof.
    
    Finally, we consider case $(5)$.
    In this case, we have 
    \begin{align*}
        \abs{S^{\natural}_{\phi_{\sigma^L_0}}}
        =\abs{S^{\natural}_{\phi^I_0}}
        =1.
    \end{align*}
    We first assume that 
    \begin{align*}
        \abs{w(\phi^H_0)/\{\pm 1\}} = 4.
    \end{align*}
    In this case, we have 
    \begin{align*}
        \abs{W(M_{\ad}, L/A_M)_{\sigma^L_0}} 
        = 
        \abs{W(M_{\ad}, T/A_M)}
        = 
        192. 
    \end{align*}
    Also, we have 
    \begin{align*}
        \abs{W(H, I)_{\sigma^I_0}}
        =
        \abs{W(H, T_H)}
        =
        12.
    \end{align*}
    Thus, we obtain the result. 
    Next, we assume that 
    \begin{align*}
        \abs{w(\phi^H_0)/\{\pm 1\}} = \abs{I_{\triv}}/2 = 2.
    \end{align*}
    In this case, we have $\chi_1=1, \chi_2 = \chi_3^{-1}$ in the notation of case $(5)$ in \cref{remark:divide-five-cases}.
    We have 
    \begin{align*}
        \abs{W(M_{\ad}, L/A_M)_{\sigma^L_0}} 
        = 
        \begin{cases}
            8 \quad \text{if $\chi_2^2 \neq 1$}, \\
            16 \quad \text{if $\chi_2^2 = 1$}.
        \end{cases}
    \end{align*}
    In the first case, the group $W(M_{\ad}, L/A_M)_{\sigma^L_0}$ is generated by the image of the elements 
    \begin{align*}
        0 \rtimes (12), (1, 1, 0, 0) \rtimes 1, (0, 0, 1, 1) \rtimes (3 4)
    \end{align*}
    in $W(M_{\ad}, T) \subset (\Z/2\Z)^4 \rtimes S_4$.
    In the second case, the group $W(M_{\ad}, L)_{\sigma^L_0}$ is generated by the image of the elements 
    \begin{align*}
        0 \rtimes (12), (1, 1, 0, 0) \rtimes 1, (0, 0, 1, 1) \rtimes (3 4), 0 \rtimes (3 4)
    \end{align*}
    in $W(M_{\ad}, T) \subset (\Z/2\Z)^4 \rtimes S_4$.
    A similar consideration implies that 
    \begin{align*}
        \abs{W(H, I)_{\sigma^I_0}}
        = 
        \begin{cases}
            2  \quad \text{if $\chi_2^2 \neq 1$}, \\
            4  \quad \text{if $\chi_2^2 = 1$}.
        \end{cases}
    \end{align*}

    We note that we have an isomorphism $W(H, T_H) \xrightarrow{\sim} S_3 \times \{ \pm 1 \} = D_6$, where the group $S_3$ acts on the set $\chi_1, \chi_2, \chi_3$ by permutation and $-1$ acts by inversion.
    The group $W(H, I)_{\sigma^I_0}$ is generated by 
    \begin{align*}
        (2 3) \times -1 
    \end{align*}
    in the group $W(H, T_H) \xrightarrow{\sim} S_3 \times \{ \pm 1 \}$ in the first case, and 
    \begin{align*}
        (2 3) \times 1, 1 \times -1
    \end{align*}
    in the second case.
    This completes the proof. 
    Lastly, we assume that 
    \begin{align*}
        \abs{w(\phi^H_0)/\{\pm 1\}} = \abs{I_{\triv}}/2 = 1.
    \end{align*}
    In this case, note that we have 
    \begin{align*}
        \chi_i \chi_j \neq 1
    \end{align*}
    for $i, j=1,2,3$ with $i \neq j$.
    We take an element 
    \begin{align*}
        w = (v_1, v_2, v_3, v_4) \rtimes \sigma \in (\Z/2\Z)^4 \rtimes S_4,
    \end{align*}
    with $\sum_{i=1, 2, 3, 4}v_i = 0 \in \Z/2\Z$.
    If this element is in the subgroup $W(M_{\ad}, L)_{\sigma^L_0}$, then as the characters $\chi_i$ are nontrivial, we have $\sigma \in S_1 \times S_3$.
    Also, the condition above and the comments in the beginning of the proof imply that we have 
    \begin{align*}
        (v_1, v_2, v_3, v_4) \in \{ (0, 0, 0, 0), (1, 1, 1, 1) \}.
    \end{align*}
    Thus, the element $w$ uniquely corresponds to an element of $W(H, T_H)_{\sigma^I_0}$, the trivial element or the longest element of $W(H, T_H)$.
    This completes the proof.
\end{proof}

     We note that, on the subset of $\cU$ defined by the equations $\varpi_{\beta_1}^{\vee}(\cW\lambda) = \varpi_{\beta_4}^{\vee}(\cW\lambda) = 0$, we have 
    \begin{align*}
        P_{[\varpi_{\beta_1}^{\vee}]}(\cW\lambda) 
        &= 
        \prod_{[\varpi] \in w(\phi^H_0)/\{\pm 1\}, [\varpi] \neq [\varpi_{\beta_1}^{\vee}]}
        ((\varpi_{\beta_1}^{\vee}(\cW\lambda))^2 - \varpi(\cW\lambda)^2) \\
        &= 
        \prod_{[\varpi] \in w(\phi^H_0)/\{\pm 1\}, [\varpi] \neq [\varpi_{\beta_1}^{\vee}]}
        (- \varpi(\cW\lambda)^2) \\
        &= (-1)^{\abs{w(\phi^H_0)}/2 - 1} \lim_{s \to 0+} s^{-2}R_s(\lambda)
    \end{align*}
    Also, we have 
    \begin{align*}
        \abs{w(\phi^H_0)} = \abs{I_{\triv}}
    \end{align*}
    by definition.
    Thus, on the subspace of $\cU$ defined by the equations $\varpi_{\beta_1}^{\vee}(\cW\lambda) = \varpi_{\beta_4}^{\vee}(\cW\lambda) = 0$, regarding \cref{eq:limitand}, we have 
    \begin{multline*}
        \lim_{s \to 0+} 
          (16s)^2(8s)^2
          \Phi(\pi_{\lambda}) 
          T(s, \pi_{\lambda})
          \frac{P(\pi_{\lambda})^2 \abs{Q(\pi_{\lambda})}}{R_{16s}(\cW\lambda) R_{8s}(s_1\cW \lambda)} \\
        = 
        \Phi(\pi_{\lambda})
        T(0, \pi_{\lambda})
        \frac{P(\pi_{\lambda})^2\abs{Q(\pi_{\lambda})}}{P_{[\varpi_{\beta_1}^{\vee}]}(\cW\lambda)P_{[\varpi_{\beta_1}^{\vee}]}(s_1\cW\lambda)},
    \end{multline*}
    which appears in \cref{eq:result-computation}.
    Thus, by \cref{lemma:comparison-Weyl-and-S-group}, 
    the limit of \cref{eq:localization} can be computed by the restricting the domain of integration to $\theta^{-1}(\cV)$, multiplying
    \begin{align*}
       \frac{1}{512} 
       \frac{\abs{S^{\natural}_{\phi^L_0}}}{\abs{S^{\natural}_{\phi^I_0}}}
       (\lim_{s \to 0+}s^{-1} \gamma(s, \mathbf{1}, \psi))^2
       (\lim_{s \to 0+}s^{-1} \zeta_F(s)^{-1})^2  (16s)^2(8s)^2
    \end{align*}
    and formally taking the limit as $s \to 0+$ inside the integral.
    Instead of multiplying this factor, we can  multiply
    \begin{align*}
        \frac{1}{512}
        \frac{\abs{S^{\natural}_{\phi^L_0}}}{\abs{S^{\natural}_{\phi^I_0}}}
        \zeta_F(16s)^{-1} 
        \zeta_F(8s)^{-1}
        \gamma(16s, \mathbf{1}, \psi) 
        \gamma(8s, \mathbf{1}, \psi).
    \end{align*}
    The limit is equal to 
    \begin{multline*}
        \frac{1}{512}
        \int_{\phi^H \in \theta^{-1}(\cV)} 
        \Phi(\theta(\phi^H))
        \\
        \times
        \{
        \lim_{s \to 0+} 
        \zeta_F(16s)^{-1} 
        \zeta_F(8s)^{-1}
        \gamma(16s, \mathbf{1}, \psi) 
        \gamma(8s, \mathbf{1}, \psi) \\
        \times r_{w^{-1}Pw \vert P}(\theta(\phi^H) \otimes \chi_{2s\rho_P} )
        \frac{\abs{S^{\natural}_{\phi^L_0}}}{\abs{S^{\natural}_{\phi^I_0}}}
        \mu^{M_{\ad}}_{\psi}(\theta(\phi^H))
        \} 
        \rd \phi^H.
    \end{multline*}
    By \cref{eq:decomposition-adjoint-restriction-to-G2} and \cref{proposition:normalizing-factor-for-s1s6}, the limit in the integrand is equal to 
    \begin{align*}
        c(\phi^H) 
        \mu^H_{\HII, \psi}(\phi^H)
    \end{align*} 
    for a smooth function $c$ on $\cV$ with absolute value $1$ as in \cref{lemma:adjoint-L-explicit} and \cref{corollary:adjoint-L-abs-value}. 
    Here, we can use, for example, the following computation of the restriction of the adjoint representation in Case $(4)$ based on \cref{eq:decomposition-adjoint-restriction-to-G2}: 
    \begin{multline*}
        \Ad_{H} \circ \iota^{H}_I \circ \phi^I
        = 
        \Ad(\phi^I) 
        \oplus 
        \mathbf{1} 
        \oplus 
        \det(\phi^I) 
        \oplus 
        \det(\phi^I)^{-1} \\
        \oplus \phi^I 
        \oplus \phi^{I, \vee}
        \oplus \phi^I \otimes \det(\phi^I)
        \oplus \phi^{I, \vee} \otimes \det(\phi^I)^{-1}.
    \end{multline*}
    We omit a similar computation.
    Thus, we finally obtain 
    \begin{align*}
        \lim_{s \to 0+}
        \gamma(s, \mathbf{1}, \psi)^2
        \int_{\pi \in \Temp_{\ind}(M_{\ad})} 
        \Phi(\pi)    
        r_{w^{-1}Pw \vert P}(\pi \otimes \chi_{2s\rho_P} )\mu^{M_{\ad}}_{\psi}(\pi)\rd \pi  \\
        = 
        \frac{1}{512}
        \int_{\Temp_{\ind}(H)/\sim_{st}} 
        \Phi(\theta(\phi^H))
        c(\phi^H)
        \mu^H_{\HII, \psi}(\phi^H)
        \rd \phi^H,
    \end{align*}
    for any smooth function $\Phi$ supported on $\cV$.
    This completes the proof.
\end{proof}

\subsection{The proof of the Hiraga--Ichino--Ikeda conjecture for $G_2$}

\begin{theorem}\label{theorem:main-theorem}
    Assume \cref{conjecture:endoscopic-character-relation}.
    Then, \cref{conjecture:formal-degree-conjecture} holds for $H = G_2$.
\end{theorem}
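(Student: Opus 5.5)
The plan is to compare the two limit formulas, \cref{theorem:geometric-limit-formula} and \cref{theorem:spectral-limit-formula}, through the twisted endoscopic character identities, and so obtain a Plancherel formula for $H$ with density $c(\phi)\mu^H_{\HII,\psi}$ up to a constant. First, take $f^H\in\Cc(H)$. By \cref{lemma:transfer-surjection} choose $f=f^{M_{\ad}}$ such that $f^H$ is a transfer of $f$ with respect to $M_{\ad}\rtimes\tau^{-1}$. Since $w^{-1}=\tau$ and $\widetilde{w}^{-1}$ acts on $M_{\ad}$ by the splitting-preserving automorphism $\tau$ (\cref{proposition:Tits-lifting-preserves-splitting}), the geometric side of \cref{theorem:geometric-limit-formula} is $\frac{K}{512}\int_{H\backslash M_{\ad}}f(\tau(m)^{-1}m)\rd_\psi\overline{m}$. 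By \cref{proposition:transfer-singular-orbit} this equals $\frac{K}{512}f^H(1)$.

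On the spectral side, I would apply \cref{theorem:spectral-limit-formula} with $\widetilde{\pi}(w^{-1})=\widetilde{\pi}(\tau)$. By linearity over tempered subrepresentations, the sum $\sum_{\pi\subset\Pi^X_\phi}\tr(\widetilde\pi(\tau)\pi(f))$ becomes the twisted character of the whole packet, generically a single induced packet by \cref{theorem:generic-irreducibility-functorial-lifts-G2}. I would then invoke \cref{conjecture:endoscopic-character-relation} in the inverted form of \cref{remark:character-relation-inversion}, keeping track of the $f$ versus $f^\vee$ convention in \cref{theorem:spectral-limit-formula}. This replaces the sum by $\sum_{\pi^H\in\Pi_\phi(H)}\dim\rho_{\pi^H}\,\tr\pi^H({f^H}^\vee)$. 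Equating the two sides gives
\[
Kf^H(1)=\int_{\Temp_{\ind}(H)/\sim_{st}}\Big(\sum_{\pi^H}\langle\pi^H,1\rangle\tr\pi^H({f^H}^\vee)\Big)c(\phi)\mu^H_{\HII,\psi}(\phi)\rd\phi
\]
for all $f^H\in\Cc(H)$. Lifting the integral back to $\Temp_{\ind}(H)$, the right-hand side is the Plancherel integral with density $\langle\sigma,1\rangle^{-1}\cdot\langle\pi^H,1\rangle c\,\mu^H_{\HII,\psi}$, which is $c\,\mu^H_{\HII,\psi}$ after matching multiplicities with \cref{remark:reduction-to-G}. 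Here one must check that the pushforward $\rd\phi$ and the packet sums are compatible; this is where the case $p=3$ caveat in \cref{theorem:local-Langlands-correspondence-G2} needs care. I expect the injectivity there to suffice by using dimension counts.

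Next, by the uniqueness in \cref{proposition:sakellaridis-venkatesh}, $K^{-1}c\,\mu^H_{\HII,\psi}=\mu^H_\psi$ almost everywhere. Since $\mu^H_\psi$ and $\mu^H_{\HII,\psi}$ are positive and $\abs{c}=1$, we get $c=1$ and $\mu^H_\psi=K^{-1}\mu^H_{\HII,\psi}$ almost everywhere.

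The main obstacle is removing the unknown constant $K$. My plan is to compare on a component where HII is already known, namely induced representations from the torus or from $\GL_2$-Levis: there $\mu^H_\psi=d\cdot j^{-1}$, the formal degree of a Levi of $\GL$ type follows from \cref{theorem:formal-degree-conjecture-GLn}, and $j$ follows from Shahidi/Langlands–Shahidi for generic representations. This forces $K=1$. Finally, to extract part (1) for discrete series, and part (2) on nondiscrete components, I would use \cref{remark:reduction-to-G}. For $M=H$ the density equals the formal degree, so (1) follows directly. For proper Levis, (1) is the $\GL$-type case, and (2) then follows from the combined identity.
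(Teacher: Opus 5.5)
Your argument is the paper's argument up to the last step. You lift $f^H$ by surjectivity of the transfer, identify the geometric limit with $f^H(1)$ through the singular-orbit transfer identity, and rewrite the spectral limit using generic irreducibility and the inverted twisted character identity. Then \cref{proposition:sakellaridis-venkatesh} together with positivity gives $c=1$ almost everywhere and $\mu^H_\psi=K^{-1}\mu^H_{\HII,\psi}$.

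The one genuine difference is how you fix $K$. The paper evaluates at a single atom of the Plancherel measure, the Steinberg representation of $G_2$, for which the conjecture is already known by \cite{HII08}; $K=1$ then drops out of one explicit formal degree. You instead compare on a continuous component. This is legitimate, since one component of positive measure suffices. You should take the principal series component. There $j$ factors exactly into rank-one pieces, because the measure on $N$ is the product of the $\psi$-normalized root-group measures. Each piece is then a Tate-thesis computation, so you only need to track Tate $\gamma$-factors and the paper's normalization of $\rd\pi$, including the $\gamma^*(0,\mathbf{1},\psi)$ factors built into the measure on unitary characters.

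The $\GL_2$-Levi option is heavier than you suggest, for two reasons. Shahidi's Plancherel formula is only stated up to a measure-dependent constant. You would also need the Langlands--Shahidi factors for $\Sym^3$ to agree with the Artin factors of the Gan--Savin parameter. So your route trades a single discrete-series input for a careful constant computation in the continuous spectrum. The paper's route is shorter because that bookkeeping is already contained in \cite{HII08}.

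Separately, the $p=3$ caveat is not where care is needed. \Cref{conjecture:endoscopic-character-relation} is phrased with $\dim\rho_{\pi^H}$, defined through Gan--Savin's injection, so no bijectivity or dimension count enters. What does need care is the normalization of $\rd\phi$ on $\Temp_{\ind}(H)/\sim_{st}$. It must be the measure with $\int\Phi\,\rd\pi=\int\sum_{\phi_\pi=\phi}\Phi(\pi)\,\rd\phi$, giving unit mass to each discrete parameter, as in the proof of \cref{theorem:key-computation}. Otherwise the packet sum would produce $\lvert\Pi_\phi\rvert\langle\pi^H,1\rangle$ instead of $\langle\pi^H,1\rangle$.
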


\begin{proof}
    We assume \cref{conjecture:endoscopic-character-relation}.
    By \cref{theorem:generic-irreducibility-functorial-lifts-G2}, we can replace the index of the sum in the right-hand side of \cref{theorem:spectral-limit-formula} with the set $\{ \pi \in \Pi^X_{\phi}(M_{\ad})  \}$.
    By \cref{proposition:transfer-singular-orbit}, \cref{theorem:geometric-limit-formula} and \cref{theorem:spectral-limit-formula}, there exists a positive constant $K>0$ such that we have 
    \begin{align*}
        K
        f^H(1) 
        = 
        \int_{\pi^H \in \Temp_{\ind}(H)} \tr\pi^H({f^H}^{\vee}) c(\phi^H_{\pi^H}) \mu^H_{\HII, \psi}(\pi^H)\rd \pi^H,
    \end{align*}
    for any transfer $f^H$ of the function $f$.
    By \cref{lemma:transfer-surjection}, every compactly supported smooth function on $H$ arises as the  transfer of a function $f \in \Cc(M_{\ad})$.
    Thus, the formula above holds for any function $f^H \in \Cc(H)$.
    We see that $c(\phi^H_{\pi^H}) = 1$ for almost all $\pi^H \in \Temp_{\ind}(H)$ by the positivity of the Plancherel measure; see \cref{proposition:sakellaridis-venkatesh}.
    Thus, we have 
    \begin{align*}
        Kf^H(1) 
        = 
        \int_{\pi^H \in \Temp_{\ind}(H)} \tr\pi^H({f^H}^{\vee}) \mu^H_{\HII, \psi}(\pi^H) \rd \pi^H,
    \end{align*}
    for any function $f^H \in \Cc(H)$.
    This completes the proof up to determining the constant $K$.
    However, the constant $K$ must be equal to $1$ since the Hiraga--Ichino--Ikeda conjecture holds for the Steinberg representation of $G_2$; see \cite{HII08}*{3.3}.
    Thus, we obtained the result.
\end{proof}

We also obtain the following refinement of \cref{theorem:key-computation}.
\begin{theorem}\label{theorem:refined-key-computation}
     Let $\Phi$ be a compactly supported smooth function on $\Temp_{\ind}(M_{\ad})$.
     Then, we have 
     \begin{multline*}
        \lim_{s \to 0+}
        \gamma(s, \mathbf{1}, \psi)^2
        \int_{\pi \in \Temp_{\ind}(M_{\ad})} 
        \Phi(\pi)    
        r_{w^{-1}Pw \vert P}(\pi \otimes \chi_{2s\rho_P} )\mu^{M_{\ad}}_{\psi}(\pi)\rd \pi \\
        = 
        \frac{1}{512}
        \int_{\phi^H \in \Temp_{\ind}(H)/\sim_{st}} 
        \lbrace
        \sum_{\pi \in \Pi^X_{\phi^H}} 
        \Phi(\pi)    
        \rbrace
        \mu^H_{\HII, \psi}(\phi^H)
        \rd \phi^H.
    \end{multline*}
\end{theorem}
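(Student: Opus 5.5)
The plan is to obtain \cref{theorem:refined-key-computation} from \cref{theorem:key-computation} by showing that the phase function $c(\phi^H)$ there equals $1$ almost everywhere. The proof of \cref{theorem:main-theorem} already proves this, but only under \cref{conjecture:endoscopic-character-relation}. So the real task is to pin down $c$ without assuming that conjecture.

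The first step is to make $c$ explicit. In the proof of \cref{theorem:key-computation}, $c(\phi^H)$ is the phase of the regularized product
\begin{align*}
\lim_{s\to0+}\gamma(s,\mathbf{1},\psi)^2\, r_{w^{-1}Pw\vert P}(\theta(\phi^H)\otimes\chi_{2s\rho_P})\,\mu^{M_{\ad}}_{\psi}(\theta(\phi^H)).
\end{align*}
By \cref{theorem:formal-degree-conjecture-GSO2n} the factor $\mu^{M_{\ad}}_{\psi}$ is already positive. By \cref{proposition:normalizing-factor-for-s1s6} together with \cref{eq:decomposition-adjoint-restriction-to-G2}, the remaining phase comes from two sources. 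One is the $\epsilon$-factors $\epsilon(\tfrac12,\pi,\Std,\psi)\,\epsilon(\tfrac12,{}^{s_1}\pi,\Std,\psi)$. The other is the ratio $\gamma^*(0,\phi^H,\Ad_H,\psi)\gamma^*(0,\phi^H,\Std_H,\psi)^2/\abs{\cdots}$, which contains the factor $Q/\abs{Q}$ of \cref{corollary:adjoint-L-abs-value}. On the distinguished locus the standard parameter is $\mathbf{1}\oplus\Std_H\circ\phi^H$, which is orthogonal and self-dual. Hence $\epsilon(\tfrac12,\cdot)=\pm1$, and it equals $1$ because the central character of a $\PGSO_8$ representation is trivial (\cref{proposition:central-character-SO2n}).

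The second step is to handle the regularized $\gamma$-values. For an orthogonal parameter, $\gamma^*(0,\Std_H\circ\phi^H,\psi)^2$ is real. Its sign is $\epsilon(\tfrac12)^2=1$ times a ratio of $L$-values at conjugate points, which is a squared modulus. For the adjoint factor I would use the standard identity $\gamma^*(0,\phi,\Ad,\psi)=\epsilon(\tfrac12,\Ad)\cdot\abs{L(1,\Ad)/L(0,\Ad)^*}$-type positivity. Here $\epsilon(\tfrac12,\Ad\circ\phi)=1$ for an orthogonal adjoint representation with trivial determinant, by Deligne's theorem. The regularization signs coming from $\zeta_F(s)^{n}$ are then checked case by case against \cref{lemma:adjoint-L-explicit}. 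Together these steps would show $c\equiv1$ on the distinguished locus, up to a null set.

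As a fallback, and the route I expect the paper actually intends, one can avoid computing signs. The key observation is that the left-hand side of \cref{theorem:key-computation} is unconditional. Apply \cref{theorem:key-computation} with $\Phi$ taken to be the trace function $\pi\mapsto\tr(\widetilde\pi(w^{-1})\pi(f^{\vee}))$ and combine it with \cref{theorem:geometric-limit-formula}. This gives an identity in which the character side is pulled back to $H$. Comparing this identity with \cref{theorem:main-theorem}'s positivity argument requires the endoscopic identity, so the sign route is the one to carry out.

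I expect the main obstacle to be the case-by-case verification that the leftover sign from the regularization, $(-1)^{\abs{w(\phi^H_0)}/2-1}$ in the proof of \cref{theorem:key-computation}, cancels against the signs of $\lim s^{-2}R_s$ and of the $\gamma_A$ versus $\gamma$ normalizations in Cases (2), (4) and (5).
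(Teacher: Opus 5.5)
The paper's own proof is the route you set aside. It reads $c\equiv 1$ off the proof of \cref{theorem:main-theorem}, so it holds only under \cref{conjecture:endoscopic-character-relation}, using positivity and uniqueness of the Plancherel density (\cref{proposition:sakellaridis-venkatesh}). An unconditional argument is therefore a genuinely different goal, but your sign route does not yet reach it.

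The decisive Cases (2), (4), (5) are left as an expected cancellation. There $c$ is not the phase of a product of regularized $\gamma$-values at a point. It emerges from the distributional limit built from \cref{lemma:residue-theorem}, \cref{lemma:transitivity} and the kernels $\bigl((\varpi(\cW\lambda)^2+(16s)^2)(\varpi'(s_1\cW\lambda)^2+(8s)^2)\bigr)^{-1}$. So your factor-by-factor bookkeeping does not apply as stated.

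Your first two steps are also imprecise. In $r_{w^{-1}Pw\vert P}$ the $\epsilon$-factors enter as $\epsilon(\tfrac12+t,\cdot)/\epsilon(t,\cdot)$, with the denominator sitting inside $\gamma_A$, not as $\epsilon(\tfrac12,\cdot)$. Deligne's theorem gives $\epsilon(\tfrac12,\Ad\circ\phi^H)=1$ because $w_2(\Ad\circ\phi^H)=0$: the adjoint representation of the simply connected group $G_2(\C)$ lifts to $\Spin_{14}$. A trivial determinant alone is not enough.

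The missing idea, which makes all sign tracking unnecessary, is that the weight multiplying $\Phi$ is pointwise positive for real $s\in(0,1)$:
\begin{itemize}
\item $\gamma(s,\mathbf{1},\psi)=(1-q^{-s})/(1-q^{-(1-s)})$ is real.
\item $\mu^{M_{\ad}}_{\psi}>0$.
\item By \cref{proposition:normalizing-factor-for-s1s6}, $r_{w^{-1}Pw\vert P}(\pi\otimes\chi_{2s\rho_P})$ is the product over $(t,\phi)\in\{(16s,\phi^{\GL_8}_{\pi}),(8s,\phi^{\GL_8}_{{}^{s_1}\pi})\}$ of
\begin{align*}
\frac{\epsilon(\tfrac12+t,\phi,\psi)}{\epsilon(t,\phi,\psi)}\cdot\frac{L(t,\phi)}{L(1+t,\phi)}.
\end{align*}
The first ratio is $q^{-a(\phi)/2}>0$, since $\psi$ has conductor $\fo_F$. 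For the second, $\phi$ is tempered and orthogonal (it lands in $\SO_8(\C)$), so each $\det(1-x\,\phi_i(\mathrm{Frob}_F)\mid V^{I_F})$ is a real polynomial in $x$. It does not vanish on $[0,1)$ and equals $1$ at $x=0$, hence $L(t,\phi)>0$ and $L(1+t,\phi)>0$.
\end{itemize}

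Hence the left-hand side of \cref{theorem:key-computation} is $\geq 0$ for every $\Phi\geq 0$. Now take $\Phi\geq0$ positive near a point lying over $\theta(\phi^H_0)$. By smoothness of $\theta$ (\cref{proposition:smoothness-of-endoscopic-lift}), the bracket $\sum_{\pi\in\Pi^X_{\phi^H}}\Phi(\pi)$ is nonnegative and positive near $\phi^H_0$. Shrinking the support and using continuity of $c$, $\abs{c}=1$ and $\mu^H_{\HII,\psi}>0$ forces $c(\phi^H_0)\in\R_{\geq0}\cap\bS^1=\{1\}$. This settles all five cases at once and proves the statement as displayed, without \cref{conjecture:endoscopic-character-relation}.
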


\begin{proof}
    From the proof of the previous theorem,
    \begin{align*}
        c(\phi^H) = 1 
    \end{align*}
    in \cref{theorem:key-computation}.
    This completes the proof of the theorem.
\end{proof}

\bibliographystyle{plain}
\bibliography{thesis}

@book {Art13,
    AUTHOR = {Arthur, James},
     TITLE = {The endoscopic classification of representations},
    SERIES = {American Mathematical Society Colloquium Publications},
    VOLUME = {61},
      NOTE = {Orthogonal and symplectic groups},
 PUBLISHER = {American Mathematical Society, Providence, RI},
      YEAR = {2013},
     PAGES = {xviii+590},
      ISBN = {978-0-8218-4990-3},
   MRCLASS = {22E55 (11F66 11F70 11F72 11R37 20G25 22E50)},
  MRNUMBER = {3135650},
MRREVIEWER = {Dihua\ Jiang},
       DOI = {10.1090/coll/061},
       URL = {https://doi.org/10.1090/coll/061},
}

@article {Beuzart-Plessis2021-Plancherel-GLnE-GLnF,
    AUTHOR = {Beuzart-Plessis, Rapha\"el},
     TITLE = {Plancherel formula for {${\rm GL}_n(F)\backslash {\rm
              GL}_n(E)$} and applications to the {I}chino-{I}keda and formal
              degree conjectures for unitary groups},
   JOURNAL = {Invent. Math.},
  FJOURNAL = {Inventiones Mathematicae},
    VOLUME = {225},
      YEAR = {2021},
    NUMBER = {1},
     PAGES = {159--297},
      ISSN = {0020-9910,1432-1297},
   MRCLASS = {22E50 (11F70)},
  MRNUMBER = {4270666},
MRREVIEWER = {Alexandre\ Afgoustidis},
       DOI = {10.1007/s00222-021-01032-6},
       URL = {https://doi.org/10.1007/s00222-021-01032-6},
}

@article {BP21a,
    AUTHOR = {Beuzart-Plessis, Rapha\"{e}l},
     TITLE = {On the formal degree conjecture for classical groups},
   JOURNAL = {Oberwolfach Report},
    VOLUME = {18},
      YEAR = {2021},
    NUMBER = {3},
     PAGES = {2096--2101},
       DOI = {10.4171/OWR/2021/39},
}

@article {HII08,
    AUTHOR = {Hiraga, Kaoru and Ichino, Atsushi and Ikeda, Tamotsu},
     TITLE = {Formal degrees and adjoint {$\gamma$}-factors},
   JOURNAL = {J. Amer. Math. Soc.},
  FJOURNAL = {Journal of the American Mathematical Society},
    VOLUME = {21},
      YEAR = {2008},
    NUMBER = {1},
     PAGES = {283--304},
      ISSN = {0894-0347,1088-6834},
   MRCLASS = {22E50},
  MRNUMBER = {2350057},
       DOI = {10.1090/S0894-0347-07-00567-X},
       URL = {https://doi.org/10.1090/S0894-0347-07-00567-X},
}

@article {Mok15,
    AUTHOR = {Mok, Chung Pang},
     TITLE = {Endoscopic classification of representations of quasi-split
              unitary groups},
   JOURNAL = {Mem. Amer. Math. Soc.},
  FJOURNAL = {Memoirs of the American Mathematical Society},
    VOLUME = {235},
      YEAR = {2015},
    NUMBER = {1108},
     PAGES = {vi+248},
      ISSN = {0065-9266,1947-6221},
      ISBN = {978-1-4704-1041-4; 978-1-4704-2226-4},
   MRCLASS = {22E55 (11R42 22E50)},
  MRNUMBER = {3338302},
MRREVIEWER = {Neven\ Grbac},
       DOI = {10.1090/memo/1108},
       URL = {https://doi.org/10.1090/memo/1108},
}

@article {Sha90,
    AUTHOR = {Shahidi, Freydoon},
     TITLE = {A proof of {L}anglands' conjecture on {P}lancherel measures;
              complementary series for {$p$}-adic groups},
   JOURNAL = {Ann. of Math. (2)},
  FJOURNAL = {Annals of Mathematics. Second Series},
    VOLUME = {132},
      YEAR = {1990},
    NUMBER = {2},
     PAGES = {273--330},
      ISSN = {0003-486X,1939-8980},
   MRCLASS = {11R39 (11F70 11S37 22E35 22E55)},
  MRNUMBER = {1070599},
MRREVIEWER = {Stephen\ Gelbart},
       DOI = {10.2307/1971524},
       URL = {https://doi.org/10.2307/1971524},
}

@article {Shi12,
    AUTHOR = {Shin, Sug Woo},
     TITLE = {Automorphic {P}lancherel density theorem},
   JOURNAL = {Israel J. Math.},
  FJOURNAL = {Israel Journal of Mathematics},
    VOLUME = {192},
      YEAR = {2012},
    NUMBER = {1},
     PAGES = {83--120},
      ISSN = {0021-2172,1565-8511},
   MRCLASS = {22E35 (43A85)},
  MRNUMBER = {3004076},
MRREVIEWER = {Valeri\u{\i}\ Vladimirovich\ Volchkov},
       DOI = {10.1007/s11856-012-0018-z},
       URL = {https://doi.org/10.1007/s11856-012-0018-z},
}

@article {Wal03,
    AUTHOR = {Waldspurger, J.-L.},
     TITLE = {La formule de {P}lancherel pour les groupes {$p$}-adiques
              (d'apr\`es {H}arish-{C}handra)},
   JOURNAL = {J. Inst. Math. Jussieu},
  FJOURNAL = {Journal of the Institute of Mathematics of Jussieu. JIMJ.
              Journal de l'Institut de Math\'{e}matiques de Jussieu},
    VOLUME = {2},
      YEAR = {2003},
    NUMBER = {2},
     PAGES = {235--333},
      ISSN = {1474-7480,1475-3030},
   MRCLASS = {22E35 (22E50)},
  MRNUMBER = {1989693},
MRREVIEWER = {Rebecca\ Herb},
       DOI = {10.1017/S1474748003000082},
       URL = {https://doi.org/10.1017/S1474748003000082},
}

@article {Xu18local,
    AUTHOR = {Xu, Bin},
     TITLE = {L-packets of quasisplit {$\mathrm{GSp}(2n)$} and {$\mathrm{GO}(2n)$}},
   JOURNAL = {Math. Ann.},
  FJOURNAL = {Mathematische Annalen},
    VOLUME = {370},
      YEAR = {2018},
    NUMBER = {1-2},
     PAGES = {71--189},
      ISSN = {0025-5831,1432-1807},
   MRCLASS = {22E50 (11F70)},
  MRNUMBER = {3747484},
MRREVIEWER = {Ivan\ Mati\'{c}},
       DOI = {10.1007/s00208-016-1515-x},
       URL = {https://doi.org/10.1007/s00208-016-1515-x},
}

@article {Var17,
    AUTHOR = {Varma, Sandeep},
     TITLE = {On descent and the generic packet conjecture},
   JOURNAL = {Forum Math.},
  FJOURNAL = {Forum Mathematicum},
    VOLUME = {29},
      YEAR = {2017},
    NUMBER = {1},
     PAGES = {111--155},
      ISSN = {0933-7741,1435-5337},
   MRCLASS = {22E35 (22E50)},
  MRNUMBER = {3592596},
MRREVIEWER = {Jeffrey\ D.\ Adler},
       DOI = {10.1515/forum-2015-0113},
       URL = {https://doi.org/10.1515/forum-2015-0113},
}

@article {GI14,
    AUTHOR = {Gan, Wee Teck and Ichino, Atsushi},
     TITLE = {Formal degrees and local theta correspondence},
   JOURNAL = {Invent. Math.},
  FJOURNAL = {Inventiones Mathematicae},
    VOLUME = {195},
      YEAR = {2014},
    NUMBER = {3},
     PAGES = {509--672},
      ISSN = {0020-9910,1432-1297},
   MRCLASS = {11F67 (11F70 22E50)},
  MRNUMBER = {3166215},
MRREVIEWER = {Ivan\ Mati\'{c}},
       DOI = {10.1007/s00222-013-0460-5},
       URL = {https://doi.org/10.1007/s00222-013-0460-5},
}

@article {GS23LLC,
    AUTHOR = {Gan, Wee Teck and Savin, Gordan},
     TITLE = {The local {L}anglands conjecture for {$G_2$}},
   JOURNAL = {Forum Math. Pi},
  FJOURNAL = {Forum of Mathematics. Pi},
    VOLUME = {11},
      YEAR = {2023},
     PAGES = {Paper No. e28, 42},
      ISSN = {2050-5086},
   MRCLASS = {11S37 (11F27 11F70 22E50)},
  MRNUMBER = {4658199},
MRREVIEWER = {Yiwen\ Ding},
       DOI = {10.1017/fmp.2023.27},
       URL = {https://doi.org/10.1017/fmp.2023.27},
}

@article {KS99,
    AUTHOR = {Kottwitz, Robert E. and Shelstad, Diana},
     TITLE = {Foundations of twisted endoscopy},
   JOURNAL = {Ast\'{e}risque},
  FJOURNAL = {Ast\'{e}risque},
    NUMBER = {255},
      YEAR = {1999},
     PAGES = {vi+190},
      ISSN = {0303-1179,2492-5926},
   MRCLASS = {22E55 (11F70 11R34 22-02 22E50)},
  MRNUMBER = {1687096},
MRREVIEWER = {Volker\ J.\ Heiermann},
}

@article {Art89IntResI,
    AUTHOR = {Arthur, James},
     TITLE = {Intertwining operators and residues. {I}. {W}eighted
              characters},
   JOURNAL = {J. Funct. Anal.},
  FJOURNAL = {Journal of Functional Analysis},
    VOLUME = {84},
      YEAR = {1989},
    NUMBER = {1},
     PAGES = {19--84},
      ISSN = {0022-1236},
   MRCLASS = {22E55 (11F70 11F72)},
  MRNUMBER = {999488},
MRREVIEWER = {David\ Joyner},
       DOI = {10.1016/0022-1236(89)90110-9},
       URL = {https://doi.org/10.1016/0022-1236(89)90110-9},
}

@article {ILM17,
    AUTHOR = {Ichino, Atsushi and Lapid, Erez and Mao, Zhengyu},
     TITLE = {On the formal degrees of square-integrable representations of
              odd special orthogonal and metaplectic groups},
   JOURNAL = {Duke Math. J.},
  FJOURNAL = {Duke Mathematical Journal},
    VOLUME = {166},
      YEAR = {2017},
    NUMBER = {7},
     PAGES = {1301--1348},
      ISSN = {0012-7094,1547-7398},
   MRCLASS = {11F70},
  MRNUMBER = {3649356},
MRREVIEWER = {Peter\ Humphries},
       DOI = {10.1215/00127094-0000001X},
       URL = {https://doi.org/10.1215/00127094-0000001X},
}

@misc{bakic2023similitudeexceptionalthetacorrespondences,
      title={Similitude exceptional theta correspondences}, 
      author={Petar Bakic and Wee Teck Gan and Gordan Savin},
      year={2023},
      eprint={2308.13339},
      archivePrefix={arXiv},
      primaryClass={math.RT},
      url={https://arxiv.org/abs/2308.13339}, 
}

@article {Borovoi1998abeliangalois,
    AUTHOR = {Borovoi, Mikhail},
     TITLE = {Abelian {G}alois cohomology of reductive groups},
   JOURNAL = {Mem. Amer. Math. Soc.},
  FJOURNAL = {Memoirs of the American Mathematical Society},
    VOLUME = {132},
      YEAR = {1998},
    NUMBER = {626},
     PAGES = {viii+50},
      ISSN = {0065-9266,1947-6221},
   MRCLASS = {20G10 (11E72 14E20 18G50)},
  MRNUMBER = {1401491},
MRREVIEWER = {James\ E.\ Humphreys},
       DOI = {10.1090/memo/0626},
       URL = {https://doi.org/10.1090/memo/0626},
}

@article {BinXu2016liftingproblem,
    AUTHOR = {Xu, Bin},
     TITLE = {On a lifting problem of {L}-packets},
   JOURNAL = {Compos. Math.},
  FJOURNAL = {Compositio Mathematica},
    VOLUME = {152},
      YEAR = {2016},
    NUMBER = {9},
     PAGES = {1800--1850},
      ISSN = {0010-437X,1570-5846},
   MRCLASS = {22E50 (11F70 20G25)},
  MRNUMBER = {3568940},
MRREVIEWER = {Ramin\ Takloo-Bighash},
       DOI = {10.1112/S0010437X16007545},
       URL = {https://doi.org/10.1112/S0010437X16007545},
}

@article {GS23theta,
    AUTHOR = {Gan, Wee Teck and Savin, Gordan},
     TITLE = {Howe duality and dichotomy for exceptional theta
              correspondences},
   JOURNAL = {Invent. Math.},
  FJOURNAL = {Inventiones Mathematicae},
    VOLUME = {232},
      YEAR = {2023},
    NUMBER = {1},
     PAGES = {1--78},
      ISSN = {0020-9910,1432-1297},
   MRCLASS = {11F27 (11F70 22E50)},
  MRNUMBER = {4557399},
       DOI = {10.1007/s00222-022-01165-2},
       URL = {https://doi.org/10.1007/s00222-022-01165-2},
}

@article {Stumbo2000minimallength,
    AUTHOR = {Stumbo, Fabio},
     TITLE = {Minimal length coset representatives for quotients of
              parabolic subgroups in {C}oxeter groups},
   JOURNAL = {Boll. Unione Mat. Ital. Sez. B Artic. Ric. Mat. (8)},
  FJOURNAL = {Bollettino della Unione Matematica Italiana. Serie VIII.
              Sezione B. Articoli di Ricerca Matematica},
    VOLUME = {3},
      YEAR = {2000},
    NUMBER = {3},
     PAGES = {699--715},
      ISSN = {0392-4041},
   MRCLASS = {20F55 (20F36 51F15)},
  MRNUMBER = {1801609},
MRREVIEWER = {Stephen\ P.\ Humphries},
}

@misc{atobe2024localintertwiningrelationscotempered,
      title={Local Intertwining Relations and Co-tempered $A$-packets of Classical Groups}, 
      author={Hiraku Atobe and Wee Teck Gan and Atsushi Ichino and Tasho Kaletha and Alberto Mínguez and Sug Woo Shin},
      year={2024},
      eprint={2410.13504},
      archivePrefix={arXiv},
      primaryClass={math.NT},
      url={https://arxiv.org/abs/2410.13504}, 
}

@article {Silberger1978Knapp-Stein,
    AUTHOR = {Silberger, Allan J.},
     TITLE = {The {K}napp-{S}tein dimension theorem for {$p$}-adic groups},
   JOURNAL = {Proc. Amer. Math. Soc.},
  FJOURNAL = {Proceedings of the American Mathematical Society},
    VOLUME = {68},
      YEAR = {1978},
    NUMBER = {2},
     PAGES = {243--246},
      ISSN = {0002-9939,1088-6826},
   MRCLASS = {22E50},
  MRNUMBER = {492091},
       DOI = {10.2307/2041781},
       URL = {https://doi.org/10.2307/2041781},
}

@article {Kimura1983prehomogeneousscalar,
    AUTHOR = {Kimura, Tatsuo},
     TITLE = {A classification of prehomogeneous vector spaces of simple
              algebraic groups with scalar multiplications},
   JOURNAL = {J. Algebra},
  FJOURNAL = {Journal of Algebra},
    VOLUME = {83},
      YEAR = {1983},
    NUMBER = {1},
     PAGES = {72--100},
      ISSN = {0021-8693},
   MRCLASS = {32M10 (20G20)},
  MRNUMBER = {710588},
MRREVIEWER = {A.\ L.\ Onishchik},
       DOI = {10.1016/0021-8693(83)90138-2},
       URL = {https://doi.org/10.1016/0021-8693(83)90138-2},
}

@article{Yamauchi2024,
author = {Takuya Yamauchi},
title = {{Transfers of some Hecke elements for possibly ramified base change in ${\mathrm{GL}}_n$}},
journal = {Kyoto Journal of Mathematics},
publisher = {Duke University Press},
pages = {1--15},
year = {2024},
doi = {10.1215/21562261-2024-0012},
URL = {https://doi.org/10.1215/21562261-2024-0012}
}

@article {KretShin23-general-symplectic,
    AUTHOR = {Kret, Arno and Shin, Sug Woo},
     TITLE = {Galois representations for general symplectic groups},
   JOURNAL = {J. Eur. Math. Soc. (JEMS)},
  FJOURNAL = {Journal of the European Mathematical Society (JEMS)},
    VOLUME = {25},
      YEAR = {2023},
    NUMBER = {1},
     PAGES = {75--152},
      ISSN = {1435-9855,1435-9863},
   MRCLASS = {11R39 (11F70 11F80 11G18)},
  MRNUMBER = {4556781},
MRREVIEWER = {Jack\ Shotton},
       DOI = {10.4171/jems/1179},
       URL = {https://doi.org/10.4171/jems/1179},
}

@article {KretShin-24-general-orthogonal,
    AUTHOR = {Kret, Arno and Shin, Sug Woo},
     TITLE = {Galois representations for general orthogonal groups},
   JOURNAL = {J. Inst. Math. Jussieu},
  FJOURNAL = {Journal of the Institute of Mathematics of Jussieu. JIMJ.
              Journal de l'Institut de Math\'ematiques de Jussieu},
    VOLUME = {23},
      YEAR = {2024},
    NUMBER = {5},
     PAGES = {1959--2050},
      ISSN = {1474-7480,1475-3030},
   MRCLASS = {11F80 (11F70 11G18 11R39)},
  MRNUMBER = {4821551},
       DOI = {10.1017/S1474748023000427},
       URL = {https://doi.org/10.1017/S1474748023000427},
}

@article {GanTakeda2011LLC-for-GSp4,
    AUTHOR = {Gan, Wee Teck and Takeda, Shuichiro},
     TITLE = {The local {L}anglands conjecture for {${\rm GSp}(4)$}},
   JOURNAL = {Ann. of Math. (2)},
  FJOURNAL = {Annals of Mathematics. Second Series},
    VOLUME = {173},
      YEAR = {2011},
    NUMBER = {3},
     PAGES = {1841--1882},
      ISSN = {0003-486X,1939-8980},
   MRCLASS = {22E50 (20G25 22E35)},
  MRNUMBER = {2800725},
MRREVIEWER = {B.\ Sury},
       DOI = {10.4007/annals.2011.173.3.12},
       URL = {https://doi.org/10.4007/annals.2011.173.3.12},
}

@article {Ramakrishnan2000-Modularity-Rankin-Selberg,
    AUTHOR = {Ramakrishnan, Dinakar},
     TITLE = {Modularity of the {R}ankin-{S}elberg {$L$}-series, and
              multiplicity one for {${\rm SL}(2)$}},
   JOURNAL = {Ann. of Math. (2)},
  FJOURNAL = {Annals of Mathematics. Second Series},
    VOLUME = {152},
      YEAR = {2000},
    NUMBER = {1},
     PAGES = {45--111},
      ISSN = {0003-486X,1939-8980},
   MRCLASS = {11F70 (11F55 11F66 11F80 11G18 14G10 20G35 22E55)},
  MRNUMBER = {1792292},
MRREVIEWER = {Solomon\ Friedberg},
       DOI = {10.2307/2661379},
       URL = {https://doi.org/10.2307/2661379},
}

@article {Kim2003-functoriality-exterior-square,
    AUTHOR = {Kim, Henry H.},
     TITLE = {Functoriality for the exterior square of {${\rm GL}_4$} and
              the symmetric fourth of {${\rm GL}_2$}},
      NOTE = {With appendix 1 by Dinakar Ramakrishnan and appendix 2 by Kim
              and Peter Sarnak},
   JOURNAL = {J. Amer. Math. Soc.},
  FJOURNAL = {Journal of the American Mathematical Society},
    VOLUME = {16},
      YEAR = {2003},
    NUMBER = {1},
     PAGES = {139--183},
      ISSN = {0894-0347,1088-6834},
   MRCLASS = {11F70 (11R39 22E46)},
  MRNUMBER = {1937203},
MRREVIEWER = {Mahdi\ Asgari},
       DOI = {10.1090/S0894-0347-02-00410-1},
       URL = {https://doi.org/10.1090/S0894-0347-02-00410-1},
}

@article {Kottwitz1988-Tamagawa,
    AUTHOR = {Kottwitz, Robert E.},
     TITLE = {Tamagawa numbers},
   JOURNAL = {Ann. of Math. (2)},
  FJOURNAL = {Annals of Mathematics. Second Series},
    VOLUME = {127},
      YEAR = {1988},
    NUMBER = {3},
     PAGES = {629--646},
      ISSN = {0003-486X,1939-8980},
   MRCLASS = {11F70 (11E72 22E55)},
  MRNUMBER = {942522},
MRREVIEWER = {Stephen\ Gelbart},
       DOI = {10.2307/2007007},
       URL = {https://doi.org/10.2307/2007007},
}

@article {GeeTaibi2019-GSp4,
    AUTHOR = {Gee, Toby and Ta\"ibi, Olivier},
     TITLE = {Arthur's multiplicity formula for {${\bf GSp}_4$} and
              restriction to {${\bf Sp}_4$}},
   JOURNAL = {J. \'Ec. polytech. Math.},
  FJOURNAL = {Journal de l'\'Ecole polytechnique. Math\'ematiques},
    VOLUME = {6},
      YEAR = {2019},
     PAGES = {469--535},
      ISSN = {2429-7100,2270-518X},
   MRCLASS = {11F72 (11F46 11F55)},
  MRNUMBER = {3991897},
MRREVIEWER = {Han\ Wu},
       DOI = {10.5802/jep.99},
       URL = {https://doi.org/10.5802/jep.99},
}

@article {SakellaridisVenkatesh2017,
    AUTHOR = {Sakellaridis, Yiannis and Venkatesh, Akshay},
     TITLE = {Periods and harmonic analysis on spherical varieties},
   JOURNAL = {Ast\'erisque},
  FJOURNAL = {Ast\'erisque},
    NUMBER = {396},
      YEAR = {2017},
     PAGES = {viii+360},
      ISSN = {0303-1179,2492-5926},
      ISBN = {978-2-85629-871-8},
   MRCLASS = {22E50 (11F67)},
  MRNUMBER = {3764130},
MRREVIEWER = {Arnab\ Mitra},
}

@article {SilbergerZink2018-Classification-L-parameter,
    AUTHOR = {Silberger, Allan J. and Zink, Ernst-Wilhelm},
     TITLE = {Langlands classification for {$L$}-parameters},
   JOURNAL = {J. Algebra},
  FJOURNAL = {Journal of Algebra},
    VOLUME = {511},
      YEAR = {2018},
     PAGES = {299--357},
      ISSN = {0021-8693,1090-266X},
   MRCLASS = {11S37 (22E40)},
  MRNUMBER = {3834776},
MRREVIEWER = {Jack\ Shotton},
       DOI = {10.1016/j.jalgebra.2018.06.012},
       URL = {https://doi.org/10.1016/j.jalgebra.2018.06.012},
}

@incollection {Haines2014-StableBernstein,
    AUTHOR = {Haines, Thomas J.},
     TITLE = {The stable {B}ernstein center and test functions for {S}himura
              varieties},
 BOOKTITLE = {Automorphic forms and {G}alois representations. {V}ol. 2},
    SERIES = {London Math. Soc. Lecture Note Ser.},
    VOLUME = {415},
     PAGES = {118--186},
 PUBLISHER = {Cambridge Univ. Press, Cambridge},
      YEAR = {2014},
      ISBN = {978-1-107-69363-0},
   MRCLASS = {11G18 (20G25 22E35 22E50)},
  MRNUMBER = {3444233},
MRREVIEWER = {Shuichiro\ Takeda},
}

@book {KalethaPrasad-Bruhat-Tits,
    AUTHOR = {Kaletha, Tasho and Prasad, Gopal},
     TITLE = {Bruhat-{T}its theory---a new approach},
    SERIES = {New Mathematical Monographs},
    VOLUME = {44},
 PUBLISHER = {Cambridge University Press, Cambridge},
      YEAR = {2023},
     PAGES = {xxx+718},
      ISBN = {978-1-108-83196-3},
   MRCLASS = {20E42 (11F70 20G25 22E50)},
  MRNUMBER = {4520154},
MRREVIEWER = {Corina\ Ciobotaru},
}

@book {HarrisTaylor2001-simple-Shimura,
    AUTHOR = {Harris, Michael and Taylor, Richard},
     TITLE = {The geometry and cohomology of some simple {S}himura
              varieties},
    SERIES = {Annals of Mathematics Studies},
    VOLUME = {151},
      NOTE = {With an appendix by Vladimir G. Berkovich},
 PUBLISHER = {Princeton University Press, Princeton, NJ},
      YEAR = {2001},
     PAGES = {viii+276},
      ISBN = {0-691-09090-4},
   MRCLASS = {11G18 (11F70 11S37 14G35 22E45)},
  MRNUMBER = {1876802},
MRREVIEWER = {James\ Milne},
}

@article {Henniart2000-simpleproof,
    AUTHOR = {Henniart, Guy},
     TITLE = {Une preuve simple des conjectures de {L}anglands pour {${\rm
              GL}(n)$} sur un corps {$p$}-adique},
   JOURNAL = {Invent. Math.},
  FJOURNAL = {Inventiones Mathematicae},
    VOLUME = {139},
      YEAR = {2000},
    NUMBER = {2},
     PAGES = {439--455},
      ISSN = {0020-9910,1432-1297},
   MRCLASS = {11F70 (11R39 11S37 22E50 22E55)},
  MRNUMBER = {1738446},
MRREVIEWER = {Dihua\ Jiang},
       DOI = {10.1007/s002220050012},
       URL = {https://doi.org/10.1007/s002220050012},
}

@incollection {Borel1979-AutomorphicLfunctions,
    AUTHOR = {Borel, A.},
     TITLE = {Automorphic {$L$}-functions},
 BOOKTITLE = {Automorphic forms, representations and {$L$}-functions
              ({P}roc. {S}ympos. {P}ure {M}ath., {O}regon {S}tate {U}niv.,
              {C}orvallis, {O}re., 1977), {P}art 2},
    SERIES = {Proc. Sympos. Pure Math.},
    VOLUME = {XXXIII},
     PAGES = {27--61},
 PUBLISHER = {Amer. Math. Soc., Providence, RI},
      YEAR = {1979},
      ISBN = {0-8218-1437-0},
   MRCLASS = {10D40 (12A67 22E50)},
  MRNUMBER = {546608},
MRREVIEWER = {Yasuhiro\ Asoo},
}

@incollection {LapidRallis2005-localfactors,
    AUTHOR = {Lapid, Erez M. and Rallis, Stephen},
     TITLE = {On the local factors of representations of classical groups},
 BOOKTITLE = {Automorphic representations, {$L$}-functions and applications:
              progress and prospects},
    SERIES = {Ohio State Univ. Math. Res. Inst. Publ.},
    VOLUME = {11},
     PAGES = {309--359},
 PUBLISHER = {de Gruyter, Berlin},
      YEAR = {2005},
      ISBN = {978-3-11-017939-2; 3-11-017939-3},
   MRCLASS = {11F70 (22E45)},
  MRNUMBER = {2192828},
MRREVIEWER = {Mahdi\ Asgari},
       DOI = {10.1515/9783110892703.309},
       URL = {https://doi.org/10.1515/9783110892703.309},
}

@article{Deligne1976-localconstant,
author = {Deligne, Pierre},
journal = {Inventiones mathematicae},
pages = {299-316},
title = {Les constantes locales de l'équation fonctionelle de la fonction L d'Artin d'une représentation orthogonale.},
url = {http://eudml.org/doc/142410},
volume = {35},
year = {1976},
}

@article {GrossSavin1998-Motives,
    AUTHOR = {Gross, Benedict H. and Savin, Gordan},
     TITLE = {Motives with {G}alois group of type {$G_2$}: an exceptional
              theta-correspondence},
   JOURNAL = {Compositio Math.},
  FJOURNAL = {Compositio Mathematica},
    VOLUME = {114},
      YEAR = {1998},
    NUMBER = {2},
     PAGES = {153--217},
      ISSN = {0010-437X,1570-5846},
   MRCLASS = {11F70 (11F80 22E35)},
  MRNUMBER = {1661756},
MRREVIEWER = {Dipendra\ Prasad},
       DOI = {10.1023/A:1000456731715},
       URL = {https://doi.org/10.1023/A:1000456731715},
}

@article {KuokFaiLi2014-R-group,
    AUTHOR = {Chao, Kuok Fai and Li, Wen-Wei},
     TITLE = {Dual {$R$}-groups of the inner forms of {${\rm SL}(N)$}},
   JOURNAL = {Pacific J. Math.},
  FJOURNAL = {Pacific Journal of Mathematics},
    VOLUME = {267},
      YEAR = {2014},
    NUMBER = {1},
     PAGES = {35--90},
      ISSN = {0030-8730,1945-5844},
   MRCLASS = {22E50 (11F70)},
  MRNUMBER = {3163476},
MRREVIEWER = {Dmitry\ Gourevitch},
       DOI = {10.2140/pjm.2014.267.35},
       URL = {https://doi.org/10.2140/pjm.2014.267.35},
}

@article {Carter1972-Conjugacy-Weyl-group,
    AUTHOR = {Carter, R. W.},
     TITLE = {Conjugacy classes in the {W}eyl group},
   JOURNAL = {Compositio Math.},
  FJOURNAL = {Compositio Mathematica},
    VOLUME = {25},
      YEAR = {1972},
     PAGES = {1--59},
      ISSN = {0010-437X,1570-5846},
   MRCLASS = {20H15 (17B20)},
  MRNUMBER = {318337},
MRREVIEWER = {Juri\ A.\ Bahturin},
}

@article{LabesseSchwermer2019,
title = {Central morphisms and cuspidal automorphic representations},
journal = {Journal of Number Theory},
volume = {205},
pages = {170-193},
year = {2019},
issn = {0022-314X},
doi = {https://doi.org/10.1016/j.jnt.2019.05.005},
url = {https://www.sciencedirect.com/science/article/pii/S0022314X19301842},
author = {Jean-Pierre Labesse and Joachim Schwermer}
}

@misc{kottwitz2012splittinginvariantssignconventions,
      title={On Splitting Invariants and Sign Conventions in Endoscopic Transfer}, 
      author={R. Kottwitz and D. Shelstad},
      year={2012},
      eprint={1201.5658},
      archivePrefix={arXiv},
      primaryClass={math.RT},
      url={https://arxiv.org/abs/1201.5658}, 
}

@article {GanTakeda2016-Howeduality,
    AUTHOR = {Gan, Wee Teck and Takeda, Shuichiro},
     TITLE = {A proof of the {H}owe duality conjecture},
   JOURNAL = {J. Amer. Math. Soc.},
  FJOURNAL = {Journal of the American Mathematical Society},
    VOLUME = {29},
      YEAR = {2016},
    NUMBER = {2},
     PAGES = {473--493},
      ISSN = {0894-0347,1088-6834},
   MRCLASS = {22E50 (11F27)},
  MRNUMBER = {3454380},
MRREVIEWER = {Ivan\ Mati\'c},
       DOI = {10.1090/jams/839},
       URL = {https://doi.org/10.1090/jams/839},
}

@article {AtobeGan2017-localLanglandsorthogonal,
    AUTHOR = {Atobe, Hiraku and Gan, Wee Teck},
     TITLE = {On the local {L}anglands correspondence and {A}rthur
              conjecture for even orthogonal groups},
   JOURNAL = {Represent. Theory},
  FJOURNAL = {Representation Theory. An Electronic Journal of the American
              Mathematical Society},
    VOLUME = {21},
      YEAR = {2017},
     PAGES = {354--415},
      ISSN = {1088-4165},
   MRCLASS = {11F70 (11R39)},
  MRNUMBER = {3708200},
MRREVIEWER = {Kimball\ L.\ Martin},
       DOI = {10.1090/ert/504},
       URL = {https://doi.org/10.1090/ert/504},
}

@article {Vigneras1981orbitalintegral,
    AUTHOR = {Vign\'eras, Marie-France},
     TITLE = {Caract\'erisation des int\'egrales orbitales sur un groupe
              r\'eductif {$p$}-adique},
   JOURNAL = {J. Fac. Sci. Univ. Tokyo Sect. IA Math.},
  FJOURNAL = {Journal of the Faculty of Science. University of Tokyo.
              Section IA. Mathematics},
    VOLUME = {28},
      YEAR = {1981},
    NUMBER = {3},
     PAGES = {945--961 (1982)},
      ISSN = {0040-8980},
   MRCLASS = {22E35},
  MRNUMBER = {656066},
MRREVIEWER = {L.\ Corwin},
}

@article {Henniart2009exterior,
    AUTHOR = {Henniart, Guy},
     TITLE = {Sur la fonctorialit\'e, pour {$\rm GL(4)$}, donn\'ee par le
              carr\'e{} ext\'erieur},
   JOURNAL = {Mosc. Math. J.},
  FJOURNAL = {Moscow Mathematical Journal},
    VOLUME = {9},
      YEAR = {2009},
    NUMBER = {1},
     PAGES = {33--45, back matter},
      ISSN = {1609-3321,1609-4514},
   MRCLASS = {22E55 (11F70 11R39)},
  MRNUMBER = {2567395},
       DOI = {10.17323/1609-4514-2009-9-1-33-45},
       URL = {https://doi.org/10.17323/1609-4514-2009-9-1-33-45},
}

@article {Griess1995-G2,
    AUTHOR = {Griess, Jr., Robert L.},
     TITLE = {Basic conjugacy theorems for {$G_2$}},
   JOURNAL = {Invent. Math.},
  FJOURNAL = {Inventiones Mathematicae},
    VOLUME = {121},
      YEAR = {1995},
    NUMBER = {2},
     PAGES = {257--277},
      ISSN = {0020-9910,1432-1297},
   MRCLASS = {20G20 (17A35 20D06 22E10)},
  MRNUMBER = {1346206},
MRREVIEWER = {James\ E.\ Humphreys},
       DOI = {10.1007/BF01884298},
       URL = {https://doi.org/10.1007/BF01884298},
}

@article {Larsen1994-conjugacy,
    AUTHOR = {Larsen, Michael},
     TITLE = {On the conjugacy of element-conjugate homomorphisms},
   JOURNAL = {Israel J. Math.},
  FJOURNAL = {Israel Journal of Mathematics},
    VOLUME = {88},
      YEAR = {1994},
    NUMBER = {1-3},
     PAGES = {253--277},
      ISSN = {0021-2172,1565-8511},
   MRCLASS = {20G20 (20C15 22E15)},
  MRNUMBER = {1303498},
MRREVIEWER = {James\ E.\ Humphreys},
       DOI = {10.1007/BF02937514},
       URL = {https://doi.org/10.1007/BF02937514},
}

@article {Scholze2013-localLanglandsGLn,
    AUTHOR = {Scholze, Peter},
     TITLE = {The local {L}anglands correspondence for {$\mathrm{GL}_n$} over
              {$p$}-adic fields},
   JOURNAL = {Invent. Math.},
  FJOURNAL = {Inventiones Mathematicae},
    VOLUME = {192},
      YEAR = {2013},
    NUMBER = {3},
     PAGES = {663--715},
      ISSN = {0020-9910,1432-1297},
   MRCLASS = {22E50 (11G18 14G35)},
  MRNUMBER = {3049932},
MRREVIEWER = {Maarten\ Sander\ Solleveld},
       DOI = {10.1007/s00222-012-0420-5},
       URL = {https://doi.org/10.1007/s00222-012-0420-5},
}

@article {GrossGan1999Haarmeasure,
    AUTHOR = {Gross, Benedict H. and Gan, Wee Teck},
     TITLE = {Haar measure and the {A}rtin conductor},
   JOURNAL = {Trans. Amer. Math. Soc.},
  FJOURNAL = {Transactions of the American Mathematical Society},
    VOLUME = {351},
      YEAR = {1999},
    NUMBER = {4},
     PAGES = {1691--1704},
      ISSN = {0002-9947,1088-6850},
   MRCLASS = {20G30 (11G99)},
  MRNUMBER = {1458303},
MRREVIEWER = {Stefan\ K\"uhnlein},
       DOI = {10.1090/S0002-9947-99-02095-4},
       URL = {https://doi.org/10.1090/S0002-9947-99-02095-4},
}

@misc{kaletha2014endoscopicclassificationrepresentationsinner,
      title={Endoscopic Classification of Representations: Inner Forms of Unitary Groups}, 
      author={Tasho Kaletha and Alberto Minguez and Sug Woo Shin and Paul-James White},
      year={2014},
      eprint={1409.3731},
      archivePrefix={arXiv},
      primaryClass={math.NT},
      url={https://arxiv.org/abs/1409.3731}, 
}

@article {HiragaIchinoIkeda2008-Correction,
    AUTHOR = {Hiraga, Kaoru and Ichino, Atsushi and Ikeda, Tamotsu},
     TITLE = {Correction to: ``{F}ormal degrees and adjoint
              {$\gamma$}-factors'' [{J}. {A}mer. {M}ath. {S}oc. {\bf 21}
              (2008), no. 1, 283--304; MR2350057]},
   JOURNAL = {J. Amer. Math. Soc.},
  FJOURNAL = {Journal of the American Mathematical Society},
    VOLUME = {21},
      YEAR = {2008},
    NUMBER = {4},
     PAGES = {1211--1213},
      ISSN = {0894-0347,1088-6834},
   MRCLASS = {22E50},
  MRNUMBER = {2425185},
       DOI = {10.1090/S0894-0347-08-00605-X},
       URL = {https://doi.org/10.1090/S0894-0347-08-00605-X},
}

@article {Harish-Chandra1976-HarmonicanalysisIII,
    AUTHOR = {Harish-Chandra},
     TITLE = {Harmonic analysis on real reductive groups. {III}. {T}he
              {M}aass-{S}elberg relations and the {P}lancherel formula},
   JOURNAL = {Ann. of Math. (2)},
  FJOURNAL = {Annals of Mathematics. Second Series},
    VOLUME = {104},
      YEAR = {1976},
    NUMBER = {1},
     PAGES = {117--201},
      ISSN = {0003-486X},
   MRCLASS = {22E45},
  MRNUMBER = {439994},
MRREVIEWER = {P.\ C.\ Trombi},
       DOI = {10.2307/1971058},
       URL = {https://doi.org/10.2307/1971058},
}

@article {Knapp-Stein1971intertwiningoperators,
    AUTHOR = {Knapp, A. W. and Stein, E. M.},
     TITLE = {Intertwining operators for semisimple groups},
   JOURNAL = {Ann. of Math. (2)},
  FJOURNAL = {Annals of Mathematics. Second Series},
    VOLUME = {93},
      YEAR = {1971},
     PAGES = {489--578},
      ISSN = {0003-486X},
   MRCLASS = {22E45},
  MRNUMBER = {460543},
MRREVIEWER = {G.\ I.\ Ol\cprime shanski\u i},
       DOI = {10.2307/1970887},
       URL = {https://doi.org/10.2307/1970887},
}

@book {Rogawski1990-U3,
    AUTHOR = {Rogawski, Jonathan D.},
     TITLE = {Automorphic representations of unitary groups in three
              variables},
    SERIES = {Annals of Mathematics Studies},
    VOLUME = {123},
 PUBLISHER = {Princeton University Press, Princeton, NJ},
      YEAR = {1990},
     PAGES = {xii+259},
      ISBN = {0-691-08586-2; 0-691-08587-0},
   MRCLASS = {22E55 (11F70 11R39 22-02)},
  MRNUMBER = {1081540},
MRREVIEWER = {David\ Joyner},
       DOI = {10.1515/9781400882441},
       URL = {https://doi.org/10.1515/9781400882441},
}

@article {Heiermann2006-unipotentpoles,
    AUTHOR = {Heiermann, Volker},
     TITLE = {Orbites unipotentes et p\^oles d'ordre maximal de la fonction
              {$\mu$} de {H}arish-{C}handra},
   JOURNAL = {Canad. J. Math.},
  FJOURNAL = {Canadian Journal of Mathematics. Journal Canadien de
              Math\'ematiques},
    VOLUME = {58},
      YEAR = {2006},
    NUMBER = {6},
     PAGES = {1203--1228},
      ISSN = {0008-414X,1496-4279},
   MRCLASS = {11F70 (11F80 22E50)},
  MRNUMBER = {2270924},
MRREVIEWER = {David\ Goldberg},
       DOI = {10.4153/CJM-2006-043-8},
       URL = {https://doi.org/10.4153/CJM-2006-043-8},
}

@misc{beuzartplessis2025hiragaichinoikedaconjectureformaldegrees,
      title={The {H}iraga-{I}chino-{I}keda conjecture on formal degrees for classical groups}, 
      author={Raphaël Beuzart-Plessis},
      year={2025},
      eprint={2508.08470},
      archivePrefix={arXiv},
      primaryClass={math.RT},
      url={https://arxiv.org/abs/2508.08470}, 
}

@article {FengOpdamSolleveld2022-unipotent,
    AUTHOR = {Feng, Yongqi and Opdam, Eric and Solleveld, Maarten},
     TITLE = {On formal degrees of unipotent representations},
   JOURNAL = {J. Inst. Math. Jussieu},
  FJOURNAL = {Journal of the Institute of Mathematics of Jussieu. JIMJ.
              Journal de l'Institut de Math\'ematiques de Jussieu},
    VOLUME = {21},
      YEAR = {2022},
    NUMBER = {6},
     PAGES = {1947--1999},
      ISSN = {1474-7480,1475-3030},
   MRCLASS = {22E50 (11S37 20G25)},
  MRNUMBER = {4515286},
MRREVIEWER = {Petar\ Baki\'c},
       DOI = {10.1017/S1474748021000062},
       URL = {https://doi.org/10.1017/S1474748021000062},
}

@article {Mie2021-simplesupercuspidal,
    AUTHOR = {Mieda, Yoichi},
     TITLE = {On the formal degree conjecture for simple supercuspidal
              representations},
   JOURNAL = {Math. Res. Lett.},
  FJOURNAL = {Mathematical Research Letters},
    VOLUME = {28},
      YEAR = {2021},
    NUMBER = {4},
     PAGES = {1227--1242},
      ISSN = {1073-2780,1945-001X},
   MRCLASS = {11F70 (22E50)},
  MRNUMBER = {4344703},
MRREVIEWER = {Ivan\ Mati\'c},
       DOI = {10.4310/MRL.2021.v28.n4.a11},
       URL = {https://doi.org/10.4310/MRL.2021.v28.n4.a11},
}

@article {Ohara2023-non-singular,
    AUTHOR = {Ohara, Kazuma},
     TITLE = {On the formal degree conjecture for non-singular supercuspidal
              representations},
   JOURNAL = {Int. Math. Res. Not. IMRN},
  FJOURNAL = {International Mathematics Research Notices. IMRN},
      YEAR = {2023},
    NUMBER = {13},
     PAGES = {10997--11034},
      ISSN = {1073-7928,1687-0247},
   MRCLASS = {22E50 (11F70)},
  MRNUMBER = {4609777},
MRREVIEWER = {Rongqing\ Ye},
       DOI = {10.1093/imrn/rnac154},
       URL = {https://doi.org/10.1093/imrn/rnac154},
}

@article {Schwein2024-regular,
    AUTHOR = {Schwein, David},
     TITLE = {Formal degree of regular supercuspidals},
   JOURNAL = {J. Eur. Math. Soc. (JEMS)},
  FJOURNAL = {Journal of the European Mathematical Society (JEMS)},
    VOLUME = {26},
      YEAR = {2024},
    NUMBER = {10},
     PAGES = {3685--3737},
      ISSN = {1435-9855,1435-9863},
   MRCLASS = {22E50 (11S37)},
  MRNUMBER = {4768407},
MRREVIEWER = {Maarten\ Sander\ Solleveld},
       DOI = {10.4171/jems/1412},
       URL = {https://doi.org/10.4171/jems/1412},
}

@article {Wang2025-formal-degree,
    AUTHOR = {Wang, Yiyang},
     TITLE = {Formal degrees and parabolic induction: the maximal generic
              case},
   JOURNAL = {Manuscripta Math.},
  FJOURNAL = {Manuscripta Mathematica},
    VOLUME = {176},
      YEAR = {2025},
    NUMBER = {48},
      ISSN = {0025-2611,1432-1785},
   MRCLASS = {22E50},
  MRNUMBER = {4932679},
       DOI = {10.1007/s00229-025-01632-z},
       URL = {https://doi.org/10.1007/s00229-025-01632-z},
}

@article {Shahidi1984-Fouriertransform,
    AUTHOR = {Shahidi, Freydoon},
     TITLE = {Fourier transforms of intertwining operators and {P}lancherel
              measures for {${\rm GL}(n)$}},
   JOURNAL = {Amer. J. Math.},
  FJOURNAL = {American Journal of Mathematics},
    VOLUME = {106},
      YEAR = {1984},
    NUMBER = {1},
     PAGES = {67--111},
      ISSN = {0002-9327,1080-6377},
   MRCLASS = {22E50 (11S37)},
  MRNUMBER = {729755},
MRREVIEWER = {Allan\ J.\ Silberger},
       DOI = {10.2307/2374430},
       URL = {https://doi.org/10.2307/2374430},
}

@article{SilbergerZink1996-formaldegree,
  author = {Silberger, A. J. and E.-W. Zink},
  journal = {Max-Planck-Institut für Mathematik},
  number = {},
  title = {The formal degree of discrete series representations of
central simple algebras over p-adic fields},
  volume = {},
  year = {1996}
}

@incollection {Springer1984-linearalgebraicgroups,
    AUTHOR = {Springer, T. A.},
     TITLE = {Linear algebraic groups},
 BOOKTITLE = {Perspectives in mathematics},
     PAGES = {455--495},
 PUBLISHER = {Birkh\"auser, Basel},
      YEAR = {1984},
      ISBN = {3-7643-1624-1},
   MRCLASS = {20G15},
  MRNUMBER = {779686},
MRREVIEWER = {Andy\ R.\ Magid},
}

@article {Takanashi2025-Sauvageot,
    AUTHOR = {Takanashi, Yugo},
     TITLE = {A note on the {S}auvageot density principle},
   JOURNAL = {Manuscripta Math.},
  FJOURNAL = {Manuscripta Mathematica},
    VOLUME = {176},
      YEAR = {2025},
    NUMBER = {57},
      ISSN = {0025-2611,1432-1785},
   MRCLASS = {22E50 (11F70 22E55)},
  MRNUMBER = {4941997},
       DOI = {10.1007/s00229-025-01650-x},
       URL = {https://doi.org/10.1007/s00229-025-01650-x},
}

@misc{gan2025trialityadjointliftinggl3,
      title={Triality and adjoint lifting for ${{\mathrm{GL}}(3)}$}, 
      author={Wee Teck Gan},
      year={2025},
      eprint={2512.08307},
      archivePrefix={arXiv},
      primaryClass={math.NT},
      url={https://arxiv.org/abs/2512.08307}, 
}

@misc{Takanashi2026-EndoscopicDescriptionG2,
    AUTHOR = {Takanashi, Yugo},
    TITLE = {Endoscopic description of the local {Langlands} correspondence for {$\mathrm{G}_2$}},
    YEAR = {2026},
    NOTE = {Preprint, \href{https://arxiv.org/abs/2609.19439}{arXiv:2609.19439}},
}

\end{document}